%

\documentclass{imsart}

\usepackage{amsmath, amsfonts, amssymb, amsthm, graphicx, color,caption}
\usepackage{epsfig}
\usepackage{fancyhdr}
\usepackage{framed}
\usepackage{floatrow}
\usepackage{mdframed}
\usepackage{url}
\usepackage{mathabx}
\usepackage{cfr-lm}
\usepackage{color}

\usepackage{imakeidx}
\makeindex

\RequirePackage[numbers]{natbib}
\RequirePackage[colorlinks,citecolor=blue,urlcolor=blue]{hyperref}

\theoremstyle{plain}
\newtheorem{lem}{Lemma}
\newtheorem*{lem*}{Lemma}
\newtheorem{thm}{Theorem}
\newtheorem{prop}{Proposition}

\newtheorem*{prop*}{Proposition}
\newtheorem{Def}{Definition}
\newtheorem{cor}{Corollary}
\newtheorem*{cor*}{Corollary}
\newtheorem{con}{Construction}

\theoremstyle{definition}
\newtheorem{ex}{Example}
\newtheorem{Rem}{Remark}
\newtheorem{obs}{Observation}

\def \v {\mathrm{v}}
\def \h {\mathrm{h}}
\def \cS {\mathcal{S}}
\def \cG {\mathcal{G}}
\def \cL {\mathcal{L}}
\def \cT {\mathcal{T}}
\def \cR {\mathcal{R}}
\def\cN{{\mathcal{N}}}
\def \cE {\mathcal{E}}
\def \cB {\mathcal{B}}
\def \cBT {\mathcal{BT}}
\def \cBL {\mathcal{BL}}
\def \cV {\mathcal{V}}
\def \cM {\mathcal{M}}

\def \cD {\mathcal{D}}
\def \cH {\mathcal{H}}

\def\cQ{{\mathcal{Q}}}
\def \E {\mathcal{E}^{\rm ex}}
\def \L {\mathcal{L}_{\rm plane}}
\def \mBL {\widetilde{\mathcal{BL}}_{\rm plane}}
\def \T {\mathcal{T}_{\rm plane}}
\def \S {\mathcal{S}_t}
\def \mS {\widetilde{\mathcal{S}}_t}
\def \D {\mathcal{D}_t}
\def \BT {\mathcal{BT}_{\rm plane}}
\def \BL {\mathcal{BL}_{\rm plane}}

\def \E {{\sf E}}
\def \Var {{\sf Var}}

\setcounter{tocdepth}{4}

\def\be{\begin{equation}}
\def\ee{\end{equation}}


\startlocaldefs
\endlocaldefs

\begin{document}

\begin{frontmatter}

\title{Random Self-Similar Trees:\\ A mathematical theory of Horton laws\thanksref{t1}}

\thankstext{t1}{This is an original survey paper}
\runtitle{Random Self-Similar Trees}


\author{\fnms{Yevgeniy} \snm{Kovchegov}\thanksref{t2}\corref{}\ead[label=e1]{kovchegy@math.oregonstate.edu}}
\thankstext{t2}{The work is supported by FAPESP award 2018/07826-5 and by NSF award DMS-1412557.}
\address{Department of Mathematics, Oregon State University \\ 2000 SW Campus Way, Corvallis, OR 97331-4605\\ \printead{e1}}
\and
\author{\fnms{Ilya} \snm{Zaliapin}\thanksref{t3}\corref{}\ead[label=e2]{zal@unr.edu}}
\thankstext{t3}{The work is supported by NSF award EAR-1723033.}
\address{Department of Mathematics and Statistics, University of Nevada Reno, \\ 1664 North Virginia st., Reno, NV 89557-0084\\ \printead{e2}}

\runauthor{Y. Kovchegov and I. Zaliapin}

\begin{abstract}
The Horton laws originated in hydrology with a 1945 paper by Robert E. Horton, and for a long 
time remained a purely empirical finding.
Ubiquitous in hierarchical branching systems, 
the Horton laws have been rediscovered in many disciplines
ranging from geomorphology to genetics to computer science.
Attempts to build a mathematical foundation behind the Horton laws
during the 1990s revealed their close connection to 
the operation of pruning -- erasing a tree from the leaves down to the root.
This survey synthesizes recent results on invariances and self-similarities 
of tree measures under 
various forms of pruning.
We argue that pruning is an indispensable instrument for describing 
branching structures and 
representing a variety of coalescent and annihilation dynamics.
The Horton laws appear as a characteristic imprint of self-similarity,
which settles some questions prompted by geophysical data.
\end{abstract}

\begin{keyword}[class=MSC]
\kwd[Primary ]{05C05, 05C80}
\kwd[; secondary ]{05C63, 58-02}
\end{keyword}



\end{frontmatter}

\tableofcontents


\section{Introduction}
Invariance of the Galton-Watson tree measures with respect to pruning (erasure) 
that begins at the leaves and progresses down to the tree root has been recognized 
since the late 1980s.
Both continuous \cite{Neveu86} and discrete \cite{BWW00} versions of prunings have been 
studied. 
The prune-invariance of the trees naturally translates to the 
symmetries of the respective Harris paths \cite{Harris}. 
The richness of such a connection is supported by the well-studied embeddings of 
the Galton-Watson trees in the excursions of random walks and Brownian motions 
(e.g., \cite{NP2,LeGall93,Pitman}).
This provides a point of departure for this survey of 
recent results on {\it prune-invariance},
and more restrictive {\it self-similarity}, of tree measures and related
stochastic processes on the real line. 
While the critical Galton-Watson tree and its Harris path (which is known to be a 
random walk) serve
as an important example, the results extend to trees with more complicated structure
and non-Markovian Harris paths. 
The main attention is paid to a discrete {\it Horton pruning} for finite trees
(Sects.~\ref{sec:defs_notations}-\ref{sec:Kingman}),
yet we also consider infinite and real trees, and general forms of pruning
(Sects.~\ref{sec:pruning}-\ref{sec:infiniteT}).
Looking at random trees through a prism of {\it self-similarity}
offers a concise parameterization of the respective measures via
their {\it Tokunaga sequences} (Sect.~\ref{TSS}), and
uncovers a variety of structures and symmetries 
(e.g., Thms.~\ref{thm:HLSST},\ref{HBPmain2},\ref{thm:mainGBM},\ref{Mthm},\ref{thm:main}).
The surveyed results suggest that particular forms of pruning may underline
the evolution of familiar dynamical systems, allowing their efficient
analytical treatment (Sects.~\ref{sec:Kingman},\ref{sec:annihilation}). 
The surveyed results also pose new questions related to random self-similar trees.

\medskip
We begin by summarizing the key empirical observations that provided an impetus 
for the topic 
(Sect.~\ref{sec:constraint}) and discussing the structure and main results of this
survey (Sect.~\ref{sec:structure}).
Here, we keep the references to a minimum, and indicate survey sections where 
one can find future information. 

\subsection{Early empirical evidence}
\label{sec:constraint}
The theory of random self-similar trees originated in the studies 
of river networks, which supplied the key empirical observations reviewed below. 
\index{random self-similar trees}

\medskip
\noindent{\bf Horton-Strahler orders (Sects.~\ref{sec:HS}, \ref{Rem:altHS}).}
Informally, the aim of orders is to quantify the importance of vertices and edges 
in the tree hierarchy.
It is natural to agree that the orders of a vertex and its parental edge are the same. 
Hence, we are only concerned with ordering vertices. 
In a perfect binary tree (where all leaves are located at the same depth, 
i.e., at the same  distance from the root) one can assign orders inversely 
proportional to the vertex depth; see Fig.~\ref{fig:HS_def}(a). 
In other words, we start with order $1$ at the leaves and increase the order by unity
with every step towards the root.

A celebrated ordering scheme that generalizes this idea to an arbitrary tree
(not necessarily binary) has been originally developed by Robert E. Horton
\cite{Horton45}, and later redesigned by Arthur N. Strahler \cite{Strahler}
to its present form. 
It assigns integer orders to tree vertices and edges, beginning with 
order $1$ at the leaves and increasing the order by unity every time 
a pair of edges of the same order meets at a vertex; see Fig.~\ref{fig:HS_def}(b). 
A sequence of adjacent vertices/edges with the same order is called a 
{\it branch}. 

An example of Horton-Strahler ordering is shown in Fig.~\ref{fig:Beaver}(a) for 
a small river network in the south-central US.
Here, the orders serve as a good proxy for (a logarithm of) various physical 
characteristics of river channels:
channel length, the area of the contributing basin, etc.
The Horton-Strahler orders (a.k.a. Strahler numbers) provide an efficient ranking
of the tree branches and have proven essential 
in numerous fields 
(see Sect.~\ref{sec:appl}).
As an example, the highest-order channel in a river basin 
commonly coincides with the basin's namesake river (e.g., Amazon river is the highest-order 
channel of the Amazon basin).
One may find it quite impressive that such an identification can be done 
using purely combinatorial properties of the basin.
Further examples of Horton-Strahler ordering are shown in 
Figs.~\ref{fig:terminology},\ref{fig:HS_example},\ref{fig:Tok_example1}.

\begin{figure}[t] 
\centering\includegraphics[width=0.9\textwidth]{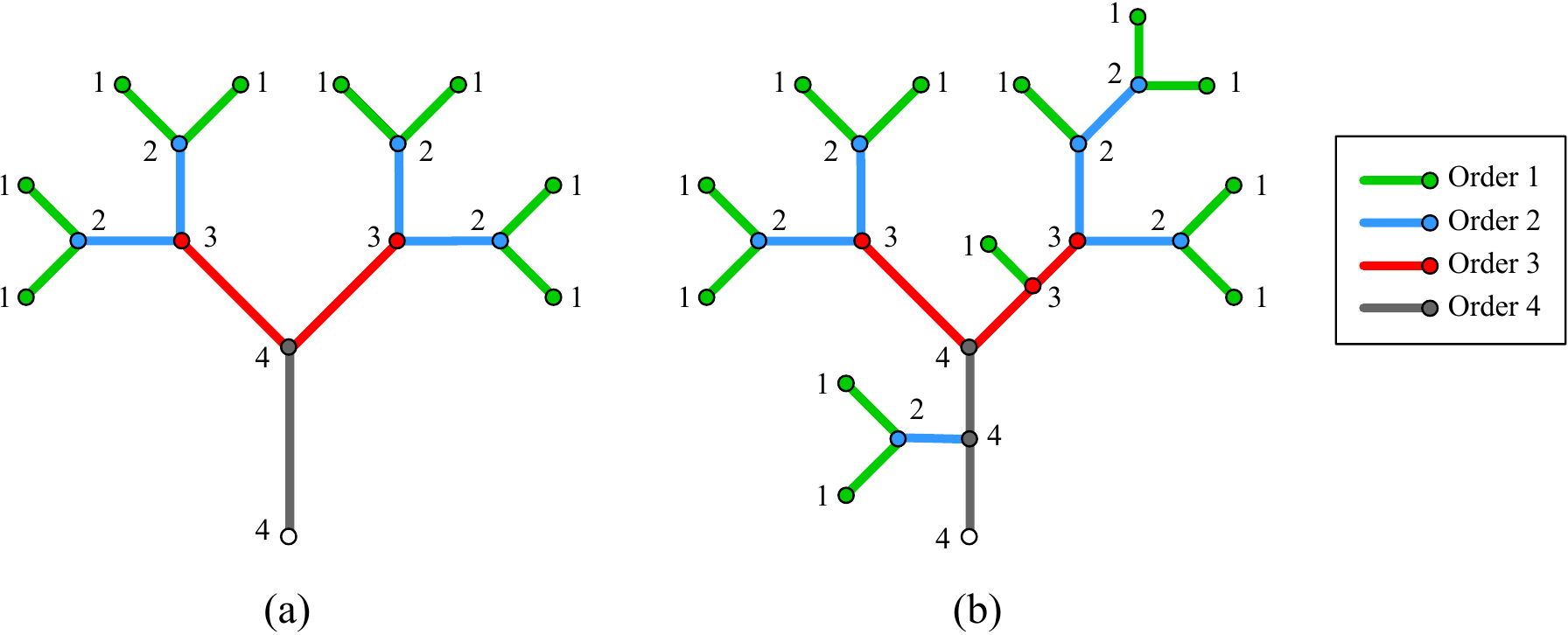}
\caption[Definition of Horton-Strahler orders]
{Horton-Strahler orders in a binary tree.
Different colors correspond to different orders of vertices and edges, 
as indicated in legend. 
(a) Perfect binary tree -- orders are inversely proportional to vertex/edge depth. 
(b) General binary tree -- orders are assigned according to the Horton-Strahler rule. }
\label{fig:HS_def}
\end{figure}

\medskip
\noindent {\bf Horton laws and Horton exponents (Sect.~\ref{sec:HLSST}).} 
A geometric decay of the number of branches of increasing Horton-Strahler orders was first 
described by Robert E. Horton \cite{Horton45} in a study of river stream networks.
Since then, the Horton law and its ramifications have proven indispensable in hydrology and have been reported 
in multiple other areas; see Sect.~\ref{sec:appl} for details and references. 

The Horton law for branch numbers
states that the numbers $N_K$ of channels (branches) of order $K$ in a large basin 
decay geometrically with the order:
\be\label{eq:Horton_emp}
\frac{N_K}{N_{K+1}}=R \quad \Leftrightarrow \quad N_K~\propto ~ R^{-K}
\ee
for some {\it Horton exponent} $R>1$.
Figure~\ref{fig:Beaver_a}(a) illustrates the Horton law for branch numbers 
in the Beaver creek network of 
Fig.~\ref{fig:Beaver}(a).
In this basin, we find $R \approx 4.55$.

The Horton laws are also found for multiple other river statistics (basin area, 
basin magnitude, channel length, etc.), with different Horton exponents. 
Figure~\ref{fig:Beaver_a}(b) illustrates the Horton laws for the average magnitude (the number of leaves) $M_K$
in a subbasin of order $K$, and the average number $L_K$ of edges in a channel 
of order $K$ in the Beaver creek network of Fig.~\ref{fig:Beaver}(a).
The respective Horton exponents here are $R_M \approx 4.55$ (for magnitude) and 
$R_L \approx 2.275$ (for edge number).

\medskip
\noindent{\bf Horton pruning and its generalizations (Sects.~\ref{pruning}, \ref{sec:pruning}).}
The Horton-Strahler orders are naturally connected to the Horton pruning operation, which 
erases the leaves of a tree together with the adjacent edges, and removes the degree-$2$ 
vertices that might result from such erasure.
Figure~\ref{fig:Beaver} illustrates a consecutive application of the Horton 
pruning to the Beaver creek network.
The channels (branches) of order $K$ are being erased at 
the $K$-th iteration of the Horton pruning. 
The mathematical theory of Horton laws concerns the tree measures 
that are invariant with respect to the Horton pruning. 
We also introduce a generalized dynamical pruning that allows one
to erase a metric tree from the leaves down to the root in different ways,
both continuous (metric) and discrete (combinatorial),
and consider the respective prune-invariance.
 
\begin{figure}[!t] 
\centering\includegraphics[width=\textwidth]{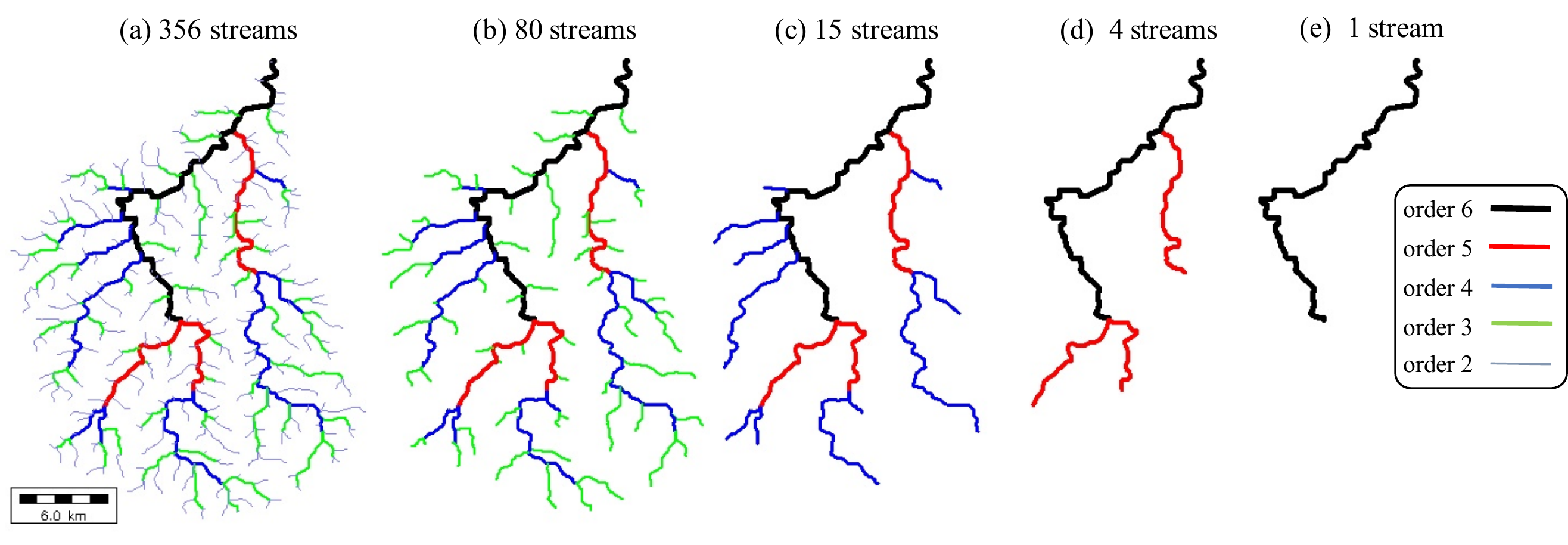}
	\caption{Stream network of Beaver creek, Floyd County, KY.
	(a) Streams (branches) of orders $K=2,\dots,6$ are shown by different colors 
	(see legend on the right).
	Streams of order $1$ are not shown for visual convenience. 
	(b)--(e) Consecutive Horton prunings of the river network; 
	uses the same color code for branch orders as panel (a). 
	The channel extraction is done using {\sf RiverTools} software (\url{http://rivix.com}).
}
\label{fig:Beaver}
\end{figure}

\medskip
\noindent{\bf Tokunaga model (Sects.~\ref{sec:Tok},~\ref{sec:HBPmartingale},~\ref{sec:combHBP}).}
A notable observation inherited from the study of river networks is the Tokunaga law \cite{Tok78}.
It complements the Horton law by describing the mergers of branches 
of distinct orders. 
Informally, the Tokunaga law suggests that the average number $\bar{N}_{i,j}$, 
$i<j$, of branches of order $i$
that merge with a branch of order $j$ in a given basin is an exponential function of the 
order difference, $\ln(\bar{N}_{i,j})~\propto~j-i$.
The Tokunaga model is surprisingly powerful in approximating the observed river 
networks \cite{ZZF13} and predicting the values of multiple Horton exponents. 
Figure~\ref{fig:Beaver_a} shows how a one-parametric critical Tokunaga model $S^{\rm Tok}$ of Sect.~\ref{sec:Tok}
fits the average values of three branching statistics in the Beaver creek network. 

In this work, we show the fundamental importance of the Toeplitz constraint  
$\bar{N}_{i,j}= f(j-i)$.
We also provide a theoretical justification for the classical 
version of the Tokunaga law, which corresponds to a particular choice $\ln f(x)~\propto~x$.

\subsection{Survey structure}
\label{sec:structure}
Our primary goal is to survey the recent developments in the theory of 
random self-similar trees; yet a number of results, models, and approaches presented here are original. 
These novel results are motivated by the need to connect the dots and bridge the gaps
when presenting a unified theory from the perspective of
Horton pruning and its generalizations.
We highlight some of these original contributions below in a list of survey topics. 

\medskip
The survey begins with the main definitions and notations in Sect.~\ref{sec:defs_notations}. 
This includes the definitions of finite rooted trees and tree spaces,
and a brief overview of real trees. 
Next, {\it Horton pruning} and {\it Horton-Strahler orders} are introduced.

\medskip
Section~\ref{TSS} defines the main types of invariances for tree measures 
sought-after in this survey. 
This includes a strong, distributional, {\it Horton self-similarity} and a weaker 
{\it mean Horton self-similarity}. 
Importantly, we justify the requirement of {\it coordination}, which, together
with prune-invariance, constitutes the {\it self-similarity}
studied in this work.
Every Horton self-similar tree (either mean or distributional) is associated
with a sequence of nonnegative {\it Tokunaga coefficients} $\{T_k\}_{k\ge 1}$, which are theoretical
analogs of the empirical averages $\bar N_{i,i+k}$.
The {\it Tokunaga self-similar} trees are a two-parameter sub-family 
of the mean Horton self-similar trees, with $T_k = ac^{k-1}$.

\medskip
The {\it Horton law} for tree measures is formally defined in Sect.~\ref{sec:HLSST} in terms 
of the random counts $N_k[T]$ of branches of order $k$ in a random tree $T$.
We introduce two versions of the {\it strong Horton law}, where one is 
convergence in probability and the other is convergence of expectation ratios. 
The main result of the section (Thm.~\ref{thm:HLSST}) establishes that the 
mean Horton self-similarity implies the strong Horton law in expectation ratios,
and expresses the Horton exponent $R$ via the Tokunaga sequence $\{T_k\}$.
Subsequently, we survey computations of the entropy rate for trees that 
satisfy the strong Horton law, as a function of the Horton exponent $R$,
and for the Tokunaga self-similar trees, as a function of the 
Tokunaga parameters $(a,c)$.
This emphasizes a special role played by the critical Tokunaga self-similar trees
with $a=c-1$, and a special point $(a,c)=(1,2)$ that describes (but is not limited to) 
the critical binary Galton-Watson tree.
The section concludes with a brief discussion of the applications 
of Horton-Strahler orders and Horton laws in natural and computer sciences.

\medskip
Section~\ref{cgw} discusses the Horton law and 
Tokunaga self-similarity for the combinatorial critical binary Galton-Watson tree. 
The proofs of the strong Horton law for branch numbers (Cor.~\ref{cor:ratio4j1})  
and the Central Limit Theorem for branch numbers (Cor.~\ref{cor:CLT_Nj}) are novel, 
and emphasize the power of the pruning approach.  
We also find here the length and height of 
the critical binary Galton-Watson tree with i.i.d. 
exponential edge lengths
that is called the {\it exponential critical binary Galton-Watson tree}. 

\medskip
Section~\ref{HBP} introduces a multi-type {\it Hierarchical Branching Process (HBP)}, which
is the main model of this work.
The process trajectories are described by time oriented trees; this induces 
a probability measure on the space of planar binary trees with edge lengths.
The HBP can generate trees with an arbitrary sequence of Tokunaga coefficients $\{T_k\}$.
The combinatorial part of these trees is always mean Horton self-similar;
the measures are also (distributionally) Horton self-similar under mild 
conditions (Thm.~\ref{HBPmain}). 
A hydrodynamic limit is established (Thm.~\ref{thm:hydro}) 
that describes the averaged branch dynamics as a 
deterministic system of ordinary differential equations (ODEs). 
This system of ODEs is used to detect a phase transition that 
separates fading and explosive behavior of the average process progeny
(Thm.~\ref{wfss}).
A subclass of {\it critical Tokunaga processes} (Def.~\ref{def:TokProcess}) that 
happens at the phase transition boundary and corresponds to $T_k=(c-1)c^{k-1}$
reproduces many of the symmetries seen in the exponential critical 
binary Galton-Watson tree, including independence of edge lengths. 
The exponential critical binary Galton-Watson tree is 
a special case of the critical Tokunaga process with $c=1$. 

\begin{figure}[!t] 
\centering\includegraphics[width=.49\textwidth]{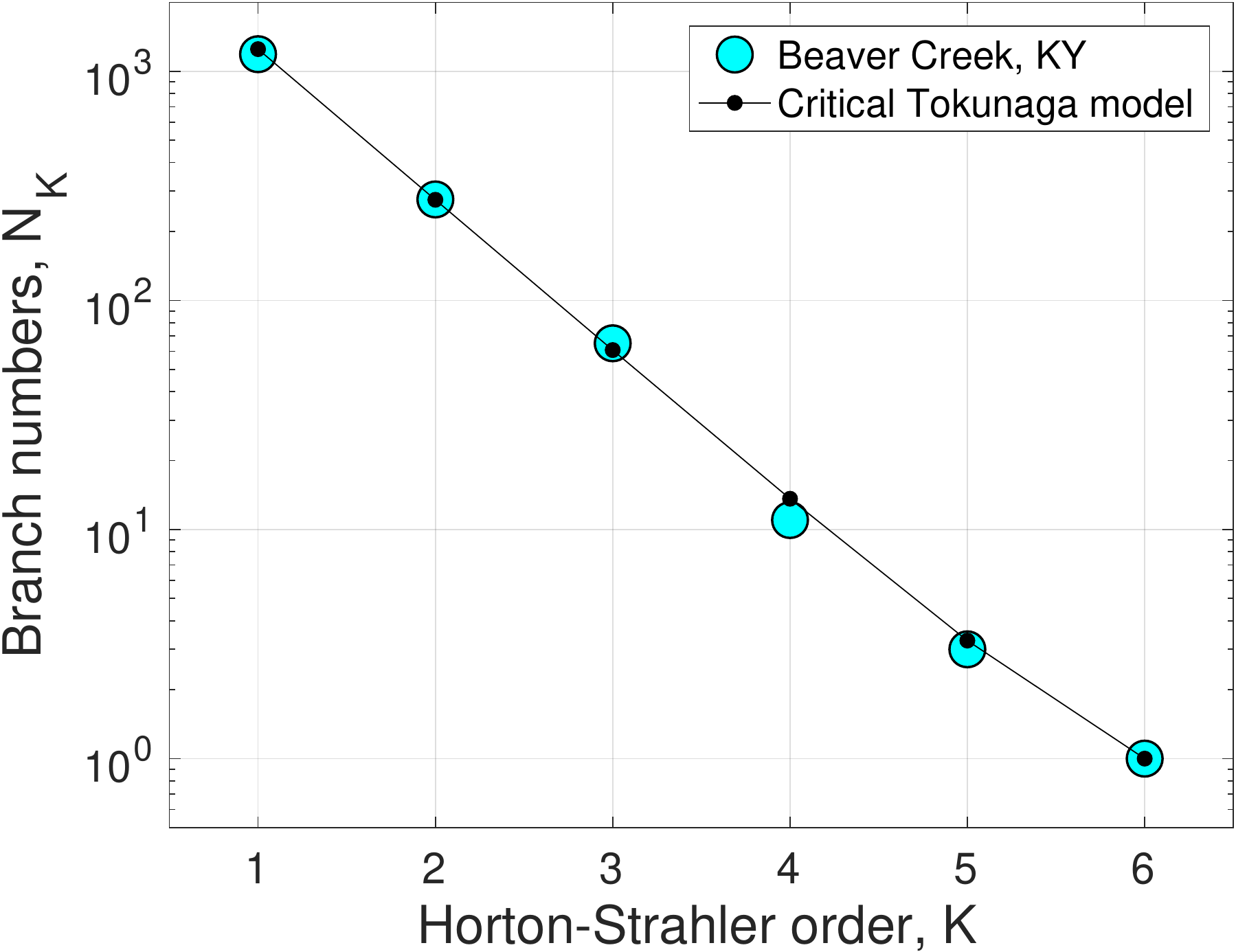}
\centering\includegraphics[width=.49\textwidth]{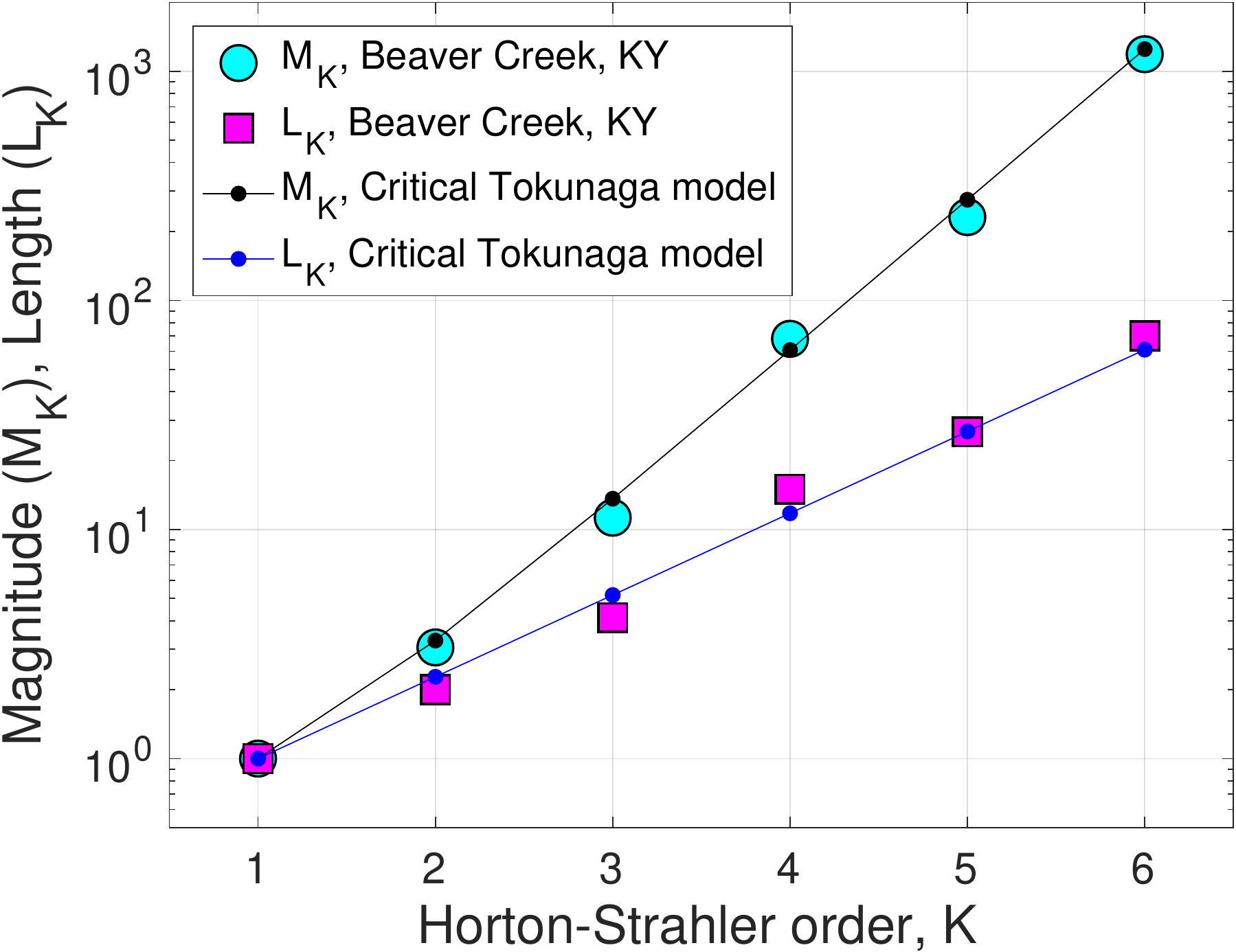}
	\caption{Horton laws in the Beaver creek network of Fig.~\ref{fig:Beaver}. 
	(a) Number $N_K$ of streams (branches) of order $K$.
	(b) Average magnitude (number of leaves) $M_K$ in a subtree of order $K$.
	Average number $L_K$ of edges in a channel (branch) of order $K$. 
	Large circles and rectangles correspond to the Beaver creek data. 
	Small dots and lines correspond to the critical Tokunaga process $S^{\rm Tok}(t;c,\gamma)$
	of Sect.~\ref{sec:Tok} with $c=2.275$, $R=2c=4.55$ 
	($\gamma$ is arbitrary, as it corresponds to metric tree properties not used in this analysis).
}
\label{fig:Beaver_a}
\end{figure}

The results in Sect. \ref{sec:HBPmartingale} are original. 
We introduce a Markov tree-valued process that generates the critical Tokunaga trees. 
We find a two-dimensional martingale with respect to the filtration of this Markov tree process 
and use Doob's Martingale Convergence Theorem for establishing the  
strong Horton law for the branch numbers (Thm.~\ref{thm:DoobMartingaleSHL}, Cor.~\ref{cor:DoobMartingaleSHL}).

The {\it Geometric Branching Process} that describes the combinatorial part of a Horton self-similar HBP is examined in Sect.~\ref{sec:combHBP}.
We show, in particular, that invariance of this process with respect to the unit time shift
is equivalent to a one-dimensional version, $a=c-1$, 
of the  Tokunaga constraint $T_k=ac^{k-1}$ (Thm.~\ref{thm:mainGBM}).
This provides an independent justification for studying the critical Tokunaga process.
We show that the complete non-empty descendant subtrees in a 
combinatorial critical Tokunaga tree have the same distribution, and two non-overlapping
trees are independent if and only if the process is critical binary 
Galton-Watson (Cor.~\ref{cor1}).
Moreover, the empirical frequencies of edge/vertex orders in a 
large random critical Tokunaga tree approximate the order distribution in the 
respective space of trees (Props.~\ref{prop:Tok1},~\ref{prop:Tok2}). 
This property is convenient for applied statistical analysis, where one might only 
be able to examine a handful of (large) trees.

\medskip
Section~\ref{LST} extends the Horton self-similarity results to time series
via tree representation of continuous functions, a construction that goes 
back to Menger \cite{Menger}, Kronrod \cite{Kronrod} and the celebrated 
Kolmogorov-Arnold representation theorem \cite{Arnold1959, Vit04}.
The {\it level set tree} for a continuous function is defined following the well known 
pseudo-metric approach \eqref{eqn:tree_dist} \cite{AldousI,AldousIII,LeGall93,NP,DLG02,Pitman}. 
We emphasize the connection of this construction with the Rising Sun Lemma (Lem.~\ref{sun}) of 
F.\,Riesz \cite{Riesz}.
Proposition~\ref{ts_prune} reveals equivalence between the Horton pruning and transition
to the local extrema of a function.
This allows us to interpret the Horton self-similarity for level set trees
as the existence of a time series whose distribution is
invariant under transition to local extrema; see \eqref{eqn:d_inv}.  
An example of such an extreme-invariant process is given by the symmetric exponential random walk
of Sect.~\ref{sec:erw}.

The results in Sect. \ref{sec:cgw} are novel; they refer to  
the level set tree $T$ of a positive excursion of a symmetric homogeneous 
random walk $\{X_k\}_{k\in\mathbb{Z}}$ on $\mathbb{R}$.
The main result of this section (Thm.~\ref{thm:excursion-tree-main}) 
shows that the combinatorial shape of $T$ is distributed as the critical 
binary Galton-Watson tree, for any choice of the transition kernel for $\{X_k\}$. 
We also show (Lem.~\ref{lem:shapeVSexcessvalue})
that $T$ has identically distributed edge lengths if and only if 
the transition kernel of $\{X_k\}$ is the probability density function of the Laplace distribution.
The results of this section complement Thm.~\ref{Pit7_3}, a classical result 
on Galton-Watson representation of the level set tree
of an exponential excursion, that can be found in \cite[Lemma 7.3]{Pitman} and \cite{LeGall93,NP}. 

Section~\ref{sec:white_Kingman} demonstrates a close connection between the level set tree of 
a sequence of i.i.d. random variables (discrete white noise) and the tree of the Kingman's
coalescent process. 
The two trees are separated by a single Horton pruning (Thm.~\ref{main2}). 

Section~\ref{sec:morse} expands the level set tree construction to 
a Morse function defined on a multidimensional compact differentiable manifold.
The key results from the Morse theory \cite{MilnorEtAl,Nicolaescu,Carmo} are used to 
describe the tree structure (Cor.~\ref{cor:KovMilnor}, Lem.~\ref{lem:KovMorse}).

\medskip
Section~\ref{sec:Kingman} establishes a weak form of Horton law for a 
tree representation of Kingman's coalescent process (Thm.~\ref{Mthm}). 
The proof is based on a Smoluchowski-type system of Smoluchowski-Horton ODEs \eqref{Aeta}
that describes evolution of the number of branches 
of a given Horton-Strahler order in a tree that represents Kingman's $N$-coalescent, 
in a hydrodynamic limit. 
Section~\ref{sec:hydro} uses T.~Kurtz's weak convergence results for density
dependent population processes (Appendix \ref{sec:kurtz})
to give a new, shorter than the original \cite{KZ17ahp}, derivation of the hydrodynamic limit.
We present two alternative, more concise, versions of the Smoluchowski-Horton ODEs in
\eqref{eqn:ODEg} and \eqref{eqn:ODEh}, and
use them to find a close numerical approximation to the Horton exponent 
in the Kingman's coalescent: $R = 3.0438279\dots$. 
This exponent also applies to the level set tree of a discrete white noise, via 
the equivalence of Thm.~\ref{main2} in Sect.~\ref{sec:white_Kingman}.

\medskip
Section~\ref{sec:pruning} introduces the {\it generalized dynamical pruning} \eqref{eqn:GenDynamPruning}.
This operation erases consecutively larger parts of a tree $T$, starting from the leaves and going down towards the root, 
according to a monotone nondecreasing pruning function $\varphi$ along the tree. 
The generalized dynamical pruning encompasses a number of discrete and continuous pruning operations, 
notably including the tree erasure of Jacques Neveu \cite{Neveu86} 
(Sect.~\ref{ex:height}) and Horton pruning (Sect.~\ref{ex:H}).
Important for our discussion, it generically includes erasures that do not
satisfy the semigroup property (Sects.~\ref{ex:L}, \ref{ex:numL}).
Theorem~\ref{thm:main} establishes prune invariance (Def.~\ref{def:pi2}) of the 
exponential critical binary Galton-Watson tree with respect to a generalized dynamical pruning 
with an arbitrary admissible pruning function $\varphi$.
The scaling exponents (Def.~\ref{def:pi2}(ii)) that describe such pruning 
for the function $\varphi$ being
the tree length, tree height, or Horton-Starhler order are found in Thm.~\ref{pdelta}. 

\medskip
As an illuminating application of the generalized dynamical pruning, 
Sect.~\ref{sec:annihilation} examines the continuum 1-D ballistic annihilation model 
$A+A \rightarrow \zeroslash$ for a constant initial particle density and
initial velocity that alternates between the values of $\pm 1$. 
The model dynamics creates coalescing shock waves, similar to those that appear in Hamilton-Jacobi
equations \cite{BK07}, that have tree structure. 
We show (Cor.~\ref{cor:V} of Thm.~\ref{thm:SWT}) that the shock tree is isometric 
to the level set tree of 
the initial potential (integral of velocity), and 
the model evolution is equivalent to a generalized dynamical pruning of 
the shock tree, with the pruning function equal to the total tree length 
(Thm.~\ref{thm:pruning}). 
This equivalence allows us to construct a complete probabilistic description of the 
annihilation dynamics for the initial velocity that alternates between the values of $\pm 1$ 
at the epochs of a constant rate Poisson point process 
(Thms.~\ref{thm:annihilation}, \ref{thm:rand_mass}, \ref{thm:mass}).
A real tree representation of the continuum ballistic annihilation is presented 
in Sect.~\ref{sec:Rtree}.

\medskip
Section~\ref{sec:infiniteT} is novel. 
Here we construct an infinite level set tree, built from leaves down,
 for a time series $\{X_k\}_{k\in\mathbb{Z}}$.
This gives a fresh perspective on multiple earlier results; 
e.g., those concerning the level set trees of random walks (Sect. \ref{sec:erw}), 
the generalized dynamical pruning (Sect. \ref{sec:PI}), or
the evolution of an infinite exponential potential in the 
continuum annihilation model (Sect. \ref{sec:rand_mass}).
For instance, the infinite-tree version of prune-invariance for the exponential
Galton-Watson tree (Thm.~\ref{thm:infmain}) can be established in a much simpler way than
its finite counterpart (Thm.~\ref{thm:main}).
Although this natural perspective has always influenced 
our research, this is the first time it is presented in explicit form.

\medskip
The survey concludes with a short list of open problems (Sect.~\ref{open}).

\medskip
Many concepts used in this survey are overlapping with the recent expositions 
on random trees, branching and coalescent processes by Aldous \cite{AldousI,AldousIII,Aldous}, 
Berestycki \cite{Berestycki}, Bertoin \cite{Bertoin}, Drmota \cite{Drmota_book}, 
Duquesne and LeGall \cite{DLG02},
Evans \cite{Evans2005}, Le Gall \cite{LeGall_book}, Lyons and Peres \cite{LP2017},
and Pitman \cite{Pitman}.
We expect that the perspectives displayed in the present survey will with time connect 
and intertwine with better established topics in the theory of random trees.

\section{Definitions and notations}\label{sec:defs_notations}

\subsection{Spaces of finite rooted trees}
A connected acyclic graph is called a {\it tree}\index{tree}.
Consider the space $\cT$ of finite unlabeled rooted reduced trees with no planar embedding.
The (combinatorial) distance between a pair of tree vertices is the number of edges in
a shortest path between them.
A tree is called {\it rooted}\index{tree!rooted} if one of its vertices, denoted by $\rho$, 
is selected as the tree root.
The existence of root imposes a parent-offspring relation between each pair of adjacent vertices: 
the one closest to the root is called the {\it parent}, 
and the other the {\it offspring}\index{vertex!parental}\index{vertex!offspring}. 
The space $\cT$ includes the {\it empty tree}\index{tree!empty} 
$\phi$ comprised of a root vertex and no edges.
The absence of planar embedding\index{tree!planar embedding} in this context is the absence of order 
among the offspring of the same parent.  
The tree root is the only vertex that does not have a parent.
We write $\#T$ for the number of non-root vertices, equal to the number of edges, in a tree $T$\index{tree!size}.
Hence, a finite tree $T=\rho\cup\{v_i,e_i\}_{1\le i \le \#T}$ is comprised of the root $\rho$
and a collection of non-root vertices $v_i$, each of which is connected to its 
unique parent ${\sf parent}(v_i)$ by the parental edge $e_i$, $1\le i \le \#T$.
Unless indicated otherwise, the vertices are indexed in order of depth-first search, starting from the root.
A tree is called {\it reduced}\index{tree!reduced} if it has no vertices of degree $2$, with the root as the only possible exception.
 
The space of trees from $\cT$ with positive edge lengths is denoted by $\cL$.
The trees in $\cL$, also known as {\it weighted tree} \cite{Pitman, LP2017}, 
can be considered metric spaces\index{tree!weighted}\index{tree!with edge lengths}.
Specifically, the trees from $\cL$ are isometric to one-dimensional
connected sets comprised of a finite number of line segments that can share
end points.
The distance along tree paths is defined according to the Lebesgue measure
on the edges. 
Each such tree can be embedded into $\mathbb{R}^2$ without creating additional 
edge intersections (see Fig.~\ref{fig:embed}).
Such a two-dimensional pictorial representation serves as the best intuitive model
for the trees discussed in this work. 

\begin{figure}[t] 
\centering\includegraphics[width=\textwidth]{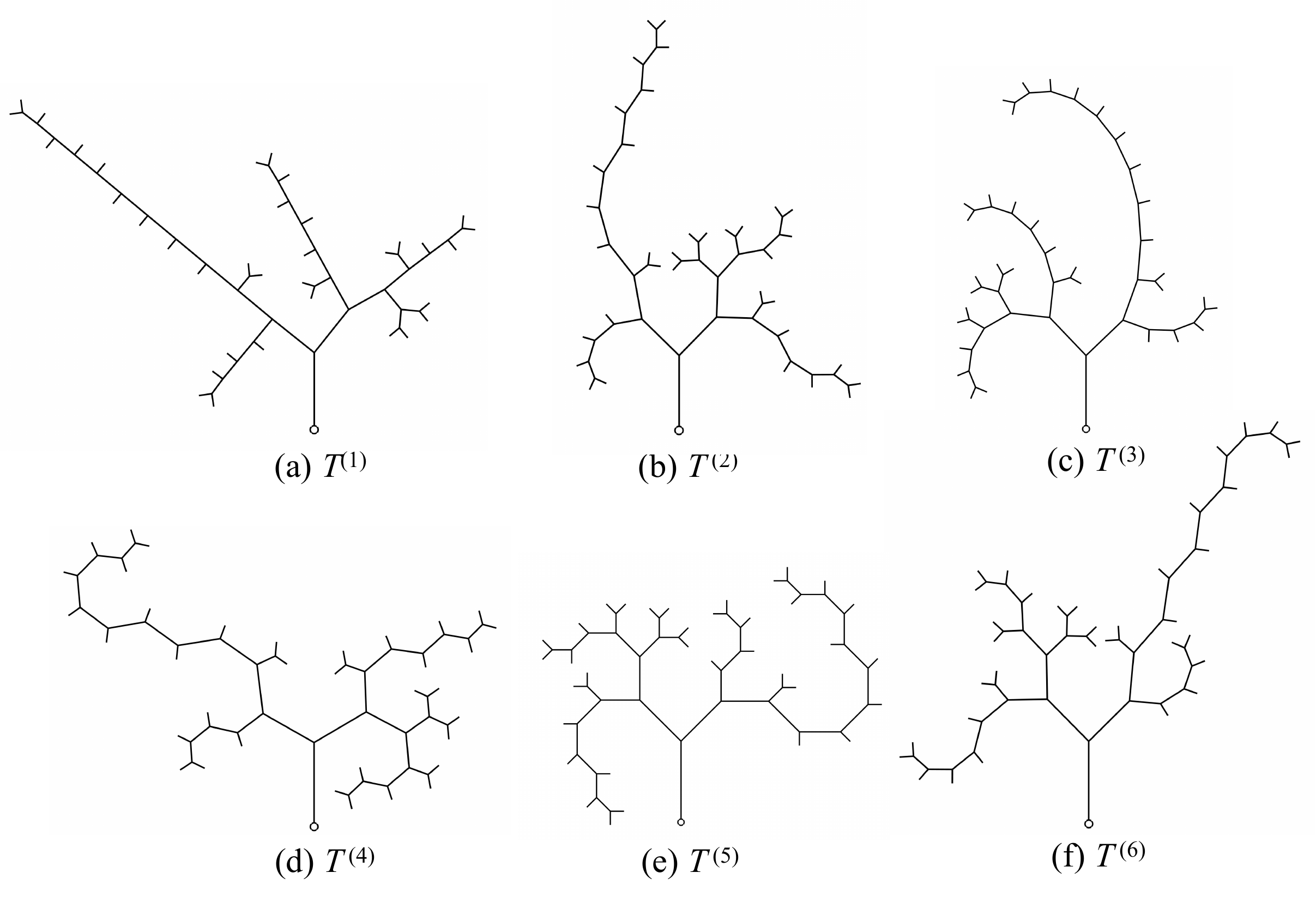}
\caption[Planar embedding: an illustration]
{Examples of alternative planar embeddings $T^{(i)}\in\L$, $i=1,\dots,6$ of the same tree $T\in\cL$,
so that $\textsc{shape}(T^{(i)})=T$.
Different panels correspond to different (random) ordering of offspring of the same parent,
and to different drawing styles. }
\label{fig:embed}
\end{figure}

We write $\T$ and $\L$ for the spaces of trees from $\cT$ and $\cL$ with planar embedding,
respectively.
Any tree from $\cT$ or $\cL$ can be embedded in a plane by selecting an order for the offsprings of the same parent. 
Choosing different embeddings for the same tree $T\in\cT$ (or $\cL$) leads, in general, to different trees from $\T$ (or $\cL_{\rm plane}$). 
Figure~\ref{fig:embed} illustrates alternative planar embeddings of a tree $T\in\cL$.
Planar embedding (offspring order) should not be confused with drawing style,
related to how edges are represented in a plane. 
Each panel in Fig.~\ref{fig:embed} uses a separate drawing style.

Sometimes we focus on the combinatorial tree ${\textsc{shape}(T)}$\index{tree!combinatorial}, 
which retains the combinatorial structure of $T \in \cL$ (or $\L$) while omitting its edge lengths and embedding. 
Similarly,  the combinatorial tree ${\textsc{p-shape}(T)}$ retains the combinatorial structure of $T \in \L$ and planar embedding,
and omits the edge length information.
Here $\textsc{shape}$ is a projection from $\cL$ or $\L$ to $\cT$, and $\textsc{p-shape}$ is a projection from $\L$ to $\T$.

A non-empty rooted tree is called {\it planted} if its root has degree $1$;
in this case the only edge connected to the root is called the 
{\it stem}\index{tree!planted}\index{stem}.
Otherwise the root has degree $\geq 2$ and a tree is called {\it stemless}\index{tree!stemless}.
We denote by $\cL^{|}$ and $\cL^{\vee}$ the subspaces of $\cL$ consisting of planted and stemless
trees, respectively. Hence $\cL=\cL^{|}\cup\cL^{\vee}$. 
Also, we let the empty tree $\phi$ to be contained in each of the spaces. 
Therefore, $\cL^{|}\cap\cL^{\vee}=\{\phi\}$. 
Similarly, we write $\L^{|}$ and $\L^{\vee}$ for the subspaces of $\L$ consisting of planted and stemless trees, respectively. 
Clearly, $\L=\L^{|}\cup\L^{\vee}$ and $\L^{|}\cap\L^{\vee}=\{\phi\}$.
Fig.~\ref{fig:planted} shows examples of a planted and a stemless tree.

For any space $\cS$ from the list $\{\cT,\T,\cL,\L\}$ we write 
$\cB\cS$ for the respective subspace of {\it binary} trees\index{tree!binary}, 
$\cS^{|}$ for the subspace of {\it planted} trees in $\cS$ including $\phi$, 
and $\mathcal{S}^{\vee}$ for the subspace of {\it stemless} trees in $\cS$ including $\phi$.
We also consider subspaces $\cB\cS^{|}=\cS^|\cap\cB\cS$ of {\it planted binary} trees 
and $\cB\mathcal{S}^{\vee}=\cS^{\vee}\cap\cB\cS$ of {\it stemless binary} trees.

\begin{figure}[t] 
\centering\includegraphics[width=0.7\textwidth]{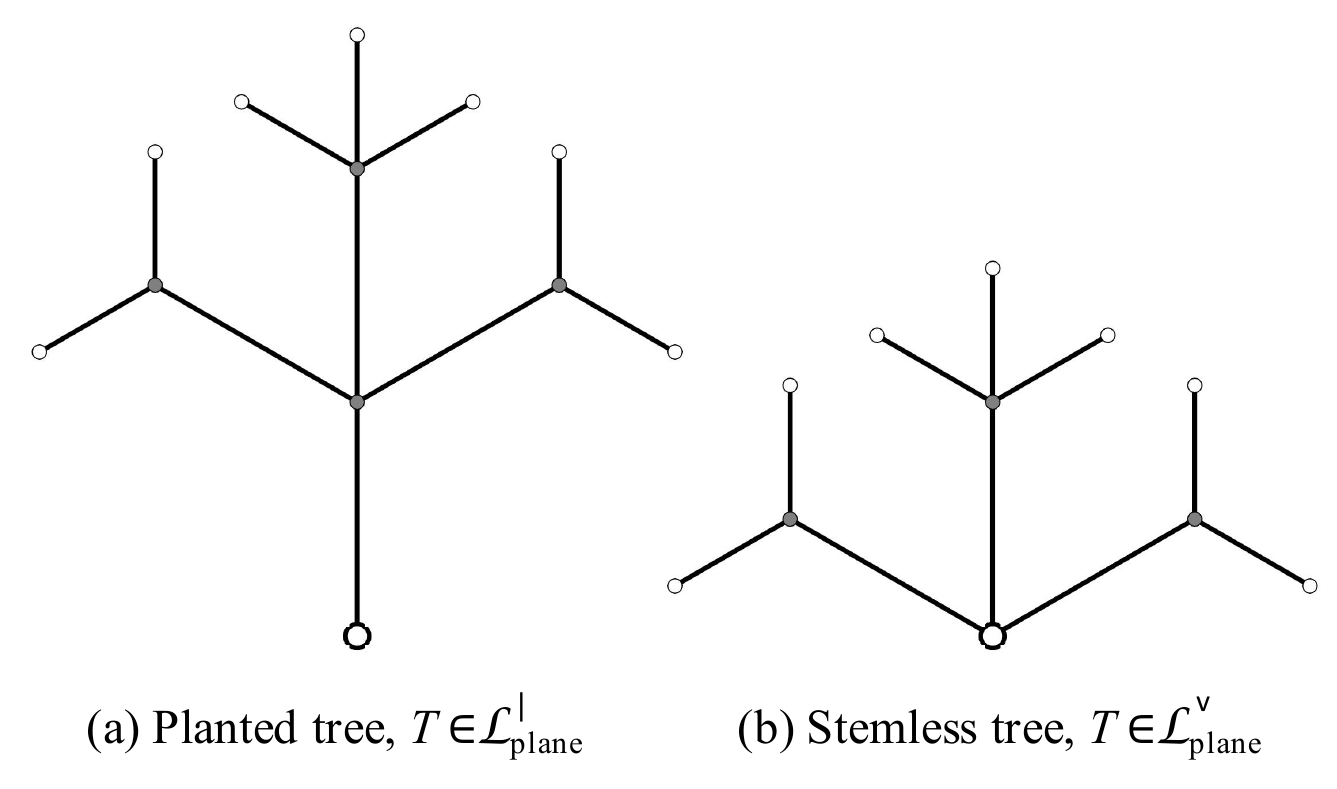}
\caption[Planted and stemless trees.]
{\small Examples of planted (a) and stemless (b) trees.
The combinatorial structure of both trees is the same, except the existence or absence of a stem.
Internal vertices are marked by gray circles. Leaves are marked by small empty circles. 
Root is marked by large empty circle.}
\label{fig:planted}
\end{figure}

\medskip
\noindent
Let $l_T=(l_1,\dots,l_{\#T})$ with $l_i>0$ be the vector of edge 
lengths of a tree $T \in \cL$ (or $\L$).
The length of a tree $T$ is the sum of the lengths of its edges: \index{tree!length}
\[\textsc{length}(T) = \sum_{i=1}^{\#T} l_i.\]
The height of a tree $T$ is the maximal distance between the root
and a vertex: \index{tree!height}
\[\textsc{height}(T) = \max_{1\le i \le \#T} d(v_i,\rho).\]

\subsection{Real trees}
\label{sec:Rsetup}

It is often natural to consider metric trees with structures more complicated
than that allowed by finite spaces $\cL$ and $\L$.
In such cases, we use the following general definition. 
\begin{Def}[{{\bf Metric tree \cite[Sect.~7]{Pitman}}}]
\label{def:treeL}
A metric space $(M,d)$ is called a {\it tree} if for each choice of $u,v\in M$
there is a unique continuous path $\sigma_{u,v}:[0,d(u,v)]\to M$ that travels
from $u$ to $v$ at unit speed, and for any simple continuous path $F:[0,L]\to M$ 
with $F(0)=u$ and $F(L)=v$, the ranges of $F$ and $\sigma_{u,v}$ coincide.\index{tree!metric}
\end{Def}

\noindent 
As an example of a metric tree that does not belong to $\L$, 
consider a unit disk in the complex plane $M= \{z\in\mathbb{C}:|z|\le 1\}$ 
and connect each point $z\in M$ to the origin ${\bf 0}$ by a linear segment $[z,{\bf 0}]$. 
Distances between points are computed in a usual way, but only along such segments.
This is a tree whose (uncountable) set of leaves coincides with the unit circle $\{|z|=1\}$.
We refer to a book of Steve Evans \cite{Evans2005} for a comprehensive discussion and further examples. 
Sects.~\ref{LST},\ref{sec:annihilation} of the present survey examine several natural
constructions of a metric $d$ on an $n$-dimensional manifold $M$ with $n\ge 1$,
such that $(M,d)$ becomes a (one-dimensional) tree according to Def.~\ref{def:treeL}.

\begin{figure}[h]
	\centering
	\includegraphics[width=0.7 \textwidth]{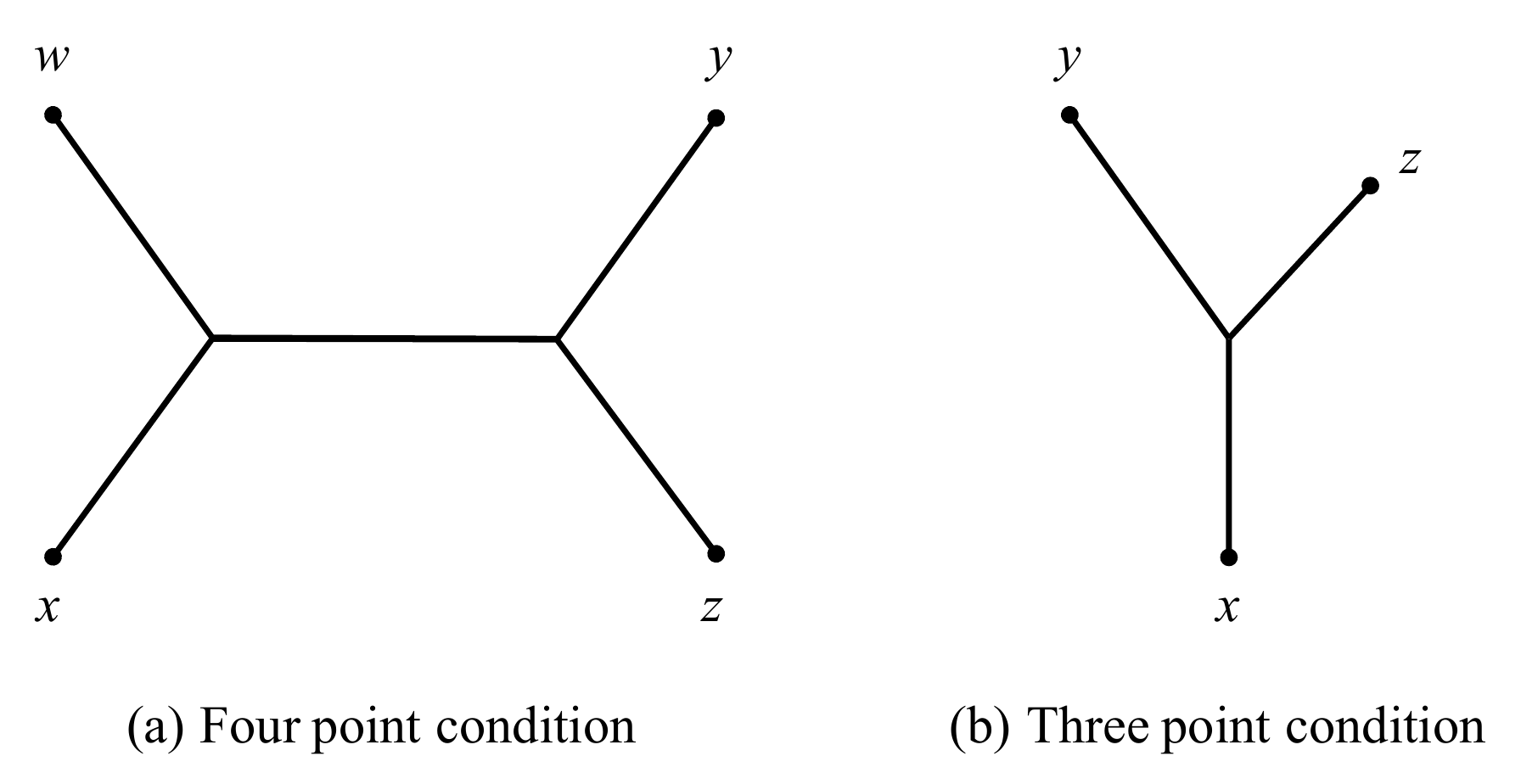}
	\caption{Equivalent conditions for $0$-hyperbolicity of a metric space $(M,d)$.
	(a) Four point condition: any quadruple $w,x,y,z\in M$ is geodesically connected as shown in the figure. 
	This configuration is algebraically expressed in Eq.~\eqref{four}.
	(b) Three point condition: any triplet $x,y,z\in X$ is geodesically connected as shown in the figure.
	There is no algebraic equivalent of the three point condition in terms of the 
	lengths of the shown segments.}
	\label{fig:hyp}
\end{figure}

Consider a metric tree $T=(M,d)$.
For any two points $x,y\in M$, we define a {\it segment}\index{segment} 
$[x,y]\subset M$ to be the image of the 
unique path $\sigma_{x,y}$ of the above definition. 
We call a point $y\in M$ a {\it descendant}\index{descendant point} 
of $x\in M$ if the path $[\rho,y]$ includes $x$.
Equivalently, removing $x$ from the tree separates its descendants from the root. 
To lighten the notations, we conventionally say $x\in T$ to indicate that point $x\in M$
belongs to tree $T$.

Metric trees benefit from an alternative characterization.
Recall that a metric space $(M,d)$ is called $0$-hyperbolic, if any quadruple
$w,x,y,z\in M$ satisfies the following {\it four point condition} \cite[Lemma 3.12]{Evans2005}:
\index{four point condition}
\be
\label{four}
d(w,x)+d(y,z)\le\max\{d(w,y)+d(x,z),d(x,y)+d(w,z)\}.
\ee
The four point condition is an algebraic description of an intuitive geometric constraint
on geodesic connectivity of quadruples that is shown in Fig.~\ref{fig:hyp}(a).
An equivalent way to define $0$-hyperbolicity is the three point condition
illustrated in Fig.~\ref{fig:hyp}(b).
It is readily seen that the four point condition is satisfied by any finite tree with 
edge lengths (considered as a metric space).
In general, a connected and $0$-hyperbolic metric space is called a {\it real tree}, 
\index{tree!real}or $\mathbb{R}$-tree \cite[Theorem 3.40]{Evans2005}.
Similarly to the case of finite trees, we say that a point $p\in T$ is an {\it ancestor} of
point $q\in T$ if the segment with endpoints $q$ and $\rho$ includes $p$: 
$p\in [p,\rho]\subset T$.
In this case, the point $q$ is called a {\it descendant} of point $p$. 
We denote by $\Delta_{p,T}$ the descendant tree at point $p$, that is the set 
of all descendants of point $p\in T$, including $p$ as the tree root.
The set of all descendant leaves of point $p$ is denoted by $\Delta^{\circ}_{p,T}$.
We use real trees in Sect.~\ref{sec:annihilation} to represent the dynamics of 
a continuum ballistic annihilation model. 


\subsection{Horton pruning}\label{pruning}

The concepts of Horton pruning and self-similarity under Horton pruning were originally developed for combinatorial binary trees $T\in\cBT$ 
\cite{Pec95,BWW00,ZK12, KZ16}. 
Here we provide a general definition of Horton pruning and Horton-Strahler orders for trees in $\cT$, their planar embeddings $\T$, and trees with edge lengths from $\cL$ and $\L$.
Horton pruning is illustrated in Fig.~\ref{fig:HST}.

\begin{Def}[{{\bf Series reduction}}]
The operation of {\it series reduction}\index{series reduction} on a rooted tree (with or without edge lengths, plane or not) removes each degree-two 
non-root vertex by merging its adjacent edges into one. 
For trees with edge lengths it adds the lengths of the two merging edges.  
The series reduction does not affect the left/right orientation in the planar trees. 
\end{Def}
\noindent
Thus, the series reduction is a mapping from the space of rooted trees  (with or without edge lengths, plane or not) to the corresponding space
of reduced rooted trees, which can be either $\cT,\T,\cL,$ or $\L$. Hence the term {\it reduced} in the definition of these spaces.

\begin{Def}[{{\bf Horton pruning}}]
\label{def:Horton_prune}
Horton pruning $\cR$ on either of the spaces $\cT,\T,\cL,$ or $\L$ is an onto 
function whose value $\cR(T)$ for a tree $T\ne\phi$ is obtained by removing
the leaves and their parental edges from $T$, followed by series reduction.
We also set $\cR(\phi)=\phi$.\index{Horton pruning}  
\end{Def}

Horton pruning induces a map on the underlying space of trees (Fig.~\ref{fig:HST}). 
The trajectory of each tree $T$ under $\cR(\cdot)$ is uniquely
determined and finite:
\be\label{TRR0}
T\equiv\cR^0(T)\to \cR^1(T) \to\dots\to\cR^k(T)=\phi,
\ee
with the empty tree $\phi$ as the (only) fixed point.
The pre-image $\cR^{-1}(T)$ of any non-empty tree $T$ consists of an infinite 
collection of trees.

\subsection{Horton-Strahler orders}
\label{sec:HS}
It is natural to think of the distance to $\phi$ under the Horton pruning map
and introduce the respective notion of tree order \cite{Horton45,Strahler}
(see Fig.~\ref{fig:HST}).

\begin{Def}[{{\bf Horton-Strahler order}}]\label{def:HSorder}
The Horton-Strahler order\index{Horton-Strahler order} 
${\sf ord}(T)\in\mathbb{Z}_+$ of a tree $T\in\cT$ ($\T,\cL,\L$) is defined as
the minimal number of Horton prunings necessary to eliminate the tree:
\[{\sf ord}(T)=\min\left\{k\ge 0 ~:~\cR^k(T)=\phi\right\}.\]
\end{Def}

\noindent 
In particular, the order of the empty tree is ${\sf ord}(\phi)=0$,
because $\cR^0(\phi)=\phi$.
Most of our discussion will be focused on non-empty trees with orders ${\sf ord}(T)>0$. 
We will often consider measures on tree spaces that assign probability zero to the empty tree $\phi$.

\begin{figure}[t] 
\centering\includegraphics[width=0.9\textwidth]{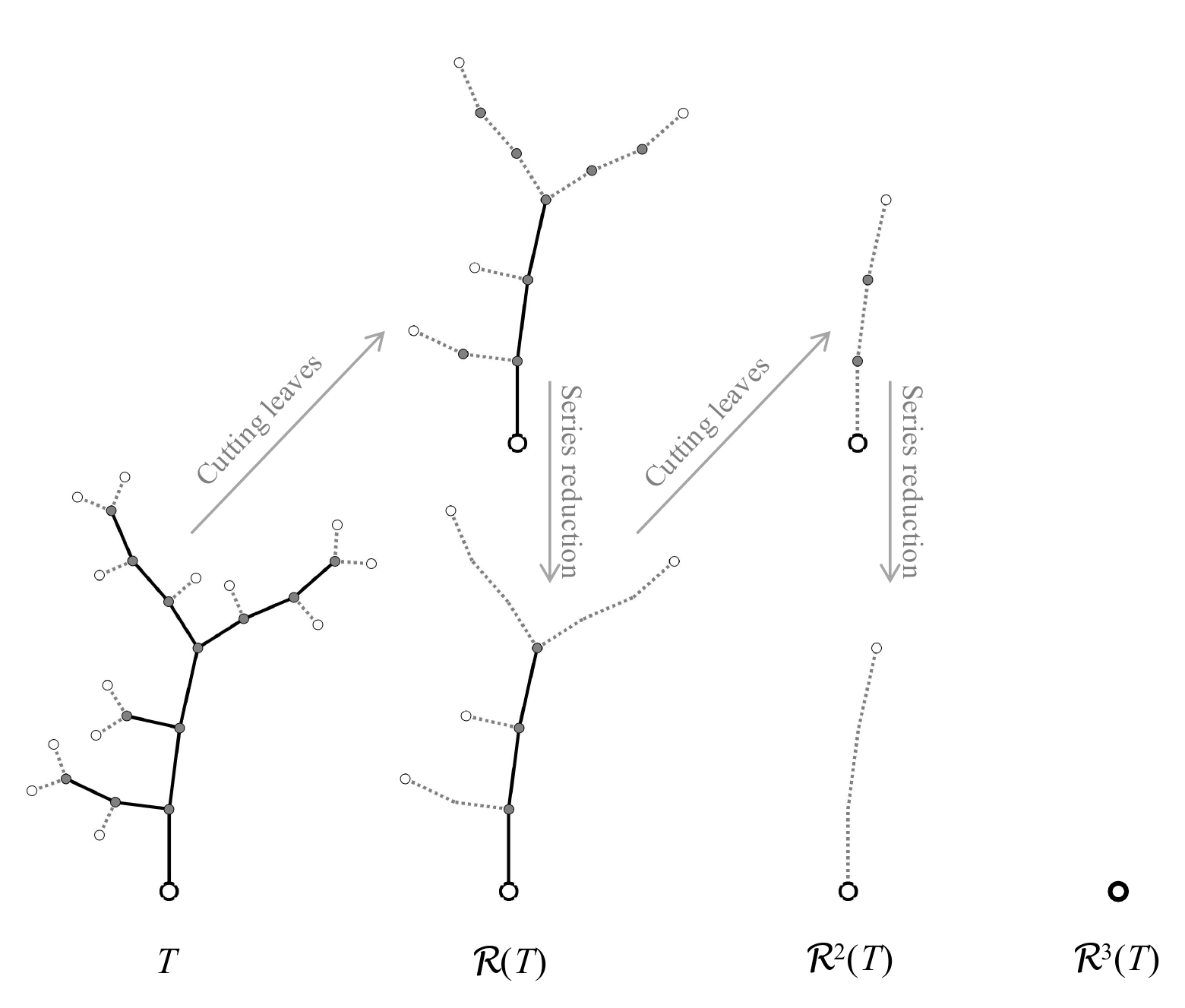}
\caption[Horton-Strahler indexing: an illustration]
{Example of Horton pruning and Horton-Strahler ordering for a tree $T\in\cBL_{\rm plane}$.
The figure shows the two stages of Horton pruning operation --
cutting the leaves (top row), and consecutive series reduction (bottom row).
The initial tree $T$ is shown in the leftmost position of the bottom row.
The edges pruned at the current step are shown by dashed gray lines.
The order of the tree is ${\sf ord}(T)=3$, since it is eliminated in three Horton prunings, $\cR^3(T)=\phi$.}
\label{fig:HST}
\end{figure}

Horton pruning partitions the underlying tree space into exhaustive and mutually exclusive collection 
of subspaces $\cH_K$ of 
trees of Horton-Strahler order $K\ge 0$ such that $\cR(\cH_{K+1})=\cH_K$.
Here $\cH_0=\{\phi\}$, $\cH_1$ consists of a single tree comprised of a root and a leaf
descendant to the root, and all other subspaces $\cH_K$, $K\ge 2$, consist of 
an infinite number of trees.
In particular, the tree size in these subspaces is unbounded from above:
for any $M>0$ and any $K\ge 2$, there exists a tree $T\in\cH_K$ such that $\#T>M$.
At the same time, the definition of Horton-Strahler orders implies, for any $K\ge 2$,
$\{\#T\big| T\in\cH_K\}\ge 2^{K-1}.$

\begin{figure}[h] 
\centering\includegraphics[width=\textwidth]{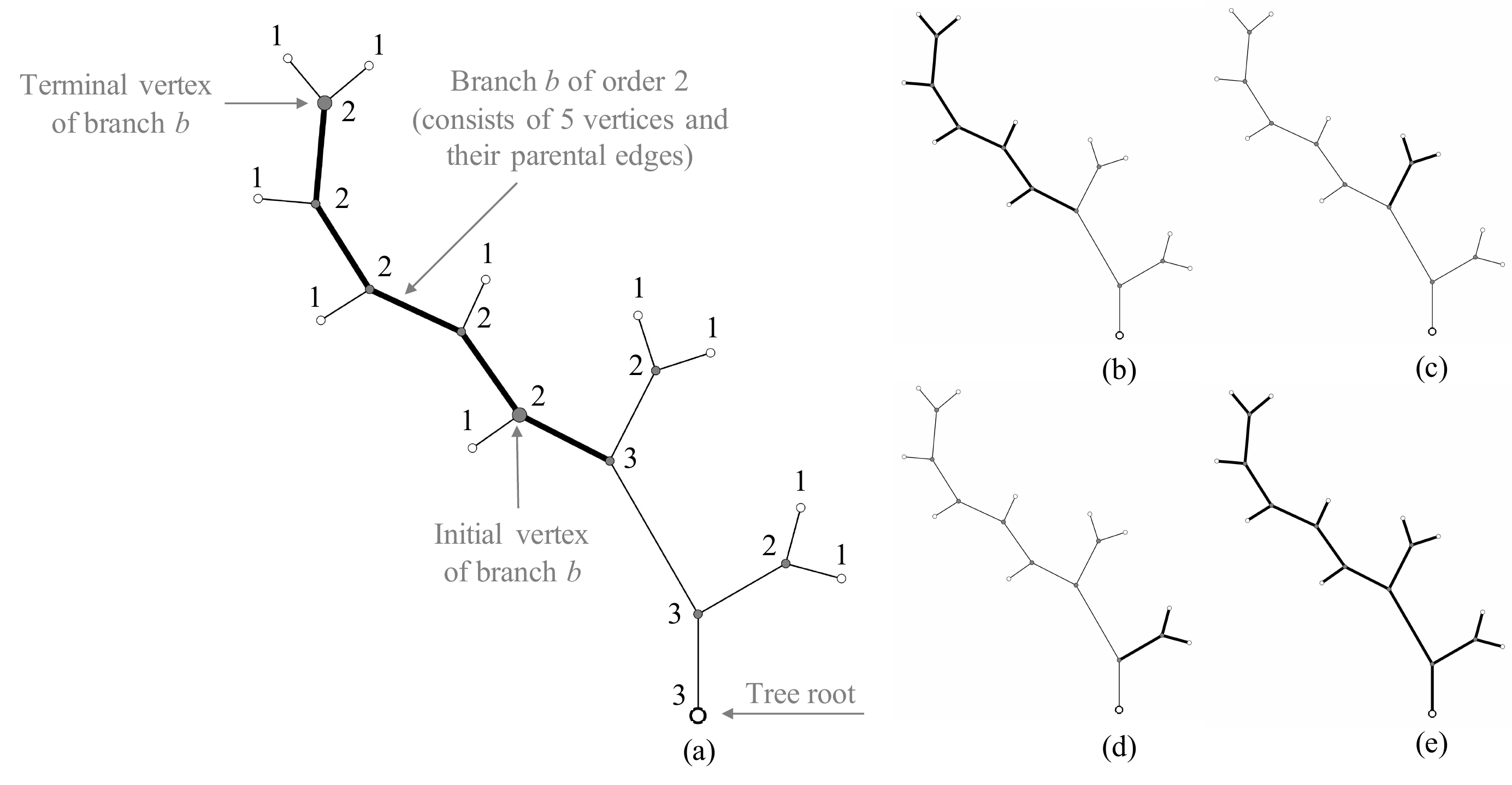}
\caption[Horton-Strahler terminology]
{Illustration of the Horton-Strahler terminology (Def.~\ref{def:HS}) in a 
tree $T \in \BL^|$ of order ${\sf ord}(T)=3$.
(a) Tree root, branch, initial and terminal vertex of a branch.
The numbers indicate the Horton-Strahler orders of the vertices
and their parental edges. 
The panel illustrates a branch of order 2, shown in bold.
Here $N_1=10$, $N_2=7$, $N_3=3$, $N_{1,2}=4$,
$N_{1,3}=0$, and $N_{2,3}=1$ (see Sect.~\ref{sec:mss}).
(b),(c),(d) Complete subtrees of order 2.
(e) Complete subtree of order 3 (coincides with the tree $T$).}
\label{fig:terminology}
\end{figure}

\medskip
\begin{Def}[{{\bf Horton-Strahler terminology}}]
\label{def:HS}
We introduce the following definitions related to the Horton-Strahler order
of a tree (see Fig.~\ref{fig:terminology}):
\begin{enumerate}
\item
{\bf (Subtree at a vertex)} For any non-root vertex $v$ in $T \ne \phi$, 
a {\it subtree}\index{subtree!combinatorial} $T_v\subset T$ 
is the only planted subtree in $T$ rooted at the parental vertex ${\sf parent}(v)$ of $v$, and comprised by $v$ and all 
its descendant vertices together
with their parental edges.
\item
{\bf (Vertex order)} For any vertex $v\in T\setminus\{\rho\}$
we set ${\sf ord}(v) = {\sf ord}(T_v)$ (Fig.~\ref{fig:terminology}a).
We also set ${\sf ord}(\rho)={\sf ord}(T)$.

\item 
{\bf (Edge order)} The parental edge of a non-root vertex has the same order as the vertex.

\item
{\bf (Branch)} A maximal connected component consisting of vertices and edges of the same order 
 is called a {\it branch}\index{branch} (Fig.~\ref{fig:terminology}a).
Note that a tree $T$ always has a single branch of the maximal order ${\sf ord}(T)$.
In a stemless tree, the maximal order branch may consist of a single root vertex. 

\item
{\bf (Initial and terminal vertex of a branch)} 
The branch vertex closest to the root is called the {\it initial vertex} 
of the branch\index{branch!initial vertex}.
The branch vertex farthest from the root is called 
the {\it terminal vertex} of a branch\index{branch!terminal vertex}.
See Fig.~\ref{fig:terminology}a.
\item
{\bf (Complete subtree of a given order)} 
Consider a connected component of tree $T$ that has been completely removed in $K$ pruning
operations (but has not been completely removed in $K-1$ prunings). 
This connected component together with the vertex used to connect it to the rest of the tree is a subtree of $T$
that will  be called a {\it complete subtree of order} 
$K$\index{subtree!complete subtree of a given order}. 

We observe that each subtree $T_v$ at the initial vertex $v$ of a branch of order $K\le {\sf ord}(T)$
is a complete subtree of order $K$, and vice versa (Fig.~\ref{fig:terminology}b-d).
A complete subtree of order ${\sf ord}(T)$ coincides with $T$ (Fig.~\ref{fig:terminology}e).
All subtrees of order ${\sf ord}=1$ are complete (and consist of a single leaf and its 
parental edge).
\end{enumerate} 
\end{Def}

\begin{figure}[t] 
\centering\includegraphics[width=0.9\textwidth]{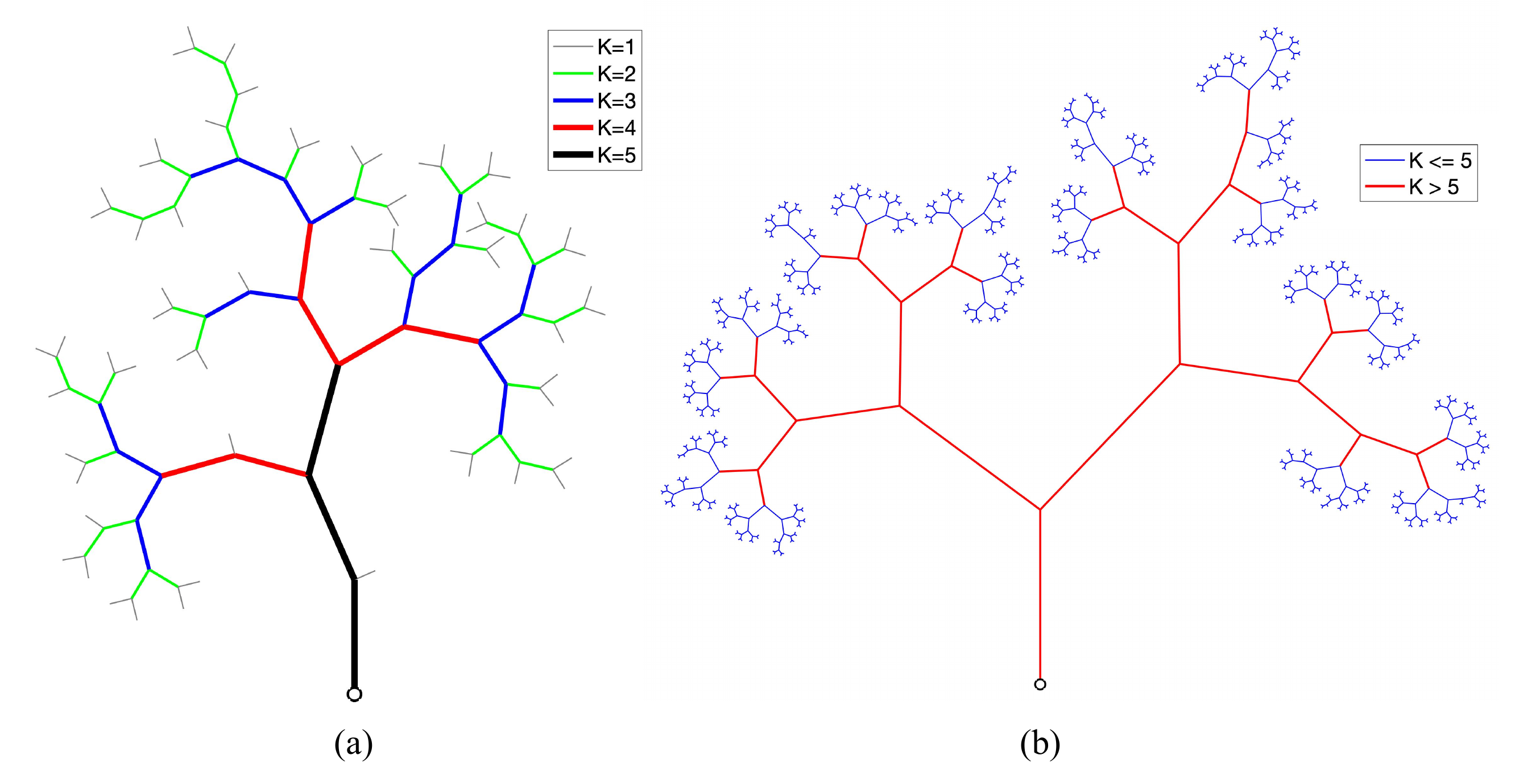}
\caption[Example of Horton-Strahler orders]
{Example of Horton-Strahler ordering of a binary tree $T\in\cBL^|$.
Different colors correspond to different orders of vertices and their parental edges, 
as indicated in legend. 
(a) $\#T = 121$, ${\sf ord}(T)=5$. 
(b) $\#T = 1233$, ${\sf ord}(T)=10$. }
\label{fig:HS_example}
\end{figure}

\noindent Figures~\ref{fig:HS_def},\ref{fig:Beaver},\ref{fig:HS_example},\ref{fig:Tok_example1} 
show examples of Horton-Strahler
ordering in binary trees.

\subsection{Alternative definitions of Horton-Strahler orders}
\label{Rem:altHS}
Definition~\ref{def:HSorder} connects the Horton-Strahler orders 
to the Horton pruning operation, which is the main theme of this survey.  
Here we give two alternative, equivalent, definitions of the Horton-Strahler
orders.
The proof of equivalence is straightforward and is left as an exercise.

The Horton-Strahler orders can be defined via hierarchical counting \cite{Horton45,Strahler,DK94,Pec95,NTG97,BWW00}.
In this approach, each leaf is assigned order $1$.
If an internal vertex $p$ has $m\ge 1$ offspring with orders $i_1,i_2,\hdots,i_m$ 
and $r=\max\left\{i_1,i_2,\hdots,i_m\right\}$, then 
\be\label{eq:HSorder_gen}
{\sf ord}(p)=\begin{cases}
    r & \text{ if }~ \#\left\{s:~i_s=r \right\}=1,\\
    r +1 & \text{ otherwise}.
\end{cases}
\ee
\noindent 
The parental edge of a non-root vertex has the same order as the vertex.
The Horton-Strahler order of a tree $T\ne\phi$ is 
${\sf ord}(T)=\max\limits_{v\in T} {\sf ord}(v)$, 
where the maximum is taken over all vertices in $T$.
This definition is most convenient for practical calculations, which
explains its popularity in the literature.

For instance, in a reduced binary tree, an internal vertex $p$ with two 
offspring of orders $i$ and $j$ has order
\be\label{eq:HSorder}
{\sf ord}(p)=\max\left(i,j\right)+\delta_{ij} =\lfloor \log_2(2^i+2^j)\rfloor,
\ee
where $\delta_{ij}$ is the Kronecker's delta and 
$\lfloor x\rfloor$ denotes the maximal integer less than or equal to $x$.
In words, the order increases by unity every time when two
edges of the same order meet at a vertex 
(Figs.~\ref{fig:HS_def},\ref{fig:Beaver},\ref{fig:HS_example},\ref{fig:Tok_example1}).

\medskip
Finally, we observe that ${\sf ord}(T)$ of a planted tree $T$ equals the depth
of the maximal planted perfect binary subtree of $T$ with the same root
(see Sect.~\ref{sec:ex}, Ex.~\ref{ex:perfect}).

\begin{figure}[!t] 
\centering\includegraphics[width=0.95\textwidth]{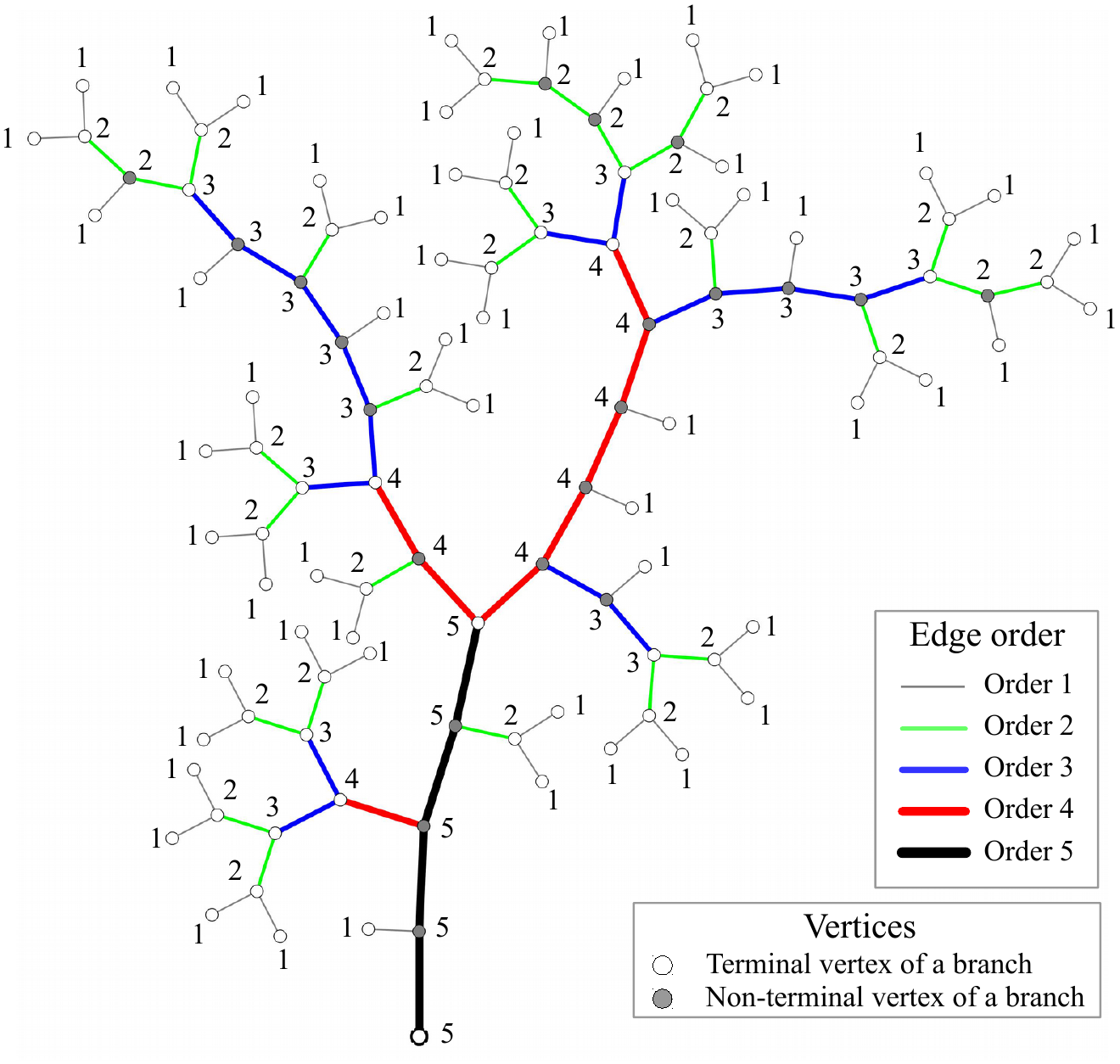}
\caption[Example of Horton-Strahler orders]
{Horton-Strahler orders of vertices in a binary tree: example. 
The order is shown next to every vertex.
Edge orders are indicated by colors (see legend).
Open circles mark terminal vertices of tree branches; they correspond either to 
leaves or mergers of principal branches.
Shaded circles mark vertices that correspond to side branches.
Here $N_1 = 56$, $N_2=22$, $N_3=8$, $N_4=3$, and $N_5=1$.
Figure~\ref{fig:Tok_example2} shows the Tokunaga indexing for the same tree.}
\label{fig:Tok_example1}
\end{figure}

\subsection{Tokunaga indices and side branching}
\label{sec:Tokunaga}
The Tokunaga indices complement the Horton-Strahler orders (Sects.~\ref{sec:HS},\ref{Rem:altHS}) by cataloging the mergers of branches according to their orders. 
In this work, we define and use the Tokunaga indices in binary trees.
It is straightforward to adopt these definitions for trees with general branching. 

Recall that a branch (Def.~\ref{def:HS}) is an uninterrupted sequence of vertices and edges 
of the same order (Fig.~\ref{fig:terminology}(a)).
According to the Horton-Strahler ordering rules, every time when two branches of the same
order $i$ meet at a vertex, this vertex (and hence the branch for which this is the terminal vertex) 
is assigned order $i\!+\!1$.
We refer to this as {\it principal branching}\index{principal branching}.
A merger of two branches of distinct orders at a vertex, however, does not result 
in assigning this vertex (and the corresponding branch) a higher order; in this case a higher-order branch 
absorbs the lower-order branch.
This phenomenon is known as {\it side branching} \index{side branching} \cite{NTG97}.
A branch of order $i$ that merges with (and is being absorbed by) a branch 
of a higher order $j>i$ is 
referred to as a {\it side branch} of Tokunaga index $\{i,j\}$.

Formally, for a non-root vertex $v$ in a reduced binary tree, 
we let ${\sf sibling}(v)$\index{vertex!sibling} denote the unique vertex of the tree that has the same 
parent as $v$, i.e.,
${\sf parent}(v) = {\sf parent}({\sf sibling}(v)).$

\begin{Def}[{{\bf Tokunaga indices}}]\label{def:TokunagaIndexing}
In a binary tree $T$, consider a branch $b$ of order $i  \in \{1,\dots,{\sf ord}(T)-1\}$,
and let $v$ denote the initial vertex of the branch $b$, whence ${\sf ord}(v)=i$. 
The branch $b$ is assigned the Tokunaga index $\{i,j\}$, where
$j={\sf ord}({\sf sibling}(v))$\index{Tokunaga!index}.
The Horton-Strahler ordering rules imply that $j\ge i$.
A branch with Tokunaga index $\{i,i\}$ is called principal branch\index{branch!principal branch}.
A branch with Tokunaga index $\{i,j\}$ such that $i< j$ is called side branch\index{branch!side branch}.
\end{Def}

\noindent
The definition of Tokunaga indices is illustrated in Fig.~\ref{fig:Tok_example2}.

\begin{Rem}
We emphasize that the Tokunaga indices refer to the tree branches, not to individual vertices and
edges as is the case with the Horton-Strahler orders. 
\end{Rem}

\begin{figure}[!t] 
\centering\includegraphics[width=\textwidth]{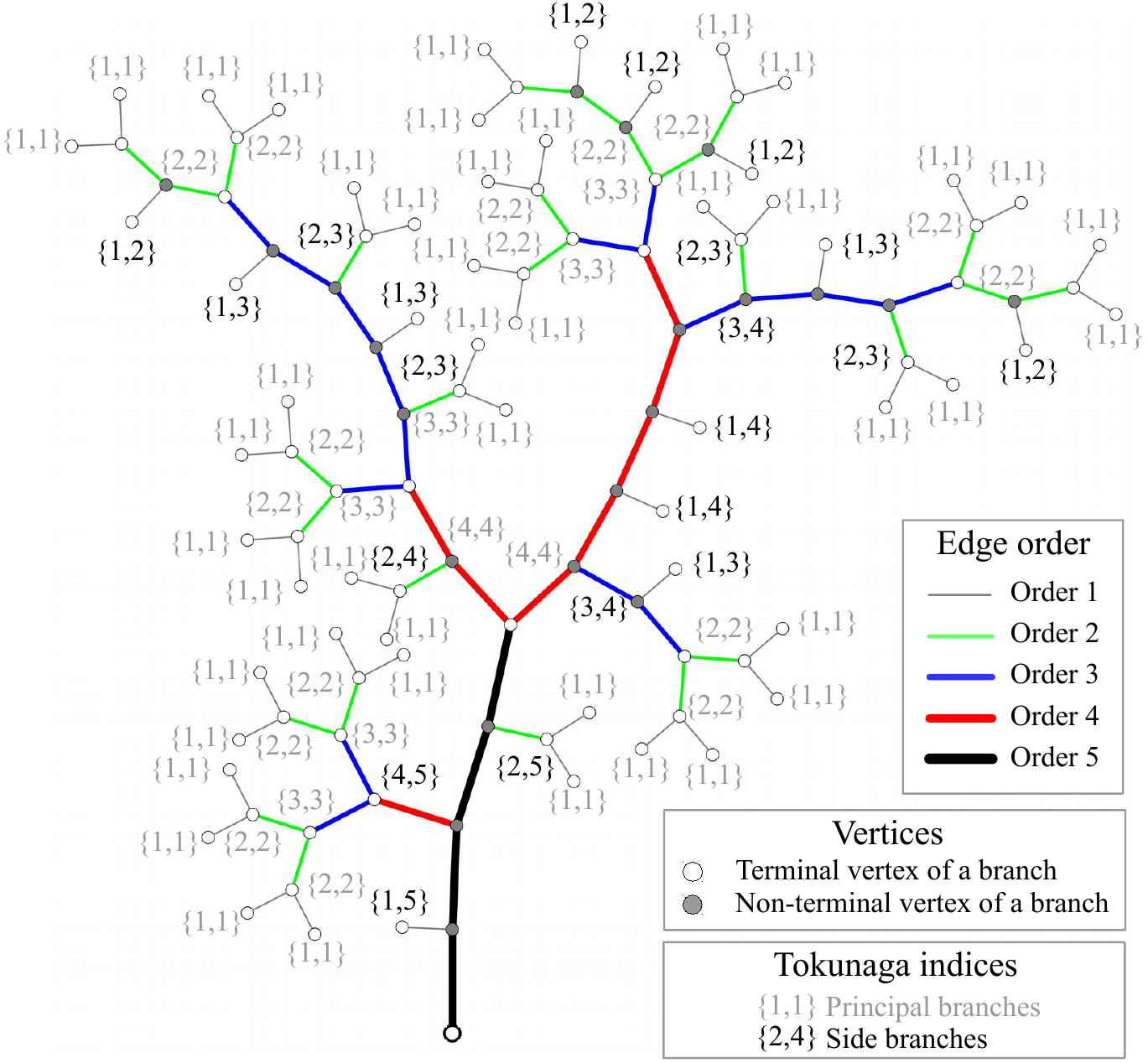}
\caption[Example of Tokunaga indices]
{Tokunaga indices for tree branches in a binary tree: example. 
The Tokunaga indices $\{i,j\}$ catalog mergers of tree branches, according 
to their Horton-Strahler orders.
Edge orders are indicated by colors (see legend).
Open circles mark terminal vertices of tree branches; they correspond either to 
leaves or mergers of principal branches.
Shaded circles mark vertices that correspond to side branches.
Here 
$N_{1,2}=5$, $N_{1,3}=4$, $N_{1,4}=2$, $N_{1,5}=1$,
$N_{2,3}=4$, $N_{2,4}=1$, $N_{2,5}=1$, 
$N_{3,4}=2$, $N_{3,5}=0$, and $N_{4,5}=1$.
Figure~\ref{fig:Tok_example1} shows the Horton-Strahler orders in the same tree.}
\label{fig:Tok_example2}
\end{figure}

\subsection{Labeling edges}
\label{sec:label}
The edges of a planar tree can be labeled by numbers $1,\dots,\#T$ in order of depth-first search.
For a tree with no embedding, labeling is done by selecting a 
suitable embedding and then using the depth-first search labeling as above.
Such embedding should be properly aligned with the Horton pruning
$\cR$, as we describe in the following definition.

\begin{Def}[{{\bf Proper embedding}}]
An embedding function $\textsc{embed}: \cT\to\cT_{\rm plane}$ ($\cL\to\cL_{\rm plane}$) 
is called {\it proper} if for any $T\in\cT$ $(T\in\cL)$
\[\cR\left(\textsc{embed}(T)\right)=\textsc{embed}\left(\cR(T)\right),\]
where the pruning on the left-hand side is in $\cT_{\rm plane}$ ($\cL_{\rm plane}$) and
pruning on the right-hand side is in $\cT$ ($\cL)$.
\end{Def}

\noindent An example of proper embedding is given in \cite{KZ18}.

\subsection{Galton-Watson trees}
\label{sec:GW}
The Galton-Watson distributions (aka Bienaym\'{e}-Galton-Watson distributions) over $\cT^|$ 
are pivotal in the theory of random trees\index{tree!Galton-Watson}.
Recall that a random Galton-Watson tree starts with a single progenitor represented by the tree root. 
The population then develops in discrete steps.
At every discrete step $d>0$ each existing population member (represented by a tree leaf 
at the maximal depth $d-1$) gives birth to $k\ge 0$ offspring with probability $q_k$, 
$\sum_{k\ge 0} q_k=1$, 
with $k=0$ representing no offspring, and terminates.
Hence, each member that terminates at step $d$ is represented by a tree vertex at depth $d-1$.
The process stops at step $d_{\rm max}$ when every leaf at depth $d_{\rm max}-1$ produces no offspring.  
\index{Galton-Watson tree}
\index{Galton-Watson tree!binary}
\index{Galton-Watson tree!critical}
\index{Galton-Watson tree!critical binary}
\index{Galton-Watson tree!combinatorial}

We denote the respective tree distribution on $\cT^{|}$ by $\mathcal{GW}(\{q_k\})$. 
Observe that $q_1=0$ in order to generate {\it reduced} trees.
Assuming that $q_1<1$, the resulting tree is finite with probability one if and only if 
$\sum k\,q_k\le 1$ \cite{Harris_book,AN_book}.
At the same time, it is well known that in the critical case (i.e., for $\sum kq_k=1$) 
the time to extinction (and hence the tree size) has infinite 
 first moment.

We write $\mathcal{GW}(q_0,q_2)$ for the probability distribution of (combinatorial) binary Galton-Watson trees in $\cBT^{|}$. 
The critical case (unit expected progeny) corresponds to $q_0=q_2=1/2$. 
Finally, we let $\mathcal{GW}_{\rm plane}(q_0,q_2)$ denote the probability distribution of (combinatorial) plane binary Galton-Watson trees in $\BT^{|}$. A random tree sampled from $\BT^{|}$ with distribution $\mathcal{GW}_{\rm plane}(q_0,q_2)$ is obtained from a random tree 
sampled from $\cBT^{|}$  with distribution $\mathcal{GW}(q_0,q_2)$ via the uniform planar embedding that assigns the left-right orientation to each pair of offsprings uniformly and independently for each node.

We conclude this section with a particular characterization of the critical binary
Galton-Watson distribution $\mathcal{GW}(1/2,1/2)$; it follows directly 
from the process definition and will be used later.
\begin{Rem}\label{rem:GWchar}
A distribution $\mu$ on $\cBT^|$ is $\mathcal{GW}(1/2,1/2)$ if and only if it can be constructed in the following way.
Start with a stem (root edge). With probability $1/2$ this completes the tree generation process.
With the complementary probability $1/2$, draw two trees independently from 
the distribution $\mu$, and attach them (as subtrees) to the non-root vertex of the stem.
This completes the construction.
\end{Rem}


\section{Self-similarity with respect to Horton pruning}
\label{TSS}

This section introduces self-similarity for finite combinatorial and metric trees.
The term {\it self-similarity} is associated with invariance of a tree distribution 
with respect to the Horton pruning $\cR$ introduced in Sect.~\ref{pruning}.
The prune-invariance alone, however, is insufficient to generate 
interesting families of trees. 
This calls for an additional property -- {\it coordination} among trees of different orders.
Coordination together with prune-invariance constitutes the self-similarity studied in this work. 
 
We start in Sects.~\ref{sec:dss}, \ref{TSSL} with a strong, distributional, self-similarity
for measures on the spaces $\cT$ and $\cL$, respectively. 
A weaker form of self-similarity that only considers the average values of selected branch statistics
it discussed in Sect.~\ref{sec:mss} for a narrower class of combinatorial binary trees from $\cBT$.

\subsection{Self-similarity of a combinatorial tree}
\label{sec:dss}

Let $\cH_{K}\subset \cT$ be the subspace of trees of Horton-Strahler order $K\ge 0$.
Naturally, $\cH_K\bigcap\cH_{K'}=\emptyset$ if $K\ne K'$, and 
$\bigcup\limits_{K\ge 1} \cH_K =\cT$.
Consider a set of conditional probability measures $\{\mu_{K}\}_{K\ge 0}$ each of which is defined on 
$\cH_K$ by 
\be\label{eq:muk}
\mu_K(T) = \mu(T\,|T\in\cH_K)
\ee
and let $p_K=\mu(\cH_K)$.
Then $\mu$ can be represented as a mixture of the conditional measures:
\be
\label{muk}
\mu=\sum_{K=1}^{\infty}p_K\mu_K.
\ee

\begin{Def}[{{\bf Horton prune-invariance}}]\label{def:prune}
Consider a probability measure $\mu$ on $\cT$ such that $\mu(\phi) = 0$.
Let $\nu$ be the pushforward measure, $\nu=\cR_*(\mu)$, i.e.,
$$\nu(T)=\mu \circ \cR^{-1}(T) = \mu \big(\cR^{-1}(T)\big).$$
Measure $\mu$ is called invariant with
respect to the Horton pruning (Horton prune-invariant)\index{Horton prune-invariance}
if for any tree $T\in\cT$ we have 
\be
\label{def:pi}
\nu\left(T\,|T\ne\phi\right)=\mu(T).
\ee
\end{Def}

\begin{Rem}
The pushforward measure $\nu$ is induced by the original measure $\mu$ 
via the pruning operation: 
if $T' \stackrel{d}{\sim} \mu$ then $T=\cR(T') \stackrel{d}{\sim} \nu$.
In particular, we observe that $\nu(\phi)=\mu(\cH_1)$ and this probability 
can be positive.
\end{Rem}

\begin{prop}\label{geom:p}
Let $\mu$ be a Horton prune-invariant measure on $\cT$.
Then the distribution of orders, $p_K=\mu(\cH_K)$, is geometric:
\be
\label{prop:1}
p_K=p\left(1-p\right)^{K-1},\quad K\ge 1,
\ee
where $p=p_1=\mu(\cH_1)$,
and for any $T\in\cH_K$
\be
\label{prop:2}
\mu_{K+1}\left(\cR^{-1}(T)\right) = \mu_K(T).
\ee
\end{prop}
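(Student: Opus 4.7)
The plan is to unpack the definition of Horton prune-invariance in terms of the pushforward $\nu$, use the fact that Horton pruning decreases the order by exactly one, and then run a one-step recursion on $p_K$.

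First, I would record the key structural observation about $\cR$ on order strata: for $K\geq 1$ the map $\cR$ sends $\cH_K$ onto $\cH_{K-1}$, so for any $T\in\cH_{K-1}$ with $T\ne\phi$ we have $\cR^{-1}(T)\subset\cH_K$. In particular $\cR^{-1}(\phi)=\{\phi\}\cup\cH_1$. Since $\mu(\phi)=0$ by hypothesis, this gives
\[
\nu(\phi)=\mu\bigl(\cR^{-1}(\phi)\bigr)=\mu(\cH_1)=p_1=:p.
\]
Next I would rewrite the prune-invariance identity \eqref{def:pi} in a usable form. For any $T\in\cT$ with $T\ne\phi$,
\[
\mu(T)=\nu(T\mid T\ne\phi)=\frac{\nu(T)}{1-\nu(\phi)}=\frac{\mu(\cR^{-1}(T))}{1-p}.
\]
This is the single identity that drives both assertions.

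To obtain \eqref{prop:1}, I would fix $K\geq 2$ and sum the displayed identity over $T\in\cH_{K-1}$. Using that the sets $\cR^{-1}(T)$ for distinct $T\in\cH_{K-1}$ are disjoint and contained in $\cH_K$, and that their union is all of $\cH_K$ (since $\cR$ maps $\cH_K$ onto $\cH_{K-1}$), the right-hand side telescopes to $p_K/(1-p)$, while the left-hand side equals $p_{K-1}$. Hence $p_K=(1-p)\,p_{K-1}$ for all $K\geq 2$, and induction on $K$ yields $p_K=p(1-p)^{K-1}$.

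For \eqref{prop:2}, I would fix $T\in\cH_K$ and simply combine the pieces: since $\cR^{-1}(T)\subset\cH_{K+1}$,
\[
\mu_{K+1}\bigl(\cR^{-1}(T)\bigr)=\frac{\mu(\cR^{-1}(T))}{p_{K+1}}=\frac{(1-p)\,\mu(T)}{p(1-p)^{K}}=\frac{\mu(T)}{p_K}=\mu_K(T),
\]
where the first equality uses \eqref{eq:muk}, the second uses the rewritten prune-invariance and the formula for $p_{K+1}$, and the last uses the formula for $p_K$. I do not expect any real obstacle; the only points needing care are verifying that $\cR^{-1}(T)\subset\cH_{K+1}$ exactly (so no mass is lost or double-counted when summing over $\cH_{K-1}$) and handling $\phi$ correctly when translating \eqref{def:pi} into an identity for $\mu$.
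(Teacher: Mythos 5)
Your proposal is correct and follows essentially the same route as the paper's proof: both rewrite prune-invariance as $\mu(\cR^{-1}(T))=(1-p)\,\mu(T)$, exploit the shift property $\cR^{-1}(\cH_{K-1})=\cH_K$ to get the geometric recursion $p_K=(1-p)p_{K-1}$, and then derive \eqref{prop:2} by expressing $\mu$ through the conditional measures $\mu_K$ and the $p_K$'s. The only cosmetic difference is that you sum the identity over trees in $\cH_{K-1}$ where the paper applies it directly at the level of the sets $\cH_{K-1}$, which is equivalent.
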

\begin{proof}
Horton pruning $\cR$ is a shift operator on the sequence of subspaces $\{\cH_k\}$:
\be
\label{H_shift}
\cR^{-1}(\cH_{K-1})=\cH_K,~K\ge 2.
\ee
The only tree eliminated by pruning is the tree of order $1$: 
$\{\tau:\cR(\tau)=\phi\}=\cH_1.$
This allows to rewrite \eqref{def:pi} for any $T\ne\phi$ as
\be
\label{def:pi1}
\mu\left(\cR^{-1}(T)\right)=\mu(T)\left(1-\mu(\cH_1)\right).
\ee
Combining \eqref{H_shift} and \eqref{def:pi1} we find for any $K\ge 2$
\be
\label{mu_shift}
\mu\left(\cH_K\right)
\stackrel{{\rm by}~\eqref{H_shift}}{=}
\mu\left(\cR^{-1}(\cH_{K-1})\right)
\stackrel{{\rm by}~\eqref{def:pi1}}{=}
\left(1-\mu(\cH_1)\right)\mu(\cH_{K-1}),
\ee
which establishes \eqref{prop:1}.
Next, for any tree $T\in\cH_K$ we have
\[\mu(T) = \mu(\cH_1)\left(1-\mu(\cH_1)\right)^{K-1}\mu_K(T),\]
\[\mu\left(\cR^{-1}(T)\right) = \mu(\cH_1)\left(1-\mu(\cH_1)\right)^K\mu_{K+1}\left(\cR^{-1}(T)\right).\]
Together with \eqref{def:pi1} this implies \eqref{prop:2}.
\end{proof}

Proposition~\ref{geom:p} shows that a Horton prune-invariant measure $\mu$ is 
completely specified by its conditional measures $\mu_K$ and the mass 
$p=\mu(\cH_1)$ of the tree of order $K=1$.
The same result was obtained for Galton-Watson trees in \cite[Thm.~3.5]{BWW00}.

\medskip
Next, we introduce a (distributional) coordination property\index{coordination!distributional}.
Informally, we require that a complete subtree $T_K$ of a given order $K$
uniformly randomly selected from a random tree $T_H$ of order $H\ge K$ has 
a common distribution independent of $H$.
Since a tree $T_K$ of order $K$ has only one complete subtree of order $K$, which
coincides with $T_K$, this common distribution must be $\mu_K$.
Formally, consider the following process of selecting a {\it uniform random 
complete subtree} ${\sf subtree}_{K,H}$ of order $K$ from a random tree 
$T_H\in\cH_H$.
First, select a random tree $T_H$ according to the conditional measure $\mu_H$. 
Label all complete subtrees of order $K$ in $T_H$ in order
of proper labeling of Sect.~\ref{sec:label}, and
select a uniform random subtree, which we denote ${\sf subtree}_{K,H}$.
By construction,  ${\sf subtree}_{K,H}\in\cH_K$;
we denote the corresponding sampling measure on $\cH_K$ by $\mu^H_K$.

\begin{Def}[{{\bf Coordination}}]
\label{def:coord}
A set of measures $\{\mu_K\}_{K\ge 1}$ on $\{\cH_K\}_{K\ge 1}$ is called
{\it coordinated} if $\mu^H_K(T) = \mu_K(T)$ for any $K\ge 1$, $H\ge K$, and $T\in\cH_K$.
A measure $\mu$ on $\cT$ is called {\it coordinated} if the 
respective conditional measures $\{\mu_K\}$, as in Eq.~\eqref{muk}, are
coordinated.
\end{Def}

\begin{Def}[{{\bf Combinatorial Horton self-similarity}}]
\label{def:ss}
A probability measure $\mu$ on $\cT$ is called {\it self-similar with respect 
to Horton pruning} (Horton self-similar) if it is coordinated and Horton prune-invariant.
\index{Horton self-similarity!combinatorial}
\end{Def}

\subsection{Self-similarity of a tree with edge lengths}
\label{TSSL}
Consider a tree $T\in\cL$ with edge lengths given
by a positive vector $l_T=(l_1,\dots,l_{\#T})$ and let
$\textsc{length}(T)=\sum_il_i$.
We assume that the edges are labeled in a proper way as described in Sect.~\ref{sec:label}.
A tree is completely specified by its combinatorial shape
$\textsc{shape}(T)$ and edge length vector $l_T$.
The edge length vector $l_T$ can be specified by
distribution $\chi(\cdot)$ of a point $x_T=(x_1,\dots,x_{\#T})$ on the simplex
$\sum_i x_i = 1$, $0<x_i\le 1$, and conditional distribution $F(\cdot|x_T)$ of the tree length
$\textsc{length}(T)$, where
\[l_T = x_T\cdot\textsc{length}(T).\]
A measure $\eta$ on $\cL$ is a joint distribution of 
tree's combinatorial shape and its edge lengths; it has
the following component measures. 
\begin{align*}
&{\rm Combinatorial~shape:}\quad \mu(\tau)={\sf Law }\left(\textsc{shape}(T)=\tau\right),\\
&{\rm Relative~edge~lengths:}\quad \chi_\tau(\bar x) ={\sf Law }\left(x_T=\bar{x} \,|\,\textsc{shape}(T)=\tau \right),\\
&{\rm Total~tree~length:}\quad F_{\tau, \bar x} (\ell) ={\sf Law }\left(\textsc{length}(T)=\ell \,|\,x_T=\bar{x}, ~ \textsc{shape}(T)=\tau \right). 
\end{align*}

\noindent The definition of self-similarity for a tree with edge lengths builds
on its analog for combinatorial trees in Sect.~\ref{sec:dss}.
The combinatorial notions of coordination (Def.~\ref{def:coord}) 
and Horton prune-invariance (Def.~\ref{def:prune}), which we refer to as coordination 
and prune-invariance {\it in shapes}, are complemented with analogous properties 
{\it in edge lengths}. 
Formally, we denote by $\mu^H_K(\tau)$, $\chi^H_\tau(\bar x)$, and $F^H_{\tau, \bar x}(\ell)$ 
the component measures for a uniform complete subtree ${\sf subtree}_{K,H}$.
(Notice that the subtree order $K$ is completely specified by the tree shape $\tau$,
which explains the absence of subscript $K$ in the component measures for subtree length).
We also consider the distribution of edge lengths after pruning:
$$\Xi_\tau(\bar x) ={\sf Law }\left(x_{\cR(T)}=\bar{x}  \,|\,\textsc{shape}\big(\cR(T)\big)=\tau \right)$$
and
$$\Phi_{\tau, \bar x} (\ell)={\sf Law }\left(\textsc{length}\big(\cR(T)\big)=\ell \,|\,x_{\cR(T)}=\bar{x}, ~\textsc{shape}\big(\cR(T)\big)=\tau \right).$$
Finally, we adopt here the notation $\cH_K$ for a subspace of trees of order $K\ge 1$ from $\cL$, and 
consider conditional measures $\mu_K(\tau)=\mu(\tau|{\sf ord}(\tau)=K)$, $K\ge 1$, 
for a tree $\tau\in\cL$.

\begin{Def}[{{\bf Horton self-similarity of a tree with edge lengths}}]\label{def:distss}
We call a measure $\eta$ on $\cL$ {\it self-similar with respect to Horton pruning $\cR$} if the following conditions hold\index{Horton self-similarity!with edge lengths}: 
\begin{itemize}
\item[(i)] The measure is coordinated in shapes. This means that
for every $K\ge 1$ and every $H\ge K$ we have
\[\mu^H_K(\tau)=\mu_K(\tau)\qquad \forall \tau \in \cH_K .\] 
\item[(ii)] The measure is coordinated in lengths. This means that
for every $K\ge 1$, $H\ge K$, and $\tau \in \cH_K$ we have
\[\chi^H_\tau(\bar x)=\chi_{\tau}(\bar{x}),\]  
and for every given $\bar x$,
\[ F^H_{\tau, \bar x}(\ell)= F_{\tau, \bar x} (\ell). \] 
\item[(iii)]
The measure is Horton prune-invariant in shapes.
This means that for the pushforward measure $\nu=\cR_*(\mu)=\mu\circ\cR^{-1}$ we have
\[\mu(\tau)=\nu(\tau|\tau\ne\phi).\]
\item[(iv)]
The measure is Horton prune-invariant in lengths.
This means that 
\[\Xi_\tau(\bar x)=\chi_\tau(\bar x)\]
and there exists a {\it scaling exponent} $\zeta>0$ such that for any 
combinatorial tree $\tau\in\cT$ we have 
\[\Phi_{\tau, \bar x} (\ell)=\zeta^{-1}F_{\tau, \bar x} \left(\frac{\ell}{\zeta}\right).\]
\end{itemize}
\end{Def}

\subsection{Mean self-similarity of a combinatorial tree}
\label{sec:mss}
The discussion of this section refers to the space $\cBT$ of combinatorial binary trees.
Let $N_k=N_k[T]$ be the number of branches of order $k$ in a tree $T$, 
and $N_{i,j}=N_{i,j}[T]$ be the number of side branches with Tokunaga index $\{i,j\}$
with $1\le i <j\le{\sf ord}(T)$ in a tree $T$,
i.e., the number of instances when an order-$i$ 
branch merges with and is being absorbed by an order-$j$ branch.
Examples of counts $N_i[T]$ and $N_{i,j}[T]$ are given in 
Figs.~\ref{fig:terminology},\ref{fig:Tok_example1},\ref{fig:Tok_example2}.
We do not consider the numbers $N_{i,i}[T]$ of principal branches in $T$, 
since $N_{i,i}[T]=2N_{i+1}[T]$ and hence such counts are redundant with respect 
to the branch counts.

We write ${\sf E}_K[\cdot]$ for the mathematical expectation with respect to $\mu_{K}$ of Eq.~\eqref{eq:muk}.
As before, we adopt the notation $\cH_K$ for the subspace of trees of order $K$ in $\cBT$. 

\medskip
\noindent
We define the {\it average Horton numbers}\index{Horton numbers (branch counts)} 
for subspace $\cH_{K}$ as
\[\cN_{k}[K]= {\sf E}_K[N_k],\quad 1\le k\le K,\quad K\ge 1,\]
and the {\it average side-branch numbers}\index{side branch counts} 
of index $\{i,j\}$ as 
\[\cN_{i,j}[K]:={\sf E}_K[N_{i,j}], \quad 1\le i<j\le K,\quad K\ge 1.\]
We assume below that the average branch and side-branch numbers are finite
for any $K\ge 1$:
\[\cN_{i,j}[K]<\infty\text{ and } \cN_{j}[K]<\infty\text{ for all }1\le i<j\le K.\]

The {\it Tokunaga coefficient}\index{Tokunaga!coefficients} 
$T_{i,j}[K]$ for subspace $\cH_{K}$ is defined as the ratio of the average 
side-branch number of index $\{i,j\}$ to the average Horton number of order $j$:
\be
\label{def_tok}
T_{i,j}[K]=\frac{\cN_{i,j}[K]}{\cN_{j}[K]}, \quad 1\le i <j\le K.
\ee
The Tokunaga coefficient $T_{i,j}[K]$ is hence reflects the average number of side-branches
of index $\{i,j\}$ per branch of order $j$ in a tree of order $K$. 

\begin{Rem}\label{rem:branch}
Suppose that measure $\mu$ is coordinated (Def.~\ref{def:coord}). 
Then, all (complete) branches of order $j$ within a random tree $T\in\cH_K$ sampled with $\mu_K$ have the same
distribution. In particular, the numbers $n_{i,j}(b_k)$ of branches of order $i$ that
merge into a particular branch $b_k$, $k=1,\dots,N_j[T]$ of order $j$ in $T$ has the same
distribution for all $b_k$.
Let $n_{i,j}$ be a random variable such that $n_{i,j}(b_k)\stackrel{d}{=}n_{i,j}$.
Assume, furthermore, that the random counts $n_{i,j}(b_k)$ are independent of $N_j[T]$.
Then, by Wald's equation, we have
\begin{eqnarray*}
\cN_{i,j}[K] &=& {\sf E}_K[N_{i,j}[T]] = {\sf E}_K\left[\sum_{k=1}^{N_j[T]}n_{i,j}(b_k)\right]\\
&=& {\sf E}_K[N_j[T]]{\sf E}_K[n_{i,j}] = \cN_j[K] {\sf E}_K[n_{i,j}],
\end{eqnarray*}
and, accordingly,
\[T_{i,j}[K] = \frac{\cN_j[K] {\sf E}_K[n_{i,j}]}{\cN_j[K]}={\sf E}_K[n_{i,j}].\]
In other words, the Tokunaga coefficient in this case is the expected number 
of side-branches of appropriate index in a randomly selected branch.
This is how the Tokunaga coefficient is often defined (e.g., \cite{BWW00}).
The definition~\eqref{def_tok} adopted here is more general, as it does not require 
the distributional coordination and independence of side-branch numbers and
branch numbers.
\end{Rem}

\medskip
Next, we introduce a property that ensures independence of the side-branch structure 
of a tree order\index{coordination!mean}.
This is a weaker version of the distributional coordination (Def.~\ref{def:coord}).

\begin{Def}[{{\bf Mean coordination}}]
\label{coord}
A set of probability measures $\{\mu_K\}_{K\ge 1}$ on $\{\cH_K\}_{K\ge 1}$ is called {\it mean coordinated}
if 
\be\label{eq:mean_coord}
T_{i,j}:=T_{i,j}[K] \quad \text{ for all } K\ge 2 \text{ and } 1\le i< j\le K.
\ee
A measure $\mu$ on $\cBT$ is called {\it mean coordinated} if the respective conditional measures $\{\mu_K\}$, as in Eq.~\eqref{muk}, are mean coordinated.
\end{Def}
\noindent For a mean coordinated measure $\mu$, 
the Tokunaga matrix $\mathbb{T}_K$ is a $K \times K$ matrix
\[\mathbb{T}_K=\left[\begin{array}{ccccc}
0 & T_{1,2} & T_{1,3} & \hdots & T_{1,K} \\
0 & 0 & T_{2,3} & \hdots & T_{2,K} \\
0 & 0 & \ddots & \ddots & \vdots \\
\vdots & \vdots & \ddots & 0 & T_{K-1,K} \\
0 & 0 & \dots & 0 & 0\end{array}\right],\]
which coincides with the restriction of any larger-order Tokunaga matrix $\mathbb{T}_M$, $M>K$,
to the first $K\times K$ entries.

\begin{Def}[{{\bf Toeplitz property}}]
\label{Tsi}
A set of probability measures $\{\mu_K\}_{K\ge 1}$ on $\{\cH_K\}_{K\ge 1}$ is said to satisfy 
the {\it Toeplitz property}\index{Toeplitz property} if for every $K\ge 2$ there exists a sequence $T_k[K]\ge 0$, $k=1,2,\dots$
such that
\be\label{eq:Toeplitz}
T_{i,j}[K] = T_{j-i}[K]\quad\text{ for each } K\ge 2.
\ee
The elements of the sequences $T_k[K]$ are also referred to as Tokunaga coefficients, 
which does not create confusion with $T_{i,j}[K]$.
A measure $\mu$ on $\cBT$ is said to satisfy the {\it Toeplitz property} if the 
respective conditional measures $\{\mu_K\}$, as in Eq.~\eqref{muk}, satisfy
the Toeplitz property.
\end{Def}

\begin{Def}[{{\bf Mean Horton self-similarity}}]
\label{ss1}
A set of probability measures $\{\mu_K\}_{K\ge 1}$ on $\{\cH_K\}_{K\ge 1}$  is called {\it mean Horton self-similar} 
if it is mean coordinated and satisfies the Toeplitz property\index{Horton self-similarity!mean}. 
A measure $\mu$ on $\cBT$ is called {\it mean Horton self-similar} if the respective conditional measures $\{\mu_K\}$, as in Eq.~\eqref{muk}, are mean Horton self-similar.
\end{Def}
\noindent
An alternative definition Def. \ref{def:ss2} stated below will explain the name.

\medskip
\noindent
Combining Eqs.~\eqref{eq:mean_coord} and \eqref{eq:Toeplitz} we find that for 
a mean Horton self-similar measure there exists a nonnegative 
{\it Tokunaga sequence}\index{Tokunaga!sequence} $\{T_k\}_{k=1,2,\hdots}$ such that  
\be \label{eq:tok1}
T_{i,j}[K]=T_{j-i} \quad \text{ for all }0<i<j\leq K,
\ee
and the corresponding Tokunaga matrices $\mathbb{T}_K$ are Toeplitz:
$$\mathbb{T}_K=\left[\begin{array}{ccccc}
0 & T_1 & T_2 & \hdots & T_{K-1} \\
0 & 0 & T_1 & \hdots & T_{K-2} \\
0 & 0 & \ddots & \ddots & \vdots \\
\vdots & \vdots & \ddots & 0 & T_1 \\
0 & 0 & \dots & 0 & 0\end{array}\right].$$

\medskip
\noindent
Recall that Horton pruning $\cR$ decreases the Horton-Strahler order of each vertex (and hence 
of each branch) by unity; in particular
\be
\label{shift1}
N_k[T] = N_{k-1}\left[\cR(T)\right],\quad k\ge 2,
\ee
\be
\label{shift2}
N_{i,j}[T] = N_{i-1,j-1}\left[\cR(T)\right],\quad 2\le i<j.
\ee
Consider the pushforward probability measure $\cR_*(\mu)$ induced on $\cH_K$ by the pruning operator:
\[\cR_*(\mu)(A) = \mu_{K+1}\left(\cR^{-1}(A)\right)\quad \forall A\subset \cH_K.\]
The Tokunaga coefficients computed on $\cH_K$ using the pushforward measure
$\cR_*(\mu)$ are denoted by $T_{i,j}^{\cR}[K]$.  
Formally,
\be
T_{i,j}^{\cR}[K] = T_{i+1,j+1}[K+1] = \frac{\cN_{i+1,j+1}[K+1]}{\cN_{j+1}[K+1]}.
\ee

\begin{Def}[{{\bf Mean Horton prune-invariance}}]
\label{mpi}
A set of probability measures $\{\mu_K\}_{K\ge 1}$ on $\{\cH_K\}_{K\ge 1}$ is called 
{\it mean Horton prune-invariant} if\index{Horton prune-invariance!mean} 
\be \label{eq:mean_pi}
T_{i,j}[K] = T_{i,j}^{\cR}[K] = T_{i+1,j+1}[K+1]
\ee
for any $K\ge 2$ and all $1\le i < j\le K$.
A measure $\mu$ on $\cBT$ is called {\it mean Horton prune-invariant} if the 
respective conditional measures $\{\mu_K\}$, as in Eq.~\eqref{muk}, are
mean Horton prune-invariant.
\end{Def}

\begin{Def}[{{\bf Mean Horton self-similarity}}]
\label{def:ss2}
A set of probability measures $\{\mu_K\}_{K\ge 1}$ on $\{\cH_K\}_{K\ge 1}$ is called 
{\it mean self-similar with respect to Horton pruning}, or mean Horton self-similar, if it is mean coordinated and mean Horton prune-invariant\index{Horton self-similarity!mean}.
A measure $\mu$ on $\cBT$ is called {\it mean self-similar with respect to Horton pruning} if the respective conditional measures $\{\mu_K\}$, as in Eq.~\eqref{muk}, are mean self-similar with respect to Horton pruning.
\end{Def}

\begin{prop}
Definitions \ref{ss1} and \ref{def:ss2} of mean self-similarity are equivalent. 
\end{prop}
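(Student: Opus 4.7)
The plan is to observe that both definitions include mean coordination as a common ingredient, so the content of the proposition reduces to showing that, under mean coordination, the Toeplitz property (Def.~\ref{Tsi}) is equivalent to mean Horton prune-invariance (Def.~\ref{mpi}). This is essentially an index-shifting exercise, so I expect no serious obstacle; the main care is to keep track of the double dependence of $T_{i,j}[K]$ on $(i,j)$ and on $K$.

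For the forward implication, I would start from Def.~\ref{ss1}. Mean coordination gives a value $T_{i,j}$ independent of $K$, while the Toeplitz property gives $T_{i,j}[K]=T_{j-i}[K]$. Combining these, the sequence $T_{j-i}[K]$ is forced to be independent of $K$ as well, so I can write $T_k := T_{i,i+k}[K]$ unambiguously for any admissible $i,K$. Then
\[
T_{i,j}[K] = T_{j-i} = T_{(j+1)-(i+1)} = T_{i+1,j+1}[K+1],
\]
which is precisely the mean Horton prune-invariance condition~\eqref{eq:mean_pi}.

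For the converse, starting from Def.~\ref{def:ss2}, mean coordination gives $T_{i,j}[K]=T_{i,j}$ independent of $K$. Mean Horton prune-invariance then reads $T_{i,j} = T_{i+1,j+1}$ for all $1\le i<j$. Iterating this shift $i-1$ times yields $T_{i,j} = T_{1,j-i+1}$, so $T_{i,j}$ depends only on the difference $j-i$. Setting $T_k := T_{1,k+1}$ and then $T_k[K] := T_k$ for every $K\ge 2$, one recovers the Toeplitz property $T_{i,j}[K] = T_{j-i}[K]$ of Def.~\ref{Tsi}. Together with the unchanged mean coordination hypothesis, this gives Def.~\ref{ss1}.

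The hard part, if any, is purely notational: one must be careful that the ``Tokunaga coefficients'' $T_{i,j}[K]$ appearing in both definitions are indexed by the admissible range $1\le i<j\le K$, and that the prune-invariance identity involves the sub-index $K+1$ on the right-hand side. Once this is kept straight, the argument is a one-line shift in each direction, and the passage between the Tokunaga sequence $\{T_k\}$ of Def.~\ref{ss1} and the stationarity under the index shift $(i,j,K)\mapsto(i+1,j+1,K+1)$ of Def.~\ref{def:ss2} is transparent. Finally, the statement is lifted from sets of measures $\{\mu_K\}$ to a single measure $\mu$ on $\cBT$ using~\eqref{eq:muk}, which is immediate from the definition.
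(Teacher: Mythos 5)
Your proof is correct and follows essentially the same route as the paper, which establishes the equivalence by observing that mean coordination, the Toeplitz property, and mean prune-invariance form a triangle of index-shift identities among $T_{i,j}[K]$, $T_{i,j}[K+1]$, and $T_{i+1,j+1}[K+1]$, any two of which imply the third. Your explicit two-direction verification is just a written-out version of that observation (the paper itself defers the details to a figure and to \cite{KZ16}).
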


\noindent This equivalence was proven in \cite{KZ16}. 
Its validity is readily seen from the diagram of Fig.~\ref{fig:ss}a, which
shows relations among the quantities $T_{i,j}[K]$, $T_{i,j}[K+1]$, and
$T_{i+1,j+1}[K+1]$ involved in the definitions of mean coordination (Def.~\ref{coord}),
 Toeplitz property (Def.~\ref{Tsi}), and mean Horton prune-invariance (Def.~\ref{mpi}).
Moreover, we observe that if any two of these properties hold, the third 
also holds. 
The Venn diagram of Fig.~\ref{fig:ss}b illustrates the relation among 
mean coordination, mean prune-invariance, Toeplitz property and mean 
self-similarity in the binary tree space $\cBT$.

\begin{figure}[h] 
\centering\includegraphics[width=\textwidth]{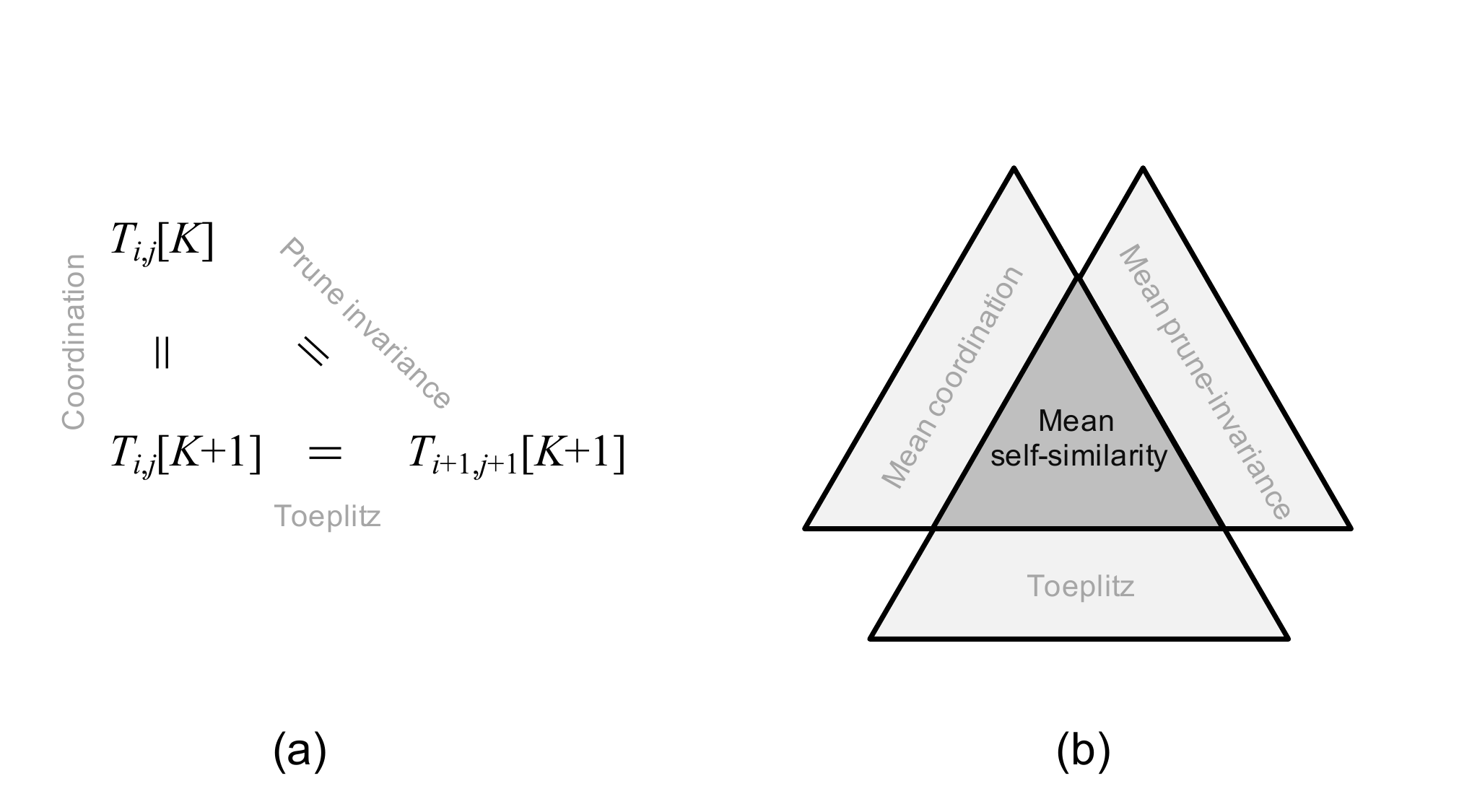}
\caption[Self-similarity]
{Relations among mean coordination, mean prune-invariance, and Toeplitz property.
(a) Pairwise equalities among the quantities $T_{i,j}[K]$, $T_{i,j}[K+1]$, and
$T_{i+1,j+1}[K+1]$ involved in the definitions of mean coordination,
mean prune-invariance, and Toeplitz property.
(b) Venn diagram of the space $\cBT$ illustrating the relation
among mean coordination (left triangle), mean prune-invariance (right triangle), 
and Toeplitz property (bottom triangle).
The mean self-similarity (inner dark triangle) is formed by the 
intersection of any pair of the three properties.}
\label{fig:ss}
\end{figure}

\medskip
\noindent 
Consider a mean Horton self-similar measure $\mu$. Observe that since exactly two branches of order $k$ are required to form a branch of order $(k+1)$,
the average number of side-branches of order $1\le k<K$ within $\cH_K$ is $\cN_{k}[K]-2\cN_{k+1}[K]$.
This number can also be computed by counting the average number of 
side-branches of order $k$ for all higher-order branches:
$$\sum\limits_{j=k+1}^K T_{k,j}\,\cN_{j}[K]=\sum\limits_{m=1}^{K-k}T_m\,\cN_{k+m}[K].$$
\noindent
Equalizing these two expressions we arrive at the main system of counting equations:
\be
\label{count1}
\cN_{k}[K] = 2\,\cN_{k+1}[K]+\sum_{j=1}^{K-k}T_j\,\cN_{k+j}[K],\quad 1\le k\le K-1, \quad K\ge2.
\ee

\noindent
Consider a $K \times K$ linear operator
\be\label{eq:Gk}
\mathbb{G}_K:=\left[\begin{array}{ccccc}-1 & T_1+2 & T_2 & \hdots & T_{K-1} \\0 & -1 & T_1+2 & \hdots & T_{K-2} \\0 & 0 & \ddots & \ddots & \vdots \\\vdots & \vdots & \ddots & -1 & T_1+2 \\0 & 0 & 0 & 0 & -1\end{array}\right].\ee

\noindent 
The counting equations \eqref{count1} rewrite as 
\be
\label{eq:count}
\mathbb{G}_K \left(\!\!\!\begin{array}{c}\cN_{1}[K] \\\cN_{2}[K] \\\vdots \\ \cN_{K}[K]\end{array}\!\!\!\right) = -e_K,\quad K\ge 1,
\ee
\noindent where $e_K$ is the $K$-th coordinate basis vector. 
Using this equation for $(K+1)$ and considering the last $K$ components we obtain
\[\mathbb{G}_K \left(\!\!\!\begin{array}{c}\cN_{2}[K\!\!+\!\!1] \\\cN_{3}[K\!\!+\!\!1] \\\vdots \\ \cN_{K\!+\!1}[K\!\!+\!\!1]\end{array}\!\!\!\right) = -e_K,\quad K\ge 1.\]
This proves the following statement.
\begin{prop}
\label{prop1_Horton}
Consider a mean Horton  self-similar measure $\mu$ on $\cBT$. 
Then for any $K\ge 1$ and $1 \leq k \leq K$ we have 
\[\cN_{k+1}[K\!\!+\!\!1]=\cN_{k}[K]\]
and
\[\cN_{i+1,j+1}[K\!\!+\!\!1] = \cN_{ij}[K], \quad 1\le i<j\le K,\quad K\ge 2.\]
\end{prop}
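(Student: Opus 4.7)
The plan is to exploit the linear system \eqref{eq:count} together with the Toeplitz property to show that $(\cN_2[K+1],\ldots,\cN_{K+1}[K+1])^T$ satisfies the same equation as $(\cN_1[K],\ldots,\cN_K[K])^T$, and then invoke invertibility of $\mathbb{G}_K$ to conclude. The side-branch statement will then follow from the first part by the definition of Tokunaga coefficients.

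\textbf{First claim.} Write down \eqref{eq:count} at level $K+1$:
\[
\mathbb{G}_{K+1}\bigl(\cN_1[K+1],\cN_2[K+1],\ldots,\cN_{K+1}[K+1]\bigr)^T = -e_{K+1}.
\]
Here the critical point is that the off-diagonal entries of $\mathbb{G}_{K+1}$ are $T_1+2,T_2,T_3,\ldots$, independent of the level $K+1$: this is precisely the combination of mean coordination (Def.~\ref{coord}) and the Toeplitz property (Def.~\ref{Tsi}) encoded in mean Horton self-similarity. Discarding the first row and the first column of $\mathbb{G}_{K+1}$ produces exactly the matrix $\mathbb{G}_K$, and the corresponding truncation of $-e_{K+1}$ is $-e_K$. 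Hence
\[
\mathbb{G}_K\bigl(\cN_2[K+1],\cN_3[K+1],\ldots,\cN_{K+1}[K+1]\bigr)^T = -e_K.
\]
But $\bigl(\cN_1[K],\ldots,\cN_K[K]\bigr)^T$ satisfies the same equation by \eqref{eq:count}. Since $\mathbb{G}_K$ is upper triangular with diagonal entries $-1$, it is invertible, forcing $\cN_{k+1}[K+1]=\cN_k[K]$ for $1\le k\le K$.

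\textbf{Second claim.} By definition of the Tokunaga coefficient we have $\cN_{i,j}[K] = T_{i,j}[K]\,\cN_j[K]$. Mean Horton self-similarity gives $T_{i,j}[K]=T_{j-i}$ for every admissible triple $(i,j,K)$, so in particular
\[
T_{i+1,j+1}[K+1] = T_{(j+1)-(i+1)} = T_{j-i} = T_{i,j}[K].
\]
Combining this with the first claim ($\cN_{j+1}[K+1]=\cN_j[K]$) yields
\[
\cN_{i+1,j+1}[K+1] = T_{i+1,j+1}[K+1]\,\cN_{j+1}[K+1] = T_{i,j}[K]\,\cN_j[K] = \cN_{i,j}[K],
\]
as required.

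\textbf{Where the work sits.} There is no real obstacle: once one sets up \eqref{eq:count}, the argument is a matrix-truncation observation plus invertibility of an upper-triangular matrix. The only thing to be careful about is the compatibility of $\mathbb{G}_K$ and $\mathbb{G}_{K+1}$, and this compatibility is exactly what mean Horton self-similarity is engineered to provide — it is why both mean coordination and the Toeplitz property are built into Def.~\ref{def:ss2}. Everything else is bookkeeping.
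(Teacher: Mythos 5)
Your proof is correct and follows essentially the same route as the paper: the paper also writes the counting system \eqref{eq:count} at level $K+1$, restricts to the last $K$ components to recover $\mathbb{G}_K(\cN_2[K+1],\ldots,\cN_{K+1}[K+1])^T=-e_K$, and concludes by comparison with the level-$K$ system. You merely make explicit two points the paper leaves implicit — the invertibility of the upper-triangular matrix $\mathbb{G}_K$ and the derivation of the side-branch identity from $\cN_{i,j}[K]=T_{j-i}\,\cN_j[K]$ — which is fine.
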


\medskip
\begin{Def}[{{\bf Tokunaga self-similarity}}]
\label{ss2}
A mean Horton self-similar measure $\mu$ on $\cBT$ is called 
{\it Tokunaga self-similar} with parameters $(a,c)$ if its 
Tokunaga sequence $\{T_j\}_{j=1,2,\hdots}$ is expressed as
\be \label{eq:tok}
T_j = a\,c^{j-1},\quad k\ge 1
\ee
for some constants $a\ge0$ and $c>0$.
\end{Def}\index{Tokunaga!self-similarity}
Tokunaga self-similarity \eqref{eq:tok} specifies a combinatorial tree 
shape (up to a permutation of side branch attachment within a given branch) with only two parameters $(a,c)$, 
hence suggesting a conventional modeling paradigm.
The empirical validity of the Tokunaga self-similarity  constraints \eqref{eq:tok} 
has been confirmed for a variety of river networks at
different geographic locations \cite{Pec95,Tarboton96,DR00,MTG10,ZZF13},
as well as in other types of data represented by trees, including 
botanical trees \cite{NTG97}, 
the veins of botanical leaves \cite{TPN98,PT00}, 
clusters of dynamically limited aggregation \cite{O92,NTG97}, 
percolation and forest-fire model clusters \cite{ZWG05,YNTG05}, 
earthquake aftershock sequences \cite{THR07,HTR08,Y13}, 
tree representation of symmetric random walks \cite{ZK12} (Sect. \ref{sec:erw}),
and hierarchical clustering \cite{GNT99}.
The conditions \eqref{eq:tok}, however, lacks a theoretical justification.
We make a step towards justifying this condition in Sect.~\ref{sec:TokTI}.

\begin{Rem}[{{\bf Mean self-similarity is a property of conditional measures}}]
\label{rem:muk}
The properties introduced in this section -- 
mean coordination (Def.~\ref{coord}),
Toeplitz (Def.~\ref{Tsi}),
mean Horton prune-invariance (Def.~\ref{mpi}), 
and mean Horton self-similarity (Def.~\ref{ss1},\ref{def:ss2}) -- are completely
specified by a set of conditional measures $\{\mu_K\}$, and are independent 
of the randomization probabilities $p_K = \mu(\cH_K)$, see Eq.~\eqref{muk}.
\end{Rem}

\begin{Rem}[{{\bf Terminology}}]
\label{rem:terminology}
The self-similarity concepts studied in this work refer to
a measure $\mu$, or a collection of conditional
measures $\{\mu_K\}$, on a suitable space of trees. 
For the sake of brevity, we sometimes use a common abuse of notations and discuss
self-similarity of a random tree $T\stackrel{d}{\sim}\mu$ 
(e.g., claiming that a tree $T$ is mean Horton self-similar, etc.).
Formally, such statements apply to the respective
tree distribution $\mu$.
\end{Rem}

\subsection{Examples of self-similar trees}
\label{sec:ex}

This section collects some examples (and non-examples) of self-similar trees
and related properties.

\begin{figure}[t] 
\centering\includegraphics[width=\textwidth]{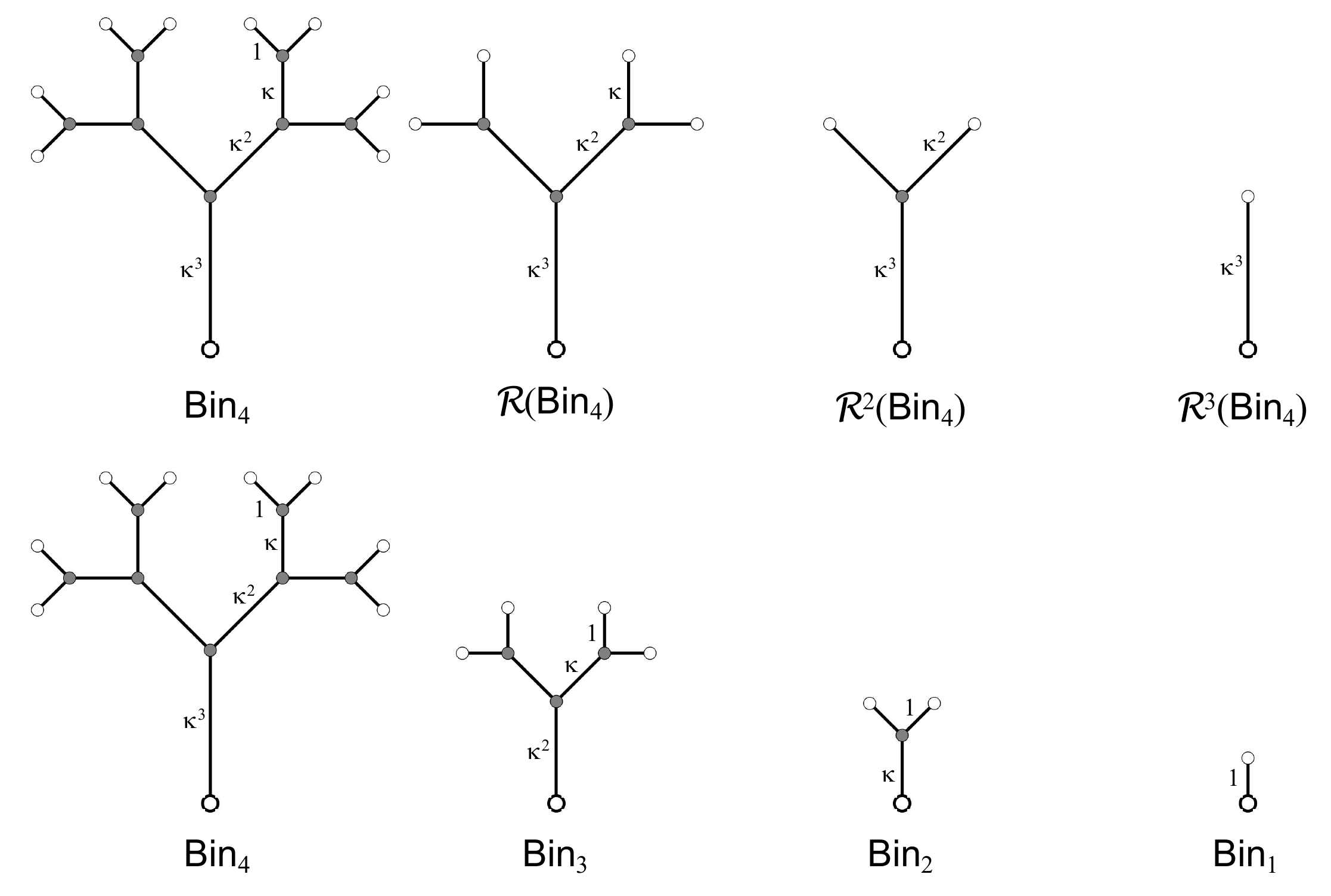}
\caption[Self-similarity of perfect binary trees]
{Self-similarity of perfect binary trees ${\sf Bin}(\kappa)\subset\cBL^|$
(Ex.~\ref{ex:perfect}).
The length of edges of order $i$ is $\kappa^{i-1}$ for some $\kappa>0$.
The space ${\sf Bin}(\kappa)$ is Horton self-similar with $\zeta=\kappa$ 
and Tokunaga sequence $T_j=0$, $k\ge 1$.
In this figure, $\kappa=1.5$.
We write ${\sf Bin}_K$ for the tree of order $K$.
Top row shows three consecutive Horton prunings of ${\sf Bin}_4$.
Bottom row shows trees ${\sf Bin}_{4,3,2,1}$.
Here, for any $K\ge 1$ and $m\ge 0$, the tree ${\sf Bin}_K$ is 
obtained by scaling all edges of the tree $\cR^m({\sf Bin}_{K+m})$ 
by a multiplicative factor $\kappa^{-m}$.
The four columns of the figure correspond to $m=0,1,2,3$ and $K+m=4$.
The lengths of selected edges are indicated in the figure.}
\label{fig:perfect}
\end{figure}

\begin{ex}[{{\bf Perfect binary trees}}]
\label{ex:perfect}
Recall that a binary tree is called {\it perfect}\index{tree!perfect binary} if it is reduced and  
all its leaves have the same depth (combinatorial distance from the root).
Consider space ${\sf Bin}\subset\cBT^|$ of finite planted perfect binary trees;
see Fig.~\ref{fig:perfect}.
We write $D=D[T]$ for the depth of a tree $T$ and ${\sf Bin}_D\subset{\sf Bin}$ 
for the subspace of trees of depth $D\ge 1$.
The subspace ${\sf Bin}_D$ consists of a single tree with $2^{D-1}$ leaves;
it has Horton-Strahler order $D$.
Every conditional measure $\mu_K$ in this case is a point measure on ${\sf Bin}_K$, $K\ge 1$.
Moreover, the order of a vertex at depth $1\le d\le D$ (and its parental edge) is $D-d+1$, and 
for the tree ${\sf Bin}_K$ we have \[N_k[{\sf Bin}_K] = 2^{K-k}, \quad K\ge 1, k\le K.\]

We write ${\sf Bin}(\kappa)\subset\cBL^|$ for the space of metric trees
with combinatorial shapes from ${\sf Bin}$ and 
length $\kappa^{i-1}$ assigned to edges of order $i\ge 1$.
The bottom row of Fig.~\ref{fig:perfect} shows trees ${\sf Bin}_i$, $i=4,3,2,1$,
that correspond to $\kappa=1.5$.

\begin{itemize}
\item[(a)] Coordination in shapes (Def.~\ref{def:coord} or \ref{def:distss}(i)) and 
in lengths (Def.~\ref{def:distss}(ii)).
The space ${\sf Bin}$ is coordinated in shapes and lengths, since
every subtree of order $K$ in a tree of order $H\ge K$ (not necessarily a 
uniform complete subtree) is  
the tree ${\sf Bin}_K$.
\item[(b)] Mean coordination (Def.~\ref{coord}) and Toeplitz property (Def.~\ref{Tsi}).
By construction, the space ${\sf Bin}$ has no side-branching ($N_{i,j}[T]=0$),
and so 
\[T_{i,j}[K]=T_{j-i}[K]=T_{j-i}=0,\quad i< j.\]
This implies mean coordination and Toeplitz property.
\item[(c)] Mean self-similarity (Def.~\ref{ss1}) follow from (b). 
\item[(d)] Mean Horton self-similarity (Def.~\ref{def:ss2}).
Recall that subspace ${\sf Bin}_K$ consists of a single tree for any $K\ge 1$.
Since
\[{\sf Bin}_K=\cR({\sf Bin}_{K+1}),\quad K\ge 1,\]
the space is mean Horton prune-invariant.
Together with mean coordination of (b) this implies mean Horton self-similarity.
\item[(e)] Combinatorial Horton self-similarity (Def.~\ref{def:ss}).
Observe that the argument used in (d) also implies Horton prune-invariance
in shapes (Def.~\ref{def:prune} or \ref{def:distss}(iii)). 
Together with coordination in shapes of (a) this gives combinatorial Horton
self-similarity.
\item[(f)] Tokunaga self-similarity with $a=0$ (Def.~\ref{ss2}) follows from (b). 
\item[(g)] Horton prune-invariance in lengths (Def.~\ref{def:distss}(iv)). 
By construction, the leaves of a pruned tree have length $\kappa$; and 
the edge lengths change by a multiplicative factor $\kappa$ with every
combinatorial step toward the root.
This implies Horton prune-invariance in lengths with $\zeta = \kappa$.
\item[(h)] Self-similarity (Def.~\ref{def:distss}) with $\zeta=\kappa$
follows from (a), (c) or (d), and (g). It implies that
for any $K\ge 1$ and $m\ge 0$, the tree ${\sf Bin}_K$ is 
obtained by scaling all edges of the tree $\cR^m({\sf Bin}_{K+m})$ 
by a multiplicative factor $\kappa^{-m}$.
The four columns of Fig.~\ref{fig:perfect} correspond to $m=0,1,2,3$ and $K+m=4$.
\end{itemize}
\end{ex}

\begin{ex}[{{\bf Combinatorial critical binary Galton-Watson trees}}]
\label{ex1}
The Galton-Watson distribution $\mathcal{GW}(\{q_k\})$ on $\cT^|$ has the coordination property
for any distribution $\{q_k\}$ with $p_1\ne 1$. 
Indeed, the Markovian branching mechanism (see Sect.~\ref{sec:GW}) creates subtrees 
of the same structure, 
independently of the tree order. 
This implies coordination. 
However, mean and distributional prune-invariance (and hence mean and combinatorial Horton self-similarity) only hold in the critical binary case $\mathcal{GW}(\frac{1}{2},\frac{1}{2})$ \cite{BWW00}.
The corresponding Tokunaga sequence is $T_j=2^{j-1}$, $j\ge 1$, which
implies Tokunaga self-similarity with parameters $(a,c)=(1,2)$.
\end{ex}

\begin{figure}[t] 
\centering\includegraphics[width=0.8\textwidth]{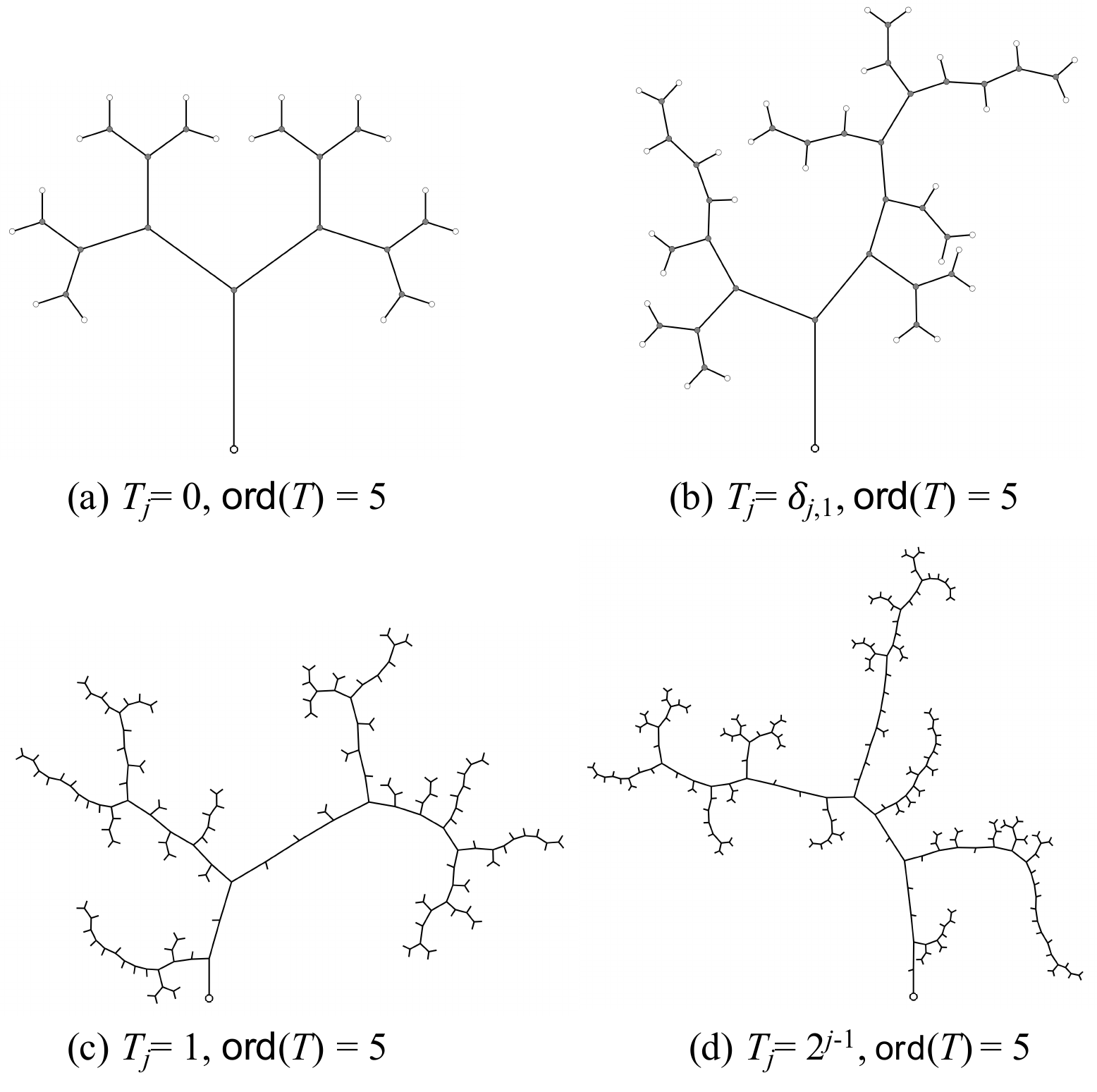}
\caption[Combinatorial Tokunaga trees]
{Tokunaga trees. Different panels correspond to different Tokunaga sequences $T_j=ac^{j-1}$.
(a) $(a,c)=(0,0), T_j = 0$,
(b) $(a,c)=(1,0), T_j = \delta_{j,1}$,
(c) $(a,c)=(1,1), T_j = 1$,
(d) $(a,c)=(1,2), T_j = 2^{j-1}$.
The lengths of edges of order $i$ equal $\kappa^{i-1}$, with $\kappa=1.5$.}
\label{fig:Tok_geom}
\end{figure}

\begin{ex}[{{\bf Critical binary Galton-Watson trees with i.i.d. exponential edge lengths}}]
\label{ex2}
The space of critical binary Galton-Watson trees with independent exponential edge lengths
is Horton self-similar with $\zeta=2$; this is shown in Sect.~\ref{ecgw}.
\end{ex}

\begin{ex}[{{\bf Hierarchical Branching Process}}]
\label{ex3}
Section~\ref{HBP} introduces a rich class of measures on $\cBL^|$ induced 
by the Hierarchical Branching Process (HBP). 
Notably, one can construct a version of the process that is Horton self-similar
(Def.~\ref{def:distss}) with an arbitrary Tokunaga sequence $\{T_j\}$ and 
for an arbitrary $\zeta>0$.
This class includes the critical binary Galton-Watson tree with independent 
exponential lengths as a special case.
\end{ex}

\begin{ex}[{{\bf Combinatorial Tokunaga trees}}]
\label{ex3a}
Tokunaga self-similar trees (Def.~\ref{ss2}) are specified by a particular form of the Tokunaga sequence:
\[T_j = ac^{j-1},\quad j\ge 1.\]
This is a very flexible model that can account for a variety of dendritic patterns.
Figure~\ref{fig:Tok_geom} shows four selected examples:
\begin{eqnarray*}
\text{Fig.~\ref{fig:Tok_geom}(a)} &:& (a,c) = (0,0),\quad T_j = 0,\\
\text{Fig.~\ref{fig:Tok_geom}(b)} &:& (a,c) = (1,0),\quad T_j = \delta_{j,1},\\
\text{Fig.~\ref{fig:Tok_geom}(c)} &:& (a,c) = (1,1),\quad T_j = 1,\\
\text{Fig.~\ref{fig:Tok_geom}(d)} &:& (a,c) = (1,2),\quad T_j = 2^{j-1}.
\end{eqnarray*}
The case $T_j=0$ corresponds to perfect binary trees with no side branching (see also Ex.~\ref{ex:perfect}). 
In this case, all branch mergers lead to increase of branch order by unity.
This results in a most symmetric deterministic tree structure.
Some side branching appears for $T_j=\delta_{j,1}$ 
(hence $T_1=1, T_2=0, T_3=0,\dots$): every branch of order $K$ has on average 
a single side branch of order $(K-1)$, and no side branches of lower orders.
This destroys symmetry and introduce randomness in tree shape.
The case $T_j=1$ corresponds to an average of one side branch of any order $1\le k\le K-1$
within a branch of order $K$, resulting
in tentacle-shaped formations of varying length.
The most complicated case illustrated here corresponds to $T_j=2^{j-1}$,
which is the Tokunaga sequence for critical binary Galton-Watson trees (but
not necessarily vice versa); see Ex.~\ref{ex1}.
In this case the number of side branches increases geometrically with
the difference of branch orders, hence producing branches with
widely varying lengths and shapes.
\end{ex}

\begin{ex}[{{\bf Tokunaga trees with i.i.d. exponential edge lengths}}]
\label{ex3b}
Random edge lengths often appear as an element of applied modeling. 
Figure~\ref{fig:Tok_exp} illustrates the same four Tokunaga models
as in Ex.~\ref{ex3a}, with i.i.d. exponential edge lengths.
Clearly, this additional random element substantially affects the 
tree outlook.
The edge length variability becomes a dominant element of the metric tree 
shape.
We notice, in particular, that the four types of trees with
exponential edge lengths in Fig.~\ref{fig:Tok_exp}
look much more similar that the same four types with deterministic
edge lengths related to branch order. 
\end{ex}

\begin{ex}[{{\bf Critical Tokunaga processes}}]
\label{ex4}
Section~\ref{sec:Tok} introduces a subclass of HBP, called critical Tokunaga processes,
with $T_j = (c-1)c^{j-1}$, $j\ge 1$ for an arbitrary $c\ge 1$.
These processes generate tree distributions that are Horton self-similar 
with $\zeta = c$ and have i.i.d. exponential edge lengths.
\end{ex}

\begin{figure}[t] 
\centering\includegraphics[width=0.8\textwidth]{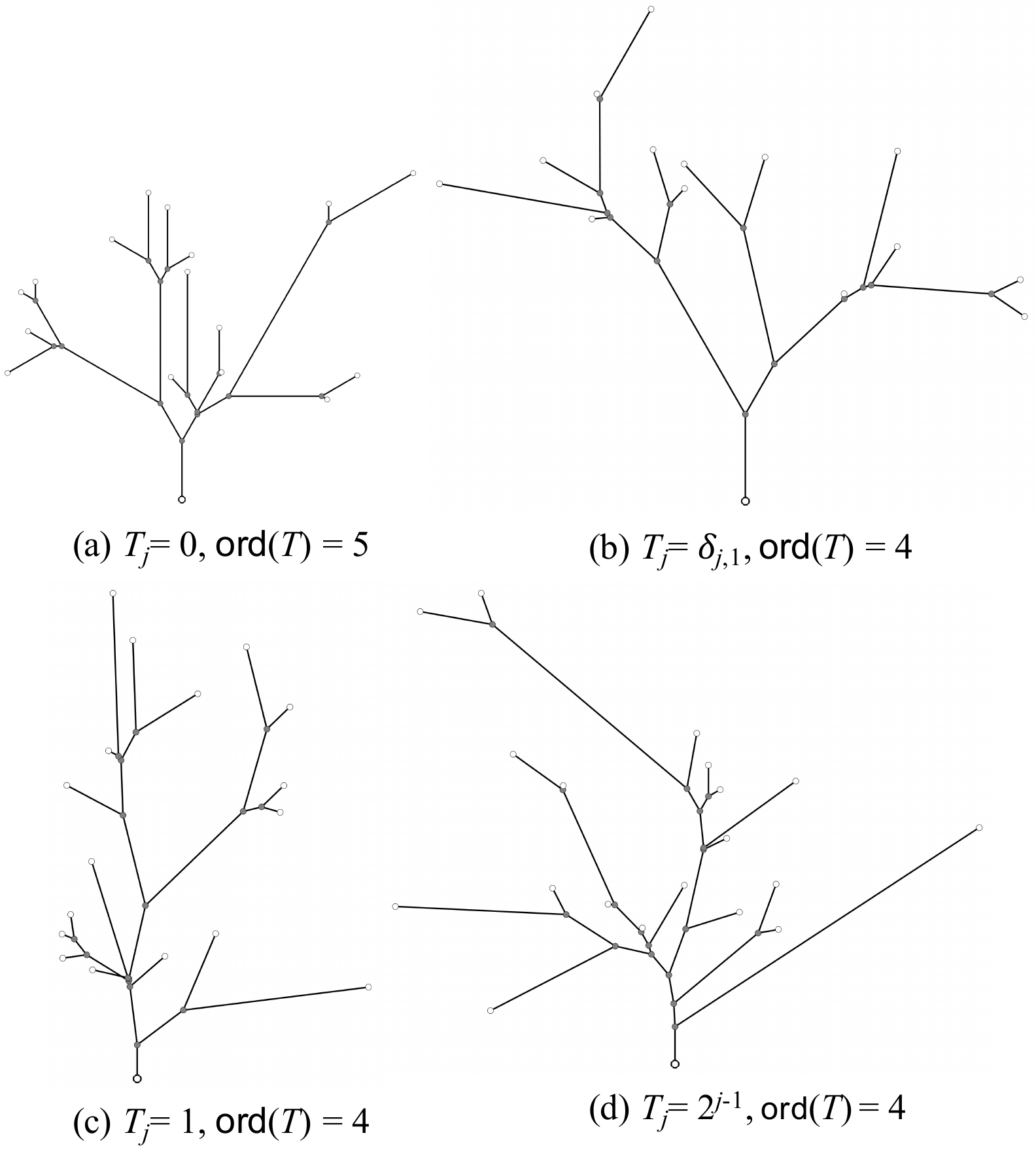}
\caption[Tokunaga trees with i.i.d exponential edge lengths]
{Tokunaga trees with i.i.d exponential edge lengths. 
Different panels correspond to different Tokunaga sequences $T_j=ac^{j-1}$.
(a) $(a,c)=(0,0), T_j = 0$,
(b) $(a,c)=(1,0), T_j = \delta_{k,1}$,
(c) $(a,c)=(1,1), T_j = 1$,
(d) $(a,c)=(1,2), T_j = 2^{j-1}$.}
\label{fig:Tok_exp}
\end{figure}

\begin{ex}[{{\bf Independent random attachment}}]
\label{ex5}
A variety of mean Horton self-similar measures on $\cT$ can be constructed for 
an arbitrary sequence of Tokunaga coefficients $\{T_j\}_{j=1,2,\hdots}$.
Here we give a natural example \cite{KZ16}.

Fix a sequence $\{T_j\}_{j=1,2,\hdots}$ of Tokunaga coefficients. 
By Remark~\ref{rem:muk}, it is sufficient to construct a set of Horton self-similar 
conditional measures $\mu_K$, $K\ge 1$.

The subspace $\cH_1$, which consists of a single-leaf tree $\tau_1$,
possesses a trivial unity mass conditional measure $\mu_1$.
To construct a random tree from $\cH_2$, we select a discrete probability
distribution $P_{1,2}(n)$, $n=0,1,\dots$, with the mean value $T_1$.
A random tree $T\in\cH_2$ is obtained from the single-leaf 
tree $\tau_1$ via the following two operations.
First, we attach two offspring vertices to the leaf of $\tau_1$.
This creates a tree of order $2$ with no side-branches -- one 
internal vertex of degree 3, two leaves, and the root.
Second, we draw the number $\tilde N_{1,2}$ from the distribution $P_{1,2}$, and
attach $\tilde N_{1,2}$ vertices to this tree so that they form side-branches 
of index $\{1,2\}$.

In general, we use a recursive construction procedure. 
Assume that a measure $\mu_{K-1}$, $K\ge 2$, is constructed. 
To construct a random tree $T\in\cH_{K}$ we select a set 
of discrete probability distributions $P_{k,K}(n)$, $k=1,...,K-1$, on $\mathbb{Z}_+$ 
with the respective mean values $T_j$.
A random tree $T\in\cH_{K}$ is constructed by adding branches of order $1$ (leaves)
to a random tree $\tau\in\cH_{K-1}$.
First, we add two new child vertices to every leaf of $\tau$ hence producing
a tree $\tilde T$ of order $K$ with no side-branches of order $1$.
Second, for each branch $b$ of order $2\le j\le K$ in $\tilde T$ we draw a random number 
$\tilde N_{1,j}(b)$ 
from the distribution $P_{j-1,K}$ and attach $\tilde N_{1,j}(b)$ new child vertices to 
this branch so that they form side-branches of index $\{1,j\}$. 
Each new vertex is attached in a random order with respect to
the existing side-branches.
Specifically, we notice that $m\ge 0$ side-branches attached to a branch of order $j$
are uniquely associated with $m+1$ edges within this branch. 
The attachment of the new $\tilde N_{1,j}(b)$ vertices among the $m+1$ edges
is given by the equiprobable multinomial distribution with $m+1$ categories
and $\tilde N_{1,j}(b)$ trials.

The procedure described above generates a set of mean-coordinated measures $\{\mu_K\}_{K\ge 1}$ 
on $\{\cH_K\}_{K\ge 1}$, since 
the mean values $T_j$ of the distributions $P_{k,K}$ are independent of $K$.
Furthermore, observe that
\[N_{i,j}=\sum_{b_i=1}^{N_j}\tilde N_{1,j-i+1}(b_i),\]
\begin{eqnarray}
\label{condarg}
\cN_{i,j}[K]&=&{\sf E}_K[N_{i,j}]={\sf E}_K\left[{\sf E}_K[N_{i,j}|N_j]\right]
={\sf E}_K[N_j\,T_{j-i}]\\
&=&T_{j-i}\,{\sf E}_K[N_j]=T_{j-i}\,\cN_j[K]\nonumber,
\end{eqnarray}
and hence
$T_{i,j}[K]=\cN_{i,j}[K]/\cN_{j}[K]=T_{j-i}$, so the tree is mean self-similar,
according to Def.~\ref{ss1}.

\noindent
Finally, to make that construction combinatorially Horton self-similar (Def. \ref{def:ss}), each tree $\tau_K\in\cH_K$ must be assigned the probability 
$p_K=p(1-p)^{K-1}$.
\end{ex}

\begin{ex}[{{\bf Why coordination?}}]
\label{rem_coord}
Relating mean Horton self-similarity (Def. \ref{def:ss2}) to 
mean prune-invariance (Def.~\ref{mpi}) is quite intuitive (see also \cite{BWW00}).
Much less so is the requirement of mean coordination of conditional measures 
(Def.~\ref{coord}), included in the definition of mean self-similarity.
This requirement is motivated by our goal to bridge the measure-theoretic 
definition of self-similarity via the pruning operation (Def.~\ref{def:ss2})
to a branch counting definition (Def.~\ref{ss1}).
In applications, when a handful of trees of different orders is observed, the coordination assumption
allows one to estimate the Tokunaga coefficients $T_{i,j}$ and make inference regarding
the Toeplitz property; see \cite{Pec95,NTG97,DR00,ZZF13}. 
The absence of coordination, at the same time, 
allows for a variety of prune-invariant
measures with no Toeplitz constraint, which are hardly treatable in applications. 
To give an example of such a measure, let select any tree $\tau_2$ from the pre-image
of the only tree $\tau_1\in\cH_1$ of order $K=1$ under the pruning operation: $\tau_2\in\cR^{-1}(\tau_1)=\cH_2$.
In a similar fashion, select any tree $\tau_{K+1}$ from the pre-image of $\tau_{K}$ for $K\ge 2$.
This gives us a collection of trees $\tau_K\in\cH_K$, $K\ge 1$ such that $\cR(\tau_{K+1}) = \tau_K$.
Assign the full measure on $\cH_K$ to $\tau_K$: $\mu_K(\tau_K)=1$.
By construction, the measures $\{\mu_K\}$ are mean prune-invariant.
They, however, may satisfy neither the mean coordination nor the Toeplitz property.
This example illustrates how one can produce rather obscure collections of 
mean prune-invariant measures, providing a motivation for the
coordination requirement.
\end{ex}

\section{Horton law in self-similar trees}\label{sec:HLSST}
In this section, we introduce the {\it strong Horton law} for the numbers of branches
of different orders in a combinatorial tree on $\cT$
(Def.~\ref{def:Horton_rv}) and for the respective averages (Def.~\ref{def:Horton_mean}).
The main result of this section (Thm.~\ref{thm:HLSST}) shows that the mean Horton self-similarity (Defs.~\ref{ss1} and \ref{def:ss2}) implies the strong Horton law for mean branch numbers
(Def. \ref{def:Horton_mean}).
\index{Horton law!strong Horton law}

Consider a measure $\mu$ on $\cT$ and its conditional measures $\mu_K$, each defined
on subspace $\cH_K\subset\cT$ of trees of Horton-Strahler 
order $K\ge 1$. 
We write $T\stackrel{d}{\sim}\mu_K$ for a random tree $T$ drawn from subspace 
$\cH_K$ according to measure $\mu_K$.

\begin{Def}[{{\bf Strong Horton law for branch numbers}}] 
\label{def:Horton_rv}
We say that a probability measure $\mu$ on $\cT$ satisfies a strong Horton law for branch numbers
if there exists such a positive (constant) Horton exponent $R\ge2$ that for any $k\ge 1$ 
\be
\label{eq:Horton_rv}
\left(\frac{N_k[T]}{N_1[T]};\,T\stackrel{d}{\sim}\mu_K\right)~\stackrel{p}{\longrightarrow}~ R^{1-k},\quad\text{ as }\quad K\to\infty,
\ee
\noindent that is, for any $\epsilon>0$
\be
\label{eq:Horton_rv1}
\mu_K\left(\left|\frac{N_k[T]}{N_1[T]}-R^{1-k}\right|>\epsilon\right)~{\to}~ 0\quad\text{ as }\quad K\to\infty.
\ee
\end{Def}

\noindent 
Corollary~\ref{cor:DoobMartingaleSHL} in Sect.~\ref{sec:martingale} is an example of the strong Horton law for branch numbers.
In the context of Horton laws, the adjective {\it strong} refers to the type of geometric decay,
while the convergence of random variables is in probability.
Section~\ref{sec:convergence} discusses weaker types of geometric convergence.
An alternative, weaker, definition of the Horton law is formulated in terms of 
expected branch counts.

\begin{Def}[{{\bf Strong Horton law for mean branch numbers}}] 
\label{def:Horton_mean}
We say that a probability measure $\mu$ on $\cT$ satisfies a strong Horton law for mean branch numbers
if there exists such a positive (constant) Horton exponent $R\ge2$ that for any $k\ge 1$ 
\be
\label{eq:Horton_mean}
\lim_{K\to\infty}\left(\frac{{\sf E}\left[N_k[T]\right]}{{\sf E}\left[N_1[T]\right]};\, T\stackrel{d}{\sim}\mu_K\right)
= \lim_{K\to\infty}\frac{\cN_k[K]}{\cN_1[K]}
= R^{1-k}.
\ee
\end{Def}

\medskip
\noindent 
\begin{lem}\label{lem:SHLbn-SHLmean}
The strong Horton law for branch numbers (Def. \ref{def:Horton_rv}) implies the strong Horton law for mean branch numbers (Def. \ref{def:Horton_mean}). 
\end{lem}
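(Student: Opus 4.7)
The proof strategy rests on a deterministic branch-count inequality and a reduction to a uniform-integrability statement. Since every branch of order $m+1$ is formed by the merger of at least two branches of order $m$, we have $N_m[T] \geq 2 N_{m+1}[T]$ and, iterating, $N_k[T] \leq 2^{1-k} N_1[T]$ deterministically for every $T \in \cH_K$ with $k \leq K$. Thus the random ratio $X_K := N_k[T]/N_1[T]$ takes values in the fixed interval $[0, 2^{1-k}]$ and, by Def.~\ref{def:Horton_rv}, converges in $\mu_K$-probability to $R^{1-k}$.

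Rewriting
\[\frac{\cN_k[K]}{\cN_1[K]} = \frac{E_K[X_K \cdot N_1[T]]}{E_K[N_1[T]]}\]
exhibits the target as a size-biased expectation of $X_K$. I would split the numerator over the event $A_K := \{|X_K - R^{1-k}| > \epsilon\}$: on $A_K^c$ the factor $X_K$ lies in $[R^{1-k} - \epsilon,\, R^{1-k} + \epsilon]$, while on $A_K$ it is bounded by $2^{1-k}$. With $\pi_K := E_K[N_1 \cdot \mathbf{1}_{A_K}]/E_K[N_1]$ this gives the two-sided sandwich
\[(R^{1-k} - \epsilon)(1 - \pi_K) \leq \frac{\cN_k[K]}{\cN_1[K]} \leq (R^{1-k} + \epsilon)(1 - \pi_K) + 2^{1-k}\pi_K.\]
Sending first $K \to \infty$ (to drive $\pi_K \to 0$) and then $\epsilon \to 0$ would complete the argument.

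The main obstacle is showing $\pi_K \to 0$, which is exactly the statement that the family $\{N_1[T]/E_K[N_1[T]]\}_K$ is uniformly integrable against the $\mu_K$-vanishing events $A_K$. Convergence of $X_K$ in probability alone is not sufficient in full generality: one can arrange measures for which a small-$\mu_K$-probability event carries a disproportionately large share of $E_K[N_1]$, driving the mean ratio toward $2^{1-k}$ rather than $R^{1-k}$. I would close this gap by supplementing the hypothesis with a bounded coefficient of variation, $\sup_K \Var_K(N_1)/E_K[N_1]^2 < \infty$, whence $\pi_K \to 0$ follows by Cauchy--Schwarz; in the specific tree models treated later in the paper (critical binary Galton--Watson, Tokunaga self-similar processes, HBP) this concentration of $N_1$ about its mean can be verified directly by second-moment or Doob-martingale arguments in the spirit of Thm.~\ref{thm:DoobMartingaleSHL}.
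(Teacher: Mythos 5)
Your decomposition is the same one the paper uses: write $\cN_k[K]/\cN_1[K]$ as the size-biased expectation $\E_K[N_1 X_K]/\E_K[N_1]$ with $X_K=N_k[T]/N_1[T]$, split over the bad event $A_K=\{|X_K-R^{1-k}|>\epsilon\}$, and use the deterministic bounds $N_k[T]\le 2^{1-k}N_1[T]$ and $R\ge 2$. So this is not a different route from the paper's.

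The substantive point is your observation that the argument does not close without showing $\pi_K=\E_K[N_1{\bf 1}_{A_K}]/\E_K[N_1]\to 0$, and here you are right --- in fact the paper's own proof stumbles at exactly this step. It bounds the bad-event contribution $\E_K\big[N_1[T]\,|X_K-R^{1-k}|\,{\bf 1}_{A_K}\big]$ by $\epsilon\, 2^{1-k}$, i.e.\ by $2^{1-k}\mu_K(A_K)$, which silently discards the factor $N_1[T]\ge 2^{K-1}$ from the integrand; the correct bound is $2^{1-k}\,\E_K[N_1{\bf 1}_{A_K}]=2^{1-k}\pi_K\,\E_K[N_1]$, which is precisely the quantity you isolate and which $\mu_K(A_K)<\epsilon$ alone does not make small relative to $\E_K[N_1]$. (The paper's final line $\epsilon+\epsilon 2^{2-k-K}$ betrays the slip: the term $\epsilon 2^{1-k}$ is divided by $\E_K[N_1]\ge 2^{K-1}$ as if it were a constant.) Your heuristic counterexample can be made concrete: let $\mu_K$ put mass $1-1/K$ on a tree of order $K$ with $N_k/N_1= R^{1-k}$ for some $R>2$ (e.g.\ a ``cyclic'' Tokunaga tree with $T_1=R-2$, $T_j=0$ for $j\ge 2$) and mass $1/K$ on a tree of the same order with $N_2/N_1=1/2$ but with $N_1$ larger by a factor $K^2$; then $X_K\stackrel{p}{\to}R^{1-k}$ for every $k$, while $\cN_2[K]/\cN_1[K]\to 1/2\ne R^{-1}$. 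So the implication as stated fails for arbitrary $\mu$, and some uniform-integrability hypothesis on $N_1/\E_K[N_1]$ --- your bounded coefficient of variation, or the martingale concentration available for the critical Tokunaga and Galton--Watson models where the lemma is actually invoked --- is genuinely required. Your proposal is therefore more careful than the paper's proof, not less; the only refinement I would suggest is to state the extra hypothesis as part of the lemma rather than as a remark at the end.
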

\begin{proof}
By construction, if ${\sf ord}(T)=K$, then $N_1[T] \geq 2^{K-1}$.
Accordingly, for any $k \leq K$ we have ${N_k[T] \over N_1[T]}\leq 2^{1-k}$. 
Assuming the strong Horton law \eqref{eq:Horton_rv1} for branch numbers, for any given $\epsilon>0$, we have
$$\mu_K\left(\left|{N_k[T] \over N_1[T]}-R^{1-k}\right|>\epsilon\right)<\epsilon$$
for all sufficiently large $K$.
Thus, for a given $k \in \mathbb{N}$ and for all sufficiently large $K$ exceeding $k$, we have
\begin{align*}
\left|{\cN_k[K] \over \cN_1[K]}-R^{1-k}\right| 
&= \left({\left| \E \left[N_1[T]\left({N_k[T] \over N_1[T]}-R^{1-k}\right)\right]\right| \over \E \big[N_1[T]\big]};\,T\stackrel{d}{\sim}\mu_K\right)\\
&\leq \left({\E \left[N_1[T]\left|{N_k[T] \over N_1[T]}-R^{1-k}\right|\right] \over \E \big[N_1[T]\big]};\,T\stackrel{d}{\sim}\mu_K\right) \\
&\leq \left({\epsilon\E \big[N_1[T]\big]+\epsilon 2^{1-k} \over \E \big[N_1[T]\big]};\,T\stackrel{d}{\sim}\mu_K\right)\\
&\leq \epsilon+\epsilon 2^{2-k-K} < 2\epsilon,
\end{align*}
as $\left|{N_k[T] \over N_1[T]}-R^{1-k}\right| \leq \max\Big(2^{1-k},\,R^{1-k}\Big) \leq 2^{1-k}$. 
This establishes \eqref{eq:Horton_mean}.
\end{proof}

\medskip
\noindent
A similar calculation allows us to establish the following result.
\begin{lem}\label{lem:SHLmeanP-SHLbn}
Consider a probability measure $\mu$ on $\cT$ and suppose the following properties hold: 
\begin{description}
  \item[(i)] $\mu$ satisfies the strong Horton law for mean branch numbers 
  (Def.~\ref{def:Horton_mean}), and
  \item[(ii)] $\forall k\ge 1$ $\exists L_k \in [0,\infty)$ such that $\left(\frac{N_k[T]}{N_1[T]};T\stackrel{d}{\sim}\mu_K\right)\stackrel{p}{\to} L_k$ as $K\to\infty$.
\end{description}
Then, the measure $\mu$ satisfies the strong Horton law for branch numbers (Def.~\ref{def:Horton_rv}), i.e., $L_k=R^{1-k}$.
\end{lem}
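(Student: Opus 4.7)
The plan is to identify $L_k$ with $R^{1-k}$ by showing that the ratio $\cN_k[K]/\cN_1[K]$ must converge to $L_k$ under hypothesis (ii), and then invoking uniqueness of limits against hypothesis (i). Once $L_k = R^{1-k}$ is established for every $k \geq 1$, hypothesis (ii) is precisely the strong Horton law for branch numbers (Def.~\ref{def:Horton_rv}).

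Fix $k \geq 1$ and set $X_K := N_k[T]/N_1[T]$ for $T \stackrel{d}{\sim} \mu_K$. Hypothesis (ii), unpacked in $\epsilon$-$\delta$ form, supplies for every $\epsilon > 0$ some $K_0$ with $\mu_K(|X_K - L_k| > \epsilon) < \epsilon$ whenever $K \geq K_0$. From $N_1[T] \geq 2^{K-1}$ (as used in the proof of Lemma~\ref{lem:SHLbn-SHLmean}), together with the fact that distinct complete subtrees of order $k$ contain disjoint sets of at least $2^{k-1}$ leaves each, one also has the deterministic bound $0 \leq X_K \leq 2^{1-k}$, whence $L_k \in [0, 2^{1-k}]$ and $|X_K - L_k| \leq 2^{1-k}$.

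Next, I would mirror the estimate in the proof of Lemma~\ref{lem:SHLbn-SHLmean} with $R^{1-k}$ replaced by $L_k$:
\[
\left|\frac{\cN_k[K]}{\cN_1[K]} - L_k\right|
= \frac{\bigl|\E\bigl[N_1[T]\,(X_K - L_k)\bigr]\bigr|}{\E[N_1[T]]}
\leq \frac{\E\bigl[N_1[T]\,|X_K - L_k|\bigr]}{\E[N_1[T]]}.
\]
Splitting the numerator on $\{|X_K - L_k| \leq \epsilon\}$ and its complement, and using $|X_K - L_k| \leq 2^{1-k}$ together with $\E[N_1[T]] \geq 2^{K-1}$, produces the same final bound $\epsilon + \epsilon \cdot 2^{2-k-K} < 2\epsilon$ obtained in Lemma~\ref{lem:SHLbn-SHLmean}. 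Since $\epsilon$ was arbitrary, $\cN_k[K]/\cN_1[K] \to L_k$; hypothesis (i) asserts the same limit equals $R^{1-k}$, so $L_k = R^{1-k}$, and (ii) then yields Def.~\ref{def:Horton_rv}.

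The only delicate point, inherited from Lemma~\ref{lem:SHLbn-SHLmean}, is the control of the ``bad event'' contribution $\E\bigl[N_1[T]\,\mathbf{1}_{|X_K - L_k| > \epsilon}\bigr]$: this is where one passes from smallness of a $\mu_K$-probability to smallness of an $N_1[T]$-weighted expectation, and a fully rigorous handling would require a uniform-integrability bound on $N_1[T]/\E[N_1[T]]$ or equivalent structural information about the family $\{\mu_K\}$. Granted that step as in the preceding lemma, the argument is otherwise a routine rerun of the earlier calculation with $L_k$ in place of $R^{1-k}$.
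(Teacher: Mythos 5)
Your proposal is correct and follows exactly the route the paper intends: the paper gives no separate proof of this lemma, stating only that it follows by ``a similar calculation'' to Lemma~\ref{lem:SHLbn-SHLmean}, and your rerun of that weighted-expectation estimate with $L_k$ in place of $R^{1-k}$, combined with uniqueness of limits against hypothesis (i), is precisely that calculation. The one delicate point you flag --- bounding the bad-event term $\E\bigl[N_1[T]\,\mathbf{1}_{\{|X_K-L_k|>\epsilon\}}\bigr]$, which strictly requires uniform integrability of $N_1[T]/\E[N_1[T]]$ rather than just smallness of $\mu_K(|X_K-L_k|>\epsilon)$ --- is present, unremarked, in the paper's own proof of Lemma~\ref{lem:SHLbn-SHLmean}, so your treatment is faithful to (and no less rigorous than) the source.
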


\medskip
\noindent
Sufficient conditions for the strong Horton law for mean branch numbers in binary trees 
were found in \cite{KZ16}, 
hence providing rigorous foundations for the celebrated regularity that has escaped a 
formal explanation for a long time.
These conditions are presented in Thm.~\ref{thm:HLSST} of this section.
It has been shown in \cite{KZ17ahp} that the tree that describes a trajectory of Kingman's coalescent process with $N$ particles obeys a weaker version of Horton law as $N\to\infty$ (Sect.~\ref{sec:Kingman}), and that the first pruning of this tree for any finite $N$
is equivalent to a level set tree of a white noise (see Sect.~\ref{LST} for definitions).

Consider a mean self-similar measure $\mu$ on $\cBT$ with a Tokunaga sequence $\{T_j\}_{j=1,2,\hdots}$.
Define a sequence $t(j)$ as
\be \label{seq:tj}
t(0)=-1,~t(1)=T_1+2,~\text{ and }t(j)=T_j \text{ for }j \geq 2,
\ee
and let $\hat{t}(z)$ denote the generating function of $\{t(j)\}_{j=0,1,\hdots }$:
\be\label{def:that}
\hat{t}(z)=\sum\limits_{j=0}^\infty z^j t(j)=-1+2z+\sum\limits_{j=1}^\infty z^j T_j.
\ee 
\noindent For a holomorphic function $f(z)$ represented by a power series $f(z)=\sum\limits_{j=0}^\infty a_j z^j$ in a nonempty disk $|z|\leq \rho$ we write 
\be\label{eq:check}
\check{f}(j)={1 \over 2\pi i}\oint\limits_{|z|=\rho} {f(z) \over z^{j+1}} dz=a_j.
\ee

\begin{thm}[{{\bf Strong Horton law in a mean self-similar tree}}]
\label{thm:HLSST}
Suppose $\mu$ is a mean Horton self-similar measure on $\cBT$ with a 
Tokunaga sequence $\{T_j\}_{j=1,2,\hdots}$ such that
\be
\label{eq:tamed}
\limsup_{j\to\infty} T_j^{1/j}<\infty.
\ee
Then the strong Horton law for mean branch numbers (Def.~\ref{def:Horton_mean}) holds
with the Horton exponent $R=1/w_0$, where $w_0$ is the only real zero of
the generating function $\hat{t}(z)$ in the interval $\left(0,{1 \over 2}\right]$. 
Moreover,
\begin{equation} \label{zetaT}
\cN_1[K+1]=
-\widecheck{\left(\frac{1}{\widehat ~ \!\! t}\right)}(K) 
\end{equation}
and
\be\label{eq:Nk}
\lim_{K\to\infty}\left(\cN_1[K]\,R^{-K}\right) = const. > 0.
\ee
Conversely, if 
$~\limsup\limits_{j \rightarrow \infty} T_j^{1/j} =\infty$, 
then the limit 
$\lim\limits_{K\to\infty}\frac{\cN_{k}[K]}{\cN_1[K]}$ 
does not exist at least for some $k$.
\end{thm}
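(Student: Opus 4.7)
The plan is to reduce the two-parameter family $\cN_k[K]$ to a one-parameter sequence using Proposition \ref{prop1_Horton}, derive a rational generating function, locate its dominant singularity via a Pringsheim-style argument, and extract exponential asymptotics; the converse will be a short analytic-function argument.

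First, Proposition \ref{prop1_Horton} tells me that $\cN_k[K]$ depends only on $K-k$, so setting $u(m):=\cN_1[m]$ I have $\cN_k[K]=u(K-k+1)$. Plugging this into the counting equations \eqref{count1} and using the definition \eqref{seq:tj} of $t(j)$, the system collapses to the one-index linear recurrence $\sum_{j=0}^{n-1}t(j)\,u(n-j)=0$ for $n\ge 2$, with $u(1)=1$. Introducing $U(z):=\sum_{m\ge 1}u(m)z^m$ and multiplying by $z^n$, the only surviving contribution on the right is the $n=1$ term $t(0)u(1)=-1$, so $\hat t(z)U(z)=-z$, i.e., $U(z)=-z/\hat t(z)$. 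Reading off the coefficient of $z^{K+1}$ immediately yields formula \eqref{zetaT}.

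Next, I locate the dominant singularity of $U$. Under \eqref{eq:tamed}, $\hat t$ has positive radius of convergence $\rho$. Along the positive real axis $\hat t$ is strictly increasing on its disk of convergence (all $t(j)$ with $j\ge 1$ are nonnegative and $t(1)=T_1+2>0$), with $\hat t(0)=-1$, so the intermediate value theorem delivers a unique real zero $w_0\in(0,1/2]$, simple because $\hat t'(w_0)\ge 2>0$. A Pringsheim-type argument applied to $\hat t(z)+1=\sum_{j\ge 1}t(j)z^j$, with $t(1)>0$ breaking the equality case in the triangle inequality, shows that $w_0$ is the unique zero of $\hat t$ in the closed disk $|z|\le w_0$, so $w_0$ is the simple pole of $U$ of smallest modulus. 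Two routes now give the asymptotic. \emph{Singularity analysis} (when $\rho>w_0$): subtracting the principal part of $U$ at $w_0$ leaves an analytic function on a slightly larger disk, yielding $u(m)\sim R^m/\hat t'(w_0)$ with $R=1/w_0$. \emph{Renewal-theoretic route}: set $v(m):=u(m)w_0^m$ and $b_j:=t(j)w_0^j\ge 0$; then $\sum_{j\ge 1}b_j=\hat t(w_0)+1=1$, the recurrence becomes the renewal equation $v(m)=\sum_{j=1}^{m-1}b_j v(m-j)$ with $v(1)=1$, aperiodic since $b_1>0$, and Erd\H{o}s--Feller--Pollard yields $v(m)\to 1/\mu$ with $\mu=w_0\hat t'(w_0)$. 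Either way, \eqref{eq:Nk} follows, and the strong Horton law for mean branch numbers reads $\cN_k[K]/\cN_1[K]=u(K-k+1)/u(K)\longrightarrow R^{1-k}$.

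For the converse, assume $\limsup T_j^{1/j}=\infty$ but the strong Horton law for mean branch numbers holds with some finite $R\ge 2$. Then $\cN_2[K]/\cN_1[K]=u(K-1)/u(K)\to 1/R>0$ forces $u(m)=O((R+\varepsilon)^m)$, so $U$ has positive radius of convergence. Consequently $\hat t(z)=-z/U(z)=-(U(z)/z)^{-1}$ would be analytic at the origin (since $U(z)/z$ is analytic there with nonzero value $u(1)=1$), which would give $\limsup T_j^{1/j}<\infty$, a contradiction. Hence the claimed limit fails to exist (or is zero) for some $k$, in particular for $k=2$. The main obstacle I anticipate is the boundary case $\rho=w_0$ in the forward direction: then $\hat t$ does not extend analytically beyond the closed disk, the pole-subtraction route is unavailable, and one must use the renewal theorem together with a separate verification of the moment condition $w_0\hat t'(w_0)<\infty$ to guarantee a strictly positive limit in \eqref{eq:Nk} rather than the degenerate outcome $u(m)=o(R^m)$.
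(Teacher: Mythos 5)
Your proposal is correct, and its core is the same as the paper's: reduce to the one--index sequence $u(m)=\cN_1[m]$ via Prop.~\ref{prop1_Horton}, identify the generating function $-1/\hat t(z)$ (the paper gets \eqref{zetaT} by a combinatorial convolution count, you get it by $z$-transforming the collapsed recurrence $\sum_{j=0}^{n-1}t(j)u(n-j)=0$ --- equivalent), isolate $w_0$ as the unique minimal-modulus zero using positivity of the coefficients (your Pringsheim argument is exactly the paper's Lemma~\ref{minmod}), and extract the geometric asymptotics from the dominant simple pole (the paper does this by a residue/contour estimate, which is the same as your singularity-analysis route).

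Two points where you genuinely diverge, both to your credit. First, the converse: the paper proves the combinatorial lower bound $\cN_1[K]\ge T_j^{(K-1)/j}$ (Prop.~\ref{prop:NT}) and concludes $\limsup T_j^{1/j}\le R$; you instead observe that a geometric growth bound on $u$ makes $U(z)/z$ analytic and nonvanishing at $0$, so $\hat t=-z/U$ has positive radius of convergence. This is a clean alternative; just note that it needs the limit $\cN_2[K]/\cN_1[K]\to L_2$ to be \emph{positive} to get the geometric bound on $u$ (which is what Def.~\ref{def:Horton_mean} guarantees, since it demands the limits equal $R^{1-k}$ with $R$ finite). Second, the boundary case $w_0=\rho$ that you flag is a real issue, and here you are more careful than the paper: Lemma~\ref{minmod}'s proof simply asserts that the radius of convergence exceeds $w_0$, but this can fail --- e.g.\ $T_j=\tfrac{3}{\pi^2}\,4^j/j^2$ gives $\hat t(1/4)=0$ with radius of convergence exactly $1/4$ and $\hat t'(1/4)=\infty$, so the contour in \eqref{eq:error} has nowhere to sit and, as you predict, the renewal mean $w_0\hat t'(w_0)$ is infinite and the constant in \eqref{eq:Nk} degenerates to zero (the ratio law \eqref{eq:Horton_mean} survives via the renewal ratio-limit theorem). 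Your Erd\H{o}s--Feller--Pollard route, with the moment condition stated explicitly, is the honest way to handle this. One trivial slip: with $u(1)=\cN_1[1]=1$ you have $v(1)=w_0$, not $1$; this only changes the value of the constant, not the argument.
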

\begin{proof}
The proof of Thm.~\ref{thm:HLSST} is given in  Sect.~\ref{sec:proof1}.
\end{proof}
\noindent That the Horton exponent $R$ is reciprocal to the real root
of $\hat t(z)$ was noticed by Peckham \cite{Pec95}, under the
assumption $\displaystyle\lim_{K\to\infty}\left(N_k R^{k-K}\right)=const. >0 $.

\begin{figure}[t] 
\centering
\includegraphics[width=0.49\textwidth]{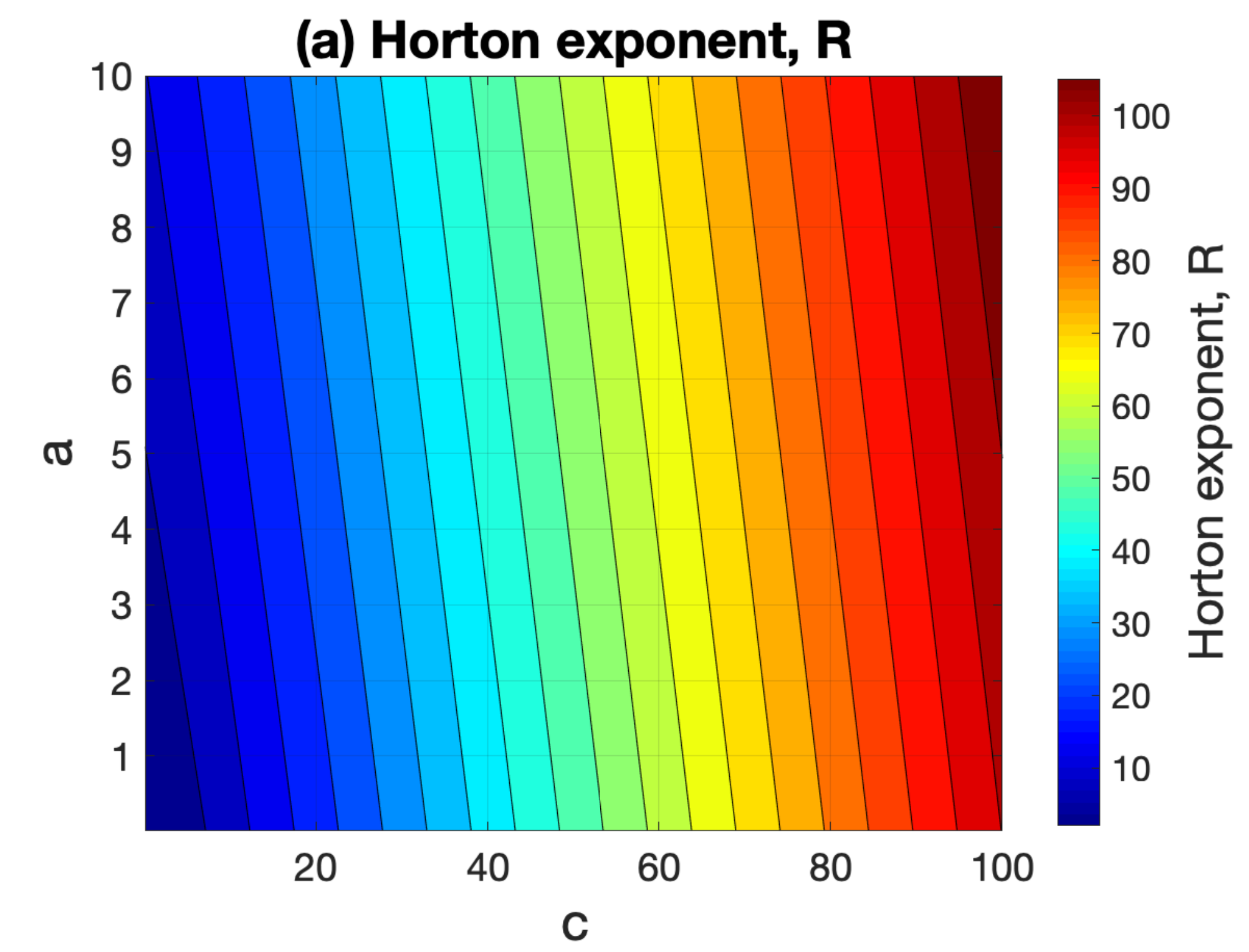}
\includegraphics[width=0.49\textwidth]{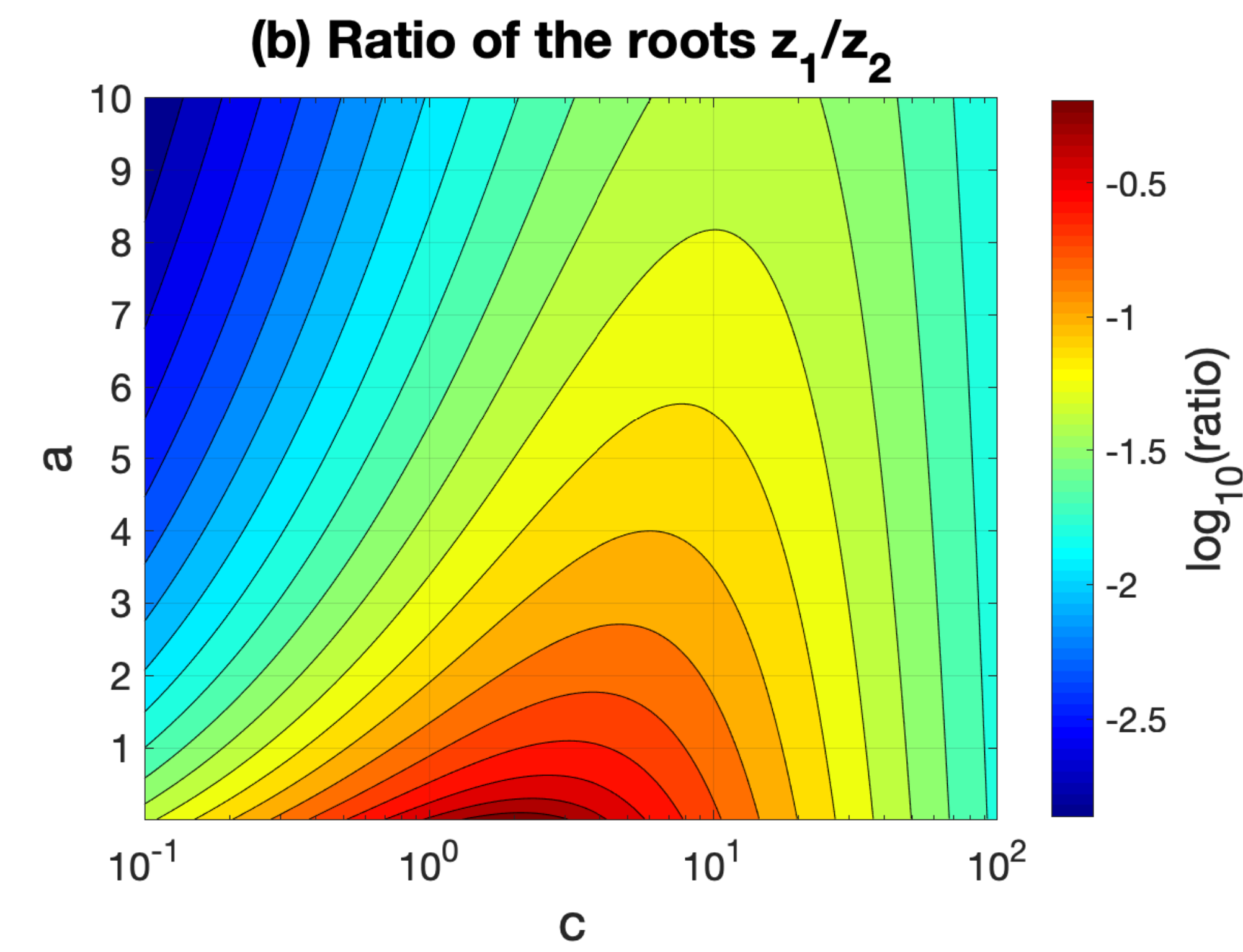}
\caption[Horton law in a Tokunaga tree]
{Strong Horton law in a Tokunaga mean self-similar tree with $T_j=ac^{j-1}$, $j\ge 1$.
(a) Horton exponent $R$ as a function of the Tokunaga parameters $(a,c)$.
(b) The ratio $0<z_1/z_2<1$ of the two roots of the equation $2cz^2-(a+c+2)z+1=0$
as a function of the Tokunaga parameters $(a,c)$. This ratio controls the
rate of convergence in the strong Horton law -- small values increase the rate.}
\label{fig:R}
\end{figure} 

\medskip
\noindent
Below we give two examples of using Theorem~\ref{thm:HLSST}.

\begin{ex}[{{\bf Tokunaga self-similar trees}}] 
Consider a Tokunaga self-similar tree (Def.~\ref{ss2}) with $T_j=a\,c^{j-1}$, where $a,c>0$. 
(We exclude the case $a=0\Rightarrow T_j=0$, which correspond to perfect binary trees with
no side branching.)
This model received considerable attention in the literature \cite{Pec95,Tok78,MG08}, in
part because of its ability to closely describe river networks \cite{ZZF13}. 
Here we have
\[\limsup_{j\to\infty} T_j^{1/j} = c<\infty\]
and
\begin{eqnarray}
\hat{t}(z)&=&-1+2z+az \sum\limits_{j=1}^\infty (cz)^{j-1}
=-1+2z+{az \over 1-cz}\nonumber\\
&=&{-1+(a+c+2)z-2cz^2 \over 1-cz}\text{   for  } |z| < 1/c.
\end{eqnarray}
The discriminant of the quadratic polynomial in the numerator is positive,  
\[(a+c+2)^2-8c > (c+2)^2-8c=(c-2)^2 \geq 0.\] 
Therefore, there exist two real roots, $z_1<z_2$, of the numerator.
It is easy to check that 
\[z_1z_2=(2c)^{-1},0< z_1 <\min\{2^{-1},c^{-1}\}, \text{ and } z_2>\max\{2^{-1},c^{-1}\}.\]
Hence, there is a single root of $\hat{t}(z)=0$ for $|z|<1/c$ of algebraic multiplicity one:
\[ z_1 \equiv w_0 = {a+c+2 -\sqrt{(a+c+2)^2-8c} \over 4c},\]
and the respective Horton exponent is
\begin{equation} \label{HortonR}
R=1/w_0 = {a+c+2 +\sqrt{(a+c+2)^2-8c} \over 2}
\end{equation}
as was observed in earlier works \cite{Tok78,Pec95,MG08}. 
A map of the values of the Horton exponent $R(a,c)$ is shown in Fig.~\ref{fig:R}a.
As suggested by \eqref{HortonR}, the level sets of $R(a,c)$ are fairly approximated
by $a+c = const.$

\begin{figure}[t] 
\centering
\includegraphics[height=1.9in]{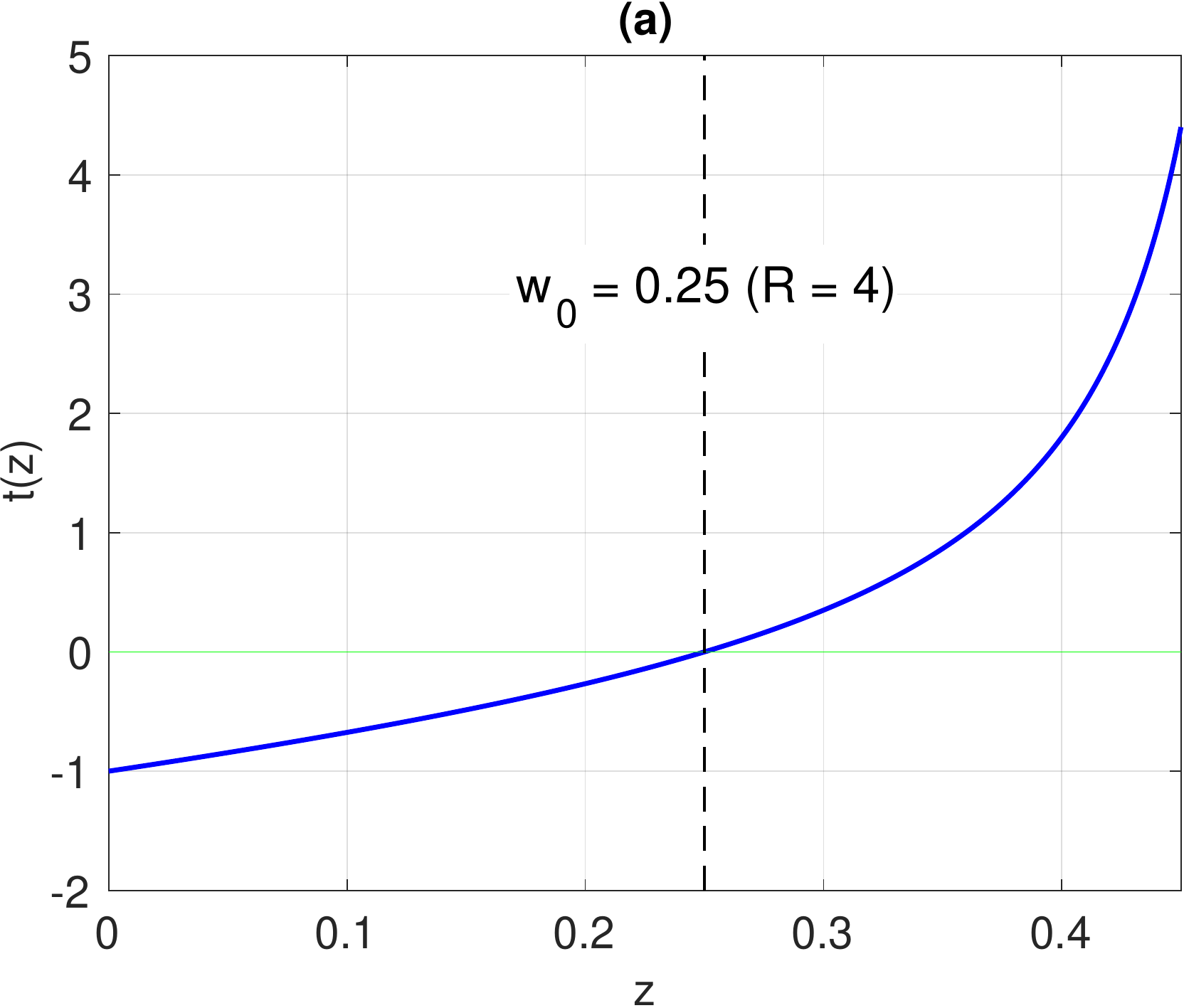}
\includegraphics[height=1.9in]{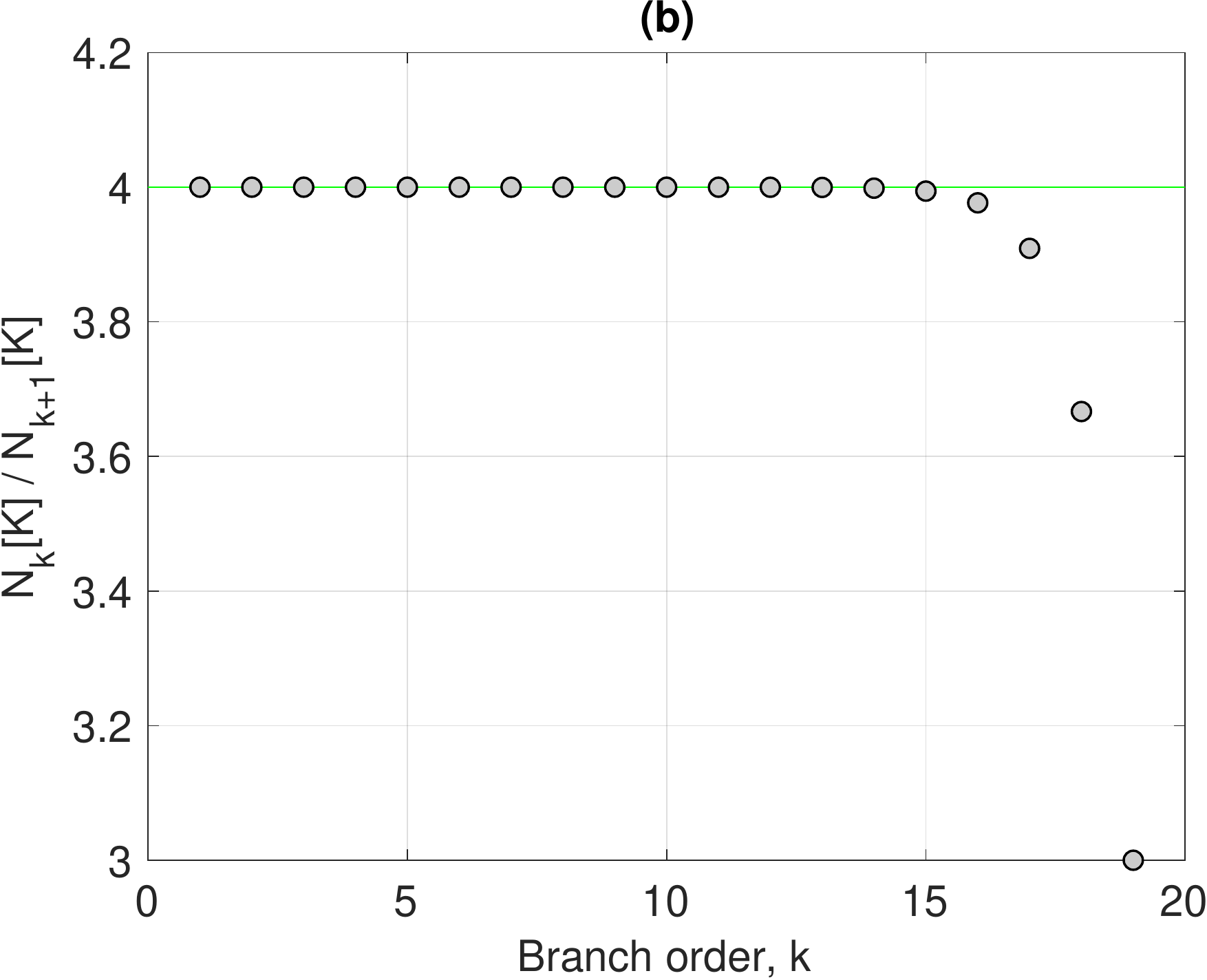}
\caption[Strong Horton law in a mean self-similar tree]
{The strong Horton law in a mean self-similar tree: an illustration.
The figure refers to a Tokunaga mean self-similar measure $\mu$ with
 $T_j = 2^{j-1}$, $j\ge 1$.
(a) Characteristic function $\hat{t}(z)$ (solid blue). 
The zero level is marked by a green horizontal line. 
The real solution $w_0=0.25$ is depicted by a vertical dashed line.
(b) Ratio $\cN_k[K]/\cN_{k+1}[K]$ for tree order $K=20$ and branch orders $k=1,\dots,19$.
The strong Horton law suggests $\cN_k[K]/\cN_{k+1}[K]\approx R = 4$ for large $K$ and
$k$ not too close to $K$.}
\label{fig:that}
\end{figure} 

To examine the rate of convergence in the strong Horton law, we use \eqref{zetaT}.
The reciprocal generating function is given by 
\begin{eqnarray}
-{1 \over \hat{t}(z)}&=&\frac{1-cz}{2c(z-z_1)(z-z_2)}\nonumber\\
&=&{1-cz\over 2c(z_2-z_1)}\left({1\over z-z_1}-{1 \over z-z_2}\right).
\end{eqnarray}
Thus, since ${1 \over z-p}=-\sum\limits_{k=0}^\infty {1 \over p^{k+1}}z^k$ for $|z|<|p|$, formula (\ref{zetaT}) implies
\begin{eqnarray}\label{HortonZeta}
\cN_1[K+1]&=&
{1 \over 2c(z_2-z_1)}
\left({1-cz_1 \over z_1^{K+1}} -{1-cz_2 \over z_2^{K+1}} \right)\nonumber\\
&=&
{1-cz_1 \over 2c(z_2-z_1)}{1\over z_1^{K+1}}\left(1-\left(\frac{z_1}{z_2}\right)^{K+1}
\frac{1-cz_2}{1-cz_1}\right).
\end{eqnarray}
Accordingly, the rate of convergence in \eqref{eq:Nk} is determined by the ratio $z_1/z_2<1$
-- values farther away from 1 lead to faster convergence.
Recall (Prop.~\ref{prop1_Horton}) that 
\[\cN_1[m+1] = \cN_{K-m}[K],\quad 0\le m\le K-1, \quad K\ge 1.\]
Hence, the ratio $z_1/z_2$ also determines the rate of convergence in \eqref{eq:Horton_mean}.
Figure~\ref{fig:R}(b) shows the ratio $z_1/z_2$ as a function of $(a,c)$.
The only region when the ratio is approaching 1, hence slowing down the
convergence rate in the strong Horton law, corresponds to $\{c\approx 2, a<1\}$.

\begin{figure}[t] 
\centering
\includegraphics[width=0.75\textwidth]{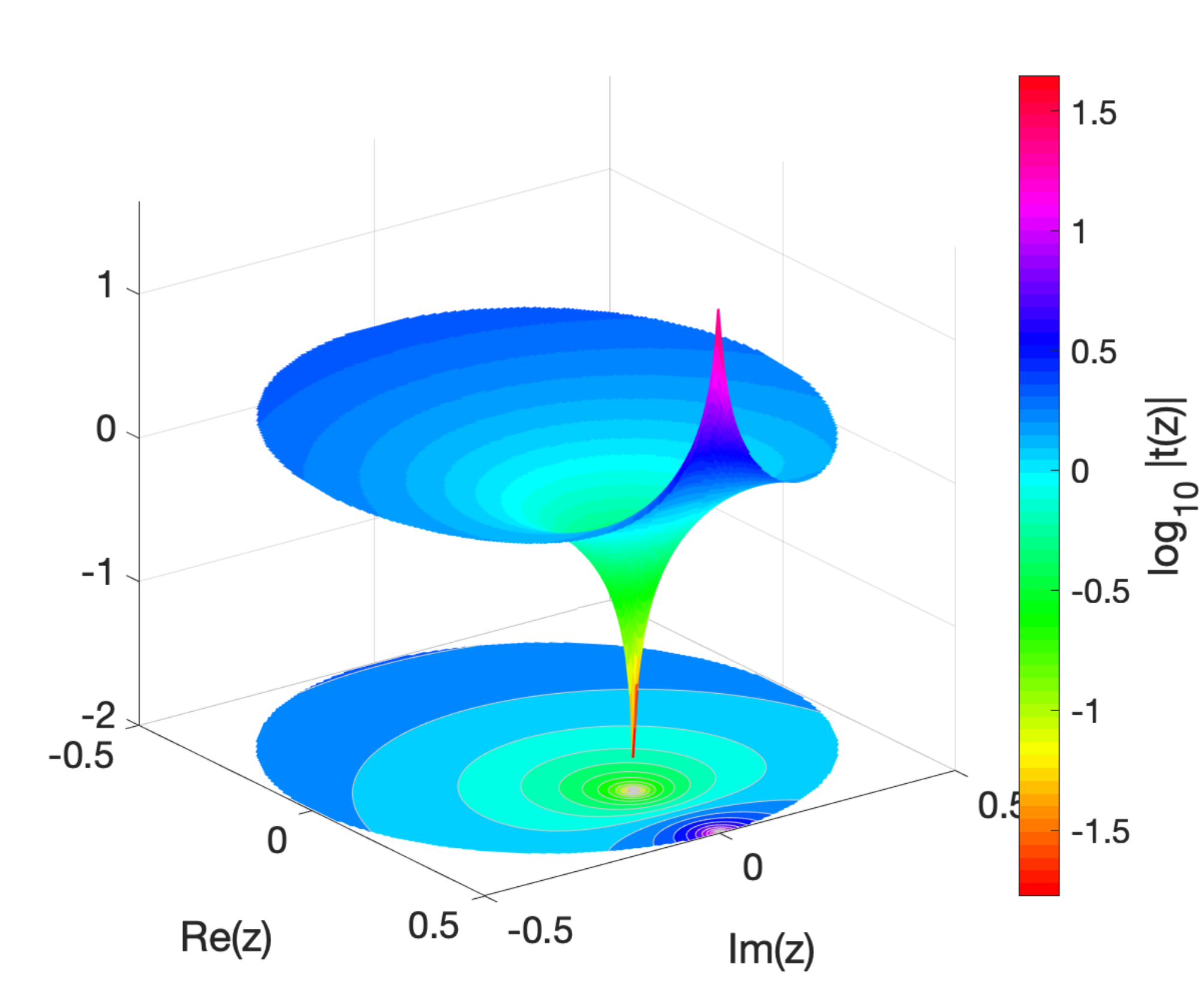}
\caption[Generating function $\hat{t}(z)$ for a Tokunaga tree with $(a,c)=(1,2)$.]
{Generating function $\hat{t}(z)$ for a mean Horton self-similar tree 
with Tokunaga sequence $T_j=2^{j-1}$, $j\ge 1$, see \eqref{eq:that_Tok}.
The figure shows the value $\log_{10}|\hat{t}(z)|$ for visual convenience.
The values of $\hat{t}(z)$ are well separated from its only zero at $z=1/4$,
ensuring a hight convergence rate in the strong Horton law.}
\label{fig:that0}
\end{figure}

Figure~\ref{fig:that} illustrates the strong Horton law in a Tokunaga mean self-similar tree with $a=1, c=2$,
which corresponds to $T_j=2^{j-1}$, $j\ge 1$.
In this case (Figs.~\ref{fig:that}(a),\ref{fig:that0})
\be\label{eq:that_Tok}
\hat{t}(z) = \frac{-1+5z-4z^2}{1-2z} = \frac{-4(z-1)(z-1/4)}{1-2z},\quad |z|<1/2.
\ee
The ratios $\cN_k[K]/\cN_{k+1}[K]$ for $K=20$ are shown in Fig.~\ref{fig:that}(b).
The ratios are very close to the theoretical value $R=1/w_0=4$, except for 
the branch orders $k$ close to the tree order $K$, $k>15$.
As suggested by Fig.~\ref{fig:R}(b), for most of the choices $(a,c)$ the convergence 
rate is higher, so we expect to have a larger number of ratios in a close vicinity
of the limit value $R$. 
As we discussed above, the convergence in \eqref{eq:Nk} has the same rate,
with first terms (small $k$) deviating from the limit value rather
then the last ones, as was the case in \eqref{eq:Horton_mean} and 
Fig.~\ref{fig:that}(b). 

We show below in Eq.~\ref{eq:error_est} that, in general, the rate of convergence in 
the strong Horton law \eqref{eq:Horton_mean}, \eqref{eq:Nk} is controlled by 
\[\min\limits_{|z|<\gamma}|\hat{t}(z)|,\]
where $\gamma$ separates $w_0$ from other possible zeros of $\hat{t}(z)$ 
-- higher values lead to faster convergence. 
Figure~\ref{fig:that0} shows the value $\log_{10}|\hat{t}(z)|$ on its disk on convergence 
for the Tokunaga tree of this example.
Here, the only zero of $\hat{t}(z)$ at $z=1/4$ (downward peak) is well isolated so that 
the surrounding values are separated from zero; this suggests a high rate of 
convergence that we already 
illustrated more directly in
\eqref{HortonZeta} and Figs.~\ref{fig:R}(b),\ref{fig:that}(b).
\end{ex}

\begin{ex}[{{\bf Shallow side-branching}}]
Suppose $T_j=0$ for $j\ge 3$, that is we only have 
``shallow'' side-branches of orders $\{k-2,k\}$ and $\{k-1,k\}$.
Then
\[\hat t(z) = -1 + (T_1+2)\,z + T_2\,z^2.\]
The only root of this equation within $[0,1/2]$ is
\[w_0 = \frac{\sqrt{(T_1+2)^2+4T_2}-(T_1+2)}{2\,T_2},\]
which leads to
\[R=1/w_0=\frac{\sqrt{(T_1+2)^2+4T_2}+(T_1+2)}{2}.\]
In particular, if $T_j=0$ for $j\ge 2$, then $R=T_1+2$;
such trees are called ``cyclic'' \cite{Pec95}.
This shows that the entire range of Horton exponents $2\le R < \infty$
can be achieved by trees with only very shallow side-branching.
\end{ex}

\noindent
We conclude this section with a linear algebra construction that
clarifies the essence of Horton law in a mean self-similar tree. 
Define a vector $\zeta_K\in\mathbb{R}^K$ of average Horton numbers  and 
a respective normalized vector $\xi_K\in\mathbb{R}^\infty$ as
$$\zeta_K=\left(\!\!\!\begin{array}{c}\cN_{1}[K] \\\cN_{2}[K] \\\vdots \\ \cN_{K}[K]\end{array}\!\!\!\right) 
\quad \text{ and } \quad
\xi_K:={1 \over \cN_1[K]}\left(\!\!\begin{array}{c}\zeta_K \\0 \\0 \\\vdots \end{array}\!\!\right)=\left(\!\!\!\!\begin{array}{c}1 \\\cN_{2}[K]/\cN_{1}[K] \\\vdots  \\\cN_{K}[K]/\cN_{1}[K]\\0 \\0 \\\vdots \end{array}\!\!\!\!\right)$$ 
and consider an infinite dimensional extension to operator $\mathbb{G}_K$ of \eqref{eq:Gk}:
\begin{equation} \label{gen}
\mathbb{G}:=\left[\begin{array}{ccccc}-1 & T_1+2 & T_2 & T_3 & \hdots  \\0 & -1 & T_1+2 & T_2 & \hdots  \\0 & 0 & -1 & T_1+2 & \ddots  \\0 & 0 & 0 & -1  & \ddots \\\vdots & \vdots & \ddots & \ddots  & \ddots \end{array}\right].
\end{equation}

\noindent
Using these notations, the main counting equations \eqref{eq:count} becomes
$\mathbb{G}_K \zeta_K = -e_K,$ 
and therefore 
\[\mathbb{G} \xi_K = -{e_K \over \cN_1[K]}.\] 

\noindent
Here $\cN_{1}[K] \geq (T_1+2)^{K-1}\to\infty$ as $K\to\infty$, and hence the
strong Horton law for mean branch numbers (Def.~\ref{def:Horton_mean})
is equivalent to the existence of a limit solution
 $\lim\limits_{K \rightarrow \infty} \xi_K = \xi$ to an infinite 
 dimensional linear operator equation
 \[\mathbb{G} \xi=0\] 
with coordinates $\xi(k)=R^{1-k}$.

\subsection{Proof of Theorem \ref{thm:HLSST}}
\label{sec:proof1}

First, we establish (Prop.~\ref{propmain_Horton}) necessary and sufficient conditions for the 
existence of the strong Horton law. 
Then we show that these conditions are satisfied and express the value of the 
Horton exponent $R$ via the Tokunaga coefficients $\{T_j\}$.

\begin{prop} \label{propmain_Horton}
Let $\mu$ be a mean Horton self-similar measure on $\cBT$.
Suppose that the limit
\be
\label{R}
R=\lim\limits_{K \rightarrow \infty} {\cN_{1}[K+1] \over \cN_{1}[K]}
\ee
exists and is finite.
Then, the strong Horton law for mean branch numbers holds; that is, for each positive integer $k$,
\be\label{Horton_here}
\lim\limits_{K\to\infty} \frac{\cN_{k}[K]}{\cN_{1}[K]}=R^{1-k}.
\ee
Conversely, if the limit \eqref{R} does not exist, then the limit
in the left hand side of \eqref{Horton_here} also does not exist, at least for some $k$.
\end{prop}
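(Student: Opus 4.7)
The plan is to reduce the problem to the behavior of the single sequence $\{\cN_1[K]\}_{K\ge 1}$ by exploiting the shift identity $\cN_{k+1}[K+1]=\cN_k[K]$ from Proposition~\ref{prop1_Horton}. Iterating this identity $k-1$ times yields
\[
\cN_k[K]=\cN_{k-1}[K-1]=\cdots=\cN_1[K-k+1]\qquad(1\le k\le K),
\]
so the Horton ratio admits the telescoping representation
\[
\frac{\cN_k[K]}{\cN_1[K]}=\frac{\cN_1[K-k+1]}{\cN_1[K]}=\prod_{j=1}^{k-1}\frac{\cN_1[K-j]}{\cN_1[K-j+1]}.
\]

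For the forward implication, I would fix $k$ and invoke the hypothesis \eqref{R}: each of the $k-1$ factors in the displayed product converges to $1/R$ as $K\to\infty$. Since $k$ is fixed, the product of finitely many convergent factors converges to $R^{-(k-1)}=R^{1-k}$, establishing \eqref{Horton_here}.

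For the converse, I would argue by contrapositive. Assume $\lim_K\cN_k[K]/\cN_1[K]$ exists for every $k\ge 1$; specializing the identity above to $k=2$ shows $\cN_1[K-1]/\cN_1[K]\to L_2$ for some $L_2\ge 0$. The counting equation \eqref{count1} taken at $k=1$ gives $\cN_1[K]\ge (T_1+2)\cN_2[K]=(T_1+2)\cN_1[K-1]$, so $L_2\le 1/(T_1+2)\le 1/2$. When $L_2>0$ we conclude $\cN_1[K+1]/\cN_1[K]\to 1/L_2=:R\ge T_1+2\ge 2$, which is \eqref{R}; if instead $L_2=0$, then $\cN_1[K+1]/\cN_1[K]\to\infty$, so the finite limit \eqref{R} still fails to exist, consistent with the statement.

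There is no real obstacle here: once Proposition~\ref{prop1_Horton} reduces the multi-index problem to a single sequence, the argument is a telescoping bookkeeping exercise. The substantive work in Theorem~\ref{thm:HLSST} comes afterwards, namely verifying the hypothesis \eqref{R} under the growth assumption $\limsup_{j\to\infty} T_j^{1/j}<\infty$ and identifying the value of $R$ as the reciprocal of the smallest real zero of $\hat t(z)$, via generating function / complex-analytic asymptotics such as \eqref{eq:check} and \eqref{zetaT}. Proposition~\ref{propmain_Horton} is precisely the abstract bridge that makes such an asymptotic analysis of the scalar sequence $\cN_1[K]$ sufficient to conclude the full strong Horton law.
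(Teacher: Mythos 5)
Your argument is essentially the paper's own proof. The forward direction is identical: Proposition~\ref{prop1_Horton} gives $\cN_k[K]=\cN_1[K-k+1]$, and the ratio $\cN_k[K]/\cN_1[K]$ telescopes into $k-1$ factors, each converging to $R^{-1}$. For the converse the paper argues directly rather than by contrapositive: since $\cN_2[K]/\cN_1[K]=\cN_1[K-1]/\cN_1[K]$ is the reciprocal of the (shifted) sequence in \eqref{R}, and that sequence is bounded below by $T_1+2\ge 2$ (your own observation from \eqref{count1}), it fails to converge exactly when the sequence in \eqref{R} fails to converge in $[2,\infty]$. Your contrapositive reproduces this in the branch $L_2>0$, but the branch $L_2=0$ does not close the contrapositive as you claim: there every limit $L_k$ exists (all equal to $0$ for $k\ge 2$) while the ratio in \eqref{R} diverges to $+\infty$, so if ``does not exist'' is read as ``does not converge to a finite value,'' this branch would be a counterexample to the converse rather than something ``consistent with the statement.'' The converse only survives because ``does not exist'' is tacitly read as excluding divergence to $+\infty$ --- a reading the paper also adopts without comment --- so you should either state that reading explicitly and note that the $L_2=0$ branch falls outside the scope of the converse, or rule that branch out. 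Apart from that logical loose end, the proof matches the paper's.
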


\begin{proof}
Suppose the limit \eqref{R} exists and is finite.
Proposition~\ref{prop1_Horton} implies that for any fixed integer $m \ge 1$
$${\cN_{m+1}[K] \over \cN_m[K]}={\cN_1[K-m] \over \cN_1[K-m+1]} \rightarrow R^{-1},\text{ as }K\to\infty.$$
Thus, for any fixed integer $k\geq 2$,
$${\cN_k[K]\over \cN_1[K]}
=\prod\limits_{m=1}^{k-1} {\cN_{m+1}[K]\over \cN_m[K]} \rightarrow  R^{1-k}, \text{ as }K\to\infty.$$
\noindent
Conversely, suppose the limit $\lim\limits_{K\rightarrow\infty} {\cN_1[K+1]\over \cN_1[K]}$ does not exist. Taking $k=2$, we obtain by Prop.~\ref{prop1_Horton}
$${\cN_2[K] \over \cN_1[K]}={\cN_1[K\!-\!1]\over \cN_1[K]}.$$
Thus $\lim\limits_{K \rightarrow \infty}{\cN_2[K]\over\cN_1[K]}$ diverges.
\end{proof}

Next, we express $\cN_1[K]$ via
the elements of the Tokunaga sequence $\{T_j\}_{j=1,2,\hdots}$ that satisfy condition~\eqref{eq:tamed}. 
The quantity $\cN_1[K+1]$ can be computed by counting, 
and expressed via convolution products as follows:
\begin{eqnarray*}
\cN_1[K+1]& = & \sum\limits_{r=1}^K \sum\limits_{\substack{j_1,j_2,\hdots,j_r \geq 1\\ j_1+j_2+\hdots+j_r =K}}t(j_1)t(j_2)\hdots t(j_r)\\
& =  & \sum\limits_{r=1}^K \underbrace{(t+\delta_0)\ast (t+\delta_0) \ast \hdots \ast (t+\delta_0)}_{r \text{ times }}(K)\\
& =  & \sum\limits_{r=1}^\infty \underbrace{(t+\delta_0)\ast (t+\delta_0) \ast \hdots \ast (t+\delta_0)}_{r \text{ times }}(K),\\
\end{eqnarray*}
where $\delta_0(j)$ is the Kronecker delta, and therefore, $(t+\delta_0)(0)=0$. Hence, taking the \mbox{$z$-transform} of $\cN_1[K]$, we obtain
\begin{equation} \label{Tzeta}
\sum_{K=1}^\infty z^{K-1}\cN_1[K]=1+\sum\limits_{r=1}^\infty \Big[\widehat{(t+\delta_0)}(z)\Big]^r=1+\sum\limits_{r=1}^\infty \Big[\hat{t}(z)+1\Big]^r=-{1 \over \hat{t}(z)}
\end{equation}
for $|z|$ small enough.
Recalling the definition \eqref{eq:check} establishes \eqref{zetaT}:
\[\cN_1[K+1]=
-\widecheck{\left(\frac{1}{\widehat ~ \!\! t}\right)}(K).\]

\bigskip
Since $T_j\ge 0$ for any $j\ge 1$, the function $\hat{t}(z)=-1+2z+\sum\limits_{j=1}^\infty z^j T_j$
has a single real root $w_0$ in the interval $(0,1/2]$. 
Our goal is to show that the Horton exponent $R$ is reciprocal to $w_0$. 
We begin by showing that $w_0$ is the root of $\hat{t}(z)$ closest to the origin.

\noindent
\begin{lem}\label{minmod}
Let $w_0$ be the only real root of  $\hat{t}(z)=-1+2z+\sum\limits_{j=1}^\infty z^j T_j$ in the interval $\left(0,1/2\right]$. Then, for any other root $w$ of  $~\hat{t}(z)$, we have $|w|>w_0.$
\end{lem}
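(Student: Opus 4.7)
My plan is to exploit the non-negativity of the Taylor coefficients of $g(z) := 1 + \hat{t}(z) = (T_1+2)z + \sum_{j\ge 2} T_j z^j$ via the triangle inequality. The defining property of $w_0$ reads $g(w_0) = 1$, and since the linear coefficient $T_1 + 2 \ge 2$ is strictly positive and all subsequent coefficients $T_j$ are nonnegative, $g$ is strictly increasing on $[0,w_0]$ with $g(0) = 0$.

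First I would dispose of the open disk $|z| < w_0$. The triangle inequality for a power series with nonnegative coefficients gives $|g(z)| \le g(|z|)$, and strict monotonicity of $g$ on $[0,w_0]$ then yields
\[
|g(z)| \;\le\; g(|z|) \;<\; g(w_0) \;=\; 1,
\]
so $\hat{t}(z) = g(z) - 1 \ne 0$. Hence no root of $\hat{t}$ lies strictly inside the disk of radius $w_0$.

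The only remaining case is $|z| = w_0$ with $z \ne w_0$, which I would handle by contradiction. Suppose $\hat{t}(z) = 0$, i.e.\ $g(z) = 1$. Then $|g(z)| = 1 = g(|z|)$, so equality must hold in the triangle inequality applied to $g(z) = \sum_{j\ge 1} c_j z^j$ with $c_j \ge 0$. Equality forces every non-zero term $c_j z^j$ to be a non-negative real multiple of a common unit vector, and the sum being real positive pins that common argument to be $0$. In particular, the linear term $(T_1+2)z$ has coefficient strictly positive and must have argument $0$, so $z$ is real positive; combined with $|z| = w_0$ this gives $z = w_0$, contradicting $z \ne w_0$. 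Therefore every other root $w$ of $\hat{t}$ satisfies $|w| > w_0$.

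The only delicate point is the equality-case analysis on the boundary circle, and even this is routine once one notices that the linear coefficient of $g$ is strictly positive — this is precisely the ``$+2$'' in $t(1) = T_1 + 2$ that comes from the binary branching. I do not anticipate a substantive obstacle beyond keeping this equality case clean.
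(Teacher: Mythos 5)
Your proof is correct and is essentially the paper's argument in different clothing: the paper bounds $\Re[\hat{t}(w)]$ using $\cos(j\theta)\le 1$ and exploits the strictly positive coefficient $2$ on $z$ to force $\theta=0$, which is exactly your triangle-inequality bound $|g(z)|\le g(|z|)$ together with its equality case pinned down by the linear term. The equality-case analysis you flag as the delicate point is handled soundly, so nothing further is needed.
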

\begin{proof}
Since $\{T_j\}$ are all nonnegative reals, we have $\overline{\hat{t}(\bar{z})}=\hat{t}(z)$. 
The radius of convergence of $\sum\limits_{j=1}^\infty z^j T_j$ must be greater than $w_0$. Suppose $w=re^{i\theta}$ \mbox{($0 \leq \theta <2\pi$)} is a root of magnitude at most $w_0$. 
That is $~\hat{t}(w)=0~$ and $r:=|w| \leq w_0.$ 
Then $~\hat{t}(\bar{w})=0~$ and
$$0 = {1 \over 2}\Big[\hat{t}(w)+\hat{t}(\bar{w})\Big]=-1+2r\cos(\theta)+\sum\limits_{j=1}^\infty r^j T_j \cos(j\theta).$$
If $r<w_0$, then
$$0 = -1+2r\cos(\theta)+\sum\limits_{j=1}^\infty r^j T_j \cos(j\theta)\leq -1+2r+\sum\limits_{j=1}^\infty r^j T_j  < -1+2w_0+\sum\limits_{j=1}^\infty w_0^j T_j =0$$
arriving to a contradiction. Thus $r=w_0$.

\medskip
\noindent
Next we show that $\theta=0$. Suppose not. Then
$$0 = -1+2r\cos(\theta)+\sum\limits_{j=1}^\infty r^j T_j \cos(j\theta) < -1+2r+\sum\limits_{j=1}^\infty r^j T_j  = -1+2w_0+\sum\limits_{j=1}^\infty w_0^j T_j =0$$
arriving to another contradiction.
Hence $r=w_0$, $\theta=0$, and $w=w_0$.
\end{proof}

\bigskip
\noindent
Let $L=\limsup\limits_{j \rightarrow \infty} T_j^{1/j}$.
Then $L^{-1}$ is the radius of convergence of $\hat{t}(z)$ (we set $L^{-1}=\infty$ if $L=0$), 
and $L^{-1}  >w_0.$ 
Lemma \ref{minmod} asserts that there exists a positive real $\gamma \in (w_0,L^{-1})$ such that  
\begin{equation}\label{eq:gamma}
\gamma<w ~\text{ for all }~w \not= w_0~\text{ such that }~\hat{t}(w)=0.
\end{equation}
Accordingly, for $0<\rho <w_0$
\be\label{eq:error}
\cN_1[K]={-1 \over 2\pi i}\oint\limits_{|z|=\rho}{dz \over \hat{t}(z)z^K}
=-Res\left({1 \over \hat{t}(z)z^K}; w_0 \right)-{1 \over 2\pi i}\oint\limits_{|z|=\gamma}{dz \over \hat{t}(z)z^K}.
\ee
Observe that $Res\left({1 \over \hat{t}(z)z^K}; w_0 \right)$ is a constant multiple of $w_0^{-K}$ since $w_0$ is a root of $\hat t(z)$ of algebraic multiplicity one. 
Thus, since $w_0 <\gamma$ and
\be\label{eq:error_est}
\left|{1 \over 2\pi i}\oint\limits_{|z|=\gamma}{dz \over \hat{t}(z)z^K}\right| \leq {1 \over \gamma^K \min\limits_{|z|=\gamma}|\hat{t}(z)|}=o\left(w_0^{-K}\right),\quad K\to\infty,
\ee
we have
$${\cN_1[K+1] \over \cN_1[K]}=\left|{\cN_1[K+1] \over \cN_1[K]}\right| \rightarrow {1 \over w_0} \quad \text{ as } K \rightarrow \infty.$$
Proposition \ref{propmain_Horton} now implies the following lemma.
\begin{lem} \label{maincor1}
Suppose $~\limsup\limits_{j \rightarrow \infty} T_j^{1/j} <\infty$. 
Then, for each positive integer $k$ 
$$\lim\limits_{K \rightarrow \infty}{\cN_k[K]\over\cN_{1}[K]}=w_0^{k-1}.$$
Moreover,
\[\lim_{K\to\infty}\left(\cN_1[K]\,w_0^{K}\right)= const. >0.\]
\end{lem}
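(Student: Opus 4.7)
The plan is to synthesize the contour-integral machinery developed in \eqref{eq:error}--\eqref{eq:error_est} with Proposition~\ref{propmain_Horton}. The two conclusions of the lemma are tightly linked: establishing the sharp asymptotic $\cN_1[K]\sim C\,w_0^{-K}$ with $C>0$ yields both the non-vanishing limit $\cN_1[K]\,w_0^{K}\to C$ and the ratio convergence $\cN_1[K+1]/\cN_1[K]\to 1/w_0$, after which the Horton law for general $k$ drops out of Proposition~\ref{propmain_Horton}.

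First I would invoke \eqref{eq:error}, which decomposes $\cN_1[K]$ into a residue at $w_0$ plus a contour integral over $|z|=\gamma$. By Lemma~\ref{minmod}, $w_0$ is the unique pole of $1/\hat{t}(z)$ inside $\{|z|<\gamma\}$. It is crucial here that $w_0$ is a \emph{simple} zero of $\hat{t}$: since $T_j\ge 0$, differentiating gives $\hat{t}'(w_0)=2+\sum_{j\ge 1} j T_j w_0^{j-1}\ge 2>0$, so the residue equals the nonzero constant $1/\hat{t}'(w_0)$ times $w_0^{-K}$. Combined with the bound in \eqref{eq:error_est}, which uses $\gamma>w_0$ and $\min_{|z|=\gamma}|\hat{t}(z)|>0$ to show that the $|z|=\gamma$ integral is $o(w_0^{-K})$, this yields $\cN_1[K]\,w_0^{K}\to C$ for an explicit constant $C$. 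Positivity of $C$ follows since $\cN_1[K]>0$ and $w_0^{K}>0$ for all $K$.

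From this asymptotic it is immediate that $\cN_1[K+1]/\cN_1[K]\to 1/w_0$, matching the hypothesis of Proposition~\ref{propmain_Horton} with $R=1/w_0$. Applying that proposition then gives $\lim_{K\to\infty}\cN_k[K]/\cN_1[K]=R^{1-k}=w_0^{k-1}$ for every $k\ge 1$, completing the first assertion. There is no real obstacle here: the heavy lifting (residue decomposition, contour-integral bound, isolation of $w_0$ as the closest singularity, and Proposition~\ref{propmain_Horton}) has already been done. The only subtlety worth flagging is the simplicity of the zero $w_0$, which is needed so that the residue is a pure constant multiple of $w_0^{-K}$ rather than a polynomial-in-$K$ correction; this is precisely what the computation $\hat{t}'(w_0)>0$ above delivers.
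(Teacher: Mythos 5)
Your proposal is correct and follows essentially the same route as the paper: the contour-integral decomposition \eqref{eq:error} with the residue at the simple zero $w_0$, the $o\!\left(w_0^{-K}\right)$ bound \eqref{eq:error_est} on the $|z|=\gamma$ integral (justified by Lemma~\ref{minmod}), and then Proposition~\ref{propmain_Horton} applied with $R=1/w_0$. Your explicit check $\hat{t}'(w_0)=2+\sum_{j\ge 1} jT_j w_0^{j-1}\ge 2>0$ is a nice way of making precise the paper's assertion that $w_0$ has algebraic multiplicity one, but it does not change the argument.
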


\medskip
\noindent
To establish the converse we need the following statement.
\begin{prop}\label{prop:NT}
Suppose $\mu$ is a mean Horton self-similar measure on $\cBT$
with Tokunaga sequence $\{T_j\}_{j\ge 1}$. Then
\[\cN_1[K] \geq T_j^{(K-1)/j}\]
for all $j \in \mathbb{N}$ and $(K-1) \in j\mathbb{N}.$
\end{prop}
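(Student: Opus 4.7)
The plan is to extract a linear recursion for $\cN_1[n]$ from the counting equations \eqref{count1} in combination with Proposition \ref{prop1_Horton}, and then read off a one-term multiplicative lower bound that can be iterated. The key observation is that every coefficient in that recursion is nonnegative, so any single summand survives as a valid lower bound.

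First I would note that Proposition \ref{prop1_Horton} yields $\cN_k[K]=\cN_1[K-k+1]$ for every $1\le k\le K$. Substituting this into the counting equation \eqref{count1} with $k=1$, and writing $n=K-1\ge 1$, gives the recursion
\[
\cN_1[n+1] \;=\; 2\,\cN_1[n] \;+\; \sum_{m=1}^{n} T_m\,\cN_1[n-m+1], \qquad n\ge 1,
\]
with the trivial initial value $\cN_1[1]=1$ (the unique tree in $\cH_1$ contains exactly one branch of order $1$). Since $T_m\ge 0$ and $\cN_1[\cdot]\ge 0$, discarding all summands except the $m=j$ term yields
\[
\cN_1[n+1] \;\ge\; T_j\,\cN_1[n-j+1] \qquad \text{for every } 1\le j\le n.
\]

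Now fix $j\in\mathbb{N}$ and apply this bound along the arithmetic progression $n=j,2j,3j,\dots$. Specifically, taking $n=mj$ for $m\ge 1$ gives
\[
\cN_1[mj+1] \;\ge\; T_j\,\cN_1[(m-1)j+1].
\]
Iterating this inequality $m$ times, starting from $\cN_1[1]=1$, yields $\cN_1[mj+1]\ge T_j^{m}$. Setting $K=mj+1$, which is exactly the case $(K-1)\in j\mathbb{N}$, we conclude
\[
\cN_1[K] \;\ge\; T_j^{(K-1)/j},
\]
as claimed. There is no real obstacle here once the right recursion is in hand; the substance of the argument is the reduction $\cN_k[K]=\cN_1[K-k+1]$ supplied by the mean Horton self-similarity, which converts the $K$-dependent counting system \eqref{count1} into a single sequence $\{\cN_1[n]\}$ whose nonnegative recursion admits this crude but sufficient bound.
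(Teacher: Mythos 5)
Your proof is correct and is essentially the paper's argument: the paper iterates the one-term bound $\cN_k[K]\ge T_j\,\cN_{k+j}[K]$ (obtained by discarding all other nonnegative terms in \eqref{count1}) within a fixed tree order $K$ down to $\cN_K[K]=1$, which under the identity $\cN_k[K]=\cN_1[K-k+1]$ of Prop.~\ref{prop1_Horton} is exactly your chain $\cN_1[mj+1]\ge T_j\,\cN_1[(m-1)j+1]$ terminating at $\cN_1[1]=1$. The only difference is cosmetic: you apply the order-shift identity first and iterate on the tree order, whereas the paper iterates on the branch order inside a single $\cH_K$.
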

\begin{proof}
Fix any $j\ge 1$. 
The main counting equations \eqref{count1} show that for any integer $m\ge 0$ 
\[\cN_{mj+1}[K]\ge T_j\cN_{(m+1)j+1}[K].\]
Accordingly,
\[\cN_1[K]\ge T_j^{m}\cN_{mj+1}[K],\]
given $mj+1\le K$.
Choosing $m = (K-1)/j$ we obtain
\[\cN_1[K]\ge T_j^{(K-1)/j}\cN_K[K] = T_j^{(K-1)/j}.\]
\end{proof}

\noindent Suppose the limit
$$R=\lim\limits_{K \rightarrow \infty} {\cN_1[K+1]\over \cN_1[K]}$$
exists and is finite. 
Proposition~\ref{prop:NT} asserts that
$\cN_1[K]^{1/(K-1)} \geq T_j^{1/j}~$ for all $j \in \mathbb{N}$ and $(K-1) \in j\mathbb{N}$.
Hence,
$$\limsup\limits_{j \rightarrow \infty} T_j^{1/j} \leq \lim\limits_{K \rightarrow \infty} \cN_1[K]^{1/(K-1)} = R < \infty.$$
We summarize this in a lemma. 

\begin{lem} \label{maincor2}
Suppose $~\limsup\limits_{j \rightarrow \infty} T_j^{1/j} =\infty$. 
Then, the limit 
$~\lim\limits_{K \rightarrow \infty}{\cN_k[K]\over \cN_1[K]}$ 
does not exist at least for some $k$.
\end{lem}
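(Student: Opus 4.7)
The plan is to argue by contraposition, combining the lower bound from Proposition~\ref{prop:NT} with the converse half of Proposition~\ref{propmain_Horton}. Namely, I will assume that the limit $R = \lim_{K\to\infty}\cN_1[K+1]/\cN_1[K]$ exists and is finite, and derive that $\limsup_{j\to\infty} T_j^{1/j}<\infty$; the desired conclusion then follows immediately from Proposition~\ref{propmain_Horton}, since non-existence of the limit~\eqref{R} forces non-existence of $\lim_{K\to\infty}\cN_k[K]/\cN_1[K]$ for at least one $k$ (in fact for $k=2$).

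The key steps are as follows. First, I would invoke Proposition~\ref{prop:NT} to get, for each fixed $j\in\mathbb{N}$,
\[
\cN_1[K]^{1/(K-1)} \;\geq\; T_j^{1/j} \qquad \text{whenever } (K-1)\in j\mathbb{N}.
\]
Second, under the working assumption that $R$ exists and is finite, I would appeal to the Cauchy--d'Alembert criterion (convergence of ratios $a_{K+1}/a_K$ to a positive limit $L$ implies convergence of roots $a_K^{1/K}$ to the same $L$) applied to the positive sequence $a_K=\cN_1[K]$. This yields $\lim_{K\to\infty}\cN_1[K]^{1/(K-1)}=R$. Combining with the first step gives $T_j^{1/j}\leq R$ for every $j$, whence $\limsup_{j\to\infty} T_j^{1/j}\leq R<\infty$, contradicting the hypothesis of the lemma. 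Hence the limit $R$ cannot exist as a finite number, and the converse statement of Proposition~\ref{propmain_Horton} (whose proof relies on $\cN_2[K]/\cN_1[K]=\cN_1[K-1]/\cN_1[K]$ via Prop.~\ref{prop1_Horton}) concludes the argument.

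The main conceptual step is the ratio-to-root transfer that turns the ratio hypothesis into the growth statement $\cN_1[K]^{1/(K-1)}\to R$; this is routine but is the one place where a standard analytic fact is needed, and it is precisely what converts the per-$j$ lower bound from Proposition~\ref{prop:NT} into a uniform bound on $\limsup T_j^{1/j}$. All other ingredients are already established: the counting equations \eqref{count1} underlying Proposition~\ref{prop:NT}, and the reindexing identity $\cN_{k+1}[K+1]=\cN_k[K]$ of Proposition~\ref{prop1_Horton} that drives Proposition~\ref{propmain_Horton}. No additional hypotheses on the Tokunaga sequence beyond $T_j\geq 0$ are required, so the contradiction is clean.
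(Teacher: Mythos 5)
Your argument is correct and is essentially the paper's own proof: the paper also proceeds by contraposition, assuming the ratio limit $R=\lim_{K\to\infty}\cN_1[K+1]/\cN_1[K]$ exists and is finite, invoking Proposition~\ref{prop:NT} to get $\cN_1[K]^{1/(K-1)}\ge T_j^{1/j}$, and using the (implicit) ratio-to-root transfer to conclude $\limsup_{j\to\infty}T_j^{1/j}\le\lim_{K\to\infty}\cN_1[K]^{1/(K-1)}=R<\infty$, after which the converse half of Proposition~\ref{propmain_Horton} finishes the job. The only difference is that you name the Cauchy--d'Alembert criterion explicitly where the paper leaves it tacit.
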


\medskip
\noindent
Finally, Thm.~\ref{thm:HLSST} follows from Lem.~\ref{maincor1} and Lem.~\ref{maincor2}.

\subsection{Well-defined asymptotic Horton ratios}
\label{sec:convergence}

The setting for Horton law in \eqref{eq:Horton_rv} and \eqref{eq:Horton_mean} can be generalized beyond randomizing the tree measure with respect to Horton-Strahler orders as in \eqref{muk}. For instance, as it will be the case with the combinatorial critical binary Galton-Watson trees $\mathcal{GW}\left({1 \over 2},{1 \over 2}\right)$ in \eqref{eqn:ratio4j1}, the tree measure may be randomized with respect to the number of leaves in a tree.
A general set up for the Horton laws is described below.

\medskip
\noindent
Let $\cQ_n$, $n\in\mathbb{N}$, be a sequence of probability measures on $\cT$. 
We write $N_j^{( \cQ_n)}$ for the number of branches of Horton-Strahler order $j\ge 1$ in a tree generated
according to $\cQ_n$. 

\begin{Def}[{\bf Well-defined asymptotic Horton ratios}]\label{def:HortonWellDef} \index{asymptotic Horton ratio}  \index{asymptotic Horton ratio!well-defined}
We say that a sequence of probability measures $\{ \cQ_n\}_{n \in \mathbb{N}}$ has {\it well-defined asymptotic Horton ratios} if for each $j\ge 1$
\be\label{eqn:HortonWellDef}
{N_j^{( \cQ_n)} \over N_1^{( \cQ_n)}} \,\stackrel{p}{\to} \,\cN_j \quad \text{ as }~n \rightarrow \infty,
\ee
where $\cN_j$ is a constant, called the {\it asymptotic Horton ratio} of the branches of order $j$. 
\end{Def} 

\noindent
Sometimes it is possible to establish a stronger limit than in \eqref{eqn:HortonWellDef}. One such example is the almost sure convergence in equation \eqref{eqn:TokMartingale6} of Sect. \ref{sec:martingale}.

\medskip
\noindent
For a sequence of well-defined asymptotic Horton ratios $\cN_j$, 
the {\it Horton law} states that $\cN_j$ decreases in a geometric fashion as $j$ goes to infinity. 
We consider three particular forms of geometric decay.

\begin{Def}[{\bf Root, ratio, and strong Horton laws}]\label{def:HortonList}
Consider a sequence $\{ \cQ_n\}_{n \in \mathbb{N}}$ of probability measures on $\cT$ with well-defined asymptotic Horton ratios (Def. \ref{def:HortonWellDef}).
Then, the sequence $\{ \cQ_n\}$ is said to obey 
\begin{itemize}
  \item a {\it root-Horton law} if the following limit exists:
$\lim\limits_{j \rightarrow \infty} \Big( \cN_j \Big)^{-{1 \over j}}=R$;
  \item a {\it ratio-Horton law} if the following limit exists:
$\lim\limits_{j \rightarrow \infty} {\cN_j \over \cN_{j+1}}=R$;
  \item  a {\it strong Horton law} if the following limit exists:
$\lim\limits_{j \rightarrow \infty} \big(\cN_j R^j\big)=const$.
\end{itemize}
The constant $R$ is called the {\it Horton exponent}.
In each case, we require the Horton exponent $R$ to be finite and positive.
\end{Def}
\index{Horton law!root-Horton law}
\index{Horton law!ratio-Horton law}
\index{Horton law!strong Horton law}
\noindent Observe that the Horton laws in Def. \ref{def:HortonList} above are listed in the order from weaker to stronger.

\subsection{Entropy and information theory}\label{sec:entropy}
The information theoretical aspects of self-similar trees were not addressed until very recently.
This section reviews recent results by Chunikhina \cite{EVC18,EVCthesis}, 
where the entropy rate is computed for trees that satisfy the strong Horton law for branch numbers (Def.~\ref{def:Horton_rv}) and for Tokunaga self-similar trees (Def.~\ref{ss2})
as a function of the respective parameters, $R$ and $(a,c)$.

\medskip
\noindent
Consider a subspace $\cT_{N_1,\hdots,N_K}$ of $\BT^|$ of trees of a given order ${\sf ord}(T)=K$ 
and given admissible ($N_K=1$, $N_j \geq 2N_{j+1}$) branch counts  $N_1,N_2,\hdots,N_K$:
$$\cT_{N_1,\hdots,N_K}=\big\{T \in \BT^| :\,{\sf ord}(T)=K,~N_1[T]=N_1,\hdots,N_K[T]=N_K=1\big\}.$$
In \cite{EVC18}, Chunikhina finds the size of $\cT_{N_1,\hdots,N_K}$, providing 
an alternative form of expression that was first derived by Shreve \cite{Shreve66}.

\begin{lem}[{{\bf Branch counting lemma, \cite{EVC18}}}]\label{lem:ChunikhinaShreve}
$$\big|\cT_{N_1,\hdots,N_K}\big|=2^{N_1-1-\sum_{j=2}^K N_j}\prod\limits_{j=1}^{K-1}\binom{N_j-2}{2N_{j+1}-2}.$$
\end{lem}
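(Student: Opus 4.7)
The plan is to proceed by induction on the Horton-Strahler order $K$, using the Horton pruning $\cR$ of Def.~\ref{def:Horton_prune} to relate $|\cT_{N_1,\ldots,N_K}|$ to $|\cT_{N_2,\ldots,N_K}|$. The base case $K=1$ forces $N_1=1$ and yields the unique stem-plus-leaf tree, matching the formula value $2^0=1$. For the inductive step, I will fix any $T'\in\cT_{N_2,\ldots,N_K}$ and count the trees in $\cR^{-1}(T')\cap\cT_{N_1,\ldots,N_K}$. Because the fibers of $\cR$ partition $\cT_{N_1,\ldots,N_K}$, any un-pruning count that turns out to be independent of $T'$ will simply multiply $|\cT_{N_2,\ldots,N_K}|$.

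Un-pruning $T'$ inverts the two pruning operations. First, at each of the $N_2$ leaves of $T'$ one attaches a pair of new leaf children, so that the old leaf becomes the terminal of a new order-$2$ branch (a principal branching); since the two new leaf children are indistinguishable, this step contributes a factor $1$ per leaf. Second, one reinserts the $N_1-2N_2$ new side-branching vertices (each bearing a single leaf as a side branch) that pruning had removed by series reduction. Each such reinsertion lands on some edge of $T'$, and there are $2N_2-1$ such edges, since $T'$ is a planted reduced binary tree with $N_2$ leaves.

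The $N_1-2N_2$ indistinguishable insertions are to be distributed among the $2N_2-1$ distinguishable edges of $T'$, and stars and bars yield
$$\binom{(N_1-2N_2)+(2N_2-1)-1}{(2N_2-1)-1}=\binom{N_1-2}{2N_2-2}$$
configurations. Each configuration admits a further $2^{N_1-2N_2}$ planar orderings, one binary left/right choice for the side leaf at each of the $N_1-2N_2$ new side-branching vertices. This count depends only on $N_1$ and $N_2$, and hence
$$|\cT_{N_1,\ldots,N_K}|=|\cT_{N_2,\ldots,N_K}|\cdot\binom{N_1-2}{2N_2-2}\cdot 2^{N_1-2N_2}.$$
Substituting the inductive hypothesis for $|\cT_{N_2,\ldots,N_K}|$ and combining the exponents via $(N_2-1-\sum_{j=3}^K N_j)+(N_1-2N_2)=N_1-1-\sum_{j=2}^K N_j$ closes the induction.

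The main delicacy is to verify that the un-pruning count really is $T'$-independent and that the stars-and-bars step introduces no double counting. Two points require attention: (a) the left/right ordering at every pre-existing principal branching of $T'$ is inherited unchanged by $T$, so no additional factor of $2$ is generated there; and (b) when several new side-branching vertices land on the same edge of $T'$, they occupy canonical, totally ordered positions along the resulting subpath, and any permutation of these structurally identical insertions produces the same planar tree, which is precisely why stars-and-bars (rather than an ordered multinomial) is the correct combinatorial tool. With these two observations in place, the algebraic combination of the inductive factors is routine.
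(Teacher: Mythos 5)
Your proof is correct. Note that the paper itself does not prove Lemma~\ref{lem:ChunikhinaShreve}: it only states the count, citing \cite{EVC18} (and \cite{Shreve66} for an equivalent form), so there is no in-paper argument to compare against line by line. Your pruning induction is nonetheless exactly the derivation one would expect this survey to give: the fiber of $\cR$ over a fixed $T'\in\cT_{N_2,\dots,N_K}$ is parametrized by attaching a principal pair at each of the $N_2$ leaves (a single choice each, since the two bare children are interchangeable in the planar order) together with, for each of the $2N_2-1$ edges of $T'$, a word over $\{L,R\}$ recording the ordered orientations of the order-$1$ side leaves inserted there; summing $\prod_i 2^{m_i}$ over compositions $\sum m_i=N_1-2N_2$ gives precisely $2^{N_1-2N_2}\binom{N_1-2}{2N_2-2}$, independent of $T'$, and the exponent bookkeeping closes the induction. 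The cited derivations instead count directly, order by order, the ways to attach the $N_j$ order-$j$ branches to the order-$(j{+}1)$-and-higher skeleton; this produces the same per-order factor $2^{N_j-2N_{j+1}}\binom{N_j-2}{2N_{j+1}-2}$, so the two routes are the same recursion read in opposite directions, with yours having the advantage of sitting squarely in the paper's Horton-pruning framework. Your two cautionary points (inherited planar order on the surviving part of $T$, and the multiset-plus-orientation-word parametrization that justifies stars and bars) are exactly the places where overcounting could occur, and you resolve both correctly.
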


\medskip
\noindent
Subsequently, Lem.~\ref{lem:ChunikhinaShreve} is used to find the entropy rate for trees that satisfy the strong Horton law (Def.~\ref{def:Horton_rv}) with exponent $R>2$.
\begin{thm}[{\bf Entropy rate for Horton self-similar trees, \cite{EVC18}}]\label{thm:Chunikhina1}
For a given $R>2$, let $T$ be a random tree, uniformly sampled from the space
$$\cT_{R,K}\!=\!\big\{T \in \BT^| \!:{\sf ord}(T)\!=\!K,~
\big|N_j[T]-\!R^{K-j}\big| < (R-\epsilon)^{K-j}~~\forall j \leq K\big\},$$
where $\epsilon \in (0,R)$ is a given small quantity. Then, the entropy rate
\be\label{eqn:Chunikhina1}
\cH_{\infty}(R):=\lim\limits_{K \rightarrow \infty}{-{\sf E}[\log_2{\sf P}(T)] \over 2R^{K-1}}=1-{1-H(2/R) \over 2-2/R},
\ee
where
\be\label{eqn:BEF}
H(z)=-z\log_2{z}-(1-z)\log_2(1-z),\quad 0< z< 1
\ee
is the binary entropy function illustrated in Fig.~\ref{fig:R1}(a).
The entropy rate $\cH_{\infty}(R)$ is illustrated in Fig.~\ref{fig:R1}(b).
\end{thm}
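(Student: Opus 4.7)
The plan is as follows. Since $T$ is sampled \emph{uniformly} from $\cT_{R,K}$, we have ${\sf P}(T)=1/|\cT_{R,K}|$ for every $T$ in the support, so
\[
-{\sf E}[\log_2 {\sf P}(T)] \;=\; \log_2|\cT_{R,K}|.
\]
Thus the theorem reduces to the purely combinatorial asymptotic
\[
\lim_{K\to\infty}\frac{\log_2|\cT_{R,K}|}{2R^{K-1}} \;=\; 1-\frac{1-H(2/R)}{2-2/R}.
\]
By definition, $\cT_{R,K}$ partitions as the disjoint union $\bigsqcup_{(N_1,\dots,N_K)} \cT_{N_1,\dots,N_K}$ over all admissible branch-count sequences with $|N_j-R^{K-j}|<(R-\epsilon)^{K-j}$ for every $j\le K$. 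The number of such sequences is bounded by $\prod_{j=1}^{K-1}\!2(R-\epsilon)^{K-j}\le 2^{K}(R-\epsilon)^{K(K-1)/2}=\exp\bigl(O(K^2\log R)\bigr)$, which is $o(R^{K-1})$ on the logarithmic scale. Consequently, $\log_2|\cT_{R,K}|$ agrees, up to a lower-order term, with the logarithm of the largest summand, and it suffices to compute $\log_2|\cT_{N_1,\dots,N_K}|$ along \emph{any} admissible sequence, e.g.\ $N_j=\lfloor R^{K-j}\rfloor$.

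I would then apply the Branch Counting Lemma (Lem.~\ref{lem:ChunikhinaShreve}):
\[
\log_2|\cT_{N_1,\dots,N_K}|
= N_1-1-\sum_{j=2}^K N_j+\sum_{j=1}^{K-1}\log_2\binom{N_j-2}{2N_{j+1}-2}.
\]
For the first part, using the geometric asymptotics $N_j\sim R^{K-j}$,
\[
N_1-\sum_{j=2}^K N_j \;\sim\; R^{K-1}-\frac{R^{K-1}-1}{R-1} \;\sim\; R^{K-1}\,\frac{R-2}{R-1}.
\]
For each binomial, since $(2N_{j+1}-2)/(N_j-2)\to 2/R$, Stirling's approximation (in the form $\log_2\binom{n}{k}=nH(k/n)+O(\log n)$) yields
\[
\log_2\binom{N_j-2}{2N_{j+1}-2} \;=\; R^{K-j}\,H(2/R)\,(1+o(1))+O(K),
\]
uniformly in $j\in\{1,\dots,K-1\}$, provided the relative tolerance $(R-\epsilon)^{K-j}/R^{K-j}=(1-\epsilon/R)^{K-j}\to 0$ is inserted into the continuity of $H$ at $2/R$. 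Summing the geometric series gives
\[
\sum_{j=1}^{K-1}\log_2\binom{N_j-2}{2N_{j+1}-2} \;\sim\; H(2/R)\,\frac{R^K-R}{R-1} \;\sim\; R^{K-1}\,\frac{R\,H(2/R)}{R-1}.
\]

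Adding the two contributions and dividing by $2R^{K-1}$:
\[
\cH_\infty(R) \;=\; \frac{1}{2(R-1)}\Bigl[(R-2)+R\,H(2/R)\Bigr] \;=\; 1-\frac{R(1-H(2/R))}{2(R-1)} \;=\; 1-\frac{1-H(2/R)}{2-2/R},
\]
which is \eqref{eqn:Chunikhina1}. The main technical obstacle is the \emph{uniformity} of the Stirling approximation across all $K-1$ binomial coefficients simultaneously: the error terms must accumulate to $o(R^{K-1})$. This is delicate at the top of the tree (small $j$, where $N_j$ is large and the relative Stirling error is tiny but must be multiplied by a large coefficient $R^{K-j}$) and at the bottom (large $j$, where $N_j$ itself may be of bounded size so that $H(\cdot)$ is evaluated away from $2/R$ and Stirling is inaccurate). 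The fix is to split the sum at some threshold $j^*=j^*(K)$ growing with $K$ (say $j^*=\lfloor\log\log K\rfloor$); for $j\le K-j^*$ use the continuity of $H$ together with the tolerance $(1-\epsilon/R)^{K-j}$ to control the relative error, while the terms with $j>K-j^*$ contribute at most $O(R^{j^*}K)$, which is $o(R^{K-1})$.
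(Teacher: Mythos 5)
Your argument is correct, and it follows exactly the route the survey indicates: the theorem is quoted from \cite{EVC18} without proof here, but the text states that Lemma~\ref{lem:ChunikhinaShreve} is the ingredient used to derive \eqref{eqn:Chunikhina1}, and your reduction (uniform law $\Rightarrow$ entropy $=\log_2|\cT_{R,K}|$, decomposition over admissible branch-count vectors whose number is only $\exp(O(K^2))$, then Stirling applied to the binomials of the Branch Counting Lemma with the split at $j^*$ to handle uniformity) is precisely that computation, and your closing algebra reproduces $\frac{R-2+RH(2/R)}{2(R-1)}=1-\frac{1-H(2/R)}{2-2/R}$ correctly. The only point worth stating slightly more carefully is that "any admissible sequence" suffices because your uniform Stirling estimate shows $\log_2|\cT_{N_1,\dots,N_K}|$ varies by $o(R^{K-1})$ over the whole admissible set, not merely because the number of summands is small.
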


\begin{figure}[t] 
\centering
\includegraphics[width=0.45\textwidth]{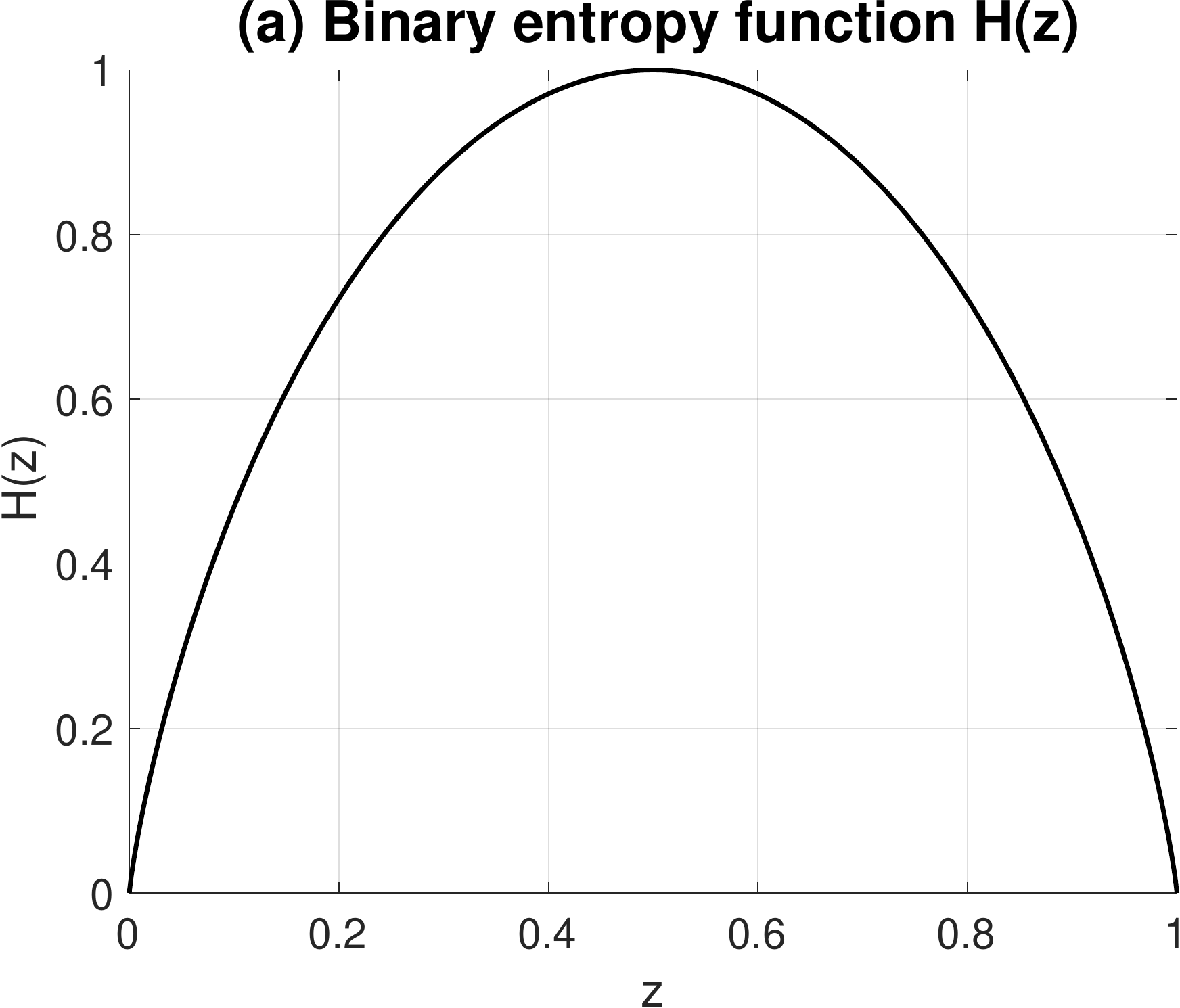}
\includegraphics[width=0.45\textwidth]{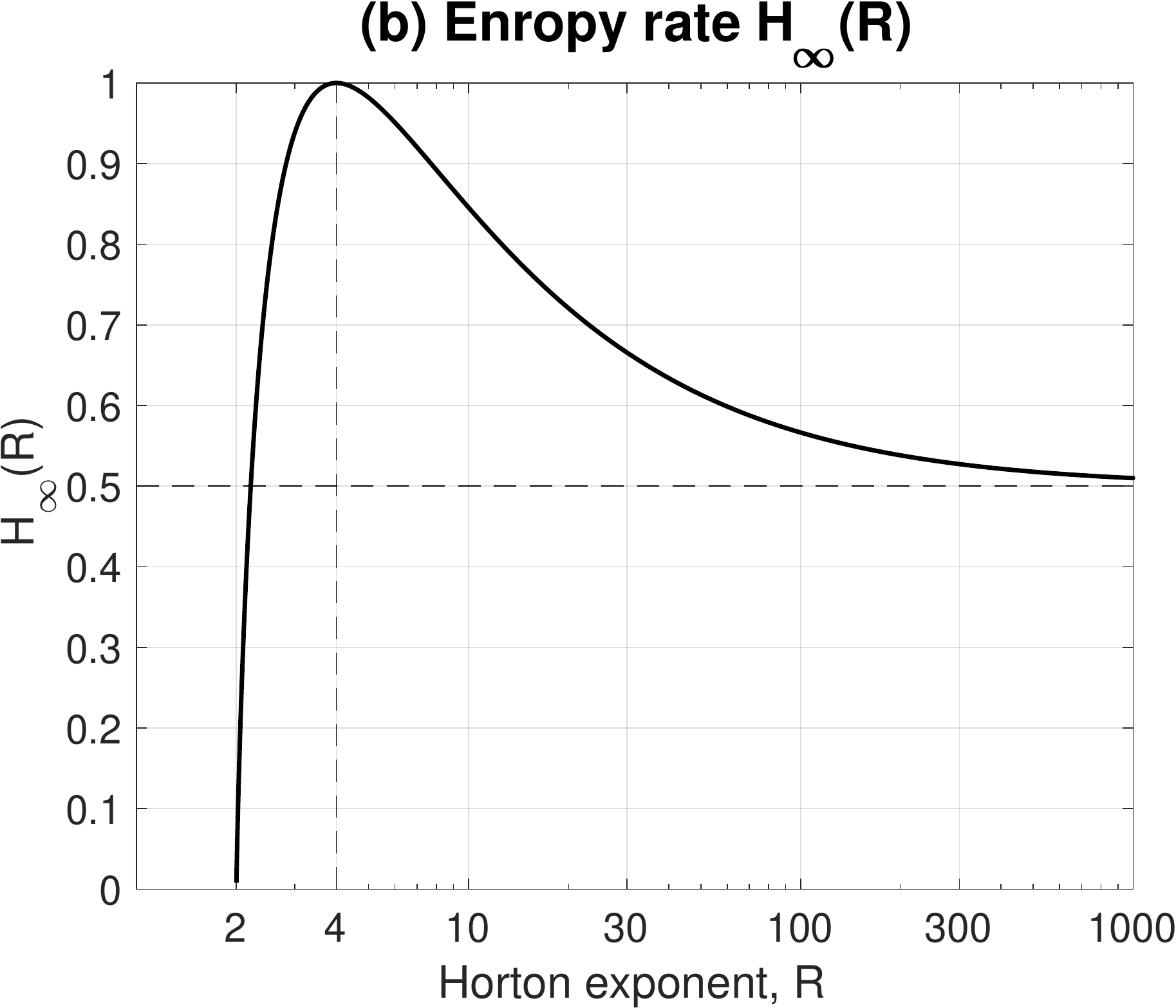}
\caption[Entropy rate in trees with strong Horton law]
{Entropy rate in trees that satisfy the strong Horton law with exponent $R$.
(a) Binary entropy function $H(z)$.
(b) Entropy rate $\cH_{\infty}(R)$.}
\label{fig:R1}
\end{figure}

\medskip
\noindent
Notice that the trees in $\cT_{R,K}$ satisfy the strong Horton law (Def.~\ref{def:Horton_rv}) 
with the Horton exponent $R$, and
$2R^{K-1}$ is the asymptotic number of nodes  in a tree $T$ from $\cT_{R,K}$. 
\begin{Rem}\label{rem:binary_treeGW}
It is an easily verified fact that a random tree $T$ selected uniformly from the subspace 
\be\label{eqn:spaceBTIN}
\BT^|(N):=\{T \in \BT^| \,:\, \#T=2N-1\}
\ee
of $\BT^|$ containing only the trees with $N$ leaves ($2N$ nodes and $2N-1$ edges) 
is distributed as a random tree sampled from the critical plane Galton-Watson distribution $\mathcal{GW}_{\rm plane}\left({1 \over 2},{1 \over 2}\right)$, conditioned on $\#T=2N-1$,
i.e., 
\be\label{eqn:binary_treeGWplane}
{\sf Unif}\big(\BT^|(N)\big) ~\stackrel{d}{=}~\left(\mathcal{GW}_{\rm plane}\left({1 \over 2},{1 \over 2}\right) \Big| \#T=2N-1\right).
\ee
Consequently, we have that
\be\label{eqn:binary_treeGW}
T \stackrel{d}{\sim} {\sf Unif}\big(\BT^|(N)\big) ~\Rightarrow~\textsc{shape}(T) \stackrel{d}{\sim}\left(\mathcal{GW}\left({1 \over 2},{1 \over 2}\right) \Big| \#T=2N-1\right).
\ee
\end{Rem}

\medskip
\noindent
The number  $\big|\BT^|(N)\big|$ of different combinatorial shapes of rooted planted plane binary trees with $N$ leaves and $2N-1$ edges, is given by $C_{N-1}$,
where $C_n$ denotes the {\it Catalan number} defined as
\be\label{eqn:catalan}
C_n={1 \over n+1}\binom{2n}{n}.
\ee 
\index{Catalan number}

\medskip
\noindent
Using $\big|\BT^|(N)\big|=C_{N-1}$ and Stirling's formula, 
it is observed in \cite{EVC18} that the entropy rate for a tree $T'$, selected uniformly from $\BT^|(N)$ is
\be\label{eqn:ChunikhinaA}
\cH_{\infty}^{\rm GW}:=\lim\limits_{N \rightarrow \infty}{-{\sf E}[\log_2{\sf P}(T')] \over 2N}=1.
\ee
Thus, scaling by the asymptotic number of nodes $2R^{K-1}$ in Thm.~\ref{thm:Chunikhina1} implies 
$$\cH_{\infty}(R) \leq \cH_{\infty}^{\rm GW}=1.$$
Indeed, by definition of the corresponding spaces,  
$$\cT_{R,K} \subseteq \bigcup_N \BT^|(N),$$ 
where the union is taken over $N$ ranging from 
$$\lceil R^{K-1}-(R-\epsilon)^{K-1}\rceil ~~\text{ to }~~\lfloor R^{K-1}+(R-\epsilon)^{K-1}\rfloor ,$$
and therefore
$$\Big|\cT_{R,K}\Big| \leq 2(R-\epsilon)^{K-1}\,\Big|\BT^|\big(2R^{K-1}+2(R-\epsilon)^{K-1}\big)\Big|.$$
Hence, for the following limits known to converge, we have
\begin{align*}
\cH_{\infty}(R) &=\lim\limits_{K \rightarrow \infty}{\log_2 \Big|\cT_{R,K}\Big| \over 2R^{K-1}} \\
&\leq \cH_{\infty}^{\rm GW}
=\lim\limits_{K \rightarrow \infty}{\log_2 \left(2(R-\epsilon)^{K-1}\,\Big|\BT^|\big(2R^{K-1}+2(R-\epsilon)^{K-1}\big)\Big|\right) \over 2R^{K-1}}.
\end{align*}
\medskip
\noindent
Moreover, scaling by the asymptotic number of nodes $2R^{K-1}$ in 
Thm.~\ref{thm:Chunikhina1} enables representing $\cH_{\infty}(R)$
as the limit ratio of the entropy for Horton self-similar trees with 
parameter $R$ to the entropy for uniformly selected binary trees. Specifically, let $T$ be a random tree sampled uniformly from the space $\cT_{R,K}$ and let $T'$ be a random tree sampled uniformly from the space $\BT^|(N)$ with $N=R^{K-1}$. Then, equations \eqref{eqn:Chunikhina1} and \eqref{eqn:ChunikhinaA} imply 
that $\cH_{\infty}(R)$ is the the limit ratio of entropies as the space sizes grow with $K \rightarrow \infty$:
\begin{equation}\label{eqn:ChunikhinaB}
\cH_{\infty}(R)=\lim\limits_{K \rightarrow \infty}{-{\sf E}[\log_2{\sf P}(T)] \over -{\sf E}[\log_2{\sf P}(T')] }=1-{1-H(2/R) \over 2-2/R}.
\end{equation}

\medskip
\noindent
As an important consequence of Thm.~\ref{thm:Chunikhina1}, a special place of the parameter $R=4$ is established amongst all Horton exponents $R \in [2,\infty)$ as
$${\rm argmax}_R \cH_{\infty}(R)=4 \quad \text{ and }\quad \max_R \cH_{\infty}(R)=\cH_{\infty}(4)=1.$$
Not surprisingly, $R=4$ is the parameter value for the strong Horton law results we will encounter in Sect.~\ref{cgw}, 
primarily in the context of the critical binary Galton-Watson tree $\mathcal{GW}\left({1 \over 2},{1 \over 2}\right)$.
Indeed, as stated in Rem.~\ref{rem:binary_treeGW}, the tree $T''=\textsc{shape}(T') \in \cBT^|$ in \eqref{eqn:ChunikhinaB} is a random tree sampled from the Galton-Watson distribution $\mathcal{GW}\left({1 \over 2},{1 \over 2}\right)$ conditioned on $\#T''=2N-1$.

\medskip
\noindent
In \cite{EVCthesis}, Chunikhina extended the results in \cite{EVC18} by counting the number of trees with the given merger numbers $N_{i,j}$ (see Sect. \ref{sec:mss}), 
and finding the entropy rates for the Tokunaga self-similar trees (Def.~\ref{ss2}) represented as a function of the parameters $(a,c)$. 
For a given integer $K>1$, consider a finite sequence of admissible branch counts  
$\{N_i\}_{i=1,\hdots,K}$, and a finite sequence of admissible branch 
numbers $\{N_{i,j}\}_{1 \leq i <j\leq K}$. 
Admissibility means that for all $i\leq K-1$,
$$N_i=2N_{i+1} +\sum\limits_{j=i+1}^K N_{i,j}$$
as all $N_i$ branches of Horton-Strahler order $i$ have to merge into a higher order branch
(either two branches of order $i$ merge and originate a branch of order $i+1$, or a branch of order $i$ merges into a branch of order $j>i$).
Consider the subspace 
$$\cT_{K,\{N_i\},\{N_{i,j}\}}=\big\{T \in \cT_{N_1,\hdots,N_K} :\,N_{1,2}[T]=N_{1,2},\hdots,N_{K-1,K}[T]=N_{K-1,K}\big\}.$$
\begin{lem}[{{\bf Side branch counting lemma, \cite{EVCthesis}}}] \label{thm:Chunikhina2}
$$\Big|\cT_{K,\{N_i\},\{N_{i,j}\}}\Big|=\prod\limits_{j=2}^K \prod\limits_{i=1}^{j-1} 2^{N_{i,j}}\binom{N_j-1+\sum\limits_{k=i}^{j-1}N_{k,j}}{N_{i,j}}.$$
\end{lem}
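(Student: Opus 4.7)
The plan is to assemble a tree in $\cT_{K,\{N_i\},\{N_{i,j}\}}$ from the top (order $K$) down, counting at each level $j\ge 2$ the number of admissible internal structures for the $N_j$ branches of order $j$, and then multiplying the per-level counts. After the higher levels $K, K-1,\dots,j+1$ have been processed, the $N_j$ positions that will host order-$j$ branches are already canonically labeled: they consist of the $2 N_{j+1}$ \emph{terminal-child slots} (the left and right children at each terminal vertex of an order-$(j+1)$ branch) and the $\sum_{k=j+1}^K N_{j,k}$ \emph{side-branch slots} (one per previously placed side branch of index $\{j,k\}$). The admissibility identity $N_j = 2 N_{j+1} + \sum_{k=j+1}^K N_{j,k}$ guarantees that these labeled slots exhaust all order-$j$ positions without overlap.

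For fixed $j\ge 2$, every order-$j$ branch is a linear chain from its initial to its terminal vertex that carries some sequence of side branches (with orders in $\{1,\dots,j-1\}$) and terminates in two order-$(j-1)$ children, whose left/right positions are already absorbed into the level-$(j-1)$ slot labels. Encoding the collective contents of all $N_j$ branches as a single linear word separated by $N_j-1$ indistinguishable dividers, with letters marking the Horton--Strahler orders of the $m_j := \sum_{i=1}^{j-1} N_{i,j}$ side branches, a stars-and-bars count produces
\[
\binom{m_j + N_j - 1}{N_{1,j},\, N_{2,j},\,\dots,\, N_{j-1,j},\, N_j - 1}
= \frac{(m_j + N_j - 1)!}{(N_j - 1)!\,\prod_{i=1}^{j-1} N_{i,j}!}
\]
such encodings. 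Multiplying by $2^{m_j}$ for the independent left/right choice of each side branch along its host branch yields the per-level factor.

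To match the stated formula, I expand the multinomial as a telescoping product of binomials: successively pulling out the $N_{i,j}$ letters of order $i$ for $i=1,2,\dots,j-1$ gives
\[
\binom{m_j + N_j - 1}{N_{1,j},\dots, N_{j-1,j}, N_j - 1}
= \prod_{i=1}^{j-1} \binom{N_j - 1 + \sum_{k=i}^{j-1} N_{k,j}}{N_{i,j}}.
\]
Combining with $2^{m_j} = \prod_{i=1}^{j-1} 2^{N_{i,j}}$ and taking the product over $j=2,\dots,K$ recovers exactly the right-hand side of the lemma. The main obstacle is not the algebra but the bookkeeping: one must verify inductively (base case $j=K$, where $N_K=1$ identifies a unique root slot) that the canonical addressing scheme assigns each order-$j$ position a unique left/right-sensitive label, so that distinct configurations at level $j$ yield distinct planar trees in $\cT_{K,\{N_i\},\{N_{i,j}\}}$ and every such tree is produced exactly once. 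Once this injectivity and exhaustiveness are established, the lemma follows by multiplying the per-level counts.
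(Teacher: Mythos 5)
The paper does not actually prove this lemma -- it is imported from \cite{EVCthesis} without an in-paper argument -- so there is no internal proof to compare against. Your count is correct: the level-by-level stars-and-bars encoding (with the $2^{m_j}$ orientation factor and the observation that the number of order-$j$ slots is forced to be $N_j$ by admissibility, independently of the higher-level choices), followed by the telescoping expansion of the multinomial into $\prod_{i=1}^{j-1}\binom{N_j-1+\sum_{k=i}^{j-1}N_{k,j}}{N_{i,j}}$, is exactly equivalent to the sequential side-branch insertion argument used in the cited source and echoed elsewhere in the paper (cf.\ the multinomial attachment of side branches among the $m+1$ edges of a branch in Ex.~\ref{ex5} and Prop.~\ref{HBP:branch}).
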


\medskip
\noindent
Lemma \ref{thm:Chunikhina2} is used to obtain the following asymptotic results.
Consider Tokunaga self-similar tree with parameters $(a,c)$. 
Such a tree satisfies the strong Horton law for mean branch numbers (Def.~\ref{def:Horton_mean}) 
with the Horton exponent \eqref{HortonR}
\[R=R(a,c)= {a+c+2 +\sqrt{(a+c+2)^2-8c} \over 2}.\] 
Next, similarly to $\cT_{R,K}$, one can define the space $\cT_{a,c,K}$ of asymptotically 
Tokunaga self-similar trees of order $K$. 
Informally, this space includes the trees in $\BT^|$ such that 
\[{\sf ord}(T)=K,\quad N_j[T]\sim R^{K-j},\quad \text{ and }\quad \frac{N_{i,j}(T)}{N_j(T)}\sim ac^{j-i-1},\]
where $R=R(a,c)$, and the asymptotic equality $\sim$ is taken as $K\to\infty$.
\begin{thm}[{\bf Entropy rate for Tokunaga self-similar trees, \cite{EVCthesis}}]\label{thm:Chunikhina3}
For given $a,c>0$, let $T$ be a random tree, uniformly sampled from the space $\cT_{a,c,K}$. Then, the entropy rate
\begin{align}\label{eqn:Chunikhina2}
\cH_{\infty}(a,c):=&\lim\limits_{K \rightarrow \infty}{-{\sf E}[\log_2{\sf P}(T)] \over 2R^{K-1}}  \nonumber \\
=& {a \over 2} \sum\limits_{j=1}^\infty R^{-j}\left({1-c^j \over 1-c}+{1 \over a} \right) \, \log_2\left({1-c^j \over 1-c}+{1 \over a} \right) \nonumber \\
&+{aR \over 2(R-c)(R-1)}+{\log_2{a} \over 2(R-1)}-{aRc\log_2{c} \over 2(R-c)^2(R-1)}.
\end{align}
\end{thm}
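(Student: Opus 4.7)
\medskip
\noindent\textbf{Proof proposal.} Since $T$ is uniform on $\cT_{a,c,K}$, we have $-\E[\log_2 {\sf P}(T)] = \log_2 |\cT_{a,c,K}|$. The plan is to (i) reduce this count to a single ``typical'' cell indexed by the ideal Tokunaga configuration $N_j^\ast = R^{K-j}$ and $N_{i,j}^\ast = ac^{j-i-1}R^{K-j}$, (ii) evaluate that cell via Lemma \ref{thm:Chunikhina2} and Stirling, and (iii) perform the algebraic reduction that produces the four terms in \eqref{eqn:Chunikhina2}. For step (i), a concentration argument analogous to the one behind Theorem \ref{thm:Chunikhina1} should yield
\[
\log_2 |\cT_{a,c,K}| = \log_2 |\cT_{K,\{N_j^\ast\},\{N_{i,j}^\ast\}}| + o(R^K),
\]
because each deviated cell in the $\epsilon$-neighborhood defining $\cT_{a,c,K}$ has a strictly smaller Stirling exponent.

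For step (ii), Lemma \ref{thm:Chunikhina2} together with $\log_2\binom{M}{N} = M\,H(N/M) + O(\log M)$ shows that the contribution of the pair $(i,j)$ with $m := j-i$ is $R^{K-j}\bigl[ac^{m-1} + a\alpha_m H(p_m)\bigr] + o(R^{K-j})$, where
\[
\alpha_m := \frac{1-c^m}{1-c} + \frac{1}{a}, \qquad p_m := \frac{c^{m-1}}{\alpha_m}.
\]
The pivotal algebraic identity, verified by direct computation, is $\alpha_m - c^{m-1} = \alpha_{m-1}$ with $\alpha_0 = 1/a$. It gives $1-p_m = \alpha_{m-1}/\alpha_m$ and, after expanding $H(p_m)$ in logarithms,
\[
a\alpha_m H(p_m) = -a(m-1)c^{m-1}\log_2 c + a\alpha_m\log_2\alpha_m - a\alpha_{m-1}\log_2\alpha_{m-1}.
\]

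For step (iii), interchange the order of summation so that $m$ is outer and $j$ runs from $m+1$ to $K$; the inner geometric sum is asymptotic to $R^{K-m}/(R-1)$, so after dividing by $2R^{K-1}$ the problem reduces to evaluating $\frac{1}{2(R-1)}\sum_{m\geq 1} R^{1-m}\bigl[ac^{m-1} + a\alpha_m H(p_m)\bigr]$. The elementary sum $\sum_{m\geq 1} c^{m-1}R^{1-m} = R/(R-c)$ produces the second term of \eqref{eqn:Chunikhina2}; the sum $\sum_{m\geq 1}(m-1)c^{m-1}R^{1-m} = cR/(R-c)^2$ produces the fourth term; and the telescoping difference $\alpha_m\log_2\alpha_m - \alpha_{m-1}\log_2\alpha_{m-1}$, after the index shift $m \mapsto m-1$, collapses into $a(R-1)\sum_{m\geq 1}\alpha_m\log_2\alpha_m R^{-m}$ plus the boundary contribution $-a\alpha_0\log_2\alpha_0 = \log_2 a$. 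Dividing by $2(R-1)$ matches the first and third terms. Convergence of all series rests on $c<R$, which was verified via the discriminant calculation after \eqref{HortonR} under the standing hypothesis $a>0$.

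The main obstacle is the concentration step (i): the merger configuration is doubly indexed and its admissibility hyperplane $N_i = 2N_{i+1} + \sum_{j>i} N_{i,j}$ couples the two indices, so one cannot directly invoke the single-index argument of Theorem \ref{thm:Chunikhina1}. A saddle-point expansion around $(\{N_j^\ast\},\{N_{i,j}^\ast\})$ should handle this: one expands $\log_2 |\cT_{K,\{N_j\},\{N_{i,j}\}}|$ to second order in the deviations and checks that the resulting quadratic form is negative definite on the admissible variations, so that deviations of relative size $\delta$ on scale $R^{K-j}$ cost entropy of order $\delta^2 R^{K-j}$, making the sum over the $\epsilon$-neighborhood dominated by the ideal cell up to $o(R^K)$.
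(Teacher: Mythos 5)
The paper does not actually prove this theorem: it states it as a survey of \cite{EVCthesis}, providing only the counting formula of Lemma~\ref{thm:Chunikhina2} as the ingredient, so there is no in-paper argument to compare against line by line. Your reconstruction is, however, the natural proof one would build from that lemma, and I checked its computational core: with $N_j^\ast=R^{K-j}$ and $N_{i,j}^\ast=ac^{j-i-1}R^{K-j}$ the top entry of the binomial in Lemma~\ref{thm:Chunikhina2} is indeed $\approx a\alpha_m R^{K-j}$ with $m=j-i$, your identity $\alpha_m-c^{m-1}=\alpha_{m-1}$ (with $\alpha_0=1/a$) is correct, the expansion of $a\alpha_m H(p_m)$ into $-a(m-1)c^{m-1}\log_2 c + a\alpha_m\log_2\alpha_m - a\alpha_{m-1}\log_2\alpha_{m-1}$ is correct, and the three resummations ($\sum c^{m-1}R^{1-m}=R/(R-c)$, $\sum(m-1)c^{m-1}R^{1-m}=cR/(R-c)^2$, and the telescoping with boundary term $\log_2 a$) reproduce exactly the four terms of \eqref{eqn:Chunikhina2} after division by $2(R-1)$. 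Convergence does rest on $c<R$, which holds since $z_1<1/c$ in the discriminant computation after \eqref{HortonR}.

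The one place where you are doing more work than necessary is step (i). A saddle-point expansion with a negative-definite Hessian on the admissibility hyperplanes is not needed for the upper bound: the number of admissible configuration cells $(\{N_j\},\{N_{i,j}\})$ inside the tolerance window is at most $2^{O(K^3)}=o(2^{\delta R^K})$ for every $\delta>0$, so a union bound reduces everything to the \emph{maximal} cell, and if the tolerance on $N_{i,j}/N_j$ is of the exponentially shrinking relative form used in the definition of $\cT_{R,K}$ in Theorem~\ref{thm:Chunikhina1} (the space $\cT_{a,c,K}$ is only defined ``informally'' in the survey, so this has to be pinned down), then first-order Lipschitz continuity of the Stirling exponent already bounds the discrepancy between any cell in the window and the ideal cell by $o(R^K)$. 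The lower bound is immediate from the single ideal cell. So your step (i) is the right thing to worry about, but it closes with a counting-plus-continuity argument parallel to Theorem~\ref{thm:Chunikhina1} rather than a genuine second-order analysis; the coupling through the constraints $N_i=2N_{i+1}+\sum_{j>i}N_{i,j}$ only affects which cells are admissible, not the union bound.
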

\noindent Figure~\ref{fig:H}(a) illustrates the entropy rate $\cH_{\infty}(a,c)$.

\begin{figure}[t] 
\centering
\includegraphics[width=\textwidth]{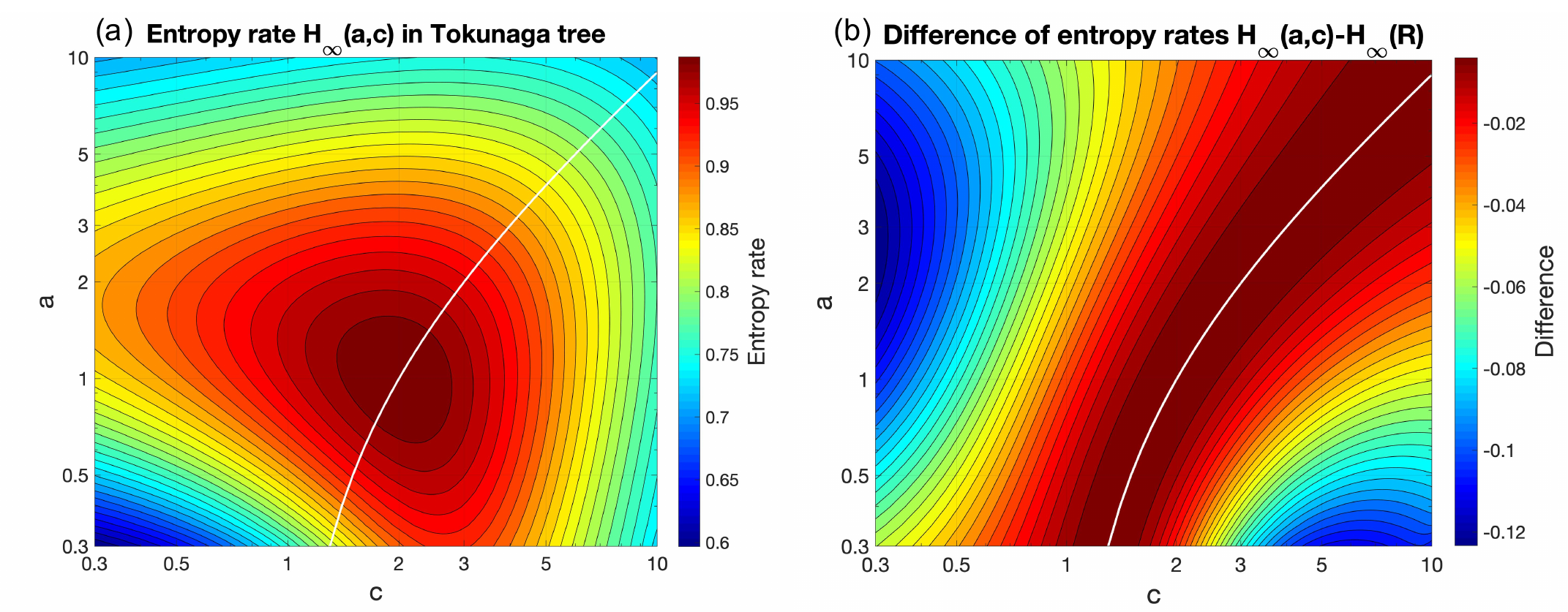}
\caption[Entropy rate in Tokunaga trees.]
{Entropy rate in Tokunaga trees. 
(a) Entropy rate $\cH_{\infty}(a,c)$ in a Tokunaga self-similar tree with parameters $(a,c)$.
(b) The difference $\cH_{\infty}(a,c)-\cH_{\infty}(R)$ of entropy rates in a Tokunaga 
tree with parameters $(a,c)$ and in a tree satisfying Horton law with Horton exponent $R(a,c)$. 
A double-logarithmic scale is used to emphasize behavior of the plots at the
boundary values. 
White line corresponds to $a=c-1$.}
\label{fig:H}
\end{figure}

\medskip

If $a=c-1$, then $R=2c$ by \eqref{HortonR}, and the equation \eqref{eqn:Chunikhina2} simplifies, leading to the following corollary.
\begin{cor}[{{\bf \cite{EVCthesis}}}]\label{cor:Chunikhina} 
Let $T$ be a random tree, uniformly sampled from the space $\cT_{a,c,K}$ with $c>1$ and $a=c-1$. Then $T$ satisfies the strong Horton law \eqref{eq:Horton_mean} with $R=2c$,
and the entropy rate is given by
\be\label{eqn:Chunikhina3}
\cH_{\infty}(c-1,c)=1-{1-H(1/c) \over 2-1/c}=\cH_{\infty}(R),
\ee
where $H(z)$ is the binary entropy function \eqref{eqn:BEF} and $\cH_{\infty}(R)$ 
is defined by \eqref{eqn:Chunikhina1}.
\end{cor}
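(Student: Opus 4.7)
The plan is to prove the corollary by direct substitution into Theorem~\ref{thm:Chunikhina3} and then matching the resulting expression with \eqref{eqn:Chunikhina1}. There is no new conceptual content beyond verification; the work lies in a careful algebraic simplification that exploits the specific relation $a=c-1$.

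First I would handle the Horton exponent. Plugging $a=c-1$ into \eqref{HortonR}, one computes $a+c+2=2c+1$ and $(a+c+2)^2-8c=(2c+1)^2-8c=(2c-1)^2$, so $R=2c$. In particular $R>2$ since $c>1$, so the strong Horton law for mean branch numbers holds by Theorem~\ref{thm:HLSST} (the Tokunaga sequence $T_j=(c-1)c^{j-1}$ satisfies $\limsup T_j^{1/j}=c<\infty$).

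The main step is simplifying \eqref{eqn:Chunikhina2}. The key algebraic observation is that with $a=c-1$,
\[
\frac{1-c^j}{1-c}+\frac{1}{a}=\frac{c^j-1}{c-1}+\frac{1}{c-1}=\frac{c^j}{c-1}.
\]
Substituting this together with $R=2c$ (so $c/R=1/2$) collapses the infinite sum in \eqref{eqn:Chunikhina2} into
\[
\frac{a}{2}\sum_{j=1}^{\infty}R^{-j}\cdot\frac{c^j}{c-1}\log_2\frac{c^j}{c-1}
=\frac{1}{2}\sum_{j=1}^{\infty}2^{-j}\bigl(j\log_2 c-\log_2(c-1)\bigr),
\]
and the standard identities $\sum j2^{-j}=2$, $\sum 2^{-j}=1$ reduce this to $\log_2 c-\tfrac{1}{2}\log_2(c-1)$. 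The three remaining terms in \eqref{eqn:Chunikhina2} become, after substituting $R-c=c$ and $R-1=2c-1$,
\[
\frac{aR}{2(R-c)(R-1)}=\frac{c-1}{2c-1},\quad
\frac{\log_2 a}{2(R-1)}=\frac{\log_2(c-1)}{2(2c-1)},\quad
-\frac{aRc\log_2 c}{2(R-c)^2(R-1)}=-\frac{(c-1)\log_2 c}{2c-1}.
\]
Collecting the $\log_2 c$ and $\log_2(c-1)$ coefficients gives the compact form
\[
\cH_{\infty}(c-1,c)=\frac{(c-1)+c\log_2 c-(c-1)\log_2(c-1)}{2c-1}.
\]

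Finally I would check that this matches $\cH_{\infty}(2c)$ defined by \eqref{eqn:Chunikhina1}. Expanding the binary entropy,
\[
H(1/c)=\log_2 c-\frac{c-1}{c}\log_2(c-1),
\]
so that $1-H(1/c)=\bigl(c-c\log_2 c+(c-1)\log_2(c-1)\bigr)/c$ and $2-1/c=(2c-1)/c$. Taking the ratio yields
\[
1-\frac{1-H(1/c)}{2-1/c}=\frac{(c-1)+c\log_2 c-(c-1)\log_2(c-1)}{2c-1},
\]
identical to the expression above, completing the proof. The only real obstacle is keeping the algebra tidy; the crucial simplification is the identity $\frac{1-c^j}{1-c}+\frac{1}{c-1}=\frac{c^j}{c-1}$, which is exactly what makes the sum in Theorem~\ref{thm:Chunikhina3} closed-form under the critical Tokunaga constraint $a=c-1$.
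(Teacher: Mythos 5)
Your proposal is correct and follows exactly the route the paper intends: the paper merely remarks that for $a=c-1$ one gets $R=2c$ from \eqref{HortonR} and that \eqref{eqn:Chunikhina2} "simplifies," and your computation carries out that simplification in full, with the key identity $\frac{1-c^j}{1-c}+\frac{1}{c-1}=\frac{c^j}{c-1}$ and the resulting geometric sums all checking out. The final matching with $\cH_{\infty}(2c)=1-\frac{1-H(1/c)}{2-1/c}$ is also verified correctly.
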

\noindent 
Figure~\ref{fig:H}(b) illustrates this result, by showing how the difference of entropy rates 
$\cH_{\infty}(a,c)-\cH_{\infty}(R)$ decreases away from the line $a=c-1$.
The special place for the line $a=c-1$ within the parameter space of 
the Tokunaga self-similar random trees was observed earlier in \cite{VG00,KZ18a,KZ18}. 
See Remark \ref{rem:a_cminus1}.
The constraint $a=c-1$ will reappear in many instances in Sect. \ref{HBP} of the present work.

\medskip
\noindent
Finally, the maximum value $\max \cH_{\infty}(a,c)=1$ is achieved at the special point $(a,c)=(1,2)$ of the special line $a=c-1$.
Once again, this is not surprising as $(a,c)=(1,2)$ is the parameter value for the Tokunaga self-similarity results 
of Sect.~\ref{cgw}, presented in the context of the critical binary Galton-Watson trees $\mathcal{GW}\left({1 \over 2},{1 \over 2}\right)$ and related processes.
We recall that the combinatorial shape $T''=\textsc{shape}(T') \in \cBT^|$ of the random binary tree $T'$ in \eqref{eqn:ChunikhinaA} is distributed according to $\mathcal{GW}\left({1 \over 2},{1 \over 2}\right)$ conditioned on $\#T''=2N-1$.

\subsection{Applications}
\label{sec:appl}
A quantitative understanding of the branching 
patterns is instrumental in hydrology \cite{RIR01,TBR88,Mar96,BMR99,BM10,Kir00},
geomorphology \cite{DR00,HSP15},
statistical seismology \cite{BP04,THR07,HTR08,ZGKW08,G13,ZBZ13,Y13},
statistical physics of fracture \cite{RTS+03},  
vascular analysis \cite{Kassab00},
brain studies \cite{Cetal06},
ecology \cite{CLF07}, biology \cite{TPN98}, 
and beyond, encouraging a rigorous treatment.
Introduced in hydrology to describe the dendritic 
structure of river networks, which is among the most evident examples 
of natural branching, Horton-Strahler \cite{Horton45, Strahler} and Tokunaga \cite{Tok78} 
indexing schemes have been rediscovered and used in other fields.
Subsequently, the Horton law (Def.~\ref{def:Horton_rv}) and Tokunaga self-similarities (Def.~\ref{ss2})
have been empirically 
or rigorously established in numerous observed and modeled systems \cite{NTG97}.
This includes hydrology (see Sect.~\ref{sec:hydrology}),
vein structure of botanical leaves \cite{NTG97,TPN98},
diffusion limited aggregation \cite{O92,MT93,YM94dla},\index{diffusion limited aggregation}
two dimensional site percolation \cite{TMMN99,YNTG05,ZWG05,ZWG06},\index{percolation}
a hierarchical coagulation model of Gabrielov et al. \cite{GNT99}\index{hierarchical coagulation model} 
introduced in the framework of self-organized criticality, 
and a random self-similar network model of Veitzer and Gupta \cite{VG00}\index{random self-similar network}
developed as an alternative to the Shreve's random topology model for river networks.
The Horton exponent commonly reported in empirical studies 
is within the range $3 < R < 6$.
Curiously, it has been observed in \cite{KZ18a} that the critical Tokunaga model 
(Sect.~\ref{sec:Tok}) with this range of Horton exponents generates
trees with fractal dimension in the range $\approx (1.6,3)$, which
includes all the trees that may exist in a three-dimensional world, excluding
the range $<1.6$ that corresponds to almost ``linear'', and 
probably less studied, trees.

\subsubsection{Hydrology}
\label{sec:hydrology}

An illuminating natural example of Horton laws and Tokunaga self-similarity is given
by the combinatorial structure of river networks (Figs.~\ref{fig:Beaver},\ref{fig:Beaver_a}).
The hydrological {\it Horton law} was first described by Robert E. Horton \cite{Horton45}
who noticed that the empirical ratio $N_K/N_{K+1}$ in river streams is close to $4$.
This observation has been strongly corroborated in numerous observational studies
\cite{Kirchner93,Shreve66,Leopold,Pec95,Tarboton96,GW98,RIR01,Mesa18,Turcotte_book}.
See Barndorff-Nielsen \cite{B-N93} for a 1993 survey for probabilists.
\index{Horton law!hydrological Horton law}

Write $Z_K$ for the value of a selected statistic $Z$ averaged over basins/channels 
of order $K$.
This can be basin area, basin magnitude (number of leaves in the tree that describes the basin),
the lengths of the longest channel, the total channel lengths, etc.
The Horton law approximates the growth of $Z_K$ with order as a geometric sequence:
$Z_K ~\propto~ R_Z^K$ with $R_Z>1$. 
Informally, this suggests that the order $K$ of a channel (branch) or a subbasin (subtree) 
is proportional to $\ln(Z_K)$, where $Z_K$ can be
interpreted as the channel/basin ``size''. 
%
If statistic $Z$ satisfies the Horton law with exponent $R_Z$, and the branch
counts $N_K$ satisfy the Horton law \eqref{eq:Horton_emp} with Horton exponent $R$,
then
\[Z_K~\propto~N_K^{-\alpha},\quad \text{with}\quad \alpha = \frac{\ln{R_Z}}{\ln{R}}.\]
A similar power relation holds for any pair of statistics that satisfy the Horton law.
A well studied example is the Hack's law that relates the length $L$
of the longest stream to the basin area $A$ via $L~\propto~A^h$ with $h\approx 0.6$
\cite{Rigon+96}.

Furthermore, it has been shown that river networks 
are closely approximated by a two-parametric Tokunaga self-similar model (Def.~\ref{ss2}) with 
parameters that are independent of river's geographic location \cite{Tok78,Pec95,DR00,ZZF13}.
The Tokunaga model closely predicts values of the Horton exponents for multiple
basin statistics with only two parameters (see Fig.~\ref{fig:Beaver_a}).

Discovery of the Horton law prompted exploration of various branching models,
most popular of which is the critical binary Galton-Watson tree (Sect.~\ref{cgw}),
also known in hydrology as Shreve's random topology model \cite{Shreve66,Shreve69};
it is conditionally equivalent to the uniform distribution on planar binary trees 
with a fixed number of leaves \cite{Pitman}.
This model has the Horton exponent $R=4$ and Tokunaga parameters $(a,c)=(1,2)$;
see Thm.~\ref{thm:BWW00}.
For long time, the critical binary Galton-Watson tree has remained the only well-known 
probability model for which the Horton and Tokunaga self-similarity was 
rigorously established, and whose Horton-Strahler ordering has received 
attention in the literature 
\cite{Shreve66,Shreve69,Kemp79,B-N93,DK94,Pec95,OWZ97,WW91,YM94,BWW00}.
The model has been particularly popular in hydrology as an
approximation to the topology of the observed river networks \cite{TBR88}.
Scott Peckham \cite{Pec95} has first explicitly noticed, by performing a high-precision 
extraction of river channels for Kentucky River, Kentucky and Powder River, Wyoming, 
that the Horton exponents and Tokunaga parameters
for the observed rivers significantly deviate from that for the Galton-Watson model.
He reported values $R\approx 4.6$ and $(a,c)\approx (1.2, 2.5)$ and
emphasized the importance of studying a broad range of Horton exponents and 
Tokunaga parameters.
The general interest to fractals and self-similar structures in natural sciences
during the $1990$s resulted in a quest, mainly inspired and led by Donald Turcotte, for 
Tokunaga self-similar tree graphs of diverse origin.
As a result, the Horton and Tokunaga self-similarity, with a broad range
of respective parameters, have been empirically or rigorously established in numerous observed and modeled systems,
well beyond river networks.

\subsubsection{Computer science}
The Horton-Strahler orders are known in computer science as the {\it register function} or
{\it register number}. 
They first appeared in the $1958$ paper by Ershov \cite{Ershov1958} 
as the minimal number of memory registers required for 
evaluating a binary arithmetic expression. 
\index{register function}
\index{register number}

\medskip
\noindent
A study of Flajolet et al. \cite{FRV79} concerns calculating the average register function in a random plane planted binary tree with $n$ leaves.
That is, let the random tree $T$ be uniformly sampled from all $C_{n-1}$ trees in the subspace $\BT^{|}(n)$ of $\BT^{|}$  defined in \eqref{eqn:spaceBTIN}, where $C_n$ is the Catalan number \eqref{eqn:catalan}. Following Rem.~\ref{rem:binary_treeGW}, we know that the combinatorial shape $\textsc{shape}(T) \in \cBT^|$ of such binary tree $T$ can also be obtained by sampling from the Galton-Watson distribution $\mathcal{GW}\left({1 \over 2},{1 \over 2}\right)$ conditioned on $\#T=2n-1$.
The work \cite{FRV79} finds the average register function (Horton-Strahler order) in a random binary tree $T \stackrel{d}{\sim} {\sf Unif}\big(\BT^|(n)\big)$, 
$${\sf E}\big[{\sf ord}(T)\big]=1+{1 \over C_n}\sum\limits_{j=1}^{n-1} v_2(j)\left[\binom{2n}{n+j+1}-2\binom{2n}{n+j}+\binom{2n}{n+j-1}\right],$$
where $v_2(n)$ is known as the {\it dyadic valuation} of $n$. 
Specifically, the  {\it dyadic valuation} of $n$ is the cardinality of the inverse image of 
$$f(p,k)=k2^p ~:~\mathbb{Z}_+ \times \mathbb{N} \rightarrow \mathbb{N},$$
i.e., $v_2(n)=\big|\{(p,k) \in \mathbb{Z}_+ \times \mathbb{N} ~:~ k2^p=n\}\big|$.

\medskip
\noindent
In addition, Flajolet et al. \cite{FRV79} proved that for $T \stackrel{d}{\sim} {\sf Unif}\big(\BT^|(n)\big)$,
\be\label{eqn:Flajolet}
{\sf E}\big[{\sf ord}(T)\big]=\log_4{n}+D\big(\log_4{n} \big)+o(1),\quad\text{as}\quad  n\to\infty,
\ee
where $D(\cdot )$ is a particular continuous periodic function of period one, explicitly derived in \cite{FRV79}.
We illustrate Eq.~\eqref{eqn:Flajolet} below in Fig.~\ref{fig:D}(a), which closely reproduces 
Fig.~6 from the original paper by Flajolet et al. \cite{FRV79}.
Equation \eqref{eqn:Flajolet} is related to the tree size asymptotic \eqref{eq:Nk}
of Thm.~\ref{thm:HLSST}, with the Horton exponent $R=4$.

\medskip
\noindent
For more on register functions see \cite{FP86,Prodinger87,Nebel2002,DP2006,HHP2018} and references therein.


\section{Critical binary Galton-Watson tree}
\label{cgw}
The critical binary Galton-Watson tree is pivotal for the theory of random trees 
and for diverse applications because of its transparent generation process 
and multiple symmetries.
This section summarizes some properties of this tree used in our 
further discussion.

\subsection{Combinatorial case}
\label{ecgw}
Here we discuss the combinatorial binary Galton-Watson trees.

\subsubsection{Horton and Tokunaga self-similarities}
\label{sec:BWW}
Burd, Waymire, and Winn \cite{BWW00} have first recognized 
a special position held by the critical binary tree with respect to the Horton pruning
in the space of Galton-Watson distributions $\mathcal{GW}(\{q_k\})$ on $\cT^|$.
We now state the main result of \cite{BWW00} using the language of the
present work.

\begin{thm}[{{\bf Horton self-similarity of Galton-Watson trees, \cite{BWW00}}}]
\label{thm:BWW00}
Consider a collection of Galton-Watson measures $\mathcal{GW}(\{q_k\})$ on $\cT^|$.
The following statements are equivalent:
\begin{itemize}
\item[(a)] A distribution is Horton self-similar (Def.~\ref{def:ss});
\item[(b)] A distribution is mean Horton self-similar (Def.~\ref{ss1},\ref{def:ss2});
\item[(c)] A distribution is Tokunaga self-similar (Def.~\ref{ss2});
\item[(d)] A distribution is critical binary: $q_0=q_2=1/2$.
\end{itemize}
Furthermore, the critical binary distribution has Tokunaga sequence
$T_j=2^{j-1}$, $j\ge 1$, which corresponds to Tokunaga self-similarity with 
$(a,c)=(1,2)$ and strong Horton law with exponent $R=4$. 
\end{thm}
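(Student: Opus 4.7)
The plan is to close the cycle $(d) \Rightarrow (a) \Rightarrow (b) \Rightarrow (d)$, with $(c)$ slotted in by computing the Tokunaga sequence directly in the critical binary case; recall that $(c) \Rightarrow (b)$ is immediate from Def.~\ref{ss2}. The conceptually cleanest implication is $(d) \Rightarrow (a)$, while the main obstacle is the converse $(b) \Rightarrow (d)$.

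For $(d) \Rightarrow (a)$, I would exploit the recursive characterization in Rem.~\ref{rem:GWchar}: a tree $T \stackrel{d}{\sim} \mathcal{GW}(1/2,1/2)$ is either a stem (probability $1/2$) or a stem with two i.i.d.\ $\mathcal{GW}(1/2,1/2)$ subtrees attached (probability $1/2$). The key observation is that Horton pruning commutes with this construction: on $\{\cR(T)\neq\phi\}$, the pruned tree is built by pruning the two children independently and then either attaching both pruned subtrees to a stem (if both survive), attaching the single survivor (after series reduction) to a stem, or returning a stem (if both are erased). An induction on the Horton–Strahler order, using $\mu(\mathcal{H}_1)=1/2$ and the fixed-point identity $x = \tfrac12 + \tfrac12 x^2$ that $x = \mathbb{P}(\cR(T)=\phi)$ satisfies, gives $\cR(T)\,|\,\{\cR(T)\neq\phi\} \stackrel{d}{\sim} \mathcal{GW}(1/2,1/2)$, i.e., prune-invariance. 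Distributional coordination (Def.~\ref{def:coord}) is automatic for any Galton–Watson measure because every subtree rooted at an internal vertex is, by construction, an independent fresh copy of the original distribution, so a uniformly chosen complete subtree of order $K$ inside a tree of order $H$ is distributed as $\mu_K$ regardless of $H$. The implication $(a) \Rightarrow (b)$ is then routine.

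The real work is $(b) \Rightarrow (d)$. Here I would use that distributional coordination is automatic in the GW class, so the substantive content of mean Horton self-similarity reduces to the Toeplitz and mean prune-invariance constraints on the Tokunaga coefficients $T_{i,j}[K]$. These can be expressed entirely through the offspring generating function $f(z) = \sum_k q_k z^k$ by describing how side branches of order $i$ are produced along an order-$j$ branch: an internal vertex on the branch has its other children's subtrees drawn independently from $\mu$, and their Horton–Strahler orders are governed by the iterated action of the GW-operator on $f$. Writing this out, the condition $T_{i,j}[K] = T_{j-i}$ for all admissible $i<j\le K$ forces $f$ to be a fixed-point of a specific functional equation induced by pruning; the combination of (i) reducedness ($q_1=0$), (ii) criticality required for the branch expectations to balance in the counting equation \eqref{count1}, and (iii) the forced vanishing of all $q_k$ with $k\ge 3$ (arising because higher-arity vertices break the Toeplitz structure by producing mergers of three or more equal-order branches simultaneously) pins down $q_0=q_2=1/2$. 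The algebraic step that higher arities break Toeplitz is the hardest part and is the bulk of the analysis in \cite{BWW00}; it is essentially a rigidity statement for the fixed points of the pruning-induced operator on generating functions.

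For the final quantitative claims, once $\mu = \mathcal{GW}(1/2,1/2)$ is isolated, I would compute $T_j$ directly from the recursive structure. At the initial vertex of an order-$j$ branch, traveling upward along the branch, each step either adds a side-branch subtree of order $k<j$ (with probabilities determined by the GW kernel) or terminates the branch via a merger with another order-$(j{-}1)$ subtree; the expected number of side branches of index $\{i,j\}$ per branch of order $j$ is then computed as a finite sum of $\mu(\mathcal{H}_k)$-type factors and simplifies to $2^{j-i-1}$, giving $T_k = 2^{k-1}$ and hence $(a,c)=(1,2)$. Finally, Thm.~\ref{thm:HLSST} applied to this Tokunaga sequence yields, via the generating function already computed in Eq.~\eqref{eq:that_Tok}, the unique root $w_0 = 1/4$ in $(0,1/2]$, whence the Horton exponent $R = 1/w_0 = 4$.
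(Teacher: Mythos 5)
First, a point of reference: the survey does not actually prove Theorem~\ref{thm:BWW00} --- it is imported from \cite{BWW00} --- and the only piece established in the text is Theorem~\ref{thm:BWW2}, which settles the binary case of $(a)\Leftrightarrow(d)$ via the explicit pruning recursion \eqref{eq:iterateDFS} for $(q_0,q_2)$. Your overall architecture (closing the cycle $(d)\Rightarrow(a)\Rightarrow(b)\Rightarrow(d)$, obtaining $(c)$ by direct computation of $T_j$, observing that coordination is automatic for Galton--Watson measures as in Example~\ref{ex1}, and getting $R=4$ from Theorem~\ref{thm:HLSST} and \eqref{eq:that_Tok}) is sensible and consistent with the ingredients the survey supplies (Remark~\ref{rem:GWchar}, Proposition~\ref{prop:BWW}).

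There are, however, two genuine problems. First, the identity you lean on for $(d)\Rightarrow(a)$ is false: for $\mathcal{GW}(1/2,1/2)$ one has ${\sf P}(\cR(T)=\phi)={\sf P}({\sf ord}(T)=1)=q_0=1/2$ outright, whereas $x=\tfrac12+\tfrac12 x^2$ forces $x=1$ (that is the extinction-probability equation, not a statement about pruning). The quadratic that actually does the work is the fixed point of \eqref{eq:iterateDFS}, namely $q_2=q_2^2/(q_0^2+q_2^2)$, whose only admissible solution with $q_2\le 1/2$ is $q_0=q_2=1/2$; alternatively, one conditions on how many of the two principal subtrees survive pruning (probabilities $1/4$, $1/2$, $1/4$) to get a distributional recursion for ${\sf Law}\big(\cR(T)\,|\,\cR(T)\ne\phi\big)$ that reproduces exactly the characterization of Remark~\ref{rem:GWchar}. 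Your induction can be repaired along these lines, but not as written. Second, and more seriously, the implication $(b)\Rightarrow(d)$ --- the only direction with real content --- is not proved: you set up a functional equation for the offspring generating function and then assert that higher-arity vertices "break the Toeplitz structure," deferring the rigidity argument to \cite{BWW00}. That assertion needs substantiation: under the general ordering rule \eqref{eq:HSorder_gen}, a ternary vertex with offspring orders $(k,k,j)$, $j<k$, still produces a perfectly well-formed side branch of index $\{j,k+1\}$, so non-binary trees do not violate the Toeplitz form for free; and Theorem~\ref{BWW00_1} shows that non-binary critical trees fail prune-invariance only in the weaker sense that they flow toward the binary fixed point under iterated pruning. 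As it stands, your cycle is open at precisely the step that makes the theorem nontrivial.
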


\noindent The following statement provides a 
useful characterization of the critical binary Galton-Watson tree.

\begin{prop}[{{\bf \cite{BWW00}}}]
\label{prop:BWW}
Suppose $T\stackrel{d}{\sim}\mathcal{GW}(1/2,1/2)$.
Then, the tree order ${\sf ord}(T)$ has geometric distribution:
\[{\sf ord}(T)\stackrel{d}{\sim}{\sf Geom}_1(1/2).\]
Furthermore, let $b_j$ be a branch of order $j\ge 2$ in $T$ selected uniformly and 
randomly among all branches of order $j$ in $T$.
Then, the total number $m_j\ge0$ of side branches within the 
branch $b_j$ is geometrically distributed:
\[m_j\stackrel{d}{\sim}{\sf Geom}_0(2^{1-j}),\quad j\ge 2.\] 
In particular, 
\[{\sf E}(m_j) = 2^{j-1}-1 = T_1+\dots+T_{j-1},\quad j\ge 2,\]
where $T_i = 2^{i-1}$, $i\ge 1$, are the Tokunaga coefficients. 
Conditioned on $m_j$, each side branch within $b_j$ is assigned order $i$ 
independently of other side branches with probability
\[\frac{T_{j-i}}{T_1+\dots+T_{j-1}},\quad i=1,\dots,j-1.\]

\end{prop}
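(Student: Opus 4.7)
The plan is to combine the recursive construction of $\mu:=\mathcal{GW}(1/2,1/2)$ from Remark~\ref{rem:GWchar} with results already established in the excerpt. The first assertion follows immediately: by Theorem~\ref{thm:BWW00} the measure $\mu$ is Horton prune-invariant, so Proposition~\ref{geom:p} forces ${\sf P}({\sf ord}(T)=K)=p_1(1-p_1)^{K-1}$, and the mass of the stem-leaf tree is $p_1=\mu(\cH_1)=1/2$ by the very first step of the construction in Rem.~\ref{rem:GWchar}. Hence ${\sf ord}(T)\sim{\sf Geom}_1(1/2)$.

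For the branch-structure assertions I first reduce a uniformly sampled branch to a canonical one. By the Horton self-similarity of $\mu$ (Thm.~\ref{thm:BWW00}) and, equivalently, the Markov branching property of Galton-Watson processes, every complete subtree (Def.~\ref{def:HS}) rooted at the parent of the initial vertex of a branch of order $j$ in $T$ is an independent copy of $\mu$ conditioned on $\{{\sf ord}=j\}$. Since each branch of order $j$ in $T$ is in bijection with such a complete subtree, a uniformly sampled branch $b_j$ has the same joint distribution as the unique branch of order $j$ at the top of a $\mathcal{GW}(1/2,1/2)$ tree conditioned on order exactly $j$.

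I would then trace this canonical branch downward. At each of its vertices $v$ the two descendant subtrees are i.i.d.\ copies of $\mu$ whose orders are ${\sf Geom}_1(1/2)$. Using the rule \eqref{eq:HSorder} and conditioning on ${\sf ord}(v)=j$, the branch terminates at $v$ (both subtrees of order $j-1$) with probability $2^{1-j}$; otherwise the branch continues along the unique child of order $j$ and the sibling is a side branch of some order $i<j$. A Bayes computation then gives the conditional probability of side-branch order $i$ given continuation as
\[\frac{2^{-i}}{1-2^{1-j}}=\frac{2^{j-1-i}}{2^{j-1}-1}=\frac{T_{j-i}}{T_1+\cdots+T_{j-1}}.\]
Because the two subtrees at $v$ are independent of one another and of the subtree rooted at the surviving order-$j$ child, the successive vertex trials along the branch are i.i.d., so $m_j\sim{\sf Geom}_0(2^{1-j})$ and, conditional on $m_j$, the side-branch orders are i.i.d.\ with the displayed law; the mean formula ${\sf E}[m_j]=(1-2^{1-j})/2^{1-j}=2^{j-1}-1=T_1+\cdots+T_{j-1}$ is then immediate. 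The only step that requires real care is the reduction to the canonical branch, which is a direct appeal to coordination of the conditional measures $\{\mu_K\}$ (equivalently, to the branching property of $\mathcal{GW}(1/2,1/2)$); everything else is a routine conditional-probability calculation.
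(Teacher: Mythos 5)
Your argument is correct. Note that the survey itself gives no proof of Proposition~\ref{prop:BWW} --- it is stated with attribution to \cite{BWW00} --- so there is no in-paper argument to compare against; what you have written is a self-contained derivation consistent with the tools the paper does develop. The first claim is handled exactly as the paper would suggest: prune-invariance (Thm.~\ref{thm:BWW00}) plus Prop.~\ref{geom:p} plus $\mu(\cH_1)=q_0=1/2$ from Rem.~\ref{rem:GWchar}; one could equally get $p_K=2^{-K}$ directly from the recursion of Rem.~\ref{rem:GWchar} without invoking the full self-similarity theorem, which would make the proof independent of the other cited result of \cite{BWW00}, but since the proposition is itself attributed to that reference this is a matter of taste rather than a gap. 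Your conditional computations check out: conditioned on ${\sf ord}(v)=j$ the two i.i.d.\ ${\sf Geom}_1(1/2)$ subtree orders give termination probability $4^{1-j}/2^{1-j}=2^{1-j}$, and the Bayes step yields $2^{-i}/(1-2^{1-j})=T_{j-i}/(T_1+\cdots+T_{j-1})$, with the regenerated order-$j$ subtree independent of the discarded side branch, so the trials along the branch are genuinely i.i.d.\ and $m_j\stackrel{d}{\sim}{\sf Geom}_0(2^{1-j})$ follows. The one step you rightly flag as delicate --- passing from a uniformly sampled branch of order $j$ to the canonical top branch of a tree drawn from $\mu_j$ --- is sound: by the Markov branching property the complete subtrees of order $j$ are, conditionally on the rest of the tree, i.i.d.\ with law $\mu_j$ and independent of their number, so uniform selection introduces no size-biasing; this is the same permutability/exchangeability argument the paper uses explicitly in the proof of Corollary~\ref{cor:ratio4j1}.
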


\noindent Notably, critical {\it non-binary}  Galton-Watson trees converge to the 
critical {\it binary} tree under
consecutive Horton pruning, as described in the following statement.
\begin{thm}[{{\bf Attraction property of critical binary Galton-Watson tree, \cite{BWW00}}}]
\label{BWW00_1}
Suppose a Galton-Watson measure $\mu\equiv\mathcal{GW}(\{q_k\})$ on $\cT^|$ 
satisfies the following conditions:
\begin{itemize}
\item The measure $\mu$ is critical, i.e. $q_1\ne1$ and $\sum_k kq_k=1$;
\item The measure $\mu$ has a.s. bounded offspring number, i.e.
there exists such $j_0\ge 2$ that $q_j=0$ for any $j\ge j_0$.
\end{itemize} 
Then, for any $\tau\in\cT^|$
\[\lim_{n\to\infty}\mu\big(\cR^n(T)=\tau|\cR^n(T)\ne\phi\big)=\mu^*(\tau),\]
where $\mu^*$ denotes the critical binary Galton-Watson measure on $\cT^|$:
\be\label{eqn:mustarGWonT}
\mu^* = \left\{
\begin{array}{ll}
\mathcal{GW}({1\over 2},{1\over 2})& \text{ on } \cBT^|, \\
0&\text { on } \cT^|\setminus\cBT^|.
\end{array}
\right. 
\ee
\end{thm}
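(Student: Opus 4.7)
The strategy is to reduce the statement about trees to a statement about offspring distributions, exploiting that the Galton--Watson class is closed under Horton pruning. I will show that the iterates $\cR^n_*(\mu)$, once conditioned on $\{\cR^n(T)\ne\phi\}$, are again planted GW measures, and that the induced map on offspring distributions attracts every critical bounded-support law to $(1/2,0,1/2,0,\dots)$, which by Thm.~\ref{thm:BWW00} is its unique fixed point.

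\textbf{Step 1 (pruning preserves GW).} First I would establish that if $T\stackrel{d}{\sim}\mathcal{GW}(\{q_k\})$ with $q_1\ne 1$ and $f(s)=\sum_k q_ks^k$, then the law of $\cR(T)$ conditioned on $\{\cR(T)\ne\phi\}$ is $\mathcal{GW}(\{q_k'\})$ for an explicit new offspring distribution $\{q_k'\}$ whose PGF $f_1$ is a rational expression in $f$. The branching property gives the decomposition: each subtree hanging off the progenitor is independently either extinguished under one pruning (probability $q_0$, since a subtree is killed iff its root carries no descendants) or survives as a pruned GW tree. The number of surviving children is thinned binomially; the series-reduction step creates the rational correction, since a progenitor with exactly one surviving child is merged into the stem and replaced by that child's pruned subtree. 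Iterating this gives a deterministic sequence of PGFs $f_0,f_1,f_2,\dots$ and corresponding distributions $\{q_k^{(n)}\}$.

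\textbf{Step 2 (invariants).} I would check two preservation properties. Criticality, $\sum_k kq_k^{(n)}=1$, is preserved: under a single pruning the expected number of surviving children per retained vertex is again $1$, which follows from $f'(1)=1$ and the mean-preserving nature of the thinning/series-reduction bookkeeping. The bound $q_k^{(n)}=0$ for $k\ge j_0$ is preserved because pruning can only remove offspring at a vertex (via the death of a subtree), never create new ones --- the series-reduction step replaces a degree-two chain by a single edge but does not increase the outdegree of any surviving vertex. Hence the iterates all lie in the compact set $\mathcal{P}_{j_0}=\{q:q_k\ge 0,\ \sum q_k=1,\ \sum kq_k=1,\ q_k=0\text{ for }k\ge j_0\}$.

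\textbf{Step 3 (attraction, the main obstacle).} This is the hard part. On $\mathcal{P}_{j_0}$ the pruning-induced map is continuous, and by Thm.~\ref{thm:BWW00}(a)--(d) its only fixed point is $q^*=(1/2,0,1/2,0,\dots)$. To upgrade this to global convergence, I would look for a Lyapunov functional that strictly decreases off $q^*$ --- a natural candidate is the offspring variance $\sigma^2(q)=\sum_k k^2 q_k-1$, which the pruning map should contract for critical distributions because deep linear chains (responsible for the variance in non-binary cases) get collapsed by series reduction at each step. Alternatively, one can run a compactness/LaSalle argument: every subsequence $\{q^{(n_j)}\}$ has a sublimit in $\mathcal{P}_{j_0}$, and if one can show that each such sublimit must itself be prune-invariant (for instance by continuity plus the fact that $q^{(n)}$ and $q^{(n+1)}$ share every sublimit along a further subsequence), then uniqueness of the fixed point forces $q^{(n)}\to q^*$. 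Once this is in hand, convergence of offspring distributions in $\mathcal{P}_{j_0}$ translates to convergence of the corresponding GW tree measures on every cylinder set $\{T=\tau\}\subset\cT^|$, and conditioning on $\{\cR^n(T)\ne\phi\}$ merely removes the atom at $\phi$; this yields the claimed convergence to $\mu^*$. The genuine technical difficulty is making Step~3 quantitative --- verifying monotonicity of a Lyapunov functional under the rational map $f\mapsto f_1$, or ruling out non-fixed sublimits without it --- and the bounded-offspring hypothesis is essential precisely to keep the dynamics on a compact state space away from degenerate boundary behavior.
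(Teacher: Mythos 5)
The paper does not actually prove this theorem --- it is quoted from Burd, Waymire and Winn \cite{BWW00} --- so there is no internal proof to compare against; I will judge your argument on its own terms. Your Steps 1 and 2 are essentially sound. The closure of the GW class under conditioned pruning works exactly as you describe: each child subtree dies under one pruning iff its root is childless (probability $q_0$), the surviving children are a ${\rm Bin}(k,1-q_0)$ thinning, and series reduction conditions the terminal branching number $B$ on $\{B\ne 1\}$, giving the new PGF $f_1(s)=\bigl(h(s)-h'(0)s\bigr)/\bigl(1-h'(0)\bigr)$ with $h(s)=\bigl(f(q_0+(1-q_0)s)-q_0\bigr)/(1-q_0)$. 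Criticality preservation, which you assert somewhat vaguely, does follow cleanly from this formula: $h'(1)=f'(1)=1$, hence $f_1'(1)=\bigl(1-h'(0)\bigr)/\bigl(1-h'(0)\bigr)=1$. Boundedness of the support is preserved since $B\le k$. So the reduction to a dynamical system on the compact set $\mathcal{P}_{j_0}$, whose unique fixed point is the critical binary law by Thm.~\ref{thm:BWW00}, is legitimate.

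The genuine gap is Step 3, which is the content of the theorem: you name two possible mechanisms for global attraction but carry out neither. The LaSalle-type argument as written does not close. Continuity only gives that the $\omega$-limit set of the orbit $\{q^{(n)}\}$ is forward-invariant under the pruning map; it does not force points of that set to be fixed points (a priori the orbit could accumulate on a periodic cycle or a larger invariant set), so ``uniqueness of the fixed point'' cannot be invoked without first excluding non-trivial invariant subsets. The Lyapunov candidate (offspring variance, or any other strictly decreasing functional) is exactly the missing ingredient, and you have not verified monotonicity of any such functional under the rational map $f\mapsto f_1$; note also that the variance of the critical binary law is $1$, not $0$, so one would need strict decrease toward that value rather than contraction to zero. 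Until one of these routes is made rigorous --- or the explicit iteration is analyzed directly, as in \cite{BWW00} --- the theorem is not proven: everything before Step 3 only shows that \emph{if} the offspring distributions converge, the limit must be $(\tfrac12,0,\tfrac12,0,\dots)$.
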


The Markov structure of the Galton-Watson tree $T\stackrel{d}{\sim}\mathcal{GW}(\{q_k\})$ 
ensures the existence of the following additional properties:
\begin{itemize}
\item[(i)]  The forest of trees obtained by removing the edges and the vertices 
below combinatorial depth $d\ge 0$ has the same frequency structure as the original 
space $\mathcal{GW}(\{q_k\})$;
\item[(ii)] A subtree rooted in a uniform random vertex of $T$ has the same 
distribution as $T$; 
and
\item[(iii)] The forest of trees obtained by considering subtrees rooted
at every vertex of $T$ 
approximates the frequency structure of the entire space of trees when 
the order of $T$ increases.
\end{itemize}
\noindent
We define these properties more formally in Sect.~\ref{sec:combHBP}.
Combined with the Horton self-similarity of Thm.~\ref{thm:BWW00}, they further 
highlight very special symmetries of the critical binary Galton-Watson distribution 
$\mathcal{GW}({1\over 2},{1\over 2})$.
Stated loosely, this distribution is invariant with respect to various form of 
cutting, either from the leaves down or from the root up.
Moreover, this is the only distribution that enjoys all these invariances 
in the family of Galton-Watson distributions $\mathcal{GW}(\{q_k\})$.  
Analysis of real world data (e.g.~\cite{Pec95,NTG97}), however, reveals 
self-similar tree-like structures with Tokunaga parameters and 
Horton exponents different from those in the critical binary Galton-Watson model.
This motivates one to look for invariant tree models outside of 
the Galton-Watson family.  
In Sect.~\ref{sec:Tok}, we construct a one parameter family of trees, called
{\it critical Tokunaga trees}, that inherit all the invariant properties mentioned 
in this section and include the critical binary Galton-Watson tree as a special case.
In particular, it generates self-similar trees with Horton exponents $2\le R<\infty$.

\subsubsection{Dynamics of branching probabilities under Horton pruning}

The following result of Burd et al. \cite{BWW00} clarifies the Horton self-similarity 
of the critical binary Galton-Watson tree and absence of such in non-critical case.

\begin{thm}[{{\bf Dynamics of branching {\rm \cite[Proposition 2.1]{BWW00}}}}]
\label{thm:BWW2}
Consider a critical or subcritical combinatorial binary Galton-Watson probability 
measure $\mu_0=\mathcal{GW}(q_0,q_2)$ on $\cBT^{|}$, i.e. require $q_0+q_2=1$ and $q_2\le 1/2$. 
Construct a recursion by repeatedly applying Horton pruning operation $\cR$ as follows. Starting with $k=0$, and for each consecutive integer,
let 
$\nu_k=\cR_*(\mu_k)$ be the pushforward probability measure induced by the pruning operator, i.e.,
$$\nu_k(T)=\mu_k \circ \cR^{-1}(T) = \mu_k \big(\cR^{-1}(T)\big),$$
and set
$$\mu_{k+1}(T)=\nu_k\left(T~|T\ne\phi\right).$$
Then for each $k \geq 0$, distribution $\mu_k(T)$ is a binary Galton-Watson distribution $\mathcal{GW}(q_0^{(k)},q_2^{(k)})$ with
$q_0^{(k)}$ and $q_2^{(k)}$ constructed recursively as follows: start with $q_0^{(0)}=q_0$ and $q_2^{(0)}=q_2$, and let
\be\label{eq:iterateDFS}
q_2^{(k+1)}=\frac{\left[q_2^{(k)}\right]^2}{\left[q_0^{(k)}\right]^2+\left[q_2^{(k)}\right]^2},
\quad q_0^{(k+1)}=1-q_2^{(k+1)}.
\ee

Consequently, a combinatorial binary Galton-Watson probability distribution $\mathcal{GW}(q_0,q_2)$ is prune-invariant as in the Def. \ref{def:prune} if and only if it is critical, i.e.,
$$q_0=q_2=1/2.$$
\end{thm}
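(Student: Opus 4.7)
The plan is to prove the first claim by induction on $k$: the inductive step reduces to showing that for $T \sim \mathcal{GW}(q_0,q_2)$ (critical or subcritical, with $q_2 > 0$), the pushforward $\nu := \cR_*\mu$ conditioned on $T \ne \phi$ is again binary Galton--Watson with offspring probabilities given by \eqref{eq:iterateDFS}. Iterating then yields $\mu_k = \mathcal{GW}(q_0^{(k)}, q_2^{(k)})$, and the second assertion follows from a fixed-point analysis of the one-step map.

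For the one-step statement I would decompose $T$ at the top non-root vertex $v$ of its stem. Under $\mathcal{GW}(q_0,q_2)$ either $v$ is a leaf (with probability $q_0$, in which case $\cR(T)=\phi$) or $v$ has two i.i.d.\ subtrees $T_1,T_2 \sim \mu$ (with probability $q_2$). In the latter case the outcome of a single Horton pruning depends only on the pair $(\tilde T_1, \tilde T_2) := (\cR(T_1), \cR(T_2))$, and splits into three sub-cases: both $\tilde T_i = \phi$ yields $\cR(T) = \tau_1$; exactly one $\tilde T_i = \phi$ triggers a series reduction that absorbs $v$, so $\cR(T)$ inherits the combinatorial shape of the surviving pruned subtree; and neither empty gives a planted tree whose two subtrees at $v$ are $\tilde T_1, \tilde T_2$. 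Collecting these contributions produces a linear recursion for $\nu(\tau)$; moving the ``absorption'' term to the left yields a common factor of $1 - 2q_0 q_2 = q_0^2 + q_2^2$. Normalizing by $\nu(\tau \ne \phi) = q_2$ and matching against the Galton--Watson factorization on $\cBT^{|}$ identifies $\mu_1$ with $\mathcal{GW}(q_0^{(1)}, q_2^{(1)})$ as in \eqref{eq:iterateDFS}. I would also verify along the way that $q_2^{(k)} \le 1/2$ is preserved by the recursion, so that all iterates describe a.s.\ finite trees and the conditioning on non-emptiness remains well defined at every step.

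For the converse, prune-invariance means $(q_0, q_2)$ is a fixed point of the one-step map. The equation $q_2 = q_2^2/(q_0^2 + q_2^2)$ combined with $q_0 + q_2 = 1$ and $q_2 > 0$ reduces to the quadratic $2 q_2^2 - 3 q_2 + 1 = 0$, whose only root in $(0,1)$ is $q_2 = 1/2$, so necessarily $q_0 = q_2 = 1/2$. The main obstacle I anticipate is the bookkeeping in the ``exactly one empty'' sub-case: one must correctly handle the combinatorial factor of $2$ coming from the two choices of which sibling at $v$ was absorbed, and confirm that the extension of the stem under series reduction combined with the absence of planar embedding on $\cBT^{|}$ makes the identification of $\cR(T)$ with the surviving $\tilde T_i$ genuinely an equality on $\cBT^{|}$ rather than an equivalence up to relabeling. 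Once this bookkeeping is pinned down, the linear recursion and its common-denominator normalization fall out cleanly, and the Galton--Watson form of $\mu_1$ follows by direct comparison.
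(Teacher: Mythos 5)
Your proposal is correct. The paper itself gives no proof of Theorem~\ref{thm:BWW2} — it is cited from \cite[Proposition 2.1]{BWW00} — but your argument is exactly the standard one: decomposing at the first non-root vertex, the three sub-cases (both pruned subtrees empty, exactly one empty with series reduction absorbing $v$, neither empty) give $\nu(\tau) = q_2\bigl[\mathbf{1}_{\{\tau=\tau_1\}}q_0^2 + 2q_0\,\nu(\tau) + (\text{product term})\bigr]$, and moving the $2q_0q_2\nu(\tau)$ term to the left indeed produces the factor $1-2q_0q_2=q_0^2+q_2^2$, after which normalizing by $\nu(\tau\ne\phi)=q_2$ recovers the Galton--Watson recursion with parameters \eqref{eq:iterateDFS}. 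The bookkeeping points you flag all check out: $\cR(T_i)=\phi$ iff $T_i=\tau_1$ (probability $q_0$), the series reduction makes $\cR(T)$ literally equal to the surviving $\cR(T_i)$ in $\cBT^{|}$ since there are no edge lengths, the map preserves $q_2^{(k)}\le 1/2$ because $q_2^2\le q_0^2$ iff $q_2\le q_0$, and the fixed-point quadratic $(2q_2-1)(q_2-1)=0$ leaves $q_2=1/2$ as the only admissible root.
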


\subsubsection{The Central Limit Theorem and the strong Horton law for branch counts}
For a given $N \in \mathbb{N}$, consider $T \stackrel{d}{\sim} {\sf Unif}\big(\BT^|(N)\big)$. Following Remark \ref{rem:binary_treeGW}, we know that
$\textsc{shape}(T) \stackrel{d}{\sim}  \left(\mathcal{GW}\left({1 \over 2},{1 \over 2}\right) \Big| \#T=2N-1\right)$. The branch counts
$$N_j^{(N)}[T]:=\left(N_j[T];\,T\stackrel{d}{\sim}{\sf Unif}\big(\BT^|(N)\big)\right)$$ 
are integer valued random variables induced by $T$. They are the same for $T$ and $\textsc{shape}(T)$, i.e.,
$N_j^{(N)}[\textsc{shape}(T)]=N_j^{(N)}[T]$.
 The following Law of Large Numbers was proved in Wang and Waymire  \cite{WW91} (Theorem 2.1).
\begin{thm}[{\bf LLN for order two branches, \cite{WW91}}]\label{thm:WW91_LLN}
For a random tree $T \stackrel{d}{\sim} {\sf Unif}\big(\BT^|(N)\big)$,
\be\label{eqn:LLN_N2}
{N_2^{(N)}[T] \over N} ~\overset{p}{\rightarrow} ~4^{-1} \quad \text{ as }\, N\rightarrow \infty.
\ee
\end{thm}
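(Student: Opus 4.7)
My plan is to compute the first and second moments of $N_2^{(N)}[T]$ explicitly via a short combinatorial bijection and then apply Chebyshev's inequality. Since the tree is uniform on a finite set and the relevant local structure is simple, everything reduces to Catalan-number arithmetic.

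First I identify $N_2^{(N)}[T]$ with a local statistic. In a reduced binary tree the initial vertex of a branch of order~$2$ is exactly an internal vertex both of whose children are leaves; call such a vertex a \emph{cherry}. Each order-$2$ branch has a unique initial vertex and every cherry initiates a unique order-$2$ branch, so $N_2^{(N)}[T]$ equals the number of cherries of $T$. A tree in $\BT^|(N)$ has $N$ leaves, $N-1$ internal vertices, and $|\BT^|(N)| = C_{N-1}$.

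The main combinatorial step is the bijection $(T,c) \leftrightarrow (T',\ell)$, where $c$ is a cherry of $T \in \BT^|(N)$ and $T' \in \BT^|(N-1)$ is obtained by removing the two leaf children of $c$ and retaining the former internal vertex of $c$ as the distinguished leaf $\ell$. This yields $\sum_{T} N_2^{(N)}[T] = (N-1)C_{N-2}$, so
$${\sf E}\big[N_2^{(N)}[T]\big] = \frac{(N-1)C_{N-2}}{C_{N-1}} = \frac{N(N-1)}{2(2N-3)} = \frac{N}{4} + O(1).$$
Any two distinct cherries are automatically vertex-disjoint, so the same contraction applied to ordered pairs of distinct cherries is a bijection with triples $(T', \ell_1, \ell_2)$, $T' \in \BT^|(N-2)$ and $\ell_1 \neq \ell_2$ leaves of $T'$, giving
$${\sf E}\big[N_2^{(N)}[T]\,(N_2^{(N)}[T]-1)\big] = \frac{(N-2)(N-3)\,C_{N-3}}{C_{N-1}} = \frac{N(N-1)(N-2)(N-3)}{4(2N-3)(2N-5)},$$
which expands to $N^2/16 - N/8 + O(1)$.

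Combining with $\big({\sf E}[N_2^{(N)}[T]]\big)^2 = N^2/16 + N/16 + O(1)$ gives $\mathrm{Var}\big(N_2^{(N)}[T]\big) = N/16 + O(1)$, hence $\mathrm{Var}\big(N_2^{(N)}[T]/N\big) = O(1/N)\to 0$. Chebyshev's inequality, applied to $N_2^{(N)}[T]/N$ whose mean tends to $1/4$, then delivers convergence in probability. The only step requiring any care is the disjointness claim underlying the two-cherry bijection, but this is immediate: a cherry consists of an internal vertex together with its two leaf children, and in a binary tree distinct internal vertices have distinct leaf children, so the two contractions commute. Everything else is Catalan arithmetic, and no probabilistic machinery beyond Chebyshev is needed.
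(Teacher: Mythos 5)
Your proof is correct, and the first thing to say is that the paper does not actually prove this statement: Theorem~\ref{thm:WW91_LLN} is imported from Wang and Waymire \cite{WW91} (their Theorem~2.1) without proof and then used as the base case of the pruning induction in Corollary~\ref{cor:ratio4j1}. So your argument is not a rephrasing of anything in the text but a self-contained replacement for the external citation, and it holds up. The identification of $N_2^{(N)}[T]$ with the number of ``cherries'' is right, with one terminological slip: by Definition~\ref{def:HS} the vertex with two leaf children is the \emph{terminal} vertex of its order-$2$ branch (the branch vertex farthest from the root), not the initial one; the initial vertex of an order-$2$ branch generally is not a cherry. The bijection between order-$2$ branches and cherries survives unchanged, since the order-$2$ vertices of a branch form a chain whose deepest element must have two leaf children. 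The contraction bijection gives $\sum_T N_2^{(N)}[T]=(N-1)C_{N-2}$, and since $C_{n-1}/C_n=(n+1)/\big(2(2n-1)\big)$ your exact formula ${\sf E}\big[N_2^{(N)}[T]\big]=N(N-1)/\big(2(2N-3)\big)$ checks out; the vertex-disjointness of distinct cherries is automatic, so the two-cherry contraction onto $\BT^|(N-2)$ with an ordered pair of distinct marked leaves gives the stated second factorial moment, and the expansions $N^2/16-N/8+O(1)$ and $N^2/16+N/16+O(1)$ combine to ${\sf Var}\big(N_2^{(N)}[T]\big)=N/16+O(1)$, after which Chebyshev finishes. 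As a consistency check, the leading-order variance $N/16$ is exactly the asymptotic variance in the companion CLT (Theorem~\ref{thm:WW91}, limit $N(0,4^{-2})$), so your moment computation already contains the variance input needed for that refinement as well.
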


\noindent
Recall that we know from Theorem \ref{thm:BWW2} that the critical binary Galton-Watson tree is invariant under the Horton pruning operation $\cR$.
Thus, the {\it strong Horton law} for branch numbers is deduced from Theorem \ref{thm:WW91_LLN} as follows.
\begin{cor}[{\bf The strong Horton law for branch counts}]\label{cor:ratio4j1}
For a random tree $T \stackrel{d}{\sim} {\sf Unif}\big(\BT^|(N)\big)$ and for all $j \in \mathbb{N}$,
\be\label{eqn:ratio4j1}
{N_j^{(N)}[T] \over N} ~\overset{p}{\rightarrow} ~4^{-(j-1)}\quad \text{ as }\, N\rightarrow \infty.
\ee 
\end{cor}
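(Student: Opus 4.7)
\textbf{Proof plan for Corollary \ref{cor:ratio4j1}.} The plan is to argue by induction on $j$, using Theorem \ref{thm:WW91_LLN} as the base of the inductive step and Theorem \ref{thm:BWW2} (prune-invariance of the critical binary Galton-Watson measure) to propagate it. The case $j=1$ is immediate because $N_1^{(N)}[T]=N$ deterministically, matching $4^0=1$. The case $j=2$ is exactly Theorem \ref{thm:WW91_LLN}. For $j\ge 2$, I will bootstrap by pruning once and reducing to a smaller uniform tree.

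The key structural identity is the branch-order shift \eqref{shift1}: for every tree $T$ one has $N_{j+1}[T]=N_{j}[\cR(T)]$. Combined with Remark \ref{rem:binary_treeGW} and Theorem \ref{thm:BWW2}, this gives the following distributional picture. If $T\stackrel{d}{\sim}{\sf Unif}(\BT^|(N))$, then $\textsc{shape}(T)$ is distributed as $\mathcal{GW}({1\over 2},{1\over 2})$ conditioned on $\#T=2N-1$, and by Theorem \ref{thm:BWW2} the pruned combinatorial tree $\cR(\textsc{shape}(T))$ is again $\mathcal{GW}({1\over 2},{1\over 2})$ conditioned on being non-empty. Consequently, conditional on the number of leaves $M_N:=N_2^{(N)}[T]=m$, one has that (up to uniform planar embedding) $\cR(T)\stackrel{d}{\sim}{\sf Unif}(\BT^|(m))$. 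Thus the law of $\cR(T)$ is the same kind of object as $T$, only with a random number $M_N$ of leaves instead of the deterministic $N$.

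The inductive step then consists of writing
\[
\frac{N_{j+1}^{(N)}[T]}{N}\;=\;\frac{N_j\big[\cR(T)\big]}{M_N}\cdot\frac{M_N}{N},
\]
applying Theorem \ref{thm:WW91_LLN} to get $M_N/N\overset{p}{\to}1/4$, and applying the inductive hypothesis to the first factor. For the inductive hypothesis to apply, I need to replace the deterministic index $N$ by the random index $M_N$. This will be done through a routine conditioning argument: by the inductive hypothesis, for each $\epsilon>0$ there is $m_0$ such that $P(|N_j^{(m)}/m-4^{-(j-1)}|>\epsilon)<\epsilon$ for all $m\ge m_0$; since $M_N/N\overset{p}{\to}1/4$ and $N\to\infty$ we have $P(M_N<m_0)\to 0$, so conditioning on $M_N$ and summing yields
\[
P\!\left(\left|\frac{N_j[\cR(T)]}{M_N}-4^{-(j-1)}\right|>\epsilon\right)\;\le\;\epsilon+o(1).
\]
Slutsky's theorem then gives $N_{j+1}^{(N)}[T]/N\overset{p}{\to}4^{-(j-1)}\cdot 4^{-1}=4^{-j}$, completing the induction.

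The main obstacle is not any single step but the careful handling of the random-index substitution $N\mapsto M_N$ in the inductive hypothesis: one must check that the convergence in probability from the induction hypothesis is, loosely speaking, uniform enough in the size parameter so that conditioning on $M_N$ does not destroy it. This is precisely what the standard conditioning estimate above takes care of, and it is the only delicate point of the argument; everything else follows mechanically from \eqref{shift1} and the prune-invariance identity from Theorems \ref{thm:BWW00} and \ref{thm:BWW2}.
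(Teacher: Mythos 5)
Your proposal is correct and follows essentially the same route as the paper's proof: both rest on the observation that, conditioned on its leaf count, the Horton-pruned tree is again uniform on the corresponding $\BT^|(m)$ (the paper's identity \eqref{eqn:pruningGW_NM}, justified there by a permutability-of-attachments count that your appeal to Theorem \ref{thm:BWW2} implicitly requires), combined with Theorem \ref{thm:WW91_LLN} and a telescoping of the ratios $N_{k}/N_{k-1}\overset{p}{\to}1/4$. Your single-pruning induction with the explicit random-index substitution $N\mapsto M_N$ is just a more carefully spelled-out version of the paper's iteration of \eqref{eqn:LL_NkNT} after conditioning on $\{{\sf ord}(T)\ge k\}$.
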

\begin{proof}
For a fixed integer $k>1$ and a tree  $T^{\rm GW} \stackrel{d}{\sim} \mathcal{GW}\left({1 \over 2},{1 \over 2}\right)$, we have 
for any positive integers $N$ and $M \leq 2^{-(k-1)}N$,
\begin{align}\label{eqn:pruningGW_NM}
\Big(\cR^{k-1}&\big(T^{\rm GW}\big) \,\Big| \, N_1^{(N)}[T^{\rm GW}]=N,\,  N_k^{(N)}[T^{\rm GW}]=M \Big) \\
&\stackrel{d}{=} \Big(\cR^{k-1}\big(T^{\rm GW}\big)\,\Big|\, N_k^{(N)}[T^{\rm GW}]=M \Big)
\stackrel{d}{=} \Big(T^{\rm GW}\,\Big|\, N_1^{(N)}[T^{\rm GW}]=M \Big)  \nonumber
\end{align}
as $\cR^{k-1}(T^{\rm GW})\stackrel{d}{=} T^{\rm GW}$ by the Horton prune-invariance Theorem \ref{thm:BWW2} (and a more general statement in Theorem \ref{thm:main} of Sect. \ref{sec:pi}). The first equality in \eqref{eqn:pruningGW_NM} can be easily verified from permutability of attachments of smaller order branches to the larger order branches. 
Specifically, the event $N_k^{(N)}[T^{\rm GW}]=M$ is equivalent to the event that the pruned tree $\cR^{k-1}\big(T^{\rm GW}\big)$ will have $\#\cR^{k-1}\big(T^{\rm GW}\big)=2M-1$ edges. 
Thus, conditioned of the combinatorial shape $\cR^{k-1}\big(T^{\rm GW}\big)$, all complete subtrees $T_v$ (see Def. \ref{def:HS}(6)) of $T$ such that ${\sf ord}(T_v)={\sf ord}(v)<k$ and ${\sf ord}({\sf parent}(v))\geq k$ will be attached to the edges and leaves of $\cR^{k-1}\big(T^{\rm GW}\big)$ in the same number of ways, for each $\cR^{k-1}\big(T^{\rm GW}\big)$ satisfying $\#\cR^{k-1}\big(T^{\rm GW}\big)=2M-1$ edges.

\medskip
\noindent
Thus, for a fixed $k \in \mathbb{N}$ and a random tree 
$$T \stackrel{d}{\sim} {\sf Unif}\big(\BT^|(N)\big),$$
we have by \eqref{eqn:pruningGW_NM},
$$\Big(\cR^{k-1}(T)\Big| N_k^{(N)}[T]=M \Big) \stackrel{d}{\sim}  {\sf Unif}\big(\BT^|(M)\big)$$
for all $M \leq 2^{-(k-1)}N$.
 Hence, Thm.~\ref{thm:WW91_LLN} implies
 $$\left({N_k^{(N)}[T] \over N_{k-1}^{(N)}[T]} \, \Big| \, {\sf ord}(T) \geq k \right)=\left({N_2^{(N)}\big[\cR^{k-1}(T)\big] \over N_1^{(N)}\big[\cR^{k-1}(T)\big]} \, \Big| \, {\sf ord}(T) \geq k \right)
 \overset{p}{\rightarrow} 4^{-1} \,\text{ as }\, N\rightarrow \infty.$$
 Next, we let ${0 \over 0}=0$ as here $N_k^{(N)}[T] \leq N_{k-1}^{(N)}[T]$, and 
 $$N_{k-1}^{(N)}[T]=0 ~\text{ implies }~N_k^{(N)}[T]=0.$$ 
 Then, as
 $\lim\limits_{N\rightarrow \infty,}{\sf P}\big({\sf ord}(T) < k\big)=0$ we have
 \be\label{eqn:LL_NkNT}
 {N_k^{(N)}[T] \over N_{k-1}^{(N)}[T]}  ~\overset{p}{\rightarrow} ~4^{-1}\quad \text{ as }\, N\rightarrow \infty.
 \ee
Finally, iterating \eqref{eqn:LL_NkNT}, we obtain
$${N_j^{(N)}[T] \over N}={N_j^{(N)}[T] \over N_{j-1}^{(N)}[T]} \, {N_{j-1}^{(N)}[T] \over N_{j-2}^{(N)}[T]} \hdots {N_2^{(N)}[T] \over N} ~\overset{p}{\rightarrow} ~4^{-(j-1)}\quad \text{ as }\, N\rightarrow \infty.$$
\end{proof}

\medskip
\noindent
Following Theorem \ref{thm:WW91_LLN}, the corresponding Central Limit Theorem was proved in Wang and Waymire  \cite{WW91} (Theorem 2.4).
\begin{thm}[{\bf CLT for order two branches, \cite{WW91}}]\label{thm:WW91}
For a random tree $T \stackrel{d}{\sim} {\sf Unif}\big(\BT^|(N)\big)$,
\be\label{eqn:CLT_N2}
\sqrt{N}\left({N_2^{(N)}[T] \over N} -{1 \over 4}\right) \overset{d}{\rightarrow} N(0,4^{-2}) \quad \text{ as }\, N\rightarrow \infty.
\ee
\end{thm}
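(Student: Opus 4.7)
The plan is to reduce this CLT to a standard problem in analytic combinatorics. First, I would observe that in a binary tree, the initial vertex of any order-2 branch is necessarily a \emph{cherry} -- an internal vertex both of whose children are leaves -- and conversely every cherry is the initial vertex of a unique order-2 branch. Hence $N_2^{(N)}[T]$ equals the number of cherries of $T$, and equivalently $N_2^{(N)}[T] = \#\mathrm{leaves}(\cR(T))$, since each cherry becomes a leaf once its two leaf children are pruned and series reduction is applied.

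Second, I would set up the bivariate generating function
$$F(z,u) \;=\; \sum_{N \geq 1}\; \sum_{T \in \BT^|(N)} z^N u^{c(T)},$$
where $c(T)$ denotes the cherry count. Decomposing a binary tree at the top internal vertex (which either is absent, leaving a single-leaf tree, or bears two planted binary subtrees) and noting that this vertex is a cherry precisely when both subtrees are single leaves, one arrives at the quadratic functional equation
$$F(z,u) \;=\; z + F(z,u)^2 + (u-1)z^2,$$
whose relevant branch is
$$F(z,u) \;=\; \frac{1 - \sqrt{1 - 4z - 4(u-1)z^2}}{2}.$$
The dominant singularity of $z \mapsto F(z,u)$ is a square-root branch point at $z_c(u) = \frac{1}{2(1+\sqrt u)}$, depending analytically on $u$ near $u = 1$ with $z_c(1) = 1/4$.

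Third, I would invoke Hwang's quasi-power theorem (equivalently, the perturbative singularity analysis of Flajolet and Sedgewick). The movable square-root singularity yields uniform asymptotics
$$[z^N]F(z,u) \;\sim\; \frac{C(u)}{\sqrt{\pi N^3}}\, z_c(u)^{-N}$$
for $u$ in a complex neighbourhood of $1$, whence the probability generating function $\E[u^{N_2^{(N)}[T]}] = [z^N]F(z,u)/[z^N]F(z,1)$ is a quasi-power in $N$ with base $B(u) = z_c(1)/z_c(u)$. This delivers asymptotic normality with mean $\mu N$ and variance $\sigma^2 N$, where, writing $\psi(u) = -\log z_c(u) = \log 2 + \log(1+\sqrt u)$, one has $\mu = \psi'(1)$ and $\sigma^2 = \psi''(1) + \psi'(1)$. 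Direct differentiation gives $\psi'(1) = 1/4$ and $\psi''(1) = -3/16$, so that $\mu = 1/4$ and $\sigma^2 = 1/16$, exactly as asserted.

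The main obstacle is the precise value of the variance: the mean $1/4$ follows immediately from the first-order expansion of $z_c(u)$ at $u = 1$, but $\sigma^2 = 1/16$ requires the second-order expansion together with uniform control of the singularity analysis as $u$ varies across a neighbourhood of $1$. This is standard machinery, but requires some care.
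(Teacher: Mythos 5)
Your argument is correct. The identification $N_2^{(N)}[T]=\#\{\text{cherries of }T\}$ is valid (each order-$2$ branch contains exactly one vertex with two leaf children, namely its \emph{terminal} vertex in the paper's terminology of Def.~\ref{def:HS}(5) — you call it the initial vertex, which is a harmless slip), the functional equation $F=z+F^2+(u-1)z^2$ correctly accounts for the exceptional two-leaf tree, the singularity $z_c(u)=\tfrac{1}{2(1+\sqrt u)}$ is the right one (the second root of the quadratic in $z$ escapes to infinity as $u\to1$, so the square-root singularity is isolated and moves analytically), and the quasi-power computation $\mu=\psi'(1)=\tfrac14$, $\sigma^2=\psi''(1)+\psi'(1)=-\tfrac{3}{16}+\tfrac14=\tfrac1{16}=4^{-2}$ matches \eqref{eqn:CLT_N2} exactly.

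There is, however, nothing in the paper to compare this against: Theorem~\ref{thm:WW91} is imported verbatim from Wang and Waymire \cite{WW91} (their Theorem~2.4) and the survey reproduces no proof, using the statement only as input to the pruning argument of Corollary~\ref{cor:CLT_Nj}. The original reference proceeds by a direct analysis of the exact distribution of the order-two branch count rather than by Hwang's quasi-power framework, so your derivation is a genuinely independent, self-contained route. What it buys is brevity and extensibility — the same bivariate-generating-function scheme yields, with no extra work, a local limit theorem and joint asymptotic normality for other additively-defined tree patterns — at the cost of invoking the uniformity clauses of perturbative singularity analysis, which, as you note, must be checked (and do hold here, since $z_c(u)$ is analytic and non-vanishing near $u=1$ and $\sigma^2>0$).
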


\medskip
\noindent
Next, using the pruning framework, the following Central Limit Theorem for $N_j^{(N)}[T]$ is readily obtained as a direct consequence of the original Theorem \ref{thm:WW91} of Wang and Waymire \cite{WW91} and the Horton prune-invariance (Def. \ref{def:prune}) of $\mathcal{GW}\left({1 \over 2},{1 \over 2}\right)$ as stated in Theorem \ref{thm:BWW2}, and a more general statement that will appear in Theorem \ref{thm:main} of Sect. \ref{sec:pi}.
\begin{cor}[{\bf CLT for branch numbers, \cite{Yamamoto17}}]\label{cor:CLT_Nj}
For a random tree $T \stackrel{d}{\sim} {\sf Unif}\big(\BT^|(N)\big)$,
\be\label{eqn:CLT_Nj}
\sqrt{N}\left({N_{j+1}^{(N)}[T] \over N_j^{(N)}[T]}  -{1 \over 4}\right) \overset{d}{\rightarrow} N(0,4^{r-3})  \quad \text{ as }\, N\rightarrow \infty,
\ee
where we set ${0 \over 0}=0$.
\end{cor}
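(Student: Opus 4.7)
The plan is to combine the base case $j=1$ from Theorem~\ref{thm:WW91} with the Horton prune-invariance established in Theorem~\ref{thm:BWW2} (together with the conditioning argument already used in the proof of Corollary~\ref{cor:ratio4j1}), and then transfer the fluctuation result through a ratio of normalizations via Slutsky's theorem.

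First, set $M := N_j^{(N)}[T]$ and exploit the key conditional identity displayed in \eqref{eqn:pruningGW_NM}, namely that
\begin{equation*}
\bigl(\cR^{j-1}(T) \,\big|\, N_j^{(N)}[T] = M\bigr) \stackrel{d}{=} {\sf Unif}\bigl(\BT^|(M)\bigr),
\end{equation*}
which follows from the Horton prune-invariance of $\mathcal{GW}(1/2,1/2)$ and Remark~\ref{rem:binary_treeGW}. Because Horton pruning shifts Horton--Strahler orders by one, we have $N_{j+1}^{(N)}[T] = N_2\bigl[\cR^{j-1}(T)\bigr]$, so the ratio of interest is precisely the order-two-over-order-one ratio in the pruned tree:
\begin{equation*}
\frac{N_{j+1}^{(N)}[T]}{N_j^{(N)}[T]} \;=\; \frac{N_2^{(M)}[\cR^{j-1}(T)]}{M}.
\end{equation*}

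Second, apply Theorem~\ref{thm:WW91} to the pruned tree: conditionally on $M$, the variable $\sqrt{M}\bigl(N_2^{(M)}[\cR^{j-1}(T)]/M - 1/4\bigr)$ converges in distribution to $N(0,4^{-2})$ as $M\to\infty$. Independently, Corollary~\ref{cor:ratio4j1} gives the law of large numbers $M/N \to 4^{-(j-1)}$ in probability, hence $\sqrt{N/M}\to 2^{j-1}$ in probability. Writing
\begin{equation*}
\sqrt{N}\left(\frac{N_{j+1}^{(N)}[T]}{N_j^{(N)}[T]} - \frac{1}{4}\right) \;=\; \sqrt{\tfrac{N}{M}}\,\cdot\,\sqrt{M}\left(\frac{N_2^{(M)}[\cR^{j-1}(T)]}{M} - \frac{1}{4}\right),
\end{equation*}
Slutsky's theorem yields convergence to $2^{j-1}\cdot N(0,4^{-2}) = N(0,4^{j-3})$, which matches the stated variance (the exponent $r$ in the statement being $j$).

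The main obstacle is to legitimately apply Theorem~\ref{thm:WW91} when $M$ itself is random and depends on $N$. A clean way is to work with characteristic functions: let $\phi_M(t) = \E\bigl[e^{it\sqrt{M}(N_2^{(M)}/M - 1/4)}\bigr]$ under $\mathsf{Unif}(\BT^|(M))$, so that Theorem~\ref{thm:WW91} gives $\phi_M(t)\to e^{-t^2/32}$ as $M\to\infty$. Conditioning on $M$ and invoking dominated convergence (valid since $|\phi_M(t)|\le 1$, $M\to\infty$ in probability, and $\sqrt{N/M}\to 2^{j-1}$ in probability) yields convergence of the characteristic function of the left-hand side to $\exp\bigl(-t^2\cdot 4^{j-3}/2\bigr)$, and the L\'evy continuity theorem finishes the proof. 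A minor technical detail is the null event $\{M = 0\}$ where the ratio is defined as $0/0 = 0$; its probability vanishes as $N\to\infty$ by Corollary~\ref{cor:ratio4j1}, so it does not affect the limiting law.
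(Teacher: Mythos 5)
Your proof is correct and follows essentially the same route as the paper's: prune $j-1$ times to reduce to the order-two CLT of Theorem~\ref{thm:WW91} via the conditional identity \eqref{eqn:pruningGW_NM}, then combine with the law of large numbers \eqref{eqn:ratio4j1} through Slutsky's theorem. Your characteristic-function/dominated-convergence treatment of the random index $M=N_j^{(N)}[T]$ supplies a detail the paper's two-line proof glosses over with ``immediately implies,'' and your identification of the variance as $4^{j-3}$ (i.e.\ $r=j$ in the statement) is the correct reading.
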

\begin{proof}
Pruning $T \stackrel{d}{\sim} {\sf Unif}\big(\BT^|(N)\big)$ iteratively $j-1$ times, we obtain $T \stackrel{d}{\sim} {\sf Unif}\Big(\BT^|\big(N_j^{(N)}[T]\big)\Big)$,
where  for the case when $j>{\sf ord}(T)$ and $N_j^{(N)}[T]=0$, we set $\BT^|(0):=\{\phi\}$. 
Hence, Theorem \ref{thm:WW91} immediately implies
\be\label{eqn:CLT_Nj_v0}
\sqrt{N_j^{(N)}[T]}\left({N_{j+1}^{(N)}[T] \over N_j^{(N)}[T]} -{1 \over 4}\right) \overset{d}{\rightarrow} N(0,4^{-2}) \quad \text{ as }\, N\rightarrow \infty.
\ee
Thus, substituting \eqref{eqn:ratio4j1} into \eqref{eqn:CLT_Nj_v0}, we obtain \eqref{eqn:CLT_Nj}.
\end{proof}
The limit \eqref{eqn:CLT_Nj} was derived by Yamamoto \cite{Yamamoto17}  directly, after a series of technically involved calculations.


\subsection{Metric case}\label{sec:GWmetric}
In this section we turn to the trees in $\cBL^|$. In particular, we will assign i.i.d. exponential lengths to the edges of a critical plane binary Galton-Watson tree 
$\mathcal{GW}_{\rm plane}({1\over 2},{1\over 2})$ in $\cT^|$, thus obtaining what will be called 
the {\it exponential critical binary Galton-Watson tree}.

\medskip
\begin{Def} [{{\bf Exponential critical binary Galton-Watson tree}}]
\label{def:cbinary}
We say that a random tree 
$T\in\BL^{|}$ is
an exponential critical binary Galton-Watson tree with (edge length) parameter $\lambda>0$, 
and write $T\stackrel{d}{\sim}{\sf GW}(\lambda)$,
if the following conditions are satisfied:
\begin{itemize}
\item[(i)] \textsc{p-shape}($T$) is a critical plane binary Galton-Watson tree $\mathcal{GW}_{\rm plane}({1\over 2},{1\over 2})$;
\item[(ii)] conditioned on a given \textsc{p-shape}($T$), the edges of $T$ are sampled as independent ${\sf Exp}(\lambda)$
random variables, i.e., random variables with  probability density function (p.d.f.)
\be
\label{exp}
\phi_\lambda (x)=
\lambda e^{-\lambda x} {\bf 1}_{\{x\ge0\}}.\\ 
\ee
\end{itemize}
\end{Def}
\noindent
The branching process that generates an exponential critical binary Galton-Watson tree is known as the continuous time 
 Galton-Watson process, and is sometimes simply called {\it Markov  branching process} \cite{Harris_book}.
 \index{Galton-Watson tree!critical!exponential critical binary Galton-Watson tree}

\subsubsection{Length of a Galton-Watson random tree ${\sf GW}(\lambda)$}\label{sec:lengthGW}
Recall the modified Bessel functions of the first kind
$$I_\nu(z)=\sum\limits_{n=0}^\infty {\left({z \over 2}\right)^{2n+\nu} \over \Gamma(n+1+\nu)\, n!}.$$
\index{Bessel function}
\index{modified Bessel functions of the first kind}

\begin{lem}\label{lem:ell}
Suppose $T\stackrel{d}{\sim}{\sf GW}(\lambda)$ is an exponential critical binary Galton-Watson tree
with parameter $\lambda$.
The {\it total length} $\textsc{length}(T)$ of the tree $T$ has the p.d.f. 
\begin{equation} \label{eq:pdfs}
\ell(x) = {1 \over x}  e^{-\lambda x}  I_1 \big(\lambda x\big), \quad x>0. 
\end{equation}
\end{lem}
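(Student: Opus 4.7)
The plan is to condition on the number of leaves and reduce the calculation to a mixture of Gamma distributions, whose coefficients will assemble into the series for the modified Bessel function $I_1$.

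First I would determine the law of the number of leaves $N[T]$ in $\textsc{shape}(T)$. Using the recursive characterization of $\mathcal{GW}(\tfrac12,\tfrac12)$ in Rem.~\ref{rem:GWchar}, each non-root vertex is independently declared a leaf or binary-branching with probability $\tfrac12$; a planted binary tree with $n$ leaves has $n-1$ internal non-root vertices and $2n-1$ edges in total, and there are $C_{n-1}$ such plane shapes, where $C_m=\tfrac{1}{m+1}\binom{2m}{m}$ is the Catalan number \eqref{eqn:catalan}. Hence
\[
{\sf P}(N[T]=n)=C_{n-1}\bigl(\tfrac12\bigr)^{2n-1},\qquad n\ge 1,
\]
which sums to $1$ via the Catalan generating function evaluated at $z=1/4$.

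Next, conditionally on $N[T]=n$ the tree has exactly $2n-1$ edges whose lengths, by Def.~\ref{def:cbinary}(ii), are i.i.d.\ ${\sf Exp}(\lambda)$. Therefore, conditionally, $\textsc{length}(T)\sim{\sf Gamma}(2n-1,\lambda)$ with density $\lambda^{2n-1}x^{2n-2}e^{-\lambda x}/(2n-2)!$. Summing over $n$ and re-indexing $m=n-1$,
\[
\ell(x)=e^{-\lambda x}\sum_{m=0}^{\infty}C_m\,\bigl(\tfrac12\bigr)^{2m+1}\frac{\lambda^{2m+1}x^{2m}}{(2m)!}
=\frac{\lambda}{2}\,e^{-\lambda x}\sum_{m=0}^{\infty}\frac{(\lambda x/2)^{2m}}{m!\,(m+1)!},
\]
where the last equality uses $C_m/(2m)!=1/(m!(m+1)!)$. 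Pulling one factor of $\lambda x/2$ inside the sum converts the series into $\sum_{m\ge 0}(\lambda x/2)^{2m+1}/(m!(m+1)!)=I_1(\lambda x)$, yielding the claimed formula $\ell(x)=x^{-1}e^{-\lambda x}I_1(\lambda x)$.

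The only genuinely non-routine step is recognizing the assembled power series as the Bessel series for $I_1$; every other ingredient (the leaf-count distribution, the conditional Gamma law, and the Fubini interchange justified by nonnegativity) is immediate. As a sanity check and an alternative route, one can verify the same density via its Laplace transform: the recursion of Rem.~\ref{rem:GWchar} gives $\hat\ell(s)=\tfrac{\lambda}{\lambda+s}\cdot\tfrac{1+\hat\ell(s)^2}{2}$, whose admissible root $\hat\ell(s)=\bigl(\lambda+s-\sqrt{s^2+2\lambda s}\bigr)/\lambda$ coincides with the known Laplace transform of $x^{-1}e^{-\lambda x}I_1(\lambda x)$; this offers an independent confirmation without appealing to the leaf decomposition.
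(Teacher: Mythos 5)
Your proof is correct and follows essentially the same route as the paper: condition on the number of leaves (with probability $C_{n-1}2^{-(2n-1)}$), use the conditional ${\sf Gamma}(2n-1,\lambda)$ law of the total length, and recognize the resulting series as that of $I_1$. Your Laplace-transform sanity check also mirrors the paper's subsequent computation of $\mathcal{L}\ell(s)$ via the recursion \eqref{recursionEll}--\eqref{recursionLaplace}.
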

\begin{proof}
Recall that the number of different combinatorial shapes of a planted plane binary tree with $n+1$ leaves, and therefore $2n+1$ edges, is given by 
the {\it Catalan number} \eqref{eqn:catalan}, i.e.,
$$\big|\BT^|(n+1)\big|=C_n={1 \over n+1}\binom{2n}{n}={(2n)! \over (n+1)! n! }.$$
The total length of $2n+1$ edges is a gamma random variable with parameters 
$\lambda$ and $2n+1$ and density function
\[\gamma_{\lambda,2n+1}(x)={\lambda^{2n+1} x^{2n} e^{-\lambda x} \over \Gamma(2n+1)}, \quad x>0.\]
Hence, the total length of the tree $T$ has the p.d.f.
\begin{align}\label{eq:ell}
\ell(x) &= \sum\limits_{n=0}^\infty {C_n \over 2^{2n+1}} \cdot {\lambda^{2n+1} x^{2n} e^{-\lambda x} \over (2n)!} 
= \sum\limits_{n=0}^\infty {\lambda^{2n+1} x^{2n} e^{-\lambda x} \over 2^{2n+1} (n+1)! n!}  \nonumber \\
&=  {1 \over x}  e^{-\lambda x} \sum\limits_{n=0}^\infty {\left({\lambda x \over 2}\right)^{2n+1} \over \Gamma(n+2)\, n!} = {1 \over x}  e^{-\lambda x}  I_1 \big(\lambda x\big).
\end{align}
\end{proof}

\medskip
\noindent
Next, we compute the Laplace transform of $\ell(x)$. By the summation formula in (\ref{eq:ell}),
\begin{align*}
\mathcal{L}\ell(s) &= \int\limits_0^\infty  \sum\limits_{n=0}^\infty {C_n \over 2^{2n+1}} \cdot {\lambda^{2n+1} x^{2n} e^{-(\lambda +s) x} \over (2n)!}  \,dx\\
&=\sum\limits_{n=0}^\infty {C_n \over 2^{2n+1}} \cdot \left({\lambda \over \lambda +s}\right)^{2n+1}\int\limits_0^\infty {(\lambda +s)^{2n+1} x^{2n} e^{-(\lambda +s) x} \over (2n)!}  \,dx\\
&=\sum\limits_{n=0}^\infty {C_n \over 2^{2n+1}}\cdot \left({\lambda \over \lambda +s}\right)^{2n+1} =Z \cdot c(Z^2),
\end{align*}
where we let $Z={\lambda \over 2(\lambda +s)}$, and the characteristic function of Catalan numbers 
\be\label{eq:CatalanZ}
c(z)=\sum\limits_{n=0}^\infty C_n z^n={2 \over 1+\sqrt{1-4z}}
\ee 
is well known.
Therefore
\begin{equation}\label{eq:LaplaceL}
\mathcal{L}\ell(s)=Z \cdot c(Z^2)={\lambda \over \lambda + s+\sqrt{(\lambda +s)^2 -\lambda^2}}.
\end{equation}

Note that the Laplace transform $\mathcal{L}\ell(s)$ could be derived from the total probability formula
\begin{equation}\label{recursionEll}
\ell(x)={1 \over 2}\phi_\lambda (x)+{1 \over 2}\phi_\lambda \ast \ell \ast \ell(x),
\end{equation}
where $\phi_\lambda (x)$ is the exponential p.d.f. (\ref{exp}). Thus, $\mathcal{L}\ell(s)$ solves
\begin{equation}\label{recursionLaplace}
\mathcal{L}\ell(s)={1 \over 2} {\lambda \over \lambda +s}\Big(1+\big(\mathcal{L}\ell(s)\big)^2\Big).
\end{equation}

\begin{cor}
The p.d.f. $f(x)$ of the length of an excursion
in an exponential symmetric random walk with parameter $\lambda$
is given by
\begin{equation}
f(x)={1 \over 2} \ell(x/2).
\end{equation}
\end{cor}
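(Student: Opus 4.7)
The plan is to deduce the corollary from Lemma~\ref{lem:ell} combined with the level-set-tree representation of an exponential random walk excursion alluded to in Section~\ref{sec:cgw}. Specifically, by Theorem~\ref{thm:excursion-tree-main} together with Lemma~\ref{lem:shapeVSexcessvalue}, the level set tree $T$ of a positive excursion of a symmetric homogeneous random walk $\{X_k\}$ whose transition kernel is the Laplace p.d.f.\ with parameter $\lambda$ has $\textsc{p-shape}(T)$ distributed as the critical plane binary Galton-Watson tree $\mathcal{GW}_{\rm plane}(\tfrac12,\tfrac12)$ and, conditionally on its shape, i.i.d.\ ${\sf Exp}(\lambda)$ edge lengths. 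In other words, $T \stackrel{d}{\sim} {\sf GW}(\lambda)$ in the sense of Definition~\ref{def:cbinary}.

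Next, I would identify the ``length of the excursion'' with twice the total tree length. This is the standard Harris/Kronrod identity for the contour encoding: each edge of the level set tree $T$ is traversed exactly twice by the excursion path (once on the way up to a local maximum, once on the way down), so the total variation of the excursion equals $2\,\textsc{length}(T)$. In the setting of the symmetric Laplace random walk, the excursion length, understood as the sum of the absolute increments along the excursion, therefore satisfies
\begin{equation*}
X \;=\; 2\,\textsc{length}(T).
\end{equation*}
This identity is the combinatorial heart of the argument; it should require only bookkeeping on increments between successive local extrema, using the pseudometric construction in \eqref{eqn:tree_dist} that defines $T$.

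Once this identification is in hand, the rest is an immediate change of variables. Since $\textsc{length}(T)$ has density $\ell(y)$ by Lemma~\ref{lem:ell}, the random variable $X=2\,\textsc{length}(T)$ has density
\begin{equation*}
f(x) \;=\; \frac{1}{2}\,\ell\!\left(\frac{x}{2}\right) \;=\; \frac{1}{x}\,e^{-\lambda x/2}\,I_1\!\left(\frac{\lambda x}{2}\right),
\end{equation*}
which is the claimed formula.

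The main obstacle will be justifying the identity $X = 2\,\textsc{length}(T)$ cleanly in the discrete-time setting, since the correspondence between excursions and their level set trees is usually presented for continuous functions. The cleanest route is to interpolate $\{X_k\}$ linearly in time, apply the pseudometric tree construction from Section~\ref{LST}, and observe that along each monotone segment of the interpolated path the absolute increment equals the Lebesgue measure of the image under the tree projection, so summing over segments yields twice the total edge length. Everything else, including the invocation of Lemma~\ref{lem:ell} and the change of variables, is routine.
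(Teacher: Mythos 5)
Your proposal is correct and follows essentially the same route as the paper, whose entire proof is the one-line observation that the excursion has twice the length of a ${\sf GW}(\lambda)$ tree, followed by the implicit change of variables. The only cosmetic difference is that you justify the identification $T\stackrel{d}{\sim}{\sf GW}(\lambda)$ via Theorem~\ref{thm:excursion-tree-main} and Lemma~\ref{lem:shapeVSexcessvalue}, whereas the paper relies on the classical Theorem~\ref{Pit7_3}; either works.
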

\begin{proof}
Observe that the excursion has twice the length of a tree ${\sf GW}(\lambda)$.
\end{proof}
\bigskip

\subsubsection{Height of a Galton-Watson random tree ${\sf GW}(\lambda)$}\label{sec:heightGW}

\begin{lem}[{{\bf \cite{KZ19}}}]\label{lem:Hx}
Suppose $T\stackrel{d}{\sim}{\sf GW}(\lambda)$ is an exponential critical binary Galton-Watson tree
with parameter $\lambda$.
Then, the height $\textsc{height}(T)$ of the tree $T$ has the cumulative distribution function 
\begin{equation} \label{eq:pdfht}
{\sf H}(x) = {\lambda x \over \lambda x +2}, \quad x>0. 
\end{equation}
\end{lem}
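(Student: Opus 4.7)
The plan is to exploit the recursive construction of the critical binary Galton-Watson tree given in Rem.~\ref{rem:GWchar}, combined with the memorylessness of the exponential edge lengths, to derive a functional equation for the survival function $G(x) := 1-{\sf H}(x) = {\sf P}(\textsc{height}(T)>x)$. Let $E\sim{\sf Exp}(\lambda)$ denote the length of the stem, independent of everything else. With probability $1/2$ the tree consists of the stem alone and $\textsc{height}(T)=E$; with the complementary probability $1/2$, two independent copies $T_1,T_2\stackrel{d}{\sim}{\sf GW}(\lambda)$ are attached at the non-root vertex of the stem, whence $\textsc{height}(T)=E+\max(\textsc{height}(T_1),\textsc{height}(T_2))$. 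Conditioning on whether $E>x$ or $E\le x$ and, in the latter case, on the continuation event and the value of $E$, I would obtain
\begin{equation*}
G(x)=e^{-\lambda x}+\tfrac{1}{2}\int_0^x\lambda\,e^{-\lambda(x-s)}\bigl[2G(s)-G(s)^2\bigr]\,ds,
\end{equation*}
where $1-{\sf H}(x-t)^2=2G(x-t)-G(x-t)^2$ represents the probability that the maximum of the two independent subtree heights exceeds $x-t$, after the substitution $s=x-t$.

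Next, I would multiply both sides by $e^{\lambda x}$ and differentiate in $x$. The resulting first-order equation simplifies remarkably: the linear term $\lambda G(x)$ on each side cancels, leaving the autonomous ODE
\begin{equation*}
G'(x)=-\tfrac{\lambda}{2}\,G(x)^2,\qquad G(0)=1.
\end{equation*}
This is separable, and integration gives $G(x)^{-1}=1+\lambda x/2$, hence $G(x)=2/(\lambda x+2)$ and ${\sf H}(x)=1-G(x)=\lambda x/(\lambda x+2)$, as claimed.

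The main obstacle, such as it is, is not analytic but structural: one must justify that the recursion from Rem.~\ref{rem:GWchar}, which is stated for combinatorial trees in $\cBT^|$, lifts to the metric setting ${\sf GW}(\lambda)$ with the stem length being an independent ${\sf Exp}(\lambda)$ variable, so that $E$, the continuation indicator, and the subtrees $T_1,T_2$ are genuinely mutually independent. This follows directly from Def.~\ref{def:cbinary}(ii) (i.i.d. exponential edges) together with the Markov branching mechanism, so no additional work is required. The rest of the argument -- the integral equation, the differentiation step, and the solution of the ODE -- is a routine calculation that does not rely on any special properties beyond the branching recursion and the memorylessness of $E$.
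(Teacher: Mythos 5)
Your proof is correct, but it takes a genuinely different route from the paper's. The paper proves Lemma~\ref{lem:Hx} by passing through the excursion representation: via Thm.~\ref{Pit7_3}, $\textsc{height}(T)$ is identified with the maximum of a positive excursion of a random walk with Laplace increments, and the exceedance probability $p_x$ is then computed by applying the Optional Stopping Theorem to the martingale $Y_k$ at the stopping time $\tau_x\wedge\tau_-$. You instead work directly on the tree, using the recursive characterization of Rem.~\ref{rem:GWchar} together with the memorylessness of the stem length to get a renewal-type integral equation for $G=1-{\sf H}$, which collapses to the autonomous Riccati ODE $G'=-\tfrac{\lambda}{2}G^2$ with $G(0)=1$; your algebra checks out (the cancellation of the $\lambda G$ terms after differentiating $e^{\lambda x}G(x)$ is exactly right, and $1-{\sf H}^2=2G-G^2$ is the correct exceedance probability for the maximum of two independent copies). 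Your argument is in fact the unconditioned version of the paper's \emph{secondary} derivation in Sect.~\ref{sec:heightGW}: there the authors condition on the number of leaves, form the $z$-transform ${\sf h}(a;z)$, and obtain the Riccati equation \eqref{eqn:HnDE1}; letting $z\uparrow 1/4$ in that equation reproduces precisely your ODE for $G$. What the optional-stopping route buys is the explicit link to the random-walk picture exploited elsewhere in the survey; what your route buys is self-containedness (no excursion duality needed) and a shorter computation, at the cost of having to justify — as you correctly do via Def.~\ref{def:cbinary} and the Markov branching mechanism — that the stem length, the continuation indicator, and the two subtrees are mutually independent in the metric model.
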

\begin{proof}
The proof is based on duality between trees and positive real excursions
that we introduce in Sect. \ref{LST}.
In particular, Thm.~\ref{Pit7_3} establishes equivalence 
between the level set tree (Sect. \ref{sec:level}) of a positive excursion of an 
{\it exponential random walk} (Sect. \ref{sec:erw}) 
and an exponential critical binary Galton-Watson tree ${\sf GW}(\lambda)$.
This implies, in particular, that for a tree $T\stackrel{d}{\sim}{\sf GW}(\lambda)$ 
the $\textsc{height}(T)$ has the same distribution as the height of a positive excursion of an exponential random walk
$Y_k$ with $Y_0=0$ and independent increments $Y_{k+1}-Y_k$ distributed according to the Laplace density function
${\phi_\lambda(x)+\phi_\lambda(-x) \over 2}={\lambda \over 2}e^{-\lambda |x|}$, with 
$\phi_\lambda(x)$ defined in \eqref{exp}.

Notice that $Y_k$ is a martingale. We condition on $Y_1>0$,  and consider an excursion  $Y_0,Y_1,\hdots, Y_{\tau_-}$
with $\tau_-=\min\{k>1 ~:~ Y_k \leq 0\}$ denoting the termination step of the excursion. 
For $x>0$, we write
$$p_x=1-{\sf H}(x) ={\sf P}\left(\max\limits_{j:~ 0< j <\tau_-} Y_j >x ~\Big|~Y_1>0 \right)$$
for the probability that the height of the excursion exceeds $x$.
The problem of finding $p_x$ is solved using the Optional Stopping Theorem. Let 
$$\tau_x=\min\{k >0~:~Y_k \geq x\} \qquad \text{ and } \qquad \tau:=\tau_x \wedge \tau_-.$$
Observe that
$$p_x={\sf P}(\tau=\tau_x ~|~Y_1>0).$$
For a fixed $y \in (0,x)$, by the Optional Stopping Theorem, we have
\begin{eqnarray*}
y & = & {\sf E}[Y_\tau ~|~Y_1=y]\\
\\
& = & {\sf E}[Y_\tau ~|~\tau=\tau_-, Y_1=y] \,{\sf P}(\tau=\tau_- ~|~ Y_1=y)\\
& & \qquad +\,{\sf E}[Y_\tau ~|~\tau=\tau_x, Y_1=y] \,{\sf P}(\tau=\tau_x ~|~ Y_1=y)\\
\\
& = & {\sf E}[Y_\tau ~|~Y_\tau \leq 0, Y_1=y] {\sf P}\,(\tau=\tau_- ~|~ Y_1=y)\\
& & \qquad +\,{\sf E}[Y_\tau ~|~Y_\tau \geq x, Y_1=y] \,{\sf P}(\tau=\tau_x ~|~ Y_1=y)\\
\\
& = & -{1 \over \lambda}{\sf P}(\tau=\tau_- ~|~ Y_1=y)+
\left(x+{1 \over \lambda}\right){\sf P}(\tau=\tau_x ~|~ Y_1=y)\\
\\
& = & \left(x+{2 \over \lambda}\right){\sf P}(\tau=\tau_x ~|~ Y_1=y)-{1 \over \lambda}.\\
\end{eqnarray*}

\noindent
Hence, $${\sf P}(\tau=\tau_x ~|~ Y_1=y)={y+{1 \over \lambda} \over x +{2 \over \lambda}}.$$
Thus,
\begin{eqnarray*}
{\sf P}\Big(\tau=\tau_x,  0<Y_1<x ~|~Y_1>0\Big) & = &\int\limits_0^x {\sf P}(\tau=\tau_x ~|~ Y_1=y)~\lambda e^{-\lambda y} dy\\
& = & \int\limits_0^x {y+{1 \over \lambda} \over x +{2 \over \lambda}}~\lambda e^{-\lambda y} dy\\
& = & {2 \over \lambda x +2}-e^{-\lambda x},
\end{eqnarray*}

\noindent
and therefore,
\begin{eqnarray*}
p_x & = & {\sf P}\left(\max\limits_{j:~ 0< j <K } Y_j >x ~|~Y_1>0 \right)\\
& = & {\sf P}\Big(\tau=\tau_x,  0<Y_1<x ~|~Y_1>0\Big)+{\sf P}\Big(\tau=\tau_x,  Y_1 \geq x ~|~Y_1>0\Big)\\
& = & {2 \over \lambda x +2}-e^{-\lambda x}+{\sf P}\Big(Y_1 \geq x ~|~Y_1>0\Big) 
=  {2 \over \lambda x +2}.
\end{eqnarray*}
Hence, 
\[{\sf H}(x)=1-p_x={\lambda x \over \lambda x +2}.\]
\end{proof}

We continue examining the height function $\textsc{height}(T)$ for $T\stackrel{d}{\sim}{\sf GW}(\lambda)$. This time, we condition on
$\#T=2n-1$, i.e., the tree $T$ has $n$ leaves and $n-1$ internal non-root vertices. We let ${\sf H}_n(x)$ denote the corresponding conditional cumulative distribution function,
\be\label{eqn:HnDef}
{\sf H}_n(x)={\sf P}\big(\textsc{height}(T) \leq x ~\big| ~\#T=2n-1 \big).
\ee
There, for a one-leaf tree,
\be\label{eqn:H1}
{\sf H}_1(x)=1-e^{-\lambda x},
\ee
and for $n \geq 2$, the following recursion follows from conditioning on the length of the stem (root edge),
\be\label{eqn:HnRec}
{\sf H}_n(a)=\sum\limits_{k=1}^{n-1}{C_{k-1}C_{n-k-1}\over C_{n-1}} \int\limits_0^a {\sf H}_k(a-x){\sf H}_{n-k}(a-x) \, \lambda e^{-\lambda x}\, dx ,
\ee
where $C_n$ is the Catalan number as defined in \eqref{eqn:catalan}.

\medskip
\noindent
Next, we consider the following $z$-transform:
\be\label{eqn:HnZtr}
{\sf h}(a;z)=\sum\limits_{n=1}^\infty {\sf H}_n(a)\, C_{n-1}\, z^n \quad \text{ for }~|z|<1/4.
\ee
Then, \eqref{eqn:H1} and \eqref{eqn:HnRec} imply
$${\sf h}(a;z)=(1-e^{-\lambda a})z+\int\limits_0^a {\sf h}^2(a-x;z)\, \lambda e^{-\lambda x}\, dx$$
which, if we let $y=a-x$, simplifies to
$$e^{\lambda a}{\sf h}(a;z)-e^{\lambda a}z=\int\limits_0^a {\sf h}^2(y;z)\, \lambda e^{\lambda y}\, dy ~-z.$$
We differentiate the above equation, obtaining
\be\label{eqn:HnDE1}
{\partial \over \partial a}{\sf h}(a;z)=\lambda\, \Big({\sf h}^2(a;z)-{\sf h}(a;z)+z\Big).
\ee
Let
$${\sf x}_1(z)={1+\sqrt{1-4z} \over 2} ~~\text{ and }~~{\sf x}_2(z)={1-\sqrt{1-4z} \over 2}$$
be the two roots of ${\sf x}^2-{\sf x}+z=0$. 
Here, ${\sf x}_2(z)/z=1/{\sf x}_1(z)=c(z)$ is the $z$-transform of the Catalan sequence $C_n$, introduced in \eqref{eq:CatalanZ}.
Then, \eqref{eqn:HnDE1} solves as
$${\sf h}(a;z)-{\sf x}_1(z)=\Phi(z)e^{\lambda a \sqrt{1-4z}}\big({\sf h}(a;z)-{\sf x}_2(z)\big),$$
where due to the initial conditions ${\sf h}(0;z)=0$, we have $\Phi(z)={\sf x}_1(z)/{\sf x}_2(z)$, and
\be\label{eqn:HnDEsol}
{\sf h}(a;z)-{\sf x}_1(z)={{\sf x}_1(z) \over {\sf x}_2(z)}e^{\lambda a \sqrt{1-4z}}\big({\sf h}(a;z)-{\sf x}_2(z)\big).
\ee
Solution \eqref{eqn:HnDEsol} implies
\be\label{eqn:HnDEsolh}
{\sf h}(a;z)
={2\left(e^{\lambda a \sqrt{1-4z}}-1\right)z \over e^{\lambda a \sqrt{1-4z}}-1+\left(e^{\lambda a \sqrt{1-4z}}+1\right)\sqrt{1-4z}}.
\ee
Here and throughout we use $-\pi < \arg(z) \leq \pi$ branch of the logarithm when defining $\sqrt{1-4z}$ for $|z|<1/4$.

\medskip
\noindent
Now, since ${\sf P}\big(\#T=2n-1 \big)=2C_{n-1} 4^{-n}$, the series expansion \eqref{eqn:HnZtr} implies
\be\label{eqn:Hvsh}
{\sf H}(a)=\lim\limits_{z \uparrow {1 \over 4}} 2 \, {\sf h}(a;z),
\ee
where $z \in (-1/4,\,1/4)$ is real.
We substitute \eqref{eqn:HnDEsolh} into the limit \eqref{eqn:Hvsh},
\begin{align}\label{eqn:Hviah}
\lim\limits_{z \uparrow {1 \over 4}} 2 \, {\sf h}(a;z) &= \lim\limits_{z \uparrow {1 \over 4}} {4\left(e^{\lambda a \sqrt{1-4z}}-1\right)z \over e^{\lambda a \sqrt{1-4z}}-1+\left(e^{\lambda a \sqrt{1-4z}}+1\right)\sqrt{1-4z}} \nonumber \\
&= \lim\limits_{z \uparrow {1 \over 4}} {4z \over 1+\sqrt{1-4z}+{2\sqrt{1-4z} \over e^{\lambda a \sqrt{1-4z}}-1}}  ~~={1 \over 1+{2 \over \lambda a}}\nonumber \\
&={\lambda a \over \lambda a +2},
\end{align}
thus obtaining an alternative proof of formula \eqref{eq:pdfht} in Lemma \ref{lem:Hx}.

\medskip
\noindent
The asymptotic of the height distribution ${\sf H}_n(a)$ for a given number of leaves $n$ was the object of analysis in \cite{Kolchin78,Waymire89,GMW90,DKW91}. In particular,
Gupta et al. \cite{GMW90} extended the results of Kolchin \cite{Kolchin78}, by showing that
\be\label{eqn:WaymireKolchin}
\lim\limits_{n \rightarrow \infty}{\sf H}_n\left({a\sqrt{n} \over \lambda}\right)={\sf H}_\infty(a):=1+2\sum_{k=1}^\infty (1-4k^2a^2)\exp\big\{-2k^2a^2\big\}.
\ee
It was also observed in \cite{GMW90} that ${\sf H}_\infty\left({a \over 2\sqrt{2}}\right)$ is the distribution function for the maximum of the Brownian excursion
as shown in the work of Durrett and Iglehart \cite{DI77}. The results of \cite{GMW90} were 
further developed in \cite{DKW91} for more general trees with edge lengths.

\section{Hierarchical Branching Process}\label{HBP}

Tree self-similarity has been studied primarily in terms of the average values
of selected branch statistics, as defined in Sect. \ref{sec:mss}.
Until recently, the only rigorous results have been obtained only for 
a very special classes of Markov trees (e.g., binary Galton-Watson trees with no
edge lengths, as in Sect. \ref{ecgw}).
At the same time, solid empirical evidence motivates a search for a 
flexible class of self-similar models that
would encompass a variety of observed combinatorial and metric structures 
and rules of tree growth.
In Sec.~\ref{TSSL} we introduced a general concept of self-similarity
that accounts for both combinatorial and metric tree structure. 
In this section we will describe a model called {\it hierarchical branching process}
that generates a broad range of self-similar trees (Thm.~\ref{HBPmain}) and includes the 
critical binary Galton-Watson tree with exponential edge lengths as a special case (Thm.~\ref{HBPmain3}).
We will also introduce a class of critical self-similar {\it Tokunaga processes} (Sect.~\ref{sec:Tok})
that enjoy additional symmetries --- their edge lengths are i.i.d. random
variables (Prop.~\ref{Tok_tree}), and subtrees of large Tokunaga trees 
reproduce the probabilistic structure of 
the entire random tree space (Prop.~\ref{prop:Tok1}).
The results of this section are derived in \cite{KZ18}.

\medskip
\noindent
The results of Sect.~\ref{cgw} concerned a very narrow class of mean self-similar
trees with $T_j=2^{j-1}$.
Among such trees, the self-similarity is established 
only for the critical binary Galton-Watson tree ${\sf GW}(\gamma)$ 
with independent exponential edge lengths, i.e., continuous parameter Galton-Watson 
binary branching Markov processes; 
this case corresponds to the scaling exponent $\zeta=2$. 
Next, we construct a multi-type branching process \cite{Harris_book,AN_book} that generates self-similar 
trees for an arbitrary sequence $T_j\ge0$ and for any $\zeta > 0$; 
it includes the critical binary Galton-Watson tree as a special case.

\subsection{Definition and main properties}
\label{HBP:def}
Consider a probability mass function $\{p_K\}_{K\ge1}$, 
a sequence $\{T_k\}_{k\ge 1}$ of nonnegative Tokunaga coefficients, 
and a sequence $\{\lambda_j\}_{j\ge 1}$ of positive termination rates.
We now define a hierarchical branching process $S(t)$.
\begin{Def}[{{\bf Hierarchical Branching Process (HBP)}}]
\label{def:HBP}
We say that $S(t)$ is a hierarchical branching process with a triplet 
of parameter sequences $\{T_k\}$, $\{\lambda_j\}$, and $\{p_K\}$, and 
write
\[S(t) \stackrel{d}{\sim} {\sf HBP}\big(\{T_k\},\{\lambda_j\},\{p_K\}\big)\]
if $S(t)$ is a multi-type branching process that develops in continuous time 
$t>0$ according to the following rules:
\begin{itemize}
\item [(i)] The process $S(t)$ starts at $t=0$ with a single progenitor (root branch) 
whose Horton-Strahler order (type) is $K\ge 1$ with probability $p_K$.
\item [(ii)] Every branch of order $j\le K$ produces offspring (side branches) 
of every order $i<j$ with rate $\lambda_j T_{j-i}$.
Each offspring (side branch) is assigned a uniform random orientation (right or left). 
\item [(iii)] A branch of order $j$ terminates with rate $\lambda_j$.
\item [(iv)] At its termination time, a branch of order $j\geq 2$ splits into 
two independent branches of order $j-1$.
The two branches are assigned uniform random orientations, i.e., a uniformly randomly 
selected branch becomes right and the other becomes left.
\item[(v)] A branch of order $j=1$ terminates without leaving offspring.
\item[(vi)] Generation of side branches and termination of distinct branches are
independent. 
\end{itemize}
\end{Def}
\index{hierarchical branching process}

\noindent The definition implies that the process $S(t)$ terminates a.s. in finite time.
Accordingly, the branching history of $S(t)$ creates a random binary tree $T[S]$ in the space 
$\BL^|$ of planted binary trees with edge lengths and planar embedding.
To avoid heavy notations, we sometimes use the process distribution name
${\sf HBP}(\cdot,\cdot,\cdot)$, as well as its
various special cases introduced below, to also denote the 
measures induced by the process on suitable tree spaces ($\T^|$, $\L^|$ $\BL^|$, etc.)
\medskip

The next statement describes the branching structure of $T[S]$.
\begin{prop}[{\bf Side-branching in hierarchical branching process, \cite{KZ18}}]
\label{HBP:branch}
Consider a hierarchical branching process $S(t)\stackrel{d}{\sim} {\sf HBP}\big(\{T_k\},\{\lambda_j\},\{p_K\}\big)$ and let $T[S]$ be the tree generated by $S(t)$ in $\BL^|$.
For a branch $b\subset T[S]$ of order $K\ge 1$, let $m_i:=m_i(b)\ge 0$ be the
number of its side branches of order $i=1,\dots,K-1$, and $m:=m(b)=m_1+\dots+m_{K-1}$ be
the total number of the side branches.
Conditioned on $m$, let $l_i:=l_i(b)$ be the lengths of $m+1$ edges within $b$, counted sequentially from 
the initial vertex,  
and $l:=l(b)=l_1+\dots+l_{m+1}$ be the total branch length.
Define
\[S_K:=1+T_1+\dots+T_K\]
for $K\ge 0$ by assuming $T_0=0$.
Then the following statements hold:
\begin{enumerate}
\item The tree order satisfies
\be\label{orddist}
{\sf P}\left({\sf ord}(T[S])=K\right)=p_K,\quad K\ge 1.
\ee
\item The total number $m(b)$ of side branches within a branch $b$ of order $K$
has geometric distribution:
\be
\label{mdist}
m(b)\stackrel{d}{\sim}{\sf Geom}_0\left(S_{K-1}^{-1}\right),\quad K\ge 1,
\ee
with ${\sf E}[m(b)]=S_{K-1}-1=T_1+\dots+T_{K-1}.$

\item Conditioned on the total number $m$ of side branches, the distribution 
of vector $(m_1,\dots,m_{K-1})$ is multinomial with $m$ trials and success probabilities
\be
\label{multinomial}
{\sf P}(\text{side~branch~has~order~}i)=\frac{T_{K-i}}{S_{K-1}-1}.
\ee
The vector $({\sf ord}_1,\dots,{\sf ord}_m)$ of side branch orders, where the side branches
are labeled sequentially starting from the initial vertex of $b$, is obtained
from the sequence
\[{\sf orders}=\underbrace{(1,\dots,1}_{m_1\text{~times}},\underbrace{2,\dots,2}_{m_2\text{~times}},
\dots\underbrace{K-1,\dots,K-1)}_{m_{K-1}\text{~times}}\]
by a uniform random permutation $\sigma_m$ of indices $\{1,\dots,m\}$:
\[({\sf ord}_1,\dots,{\sf ord}_m) = {\sf orders}\circ\sigma_m.\]

\item The total numbers of side branches and orders of side branches are
independent in distinct branches.

\item The branch length $l$ has exponential
distribution with rate $\lambda_{K}$, independent of the lengths of
any other branch (of any order). 
The corresponding edge lengths $l_i$ are i.i.d. random variables; they have
a common exponential distribution with rate
\be
\label{edges}
\lambda_{K}S_{K-1}.
\ee
\end{enumerate}
\end{prop}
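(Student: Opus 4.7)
The plan is to reduce the entire statement to a standard competing--exponential--clocks analysis applied to a single branch in isolation, and then to invoke the independence built into Definition~\ref{def:HBP}(vi) to chain single-branch results into the joint statements. Part (1) is immediate from (i): the order of the root never changes during the evolution, so ${\sf ord}(T[S])$ equals the initial root order, which is $K$ with probability $p_K$.

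Next I would fix a single branch $b$ of order $K$ and examine the clocks that govern its future. By (ii)--(iii), on the lifetime of $b$ there run $K$ independent Poisson clocks: a termination clock with rate $\lambda_K$ and, for each $i<K$, a side-branch clock of rate $\lambda_K T_{K-i}$. Their superposition is a Poisson process of rate
\[
\lambda_K + \sum_{i=1}^{K-1}\lambda_K T_{K-i} \;=\; \lambda_K S_{K-1},
\]
and by the standard competition lemma each event is independently labeled ``terminate'' with probability $1/S_{K-1}$ or ``side branch of order $i$'' with probability $T_{K-i}/S_{K-1}$. Stopping at the first termination event gives a geometric number of side branches: $m \stackrel{d}{\sim}{\sf Geom}_0(S_{K-1}^{-1})$, which is (2). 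Conditioning on $m$ and normalizing by the probability of the non-terminating outcomes, each side branch independently has order $i$ with probability $T_{K-i}/(S_{K-1}-1)$, giving the multinomial claim of (3); writing the ordered sequence $\mathsf{orders}$ of a multinomial sample and then applying an independent uniform permutation $\sigma_m$ is merely the standard factorization of an i.i.d.\ sequence into its sorted order statistics composed with a uniform shuffle.

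For (5), the interarrival times of the superposed Poisson process are i.i.d.\ ${\sf Exp}(\lambda_K S_{K-1})$, and they are precisely the lengths $l_1,\dots,l_{m+1}$ of the edges of $b$ counted from the initial vertex, which handles the edge-length claim \eqref{edges}. The total length $l$ is a $\mathrm{Geom}_1(S_{K-1}^{-1})$-sum of ${\sf Exp}(\lambda_K S_{K-1})$ variables; a one-line Laplace-transform computation
\[
\mathbb{E}\!\left[e^{-s l}\right]
=\sum_{m\ge 0}\frac{1}{S_{K-1}}\!\left(1-\frac{1}{S_{K-1}}\right)^{m}\!\!\left(\frac{\lambda_K S_{K-1}}{\lambda_K S_{K-1}+s}\right)^{m+1}
=\frac{\lambda_K}{\lambda_K+s}
\]
identifies $l\stackrel{d}{\sim}{\sf Exp}(\lambda_K)$. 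Part (4) -- independence across distinct branches and the independence of $l$ from all other branch lengths -- follows from the strong Markov property of the multi-type continuous-time branching process: when a new branch $b'$ is born (either as a side branch by (ii) or as one of the two daughters produced by (iv)), its clocks restart afresh, independent of everything outside $b'$, by (vi).

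The only delicate point, and thus the main obstacle worth watching, is the bookkeeping for (3): one has to verify that the \emph{order} in which the $m$ side branches appear along $b$ has the distribution described via $\sigma_m$, rather than merely the unordered multinomial counts $(m_1,\dots,m_{K-1})$. This is, however, immediate once one notes that the sequence of labels produced by the competing clocks is i.i.d.\ from the categorical distribution with weights $T_{K-i}/(S_{K-1}-1)$, so conditioning on the counts and re-labeling by a uniform permutation recovers the joint law of the ordered sequence; no separate argument is needed beyond a direct appeal to exchangeability.
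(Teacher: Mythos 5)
Your proof is correct and is exactly the argument the paper intends: the paper's own proof consists of the single line that all properties readily follow from Definition~\ref{def:HBP}, and your competing-exponential-clocks analysis (superposition at rate $\lambda_K S_{K-1}$, thinning into termination versus side-branch labels, and the strong Markov/independence property for distinct branches) is the standard way to make that assertion precise. No gaps; the Laplace-transform identification of $l\stackrel{d}{\sim}{\sf Exp}(\lambda_K)$ could be shortened by noting that $l$ is just the first arrival of the independent rate-$\lambda_K$ termination clock, but your computation is equally valid.
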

\noindent
\begin{proof}
All the properties readily follow from Def.~\ref{def:HBP}.
\end{proof}
Combining properties 2 and 3 of Prop.~\ref{HBP:branch} we find that 
the number $m_i$ of side branches of order $i$ within a branch $b$ of order $K$
has geometric distribution:
\be
\label{midist}
m_i(b)\stackrel{d}{\sim}{\sf Geom}_0\left(\left[1+T_{K-i}\right]^{-1}\right),
\quad K\ge 1, i\le K-1,
\ee
with ${\sf E}\left[m_i\right] = T_{K-i}.$
We also notice that the numbers $m_i(b)$ for $i=1,\dots,K-1$ within the same branch $b$
are dependent.

Proposition~\ref{HBP:branch} provides an alternative definition of the 
hierarchical branching process and suggests a recursive construction of $T[S]$ 
that does not require time-dependent simulations.
Specifically, a tree of order $K=1$ consists of two vertices (root and leaf) 
connected by an edge of exponential length with rate $\lambda_1$.
Assume now that we know how to construct a random tree of any order below $K\ge 2$.
To construct a tree of order $K$, we start with a perfect (combinatorial)
planted binary tree of depth $K$, which we call {\it skeleton}.
The combinatorial shapes of such trees is illustrated in Fig.~\ref{fig:perfect}.
All leaves in the skeleton have the same depth $K$, and all vertices at depth $1\le \kappa\le K$
have the same Horton-Strahler order $K-\kappa+1$.
The root (at depth 0) has order $K$.
Next, we assign lengths to the branches of the skeleton. 
Recall (Ex.~\ref{ex:perfect}) that each branch in a perfect tree consists of 
a single edge. 
To assign length to a branch $b$ of order $\kappa$, with $1\le \kappa\le K$,
we generate a geometric number $m\stackrel{d}{\sim}{\sf Geom}_0(S^{-1}_{\kappa-1})$ 
according to
\eqref{mdist} and then $m+1$ i.i.d. exponential lengths $l_i$, $i=1,\dots,m+1$,
with the common rate $\lambda_{\kappa}S_{\kappa-1}$ according to \eqref{edges}.
The total length of the branch $b$ is $l_1+\dots+l_{m+1}$.
Moreover, branch $b$ has $m$ side branches that are attached along $b$
with spacings $l_i$, starting from the branch point closest to the root.
The order assignment for the side branches is done according to \eqref{multinomial}.
We generate side branches (each has order below $K$) independently 
and attach them to the branch $b$. This completes the construction of a 
random tree of order $K$.
To construct a random HBP tree, one first generates a random
order $K\ge 1$ according to \eqref{orddist} and then constructs
a tree of order $K$ using the above recursive process.

\medskip
Next, we establish various forms of self-similarity for the hierarchical branching process.
\begin{thm}[{\bf Self-similarity of hierarchical branching process, \cite{KZ18}}]
\label{HBPmain}
Consider a hierarchical branching process $S(t)\stackrel{d}{\sim} {\sf HBP}\big(\{T_k\},\{\lambda_j\},\{p_K\}\big)$
and let $T:=T[S]$ be the tree generated by $S(t)$ on $\BL^|$.
The following statements hold.
\begin{enumerate}

\item The combinatorial tree $\textsc{shape}(T)$ is mean Horton self-similar
(according to Def.~\ref{ss1},\ref{def:ss2}) with Tokunaga coefficients $\{T_k\}$.

\item The combinatorial tree $\textsc{shape}(T)$ is Horton self-similar
(according to Def.~\ref{def:ss}) with Tokunaga coefficients $\{T_k\}$
if and only if 
\[p_K = p(1-p)^{K-1}\text{ for all }K\ge 1\text{ and some } 0<p< 1.\]

\item The tree $T$ is Horton self-similar (according to Def.~\ref{def:distss})
with scaling exponent $\zeta>0$ if and only if
\[p_K = p(1-p)^{K-1}, ~K\ge 1,\quad{ and }\quad\lambda_j = \gamma\,\zeta^{-j},~j\ge 1,\]
for some positive $\gamma$ and $0<p< 1$.
\end{enumerate} 
\end{thm}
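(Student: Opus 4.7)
The plan is to lean on Proposition~\ref{HBP:branch}, which gives the joint law of side-branch counts, orders, and edge lengths within each branch of an HBP-tree, together with the Markov-type independence of the dynamics initiated by each newly created branch (whether a side branch or a post-split offspring). Part~(1) is then immediate: identity~\eqref{midist} gives $\mathbb{E}[m_i(b)] = T_{j-i}$ for each branch $b$ of order $j$, with distinct branches independent; Wald's identity (Remark~\ref{rem:branch}) then yields $T_{i,j}[K] = T_{j-i}$ for all $K \geq 2$ and $1 \leq i < j \leq K$, which is simultaneously mean coordination and the Toeplitz property, hence mean Horton self-similarity (Def.~\ref{ss1}).

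For (2), Proposition~\ref{geom:p} forces $\{p_K\}$ to be geometric under any Horton prune-invariant measure on $\cT$, which handles the ``only if'' direction. For ``if'', assume $p_K = p(1-p)^{K-1}$ and check coordination and prune-invariance separately. Coordination is immediate from the Markov structure: each complete order-$K$ subtree is rooted at the initial vertex of an order-$K$ branch, and the subtree spawned by that branch is a fresh copy of the HBP started from a single order-$K$ progenitor, so it is distributed as $\mu_K$ independently of the position of the branch; averaging over the uniform choice among the $N_K$ such branches gives $\mu^H_K = \mu_K$. For prune-invariance, \eqref{mdist} and \eqref{multinomial} combine to give the joint probability generating function
\[\mathbb{E}\bigl[z_1^{m_1}\cdots z_{K-1}^{m_{K-1}}\bigr] \;=\; \frac{1}{S_{K-1} \;-\; \sum_{i=1}^{K-1} T_{K-i}\,z_i}\]
for the side-branch count vector inside a branch of order $K$, where $S_{K-1} = 1 + T_1 + \cdots + T_{K-1}$. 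Setting $z_1 = 1$ (i.e.\ projecting out the order-1 side branches, which disappear under $\cR$) and relabeling $z'_j = z_{j+1}$ produces exactly the p.g.f.\ of a branch of order $K-1$, so pruning shifts the side-branch distribution within every branch in the expected way. Since a geometric $\{p_K\}$ conditioned on tree order $\geq 2$ remains geometric, full distributional prune-invariance on $\cT$ follows.

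For (3), the combinatorial conditions (i)--(iii) of Def.~\ref{def:distss} are provided by~(2). For the length conditions (iv), I use the continuous-time description of the HBP: within a branch of order $K$, events (termination or side-branch emission) form a Poisson process of total rate $\lambda_K S_{K-1}$, so the edges of the branch are i.i.d.\ $\mathrm{Exp}(\lambda_K S_{K-1})$. Thinning to the ``significant'' events (termination or side branches of order $\geq 2$) preserves the Poisson property with rate $\lambda_K S_{K-2}$, so the edges of the pruned branch (of new order $K-1$) are i.i.d.\ $\mathrm{Exp}(\lambda_K S_{K-2})$. This coincides with the $\zeta$-scaling of $\mathrm{Exp}(\lambda_{K-1} S_{K-2})$, the edge law of a branch of order $K-1$ in the unpruned tree, iff $\lambda_K = \lambda_{K-1}/\zeta$, that is $\lambda_j = \gamma \zeta^{-j}$. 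The relative-edge-length condition $\Xi_\tau = \chi_\tau$ is automatic, since i.i.d.\ exponentials conditioned on their sum produce a Dirichlet law depending only on the number of edges in the branch, a quantity that is preserved by the combinatorial step in~(2).

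The main obstacle is the distributional prune-invariance in~(2): mean coordination and the Toeplitz property drop out of first moments in~(1), but upgrading to the full joint law of the pruned tree requires the p.g.f.\ identity above and careful bookkeeping of how order-1 side branches interleave with higher-order ones within each branch. Once that combinatorial step is in hand, Part~(3) reduces to a Poisson thinning calculation together with the elementary observation that $\mathrm{Exp}(\lambda)$ scales correctly under $Y \mapsto \zeta Y$.
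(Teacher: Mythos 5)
Your proposal is correct and follows essentially the same route as the paper's proof: part (1) via the conditional/Wald argument on Proposition~\ref{HBP:branch}, part (2) via Proposition~\ref{geom:p} for necessity and geometric thinning of the side-branch counts for sufficiency (your p.g.f.\ identity is just an explicit form of the paper's verbal thinning computation, and one checks it reduces to $1/S_{K-2}$-form after setting $z_1=1$ exactly as you claim), and part (3) via Poisson thinning of the within-branch edge spacings giving $\lambda_K S_{K-2}$ and forcing $\lambda_j=\gamma\zeta^{-j}$. The only cosmetic imprecision is the Dirichlet remark at the end of (3): since edge rates $\lambda_K S_{K-1}$ differ across branch orders, the relative edge lengths of the whole tree are not Dirichlet, but the condition $\Xi_\tau=\chi_\tau$ follows anyway because the entire edge-length vector of the pruned tree is distributed as $\zeta$ times that of the original, which is what your thinning computation already establishes.
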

\begin{proof}
By process construction, the tree $T$ is coordinated in shapes and lengths 
(according to Def.~\ref{def:distss}), with independent complete subtrees.

(1) Proposition~\ref{HBP:branch}, part (3) implies that the expected value of the number
$\tilde N_{i,j}$ of side branches of order $i\ge 1$ within a branch of order $j>i$ is 
given by ${\sf E}\left[\tilde N_{i,j}\right]=T_{j-i}$.
The mean self-similarity of Def.~\ref{ss1} with coefficients $T_k$ immediately follows, using 
a conditional argument as in \eqref{condarg}.

(2) Assume that $\textsc{shape}\left(T\right)$ is self-similar. 
A geometric distribution of orders is then established in Prop.~\ref{geom:p}.
Inversely, a geometric distribution of orders ensures that the total mass
$\mu\left(\cH_K\right)$, $K\ge 1$, is invariant with respect to pruning.
The conditional distribution of trees of a given order is completely specified
by the side branch distribution, described in Proposition~\ref{HBP:branch},
parts (1)-(3). 
Consider a branch of order $K+1$, $K\ge 1$. 
Pruning decreases the orders of this branch, and all its side branches, by unity.
Pruning eliminates a random geometric number $m_1$ of side-branches of order $1$
from the branch.
It acts as a thinning with removal probability $T_{K}/(S_K-1)$
on the total side branch count $m$.
Accordingly, the total side branch count after pruning has geometric distribution
with success probability
\[q^{\cR} = S_{K-1}^{-1}.\]
The order assignment among the remaining side branches (with possible orders $i=1,\dots,K-1$) 
is done according to multinomial distribution with probabilities proportional to $T_{K-i}$.
This coincides with the side branch structure in the original tree, hence completing 
the proof of (2).

(3) Having proven (2), it remains to prove the statement for the length structure of the tree.
Assume that $T$ is self-similar
with scaling exponent $\zeta$. 
The branches of order $j\ge 2$ become branches of order $j-1$ after
pruning, which necessitates $\lambda_{j} = \zeta\,\lambda_{j-1}$.
Inversely, pruning acts as a thinning on the side branches within a
branch of order $K+1$, eliminating the side branches of order ${\sf ord}=1$. 
Accordingly, the spacings between the remaining side branches are
exponentially distributed with a decreased rate
\[\lambda_{K+1}S_{K-1}=\zeta\,\lambda_KS_{K-1}.\]
Comparing this with \eqref{edges}, and recalling the self-similarity of $\textsc{shape}\left(T\right)$, we conclude that Def.~\ref{def:distss}
is satisfied with scaling exponent $\zeta$.
\end{proof}

\subsection{Hydrodynamic limit}
\label{HBP:hydro}
Here we analyze the average numbers of branches of different orders in 
a hierarchical branching process, using a hydrodynamic limit.
Specifically, let $n\,x^{(n)}_j(s)$ be the number of branches of order $j$ at time 
$s$ observed in $n$ independent copies of the process $S$. 
Let $N_j(s)$ be the number of branches of order $j\ge 1$ in the process $S$
at instant $s\ge 0$. 
We observe that, by the law of large numbers,
\[ x^{(n)}_j(s)\stackrel{\text{a.s.}}{\longrightarrow}{\sf E}\left[N_j(s)\right]= :x_j(s).\]

\begin{thm}[{\bf Hydrodynamic limit for branch dynamics, \cite{KZ18}}]
\label{thm:hydro}
Suppose that the following conditions are satisfied:
\be\label{eq:L}
L:=\limsup_{k\to\infty} T_k^{1/k}<\infty,
\ee
and 
\be\label{con}
\sup\limits_{j \geq 1} \lambda_j <\infty,\quad
\limsup\limits_{j\to\infty} \lambda_j ^{1/j}\le 1/L.
\ee
Then, for any given $T>0$, the empirical process 
\[x^{(n)}(s)=\Big(x^{(n)}_1(s),x^{(n)}_2(s),\hdots \Big)^T, \qquad s \in [0,T],\] 
converges almost surely, as $n\to\infty$, to the process 
\[x(s)=\Big(x_1(s),x_2(s),\hdots \Big)^T, \qquad s \in [0,T],\] 
that satisfies
\begin{equation}\label{GODE}
\dot{x}=\mathbb{G}\Lambda x \quad
\text{ with the initial conditions } \quad x(0)=\pi:=\sum\limits_{K=1}^\infty p_K e_K,
\end{equation}
where $\Lambda=\text{diag}\{\lambda_1,\lambda_2,\hdots\}$ is a diagonal operator with 
the entries $\lambda_1,\lambda_2,\hdots~$,
$e_i$ are the standard basis vectors, and operator $\mathbb{G}$ defined in Eq. (\ref{gen}).
\end{thm}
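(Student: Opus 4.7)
The plan is to realize the scaled process $x^{(n)}(s)$ as a density-dependent continuous-time pure-jump Markov process on $n^{-1}\mathbb{Z}_+^{\mathbb{N}}$ and apply T.~Kurtz's weak convergence result surveyed in Appendix~\ref{sec:kurtz}. Reading off Def.~\ref{def:HBP}, the $n$-fold pooled system admits two families of jumps: a termination jump $x^{(n)} \mapsto x^{(n)} + n^{-1}(2e_{j-1}-e_j)$ at aggregate rate $n\lambda_j x_j^{(n)}$ for each $j\geq 1$ (with $e_0 := 0$), and, for each $1\leq i < j$, a side-branching jump $x^{(n)} \mapsto x^{(n)} + n^{-1}e_i$ at rate $n\lambda_j T_{j-i}x_j^{(n)}$. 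Aggregating the jump vector against its rate yields the drift
\[F(x)_i \;=\; -\lambda_i x_i + 2\lambda_{i+1}x_{i+1} + \sum_{k\geq 1} T_k \lambda_{i+k} x_{i+k} \;=\; (\mathbb{G}\Lambda x)_i,\]
which identifies \eqref{GODE} as the candidate limit; a coordinatewise strong law applied to the $n$ i.i.d. initial orders gives in addition $x^{(n)}(0) \to \pi$ almost surely in the product topology.

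Next I would fix a weighted $\ell^1$ geometry in which \eqref{GODE} is globally well-posed. With weights $w_i = \rho^i$ and $\rho$ chosen so that $\rho^{-1} > L$, a shuffling of summation produces
\[\|F(x)\|_w \;\leq\; \sup_j\lambda_j\,\Bigl(1+2\rho^{-1}+\sum_{k\geq 1}T_k\rho^{-k}\Bigr)\,\|x\|_w \;=:\; C_\rho\,\|x\|_w,\]
where the inner series converges by \eqref{eq:L} and the supremum is finite by \eqref{con}. Since $F$ is linear, Picard iteration in $(\ell^1_w,\|\cdot\|_w)$ produces a unique global solution $x(s)$ to \eqref{GODE} satisfying the a priori bound $\|x(s)\|_w \leq e^{C_\rho s}\|\pi\|_w$ on $[0,T]$, and Dynkin's formula lifts the same estimate to $\sup_n\sup_{s\leq T}{\sf E}\|x^{(n)}(s)\|_w < \infty$.

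With these bounds in hand I would run a truncation-plus-Kurtz argument to bypass the infinite dimensionality of the state space. Fix a cutoff $M$ and project the generator onto coordinates $\{1,\dots,M\}$ by suppressing all contributions from or to orders exceeding $M$; this produces a genuinely finite-dimensional density-dependent Markov process whose drift is the $M{\times}M$ principal submatrix of $\mathbb{G}\Lambda$, so the classical finite-dimensional Kurtz theorem delivers almost sure uniform convergence on $[0,T]$ of the truncation $x^{(n,M)}$ to the truncated ODE solution $x^{(M)}$. The weighted a priori estimate of the previous paragraph then controls both differences $x^{(n)}-x^{(n,M)}$ and $x-x^{(M)}$ uniformly in $n$ and $s\leq T$ by a quantity that vanishes as $M\to\infty$; letting $M=M_n\to\infty$ along a sufficiently fast subsequence and invoking Borel--Cantelli upgrades this to the claimed almost sure coordinatewise convergence $x^{(n)}\to x$ uniformly on $[0,T]$.

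The main obstacle is precisely this infinite-dimensional feature: Kurtz's theorem as usually stated assumes a locally compact state space and a globally Lipschitz drift with bounded jump sizes, whereas here both the jump rates and the columns of $\mathbb{G}\Lambda$ grow without bound as the order increases. The weighted-$\ell^1$ geometry dictated by \eqref{eq:L}--\eqref{con} is the single device that tames these unboundednesses, makes the truncation error quantitatively negligible, and brings the problem within the density-dependent framework of Appendix~\ref{sec:kurtz}.
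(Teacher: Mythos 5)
Your identification of the jump structure and of the aggregated drift $F(x)=\mathbb{G}\Lambda x$ coincides exactly with the paper's, so the skeleton is the same: realize $x^{(n)}$ as a density-dependent population process and invoke Kurtz's law of large numbers from Appendix~\ref{sec:kurtz}. Where you diverge is in how the infinite dimensionality is handled. The paper extends Theorem~\ref{kurtzT} verbatim to $\ell^1(\mathbb{R})$ and checks its two hypotheses --- summability of $\|\ell\|_1\sup_{x\in\mathcal{C}}\beta_\ell(x)$ over compacts and Lipschitz continuity of $F$ --- directly from \eqref{con}, working in the plain $\ell^1$ norm where $\|\pi\|_1=1$ comes for free. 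You instead truncate to $M$ coordinates, apply the finite-dimensional theorem, and control the truncation error in a weighted norm before letting $M\to\infty$. The truncation route is a legitimate and arguably more self-contained alternative, since it avoids asserting that Kurtz's proof carries over unchanged to a Banach space.

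However, the weighted geometry as you set it up does not work, and it is the load-bearing device in your error control. For $\sum_{k\ge 1}T_k\rho^{-k}$ to converge under \eqref{eq:L} you need $\rho>L$, not $\rho^{-1}>L$; the inequality is reversed. Once corrected, a second problem surfaces: with $w_i=\rho^i$ and $\rho>L\ge 1$, the quantity $\|\pi\|_w=\sum_K p_K\rho^K$ need not be finite for an arbitrary probability vector $\{p_K\}$ (the theorem imposes no decay on $p_K$ beyond summability), so the a priori bound $\|x(s)\|_w\le e^{C_\rho s}\|\pi\|_w$ and the uniform-in-$n$ moment estimate you derive from it can be vacuous. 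The repair is to abandon the geometric weights and run the truncation in plain $\ell^1$, which is precisely the space the paper works in: there $\|\pi\|_1=1$ automatically, and the summability and Lipschitz conditions needed for both the finite-dimensional step and the tail control are the ones the paper extracts from \eqref{con}, namely $\sum_i\sup_{x\in\mathcal{C}}|\lambda_i x_i|<\infty$ and $\sum_i\sup_{x\in\mathcal{C}}\sum_{j>i}\lambda_jT_{j-i}|x_j|<\infty$. With that substitution your argument goes through and recovers \eqref{GODE} in the form \eqref{gFODEsys}.
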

\begin{proof} 
The process $~x^{(n)}(s)~$ evolves according to the transition rates 
$$q^{(n)}(x,x+\ell)=n\beta_\ell\left({1 \over n} x\right)$$ 
with
$$\beta_\ell(x)=\begin{cases}
      \lambda_1x_1 & \text{  if } \ell=-e_1, \\
      \lambda_{i+1}x_{i+1} & \text{  if } \ell=2e_i-e_{i+1}, i\ge 1,\\
      \sum\limits_{j=i+1}^\infty \lambda_j T_{j-i}x_j & \text{ if } \ell=e_i, i\ge 1.
\end{cases}$$
Here the first term reflects termination of branches of order $1$;
the second term reflects termination of branches of orders $i+1>1$, each of 
which results in creation of two branches of order $i$; and
the last term reflects side-branching.
Thus, the infinitesimal generator of the stochastic process $x^{(n)}(s)$ is
\begin{eqnarray}\label{eq:Ln}
L_nf(x) & = & n\lambda_1 x_1\left[f\left(x-{1 \over n}e_1\right)-f(x)\right]  \nonumber \\
&  &  ~~+\sum\limits_{i=1}^\infty n\lambda_{i+1}x_{i+1}\left[f\left(x-{1 \over n}e_{i+1}+{2 \over n}e_i\right)-f(x)\right]  \nonumber \\
&  &  ~~+  \sum\limits_{i=1}^\infty\left(\sum\limits_{j=i+1}^\infty n \lambda_jT_{j-i}x_j\right) \left[f\left(x+{1 \over n}e_i\right)-f(x)\right].
\end{eqnarray}
Let
$$F(x):=\sum\limits_\ell \beta_\ell(x)=-\lambda_1 x_1e_1 +\sum\limits_{i=1}^\infty \lambda_{i+1}x_{i+1}(2e_i-e_{i+1})+\sum\limits_{i=1}^\infty \left(\sum\limits_{j=i+1}^\infty \lambda_j T_{j-i}x_j\right) e_i.$$
The convergence result of Kurtz (\cite[Theorem 2.1, Chapter 11]{EK86}, \cite[Theorem 8.1]{Kurtz81}) given here in Appendix \ref{sec:kurtz} extends (without changing the proof) to the Banach space $\ell^1(\mathbb{R})$ provided the same conditions are satisfied for $\ell^1(\mathbb{R})$ as for $\mathbb{R}^d$ in Theorem \ref{kurtzT}. 
Specifically, we require that for a compact set $\mathcal{C}$ in $\ell^1(\mathbb{R})$, 
\begin{equation}\label{eq:beta}
\sum_\ell \|\ell\|_1 \sup_{x \in \mathcal{C}} \beta_\ell(x) <\infty,
\end{equation} 
and there exists $M_\mathcal{C} >0$ such that 
\begin{equation}\label{eq:Lipschitz}
\|F(x)-F(y)\|_1 \leq M_\mathcal{C} \|x-y\|_1, \qquad x,y \in \mathcal{C} .
\end{equation}
Here the condition (\ref{eq:beta}) follows from
$$\sum_i \sup_{x \in \mathcal{C}} |\lambda_i x_i| <\infty \qquad \text{ and } \qquad \sum_i \sup_{x \in \mathcal{C}} \sum\limits_{j=i+1}^\infty \lambda_j T_{j-i}|x_j| <\infty,$$
which in turn follow from conditions \eqref{con}. 
Similarly, Lipschitz conditions (\ref{eq:Lipschitz}) are satisfied in 
$\mathcal{C}$ due to conditions \eqref{con}.
Thus, by Theorem \ref{kurtzT} extended for $\ell^1(\mathbb{R})$,  
the process $x^{(n)}(s)$ converges almost surely to $x(s)$ that satisfies $\dot{x}=F(x)$,
which expands as the following system of ordinary differential equations:
\begin{equation}\label{gFODEsys}
\begin{cases}
x'_1(s) & =  -\lambda_1 x_1 +\lambda_2 (T_1+2)x_2+\lambda_3 T_2x_3+\hdots \\ 
x'_2(s) & =  -\lambda_2 x_2 +\lambda_3 (T_1+2)x_3+\lambda_4 T_2x_4+\hdots \\  
& \vdots \\
x'_k(s) & =  -\lambda_k x_k +\lambda_{k+1} (T_1+2)x_{k+1}+\lambda_{k+2} T_2x_{k+2}+\hdots \\ 
& \vdots   
\end{cases}
\end{equation}
with the initial conditions $x(0)=\lim\limits_{n \rightarrow \infty} x^{(n)}(0)=\pi:=\sum\limits_{K=1}^\infty p_K e_K$ by the law of large numbers. 
Finally, we observe that $\|\pi\|_1=1$, and conditions \eqref{con} imply that 
$\mathbb{G}\Lambda$ is a bounded operator in $\ell^1(\mathbb{R})$.
\end{proof}

\subsection{Criticality and time invariance}
\subsubsection{Definitions}
\label{sec:width}
Assume that the hydrodynamic limit $x(s)$, and hence the averages $x_j(s)$, exist. 
Write $\pi=\sum\limits_{K=1}^\infty p_K e_K$ for the initial distribution of the process.
Consider the average progeny of the process, that is the average 
number of branches of any order alive at instant $s\ge0$:
$$C(s)=\sum\limits_{j=1}^\infty x_j(s)=\Big\|e^{\mathbb{G}\Lambda s} \pi \Big\|_1.$$

\begin{Def}\label{criticalpi}
A hierarchical branching process $S(s)$ is said to be 
{\it critical} if its average progeny is constant: $C(s)=1$ for all $s \geq 0$.
\end{Def}

\begin{Def}
A hierarchical branching process $S(s)$ is said to be
{\it time-invariant} if
\begin{equation}\label{eq:fipc}
e^{\mathbb{G}\Lambda s}\pi=\pi\quad\text{ for all}\quad s\ge 0.
\end{equation}
\end{Def}

\begin{prop}\label{ti2c}
Suppose the hydrodynamic limit $x(s)$ exists, and the hierarchical branching process 
$S(s)$ is time-invariant. 
Then the process $S(s)$ is critical.
\end{prop}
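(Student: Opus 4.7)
The proof should be essentially one line once the definitions are unpacked, so my plan is really just to verify that the chain of identities goes through with no hidden subtleties.

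First, I will recall the definition of the average progeny
\[
C(s)=\sum_{j=1}^{\infty}x_j(s)=\bigl\|e^{\mathbb{G}\Lambda s}\pi\bigr\|_1,
\]
where the second equality uses that the coordinates $x_j(s)$ of the hydrodynamic limit are expected branch counts, hence nonnegative, so the $\ell^1$ norm coincides with the coordinate sum. Next, I will plug in the time-invariance hypothesis $e^{\mathbb{G}\Lambda s}\pi=\pi$ to obtain $C(s)=\|\pi\|_1$ for every $s\ge 0$. Finally, since $\pi=\sum_{K\ge 1}p_K e_K$ with $\{p_K\}_{K\ge 1}$ a probability mass function and the $e_K$ pairwise orthogonal basis vectors, I will compute $\|\pi\|_1=\sum_{K\ge 1}p_K=1$, yielding $C(s)\equiv 1$ and hence criticality in the sense of Def.~\ref{criticalpi}.

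The only point that requires a sentence of justification is the equality $\|e^{\mathbb{G}\Lambda s}\pi\|_1=\sum_j x_j(s)$: it is not automatic for an arbitrary vector in $\ell^1(\mathbb{R})$ that the $\ell^1$ norm equals the sum of entries, but here the entries $x_j(s)={\sf E}[N_j(s)]$ (or, before the limit, the nonnegative empirical densities $x_j^{(n)}(s)$) are nonnegative by construction, so absolute values can be dropped. There is no hard step in this argument; the substance of time-invariance is encoded in the hypothesis itself, and the proposition merely observes that the $\ell^1$ mass of $\pi$ is $1$.
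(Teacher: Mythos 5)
Your proof is correct and is essentially identical to the paper's own one-line argument, which is the chain $C(s)=\|x(s)\|_1=\|e^{\mathbb{G}\Lambda s}\pi\|_1=\|\pi\|_1=1$. Your extra remark that the coordinates $x_j(s)$ are nonnegative (so the $\ell^1$ norm equals the coordinate sum) is a reasonable clarification that the paper leaves implicit.
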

\begin{proof}
$C(s)=\|x(s)\|_1=\| e^{\mathbb{G}\Lambda s} \pi \|_1=\| \pi \|_1=1.$
\end{proof}

\medskip
\noindent
Recall the function  $~\hat{t}(z) = -1+2z+\sum_j z^j\,T_j~$ defined in Eq. (\ref{def:that}) for all complex $|z| <1/L$, where the inverse radius of convergence $L$ is defined in Eq. (\ref{eq:L}).
We also recall that there is a unique real root $w_0$ of $\hat{t}(z)$ within $(0,\frac{1}{2}]$.
We formulate some of the results below in terms of  $\hat{t}(z)$ and the {\it Horton exponent} $R:=w_0^{-1}$; see Theorem \ref{thm:HLSST}.

\begin{prop}
\label{critcond}
Suppose 
$\Lambda\pi$ is a constant multiple of the geometric vector 
$v_0=\sum\limits_{K=1}^\infty R^{-K} e_K$. 
Then the process $S(s)$ is time-invariant.
\end{prop}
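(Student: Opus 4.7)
The proof reduces to a short linear-algebra observation. Time-invariance \eqref{eq:fipc} is equivalent to the statement that $\pi$ is a stationary point of the linear ODE $\dot x = \mathbb{G}\Lambda x$, i.e.\ $\mathbb{G}\Lambda\pi = 0$: if this holds, then $x(s)\equiv \pi$ solves the ODE with initial condition $\pi$, and by uniqueness of solutions (which follows from boundedness of $\mathbb{G}\Lambda$ on $\ell^1(\mathbb{R})$ under the standing assumptions \eqref{eq:L}, \eqref{con} used in Thm.~\ref{thm:hydro}) we must have $e^{\mathbb{G}\Lambda s}\pi = \pi$ for every $s\ge 0$. So the plan is to show $\mathbb{G}v_0 = 0$ for the geometric vector $v_0=\sum_{K\ge 1} R^{-K}e_K$, and then invoke $\Lambda\pi = c\,v_0$.

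To see $\mathbb{G}v_0=0$, I would just read off the $i$-th coordinate from the definition \eqref{gen} of $\mathbb{G}$:
\[
(\mathbb{G}v_0)_i \;=\; -R^{-i} + (T_1+2)R^{-(i+1)} + \sum_{k\ge 2} T_k R^{-(i+k)} \;=\; R^{-i}\Bigl(-1 + 2R^{-1} + \sum_{k\ge 1} T_k R^{-k}\Bigr) \;=\; R^{-i}\,\hat t(R^{-1}).
\]
By definition $R = 1/w_0$ where $w_0\in (0,1/2]$ is the distinguished real root of $\hat t$ (Thm.~\ref{thm:HLSST}), so $\hat t(R^{-1})=\hat t(w_0)=0$ and every coordinate vanishes. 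The series manipulation is legitimate because condition \eqref{eq:L} gives radius of convergence $1/L$ for $\hat t$, and $R^{-1}=w_0 < 1/L$ (as used in the proof of Thm.~\ref{thm:HLSST}), so the sum $\sum_k T_k R^{-k}$ converges absolutely; this also shows $v_0\in\ell^1$, hence lies in the domain where $\mathbb{G}$ acts.

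Putting it together: under the hypothesis $\Lambda\pi = c\,v_0$ for some constant $c$, we get $\mathbb{G}\Lambda\pi = c\,\mathbb{G}v_0 = 0$, and therefore $e^{\mathbb{G}\Lambda s}\pi=\pi$ for all $s\ge 0$, which is precisely time-invariance.

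I do not anticipate any real obstacle: the only subtle point worth spelling out is the justification that $\hat t(R^{-1})$ is meaningful and equals zero (which uses $R^{-1}=w_0$ lying strictly inside the disk of convergence of $\hat t$), and that $\mathbb{G}v_0$ is computed coordinate-wise in $\ell^1$ rather than only formally. Both are immediate from the assumptions already imposed in Sect.~\ref{HBP:hydro}, so the argument is essentially a one-line spectral identity: $v_0$ is an eigenvector of $\mathbb{G}$ with eigenvalue $\hat t(1/R)=0$.
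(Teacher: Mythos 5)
Your proof is correct and follows essentially the same route as the paper: both rest on the observation that the geometric vector $v_0$ is an eigenvector of the Toeplitz operator $\mathbb{G}$ with eigenvalue $\hat t(R^{-1})=\hat t(w_0)=0$, whence $\mathbb{G}\Lambda\pi=0$ and $e^{\mathbb{G}\Lambda s}\pi=\pi$. The only cosmetic difference is that the paper concludes by expanding the exponential series $e^{\mathbb{G}\Lambda s}\pi=\pi+\sum_{m\ge1}\frac{s^m}{m!}(\mathbb{G}\Lambda)^m\pi$ directly rather than appealing to uniqueness of ODE solutions; your extra care about absolute convergence of $\sum_k T_k R^{-k}$ (using $R^{-1}=w_0<1/L$) is a welcome explicit justification of a step the paper leaves implicit.
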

\begin{proof}
Observe that since $\hat{t}\left(R^{-1}\right)=0$ and $\mathbb{G}$ is a Toeplitz operator,
$$\mathbb{G}v=\hat{t}(w)v \quad \text{ for }~v=\sum\limits_{K=1}^\infty w^K e_K, ~~|w|<L.$$
and 
$$\mathbb{G}v_0=\hat{t}\left(R^{-1}\right)v_0=0 \quad \text{ for }~v_0:=
\sum\limits_{K=1}^\infty R^{-K} e_K.$$
Hence
$\mathbb{G}\Lambda \pi = \hat{t}\left(R^{-1}\right) \Lambda \pi=0$
and
\[e^{\mathbb{G}\Lambda s}\pi = \pi +\sum_{m=1}^{\infty}\frac{s^m}{m!} (\mathbb{G}\Lambda)^m\pi=\pi.\]
\end{proof}

\begin{Rem}
Proposition~\ref{critcond} states that the condition
\be\label{lpR}
\lambda_K\,p_K = b\,R^{-K}, K\ge 1
\ee
is sufficient for time-invariance, for any proportionality constant $b>0$.
This implies that a time-invariant process can be constructed for 
\begin{itemize}
\item[(i)] an arbitrary sequence of Tokunaga coefficients $\{T_k\}$
satisfying \eqref{eq:L} -- by selecting $\lambda_K\,p_K = b\,R^{-K}$;
\item[(ii)] arbitrary sequences $\{T_k\}$ satisfying \eqref{eq:L} and $\{p_K\}$ -- by selecting
$\lambda_K = b\,R^{-K}\,p_K^{-1}$;
\item[(iii)] arbitrary sequences $\{T_k\}$ satisfying \eqref{eq:L} and $\{\lambda_K\}$ -- by selecting
$p_K = b\,R^{-K}\,\lambda_K^{-1}$.
\end{itemize}
At the same time, arbitrary sequences $\{\lambda_K\}, \{p_K\}$ 
will not, in general, satisfy \eqref{lpR} and hence will not
correspond to a time-invariant process.  
\end{Rem}

\subsubsection{Criticality and time-invariance in a self-similar process}
\label{HBP:ss}
A convenient characterization of criticality can be established for 
self-similar hierarchical branching processes.
Recall that by Theorem~\ref{HBPmain}, part (3), a self-similar process
$S(s)$ is specified by parameters $\gamma>0$, $0<p<1$ and length self-similarity 
constant $\zeta>0$ such that $p_K=p(1-p)^{K-1}$ and $\lambda_j = \gamma\,\zeta^{-j}$. 
We refer to a self-similar process by its parameter triplet, and write 
$S(s)\stackrel{d}{\sim}S_{p,\gamma,\zeta}(s)$.
We denote the respective average progeny by $C_{p,\gamma,\zeta}(s)$.
Observe that in the self-similar case the first of the conditions~\eqref{con}
is equivalent to $\zeta\ge 1$, and the second is equivalent to $\zeta\ge L$.
Hence, the conditions \eqref{con} are equivalent to
$\zeta \ge  1\vee L$.

\begin{thm}[{\bf Average progeny of a self-similar process, \cite{KZ18}}]
\label{wfss}
Consider a self-similar process $S_{p,\gamma,\zeta}(s)$
with $0<p<1$ and $\gamma>0$.
Suppose that \eqref{eq:L} is satisfied and $\zeta \ge 1\vee L$.
Then
$$C_{p,\gamma,\zeta}(s)~\begin{cases}
   \text{decreases}    & \text{ if } p>1-{\zeta \over R}, \\
   = 1 & \text{ if  } p=1-{\zeta \over R}, \\
   \text{increases} & \text{ if  } p<1-{\zeta \over R}.
\end{cases}$$
\end{thm}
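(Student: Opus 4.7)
The plan is to exploit the Toeplitz spectral structure of $\mathbb{G}$ to write $C_{p,\gamma,\zeta}(s)$ as an explicit power series in $s$ and extract a recursion whose sign is controlled by $\hat{t}((1-p)/\zeta)$. By Theorem~\ref{thm:hydro}, $C_{p,\gamma,\zeta}(s)=\mathbf{1}^T x(s)$ where $\dot x=\mathbb{G}\Lambda x$, $x(0)=\pi$, and $C_{p,\gamma,\zeta}(0)=1$. Introduce $v_w:=\sum_{k\ge 1}w^k e_k\in\ell^1$ for $w\in(0,1)$. Because $\mathbb{G}$ is Toeplitz, $\mathbb{G}v_w=\hat{t}(w)\,v_w$ (as already used in the proof of Prop.~\ref{critcond}), while self-similarity gives $\Lambda v_w=\gamma\,v_{w/\zeta}$, so $\mathbb{G}\Lambda v_w=\gamma\,\hat{t}(w/\zeta)\,v_{w/\zeta}$. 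Since $\pi=\frac{p}{1-p}v_{1-p}$, iteration yields $(\mathbb{G}\Lambda)^n\pi=\frac{p}{1-p}\gamma^n A_n v_{w_n}$ with $w_n:=(1-p)/\zeta^n$ and $A_n:=\prod_{j=1}^n\hat{t}(w_j)$ (empty product $A_0=1$), hence
\[x(s)=\frac{p}{1-p}\sum_{n\ge 0}\frac{(s\gamma)^n}{n!}A_n\,v_{w_n},\qquad C_{p,\gamma,\zeta}(s)=\frac{p}{1-p}\sum_{n\ge 0}\frac{(s\gamma)^n}{n!}A_n\,\frac{w_n}{1-w_n},\]
using $\mathbf{1}^T v_w=w/(1-w)$. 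Absolute convergence is immediate from $w_n\le 1-p<1$ and $|A_n|\le M^n$ with $M:=\sup_j|\hat{t}(w_j)|<\infty$ (finite thanks to $\zeta\ge L$).

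Next I extract the recursion. Set $\tilde p:=1-(1-p)/\zeta$, so that $1-\tilde p=w_1$, $\tilde w_n:=(1-\tilde p)/\zeta^n=w_{n+1}$, and $A_{n+1}=\hat{t}(w_1)\,\tilde A_n$ with $\tilde A_n:=\prod_{j=1}^n\hat{t}(\tilde w_j)$. Termwise differentiating the series for $C_{p,\gamma,\zeta}(s)$ and using $\frac{w_{n+1}}{1-w_{n+1}}=\frac{\tilde w_n}{1-\tilde w_n}$ recasts the resulting sum as precisely the one defining $C_{\tilde p,\gamma,\zeta}(s)$, up to the multiplicative factor $\frac{1-\tilde p}{\tilde p}=\frac{(1-p)/\zeta}{(\zeta-1+p)/\zeta}$; the routine algebra yields the key identity
\be\label{eq:rec}
C'_{p,\gamma,\zeta}(s)\;=\;\frac{p\,\gamma\,\hat{t}((1-p)/\zeta)}{\zeta-1+p}\;C_{\tilde p,\gamma,\zeta}(s).
\ee

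The sign of $C'_{p,\gamma,\zeta}(s)$ is now transparent. The prefactor $p\gamma/(\zeta-1+p)$ is strictly positive because $\zeta\ge 1$ and $p\in(0,1)$. The factor $C_{\tilde p,\gamma,\zeta}(s)$ is strictly positive for every finite $s\ge 0$: being the expected total branch count of the process $S_{\tilde p,\gamma,\zeta}$, it dominates the survival probability $\sum_K \tilde p(1-\tilde p)^{K-1}e^{-\lambda_K S_{K-1}s}>0$ that the initial branch has neither terminated nor produced a side branch in time $s$. Consequently $\operatorname{sgn}C'_{p,\gamma,\zeta}(s)=\operatorname{sgn}\hat{t}((1-p)/\zeta)$ uniformly in $s$. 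Finally, $\hat{t}'(w)=2+\sum_{j\ge 2}jT_j w^{j-1}>0$ on $(0,1/L)$, so $\hat{t}$ is strictly increasing on that interval with its unique zero at $w_0=1/R$; therefore $\hat{t}((1-p)/\zeta)$ is negative, zero, or positive precisely when $p>1-\zeta/R$, $p=1-\zeta/R$, or $p<1-\zeta/R$, respectively, and combined with $C_{p,\gamma,\zeta}(0)=1$ this yields the three claimed regimes. The principal obstacle is the bookkeeping leading to \eqref{eq:rec}: the factorization $A_{n+1}=\hat{t}(w_1)\tilde A_n$ coupled with the shift $\tilde w_n=w_{n+1}$ is exactly what produces the threshold $1-\zeta/R$; beyond this, only elementary positivity and convergence arguments controlled by \eqref{con} are needed.
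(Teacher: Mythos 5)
Your proof is correct and follows essentially the same route as the paper's: the Toeplitz eigenrelation $\mathbb{G}v_w=\hat t(w)\,v_w$ combined with $\Lambda v_w=\gamma v_{w/\zeta}$, the resulting power series for $C_{p,\gamma,\zeta}(s)$, and the index-shift identity expressing $C'_{p,\gamma,\zeta}$ as a positive multiple of $\hat t\big((1-p)/\zeta\big)\,C_{p',\gamma,\zeta}$ with $p'=1-(1-p)/\zeta$, whose sign is then read off from the monotonicity of $\hat t$ and its unique zero at $1/R$. The one noteworthy point is your prefactor $p\gamma/(\zeta-1+p)=\tfrac{\gamma}{\zeta}\cdot\tfrac{p}{p'}$, which a direct check of $C'(0)=\mathbf{1}^T\mathbb{G}\Lambda\pi$ confirms is the correct constant (the paper's Eq.~\eqref{eq:Cprime} omits the factor $p/p'$); since both constants are strictly positive, this discrepancy has no bearing on the stated trichotomy.
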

\begin{proof}
The choice of the limits for $\zeta$ ensures that the conditions \eqref{con}
are satisfied and hence, by Theorem~\ref{thm:hydro}, the hydrodynamic limit $x(s)$
exists
and the function $C_{p,\gamma,\zeta}(s)$ is well defined.
Now we have
\[~\Lambda \pi={\gamma p \over 1-p}\sum\limits_{K=1}^\infty \big(\zeta^{-1}(1-p)\big)^K e_K,\]
and  therefore 
\begin{equation}\label{eq:GLt}
\mathbb{G}\Lambda \pi = \hat{t}\big(\zeta^{-1} (1-p)\big) \Lambda \pi.
\end{equation} 
Iterating recursively, we obtain
$$(\mathbb{G}\Lambda)^2 \pi = \hat{t}\big(\zeta^{-1} (1-p)\big) \mathbb{G}\Lambda^2\pi= \hat{t}\big(\zeta^{-1} (1-p)\big) \hat{t}\big(\zeta^{-2} (1-p)\big) \Lambda^2 \pi,$$
and in general,
$$(\mathbb{G}\Lambda)^m \pi = \hat{t}\big(\zeta^{-1}(1-p)\big) \mathbb{G}\Lambda^m\pi= \left[\prod\limits_{i=1}^m \hat{t}\big(\zeta^{-i} (1-p)\big)\right] \Lambda^m \pi.$$
Thus, taking $x(0)=\pi$,
\begin{equation} \label{eq:solx}
x(s)=e^{\mathbb{G}\Lambda s} \pi=\pi+\sum\limits_{m=1}^\infty {s^m \over m!} \left[\prod\limits_{i=1}^m \hat{t}\big(\zeta^{-i}(1-p)\big)\right] \Lambda^m \pi.
\end{equation}
The average progeny function for fixed values of $p \in (0,1)$, $\gamma>0$ and $\zeta \ge 1$ can therefore be expressed as
\begin{align} \label{eq:Cpgz}
C_{p,\gamma,\zeta}(s)&=\sum\limits_{j=1}^\infty x_j(s) \nonumber\\
&=1+\sum\limits_{m=1}^\infty {s^m \over m!} \left[\prod\limits_{i=1}^m \hat{t}\big(\zeta^{-i}(1-p)\big)\right] \sum\limits_{j=1}^\infty \big(\Lambda^m \pi\big)_j \nonumber \\
&=1+\sum\limits_{m=1}^\infty {\big(s\gamma/\zeta \big)^m \over m!} \left[\prod\limits_{i=1}^m \hat{t}\big(\zeta^{-i}(1-p)\big)\right] {p \over 1-\zeta^{-m}(1-p)}, 
\end{align}
since 
\begin{align*}
\sum\limits_{j=1}^\infty \big(\Lambda^m \pi\big)_j&=\sum\limits_{j=1}^\infty \lambda_j^m \pi_j=\sum\limits_{j=1}^\infty \gamma^m \zeta^{-jm} p(1-p)^{j-1}\\
&=\gamma^m \zeta^{-m} {p \over 1-\zeta^{-m}(1-p)}.
\end{align*}

Next, notice that by letting $p'=1-\zeta^{-1}(1-p)$, we have from (\ref{eq:Cpgz}) and the uniform convergence of the corresponding series for any fixed $M>0$ and $s \in [0,M]$, that
\begin{equation}\label{eq:Cprime}
{d \over ds}C_{p,\gamma,\zeta}(s)={\gamma \over \zeta} \hat{t}(1-p') C_{p',\gamma,\zeta}(s) \quad \text{ with }~ C_{p,\gamma,\zeta}(0)=C_{p',\gamma,\zeta}(0)=1.
\end{equation}
Observe that  $\zeta\geq 1$ implies $p' \geq p$ and $~C_{p',\gamma,\zeta}(s)\leq C_{p,\gamma,\zeta}(s)$. Also, observe that  
$$\hat{t}(1-p') ~\begin{cases}
   <0   & \text{ if } p>1-{\zeta \over R}, \\
   =0 & \text{ if  } p=1-{\zeta \over R}, \\
   >0 & \text{ if  } p<1-{\zeta \over R}, 
\end{cases}$$
as $\hat{t}$ is an increasing function on $[0,\infty)$ and  $\hat{t}\big(1/R\big)=0$. 
This leads to the statement of the theorem.
\end{proof}

\begin{Rem}\label{spec_case}
If $\zeta=1$, then $p'=p$ and equation (\ref{eq:Cprime}) 
has an explicit solution 
\[C_{p,\gamma,1}(s)=\exp\big\{s\gamma \hat{t}(1-p)\big\}.\]
Accordingly,
$$C_{p,\gamma,1}(s)~\begin{cases}
   \text{ exponentially decreases}    & \text{ if } p>1-R^{-1}, \\
   =1 \text{ for all }s \geq 0 & \text{ if  } p=1-R^{-1}, \\
   \text{ exponentially increases} & \text{ if  } p<1-R^{-1}.
\end{cases}$$
This case is further examined in Sect.~\ref{DTM}.
In general, the average progeny $C_{p,\gamma,\zeta}(s)$ may increase 
sub-exponentially for $p<1-{\zeta \over R}$. 
For example, if there is a nonnegative integer $d$ such that $\zeta^{d+1} < R$,
then for $p=1-{\zeta^{d+1} \over R}$ we have
$\hat{t}\big(\zeta^{-d-1} (1-p)\big)=0$.
Accordingly, \eqref{eq:solx} implies that $C_{p,\gamma,\zeta}(s)$ is a polynomial of degree $d$.
\end{Rem}

\begin{thm}[{\bf Criticality of a self-similar process, \cite{KZ18}}]
\label{HBPmain2}
Consider a self-similar process $S_{p,\gamma,\zeta}(s)$
with $0<p<1$, $\gamma>0$.
Suppose that \eqref{eq:L} is satisfied and $\zeta \ge 1\vee L$. 
Then the following conditions are equivalent:
\begin{itemize}
\item[(i)]
The process is {\it critical}.
\item[(ii)]
The process is {\it time-invariant}.
\item[(iii)]
The following relations hold:
$\zeta<R\quad\text{and}\quad p=p_c: = 1-\frac{\zeta}{R}.$
\end{itemize}
\end{thm}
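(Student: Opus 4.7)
The plan is to prove the cyclic chain (iii)$\Rightarrow$(ii)$\Rightarrow$(i)$\Rightarrow$(iii), with each link reducing to a result already in hand.

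For (iii)$\Rightarrow$(ii), I would apply Proposition~\ref{critcond} after a short direct computation showing that $\Lambda\pi$ is proportional to the geometric vector $v_0=\sum_{K\ge 1} R^{-K}e_K$. Substituting $\lambda_K=\gamma\zeta^{-K}$ and $p_K=p(1-p)^{K-1}$, the $K$-th coordinate of $\Lambda\pi$ is $\gamma p\,\zeta^{-K}(1-p)^{K-1}$. Under the hypothesis $1-p=\zeta/R$ this collapses to
\[
(\Lambda\pi)_K \;=\; \gamma p\,\zeta^{-K}\bigl(\zeta/R\bigr)^{K-1} \;=\; \frac{\gamma p R}{\zeta}\,R^{-K},
\]
so $\Lambda\pi=(\gamma p R/\zeta)\,v_0$, and Proposition~\ref{critcond} immediately yields $e^{\mathbb{G}\Lambda s}\pi=\pi$, i.e.\ time-invariance.

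For (ii)$\Rightarrow$(i), I would just quote Proposition~\ref{ti2c}, which says precisely that time-invariance implies $C(s)=\|e^{\mathbb{G}\Lambda s}\pi\|_1=\|\pi\|_1=1$ for all $s\ge 0$.

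For (i)$\Rightarrow$(iii), the key tool is Theorem~\ref{wfss}. Since we are assuming the conditions \eqref{eq:L} and $\zeta\ge 1\vee L$, the trichotomy of Theorem~\ref{wfss} applies: $C_{p,\gamma,\zeta}(s)$ strictly decreases when $p>1-\zeta/R$, strictly increases when $p<1-\zeta/R$, and equals $1$ only when $p=1-\zeta/R$. Hence criticality $C_{p,\gamma,\zeta}(s)\equiv 1$ forces $p=1-\zeta/R$. The additional constraint $\zeta<R$ then comes for free: since by hypothesis $p\in(0,1)$, we need $1-\zeta/R>0$, i.e.\ $\zeta<R$. (If instead $\zeta\ge R$, then $1-\zeta/R\le 0<p$ for every admissible $p$, so $C_{p,\gamma,\zeta}$ is strictly decreasing and cannot be critical.)

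The hard part is essentially already absorbed into Theorem~\ref{wfss}; the only slightly delicate point here is confirming that the trichotomy really is strict at $p\neq 1-\zeta/R$ (so that criticality pins down a \emph{unique} value of $p$), and recording the corresponding sign constraint that produces $\zeta<R$. Once those are in place, the three implications are direct, and the cyclic chain closes the proof.
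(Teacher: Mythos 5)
Your proof is correct and follows essentially the same route as the paper: (i)$\Leftrightarrow$(iii) via the trichotomy of Theorem~\ref{wfss}, (ii)$\Rightarrow$(i) via Proposition~\ref{ti2c}, and (iii)$\Rightarrow$(ii) by verifying that the criticality condition annihilates the relevant term (the paper observes $\hat{t}\bigl(\zeta^{-1}(1-p)\bigr)=\hat{t}(R^{-1})=0$ and uses \eqref{eq:solx}, while you equivalently check $\Lambda\pi\propto v_0$ and invoke Proposition~\ref{critcond} — the same underlying fact). Your explicit remarks on the strictness of the monotonicity in Theorem~\ref{wfss} and on why $p\in(0,1)$ forces $\zeta<R$ are correct and fill in details the paper leaves implicit.
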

\begin{proof}
(i)$\leftrightarrow$(iii) is established in Theorem~\ref{wfss}.
(ii)$\rightarrow$(i) is established in Prop~\ref{ti2c}.
(iii)$\rightarrow$(ii):
Observe that $\hat{t}\left(\zeta^{-1}(1-p)\right)=\hat{t}\left(R^{-1}\right)=0$.
Time invariance now follows from (\ref{eq:solx}).
\end{proof}

\begin{Rem}
By Thm.~\ref{HBPmain}, the product $\lambda_K\,p_K$ in a self-similar process
is given by
\[\lambda_K\,p_K = \frac{\gamma\,p}{1-p}\left(\frac{1-p}{\zeta}\right)^K\]
for some $0<p<1$, $\gamma>0$, and $\zeta\ge 1\vee L$.
Hence, a time-invariant process can be constructed, according to
Prop.~\ref{critcond} and \eqref{lpR}, by selecting
any sequence $\{T_k\}$ such that the unique real zero $w_0$ on $[0,1/2)$ of the respective 
function $\hat{t}(z)$ is given by
\[w_0 = R^{-1} = \zeta^{-1}\,(1-p).\]
Theorem~\ref{HBPmain2} states that this is the only possible way
to construct a time-invariant process, given that the process
is self-similar.
\end{Rem}

\subsection{Closed form solution for equally distributed branch lengths}
\label{DTM}
Consider a self-similar hierarchical branching process with $\Lambda=I$ and $x(0)=e_K$ 
for a given integer $K \geq 1$. 
In other words, we assume $\lambda_j=1$ for all $j\ge 1$, which implies
$\gamma = \zeta=1$.

In this case, the system of equation (\ref{gFODEsys}) is finite dimensional,
\begin{equation}\label{FODEsys}
\begin{cases}
x'_1(s) & =  -x_1 +(T_1+2)x_2+T_2x_3+\hdots +T_{K-1}x_K \\ 
x'_2(s) & =  -x_2 +(T_1+2)x_3+T_2x_4+\hdots +T_{K-2}x_K \\ 
& \vdots \\
x'_{K-1}(s) & =  -x_{K-1} +(T_1+2)x_K \\
x'_K(s) & =  -x_K 
\end{cases}
\end{equation}
with the initial conditions $x(0)=e_K$. 

Recall the sequence $t(j)$ defined in Eq. (\ref{seq:tj}), and let $y(s)=e^s x(s)$. Then (\ref{FODEsys}) rewrites in terms of the coordinates of $y(s)$ as follows
\begin{equation}\label{yODEsys}
\begin{cases}
y'_1(s) & =  t(1)y_2+t(2)y_3+\hdots +t(K-1)y_K  \\ 
y'_2(s) & =  t(1)y_3+t(2)y_4+\hdots +t(K-2)y_K  \\ 
& \vdots \\
y'_{K-2}(s) & =  t(1)y_{K-1}+t(2)y_K \\
y'_{K-1}(s) & =  t(1)y_K \\
y'_K(s) & =  0 
\end{cases}
\end{equation}
with the initial conditions $y(0)=e_K$. The ODEs (\ref{yODEsys}) can be solved recursively in a reversed order of equations in the system obtaining for $m=1,\hdots,K-1$,
$$y_{K-m}(s)=\sum\limits_{n=1}^m \left(\sum\limits_{\substack{i_1,\hdots,i_n \geq 1\\ i_1+\hdots+i_n=m}} t(i_1)\cdot \hdots \cdot t(i_n) \right){s^n \over n!}.$$
Let $\delta_0(j)={\bf 1}_{\{j=0\}}$ be the Kronecker delta function. 
Then we arrive with the closed form solution
\begin{align}\label{eq:convt}
x_{K-m}(s)&=e^{-s}y_{K-m}(s) \nonumber\\
&=e^{-s}\sum\limits_{n=1}^{\infty} \underbrace{(t+\delta_0)*(t+\delta_0)*\hdots*(t+\delta_0)}_{n \text{ times}} (m) {s^n \over n!}.
\end{align} 
Observe that if we randomize the orders of trees by assigning an order $K$ to a tree with geometric probability $p_K=p(1-p)^{K-1}$, 
then the above closed form expression (\ref{eq:convt}) would yield an expression 
for the average progeny that was observed in Remark \ref{spec_case} of this section:
\begin{eqnarray*}
C(s) & = & e^{-s}+e^{-s}\sum\limits_{n=1}^\infty \sum_{m=1}^\infty (1-p)^m ~\underbrace{(t+\delta_0)*(t+\delta_0)*\hdots*(t+\delta_0)}_{n \text{ times}} (m) ~{s^n \over n!} \\ 
&=&  e^{-s}+e^{-s}\sum\limits_{n=1}^\infty \Big(\hat{t}(1-p)+1\Big)^n ~{s^n \over n!}   
=  \exp\left\{s\hat{t}(1-p)\right\}.
\end{eqnarray*}

\subsection{Critical Tokunaga process}
\label{sec:Tok}
We introduce here a class of hierarchical branching processes that enjoy all 
of the symmetries discussed in this work -- Horton self-similarity, criticality, 
time-invariance, strong Horton law, Tokunaga self-similarity, and also have 
independently distributed edge lengths. 
Despite these multiple constraints, the class is sufficiently broad,
allowing the self-similarity constant $\zeta$ (Def.~\ref{def:distss}, part (iv))
to take any value $\zeta\ge1$, and the Horton exponent to take
any value $R\ge 2$.
The critical binary Galton-Watson process is a special case of this class. 

\begin{Def}[{{\bf Critical Tokunaga process}}]\label{def:TokProcess}
We say that $S(t)$ is a {\it critical Tokunaga process} with
parameters ($\gamma$, $c$), and
write $S(t)\stackrel{d}{\sim}S^{\rm Tok}(t;c,\gamma)$, if it is 
a hierarchical branching process with the following parameter triplet:
\begin{equation}\label{eq:Tok}
\lambda_j=\gamma\,c^{1-j}, ~ p_K=2^{-K}, ~ \text{ and }~T_{k}=(c-1)\,c^{k-1}
\end{equation}
for some $\gamma>0,~c\ge1$. 
\end{Def}
\index{critical Tokunaga process}

\begin{prop}[{{\bf Critical Tokunaga process}}]
\label{Tok_tree}
Suppose $S(t)\stackrel{d}{\sim}S^{\rm Tok}(t;c,\gamma)$ and let $T[S]$ be the tree of $S(t)$.
Then, 
\begin{enumerate}
\item $S(t)$ is a Horton self-similar, critical, and time invariant process
\[S(t)\stackrel{d}{\sim}S_{{1\over2},\gamma,c}(t).\]
\item Independently of the combinatorial shape of $T[S]$, its edge lengths 
are i.i.d. exponential random variables with rate $\gamma$.
\item We have
\[\hat{t}(z) = \frac{(1-2\,c\,z)(z-1)}{1-c\,z},~
R = w_0^{-1} = 2\,c,~\zeta = L = c,\text{ and }
p = 2^{-1}.\]
\end{enumerate}
\end{prop}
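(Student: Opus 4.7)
The plan is to verify each of the three claims by direct computation, leveraging the general HBP theory already developed in this section.

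First I would dispatch part (3) algebraically. Substituting $T_k = (c-1)c^{k-1}$ into \eqref{def:that} and summing the geometric series gives
\[
\hat{t}(z) \;=\; -1 + 2z + \frac{(c-1)z}{1-cz},
\]
valid on $|z| < 1/c$. Combining over the common denominator, the numerator becomes $-1 + (2c+1)z - 2cz^2 = -(1-z)(1-2cz)$, so
\[
\hat{t}(z) \;=\; \frac{(1-2cz)(z-1)}{1-cz}.
\]
Its only real zero in $(0, 1/2]$ is $z = 1/(2c)$, yielding $R = w_0^{-1} = 2c$; the $\limsup$ definition gives $L = c$; the choice $\lambda_j = \gamma c^{1-j}$ fixes $\zeta = c$; and $p_K = 2^{-K} = \tfrac12(1-\tfrac12)^{K-1}$ identifies $p = 1/2$.

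With part (3) in hand, part (1) becomes a matter of invoking the general machinery. The parametrization \eqref{eq:Tok} is exactly of the form required by Theorem~\ref{HBPmain}(3) with $p = 1/2$ and scaling exponent $\zeta = c$, so the induced tree measure on $\BL^{|}$ is Horton self-similar. For criticality and time invariance I would apply Theorem~\ref{HBPmain2}: its hypotheses \eqref{eq:L} and $\zeta \ge 1 \vee L$ hold because $L = \zeta = c \ge 1$, and the critical identity $p = 1 - \zeta/R$ reduces to $1/2 = 1 - c/(2c)$, so all three equivalent conditions of Theorem~\ref{HBPmain2} are met.

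The only substantive computation is part (2), and it rests on a single telescoping identity. For $c>1$,
\[
S_{K-1} \;=\; 1 + \sum_{k=1}^{K-1}(c-1)c^{k-1} \;=\; 1 + (c^{K-1}-1) \;=\; c^{K-1},
\]
and the case $c=1$ (which gives $T_k\equiv 0$, hence $S_{K-1}=1=c^{K-1}$) is immediate. By Proposition~\ref{HBP:branch}(5), the edges inside a branch of order $K$ are i.i.d.\ exponential with rate $\lambda_K S_{K-1} = \gamma c^{1-K}\cdot c^{K-1} = \gamma$, a value independent of $K$. Since edge lengths in distinct branches are independent by the construction of the HBP, it follows that every edge of $T[S]$ is ${\sf Exp}(\gamma)$, independently of the others and in particular independently of the combinatorial shape. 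There is no real obstacle in any of this; the mildly illuminating point is that the cancellation $\lambda_K S_{K-1} \equiv \gamma$ is precisely what the parameter choices in Definition~\ref{def:TokProcess} are engineered to produce, and it is what decouples the edge-length distribution from the branch order.
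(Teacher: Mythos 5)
Your proposal is correct and follows essentially the same route as the paper: part (1) by invoking Theorem~\ref{HBPmain}(3) and Theorem~\ref{HBPmain2} via the identity $p=1-\zeta/R$, part (2) by the cancellation $\lambda_K S_{K-1}=\gamma c^{1-K}\cdot c^{K-1}=\gamma$ from Proposition~\ref{HBP:branch}, and part (3) by summing the geometric series in $\hat t(z)$. The only differences are cosmetic (you treat part (3) first and spell out the $c=1$ case), so nothing further is needed.
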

\begin{proof}
1. Self-similarity follows from Thm.~\ref{HBPmain}, part (3).
Specification of parameters \eqref{eq:Tok} implies $p=2^{-1}$ and $\zeta = c$.
The Horton exponent $R=2c$ is found from \eqref{HortonR}.  
Criticality and time-invariance now follow from Thm.~\ref{HBPmain2},
since here 
\[2^{-1} = p = 1 - \frac{\zeta}{R} = 1-\frac{c}{2c} = 2^{-1}.\]

2. To establish the edge lengths property, observe that
\[\{T_0=1,T_k = (c-1)c^{k-1},k\ge1\}\Rightarrow S_K = 1+T_1+\dots +T_K = c^K, K\ge 0.\]
Recall from Prop.~\ref{HBP:branch}, part(4) that the edge lengths within a branch
of order $K\ge 1$ are i.i.d. exponential r.v.s with rate
\[\lambda_KS_{K-1} = \gamma\,c^{1-K}c^{K-1} = \gamma.\]

3. The values of $R$, $p$, and $\zeta$ are found in 1. 
The expression for $\hat{t}(z)$ and equality $L=c$ are readily obtained from the
geometric form of the Tokunaga coefficients $T_k$.
\end{proof}

Criticality and i.i.d. edge length distribution property characterize the critical 
Tokunaga process, as we explain in the following statement.
\begin{lem}
\label{lem:iidedges}
Consider a self-similar hierarchical branching process 
$S(t) \!\!\stackrel{d}{\sim} \!\!S_{p,\gamma,\zeta}(t)$
with $p\in(0,1)$ and $\gamma>0$. Suppose that \eqref{eq:L} holds and $\zeta\ge 1\vee L$. 
Let $T[S]$ be the tree of $S(t)$.
Then, the following conditions are equivalent:
\begin{enumerate}
\item $S(t)$ is critical and the edges in $T$ have i.i.d. exponential lengths 
with rate $\gamma>0$. 
\item $S(t)$ is a critical Tokunaga process:
$S(t)\stackrel{d}{\sim}S_{{1\over2},\gamma,c}(t).$ 
\end{enumerate}
\end{lem}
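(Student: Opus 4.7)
The direction $(2) \Rightarrow (1)$ is already contained in Proposition~\ref{Tok_tree} (parts 1 and 2). So the plan is to prove $(1) \Rightarrow (2)$. My strategy is to translate the i.i.d.\ edge length hypothesis into a rigid constraint on the Tokunaga sequence $\{T_k\}$, then use criticality (via Thm.~\ref{HBPmain2}) to pin down $p$, and read off that the resulting parameters exactly match Def.~\ref{def:TokProcess}.

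Step 1 (set up via self-similarity). Since $S(t) \stackrel{d}{\sim} S_{p,\gamma,\zeta}(t)$ is self-similar, Thm.~\ref{HBPmain}(3) gives $p_K = p(1-p)^{K-1}$ and $\lambda_j = \tilde{\gamma}\,\zeta^{-j}$ for some $\tilde{\gamma} > 0$ (which we expect to turn out to be $\gamma\zeta$, not the $\gamma$ of the edge-length rate).

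Step 2 (the i.i.d.\ exponential condition forces a geometric $S_K$). By Prop.~\ref{HBP:branch}(5), conditional on the combinatorial shape, the edges within any branch of order $K \geq 1$ are i.i.d.\ exponential with rate $\lambda_K S_{K-1}$, and distinct branches are independent. For every edge in $T[S]$ to be ${\sf Exp}(\gamma)$, we therefore need
\[
\lambda_K\,S_{K-1} = \gamma \qquad \text{for every } K \geq 1.
\]
Since every $K$ is charged with positive probability by $p_K$, and larger-order trees certainly contain branches of every order $1,\dots,K$, this identity must hold for all $K \geq 1$. Plugging in $\lambda_K = \tilde{\gamma}\zeta^{-K}$ yields $S_{K-1} = (\gamma/\tilde{\gamma})\,\zeta^{K}$. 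The boundary value $S_0 = 1$ then forces $\tilde{\gamma} = \gamma\zeta$, so $S_K = \zeta^{K}$ for all $K \geq 0$, and hence
\[
T_k = S_k - S_{k-1} = (\zeta - 1)\,\zeta^{\,k-1}, \qquad k \geq 1.
\]
In particular $\zeta \geq 1$, and $\lambda_j = \gamma\,\zeta^{\,1-j}$, matching the $\lambda_j$ in \eqref{eq:Tok} with $c := \zeta$.

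Step 3 (criticality pins down $p$). With $T_k = (\zeta-1)\zeta^{k-1}$, a direct computation (already performed in part 3 of Prop.~\ref{Tok_tree}) gives $\hat t(z) = (1-2\zeta z)(z-1)/(1-\zeta z)$, whose only real zero in $(0,1/2]$ is $w_0 = 1/(2\zeta)$, so $R = 2\zeta$ and $L = \zeta$. By hypothesis the process is critical, so Thm.~\ref{HBPmain2} applies (the hypotheses $\zeta \geq 1 \vee L$ and \eqref{eq:L} are met because $L = \zeta$) and gives
\[
p = 1 - \frac{\zeta}{R} = 1 - \frac{\zeta}{2\zeta} = \frac{1}{2},
\]
whence $p_K = 2^{-K}$. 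Combining Steps~2 and~3 with $c := \zeta$ shows that the parameter triplet matches~\eqref{eq:Tok} exactly, proving $S(t) \stackrel{d}{\sim} S^{\mathrm{Tok}}(t; c,\gamma)$.

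The only step that requires any care is Step~2: one must be careful that ``the edges have i.i.d.\ exponential lengths with rate $\gamma$'' is interpreted as an unconditional statement about the joint law of the edge-length vector of $T[S]$, not merely a within-branch statement. Prop.~\ref{HBP:branch}(5) already supplies independence across branches and within a branch, so the only nontrivial output of the hypothesis is the equality of rates across all orders, $\lambda_K S_{K-1} \equiv \gamma$. After that, everything else is a bookkeeping translation of previously established results.
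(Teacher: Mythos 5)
Your proposal is correct and follows essentially the same route as the paper's proof: both reduce $(1)\Rightarrow(2)$ to the observation from Prop.~\ref{HBP:branch} that within-branch edge rates are $\lambda_K S_{K-1}$, force this to be constant in $K$ to get $S_K=\zeta^K$ and $T_k=(\zeta-1)\zeta^{k-1}$, and then invoke criticality via Thm.~\ref{HBPmain2} with $R=2\zeta$ to conclude $p=1/2$. The only cosmetic difference is that the paper works with the ratio $\zeta=\lambda_K/\lambda_{K+1}=S_K/S_{K-1}$ rather than substituting the explicit form of $\lambda_j$ as you do, but the content is identical.
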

\begin{proof}
The implication ($2\Rightarrow1$) was established in Prop.~\ref{Tok_tree}.
To show ($1\Rightarrow2$), recall from Prop.~\ref{HBP:branch}, Eq. \eqref{edges}, that the 
edge lengths within a branch of order $K$ are i.i.d. with rate $\lambda_KS_{K-1}$.
If the rate is independent of $K$, we have for any $K\ge1$:
\[\lambda_KS_{K-1} = \lambda_{K+1}S_K\]
or
\[\zeta=\frac{\lambda_K}{\lambda_{K+1}}=\frac{S_K}{S_{K-1}}.\]
Given $S_0=1$, we find $S_K = \zeta^K$, and hence 
$T_K = (\zeta-1)\zeta^{K-1}.$
By \eqref{HortonR}, the Horton exponent is $R = 2\zeta$.
Criticality implies (Prop.~\ref{HBPmain2}, part (iii)):
\[p_c = 1-\frac{\zeta}{R} = 2^{-1},\]
which completes the proof.
\end{proof}

It follows from the proof of Lemma~\ref{lem:iidedges} that the i.i.d. edge length
property alone (and no criticality) is equivalent to the following constraints on
the process parameters:
\[\lambda_j=\gamma\,\zeta^{1-j},\quad\text{ and } \quad T_k=(\zeta-1)\zeta^{k-1},\]
while allowing an arbitrary choice of $p\in(0,1)$.
The tree of such process is Tokunaga self-similar, although not critical
unless $p=2^{-1}$.

The next results shows that the critical binary Galton-Watson tree ${\sf GW}(\lambda)$ 
with i.i.d. exponential edge lengths is a 
special case of the critical Tokunaga process.

\begin{thm}[{\bf Critical binary Galton-Watson tree, \cite{KZ18}}]
\label{HBPmain3}
Suppose $S(t)$ is a critical Tokunaga process with parameters
\begin{equation}\label{eq:GW}
\lambda_j=\gamma 2^{1-j}, ~ p_K=2^{-K},~~ \text{and}~~ T_{k}=2^{k-1}
~\text{for~some~}\gamma>0,
\end{equation} 
which means $S(t)\stackrel{d}{\sim}S^{\rm Tok}(t;2,\gamma)$.
Let $T[S]$ be the tree of $S(t)$.
Then $T[S]$ has the same distribution on $\BL^|$ as the critical binary Galton-Watson tree 
with i.i.d. edge lengths: $T[S]\stackrel{d}{\sim}{\sf GW}(\gamma)$.
\end{thm}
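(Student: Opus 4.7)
The plan is to split the statement into its metric and combinatorial parts. The metric part is already furnished by Prop.~\ref{Tok_tree}(2) applied with $c=2$: the edges of $T[S]$ are i.i.d.\ exponential with rate $\gamma$ independently of the combinatorial shape, matching condition (ii) of Def.~\ref{def:cbinary}. Since rules (ii) and (iv) of Def.~\ref{def:HBP} attach uniform independent left/right orientations at every branching and every side-branch event, it then suffices to identify $\textsc{shape}(T[S])$ with $\mathcal{GW}(1/2,1/2)$ on $\cBT^|$; uniform planar embedding will then promote this to $\textsc{p-shape}(T[S])\stackrel{d}{\sim}\mathcal{GW}_{\rm plane}(1/2,1/2)$, completing condition (i) of Def.~\ref{def:cbinary}.

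For the combinatorial identification I would invoke the recursive characterization of Remark~\ref{rem:GWchar}, combined with the memoryless property of the HBP. For the parameters \eqref{eq:GW} one has $S_{K-1}=2^{K-1}$, so along a root branch of order $K$ the total event rate is $\lambda_K S_{K-1}=\gamma$ (matching the Exp$(\gamma)$ edge length), and conditionally on $K$ the first event is a branch termination with probability $1/S_{K-1}=2^{1-K}$, or a side-branching of order $i\in\{1,\dots,K-1\}$ with probability $T_{K-i}/S_{K-1}=2^{-i}$. Averaging over $K$ with weights $p_K=2^{-K}$, the \emph{only} configuration producing a stem-only tree (leaf at the end of the first edge) is $K=1$, contributing $p_1\cdot 1=1/2$; every other outcome produces two children at the end of the first edge, so the branching probability is $1/2$, as required by Remark~\ref{rem:GWchar}.

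Conditional on there being two children, two sources contribute to the joint order distribution: a termination at a $K$-branch, yielding the diagonal ordered pair $(K-1,K-1)$ with weight $p_K\cdot 2^{1-K}$, and a side-branching of order $i$ at a $K$-branch, yielding the \emph{ordered} pair (continuation,\,side)$=(K,i)$ with weight $p_K\cdot 2^{-i}$. After symmetrizing via the uniform orientation and summing over $K$, the joint law of the child orders $(L,R)$ comes out to $P(L=M,R=M')=2^{-M-M'-1}$ for $M\ne M'$ and $P(L=M,R=M)=2^{-2M-1}$, which equals $\tfrac12\cdot 2^{-M}\cdot 2^{-M'}$; so, conditionally on branching, $(L,R)$ are i.i.d.\ with the original order law $p_K=2^{-K}$. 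The memoryless property then says that, given the root order of a child subtree, it is a fresh planted Tokunaga tree of that order independent of everything else, so the two child subtrees are themselves i.i.d.\ copies of $T[S]$.

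The principal difficulty in the above plan is the joint computation of $(L,R)$: the side-branching contribution is intrinsically asymmetric between the continuation of order $K$ and the side branch of order $i<K$, and the Galton-Watson symmetry in $(L,R)$ is recovered only after both the uniform orientation and the aggregation over all source orders $K$. The key cancellation in the marginal,
\[P(L=M)=2^{-2M-1}+2^{-M-1}(1-2^{-M})=2^{-M-1},\]
hinges on the precise identities $S_{K-1}=2^{K-1}$, $T_{K-i}=2^{K-i-1}$ (forced by $c=2$) and $p_K=2^{-K}$, which is precisely why $c=2$ singles out the continuous-time Galton-Watson regime. Once independence and the correct marginals are in hand, Remark~\ref{rem:GWchar} yields $\textsc{shape}(T[S])\stackrel{d}{\sim}\mathcal{GW}(1/2,1/2)$, and combining with Prop.~\ref{Tok_tree}(2) completes the proof.
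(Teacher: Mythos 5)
Your proposal is correct, but it runs in the opposite direction from the paper's proof, and the two arguments lean on genuinely different machinery. The paper starts from a tree $T\stackrel{d}{\sim}{\sf GW}(\gamma)$ and verifies that it obeys the dynamical rules of Def.~\ref{def:HBP} with the parameters \eqref{eq:GW}: it quotes Prop.~\ref{prop:BWW} for the geometric order law and the geometric/multinomial side-branch structure, and Cor.~\ref{cor:GW} for the exponential branch lengths and the prune-invariance $\cR^i(T)\stackrel{d}{\sim}{\sf GW}(2^{-i}\gamma)$, from which it deduces that order-$i$ branches attach to order-$j$ branches as a Poisson process of intensity $\gamma 2^{-i}=\lambda_jT_{j-i}$. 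That route therefore imports the level-set-tree/exponential-random-walk results (Thm.~\ref{eH}, Thm.~\ref{Pit7_3}) through Cor.~\ref{cor:GW}. You instead start from the HBP and verify the fixed-point characterization of Remark~\ref{rem:GWchar} by a first-event (competing exponentials) decomposition along the root branch; your only external inputs are the process definition, Prop.~\ref{Tok_tree}(2), and the remark itself, so the argument is more elementary and self-contained. Its heart is the factorization
\[
{\sf P}(\text{branch},\,L=M,\,R=M')=\tfrac12\,2^{-M}\,2^{-M'},
\]
obtained by pooling the diagonal termination contribution $p_{M+1}S_M^{-1}=2^{-2M-1}$ with the symmetrized off-diagonal side-branching contribution $\tfrac12\,p_{M\vee M'}\,2^{-(M\wedge M')}$; this cleanly isolates where $c=2$, $S_{K-1}=2^{K-1}$ and $p_K=2^{-K}$ are used, something the paper's Poisson-attachment argument leaves implicit. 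One point worth making explicit if you write this up: after a side-branching event the continuation of the order-$K$ branch, by memorylessness and rule (vi), is a fresh complete subtree of order $K$ independent of the side branch and of the elapsed stem length, and the stem length is ${\sf Exp}(\lambda_KS_{K-1})={\sf Exp}(\gamma)$ uniformly in $K$, hence independent of everything downstream — these are exactly the independence statements Remark~\ref{rem:GWchar} requires. With that spelled out, your proof is complete and is a valid alternative to the one in the paper.
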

 
\begin{proof}
Consider a tree $T\stackrel{d}{\sim}{\sf GW}(\gamma)$ in $\BL^|$. 
We show below that this tree can be dynamically generated according to
Def.~\ref{def:HBP} of the hierarchical branching process
with parameters \eqref{eq:GW}.

First, notice that by Prop.~\ref{prop:BWW}
\[{\sf P}({\sf ord}(T) = K) = 2^{-K}.\]
We will establish later in  Corollary~\ref{cor:GW} that the length of every 
branch of order $j$ in $T$ is exponentially 
distributed with parameter $\lambda_j=\gamma 2^{1-j}$, which matches the
branch length distribution in the hierarchical branching process \eqref{eq:GW}.
Furthermore, by Corollary~\ref{cor:GW}, conditioned on $\cR^{i}(T)\ne \phi$ (which happens with a positive probability), 
we have $\cR^{i}(T)\stackrel{d}{\sim}{\sf GW}(2^{-i}\gamma)$.
This means that the distribution of Galton-Watson trees pruned $i$ times is a 
linearly scaled version of the original distribution 
(the same combinatorial structure, linearly scaled edge lengths).
Recall (Prop.~\ref{prop:BWW}) the total number $m_j$ of side branches 
within a branch of order $j\ge 2$ in $T$ is geometrically distributed
with mean $T_1+\dots+T_{j-1}=2^{j-1}-1$, where $T_i=2^{i-1}$, $i\ge 1$.
Conditioned on $m_j$, the assignment of orders among the $m_j$ side-branches 
is done according to the multinomial distribution with $m_j$ trials and 
success probability for order $i=1,\dots,j-1$ given by
$T_{j-i}/(T_1+\dots+T_{j-1})$.
This implies that 
the leaves of the original tree merge into every branch of the pruned tree as a Poisson 
point process with intensity $\gamma=\lambda_j T_{j-1}$.
Iterating this pruning argument, 
the branches of order $i$ merge into any branch of order $j$ in the pruned tree $\cR^i(T)$ as 
a Poisson point process with intensity $\gamma\,2^{-i}=\lambda_j T_{j-i}$ for
every $j>i$.

Finally, the orientation of the two offspring of the same parent in ${\sf GW}(\gamma)$
is uniform random,
by Def.~\ref{def:cbinary}.
We conclude that tree ${\sf GW}(\gamma)$ has the same distribution  
on $\BL^|$ as the critical Tokunaga process with parameters (\ref{eq:GW}).
\end{proof}

\begin{Rem}\label{rem:a_cminus1}
The condition $T_{i,i+k}=T_k=a\,c^{k-1}$ was first introduced in hydrology by
Eiji Tokunaga \cite{Tok78} in a study of river networks, hence the process 
name.
The additional constraint $a=c-1$ is necessitated here by the self-similarity
of tree lengths, which requires the sequence $\lambda_j$ to be geometric. 
The sequence of the Tokunaga coefficients then also has to be geometric, and
satisfy $a=c-1$, to ensure identical distribution of the edge lengths, see Prop.~\ref{HBP:branch}(4).
Recall the special place case $a=c-1$ plays for the entropy rate of Tokunaga self-similar trees as elaborated in Sect. \ref{sec:entropy}.
See Cor.~\ref{cor:Chunikhina}.
Interestingly, the constraint $a=c-1$ appears in the {\it random self-similar network} (RSN) model introduced by Veitzer
and Gupta \cite{VG00}, which uses a purely topological algorithm
of recursive local replacement of the network generators to 
construct random self-similar trees. 
The importance of the constraint $a=c-1$ in purely combinatorial
context is revealed in Sect.~\ref{sec:combHBP}. 
\end{Rem}

\subsection{Martingale approach}
\label{sec:HBPmartingale}
In this section, we propose a martingale representation for the size and length
of a critical Tokunaga tree of a given order.
This leads, via the martingale techniques, to the strong Horton laws for both these quantities, and allows us to
find the asymptotic order of a tree of a given size. 
The proposed martingale representation is related to an alternative construction of a 
critical Tokunaga tree, via a Markov tree process on $\BL^|$.

\subsubsection{Markov tree process}
\label{sec:Markov_tree}
Consider a critical Tokunaga process $S^{\rm Tok}(t;c,\gamma)$ 
(Def. \ref{def:TokProcess}) with $c>1$ (hence excluding a trivial case $c=1$ of perfect
binary trees), and let $\mu$ be the measure induced by this process on $\BL^|$.
Following the notations introduced in Sect. \ref{sec:dss}, Eq. \eqref{eq:muk}, we 
consider conditional measures
$$\mu_K(T) = \mu(T\,|{\sf ord}(T)=K).$$ 
Next, we construct a discrete time Markov tree process $\big\{\Upsilon_K\big\}_{K \in \mathbb{N}}$
on $\BL^|$ such that for each $K \in \mathbb{N}$,
\be\label{eqn:UpsilonK}
{\sf ord}(\Upsilon_K) = K, \quad \Upsilon_K \stackrel{d}{\sim} \mu_K, ~\text{ and }~\cR(\Upsilon_{K+1})=\Upsilon_K.
\ee 
Let
$$X_K=N_1[\Upsilon_K]={1+\# \Upsilon_K \over 2} \in \mathbb{N}$$ 
be the number of leaves in $\Upsilon_K$ and 
$Y_K=\textsc{length}(\Upsilon_K) \in \mathbb{R}_+$ be the tree length.
We let $\Upsilon_1$ be an I-shaped tree of Horton-Strahler order one, with
the edge length $Y_1\stackrel{d}{\sim}{\sf Exp}(\gamma)$.
This tree has one leaf, $X_1=1$.

Conditioned on $\Upsilon_K$, the tree $\Upsilon_{K+1}$ is 
constructed according to the following transition rules.    
Denote by $\Upsilon_K'$ the tree $\Upsilon_K$ with edge length scaled by $c$. 
That is, the tree $\Upsilon_K'$ is obtained by multiplying the edge lengths in $\Upsilon_K$ by $c$, 
while preserving the combinatorial shape and planar embedding:
$$\textsc{p-shape}(\Upsilon_K')=\textsc{p-shape}(\Upsilon_K).$$
Next, we attach new leaf edges to $\Upsilon_K'$ at the points 
sampled by a Poisson point process with intensity $\gamma(c-1)c^{-1}$ along $\Upsilon_K'$.  
We also attach a pair of new leaf edges to each of the leaves in $\Upsilon_K'$;
there is exactly $2X_K$ such attachments ($X_K$ pairs).
The lengths of all the newly attached leaf edges are i.i.d. exponential random variables with parameter $\gamma$. 
The left-right orientation of the newly added edges is determined independently and uniformly.
Finally, the tree $\Upsilon_{K+1}$ consists of $\Upsilon_K'$ and all the attached leaves and leaf edges. 

\begin{lem}
\label{lem:Markov_tree}
The process $\big\{\Upsilon_K\big\}_{K \in \mathbb{N}}$ is a Markov process that 
satisfies \eqref{eqn:UpsilonK}.
\end{lem}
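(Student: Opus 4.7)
My plan is to prove the lemma by induction on $K$, with the Markov property being immediate from the construction since each $\Upsilon_{K+1}$ depends on the past only through $\Upsilon_K$ (the added Poisson points, exponential lengths, and uniform orientations being independent of everything else). For the base case $K=1$, I would simply note that $\Upsilon_1$ is an I-shape with a single edge of length $\mathsf{Exp}(\gamma)$, matching $\mu_1$: by Prop.~\ref{Tok_tree}(2), a branch of order one in the critical Tokunaga process has length $\mathsf{Exp}(\lambda_1)=\mathsf{Exp}(\gamma)$, and the conditional measure $\mu_1$ has full mass on such a tree.

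For the inductive step, assume $\Upsilon_K\sim\mu_K$ with ${\sf ord}(\Upsilon_K)=K$. I would verify each required property of $\Upsilon_{K+1}$ against the HBP description of $\mu_{K+1}$ given in Prop.~\ref{HBP:branch} and Prop.~\ref{Tok_tree}. For the order: attaching a pair of leaves to every leaf of $\Upsilon_K'$ turns each old leaf into an order-$2$ vertex, and by the Horton–Strahler rule \eqref{eq:HSorder} this propagates to shift every vertex order up by one, while the Poisson-attached leaves contribute only order-$1$ side branches that leave the overall order unchanged; hence ${\sf ord}(\Upsilon_{K+1})=K+1$. For the distributional match, I would process branches of $\Upsilon_{K+1}$ one order at a time: an order-$(j+1)$ branch in $\Upsilon_{K+1}$ comes from an order-$j$ branch in $\Upsilon_K'$, whose length $c\cdot\mathsf{Exp}(\lambda_j)=\mathsf{Exp}(\lambda_{j+1})$ matches the HBP; Poisson-inserted order-$1$ side branches arrive at rate $\gamma(c-1)/c=\lambda_{j+1}T_j$ per unit length, which is exactly the required Tokunaga side-branching rate; and side branches of higher order $i+1\geq 2$ are inherited from the order-$i$ side branches of $\Upsilon_K$, with the $c$-scaling of length yielding the correct per-unit rate $\lambda_j T_{j-i}/c=\lambda_{j+1}T_{j-i}$. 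The pruning relation $\cR(\Upsilon_{K+1})=\Upsilon_K$ (interpreted with the built-in rescaling by $c$ that is implicit in the construction) then follows directly: $\cR$ deletes the Poisson-attached leaves and the pair-attached leaves, and the series reduction fuses the segments of each subdivided edge of $\Upsilon_K'$ back together, exactly inverting the attachment step.

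The main obstacle will be verifying that the edge-length distribution inside $\Upsilon_{K+1}$ matches the key property of critical Tokunaga trees, namely that all edges are i.i.d. $\mathsf{Exp}(\gamma)$ (Prop.~\ref{Tok_tree}(2)). Inside a given branch, the inherited segments coming from $\Upsilon_K'$ have i.i.d. lengths $\mathsf{Exp}(\gamma/c)$ (by induction, edges of $\Upsilon_K$ are $\mathsf{Exp}(\gamma)$, then rescaled by $c$), and the Poisson insertions on top of them occur at rate $\gamma(c-1)/c$. The crucial calculation is the superposition of these two independent exponential clocks: the spacings between consecutive events along a branch (whether ``Poisson insertion'' or ``end of an underlying $\Upsilon_K'$ segment'') become i.i.d. $\mathsf{Exp}\!\bigl(\gamma/c+\gamma(c-1)/c\bigr)=\mathsf{Exp}(\gamma)$. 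Together with the fact that the newly attached leaf edges (both Poisson-attached and pair-attached) are declared to have length $\mathsf{Exp}(\gamma)$ in the construction, this shows that all edges of $\Upsilon_{K+1}$ are i.i.d. $\mathsf{Exp}(\gamma)$, closing the induction and delivering $\Upsilon_{K+1}\sim\mu_{K+1}$.
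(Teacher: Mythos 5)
Your proof is correct and follows essentially the same route as the paper's: induction on $K$, with the Markov property read off the construction, the base case $\Upsilon_1\stackrel{d}{\sim}\mu_1$, and the inductive step checking that the $c$-rescaling plus the new leaf attachments reproduce exactly the side-branching rates $\lambda_j T_{j-i}$ and termination rates $\lambda_j$ of the critical Tokunaga process. Your additional superposition-of-exponential-clocks verification that the edges of $\Upsilon_{K+1}$ are i.i.d.\ $\mathsf{Exp}(\gamma)$ is sound but redundant relative to the paper's argument, which obtains that property afterwards as a consequence of Prop.~\ref{Tok_tree} once the HBP structure is established.
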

\begin{proof}
The process construction readily implies the Markov property, and
ensures that ${\sf ord}(\Upsilon_K)=K$ and $\cR(\Upsilon_{K+1})=\Upsilon_K$.
Next, we show that a random tree $\Upsilon_K$ satisfies 
Def.~\ref{def:HBP}, conditioned on the tree order $K\ge 1$, with the critical
Tokunaga parameters 
\[\lambda_j=\gamma c^{1-j}\quad\text{and}\quad T_k = (c-1)c^{k-1}.\]
The tree $\Upsilon_1$ has exponential edge length with parameter $\lambda_1=\gamma$
and no side branching, hence $\Upsilon_1\stackrel{d}{\sim} \mu_1$.
Assume now that $\Upsilon_K\stackrel{d}{\sim} \mu_K$ for some $K\ge 1$
and establish each of the properties of Def.~\ref{def:HBP}, except the tree order property (i), 
for $\Upsilon_{K+1}$.

Property Def.~\ref{def:HBP}(ii). 
Fix any $j$ such that $1<j\le K$. 
Every branch of order $j$ in $\Upsilon_{K+1}$ is formed by 
a branch of order $j-1$ in $\Upsilon_{K}$.
In particular, the length of the branch is multiplied by $c$.
Accordingly, every branch of order $j$ within $\Upsilon_{K+1}$ 
produces offspring of every order $i$ such that $1<i<j$ with rate 
\[c^{-1}\left(\lambda_{j-1} T_{(j-1)-(i-1)}\right)  = c^{-1}\gamma c^{1-(j-1)} T_{j-i} 
= \gamma c^{1-j} T_{j-i} =\lambda_j T_{j-i}.\] 
By construction, the side branches of order $i=1$ are generated with rate 
\[\gamma(c-1)c^{-1} = \lambda_j T_{j-1}.\] 
This establishes property (ii).

Property Def.~\ref{def:HBP}(iii). 
Using the same argument as above, each branch of order $j>1$ in $\Upsilon_{K+1}$ terminates with rate
$c^{-1}\lambda_{j-1} = \lambda_j.$
By construction, each branch of order $i=1$ terminates with rate
$\gamma = \lambda_1.$
This establishes property (iii).

Properties Def.~\ref{def:HBP}(iv,v,vi) 
follow trivially from the process construction.
This completes the proof.

%
\end{proof}

\noindent
Notice that sampling a random variable $\kappa \stackrel{d}{\sim} {\sf Geom}_1\left({1 \over 2}\right)$ independently of the process $\Upsilon_K$, we have
the stopped process $\Upsilon_\kappa \stackrel{d}{\sim} \mu$.

\subsubsection{Martingale representation of tree size and length}
\label{sec:martingale}
By construction, the pairs $(X_K,Y_K)$ and $(X_{K+1},Y_{K+1})$ are related in an iterative way as follows.
Conditioned on the values of $(X_K,Y_K)$, we have
 \be\label{eqn:TokMartingaleX}
X_{K+1}=2X_K+V_K,
\ee
where 
$V_K\stackrel{d}{\sim}{\sf Poi}\big(\gamma (c-1)Y_K\big)$ is the number of 
side branches of order one attached to $\Upsilon'_K$.  
Next, conditioning on $X_{K+1}$, we have
 \be\label{eqn:TokMartingaleY}
Y_{K+1}=U_K+cY_K,
\ee
where 
$U_K\stackrel{d}{\sim}{\sf Gamma}\big(X_{K+1},\gamma\big)$
is the sum of $X_{K+1}$ i.i.d. edge lengths, 
each exponentially distributed with parameter $\gamma$.

\medskip
\noindent
\begin{lem}[{{\bf Martingale representation}}]
\label{lem:TokMartingale}
The sequence
\be\label{eqn:TokMartingale4}
M_K=R^{1-K}\left(\vphantom{I^{I^I}}X_K+\gamma(c-1)Y_K\right)\text{ with } K\in\mathbb{N}
\ee
is a martingale with respect to the Markov tree process $\big\{\Upsilon_K\big\}_{K \in \mathbb{N}}$.
\end{lem}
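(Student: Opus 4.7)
The plan is to verify the martingale identity $\mathbb{E}[M_{K+1}\mid \mathcal{F}_K]=M_K$ by a direct computation of conditional expectations, using the two iterative relations \eqref{eqn:TokMartingaleX}--\eqref{eqn:TokMartingaleY} together with the critical Tokunaga identity $R=2c$ from Proposition~\ref{Tok_tree}(3). Integrability is clear because $X_K$ and $Y_K$ have finite expectation at each fixed step: $X_1=1$, $Y_1\sim{\sf Exp}(\gamma)$, and the recursions add a Poisson and a Gamma increment with finite mean, so one can propagate $\mathbb{E}[X_K],\mathbb{E}[Y_K]<\infty$ inductively.

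First I would take expectations of \eqref{eqn:TokMartingaleX} given the $\sigma$-algebra $\mathcal{F}_K:=\sigma(\Upsilon_1,\dots,\Upsilon_K)$, which by construction also carries $(X_K,Y_K)$. Since $V_K$ conditional on $\mathcal{F}_K$ is Poisson with mean $\gamma(c-1)Y_K$, I get
\begin{equation*}
\mathbb{E}[X_{K+1}\mid \mathcal{F}_K]=2X_K+\gamma(c-1)Y_K.
\end{equation*}
Next, conditioning \eqref{eqn:TokMartingaleY} first on $X_{K+1}$ and then on $\mathcal{F}_K$, and using $\mathbb{E}[U_K\mid X_{K+1}]=X_{K+1}/\gamma$,
\begin{equation*}
\mathbb{E}[Y_{K+1}\mid \mathcal{F}_K]=\tfrac{1}{\gamma}\,\mathbb{E}[X_{K+1}\mid \mathcal{F}_K]+cY_K=\tfrac{2}{\gamma}X_K+(2c-1)Y_K.
\end{equation*}

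Now I would plug these into the linear functional $X+\gamma(c-1)Y$:
\begin{align*}
\mathbb{E}\big[X_{K+1}+\gamma(c-1)Y_{K+1}\mid \mathcal{F}_K\big]
&=\bigl(2X_K+\gamma(c-1)Y_K\bigr)+\gamma(c-1)\bigl(\tfrac{2}{\gamma}X_K+(2c-1)Y_K\bigr)\\
&=2cX_K+2c\,\gamma(c-1)Y_K\\
&=R\bigl(X_K+\gamma(c-1)Y_K\bigr),
\end{align*}
using $R=2c$. Multiplying by $R^{-K}$ yields $\mathbb{E}[M_{K+1}\mid \mathcal{F}_K]=M_K$, completing the verification.

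There is essentially no hard step: the content is in the precise choice of the linear combination $X+\gamma(c-1)Y$ and the scaling factor $R^{1-K}$, which are engineered so that the $2\times 2$ matrix describing the conditional expectation of $(X_{K+1},Y_{K+1})$ given $(X_K,Y_K)$, namely $\begin{pmatrix}2 & \gamma(c-1)\\ 2/\gamma & 2c-1\end{pmatrix}$, has $(1,\gamma(c-1))^{\top}$ as a left eigenvector with eigenvalue $R=2c$. The only point that warrants a line of care is making sure the conditioning is done in the right order in the computation for $Y_{K+1}$, since $U_K$ depends on $X_{K+1}$ and $X_{K+1}$ depends on the extra randomness $V_K$ not in $\mathcal{F}_K$; this is handled cleanly by the tower property.
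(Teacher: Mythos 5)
Your proof is correct and follows essentially the same route as the paper: both compute the conditional expectations $\E[X_{K+1}\mid\Upsilon_K]$ and $\E[Y_{K+1}\mid\Upsilon_K]$ from the recursions and then exploit the fact that $\big(1,\gamma(c-1)\big)$ is a left eigenvector of the transition matrix $\mathbb{M}$ with eigenvalue $R=2c$. The only cosmetic difference is that the paper passes through the vector-valued martingale $\mathbb{M}^{1-K}(X_K,Y_K)^{\top}$ before projecting onto the eigenvector, whereas you verify the scalar identity directly.
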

\begin{proof}
Taking conditional expectations in \eqref{eqn:TokMartingaleX} and \eqref{eqn:TokMartingaleY} gives 
\begin{eqnarray}
\label{eqn:TokMartingale1}
\E [X_{K+1}\,|\Upsilon_K]&=&2X_K+\gamma (c-1)Y_K,\\
\label{eqn:TokMartingale2}
\E [Y_{K+1}\,|\Upsilon_K]&=&\gamma^{-1} \E [X_{K+1}\,|\Upsilon_K]+cY_K\nonumber\\
&=&2\gamma^{-1}X_K+(2c-1)Y_K.
\end{eqnarray}
This can be summarized as
\be\label{eqn:TokMartingale3}
\E\left[\left(\!\!\begin{array}{c}X_{K+1} \\Y_{K+1}\end{array}\!\!\right) \,\Big|\Upsilon_K \right]=
\mathbb{M}\left(\!\!\begin{array}{c}X_K \\Y_K\end{array}\!\!\right),
\ee
where
\[\mathbb{M} = 
\left[\!\!\begin{array}{cc}2 & \gamma (c-1) \\2\gamma^{-1} & 2c-1\end{array}\!\!\right].\]
The eigenvalues of the matrix $\mathbb{M}$ are $R=2c$ and $1$.
The largest eigenvalue equals the Horton exponent $R$; the respective
eigenspace is  
$y=2\gamma^{-1}x$.
Equation \eqref{eqn:TokMartingale3} implies that
$$\mathbb{M}^{1-K}\!\left(\!\!\begin{array}{c}X_K \\Y_K\end{array}\!\!\right)={1 \over 2c-1}\left(\!\!\begin{array}{c}[R^{1-K}+2(c-1)]X_K+\gamma(c-1)[R^{1-K}-1]Y_K \\  \\ 2\gamma^{-1}[R^{1-K}-1]X_K+[2(c-1)R^{1-K}+1]Y_K\end{array}\!\!\right)$$
 is a vector valued martingale with respect to the Markov tree 
 process $\big\{\Upsilon_K\big\}_{K \in \mathbb{N}}$. 
Multiplying this martingale by the left eigenvector $\Big(1,\,\gamma(c-1)\Big)$ of
$\mathbb{M}$ that corresponds to the largest eigenvalue $R$, 
 we obtain a scalar martingale with respect to $\big\{\Upsilon_K\big\}_{K \in \mathbb{N}}$:
$$\Big(1,\,\gamma(c-1)\Big)\mathbb{M}^{1-K}\!\left(\!\!\begin{array}{c}X_K \\Y_K\end{array}\!\!\right)=R^{1-K}\left(X_K+\gamma(c-1)Y_K\right).$$
This completes the proof.
\end{proof}
 
\medskip
\noindent
\begin{lem}\label{lem:YoverXas}
Suppose $\mu=S^{\rm Tok}(t;c,\gamma)$ is the distribution of a critical Tokunaga process,
and $\big\{\Upsilon_K\big\}_{K \in \mathbb{N}}$ is the corresponding Markov tree process.
Then, 
\be\label{eqn:TokMartingale5}
Y_K/X_K \rightarrow 2\gamma^{-1}~a.s. ~\text{ as }~K \rightarrow \infty.
\ee 
\end{lem}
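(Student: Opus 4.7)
The plan is to leverage the martingale from Lemma~\ref{lem:TokMartingale} together with a second, complementary martingale built from the sub-dominant eigenvalue of the mean matrix $\mathbb{M}$. The matrix $\mathbb{M}$ has eigenvalues $R=2c$ and $1$; the left eigenvector for the eigenvalue $1$ is $(2,-\gamma)$, so I expect $N_K := 2X_K - \gamma Y_K$ to be a martingale with respect to $\{\Upsilon_K\}$, and a direct check from \eqref{eqn:TokMartingale1}--\eqref{eqn:TokMartingale2} confirms $\E[N_{K+1}\mid \Upsilon_K] = N_K$. Solving the $2\times 2$ linear system defined by $M_K$ and $N_K$ for $(X_K,Y_K)$ yields
\[
(2c-1)\,X_K \;=\; R^{K-1}M_K + (c-1)N_K, \qquad (2c-1)\,\gamma\, Y_K \;=\; 2R^{K-1}M_K - N_K,
\]
so that
\[
\frac{Y_K}{X_K}\;=\;\frac{1}{\gamma}\cdot\frac{2M_K - N_K/R^{K-1}}{M_K + (c-1)N_K/R^{K-1}}.
\]
Thus it suffices to establish (a) $M_K \to M_\infty$ a.s.\ with $M_\infty > 0$ a.s., and (b) $N_K/R^K \to 0$ a.s.

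For (b) I would carry out an $L^2$ estimate. Using the Poisson/Gamma conditional structure of \eqref{eqn:TokMartingaleX}--\eqref{eqn:TokMartingaleY} to compute the conditional covariance $\Sigma_K$ of $(X_{K+1},Y_{K+1})$ given $\Upsilon_K$, a short calculation gives the clean identity
\[
\Var(N_{K+1}-N_K\mid \Upsilon_K) \;=\; (2,-\gamma)\,\Sigma_K\,(2,-\gamma)^{\!\top} \;=\; 2\bigl(X_K+\gamma(c-1)Y_K\bigr) \;=\; 2R^{K-1}M_K.
\]
Orthogonality of martingale differences and $\E[M_K]\equiv c$ then force $\E[N_K^2]=O(R^K)$; Chebyshev combined with Borel--Cantelli on the summable series $\sum_K R^{-K}$ yields (b). The same covariance calculation applied to the eigenvector $(1,\gamma(c-1))$ gives $\E[(M_{K+1}-M_K)^2]=O(R^{-K})$, so $M_K$ is $L^2$-bounded, converges in $L^2$ (and a.s.\ by Doob), and $\E[M_\infty]=\E[M_1]=c>0$.

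The main obstacle is the positivity part of (a): $L^2$-convergence only guarantees ${\sf P}(M_\infty>0)>0$. To upgrade this to ${\sf P}(M_\infty>0)=1$, I would re-cast the critical Tokunaga tree, via its HBP description in Sect.~\ref{HBP:def}, as a supercritical continuous-time multitype branching process (with edges grouped by Horton--Strahler order) and invoke the classical Kesten--Stigum non-degeneracy theorem for the principal eigenvalue martingale limit; the required $L\log L$ (indeed, all-moment) condition holds trivially since the offspring distributions are Poisson/geometric. An alternative is a direct self-similar zero-one law for the event $\{M_\infty=0\}$ built from the Markov recursion $\cR(\Upsilon_{K+1})=\Upsilon_K$ together with the deterministic growth $X_K\ge 2^{K-1}$. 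Either route, combined with $\E[M_\infty]=c>0$, forces ${\sf P}(M_\infty=0)=0$, and the displayed algebraic identity then delivers $Y_K/X_K \to 2/\gamma$ almost surely.
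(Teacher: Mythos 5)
Your proposal is built on correct computations --- the identification of $N_K=2X_K-\gamma Y_K$ as the eigenvalue-$1$ martingale, the conditional-variance identity $\Var(N_{K+1}-N_K\mid\Upsilon_K)=2R^{K-1}M_K$, the resulting bound $\E[N_K^2]=O(R^K)$, the $L^2$-boundedness of $M_K$, and the algebra expressing $Y_K/X_K$ through $M_K$ and $N_KR^{1-K}$ all check out. But the argument has a genuine gap at exactly the point you flag as ``the main obstacle'': the almost sure positivity of $M_\infty$. Your displayed identity degenerates to $0/0$ on the event $\{M_\infty=0\}$, so positivity is not a refinement but a prerequisite, and neither of your two routes to it is carried out. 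The Kesten--Stigum route, as you describe it (``edges grouped by Horton--Strahler order''), does not produce a finite-type, positively regular Galton--Watson process: under the transition $\cR(\Upsilon_{K+1})=\Upsilon_K$ every order is shifted up by one, so a type-$j$ edge only begets types $j+1$ and $1$, the number of types grows with $K$, and the classical theorem does not apply as stated. (A recasting that \emph{does} work groups edges into just two types, leaf edges versus internal edges, each edge independently spawning a ${\sf Geom}_0(1/c)$ number of side-branch leaves; but that is not what you wrote, and you would still have to verify positive regularity and a.s.\ survival.) In the paper, this positivity statement is a separate and genuinely harder result (Lemma~\ref{lem:LimNotZero}), proved by a Chernoff-bound/Borel--Cantelli argument, and it is \emph{not} used to prove the present lemma --- the logical order is the reverse: Lemma~\ref{lem:YoverXas} is proved first and then fed into Theorem~\ref{thm:DoobMartingaleSHL} alongside Lemma~\ref{lem:LimNotZero}.

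The paper's own proof is far more elementary and avoids the martingale machinery entirely. Conditioned on $X_K$, the length $Y_K$ is a sum of $2X_K-1$ i.i.d.\ ${\sf Exp}(\gamma)$ edge lengths (Prop.~\ref{Tok_tree}), so $\E[Y_K\mid X_K]=(2X_K-1)/\gamma$ and $\Var(Y_K/X_K\mid X_K)=(2X_K-1)/(\gamma^2X_K^2)\le 2/(\gamma^2 X_K)$; since $X_K\ge 2^{K-1}$ deterministically, a \emph{conditional} Chebyshev bound gives
\[
{\sf P}\left(\left|\frac{Y_K}{X_K}-\frac{2}{\gamma}\right|\ge\epsilon\right)\le \frac{C}{\epsilon^2}\,2^{-K},
\]
which is summable, and Borel--Cantelli finishes. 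The key difference from your approach is the conditioning: you control the \emph{unconditional} second moment $\E[N_K^2]=O(R^K)$ and must then divide by something comparable to $R^K$, which forces you to prove $X_KR^{-K}$ stays bounded away from zero; the paper instead divides by $X_K$ \emph{before} taking expectations, where the deterministic lower bound $X_K\ge 2^{K-1}$ suffices. I recommend replacing your argument with this conditional second-moment computation; if you wish to keep your route, you must supply a complete proof that ${\sf P}(M_\infty=0)=0$, which is essentially the content of Lemma~\ref{lem:LimNotZero}.
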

\begin{proof}
Recall that $Y_K$ is a sum of $2X_K-1$ independent edge lengths, each being exponentially distributed with parameter $\gamma$.
Thus, since $X_K=N_1[\Upsilon_K] \geq 2^{K-1}$, the Chebyshev inequality implies for any $\epsilon>0$,
\begin{align*}
\sum\limits_{k=1}^\infty {\sf P}\left(\left|{Y_K \over X_K} -2\gamma^{-1}\right| \geq \epsilon \right) &\leq \epsilon^{-2} \sum\limits_{k=1}^\infty \Var \left({Y_K \over X_K} -2\gamma^{-1}\right) \\
&\leq \epsilon^{-2} \sum\limits_{k=1}^\infty \E \left[ \E \left[\left({Y_K \over X_K} -2\gamma^{-1}\right)^2 \,\Big| X_K \right] \right]\\
&= \epsilon^{-2} \gamma^{-2} \sum\limits_{k=1}^\infty \E \left[ X_K^{-1}+X_K^{-2}\right]\\
&\leq \epsilon^{-2} \gamma^{-2} \sum\limits_{k=1}^\infty \left(2^{1-K}+2^{2(1-K)}\right) ~~<\infty.
\end{align*}
as $\E[Y_K\,|X_K]=2\gamma^{-1}X_K-\gamma^{-1}$ and $\E[Y_K^2\,|X_K]=4\gamma^{-2}X_K^2-3\gamma^{-2}X_K+\gamma^{-1}$.

\noindent
Hence, by the Borel-Cantelli lemma, we arrive with the almost sure convergence in \eqref{eqn:TokMartingale5}.
\end{proof} 

\medskip
\noindent
\begin{lem}\label{lem:LimNotZero}
Suppose $\mu=S^{\rm Tok}(t;c,\gamma)$ is the distribution of a critical Tokunaga process,
and $\big\{\Upsilon_K\big\}_{K \in \mathbb{N}}$ is the corresponding Markov tree process.
Then,
$${\sf P}\left(\lim\limits_{K \rightarrow \infty} R^{1-K}X_K=0 \right)=0.$$
\end{lem}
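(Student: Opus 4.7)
The plan is to reduce the problem about $R^{1-K}X_K$ to positivity of the martingale limit $M_\infty$ from Lemma~\ref{lem:TokMartingale}, and then rule out $M_\infty=0$ via a self-similarity dichotomy of branching-process type.

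My first step would be to establish $L^2$-boundedness of $\{M_K\}$. Starting from the recursions $X_{K+1}=2X_K+V_K$ with $V_K\mid\Upsilon_K\sim{\sf Poi}(\gamma(c-1)Y_K)$ and $Y_{K+1}=cY_K+U_K$ with $U_K\mid\Upsilon_{K+1}\sim{\sf Gamma}(X_{K+1},\gamma)$, a law-of-total-variance computation (conditioning first on $\Upsilon_{K+1}$, then on $\Upsilon_K$) yields
\[
\mathrm{Var}(M_{K+1}\mid\Upsilon_K)=R^{-2K}\bigl[\,2(c-1)^2 X_K+\gamma(c-1)\bigl((c-1)^2+c^2\bigr)Y_K\,\bigr].
\]
Since the mean-recursion matrix $\mathbb{M}$ from the proof of Lemma~\ref{lem:TokMartingale} has eigenvalues $R$ and $1$, both $E[X_K]$ and $E[Y_K]$ grow proportionally to $R^{K-1}$, so $E[\mathrm{Var}(M_{K+1}\mid\Upsilon_K)]=O(R^{-K})$, which is summable. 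Together with $E[M_1^2]<\infty$, this gives $\sup_K E[M_K^2]<\infty$, and hence by Doob's $L^2$-martingale theorem $M_K\to M_\infty$ in $L^2$ and $E[M_\infty]=E[M_1]=c>0$. Combining with Lemma~\ref{lem:YoverXas},
\[
R^{1-K}X_K=\frac{M_K}{1+\gamma(c-1)\,Y_K/X_K}\xrightarrow{\mathrm{a.s.}}\frac{M_\infty}{2c-1}=:W,
\]
so the event in question coincides almost surely with $\{W=0\}$, and it remains to show $P(W=0)=0$.

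For this, I would invoke the Horton self-similarity of the critical Tokunaga tree (Prop.~\ref{Tok_tree}(1)) together with the independence of offspring and side-branches in Def.~\ref{def:HBP}(iv,vi). The two complete subtrees of order $K-1$ emanating from the endpoint of the order-$K$ trunk of $\Upsilon_K$ are i.i.d.\ copies of $\Upsilon_{K-1}$ and are independent of the side subtrees attached along the trunk. Writing $W_K:=R^{1-K}X_K$, this yields the decomposition
\[
W_K=R^{-1}\bigl(W^{(1)}_{K-1}+W^{(2)}_{K-1}\bigr)+\epsilon_K,
\]
with $W^{(1)}_{K-1},W^{(2)}_{K-1}\stackrel{d}{=}W_{K-1}$ independent and $\epsilon_K\geq 0$ independent of the pair. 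Taking Laplace transforms $\phi_K(s)=E[e^{-sW_K}]$ produces $\phi_K(s)=\phi_{K-1}(s/R)^2\,\psi_K(s)$ with $\psi_K(s)\le 1$. Passing to the limit (Laplace transforms are continuous under weak convergence) gives $\phi(s)\le\phi(s/R)^2$ for the limit Laplace transform $\phi$ of $W$. Sending $s\to\infty$ yields $q\le q^2$ for $q:=P(W=0)$, so $q\in\{0,1\}$; and $E[W]=c/(2c-1)>0$ rules out $q=1$.

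The main obstacle is the limit passage in the self-similar decomposition: the identity is exact for each finite $K$, but $\epsilon_K$ need not possess a distributional limit, and $W$ may carry an atom at $0$. The Laplace-transform route circumvents both difficulties, since it only requires $\psi_K(s)\le 1$ and continuity of Laplace transforms, after which the dichotomy $q\in\{0,1\}$ follows immediately and the $L^1$-convergence $W_K\to W$ provided by Step~1 disposes of $q=1$.
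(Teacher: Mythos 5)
Your proof is correct, but it takes a genuinely different route from the paper's. The paper works directly with the ratio $X_{K+1}/(RX_K)$: it computes the moment generating function of $V_K$ conditioned on $X_K=x$ (a Poisson--Gamma mixture), applies a Chernoff bound to get ${\sf P}\big(X_{K+1}/(RX_K)\le 1-\varepsilon\,\big|\,X_K=x\big)\le (1-\varepsilon)^{-1}\exp\{-x(\tfrac{c}{c-1}\varepsilon^2+O(\varepsilon^3))\}$, uses $X_K\ge 2^{K-1}$ to make this summable with $\varepsilon=1-e^{-1/K^2}$, and concludes by Borel--Cantelli that $\sum_K\log\big(R^{1-K}X_K/(R^{-K}X_{K+1})\big)<\infty$ a.s., so the telescoping product cannot diverge and $R^{1-K}X_K$ cannot tend to $0$. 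You instead upgrade the martingale $M_K$ to an $L^2$-bounded one (your conditional-variance formula checks out, and $E[{\sf Var}(M_{K+1}|\Upsilon_K)]=O(R^{-K})$ is indeed summable), identify $W=M_\infty/(2c-1)$ as the a.s.\ limit via Lemma~\ref{lem:YoverXas}, and then exploit the branching decomposition $X_K=X_{K-1}^{(1)}+X_{K-1}^{(2)}+(\text{side-branch leaves})$ --- which is legitimate, since by Def.~\ref{def:HBP} the two principal complete subtrees of order $K-1$ are i.i.d.\ $\sim\mu_{K-1}$ and independent of the side subtrees --- to get $\phi(s)\le\phi(s/R)^2$ and the dichotomy $q\in\{0,1\}$, with $q=1$ excluded by uniform integrability and ${\sf E}[W]=c/(2c-1)>0$. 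Your approach buys more: $L^2$ convergence of $M_K$ and identification of ${\sf E}[M_\infty]=c$, plus a conceptually standard Kesten--Stigum-style argument; the paper's approach is more elementary and quantitative about the per-step ratios, requiring neither the second-moment computation nor the recursive subtree structure. The only point worth flagging (not a gap) is that your decomposition is a distributional identity for the marginals $\mu_K$ rather than a statement about the Markov coupling $\cR(\Upsilon_{K+1})=\Upsilon_K$; since the Laplace-transform inequality and the limit law of $W$ only involve marginals, this is harmless, but it deserves a sentence in a written-up version.
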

\begin{proof}
For a given integer $x \geq 2^{K-1}$, we condition on the event $X_K =x$.
Then, $Y_K$ is a sum of $2X_K-1=2x-1$ i.i.d. exponential edge lengths. 
Hence,  $Y_K\stackrel{d}{\sim}{\sf Gamma}\big(2x-1,\gamma\big)$. Finally, recall that in the setup of \eqref{eqn:TokMartingaleX}, 
$V_K\stackrel{d}{\sim}{\sf Poi}\big(\gamma (c-1)Y_K\big)$. 
Therefore, we can compute the moment generating function of $V_K$ conditioned on the event $X_K =x$ as follows
\begin{align}\label{eqn:mgfVK}
\mathcal{M}_{\sf v}(s;x)&:=\E\left[e^{sV_K}\,\big|X_K=x\right] \nonumber \\
&=\int\limits_0^\infty \sum\limits_{k=0}^\infty e^{sk} e^{-\gamma (c-1)y} {\big(\gamma (c-1)y\big)^k \over k!} {\gamma^{2x-1}y^{2x-2}e^{-\gamma y} \over \Gamma(2x-1)}\,dy \nonumber \\
&=\int\limits_0^\infty e^{-\gamma\big(c-(c-1)e^s\big)} {\gamma^{2x-1}y^{2x-2} \over \Gamma(2x-1)}\,dy \nonumber \\
&={1 \over \big(c-(c-1)e^s\big)^{2x-1}},
\end{align}
with the domain $s \in \left(-\infty,\,\log{c \over c-1}\right)$.

\medskip
\noindent
Next, we use \eqref{eqn:mgfVK} in the exponential Markov inequality (a.k.a. Chernoff bound).
For a given $\varepsilon \in \Big(0,(c-1)c^{-1}\Big)$ and $x \geq 2^{K-1}$, by \eqref{eqn:TokMartingaleX} we have, for all $s \geq 0$,
\begin{eqnarray}\label{eqn:VK_Chernoff1}
\lefteqn{{\sf P}\left({X_{K+1} \over RX_K} \leq 1-\varepsilon \,\big|X_K=x\right)}\nonumber\\ 
&=&{\sf P}\left(-sV_K \geq -2s\big((1-\varepsilon)c-1\big)x \,\big|X_K=x\right) \nonumber \\
&\leq& e^{2s\big((1-\varepsilon)c-1\big)x} \mathcal{M}_{\sf v}(-s;x) \nonumber \\
&=& {e^{2s\big((1-\varepsilon)c-1\big)x} \over \big(c-(c-1)e^{-s}\big)^{2x-1}} \nonumber \\
&=&\big(c-(c-1)e^{-s}\big)\left({e^{(1-\varepsilon)cs} \over ce^s-(c-1)} \right)^{2x}.
\end{eqnarray}

\medskip
\noindent
We find the extreme value of ${e^{(1-\varepsilon)cs} \over ce^s-(c-1)}$ in \eqref{eqn:VK_Chernoff1}, 
and substitute 
$$e^s={(1-\varepsilon)(c-1) \over (1-\varepsilon)c-1}={1-\varepsilon \over 1-{c \over c-1}\varepsilon}$$
into the right hand side of \eqref{eqn:VK_Chernoff1}, obtaining
\begin{eqnarray}\label{eqn:VK_Chernoff2}
\lefteqn{{\sf P}\left({X_{K+1} \over RX_K} \leq 1-\varepsilon \,\big|X_K=x\right)}\nonumber\\
&\leq& \big(c-(c-1)e^{-s}\big)\left({e^{(1-\varepsilon)cs} \over ce^s-(c-1)} \right)^{2x} 
\nonumber \\
&=&(1-\varepsilon)^{-1}\left(\left(1-{c \over c-1}\varepsilon\right)\left({1-\varepsilon \over 1-{c \over c-1}\varepsilon}\right)^{(1-\varepsilon)c} \right)^{2x} \nonumber \\
&=&(1-\varepsilon)^{-1} \exp\left\{-x\left({c \over c-1}\varepsilon^2+O(\varepsilon^3)\right)\right\}.
\end{eqnarray}

\medskip
\noindent
Now, since $X_K \geq 2^{K-1}$, \eqref{eqn:VK_Chernoff2} implies
\begin{align}\label{eqn:XKupperbound}
{\sf P}\left({X_{K+1} \over RX_K} \leq 1-\varepsilon \right)
&=\sum\limits_{x=2^{K-1}}^\infty {\sf P}\left({X_{K+1} \over RX_K} \leq 1-\varepsilon \,\big|X_K=x\right) {\sf P}(X_K=x) \nonumber \\
&\leq  \exp\left\{-2^{K-1}\left({c \over c-1}\varepsilon^2+O(\varepsilon^3)\right)\right\}.
\end{align}

\medskip
\noindent
Next, plugging $\varepsilon =1-e^{-1/K^2}$ into \eqref{eqn:XKupperbound}, we find that
\be\label{eqn:ratioBC1}
\sum\limits_{K=1}^\infty {\sf P}\left({X_{K+1} \over RX_K} \leq e^{-1/K^2}\right) <\infty,
\ee
and equivalently, 
\be\label{eqn:ratioBC2}
\sum\limits_{K=1}^\infty {\sf P}\left(\log\left({R^{1-K}X_K \over R^{-K}X_{K+1}}\right) \geq {1 \over K^2}\right) <\infty,
\ee

\medskip
\noindent
Therefore, by the Borel-Cantelli lemma, 
\be\label{eqn:ratioBC}
{\sf P}\left(\left|\left\{K \in \mathbb{N}\,:\,\log\left({R^{1-K}X_K \over R^{-K}X_{K+1}}\right) \geq {1 \over K^2}\right\}\right|<\infty \right)=1,
\ee
where $|\cdot |$ denotes the magnitude of sets. Hence, as $\sum\limits_{K=1}^\infty  {1 \over K^2} <\infty$,
\begin{align}\label{eqn:ratioBCmain}
&{\sf P}\left(\lim\limits_{K \rightarrow \infty} R^{1-K}X_K=0 \right)
= {\sf P}\left(\lim\limits_{S \rightarrow \infty} \prod\limits_{K=1}^S {R^{1-K}X_K \over R^{-K}X_{K+1}}=\infty \right) \nonumber \\
&= {\sf P}\left(\prod\limits_{K=1}^\infty {R^{1-K}X_K \over R^{-K}X_{K+1}}=\infty \right) 
= {\sf P}\left(\sum\limits_{K=1}^\infty \log\left({R^{1-K}X_K \over R^{-K}X_{K+1}}\right)=\infty \right)=0.
\end{align}
This completes the proof.
\end{proof}

\subsubsection{Strong Horton laws in a critical Tokunaga tree}
\label{sec:Tok_Horton}
The martingale representation of Lemma~\ref{lem:TokMartingale}
has an immediate implication for the asymptotic behavior of the 
average size of a critical Tokunaga tree, stated below.

\begin{cor}[{{\bf Strong Horton law for mean branch numbers}}]
\label{cor:TokMartingaleSHL}
Suppose $\mu=S^{\rm Tok}(t;c,\gamma)$ is the distribution of a critical Tokunaga process with 
$c\ge 1$. Then, the following closed form expression holds
for all $1 \leq  k \leq K$:
\be\label{eqn:NkClosedForm}
{(2c-1)\cN_k[K]-(c-1) \over (2c-1)\cN_1[K]-(c-1)}= R^{1-k},\text{ with } R=2c.
\ee
Consequently, $\mu=S^{\rm Tok}(t;c,\gamma)$ satisfies the strong Horton law for mean branch numbers (Def. \ref{def:Horton_mean}).
The equation \eqref{eqn:NkClosedForm} implies, in particular, 
\be\label{eq:N1K}
\cN_1[K] = \frac{R^{K-1}c+c-1}{2c-1} = \frac{R^K+R-2}{2(R-1)}.
\ee
\end{cor}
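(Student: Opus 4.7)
The plan is to exploit the two-dimensional martingale structure from Lemma~\ref{lem:TokMartingale} to pin down $\cN_1[K]$ exactly, and then bootstrap to all $k$ via the shift identity for mean Horton self-similar measures (Prop.~\ref{prop1_Horton}). Since $\{M_K\}$ is a martingale with respect to the filtration of the Markov tree process $\{\Upsilon_K\}$, taking expectations gives $\E[M_K]=\E[M_1]$ for every $K\ge 1$, which will translate into a single linear equation for $\cN_1[K]$ once both $\E[X_K]$ and $\E[Y_K]$ are expressed in terms of branch-counting averages.

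First I would compute the base value. Since $\Upsilon_1$ consists of a single I-shaped edge with $X_1=1$ and $Y_1\stackrel{d}{\sim}{\sf Exp}(\gamma)$, we get
\[
\E[M_1]\;=\;1+\gamma(c-1)\cdot\gamma^{-1}\;=\;c.
\]
Next I would identify $\E[X_K]=\cN_1[K]$, as $X_K=N_1[\Upsilon_K]$ is by definition the number of leaves, and rewrite $\E[Y_K]$ in terms of the same quantity. Here Prop.~\ref{Tok_tree}(2) is the crucial input: in a critical Tokunaga tree the edges are i.i.d.\ ${\sf Exp}(\gamma)$ regardless of combinatorial shape. A planted reduced binary tree with $n$ leaves has exactly $2n-1$ edges, so by Wald's identity
\[
\E[Y_K]\;=\;\gamma^{-1}\bigl(2\cN_1[K]-1\bigr).
\]
Substituting into the martingale identity $\E[X_K]+\gamma(c-1)\E[Y_K]=cR^{K-1}$ and collecting terms gives
\[
(2c-1)\,\cN_1[K]-(c-1)\;=\;c\,R^{K-1},
\]
which is precisely \eqref{eq:N1K} after dividing by $2c-1$ (this uses $c\ge 1$, so $2c-1>0$).

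To obtain the formula for general $k$, I would invoke Prop.~\ref{prop1_Horton}: since $\mu=S^{\rm Tok}(t;c,\gamma)$ is mean Horton self-similar by Thm.~\ref{HBPmain}(1), we have $\cN_{k+1}[K+1]=\cN_k[K]$, and iterating gives $\cN_k[K]=\cN_1[K-k+1]$ for $1\le k\le K$. Applying the closed-form expression above with $K$ replaced by $K-k+1$ yields
\[
(2c-1)\,\cN_k[K]-(c-1)\;=\;c\,R^{K-k},
\]
so the ratio in \eqref{eqn:NkClosedForm} equals $R^{K-k}/R^{K-1}=R^{1-k}$. The strong Horton law for mean branch numbers then follows immediately by sending $K\to\infty$ in
\[
\frac{\cN_k[K]}{\cN_1[K]}\;=\;\frac{c\,R^{K-k}+(c-1)}{c\,R^{K-1}+(c-1)}\;\longrightarrow\;R^{1-k}.
\]

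The argument is essentially algebraic once the martingale is in hand; there is no real obstacle. The one place where care is needed is the passage $\E[Y_K]=\gamma^{-1}(2\cN_1[K]-1)$: it relies on the i.i.d.\ exponential edge length property of Prop.~\ref{Tok_tree}(2), which is itself the deep structural fact specific to critical Tokunaga processes. Without that property, one would be forced to weight edge lengths by order-dependent rates $\lambda_jS_{j-1}$ and the expectation of $Y_K$ would couple to all of $\cN_j[K]$ rather than just $\cN_1[K]$, closing the loop only after further work.
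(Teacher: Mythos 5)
Your argument is correct and is essentially the paper's own proof: both rest on the martingale of Lemma~\ref{lem:TokMartingale} combined with the identity $\E[Y_K]=\gamma^{-1}(2\E[X_K]-1)$ coming from the i.i.d.\ exponential edge lengths. The only (cosmetic) difference is the order of deduction — you evaluate $\E[M_K]=\E[M_1]=c$ to get \eqref{eq:N1K} first and then propagate to general $k$ via Prop.~\ref{prop1_Horton}, whereas the paper compares $\E[M_{K-k+1}]$ with $\E[M_K]$ to obtain \eqref{eqn:NkClosedForm} directly and recovers \eqref{eq:N1K} by setting $k=K$.
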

\begin{proof}
Since $Y_K$ is a sum of $2X_K-1$ independent edge lengths,
each exponentially distributed with parameter $\gamma$, 
we have 
$\E[Y_K]=\gamma^{-1}(2\E[X_K]-1)$.
Therefore, 
\begin{align*}
\E[M_K]&=R^{1-K}\E[X_K]+\gamma(c-1)R^{1-K} \E[Y_K] \\
&=(2c-1)R^{1-K}\E[X_K]-(c-1)R^{1-K}.
\end{align*}
Furthermore, for all $1 \leq k \leq K$, substituting $K-k+1$ instead of $K$ in the above equation, we obtain
$$\E[M_{K-k+1}]=(2c-1)R^{k-K}\E[X_{K-k+1}]-(c-1)R^{k-K}.$$
Since $M_K$ is a martingale (see Lemma \ref{lem:TokMartingale}), we have
$\E[M_{K-k+1}]=\E[M_K]$. 
Hence,  
\begin{align*}
1&={\E[M_{K-k+1}] \over \E[M_K]}=R^{k-1}{(2c-1)\E[X_{K-k+1}]-(c-1) \over (2c-1)\E[X_K]-(c-1)} \\
&=R^{k-1}{(2c-1)\E\big[N_k[\Upsilon_K]\big]-(c-1) \over (2c-1) \E\big[N_1[\Upsilon_K]\big]-(c-1)} 
\end{align*}
as
$\E[X_{K-k+1}]=\E\big[N_k[\Upsilon_K]\big]$ and $\E[X_K]=\E\big[N_1[\Upsilon_K]\big]$.
This establishes \eqref{eqn:NkClosedForm}.
The strong Horton law \eqref{eq:Horton_mean} for mean branch numbers follows from \eqref{eqn:NkClosedForm}.
The expression \eqref{eq:N1K} is obtained by using $k=K$ in \eqref{eqn:NkClosedForm}.
This completes the proof.
\end{proof}

\noindent We also suggest an alternative proof that emphasizes the spectral property
of the transition matrix $\mathbb{M}$ of \eqref{eqn:TokMartingale3}.
\begin{proof}[Alternative proof of Corollary~\ref{cor:TokMartingaleSHL}]
Taking expectation in \eqref{eqn:TokMartingale3} we obtain, for any $K> 1$,
\be
\label{eq:EX}
\left(\begin{array}{l}\E[X_{K}]\ \\ \E[Y_{K}]\end{array}\right)=
\mathbb{M}\left(\begin{array}{l}\E[X_{K-1}] \\\E[Y_{K-1}]\end{array}\right)
= \mathbb{M}^{K-1}\left(\begin{array}{l}\E[X_1] \\\E[Y_1]\end{array}\right).
\ee
Since $Y_K$ is a sum of $2X_K-1$ independent edge lengths,
each exponentially distributed with parameter $\gamma$, 
we have 
$\E[Y_K]=\gamma^{-1}(2\E[X_K]-1)$.
Recall also that $\Big(1,\gamma(c-1)\Big)$ is the left eigenvector of $\mathbb{M}$ that corresponds
to the eigenvalue $R$. 
Accordingly,
\[\Big(1,\gamma(c-1)\Big)\mathbb{M}^{K-1} = R^{K-1}\Big(1,\gamma(c-1)\Big).\]
Premultiplying \eqref{eq:EX} by the eigenvector $\Big(1,\gamma(c-1)\Big)$ we hence obtain
\[(2c-1)\E[X_K]-(c-1) = R^{K-1}\Big((2c-1)\E[X_1]-(c-1)\Big),\]
which establishes \eqref{eqn:NkClosedForm}, since
$\E[X_{1}]=\E\big[N_K[\Upsilon_K]\big]$ and $\E[X_K]=\E\big[N_1[\Upsilon_K]\big]$.
The strong Horton law \eqref{eq:Horton_mean} for mean branch numbers follows from \eqref{eqn:NkClosedForm}.
The expression \eqref{eq:N1K} is obtained by using $k=K$ in \eqref{eqn:NkClosedForm}.
This completes the proof.
\end{proof}

\noindent The sizes of trees of distinct orders have fixed asymptotic ratios in 
a much stronger (almost sure) sense, as we show below.
\begin{thm}
\label{thm:DoobMartingaleSHL}
Suppose $\mu=S^{\rm Tok}(t;c,\gamma)$ is the distribution of a critical Tokunaga process,
and $\big\{\Upsilon_K\big\}_{K \in \mathbb{N}}$ is the corresponding Markov tree process.
Then,
\be\label{eqn:TokMartingale6}
{N_k[\Upsilon_K] \over N_1[\Upsilon_K]}\stackrel{a.s.}{\to}R^{1-k} \quad \text{as } K\to\infty.
\ee
\end{thm}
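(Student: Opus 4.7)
My plan is to deduce the almost sure Horton law from the martingale structure of Lemma~\ref{lem:TokMartingale} together with the asymptotic length-to-size ratio of Lemma~\ref{lem:YoverXas} and the non-degeneracy estimate of Lemma~\ref{lem:LimNotZero}. The starting observation is that the Markov tree construction of Sect.~\ref{sec:Markov_tree} satisfies $\cR(\Upsilon_{K+1})=\Upsilon_K$, and iterating this gives $\cR^{k-1}(\Upsilon_K)=\Upsilon_{K-k+1}$ for every $1\le k\le K$. Since pruning shifts Horton--Strahler orders by one, the number of branches of order $k$ in $\Upsilon_K$ coincides with the number of leaves in $\Upsilon_{K-k+1}$, i.e.\
\[
N_k[\Upsilon_K]=N_1[\Upsilon_{K-k+1}]=X_{K-k+1}.
\]
Thus it suffices to show that $X_{K-k+1}/X_K\stackrel{a.s.}{\to}R^{1-k}$, which in turn will follow once we establish that $R^{1-K}X_K$ converges almost surely to a strictly positive limit.

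The next step is to apply Doob's martingale convergence theorem to the nonnegative martingale
\[
M_K=R^{1-K}\bigl(X_K+\gamma(c-1)Y_K\bigr).
\]
Since $\E[M_K]=\E[M_1]=c<\infty$, we have $M_K\stackrel{a.s.}{\to}M_\infty$ for some integrable random variable $M_\infty\ge 0$. I now rewrite
\[
M_K=R^{1-K}X_K\Bigl(1+\gamma(c-1)\tfrac{Y_K}{X_K}\Bigr)
\]
and invoke Lemma~\ref{lem:YoverXas}, which gives $Y_K/X_K\to 2\gamma^{-1}$ a.s. Therefore the parenthesized factor converges almost surely to $1+2(c-1)=2c-1$, and so
\[
R^{1-K}X_K\stackrel{a.s.}{\longrightarrow} W:=\frac{M_\infty}{2c-1}.
\]
Lemma~\ref{lem:LimNotZero} asserts ${\sf P}(\lim_{K\to\infty}R^{1-K}X_K=0)=0$, hence $W>0$ almost surely.

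To conclude, fix $k\ge 1$. Applying the convergence $R^{1-K'}X_{K'}\to W$ along both subsequences $K'=K$ and $K'=K-k+1$, I obtain
\[
\frac{X_{K-k+1}}{X_K}
=R^{1-k}\cdot\frac{R^{k-K}X_{K-k+1}}{R^{1-K}X_K}
\stackrel{a.s.}{\longrightarrow}R^{1-k}\cdot\frac{W}{W}=R^{1-k},
\]
on the full-measure event $\{W>0\}$. Combined with the identification $N_k[\Upsilon_K]/N_1[\Upsilon_K]=X_{K-k+1}/X_K$ from the first step, this yields the claim.

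The only substantive difficulty in this scheme is the non-degeneracy of the martingale limit, i.e.\ ruling out $W=0$; this is exactly what Lemma~\ref{lem:LimNotZero} supplies via the Chernoff bound and Borel--Cantelli argument already carried out in the paper. Everything else reduces to bookkeeping built on Lemmas~\ref{lem:TokMartingale}--\ref{lem:LimNotZero} and the order-shifting property of the Markov tree process.
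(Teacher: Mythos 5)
Your proposal is correct and follows essentially the same route as the paper's own proof: Doob's convergence theorem applied to the martingale $M_K$ of Lemma~\ref{lem:TokMartingale}, the identification $\lim R^{1-K}X_K = \lim M_K/(2c-1)$ via Lemma~\ref{lem:YoverXas}, positivity of the limit from Lemma~\ref{lem:LimNotZero}, and the order-shift identity $N_k[\Upsilon_K]=X_{K-k+1}$. Your write-up simply makes a few intermediate computations (e.g.\ $1+2(c-1)=2c-1$) more explicit than the paper does.
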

\begin{proof}
Recall that by Lemma \ref{lem:TokMartingale}, $M_K$ defined in \eqref{eqn:TokMartingale4} is a martingale. 
Also, $M_K>0$ and is in $L^1$ for all $K \in \mathbb{N}$. Thus, by the Doob's Martingale Convergence Theorem,  $M_K$ converges almost surely. 
Hence, by \eqref{eqn:TokMartingale5}, $R^{1-K}X_K$ also converges almost surely, and 
\be\label{eq:Xlimit}
\lim\limits_{K \rightarrow \infty} R^{1-K}X_K=\lim\limits_{K \rightarrow \infty} {M_K \over 2c-1}.
\ee
In other words, for almost every trajectory of the process $\big\{\Upsilon_K\big\}_{K \in \mathbb{N}}$, we have
$R^{1-K}X_K=R^{1-K}N_1[\Upsilon_K]$ converging to a finite limit $V_\infty$, 
where $V_\infty$ is a random variable.
Hence, for any $k \in \mathbb{N}$, the random sequences 
$$R^{1-K}X_K=R^{1-K}N_1[\Upsilon_K]~~\text{ and }~~R^{k-K}X_{K-k+1}=R^{k-K}N_k[\Upsilon_K]$$ 
converge almost surely to the same finite $V_\infty$, where $V_\infty>0~a.s.$  by Lemma \ref{lem:LimNotZero}. The almost sure convergence \eqref{eqn:TokMartingale6} follows.
\end{proof}

\noindent
The almost sure convergence \eqref{eqn:TokMartingale6} in Theorem \ref{thm:DoobMartingaleSHL} implies the corresponding week convergence
$${\sf P}\left(\left|{N_k[\Upsilon_K] \over N_1[\Upsilon_K]}-R^{1-k}\right|>\epsilon\right) \to \,0\quad\text{ as }\quad K\to\infty,$$
via the Bounded Convergence Theorem. We restate it as the following corollary.
\begin{cor}[{{\bf Strong Horton law for branch numbers}}]
\label{cor:DoobMartingaleSHL}
The distribution $\mu=S^{\rm Tok}(t;c,\gamma)$ of a critical Tokunaga process satisfies the strong Horton law for branch numbers (Def. \ref{def:Horton_rv}). That is, for any $\epsilon>0$,
$$\mu_K\left(\left|\frac{N_k[T]}{N_1[T]}-R^{1-k}\right|>\epsilon\right) \to \,0\quad\text{ as }\quad K\to\infty.$$
\end{cor}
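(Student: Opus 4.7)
The plan is essentially a single-step deduction from Theorem \ref{thm:DoobMartingaleSHL}, exploiting the distributional identity built into the Markov tree process. By Lemma \ref{lem:Markov_tree}, the Markov tree process $\{\Upsilon_K\}_{K\in\mathbb{N}}$ is constructed so that $\Upsilon_K \stackrel{d}{\sim} \mu_K$ for every $K\geq 1$, and the branch counts $N_k$ are determined by the combinatorial shape of the tree. Consequently, for any fixed $k\geq 1$ the law of the ratio $N_k[\Upsilon_K]/N_1[\Upsilon_K]$ under the distribution of $\Upsilon_K$ coincides with the law of $N_k[T]/N_1[T]$ under $\mu_K$.

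With this identification in hand, I would invoke the standard fact that almost sure convergence implies convergence in probability on the same probability space. Theorem \ref{thm:DoobMartingaleSHL} gives
\[
\frac{N_k[\Upsilon_K]}{N_1[\Upsilon_K]} \stackrel{a.s.}{\longrightarrow} R^{1-k} \quad \text{as } K\to\infty,
\]
so in particular for every $\epsilon>0$,
\[
{\sf P}\!\left(\left|\frac{N_k[\Upsilon_K]}{N_1[\Upsilon_K]}-R^{1-k}\right|>\epsilon\right)\longrightarrow 0\quad\text{as } K\to\infty.
\]
Translating back via $\Upsilon_K \stackrel{d}{\sim}\mu_K$ yields exactly
\[
\mu_K\!\left(\left|\frac{N_k[T]}{N_1[T]}-R^{1-k}\right|>\epsilon\right)\longrightarrow 0\quad\text{as } K\to\infty,
\]
which is the strong Horton law for branch numbers as in Definition \ref{def:Horton_rv}.

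There is essentially no obstacle here, since all the work has been absorbed into the martingale argument for Theorem \ref{thm:DoobMartingaleSHL}. One small sanity check worth flagging explicitly is that $N_1[\Upsilon_K]\geq 2^{K-1}>0$ almost surely, so the ratio $N_k[\Upsilon_K]/N_1[\Upsilon_K]$ is well-defined without any convention on $0/0$; together with the uniform bound $N_k[\Upsilon_K]/N_1[\Upsilon_K]\leq 2^{1-k}$ noted in the proof of Lemma \ref{lem:SHLbn-SHLmean}, this also yields the corresponding $L^p$ convergence for all $p\geq 1$ by dominated convergence, should it ever be needed downstream.
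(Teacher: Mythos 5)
Your proposal is correct and follows essentially the same route as the paper: the paper likewise deduces the corollary directly from the almost sure convergence in Theorem~\ref{thm:DoobMartingaleSHL} (via the bounded convergence theorem applied to the indicators, i.e.\ the standard implication from a.s.\ convergence to convergence in probability), using that $\Upsilon_K\stackrel{d}{\sim}\mu_K$. Your added remarks on well-definedness of the ratio and the resulting $L^p$ convergence are correct but not needed for the statement.
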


\begin{cor}[{{\bf Asymptotic tree order}}]
\label{cor:Tok_order}
Suppose $\mu=S^{\rm Tok}(t;c,\gamma)$ is the distribution of a critical Tokunaga process,
and $\big\{\Upsilon_K\big\}_{K \in \mathbb{N}}$ is the corresponding Markov tree process.
Then,
\[\frac{\log_R \#\Upsilon_K}{K} \stackrel{a.s.}{\to}1,\quad\text{as }K\to\infty.\]
\end{cor}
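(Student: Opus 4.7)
The plan is to express $\#\Upsilon_K$ in terms of the leaf count $X_K = N_1[\Upsilon_K]$ and then exploit the almost sure convergence of the normalized leaf count that is already embedded in the proof of Theorem~\ref{thm:DoobMartingaleSHL}. Since each $\Upsilon_K$ is a planted binary tree with $X_K$ leaves, we have $\#\Upsilon_K = 2X_K - 1$, so it suffices to establish
\[\frac{\log_R X_K}{K} \stackrel{a.s.}{\longrightarrow} 1.\]

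First I would invoke the argument already carried out in the proof of Theorem~\ref{thm:DoobMartingaleSHL}: the martingale $M_K = R^{1-K}(X_K + \gamma(c-1)Y_K)$ from Lemma~\ref{lem:TokMartingale} is nonnegative and in $L^1$, so by Doob's Martingale Convergence Theorem it converges a.s. to a finite limit $M_\infty$, and combining this with the almost sure asymptotic $Y_K/X_K \to 2\gamma^{-1}$ from Lemma~\ref{lem:YoverXas} yields
\[R^{1-K} X_K \stackrel{a.s.}{\longrightarrow} V_\infty := \frac{M_\infty}{2c-1}\]
(this is essentially equation \eqref{eq:Xlimit}).

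Next I would apply Lemma~\ref{lem:LimNotZero}, which asserts that ${\sf P}(V_\infty = 0) = 0$, so that $V_\infty \in (0,\infty)$ almost surely. Taking logarithms in the displayed limit then gives
\[\log_R X_K = (K-1) + \log_R V_\infty + o(1) \quad \text{a.s.},\]
and dividing by $K$ yields $\log_R X_K / K \to 1$ a.s. Finally, $\log_R \#\Upsilon_K = \log_R(2X_K - 1) = \log_R X_K + \log_R(2 - X_K^{-1})$, and since $X_K \ge 2^{K-1} \to \infty$, the correction $\log_R(2 - X_K^{-1}) \to \log_R 2$ is bounded, so dividing by $K$ kills it in the limit.

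The substantive content — namely the a.s.\ positivity of $V_\infty$, which rules out the degenerate possibility that $X_K$ grows strictly slower than $R^K$ — has already been done in Lemma~\ref{lem:LimNotZero} via a Chernoff/Borel--Cantelli argument, so no new obstacle arises here. The remainder is bookkeeping: translating the multiplicative a.s.\ asymptotic $X_K \sim V_\infty R^{K-1}$ into the logarithmic rate statement and passing from $X_K$ to $\#\Upsilon_K$.
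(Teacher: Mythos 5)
Your proof is correct and follows essentially the same route as the paper: both recall the almost sure limit $R^{1-K}X_K \to V_\infty$ from \eqref{eq:Xlimit} (Doob's Martingale Convergence Theorem plus Lemma~\ref{lem:YoverXas}), invoke Lemma~\ref{lem:LimNotZero} for $V_\infty>0$ a.s., take logarithms, and pass from $X_K$ to $\#\Upsilon_K=2X_K-1$. No gaps.
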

\begin{proof}
Recall from \eqref{eq:Xlimit} that 
\[R^{1-K}X_K\stackrel{a.s.}{\to}V_\infty,\]
where $V_\infty$ is finite (by Doob's Martingale Convergence Theorem) 
and $V_\infty >0~ a.s.$ by Lemma~\ref{lem:LimNotZero}.
Accordingly,
\be\label{eqn:logN1-K}
\log_R X_K-K ~\stackrel{a.s.}{\to} ~\log_R{V_\infty}\,-1,
\ee
with $-\infty<\log_R{V_\infty}<\infty~ a.s.$
Recalling that $ \#\Upsilon_K=2X_K-1$ completes the proof.
\end{proof}

\medskip
\noindent 
The almost sure convergence \eqref{eqn:TokMartingale5} allows to restate 
the limit results of this section in terms of the tree length $Y_K$.
\begin{cor}[{{\bf Strong Horton laws for tree lengths}}]\label{cor:Ymartingale}
Suppose $\mu=S^{\rm Tok}(t;c,\gamma)$ is the distribution of a critical Tokunaga process,
and $\big\{\Upsilon_K\big\}_{K \in \mathbb{N}}$ is the corresponding Markov tree process.
Then, for a tree $T\stackrel{d}{\sim}\mu$,
\be\label{cor:YK2}
\E\big[\textsc{length}(T)\,|{\sf ord}(T)=K\big]=\E[Y_K] = \frac{R^K-1}{\gamma(R-1)},\quad K\ge 1.
\ee
Furthermore, we have, for any $k\ge 1$,
\be\label{cor:YK3}
\frac{Y_{K-k}}{Y_K}\stackrel{a.s.}{\to}R^{-k},\text{ as } K\to\infty,
\ee
which implies the strong Horton law for tree lengths: for any $\epsilon>0$,
\be\label{cor:YK4}
\mu_K\left(\left|{\textsc{length}\big(\cR^k(T)\big) \over \textsc{length}(T)}-R^{-k}\right|>\epsilon\right){\to} 0
\quad\text{ as }\quad K\to\infty.
\ee
\end{cor}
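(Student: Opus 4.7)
The plan is to establish each of the three assertions by leveraging the machinery already built up in Section~\ref{sec:HBPmartingale}: the martingale $M_K$ of Lemma~\ref{lem:TokMartingale}, the closed-form expression for $\cN_1[K]$ in Corollary~\ref{cor:TokMartingaleSHL}, the almost sure ratio $Y_K/X_K \to 2\gamma^{-1}$ of Lemma~\ref{lem:YoverXas}, and the Doob Martingale Convergence Theorem argument used in Theorem~\ref{thm:DoobMartingaleSHL}.

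For \eqref{cor:YK2}, I would simply exploit the fact that, conditioned on $X_K$, the length $Y_K$ is a sum of $2X_K-1$ i.i.d.\ ${\sf Exp}(\gamma)$ edge lengths, so $\E[Y_K\mid X_K] = \gamma^{-1}(2X_K-1)$ and therefore $\E[Y_K] = \gamma^{-1}\bigl(2\E[X_K]-1\bigr)$. Plugging in $\E[X_K]=\cN_1[K] = (R^K+R-2)/\bigl(2(R-1)\bigr)$ from \eqref{eq:N1K} yields
\[
\E[Y_K] \;=\; \gamma^{-1}\left(\frac{R^K+R-2}{R-1}-1\right) \;=\; \frac{R^K-1}{\gamma(R-1)}.
\]

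For \eqref{cor:YK3}, I would combine two almost sure limits already in hand. By Lemma~\ref{lem:YoverXas}, $Y_K/X_K \to 2\gamma^{-1}$ a.s. By the proof of Theorem~\ref{thm:DoobMartingaleSHL}, $R^{1-K}X_K \to V_\infty$ a.s. with $V_\infty>0$ a.s.\ (via Doob's theorem together with Lemma~\ref{lem:LimNotZero}). Since the Markov tree process satisfies $\cR^k(\Upsilon_K)=\Upsilon_{K-k}$, the ratio $Y_{K-k}/Y_K$ can be rewritten as
\[
\frac{Y_{K-k}}{Y_K} \;=\; \frac{Y_{K-k}/X_{K-k}}{Y_K/X_K}\cdot\frac{X_{K-k}}{X_K} \;=\; \frac{Y_{K-k}/X_{K-k}}{Y_K/X_K}\cdot R^{-k}\cdot\frac{R^{1-(K-k)}X_{K-k}}{R^{1-K}X_K}.
\]
Each of the three factors tends almost surely to $1$, $R^{-k}$, and $1$ respectively (using $V_\infty>0$ to take the ratio), so the product converges almost surely to $R^{-k}$.

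Finally, \eqref{cor:YK4} follows immediately from \eqref{cor:YK3}: since $\Upsilon_K\stackrel{d}{\sim}\mu_K$ by Lemma~\ref{lem:Markov_tree}, one may realize $T$ and $\cR^k(T)$ on the probability space of the Markov tree process, where the ratio $\textsc{length}(\cR^k(T))/\textsc{length}(T)$ equals $Y_{K-k}/Y_K$. Almost sure convergence combined with the a priori bound $0 \le Y_{K-k}/Y_K \le 1$ yields convergence in probability by the bounded convergence theorem. I do not anticipate any real obstacle here: the main subtlety is making sure that $V_\infty>0$ a.s.\ so one can safely form ratios of the form $R^{1-(K-k)}X_{K-k}/(R^{1-K}X_K)$, which is exactly the content of Lemma~\ref{lem:LimNotZero}.
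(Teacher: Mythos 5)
Your proof is correct and follows exactly the route the paper intends: \eqref{cor:YK2} from $\E[Y_K]=\gamma^{-1}(2\E[X_K]-1)$ together with the closed form \eqref{eq:N1K}, \eqref{cor:YK3} by combining Lemma~\ref{lem:YoverXas} with the a.s.\ convergence of $R^{1-K}X_K$ to $V_\infty>0$ from Theorem~\ref{thm:DoobMartingaleSHL} and Lemma~\ref{lem:LimNotZero}, and \eqref{cor:YK4} by realizing $\big(\cR^k(T),T\big)$ as $\big(\Upsilon_{K-k},\Upsilon_K\big)$ and passing from a.s.\ convergence to convergence in probability. No gaps.
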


\begin{ex}[{{\bf Critical binary Galton Watson tree}}]
Theorem \ref{HBPmain3} asserts that the critical binary Galton-Watson tree with 
exponential i.i.d. edge lengths, $T\stackrel{d}{\sim}{\sf GW}(\lambda)$, has the same distribution
as a critical Tokunaga branching process with $c=2$ and $\gamma=\lambda$. 
In this case $R=2c=4$ and the expressions \eqref{eqn:NkClosedForm}, \eqref{eq:N1K} give,
for any $K\ge 1$,
\[\cN_1[K] = \frac{4^{K}+2}{6}.\]
Fixing $\lambda=1$, by the expression
\eqref{cor:YK2} we have, for any $K\ge 1$,
\[
\E\big[\textsc{length}(T)\,|{\sf ord}(T)=K\big]= \frac{4^K-1}{3}.\]
Table~\ref{tab1} shows the values of the mean size and mean length
of a critical binary Galton-Watson tree $T\stackrel{d}{\sim}{\sf GW}(1)$,
conditioned on selected values of tree order.
\end{ex}

\begin{table}
\captionof{table}{Mean size, $\E[X_K] = \cN_1[K]$, and length, $\E[Y_K]=\E[\textsc{length}(T)]$, of a critical binary 
Galton-Watson tree $T\stackrel{d}{\sim}{\sf GW}(1)$; here $c=2$, $R=4$.} \label{tab1}
\begin{tabular}{ccccc}
\hline
\hline
\vspace{.1cm}
${\sf ord}(T)$  &  $\cN_1[K]$ &  $\E[\textsc{length}(T)]$ &  $2-\frac{\E[Y_K]\vphantom{I^I}}{\E[X_K]}$ &  $4-\frac{\E[X_K]}{\E[X_{K-1}]}$\\
\hline
$1 $ &                       $1$&                         $1$ &        $1$  &                     --				\\
$2 $ &                       $3$&                         $5$ &        $1/3$ &                    $1$				\\
$3 $ &                      $11$&                        $21$ &       $9\times 10^{-2}$ &        $1/3$				\\
$4 $ &                      $43$&                        $85$ &       $2\times 10^{-2}$ &        $9\times 10^{-2}$ 	\\ 
$5 $ &                     $171$&                       $341$ &       $6\times 10^{-3}$ &        $2\times 10^{-2}$ 	\\
$6 $ &                     $683$&                      $1365$ &       $1\times 10^{-3}$ &        $6\times 10^{-3}$ 	\\
$7 $ &                    $2731$&                      $5461$ &       $4\times 10^{-4}$ &        $1\times 10^{-3}$ 	\\ 
$8 $ &                   $10923$&                     $21845$ &       $9\times 10^{-5}$ &        $4\times 10^{-4}$ 	\\
$9 $ &                   $43691$&                     $87381$ &       $2\times 10^{-5}$ &        $9\times 10^{-5}$	\\
$10$ &                   $174763$&                    $349525$ &      $6\times 10^{-6}$ &        $2\times 10^{-5}$	\\
$11$ &                   $699051$&                   $1398101$ &      $1\times 10^{-6}$ &        $6\times 10^{-6}$	\\
$12$ &                  $2796203$&                   $5592405$ &      $4\times 10^{-7}$ &        $1\times 10^{-6}$	\\
$13$ &                 $11184811$&                  $22369621$ &      $9\times 10^{-8}$ &        $4\times 10^{-7}$	\\
$14$ &                 $44739243$&                  $89478485$ &      $2\times 10^{-8}$ &        $9\times 10^{-8}$	\\
$15$ &                $178956971$&                 $357913941$ &      $6\times 10^{-9}$ &        $2\times 10^{-8}$ 	\\
$16$ &                $715827883$&                $1431655765$ &      $1\times 10^{-9}$ &        $6\times 10^{-9}$	\\
$17$ &               $2863311531$&                $5726623061$ &      $3\times 10^{-10}$ &       $1\times 10^{-9}$	\\
$18$ &              $11453246123$&               $22906492245$ &      $9\times 10^{-11}$ &       $3\times 10^{-10}$	\\
$19$ &              $45812984491$&               $91625968981$ &      $2\times 10^{-11}$ &       $9\times 10^{-11}$	\\
$20$ &             $183251937963$&              $366503875925$ &      $5\times 10^{-12}$ &       $2\times 10^{-11}$	\\
\hline
\hline
\end{tabular}
\end{table}


\subsection{Combinatorial HBP: Geometric Branching Process}
\label{sec:combHBP}
This section focuses on combinatorial structure 
of a Horton self-similar hierarchical branching process
\cite{KZ18}
\[S(t)\stackrel{d}{\sim} {\sf HBP}\big(\{T_k\},\{\lambda_j\},\{p(1-p)^{K-1}\}\big).\]
Let $T[S]$ be the tree generated by $S(t)$ in $\BL^|$.
Section~\ref{sec:GBPdef} introduces a discrete 
time multi-type {\it geometric branching process} 
$\cG(s)={\cG}(s;\{T_k\},p)$ whose trajectories induce
a random tree $\cG(\{T_k\},p)$ on $\cBT^|$ such that
\be\label{G2T}
{\cG}(\{T_k\},p) \stackrel{d}{=}\textsc{shape}\big(T[S]\big)\in\cBT^|.
\ee
We then show in Sect.~\ref{sec:TokTI} that geometric branching process 
is time invariant (in discrete time) if and only if it is Tokunaga self-similar with $T_k=(c-1)c^{k-1}$
and $p=1/2$.

\subsubsection{Definition and main properties}
\label{sec:GBPdef}
Our goal is to consider combinatorial shape of a self-similar hierarchical branching process.
The following definition suggests an explicit time dependent construction of such a process,
which we denote $\cG(s;\{T_k\},p)$. \index{geometric branching process}
\begin{Def}[{{\bf Geometric Branching Process}}]
\label{def:GBM}
Consider a sequence of {\it Tokunaga coefficients} $\{T_k\ge0\}_{k\ge 1}$ and $0<p<1$.
Define 
$$S_{K}:=1+T_1+\dots+T_{K}$$ 
for $K\ge 0$ by assuming $T_0=0$.
The {\it Geometric Branching Process} (GBP) $\cG(s)={\cG}(s;\{T_k\},p)$ describes a discrete time 
multi-type population growth:
\begin{itemize}
\item[(i)] The process starts at $s=0$ with a progenitor of order ${\sf ord}(\cG)$
such that ${\sf ord}(\cG)\stackrel{d}{\sim}{\sf Geom}_1(p)$.
\item[(ii)] At every integer time instant $s>0$, each population member of order
$K\in\{1,\dots,{\sf ord}(\cG)\}$ terminates 
with probability $q_K=S^{-1}_{K-1}$, independently of other members. 
At termination, a member of order $K>1$ produces two offspring of
order $(K-1)$; and a member of order $K=1$ terminates with leaving no offspring.
\item[(iii)] At every integer time instant $s>0$, each population member of order
$K\in\{1,\dots,{\sf ord}(\cG)\}$ survives (does not terminate) with probability $$1-q_K=1-S^{-1}_{K-1},$$
independently of other members.
In this case, it produces a single offspring (side branch). 
The offspring order $i\in\{1,\dots,K-1\}$, is drawn from the distribution
\be
\label{eq:pj}
p_{K,i}=\frac{T_{K-i}}{T_1+\dots+T_{K-1}}.
\ee
\end{itemize}
The geometric tree ${\cG}(\{T_k\},p)$ is a combinatorial tree generated by the trajectories 
of ${\cG}(s;\{T_k\},p)$ in $\cBT^|$.
\end{Def}

By construction, the distribution of a geometric tree $\cG(\{T_k\}, p)$ 
coincides with the combinatorial shape of the tree of a combinatorially Horton self-similar 
hierarchical branching process $S(t)$ with Tokunaga coefficients $\{T_k\}$,
initial order distribution $p_K=p(1-p)^{K-1}$ and an arbitrary 
positive sequence of termination rates $\{\lambda_i\}$.
Accordingly, the branching structure of a geometric tree is described by 
Prop.~\ref{HBP:branch}, items (1)-(4).
The essential elements of the geometric trees (tree order, 
total number of side branches within a branch, 
numbers of side branches of a given order within a branch) are described by 
geometric laws, hence the model name.

Similarly to the tree of an HBP, a geometric tree can be constructed 
without time-dependent simulations,
following a suitable modification of the algorithm given after Prop.~\ref{HBP:branch}.
Specifically, the step that involves generation and assignment of the edge lengths 
$l_i$ should be skipped.  
 
\medskip

\begin{figure}
\includegraphics[width=0.75\textwidth]{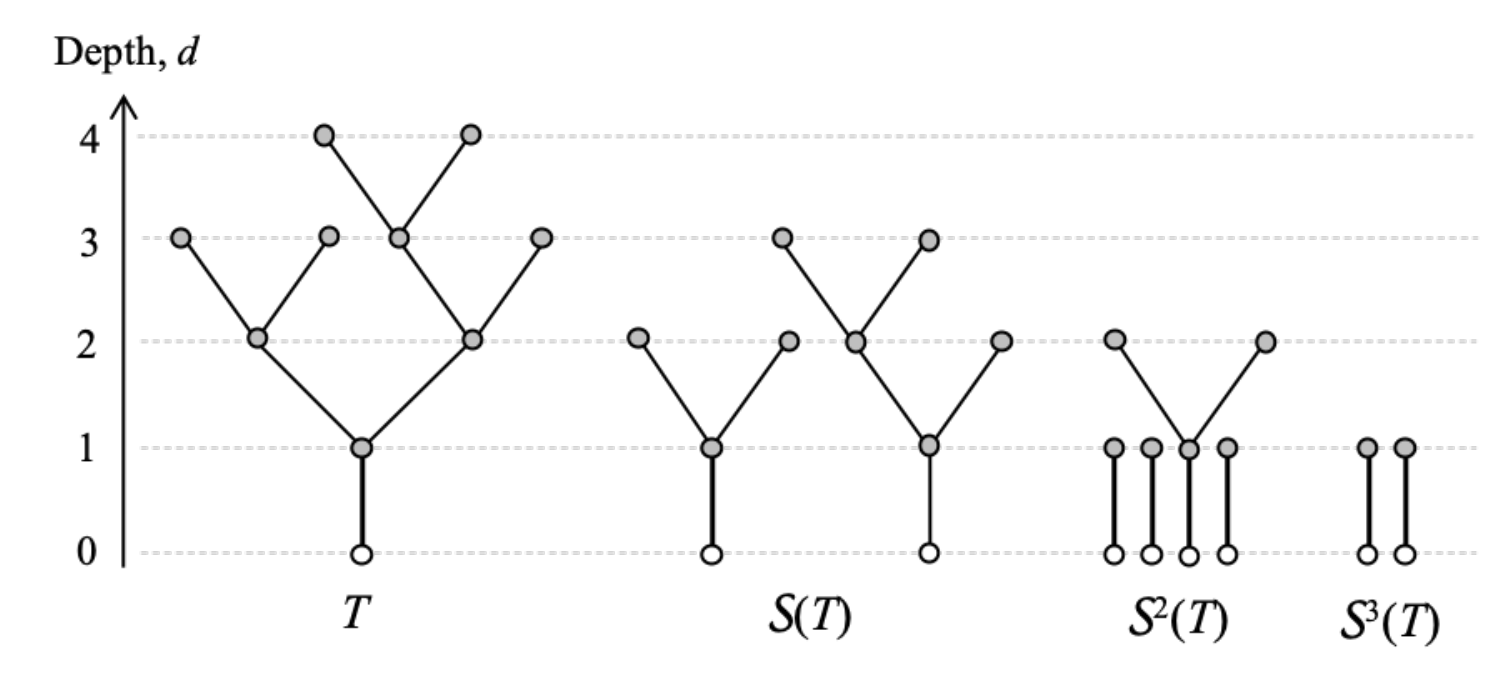}
\caption{\label{fig:Depth} Time shift $\cS$: an illustration. 
The figure shows forests obtained by consecutive
application of the time shift operator $\cS$ to a tree $T$ shown on the left.
At every step, we remove the stem from each existing tree. 
This terminates the trees of order ${\sf ord}=1$, and splits any other
tree in two new trees.
The operation $\cS^d(T)$ removes all vertices at depth $\le d$, together
with their parental edges.}
\end{figure}

Consider a geometric tree $\cG=\cG(\{T_k\},p)$ and its two subtrees, $T^a$ and $T^b$, 
rooted at the internal vertex closest to the root, randomly and uniformly permuted.
We call $T^a$ and $T^b$ the {\it principal} subtrees of $\cG$.
Let $K$ be the order of $\cG$, and, 
conditioned on  $K>1$, let $K_a,K_b$ be the orders 
of the principal subtrees $T^a$ and $T^b$, respectively.
Observe that the pair $K_a,K_b$ uniquely defines the tree order $K$:
\[K=\begin{cases}
K_a\vee K_b&\text{ if } K_a\ne K_b,\\
K_a+1&\text{ if } K_a = K_b.
\end{cases}\]
We write $K_1\le K_2$ for the order statistics of $K_a$, $K_b$.

\begin{lem}[{{\bf Order of principal subtrees}}]
\label{lem:K}
Conditioned on the tree order $K$, the joint distribution 
of the order statistics $(K_1,K_2)$ is given by
\be
\label{joint}
{\sf P}\left(K_1=j,K_2=m|K=k\right)
=
\begin{cases}
S^{-1}_{k-1}&\text{ if }j=m=k-1,\\
T_{k-j}S^{-1}_{k-1}&\text{ if }j<m=k,
\end{cases}
\ee
where
$${\sf P}(K=k|K>1)=(1-p)p^{k-2},\quad k\ge 2.$$
\end{lem}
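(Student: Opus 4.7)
The plan is to exploit the Markov tree structure of the geometric branching process $\cG(s;\{T_k\},p)$ and reduce the computation to a single-step analysis. By construction (Def.~\ref{def:GBM}), the internal vertex of $\cG$ closest to the root is created at time $s=1$ by the first transition of the progenitor, so the two principal subtrees $T^a,T^b$ are precisely the subtrees rooted at that vertex and their joint law is determined by the outcome of this single transition.

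Concretely, I would condition on $K = k \ge 2$ and enumerate the two mutually exclusive outcomes of the first transition of an order-$k$ member. With probability $q_k = S_{k-1}^{-1}$ the progenitor terminates and splits into two offspring of order $k-1$, each generating an independent principal subtree; this contributes probability $1/S_{k-1}$ to the event $\{K_1 = K_2 = k-1\}$, matching the first case of \eqref{joint}. With the complementary probability $1 - q_k = (S_{k-1}-1)/S_{k-1}$ the progenitor survives and emits a single side-branch offspring whose order $i \in \{1,\ldots,k-1\}$ is drawn according to \eqref{eq:pj}. One principal subtree is then the side-branch subtree (of order $i < k$) and the other is the subtree generated by the progenitor's continuation. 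Multiplying $1 - q_k$ by $p_{k,i} = T_{k-i}/(S_{k-1}-1)$ collapses to $T_{k-i}/S_{k-1}$, which is exactly the second case of \eqref{joint} after identifying $(K_1,K_2) = (i,k)$.

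The only mild technical point is to verify that the continuation subtree after a survival step has order exactly $k$ (and not lower). This follows from the Markov structure of the GBP: after the first survival step, the progenitor continues to evolve as a fresh copy of $\cG(s;\{T_k\},p)$ issued from a deterministic order-$k$ member, whose induced subtree is therefore of order $k$ almost surely. The marginal ${\sf P}(K = k \mid K > 1)$ is a one-line calculation from ${\sf ord}(\cG) \stackrel{d}{\sim} {\sf Geom}_1(p)$. I do not expect any substantial obstacle beyond carefully pairing each first-step outcome of the GBP with its combinatorial effect on the first internal vertex of $\cG$.
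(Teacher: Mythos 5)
Your proposal is correct and follows essentially the same route as the paper's proof: both enumerate the two outcomes of the progenitor's first transition (termination/split with probability $S_{k-1}^{-1}$ versus survival with a side branch of order $i$ drawn from \eqref{eq:pj}), and multiply $(1-S_{k-1}^{-1})\cdot T_{k-i}(S_{k-1}-1)^{-1}=T_{k-i}S_{k-1}^{-1}$ for the second case. Your added remark that the continuation subtree retains order exactly $k$ is a correct detail the paper leaves implicit.
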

\begin{proof}
Definition~\ref{def:GBM}, part (ii) states that a branch of order $K$
splits into two branches of order $K-1$ with probability $S^{-1}_{K-1}$,
which establishes the first case of \eqref{joint}.
Definition~\ref{def:GBM}, part (iii) states that, otherwise, 
with probability $1-S^{-1}_{K-1}$, a side branch is created 
whose order equals $j$ with probability $T_{K-j}(S_{K-1}-1)^{-1}$.
This gives
\begin{eqnarray*}
\lefteqn{{\sf P}\left(K_1=j,K_2=k|K=k\right)}\\
&=&{\sf P}\left(K_1=j|K=k,K_2=k\right)
{\sf P}\left(K_2=k|K=k\right)\\
&=&\frac{T_{k-j}}{S_{k-1}-1}\left(1-\frac{1}{S_{k-1}}\right)
=T_{k-j}S_{k-1}^{-1},
\end{eqnarray*}
which establishes the second case.
\end{proof}

\subsubsection{Tokunaga self-similarity of time invariant process}
\label{sec:TokTI}
Let $x_i(s)$, $i\ge 1$, denote the average number of vertices of order $i$
at time $s$ in the process $\cG(s)$, and
${\bf x}(s)=(x_1(s),x_2(s),\dots)^T$ be the state vector.
By definition we have
\[{\bf x}(0)=\pi:=\sum\limits_{K=1}^\infty p(1-p)^{K-1} {\bf e}_K,\]
where ${\bf e}_K$ are standard basis vectors. 
Furthermore, if $q_{a,b}$, $a\ge b$, denotes the probability that a vertex
of order
${\sf ord}=a+{\bf 1}_{\{a=b\}}$
that exists at time $s$ splits into a pair of vertices of 
orders $(a,b)$ at time $(s+1)$, then
\begin{eqnarray}
\label{eq:dyn1}
\lefteqn{x_K(s+1)= 2\,x_{K+1}(s)q_{K,K}}\nonumber\\
&+&x_K(s)(1-q_{K-1,K-1})
+\sum_{i=K+1}^{\infty}x_i(s)\,q_{i,K}.
\end{eqnarray}
The first term in the right-hand side of \eqref{eq:dyn1} corresponds to a split of an order-$(K+1)$
vertex into two vertices of order $K$, 
the second -- to a split of an order-$K$ vertex into a vertex
of order $K$ and a vertex of a smaller order, and
the third -- to a split of a vertex of order $i>K$ into 
a vertex of order $K$ and a vertex of order $i$. 
The geometric branching implies (see Lemma~\ref{lem:K}, Eq. (\ref{joint}))
\begin{eqnarray}
\label{jointGBM}
q_{a,b}
=\left\{
\begin{array}{rl}
S_{a}^{-1} & \text{ if }a=b,\\
T_{a-b}S_{a-1}^{-1}& \text{ if }b<a.
\end{array}\right.
\end{eqnarray}
Accordingly, the system \eqref{eq:dyn1} rewrites as
\be
\label{eq:dyn2}
{\bf x}(s+1)-{\bf x}(s)~=~\mathbb{G}\mathbb{S}^{-1}{\bf x}(s),
\ee
where $\mathbb{G}$ is defined in Eq. (\ref{gen}),
and 
\[\mathbb{S} = {\sf diag}\{S_0,S_1,\dots\}.\]

\noindent
In this setup, the {\it unit time shift} operator $\cS$, which advances the 
process time by unity, can be applied to 
individual trees and forests (collection of trees).
For each tree $T\in \cT^|$, the operator removes the root and stem, resulting in two principal subtrees $T^a$ and $T^b$.
A consecutive applications of $d$ time shifts to a tree $T$ is equivalent to removing
the vertices at depth $\le d$  from the root together with their 
parental edges (Fig.~\ref{fig:Depth}).
Next we define time invariance with respect to the shift $\cS$.


\begin{Def}[{\bf{Time invariance}}]
\label{def:TI}
Geometric branching process $\cG(s)$, $s\in\mathbb{Z}_+$, is called 
{\it time invariant} if the state vector ${\bf x}(s)$ is 
invariant with respect to a unit time shift $\cS$:
\be
\label{eq:TI}
{\bf x}(s) = {\bf x}(0)\equiv \pi ~~\forall s~\Longleftrightarrow~
\mathbb{G}\mathbb{S}^{-1}\pi = {\bf 0}.
\ee
\end{Def}

%
 
Now we formulate the main result of this Section.
\begin{thm}[{{\bf \cite{KZ18a}}}]
\label{thm:mainGBM}
A geometric branching process $\cG(s;T_k,p)$ is time invariant if and only if
\be
\label{eq:cTok}
p=1/2\text{  and } T_k=(c-1)c^{k-1} \text{ for any } c\ge 1.
\ee  
We call this family a (combinatorial) critical Tokunaga process, and the respective trees -- (combinatorial) critical Tokunaga trees.
\end{thm}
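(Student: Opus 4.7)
The proof separates into establishing each direction of the equivalence.

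\textbf{Sufficiency.} With $p = 1/2$ and $T_k = (c-1)c^{k-1}$, a telescoping computation gives $S_k = c^k$. Thus the $K$-th coordinate of $\mathbb{S}^{-1}\pi$ equals $2^{-K} c^{1-K} = c\,(2c)^{-K}$, so $\mathbb{S}^{-1}\pi$ is proportional to $v_0 := \sum_{K\ge 1} R^{-K}\mathbf{e}_K$ with $R = 2c$. Since $\mathbb{G}$ acts on $v_0$ componentwise as $(\mathbb{G}v_0)_k = R^{-k}\hat t(R^{-1})$, and since $R^{-1} = 1/(2c)$ is the zero of $\hat t$ for the critical Tokunaga sequence (as verified inside Proposition~\ref{Tok_tree}), we obtain $\mathbb{G}\mathbb{S}^{-1}\pi = 0$, which is the time-invariance condition \eqref{eq:TI}.

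\textbf{Necessity, step one.} Summing the dynamical identity \eqref{eq:dyn2} along coordinates and using the column-sum structure of $\mathbb{G}$ yields $\sum_k x_k(s+1) = 2\sum_{k\ge 2}x_k(s)$; this reflects that each order-$\ge 2$ branch produces exactly two branches at the next step (either two principal-split offspring of order one lower or itself together with one side-branch offspring) while order-$1$ branches terminate without offspring. Time invariance $\mathbf{x}(s) = \pi$ then forces $1 = 2(1-p)$, hence $p = 1/2$.

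\textbf{Necessity, step two.} Writing $a_k := 1/S_{k-1}$ and $\rho_k := a_{k+1}/a_k = S_{k-1}/S_k \in (0,1]$, the equation $\mathbb{G}\mathbb{S}^{-1}\pi = 0$ at $p = 1/2$ reduces, after division by $a_k$, to
\[
1 = \sum_{j\ge 1} t(j)\,2^{-j}\prod_{i=k}^{k+j-1}\rho_i\quad\text{for all }k\ge 1.
\]
Subtracting this identity at indices $k$ and $k+1$, and applying the telescoping relation $\prod_{i=k}^{k+j-1}\rho_i - \prod_{i=k+1}^{k+j}\rho_i = (\rho_k - \rho_{k+j})\prod_{i=k+1}^{k+j-1}\rho_i$, yields the convex-combination identity
\[
\rho_k = \sum_{j\ge 1}\hat w_j^{(k)}\,\rho_{k+j},\qquad \hat w_j^{(k)} := t(j)\,2^{-j}\prod_{i=k}^{k+j-1}\rho_i > 0,
\]
with normalized weights summing to $1$ by the original equation. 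Hence each $\rho_k$ is a convex combination of $\rho_{k+1},\rho_{k+2},\ldots$, which implies that both $\sup_{m\ge k}\rho_m$ and $\inf_{m\ge k}\rho_m$ are independent of $k$. The weight $\hat w_1^{(k)} = (T_1+2)\rho_k/2$ is uniformly bounded below on tails. A propagation argument then rules out the nonconstant case: if $\rho$ approaches its extremum at some index, the convex combination forces the same extremal behavior at the immediate successor and recursively along a tail, contradicting tail-invariance of the opposite extremum. Consequently $\rho_k \equiv 1/c$ is constant, giving $S_k = c^k$ and $T_k = (c-1)c^{k-1}$; the constraint $c \ge 1$ is enforced by $T_k \ge 0$.

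\textbf{Main obstacle.} The hardest part will be the rigorous propagation argument in necessity step two, turning the convex-combination identity into the conclusion that $\rho_k$ is exactly constant. Care is needed to handle the degenerate case $\inf\rho_m = 0$ (where $R_k\to\infty$) and to convert the extremum-propagation intuition into a formal contradiction, for instance by iterating the convex combination over multiple steps and exploiting the uniform positive lower bound on $\hat w_1^{(k)}$.
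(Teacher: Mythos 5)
Your sufficiency verification and the derivation of $p=1/2$ from progeny conservation are correct and coincide with the paper's (Prop.~\ref{critcond} applied to the discrete dynamics, and the first half of Lemma~\ref{lem:system}). Your necessity step two also follows the paper's route in its essential structure: after passing to the ratios $S_{k-1}/S_k$, the paper's Lemma~\ref{lem:NL} rests on exactly the convex-combination identity you obtain by differencing consecutive equations (its \eqref{eq:L1}--\eqref{eq:L+1}), and your telescoping computation and the claim that the tail suprema and infima are $k$-independent are correct. The one substantive difference is that you difference the raw coordinate equations \eqref{eq:cdn}, so your weights carry the factors $t(j)$, whereas the paper first transforms them into the system \eqref{eq:a} whose weights are exactly $2^{-j}$ and which is anchored by the normalization $a_0=1$; this is not cosmetic, because that anchor is what powers the endgame. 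Your treatment of the case where the supremum of $\rho_k$ is attained (propagate equality forward through the strictly positive weight $\hat w_1^{(k)}$, then backward) is the paper's Case~I and is fine.

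The genuine gap is the case where the supremum $U=\sup_m \rho_m$ is not attained, and the specific contradiction you propose does not exist. Propagating $\rho_k>U-\varepsilon$ forward through the convex combination controls only finitely many successors $\rho_{k+1},\dots,\rho_{k+K}$, with an error that degrades at each step; a finite run of near-maximal values is entirely compatible with $\inf_{m\ge k}\rho_m=L<U$, since the tail infimum may be approached arbitrarily far beyond $k+K$. So ``contradicting tail-invariance of the opposite extremum'' cannot close the argument. What the paper actually does in Case~II of Lemma~\ref{lem:NL} is substantially more: it feeds the controlled run back into the \emph{original}, un-differenced equation \eqref{eq:U2}, uses $a_0=1$ together with $a_j<U$ in the denominators to extract $1\ge U\bigl(1-2^{-(K+1)}\bigr)$, lets $K\to\infty$ to conclude $U\le 1$, and only then contradicts the first equation $\sum_j 2^{-j}a_j=1$, which is impossible when every $a_j<U\le 1$. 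In your un-normalized formulation this anchor is absent: bounding $\rho_i<U$ in $1=\sum_j t(j)2^{-j}\prod_{i=1}^{j}\rho_i$ only yields $\hat t(U/2)>0$, i.e.\ $U>2/R$, a one-sided estimate that does not force $U=L$. To complete the proof you must either reproduce the paper's normalization and the quantitative iteration of its Case~II, or supply a genuinely different argument excluding a non-attained supremum (and symmetrically a non-attained infimum, including the degenerate possibility $L=0$ that you flag but do not treat).
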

\noindent 
Theorem~\ref{thm:mainGBM} is proven in Sect.~\ref{sec:proofGBM} via solving a 
nonlinear system of equations that writes \eqref{eq:TI} in terms of ratios $S_k/S_{k+1}$.

\begin{cor}
\label{cor1}
Let $\cG$ be a combinatorial critical Tokunaga tree. 
Then the distribution of the principal subtree $T^a$ 
(and hence $T^b$) matches that of the initial tree $\cG$.
The distributions of $T^a$ and $T^b$ are independent if and only if 
$c=2$.
\end{cor}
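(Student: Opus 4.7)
The plan is to exploit the Markov/independence structure of the Geometric Branching Process together with Lemma~\ref{lem:K} specialized to $T_k = (c-1)c^{k-1}$, which gives $S_{K-1} = c^{K-1}$ and
\[
{\sf P}(K_1 = K_2 = K-1 \mid K) = c^{1-K}, \qquad {\sf P}(K_1 = j,\, K_2 = K \mid K) = (c-1)c^{-j} \text{ for } j < K.
\]
The crucial observation to record at the outset is that, by Def.~\ref{def:GBM}, the two principal subtrees of $\cG$ are generated by independent restarts of the branching dynamics; hence conditional on $({\sf ord}(T^a), {\sf ord}(T^b)) = (m_1, m_2)$ with $m_1, m_2 \geq 1$, the trees $T^a$ and $T^b$ are independent, each distributed as a fresh GBP tree of its prescribed order. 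This reduces both halves of the corollary to statements about the joint law of the orders alone.

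For the first assertion, I would compute ${\sf P}({\sf ord}(T^a) = m \mid K \geq 2)$ by summing the conditional law above against ${\sf P}(K = k \mid K \geq 2) = 2^{1-k}$ (for $k \geq 2$), splitting into three cases $m = K-1$, $m = K$, and $m < K-1$. After telescoping the geometric sums, the resulting expression simplifies via $(c+1) + (c-1) = 2c$ to $2^{-m}$ for every $m \geq 1$, exactly matching the order distribution of $\cG$. Combined with the conditional Markov property recorded above, this yields $T^a \stackrel{d}{=} \cG$ (on the event $\{T^a \neq \phi\} = \{{\sf ord}(\cG) \geq 2\}$), and the same holds for $T^b$ by exchangeability.

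For the second assertion, sufficiency ($c = 2$) follows immediately from Thm.~\ref{HBPmain3}: when $c = 2$, $\cG$ coincides in distribution with the shape of a critical binary Galton-Watson tree, and Rem.~\ref{rem:GWchar} gives that the two first-branch subtrees are i.i.d.\ copies of the whole tree. For necessity, independence of $T^a$ and $T^b$ as trees forces independence of the pair $({\sf ord}(T^a), {\sf ord}(T^b))$. But the event $\{{\sf ord}(T^a) = {\sf ord}(T^b) = m\}$ for $m \geq 1$ is contributed only by the principal split at $K = m+1$, so
\[
{\sf P}\bigl({\sf ord}(T^a) = {\sf ord}(T^b) = m \,\big|\, K \geq 2\bigr) \;=\; 2 \cdot 2^{-(m+1)} \cdot c^{-m} \;=\; (2c)^{-m},
\]
which equals the product $(2^{-m})^2 = 4^{-m}$ of the marginals if and only if $c = 2$. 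The main obstacle is purely bookkeeping: consistently conditioning on ${\sf ord}(\cG) \geq 2$ so that $T^a$ and $T^b$ are non-empty, which affects normalizations but not the logical flow; the conceptual content is entirely encoded in Lemma~\ref{lem:K} and the branching Markov property built into Def.~\ref{def:GBM}.
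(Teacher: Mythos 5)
Your proposal is correct and follows essentially the same route as the paper: the paper's proof of the first statement is primarily via time invariance of the state vector, but it also records exactly your direct summation of ${\sf P}(K_a=m\mid K=k)$ against ${\sf P}(K=k\mid K>1)=2^{1-k}$ (its Eq.~\eqref{eq:Kam}, yielding $2^{-m}$), and for the second statement the paper likewise reduces independence of the subtrees to factorization of the joint order law from Lemma~\ref{lem:K}, computing ${\sf P}(K_a=m,K_b=j\mid K>1)$ in full where you check only the diagonal $(2c)^{-m}$ versus $4^{-m}$ for necessity and invoke the Galton--Watson characterization for sufficiency. Your explicit use of conditional independence of $T^a$ and $T^b$ given their orders is left implicit in the paper but is exactly the coordination property built into Def.~\ref{def:GBM}, so no gap remains.
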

\begin{proof}
Let ${\sf ord}(\cG)$ denote the (random) order of a random geometric tree $\cG$. 
Conditioned on ${\sf ord}(\cG)>1$, at instant $s=1$ (equivalently, after
a unit time shift $\cS$) there exist exactly two 
vertices that are the roots of the principal subtrees $T^a$ and $T^b$. 
Since the trees $T^a$ and $T^b$ have the same distribution, their roots
have the same order distribution.
Denote by $y_k$ the probability that the tree $T^a$ has order $k\ge 1$
and let ${\bf y}=(y_1,y_2,\dots)^T$.
By Thm.~\ref{thm:mainGBM}, the process $\cG(s)$ is time invariant.
We have $p=\pi_1=1/2$, which, together with time invariance,
implies
\[{\bf x}(0)={\bf x}(1)=2{\bf y}(1-\pi_1)+{\bf 0}\pi_1
={\bf y}.\]
This establishes the first statement.

The second statement follows from examining the joint distribution $q_{a,b}$
of \eqref{jointGBM}.
Recall that we write $K$ for the order of tree $T$, $K_a$, $K_b$ for the orders of 
the principal subtrees $T^a$, $T^b$, and $K_1<K_2$ for the
order statistics of $K_a$, $K_b$. Observe that for $k>1$,
\begin{align*}
{\sf P}(K_a=m ~&|~K=k)\\
=&
\begin{cases} 
\frac{1}{2} \sum\limits_{j:j<k} {\sf P}(K_1=j,K_2=k|K=k)& \qquad \text{ if } ~m=k,\\
{\sf P}(K_1=K_2=k-1|K=k) \\  
+\frac{1}{2} {\sf P}(K_1=k-1,K_2=k|K=k)
& \qquad \text{ if } ~m=k-1,\\
\frac{1}{2} {\sf P}(K_1=m,K_2=k|K=k)& \qquad \text{ if } ~m<k-1,
\end{cases}\\
\\
=&
\left\{
\begin{array}{lll}
\frac{1}{2}(S_{k-1}-1)S_{k-1}^{-1} &={1\over2}(1-c^{1-k}) 
\vphantom{\left(I^{I^I}\right)}&\text{ if }~m=k,\\
\left(1+\frac{1}{2}T_1\right)S_{k-1}^{-1} &={1\over 2}(c+1)c^{1-k} 
\vphantom{\left(I^{I^I}\right)}&\text{ if }~m=k-1,\\
{1\over 2}T_{k-m}S_{k-1}^{-1}  &={1\over 2}(c-1)c^{-m}&\text{ if }~m<k-1.
\end{array}
\right.
\end{align*}
Furthermore,
\begin{eqnarray*}
\lefteqn{{\sf P}(K_a=m, ~K_b=j ~|~K>1)}\\
&=&\sum\limits_{k=m}^\infty {\sf P}(K_a=m, ~K_b=j ~|~K=k){\sf P}(K=k|K>1)\\
&=& 
\begin{cases}
(c-1)c^{-j}\,2^{-m} &\text{ if }j<m,\\
c^{-m}\,2^{-m}&\text{ if }j=m.
\end{cases}
\end{eqnarray*}
Accordingly, the joint distribution of $K_a$, $K_b$ equals the product 
of their marginals if and only if $c=2$. 
This establishes the second statement.

We also notice that 
\begin{eqnarray}
\label{eq:Kam}
\lefteqn{{\sf P}(K_a=m ~|~K>1)=\sum\limits_{k=m}^\infty {\sf P}(K_a=m ~|~K=k){\sf P}(K=k|K>1)}\nonumber\\
&=& (1-c^{1-m})2^{-m}+c^{-m}2^{-m}+{(c-1)c^{-m} \over 2}\sum\limits_{k=m+1}^\infty 2^{1-k} = 2^{-m},
\end{eqnarray}
which provides an alternative, direct proof of the first statement of the corollary
that does not use the time invariance.
\end{proof}

\begin{Rem} 
Corollary~\ref{cor1} asserts that the principal
subtrees in a random critical Tokunaga tree are dependent, except 
the critical binary Galton-Watson case $c=2$.
This implies that, in general, non-overlapping subtrees within
a critical Tokunaga tree are dependent.
Accordingly, the increments of the Harris path $H$ of a critical Tokunaga 
process have (long-range) dependence.
The only exception is the case $c=2$ that will be discussed in Sect.~\ref{sec:erw}.
The structure of $H$ is hence reminiscent of a self-similar random process
with long-range dependence
\cite{MS,ST}.
Establishing the correlation structure of the Harris paths of critical Tokunaga
processes is an interesting open problem (see Sect. \ref{open}). 
\end{Rem}

\subsubsection{Frequency of orders in a large critical Tokunaga tree}

Combinatorial trees of the critical Tokunaga processes (Def.~\ref{def:TokProcess},
Prop.~\ref{Tok_tree}),
and hence the time invariant geometric trees (also called combinatorial
critical Tokunaga trees) of Thm.~\ref{thm:mainGBM},
have an additional important property: the frequencies of vertex
orders in a large-order tree approximate the tree order distribution $p_K = 2^{-K}$
in the space $\cBT^|$.
To formalize this observation, let $\mu$ be a measure on $\cBT^|$ induced by 
a combinatorial critical Tokunaga tree $\cG$ of \eqref{eq:cTok}.
For a fixed $K \geq 1$, let $\mu_K(\cG) = \mu(\cG|{\sf ord}(\cG)=K)$.
We write $V_k[\cG]$ for the number of non-root vertices of order $k$ in a tree $\cG$, 
and let $\cV_k[K]={\sf E}_K\big[V_k[\cG]\big].$ 
Finally, we denote by $V[\cG]= \sum\limits_{k=1}^{{\sf ord}(\cG)} V_k[T]$ the total number of non-root 
vertices in $\cG$, and notice that $V[\cG]= 2V_1[\cG]-1$. 
Thus,  $\cV[K]:={\sf E}_K\big[V[\cG]\big]= 2\cV_1[K]-1$.

\begin{prop}
\label{prop:Tok1}
Let $\cG$ be a combinatorial critical Tokunaga tree \eqref{eq:cTok}.
Then
\be
\label{v1}
\lim_{ K\to\infty}\frac{\cV_k[K]}{\cV_1[K]}=2^{1-k}.
\ee
Let $v\in\cG$ be a vertex selected by uniform random drawing from the
non-root vertices of $\cG$. Then, for any $k\ge 1$, 
\be
\label{v2}
\lim_{K\to\infty}{\sf P}({\sf ord}(v)=k|{\sf ord}(\cG)=K)=2^{-k}.
\ee

\end{prop}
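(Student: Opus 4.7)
The plan is to reduce the statement to the strong Horton laws for branch numbers proved earlier in Section~\ref{sec:HBPmartingale} by writing each vertex count $V_k$ as a random sum over branches of order $k$. The combinatorial observation I would start from is that, in a reduced binary tree, each branch of order $k$ contains exactly $m+1$ non-root vertices, where $m$ is the number of side branches attached to it: every side branch attachment creates one degree-$3$ vertex inside the branch, plus one extra vertex for the terminal vertex. Summing over all $N_k[\cG]$ branches of order $k$ gives
\[
V_k[\cG]=\sum_{i=1}^{N_k[\cG]}(m_i+1),
\]
where by Proposition~\ref{HBP:branch}(2,4) the $m_i$ are i.i.d.\ ${\sf Geom}_0(S_{k-1}^{-1})$ and independent of $N_k[\cG]$. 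For the critical Tokunaga choice $T_j=(c-1)c^{j-1}$ one has $S_{k-1}=c^{k-1}$.

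Statement \eqref{v1} follows almost immediately from Wald's identity: $\cV_k[K]=c^{k-1}\cN_k[K]$, and since $S_0=1$ forces $\cV_1[K]=\cN_1[K]$, Corollary~\ref{cor:TokMartingaleSHL} (with $R=2c$) yields
\[
\lim_{K\to\infty}\frac{\cV_k[K]}{\cV_1[K]}=c^{k-1}\lim_{K\to\infty}\frac{\cN_k[K]}{\cN_1[K]}=c^{k-1}(2c)^{1-k}=2^{1-k}.
\]

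For \eqref{v2} I would realize the expectation on the Markov tree process $\{\Upsilon_K\}$ of Section~\ref{sec:Markov_tree}, where stronger almost sure statements are available. Since the uniform random draw of $v$ gives ${\sf P}({\sf ord}(v)=k\mid{\sf ord}(\cG)=K)={\sf E}_K[V_k[\cG]/V[\cG]]$ with $V[\cG]=2V_1[\cG]-1$, it suffices to find the almost sure limit of $V_k[\Upsilon_K]/V[\Upsilon_K]$ and then invoke bounded convergence. Theorem~\ref{thm:DoobMartingaleSHL} provides $N_k[\Upsilon_K]/N_1[\Upsilon_K]\to(2c)^{1-k}$ a.s.; Lemma~\ref{lem:LimNotZero} gives $N_k[\Upsilon_K]\to\infty$ a.s.; and the Kolmogorov SLLN applied to the i.i.d.\ multiplicities within the order-$k$ branches yields $V_k[\Upsilon_K]/N_k[\Upsilon_K]\to c^{k-1}$ a.s. Combining these with $V_1=N_1$,
\[
\frac{V_k[\Upsilon_K]}{V[\Upsilon_K]}=\frac{(V_k/N_k)(N_k/N_1)}{2-1/V_1}\;\longrightarrow\;\frac{c^{k-1}(2c)^{1-k}}{2}=2^{-k}\quad\text{a.s.,}
\]
and since $V_k/V\in[0,1]$ bounded convergence delivers \eqref{v2}.

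The main technical obstacle I expect is justifying the joint independence needed for both Wald's identity and the SLLN: the multiplicities $\{m_i\}$ inside distinct order-$k$ branches must be i.i.d.\ and jointly independent of $N_k[\cG]$. This rests on Proposition~\ref{HBP:branch}(4) together with the observation that $N_k[\cG]$ is determined only by the higher-order skeleton of the tree and by the placements of side branches of order $k$ along branches of order $>k$, all of which are generated independently from the internal structure of the order-$k$ branches themselves.
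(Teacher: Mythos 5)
Your proposal is correct and follows essentially the same route as the paper: the decomposition $V_k[\cG]=\sum_{i=1}^{N_k[\cG]}(1+m_i)$ with $m_i\stackrel{d}{\sim}{\sf Geom}_0(S_{k-1}^{-1})$, Wald's identity giving $\cV_k[K]=c^{k-1}\cN_k[K]$ for \eqref{v1}, and the factorization $V_k/\#T=(V_k/N_k)\cdot(N_k/(2N_1-1))$ combined with the strong Horton law and bounded convergence for \eqref{v2}. The one caveat is your appeal to the Kolmogorov SLLN for $V_k[\Upsilon_K]/N_k[\Upsilon_K]\to c^{k-1}$ a.s.: the multiplicities of order-$k$ branches in $\Upsilon_K$ form a triangular array rather than partial sums of a single i.i.d.\ sequence (the order-$k$ branches of $\Upsilon_{K+1}$ are not those of $\Upsilon_K$), so the SLLN does not apply verbatim; but since $N_k[\Upsilon_K]\ge 2^{K-k}$ this is repairable by Chebyshev and Borel--Cantelli, and in any case convergence in probability (the weak law, which is what the paper uses) already suffices for the bounded-convergence step.
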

\begin{proof}
Theorem \ref{thm:HLSST} asserts that a critical Tokunaga tree $\cG$ satisfies
the strong Horton law \eqref{eq:Horton_mean} with Horton exponent $R=2c$:
\[\lim_{K\to\infty}\frac{\cN_k[K]}{\cN_1[K]}=(2c)^{1-k},\text{ for any }k\ge 1.\]
Conditioned on ${\sf ord}(\cG)=K$ we have, for any $k \in \{1,\hdots, K\}$,
\[V_k[K] = \sum_{i=1}^{N_k(\cG)}(1+m_i(\cG)),\]
where $m_i(\cG)$ is the number of side branches that merge the $i$-th branch
of order $k$ in $\cG$, according to the proper branch labeling of 
Sect.~\ref{sec:label}.
Proposition~\ref{HBP:branch} gives
\[\cV_k[K] = \cN_k[K](1+T_1+\dots+T_{k-1}).\]
For a critical Tokunaga tree with $T_k=(1-c)c^{k-1}$ this implies 
\begin{eqnarray*}
\lim_{K\to\infty}\frac{\cV_k[K]}{\cV_1[K]}
&=&\lim_{K\to\infty}\frac{\cN_k[K](1+T_1\dots+T_{k-1})}{\cN_1[K]}
=(2c)^{1-k}c^{k-1}=2^{1-k}.
\end{eqnarray*}
To show \eqref{v2}, we write
\[V_k[\cG] = N_k[\cG]+\sum_{i=1}^{N_k[\cG]}m(i),\quad \E[m(i)] = S_{k-1}-1,\]
where $m(i)$ is a random variable that represents the total number of side branches
within $i$-th branch of order $k$ within $\cG$.
Since $N_k[\cG]\stackrel{p}{\to}\infty$
for any $k\ge 1$ as ${\sf ord}(\cG)\to\infty$, the Weak Law of Large Numbers gives
\[\frac{V_k[\cG]}{N_k[\cG]}\stackrel{p}{\to}S_{k-1}=c^{k-1}
\text{ as }{\sf ord}(\cG)\to\infty.\]
Finally, the strong Horton law of Cor.~\ref{cor:DoobMartingaleSHL} gives
\begin{eqnarray*}
\frac{V_k[\cG]}{\#T}
=\frac{V_k[\cG]}{N_k[\cG]}\frac{N_k[\cG]}{2N_1[\cG]-1}
~\stackrel{p}{\longrightarrow}~c^{k-1}\frac{1}{2}(2c)^{1-k} = 2^{-k}.
\end{eqnarray*}
This implies \eqref{v2} and completes the proof.
\end{proof}

Proposition~\ref{prop:Tok1} has an immediate extension to trees with edge lengths, which we
include here for completeness.
Recall (Def.~\ref{def:treeL}) that a tree $\cG\in\cBL^|$ can be considered a metric
space with distance $d(a,b)$ between two points $a,b\in \cG$ defined as 
the length of the shortest path within $\cG$ connecting them.
\begin{prop}
\label{prop:Tok2}
Let $\cG$ be a combinatorial critical Tokunaga tree \eqref{eq:cTok}.
Let point $u\in \cG$ be sampled from a uniform density function on the metric space $\cG$, 
and let ${\sf ord}(u)$ denote the order of the edge to which the point $u$ belongs.
Then
\be
\label{p2}
\lim_{K\to\infty}{\sf P}\Big({\sf ord}(u)=k~\Big|~{\sf ord}(\cG)=K\Big)=2^{-k}.
\ee
\end{prop}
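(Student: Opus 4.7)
The plan is to reduce Proposition~\ref{prop:Tok2} to Proposition~\ref{prop:Tok1} via a simple identification between uniformly sampled points and uniformly sampled edges. First I would interpret the combinatorial tree $\cG$ as a metric space by assigning unit length to every edge, so that the total length equals $\#\cG$. Sampling a point $u$ from the uniform density on $\cG$ is then equivalent to first choosing an edge $e\subset \cG$ uniformly at random (each with probability $1/\#\cG$) and then placing $u$ uniformly along $e$. The order ${\sf ord}(u)$ coincides almost surely with the order of that chosen edge, so it suffices to compute the fraction of edges of each order.

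Next I would observe the standard bijection between non-root vertices $v\in \cG$ and their parental edges, which preserves orders. Hence the number of edges of order $k$ equals $V_k[\cG]$, and
\[
{\sf P}\bigl({\sf ord}(u)=k\,\bigm|\,\cG\bigr) \;=\; \frac{V_k[\cG]}{\#\cG} \;=\; \frac{V_k[\cG]}{2V_1[\cG]-1}.
\]
Taking the conditional expectation on the event $\{{\sf ord}(\cG)=K\}$, the desired probability becomes
\[
{\sf P}\bigl({\sf ord}(u)=k\,\bigm|\,{\sf ord}(\cG)=K\bigr) \;=\; {\sf E}_K\!\left[\frac{V_k[\cG]}{2V_1[\cG]-1}\right].
\]

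The proof of Proposition~\ref{prop:Tok1} already establishes
\[
\frac{V_k[\cG]}{2V_1[\cG]-1} \;\stackrel{p}{\longrightarrow}\; 2^{-k}
\qquad\text{as}\quad K={\sf ord}(\cG)\to\infty,
\]
by combining the strong Horton law $N_k[\cG]/N_1[\cG]\to (2c)^{1-k}$ from Cor.~\ref{cor:DoobMartingaleSHL} with the weak law of large numbers applied to the numbers of side branches attached to each order-$k$ branch, which yields $V_k[\cG]/N_k[\cG]\to c^{k-1}$. Since the ratio $V_k[\cG]/(2V_1[\cG]-1)$ lies in $[0,1]$, the Bounded Convergence Theorem upgrades this convergence in probability to convergence of expectations, giving \eqref{p2}.

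No real obstacle arises; the proposition is essentially a metric restatement of Proposition~\ref{prop:Tok1} once one notices that the uniform measure on the unit-edge metric tree pushes forward to the uniform distribution on the edge set. The one conceptual check is the identification of edges with non-root vertices (preserving orders), after which the combinatorial limit transfers verbatim. The same argument applies verbatim if one instead wishes to equip $\cG$ with i.i.d.\ exponential edge lengths, as in the critical Tokunaga process $S^{\rm Tok}(t;c,\gamma)$: conditioning on $\textsc{shape}(\cG)$ and applying the LLN to the i.i.d.\ lengths shows that the ratio of total order-$k$ length to total tree length has the same limit $2^{-k}$ as the edge-count ratio.
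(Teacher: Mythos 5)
Your proof is correct and follows essentially the same route as the paper: both reduce the statement to Proposition~\ref{prop:Tok1} by identifying the edge containing a uniformly sampled point with a uniformly chosen edge, and hence with a uniformly chosen non-root vertex of the same order. The only difference is cosmetic --- the paper realizes the uniform-point-to-uniform-edge step by exploiting the exchangeability of the i.i.d.\ exponential edge lengths from Proposition~\ref{Tok_tree}, whereas you assign unit lengths (and then treat the exponential case separately via the LLN); both yield the same conclusion.
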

\begin{proof}
Proposition~\ref{Tok_tree} establishes that the edge lengths in $\cG$ are i.i.d. exponential random variables. 
Thus we can generate $\cG$ by first sampling a combinatorial critical Tokunaga tree 
$\textsc{shape}(\cG)$, 
and then assigning i.i.d. exponential edge lengths. 
Provided that we already sampled $\textsc{shape}(\cG)$, selecting the i.i.d. edge lengths and then selecting the point $u\in \cG$ uniformly at random, and marking the edge that $u$ belongs to, is equivalent to selecting a random edge uniformly from the edges of $\textsc{shape}(\cG)$, in order of proper labeling of Sect.~\ref{sec:label}.
The order ${\sf ord}(u)$ is uniquely determined by the edge to which $u$ belongs.
The statement now follows from Prop.~\ref{prop:Tok1}. 
\end{proof}

\subsubsection{Proof of Theorem~\ref{thm:mainGBM}}
\label{sec:proofGBM}

\begin{lem}[{{\bf \cite{KZ18a}}}]
\label{lem:system}
A geometric branching process $\cG(s)$ is time invariant if and only if
$p=1/2$ and the sequence $\{T_k\}$ solves the following (nonlinear) system
of equations:
\be
\label{eq:system}
\frac{S_0}{S_k}=\sum_{i=1}^{\infty} 2^{-i}\frac{S_i}{S_{k+i}}\quad\text{for all }k\ge 1.
\ee
\end{lem}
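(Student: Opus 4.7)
Plan for proving Lemma~\ref{lem:system}. I would spell out the time-invariance condition coordinate by coordinate, then introduce an auxiliary sequence that makes the recursion telescope; the rest is just passing to a limit.

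First, write $q = 1-p$ and expand $\mathbb{G}\mathbb{S}^{-1}\pi = 0$. With $v_k := (\mathbb{S}^{-1}\pi)_k = p\,q^{k-1}/S_{k-1}$ and $u_k := 1/S_{k-1}$, the $k$-th coordinate equation reads
\[
u_k \;=\; q(T_1+2)\,u_{k+1} + \sum_{j\ge 2} q^j T_j\,u_{k+j}, \qquad k\ge 1. \tag{A_k}
\]
Pulling out the factor of $2$ from $T_1+2$ rewrites $(A_k)$ as
\[
u_k - 2q\,u_{k+1} \;=\; \sum_{j\ge 1} T_j\,q^j\,u_{k+j}. \tag{A'_k}
\]
This is the starting point.

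The key algebraic step is to use $T_j = S_j - S_{j-1}$ (valid for $j\ge 1$, with $S_0=1$) and introduce
\[
W_k \;:=\; \sum_{j\ge 0} S_j\,q^j\,u_{k+j}.
\]
Splitting $T_j = S_j - S_{j-1}$ and re-indexing the second sum yields
\[
\sum_{j\ge 1} T_j q^j u_{k+j} \;=\; (W_k - u_k) - q\,W_{k+1},
\]
so $(A'_k)$ becomes the clean two-term recursion
\[
W_k - q\,W_{k+1} \;=\; 2\bigl(u_k - q\,u_{k+1}\bigr). \tag{A''_k}
\]

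Next I would iterate $(A''_k)$: substituting the $(k+1)$-instance into the $k$-instance produces two cancellations and leaves
\[
W_k \;=\; 2u_k - 2q^{n}u_{k+n} + q^{n}W_{k+n} \qquad \text{for every } n\ge 1,
\]
proved by a one-line induction. Since $u_m = 1/S_{m-1}\le 1$ and $W_m = \sum_{j\ge 0}S_j q^j/S_{m+j-1}\le\sum_{j\ge 0}q^j = 1/(1-q)$ (using $S_j\le S_{m+j-1}$ for $m\ge 1$), the two remainder terms vanish as $n\to\infty$ for any $q\in(0,1)$. Therefore time invariance is equivalent to
\[
W_k \;=\; 2u_k \qquad \text{for all } k\ge 1. \tag{$\ast$}
\]

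Finally I would read off the two conclusions by evaluating $(\ast)$ at different $k$. At $k=1$ we have $u_1 = 1$ and $W_1 = \sum_{j\ge 0} q^j = 1/(1-q)$, so $(\ast)$ becomes $1/(1-q)=2$, i.e.\ $p = 1/2$. For $k=m+1$ with $m\ge 1$, separating the $j=0$ term gives
\[
\frac{1}{S_m} + \sum_{j\ge 1} 2^{-j}\,\frac{S_j}{S_{m+j}} \;=\; \frac{2}{S_m},
\]
which is exactly \eqref{eq:system} (recall $S_0 = 1$). The converse follows at once: assuming $p=1/2$ and \eqref{eq:system} gives $(\ast)$ for all $k\ge 1$, which trivially implies $(A''_k)$, hence $(A'_k)$ and $(A_k)$, hence $\mathbb{G}\mathbb{S}^{-1}\pi=0$.

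The only non-mechanical step is spotting that $T_j = S_j - S_{j-1}$ turns $(A'_k)$ into the telescoping form $(A''_k)$; once $W_k$ is on the stage, the rest is bookkeeping and a dominated-convergence-style bound. I would expect no real obstacle beyond presenting that substitution transparently.
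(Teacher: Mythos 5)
Your proof is correct, and while it shares the same algebraic core as the paper's argument, it resolves it along a genuinely different route. Both proofs hinge on the substitution $T_j = S_j - S_{j-1}$, which turns the coordinate equations of $\mathbb{G}\mathbb{S}^{-1}\pi=0$ into a two-term relation between consecutive values of the ``defect'' $D_k := 2u_k - W_k = \tfrac{1}{S_{k-1}} - \sum_{i\ge 1} q^{i}\tfrac{S_i}{S_{k+i-1}}$; your $(A''_k)$ is exactly the paper's Eq.~\eqref{eq:10}, namely $D_k = q\,D_{k+1}$. The differences are twofold. First, the paper obtains $p=1/2$ \emph{before} touching the system, via a separate conservation argument (under the unit time shift every order-one subtree dies and every other subtree splits in two, so the total mass $1$ must equal $2(1-p)$), and only then works with $q=1/2$; you instead keep $q$ general and extract $p=1/2$ as the $k=1$ instance of $W_k=2u_k$, using $W_1=\sum_j q^j = 1/(1-q)$ and $u_1=1$. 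Second, the paper converts $D_k=\tfrac12 D_{k+1}$ into $D_k\equiv 0$ by an upward induction whose base case $D_2=0$ requires its own computation from the $k=1$ coordinate equation, whereas you iterate the recursion to $D_k = q^n D_{k+n}$ and kill the remainder with the uniform bounds $u_m\le 1$ and $W_m\le 1/(1-q)$ (valid since $S_j\le S_{m+j-1}$ for $m\ge 1$). Your version is more uniform — it handles $p=1/2$ and \eqref{eq:system} in a single stroke and avoids the separate base-case calculation — at the cost of needing the boundedness estimate; the paper's version isolates the probabilistic meaning of $p=1/2$ (criticality of the split) from the purely algebraic content of \eqref{eq:system}. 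Both directions of the equivalence go through in your write-up, since every step from $(A_k)$ to $(\ast)$ is reversible once absolute convergence (guaranteed by $S_j q^j u_{k+j}\le q^j$) is noted.
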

\begin{proof}
Assume that the process is time invariant. 
Then the process progeny
is constant in time and equals unity:
\[\|\pi\|_1=\sum_{k=1}^{\infty}p(1-p)^{k-1} = 1.\]
Observe that in one time step, every vertex of order ${\sf ord}=1$
terminates, and any vertex of order ${\sf ord}>1$ splits in two.
Hence, the process progeny at $s=1$ is
\[2\sum_{k=2}^{\infty}p(1-p)^{k-1} = 2(1-p)=1,\]
which implies $p=1/2$.
Accordingly, $p(1-p)^{k-1}=2^{-k}$ and
the time invariance \eqref{eq:TI} takes the following coordinate form
\be
\label{eq:cdn}
-\frac{2^{-k}}{S_{k-1}}+2^{-(k+1)}\frac{T_1+2}{S_k} +\sum_{i=k+2}^{\infty}2^{-i}\frac{T_{i-k}}{S_{i-1}}=0,\text{ for all } k\ge 1.
\ee
Multiplying \eqref{eq:cdn} by $2^k$ and observing that $T_k=S_k-S_{k-1}$ ,
we obtain
$$-\frac{1}{S_{k-1}}+\frac{1}{2}\frac{T_1+2}{S_k} +\sum_{i=2}^{\infty}2^{-i}\frac{T_{i}}{S_{k+i-1}}=0,$$

$$\frac{1}{S_{k-1}}-\sum_{i=1}^{\infty}2^{-i}\frac{S_i}{S_{k+i-1}}=\frac{1}{S_k}-\frac{1}{2\,S_k} -\sum_{i=2}^{\infty}2^{-i}\frac{S_{i-1}}{S_{k+i-1}},$$
and
\be\label{eq:10}
\frac{1}{S_{k-1}}-\sum_{i=1}^{\infty}2^{-i}\frac{S_i}{S_{k+i-1}}=\frac{1}{2}\left(\frac{1}{S_k}-\sum_{i=1}^{\infty}2^{-i}\frac{S_i}{S_{k+i}}\right),
\ee
We prove \eqref{eq:system} by induction.
For $k=1$ we have
\begin{eqnarray*}
\frac{1}{2}&=&\frac{1}{2\,S_1}+\sum_{i=1}^{\infty}2^{-(i+1)}\frac{S_i-S_{i-1}}{S_i},\\
1&=&\frac{1}{S_1}+\sum_{i=1}^{\infty}2^{-i}-\sum_{i=1}^{\infty}2^{-i}\frac{S_{i-1}}{S_i},
\end{eqnarray*}
which establishes the base case
\[\frac{1}{S_1}=\sum_{i=1}^{\infty}2^{-i}\frac{S_i}{S_{i+1}}.\]
Next, assuming that the statement is proven for $(k-1)$, the left-hand side of \eqref{eq:10} vanishes, 
and the right-hand part rewrites as \eqref{eq:system}.
This establishes necessity.

\medskip
\noindent
Conversely, we showed that the system \eqref{eq:system} 
is equivalent to \eqref{eq:TI} in case $p=1/2$.
This establishes sufficiency.
\end{proof}

\noindent Let $a_k=S_k/S_{k+1}\le 1$ for all $k\ge 0$.
Then, for any $i\ge 0$ and any $k>0$ we have $S_i/S_{k+i} = a_i\,a_{i+1}\dots a_{i+k-1}$.
The system \eqref{eq:system} rewrites in terms of $a_i$ as
\begin{align*}
{1 \over 2}a_1+{1 \over 4}a_2+{1 \over 8}a_3+\hdots &=a_0, \nonumber \\
{1 \over 2}a_1a_2+{1 \over 4}a_2a_3+{1 \over 8}a_3a_4+\hdots &=a_0a_1, \nonumber \\
{1 \over 2}a_1a_2a_3+{1 \over 4}a_2a_3a_4+{1 \over 8}a_3a_4a_5+\hdots &=a_0a_1a_2,\nonumber  
\end{align*}
and so on, which can be summarized  as
\begin{equation}\label{eq:a}
\sum\limits_{j=1}^\infty {1 \over 2^j}\prod\limits_{k=j}^{n+j-1}a_k =  \prod\limits_{k=0}^{n-1}a_k,
\text{ for all } n \in \mathbb{N}.
\end{equation}

\begin{lem}[{{\bf \cite{KZ18a}}}]
\label{lem:NL}
The system \eqref{eq:a} with the initial value $a_0=1/c >0$ has a unique solution
$$a_0=a_1=a_2=\hdots=1/c.$$
\end{lem}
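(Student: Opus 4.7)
The plan has three stages.

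\textbf{Existence.} Direct verification: with $a_k \equiv 1/c$, both products in \eqref{eq:a} equal $c^{-n}$, and $\sum_{j\ge 1} 2^{-j} = 1$, so the identity holds.

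\textbf{Reformulation as a convex-combination identity.} The crucial algebraic observation is that \eqref{eq:a} is equivalent to the statement that each $a_n$ is a strictly positive convex combination of the later terms. Indeed, writing the equation for index $n+1$ and using $P_{n+1}^{(j)} = a_{n+j} P_n^{(j)}$ and $P_{n+1} = a_n P_n$, division by the equation for index $n$ yields, for every $n\ge 0$ (with the convention $P_0 = P_0^{(j)} = 1$),
\begin{equation*}
a_n \;=\; \sum_{j\ge 1} w_j^{(n)}\, a_{n+j}, \qquad w_j^{(n)} := \frac{P_n^{(j)}}{2^j\, P_n} > 0, \qquad \sum_{j\ge 1} w_j^{(n)} = 1.
\end{equation*}
Conversely, iterating this identity and using $\sum_{j\ge 1} 2^{-j} = 1$ recovers \eqref{eq:a}. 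From the convex-combination structure, $a_n \le \sup_{k > n} a_k$ and $a_n \ge \inf_{k > n} a_k$, so the tail sup $M := \sup_{k\ge n} a_k$ and tail inf $m := \inf_{k\ge n} a_k$ are independent of $n$; also $m \le a_0 = 1/c \le M$.

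\textbf{Uniqueness: $M = m$.} If $M$ is attained at some $k_0$, the convex-combination identity at $n=k_0$, combined with $a_{k_0+j} \le M$ and $\sum_j w_j^{(k_0)} = 1$ with strictly positive weights, forces $a_{k_0+j} = M$ for every $j \ge 1$; applying the identity at successively smaller $n$ gives $a_k \equiv M$, and the initial condition pins $M = 1/c$. The attained-infimum case is symmetric. The remaining case — $m < M$ with neither extremum attained — is the main obstacle. I would handle it by a martingale argument on the time-homogeneous Markov chain $(X_\ell)_{\ell\ge 0}$ on $\mathbb{N}_0$ with transitions $\P(X_{\ell+1} = n+j \mid X_\ell = n) = w_j^{(n)}$: the convex-combination identity makes $(a_{X_\ell})$ a bounded martingale in $(0,1]$, so it converges a.s.\ to some $a_\infty \in [m,M]$ with $\E[a_\infty \mid X_0 = n] = a_n$, and $a_\infty$ lies a.s.\ in the set of subsequential limits of $(a_k)$ since $X_\ell \to \infty$. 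Starting the chain from an $n_0$ with $M - a_{n_0}$ small, Markov's inequality concentrates $a_\infty$ near $M$; starting from an $n_1$ with $a_{n_1} - m$ small concentrates $a_\infty$ near $m$. Since all weights $w_j^{(n)}$ are strictly positive, the chain started at $n_0$ reaches a chosen later state $n_1$ with positive probability, and the strong Markov property then forces $a_\infty$ also to be concentrated near $m$ with positive probability when started at $n_0$, contradicting $m < M$ and the disjointness of $\{a_\infty \ge M - \epsilon\}$ and $\{a_\infty \le m + \epsilon\}$.

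\textbf{Where the work is.} The delicate point is making this contradiction quantitatively airtight: the one-step travel probability $w_{n_1-n_0}^{(n_0)}$ may shrink as the target precision $\epsilon \to 0$ (with $n_0, n_1$ both drifting to infinity), so the naive bound need not close. The remedy is to exploit the explicit uniform lower bound $w_1^{(n)} = c\,a_n/2 \ge cm/2 > 0$, which lets the chain advance by one with probability bounded below independently of $n$; the probability of traversing from $n_0$ to $n_1$ by $n_1-n_0$ consecutive unit jumps is then at least $(cm/2)^{n_1-n_0} > 0$, and by choosing $n_0, n_1$ so that $(a_k)$ transitions between a value near $M$ and one near $m$ on a bounded index scale (which the identities $M_n = M$ and $m_n = m$ for every $n$ make possible), one obtains a lower bound on the travel probability that does not degrade with $\epsilon$. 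Once $M = m$ is established, the sequence is constant, and the initial condition pins it to $1/c$.
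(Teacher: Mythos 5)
Your existence check, the reformulation of \eqref{eq:a} as the pair of identities $\sum_j w_j^{(n)}=1$ and $a_n=\sum_j w_j^{(n)}a_{n+j}$ (these are exactly \eqref{eq:L1} and \eqref{eq:L+1} in the paper), and the attained-extremum case all match the paper's Case~I and are correct. The genuine gap is the case where the supremum is not attained, and your proposed martingale remedy cannot be made to work. Two concrete failures. First, the uniform lower bound $w_1^{(n)}\ge cm/2>0$ presupposes $m=\inf_k a_k>0$, which is nowhere established. Second, and fatally, the claim that $n_0$ and $n_1$ can be chosen ``on a bounded index scale'' is false: the convex-combination identity itself gives $M-a_{n+1}\le \frac{2}{c\,a_n}(M-a_n)$ (bound the $j\ge 2$ terms by $M$ and solve for $a_{n+1}$), so near-maximality propagates forward with a per-step loss factor of roughly $C=2/(cM)\ge 1$, and the index gap between a $\delta$-near-$M$ term and the first subsequent term below $M-\eta$ must grow like $\log(\eta/\delta)/\log C$. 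Worse, applying Doob's maximal inequality to the nonnegative martingale $M-a_{X_\ell}$ started at $n_0$ shows that the chain ever visits a state of value $\le M-\eta$ with probability at most $(M-a_{n_0})/\eta\le\delta/\eta$. So the travel probability $p$ is \emph{necessarily} $O(\delta)$, which exactly cancels the $\delta/\epsilon$ you get from Markov's inequality, and the intended contradiction $p(1-\delta/\epsilon)\le\delta/\epsilon$ becomes vacuous. In short, the one-step martingale identity is too weak: everything it implies is already consistent with $m<M$.

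The missing ingredient is global, not probabilistic. In the no-maximum case the paper works with the truncated full product equations (the form \eqref{eq:U2}, obtained from \eqref{eq:a} by cancellation), where the denominators $\prod_{k=0}^{j-1}a_k$ can be bounded above by $U^{j}$ with $U=\limsup_j a_j$ precisely because \emph{no} term attains $U$; combining this with the forward-propagation estimate $a_{n+k}>(1-\varphi^{(k)}(\varepsilon))U$ and letting $\varepsilon\to 0$ yields $1\ge(1-2^{-(K+1)})U$ for every $K$, hence $U\le 1$, which contradicts the first equation $\frac12 a_1+\frac14 a_2+\cdots=1$ (after normalizing $a_0=1$). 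It is this use of the multi-term product structure of \eqref{eq:a}, rather than only the one-step convex combination, that closes the argument; your plan needs to import something of this kind to handle the non-attained case.
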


\begin{proof}[Proof of Lemma~\ref{lem:NL}] 
Suppose $\{a_0,a_1,a_2,\dots\}$ is a solution to system \eqref{eq:a}.
Then $\{1,a_1/a_0,a_2/a_0,\dots\}$ is also a solution, since each equation only includes multinomial terms of the same degree.
Thus, without loss of generality we assume $a_0=1$, and we need to prove that
$$a_1=a_2=\hdots=1.$$
We consider two cases. 
\medskip
\noindent

{\it Case I.} Suppose the sequence $\{a_j\}$ has a maximum:
there exists an index $i \in \mathbb{N}$ such that 
$a_i =\max\limits_{j \in \mathbb{N}}a_j.$
Define
\[w_{j,\ell}:= {1 \over 2^j}\prod\limits_{k=j}^{\ell+j-1}a_k \left[\prod\limits_{k=0}^{\ell-1}a_k\right]^{-1}.\]
Using $n=\ell$ in (\ref{eq:a}) we obtain that for any $\ell \in \mathbb{N}$,
\begin{equation}\label{eq:L1}
\sum\limits_{j=1}^\infty w_{j,\ell}  =  1,
\end{equation}
and using $n=\ell+1$ we find that an arbitrary $a_\ell$ is the weighted average of $\{a_{\ell+j}\}_{j=1,2,\hdots}$:
\begin{equation}\label{eq:L+1}
\sum\limits_{j=1}^\infty w_{j,\ell}\,a_{\ell+j} =  a_\ell.
\end{equation}
Hence, since $a_i =\max\limits_{j \in \mathbb{N}}a_j$,
$$a_i=a_{i+1}=a_{i+2}=a_{i+3}=\hdots =a.$$
Similarly, letting $\ell=i-1$ in (\ref{eq:L1}) and (\ref{eq:L+1}), we obtain $a_{i-1}=a$. Recursively, by plugging in $\ell=i-2, ~i-3, \hdots$, we show that
$$a_1=a_2=\hdots =a_{i-1}=a_i=a_{i+1}=\hdots =a.$$
Finally, ${1 \over 2}a_1+{1 \over 4}a_2+{1 \over 8}a_3+\hdots=1$ implies $a=1$.

\medskip
\noindent
{\it Case II.} Suppose there is no $\max\limits_{j \in \mathbb{N}}a_j$. Let 
$U:=\limsup\limits_{j \rightarrow \infty}a_j.$
From (\ref{eq:a}) we know via cancelation that
\begin{eqnarray}
\label{eq:U2}
{1 \over 2}a_n+{1 \over 4}{a_n a_{n+1} \over a_1}&+&{1 \over 8}{a_n a_{n+1}a_{n+2} \over a_1a_2}+\hdots\nonumber\\
&+&{1 \over 2^{n-1}}{ \prod\limits_{k=n}^{2n-2}a_k \over \prod\limits_{k=0}^{n-2}a_k}
+\sum\limits_{j=n}^\infty {1 \over 2^j}{ \prod\limits_{k=j}^{n+j-1}a_k \over \prod\limits_{k=0}^{n-1}a_k}=  1.
\end{eqnarray}
Thus, $2^{-1}\,a_n <1$ and $U \leq 2$.
The absence of maximum implies $a_j <U \leq 2$ for all $j \in \mathbb{N}$.

\medskip
\noindent
Plugging $n+1$ in (\ref{eq:a}), we obtain
\begin{eqnarray*}\label{eq:Un+1}
\lefteqn{\left({1 \over 2}a_n\right) a_{n+1}+\left({1 \over 4}{a_n a_{n+1} \over a_1}\right)a_{n+2}+\hdots}\\
&+&{1 \over 2^{n-1}}{ \prod\limits_{k=n}^{2n-2}a_k \over \prod\limits_{k=0}^{n-2}a_k}a_{n+j-1}+\sum\limits_{j=n}^\infty {1 \over 2^j}{ \prod\limits_{k=j}^{n+j-1}a_k \over \prod\limits_{k=0}^{n-1}a_k}a_{n+j}=  a_n.
\end{eqnarray*}

Thus, since $a_j <U$ for all $j \in \mathbb{N}$,
\begin{eqnarray*}
\lefteqn{\left({1 \over 2}a_n\right) a_{n+1}+\left({1 \over 4}{a_n a_{n+1} \over a_1}\right)U+\hdots}\\
&+&{1 \over 2^{n-1}}{ \prod\limits_{k=n}^{2n-2}a_k \over \prod\limits_{k=0}^{n-2}a_k}U+\sum\limits_{j=n}^\infty {1 \over 2^j}{ \prod\limits_{k=j}^{n+j-1}a_k \over \prod\limits_{k=0}^{n-1}a_k}U > a_n
\end{eqnarray*}
which simplifies via (\ref{eq:U2}) to
\begin{equation}\label{eq:anan+1}
\left({a_n \over 2}\right) a_{n+1}+\left(1-{a_n \over 2}\right)U  > a_n.
\end{equation}
For all $\varepsilon \in (0,1)$, there are infinitely many $n \in \mathbb{N}$ such that $a_n >(1-\varepsilon)U$.
Then, for any such $n$, the above inequality (\ref{eq:anan+1}) implies
$$a_{n+1}>2-{2 \over a_n}U+U >2-{2\varepsilon \over 1-\varepsilon}+U =\big(1-\varphi(\varepsilon)\big)U,$$
where 
$$\varphi(x):={2x \over (1-x)U}.$$
Let $\varphi^{(k)}=\varphi \circ \hdots \circ \varphi$. 
Repeating the argument for any given number of iterations $K \in \mathbb{N}$, we obtain
\begin{equation*}
a_{n+2}>\big(1-\varphi^{(2)}(\varepsilon)\big)U, ~~ a_{n+3}>\big(1-\varphi^{(3)}(\varepsilon)\big)U,\hdots, ~~a_{n+K}>\big(1-\varphi^{(K)}(\varepsilon)\big)U.
\end{equation*}
Thus, given any $K \in \mathbb{N}$, fix $\varepsilon \in (0,1)$ small enough so that such that $\varphi^{(k)}(\varepsilon)\in (0,1)$ for all $k=1,2,\hdots,K$. Then, taking $n>K$ such that $a_n >(1-\varepsilon)U$, we obtain from (\ref{eq:U2}) that
\begin{align*}
1 &> {1 \over 2}a_n+{1 \over 4}{a_n a_{n+1} \over a_1}+{1 \over 8}{a_n a_{n+1}a_{n+2} \over a_1a_2}+\hdots +{1 \over 2^{K+1}}{ \prod\limits_{k=n}^{n+K}a_k \over \prod\limits_{k=0}^Ka_k}\\
>& {1 \over 2}(1-\varepsilon)U+{1 \over 4}{(1-\varepsilon)\big(1-\varphi(\varepsilon)\big)U^2 \over U}+\hdots +{1 \over 2^{K+1}}{ U^{K+1}\prod\limits_{k=0}^K\big(1-\varphi^{(k)}(\varepsilon)\big) \over U^K}.
\end{align*}
Now, since $\varepsilon$ can be chosen arbitrarily small,
$$1 \geq \left(1-{1 \over 2^{K+1}}\right)U.$$
Finally, since $K$ can be selected arbitrarily large, we have proven that $1\geq U$. However, this will contradict the assumption of {\it Case II}. Indeed, if
$a_j <U \leq 1$ for all $j \in \mathbb{N}$, then
$${1 \over 2}a_1+{1 \over 4}a_2+{1 \over 8}a_3+\hdots <1,$$
contradicting the first equation in the statement of the theorem. Thus, the assumptions  of {\it Case II} cannot be satisfied.  
We conclude that there exists a maximal element in the sequence $\{a_j \}_{j=1,2,\hdots}$ as assumed in {\it Case I}, implying the statement of the theorem.
\end{proof}

\bigskip
\begin{proof}[Proof of Theorem~\ref{thm:mainGBM}]
Lemma~\ref{lem:NL} implies
$a_k = S_k/S_{k+1} = 1/c$ for some $c\ge 1$.
Hence
$S_1 = 1+T_1 = c$ and $T_1 = c-1.$
Furthermore, 
\[S_{k+1} = c\,S_k =c^k\]
and, accordingly,
\[T_{k+1} = S_{k+1}-S_k = (c-1)c^{k-1},\]
which completes the proof.
\end{proof}

\section{Tree representation of continuous functions}
\label{LST}

We review here the results of \cite{LeGall93,NP,Pitman,ZK12} on tree 
representation of continuous functions. 
This representation allows us to apply the self-similarity concepts to
time series.

\begin{figure}[t] 
\centering\includegraphics[width=0.7\textwidth]{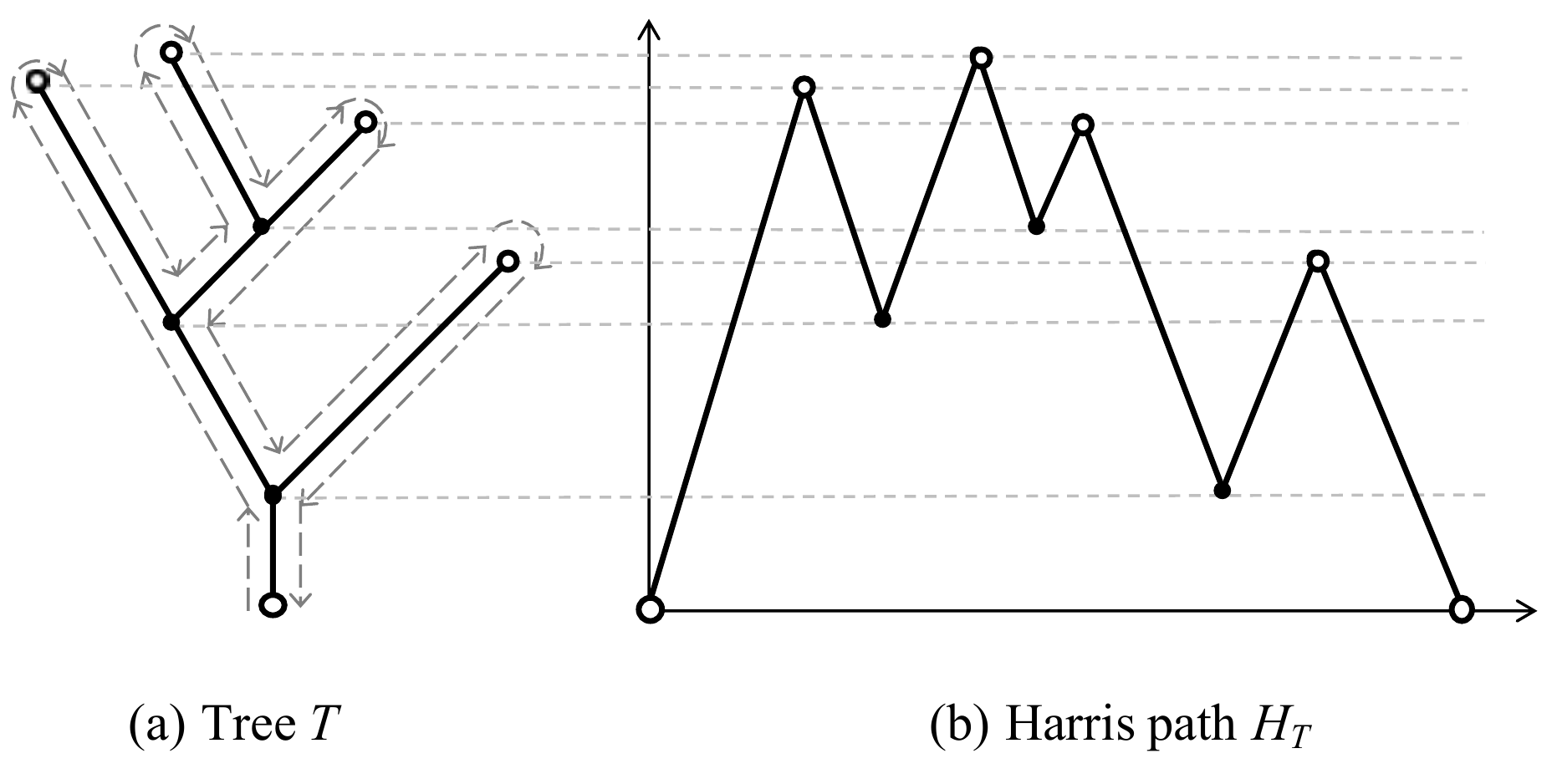}
\caption[Harris path]
{(a) Tree $T$ and its depth-first search illustrated by dashed arrows.
(b) Harris path $H_T(t)$ for the tree $T$ of panel (a). 
In this figure, the distances on a tree (edge lengths)
are measured along the $y$-axis.
Dashed horizontal lines illustrate correspondence between
vertices of $T$ and local extrema of $H_T(t)$.}
\label{fig:Harris}
\end{figure} 

\subsection{Harris path}
\label{sec:Harris}
For any embedded tree  $T\in\cL_{\rm plane}$ with edge lengths, the {\it Harris path}
(also known as the contour function, or Dyck path) 
is defined as a piece-wise linear function \cite{Harris,Pitman}  \index{Harris path}
\[H_T(t)\,:\,[0,2\cdot\textsc{length}(T)]\to\mathbb{R}\]
that equals the distance from the root traveled along the tree $T$ 
in the depth-first search, as illustrated in Fig.~\ref{fig:Harris}.
For a tree $T$ with $n$ leaves, the Harris path 
$H_T(t)$ is a piece-wise linear positive excursion
that consists of $2n$ linear segments with alternating slopes $\pm 1$.

\subsection{Level set tree}
\label{sec:level}
This section introduces a tree representation of continuous functions,
which we call a {\it level set tree}.
We begin in Sect.~\ref{sec:level1} by assuming
a finite number of local extrema; this construction is more intuitive
and is sufficient for analysis of finite trees from $\L$.  
A general definition for continuous functions follows in Sect.~\ref{sec:level2}.
\index{level set!tree}
\index{tree!level set}

\subsubsection{Tamed functions: finite number of local extrema}
\label{sec:level1}

Consider a closed interval $I\subset\mathbb{R}$ and function $f(x) \in C(I)$, where $C(I)$ is the 
space of continuous functions from $I$ to $\mathbb{R}$. 
Suppose that $f(x)$ has a finite number of distinct local minima.
The {\it level set} $\mathcal{L}_\alpha\left(f\right)$ is defined 
as the pre-image of the function values equal to or above $\alpha$: 
\[\mathcal{L}_\alpha =\mathcal{L}_\alpha (f) = \{x \in I\,:\,f(x)\ge\alpha\}.\]
The level set $\mathcal{L}_\alpha$ for each $\alpha$ is
a union of non-overlapping intervals; we write $|\mathcal{L}_\alpha|$ for their number.
Notice that 
$|\mathcal{L}_\alpha| = |\mathcal{L}_{\beta}|$ 
as soon as the interval $[\alpha,\,\beta]$ does not contain a value of
local extrema of $f(x)$ and  
$0\le |\mathcal{L}_\alpha| \le n$, where $n$ is the total number 
of the local maxima of $f(x)$ over $I$. \index{level set}

\begin{figure}[t] 
\centering\includegraphics[width=0.7\textwidth]{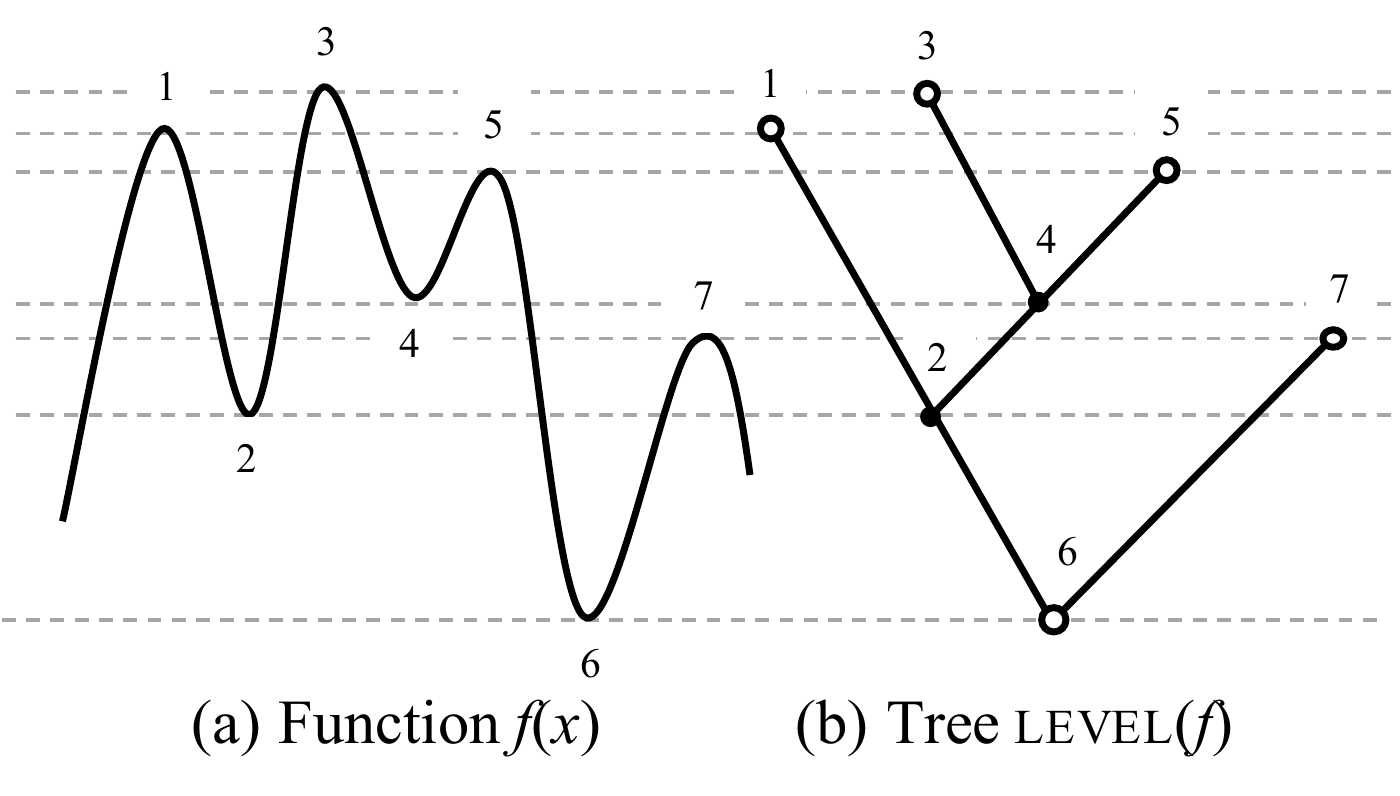}
\caption{Function $f(x)$ (panel a) with a finite number of local
extrema and its level set tree $\textsc{level}(f)$ (panel b).
In this figure, the distances on a tree (edge lengths)
are measured along the $y$-axis.
Dashed horizontal lines and numbers $1,\dots,7$ illustrate correspondence between
the local extrema of $f(x)$ and vertices of $\textsc{level}(f)$.
}
\label{fig:LST}
\end{figure}

The {\it level set tree} $\textsc{level}(f)\in\cL_{\rm plane}$ is a 
tree that describes the structure of the level sets $\mathcal{L}_\alpha$ 
as a function of threshold $\alpha$, as illustrated in Fig.~\ref{fig:LST}.
Specifically, there are bijections between 
\begin{description}
  \item[(i)] the leaves of $\textsc{level}(f)$ and the local maxima of $f(x)$;
  \item[(ii)] the internal (parental) vertices of $\textsc{level}(f)$ 
and the local minima of $f(x)$, excluding possible local minima achieved on 
the boundary $\partial I$;
  \item[(iii)] a pair of subtrees of $\textsc{level}(f)$ rooted in the parental vertex that corresponds to a local
minima $f(x^*)$ and the adjacent positive excursions (or meanders bounded by $\partial I$) of $f(x)-f(x^*)$ to the right and left of $x^*$.
\end{description}
Furthermore, every edge in the tree is assigned a length equal the difference
of the values of $f(x)$ at the local extrema that correspond to the
vertices adjacent to this edge according to the bijections (i) and (ii) above.
The tree root corresponds to the global minimum of $f(x)$ on $I$.
If the minimum is achieved at $x \in I \setminus \partial I$, then the 
level set tree is stemless, $\textsc{level}(f)\in\cL_{\rm plane}^{\vee}$;
this case is shown in Fig.~\ref{fig:LST}.
Otherwise, if the minimum is on the boundary $\partial I$, then the
level set tree is planted, $\textsc{level}(f)\in\cL_{\rm plane}^{|}$.

\begin{figure}[t] 
\centering\includegraphics[width=\textwidth]{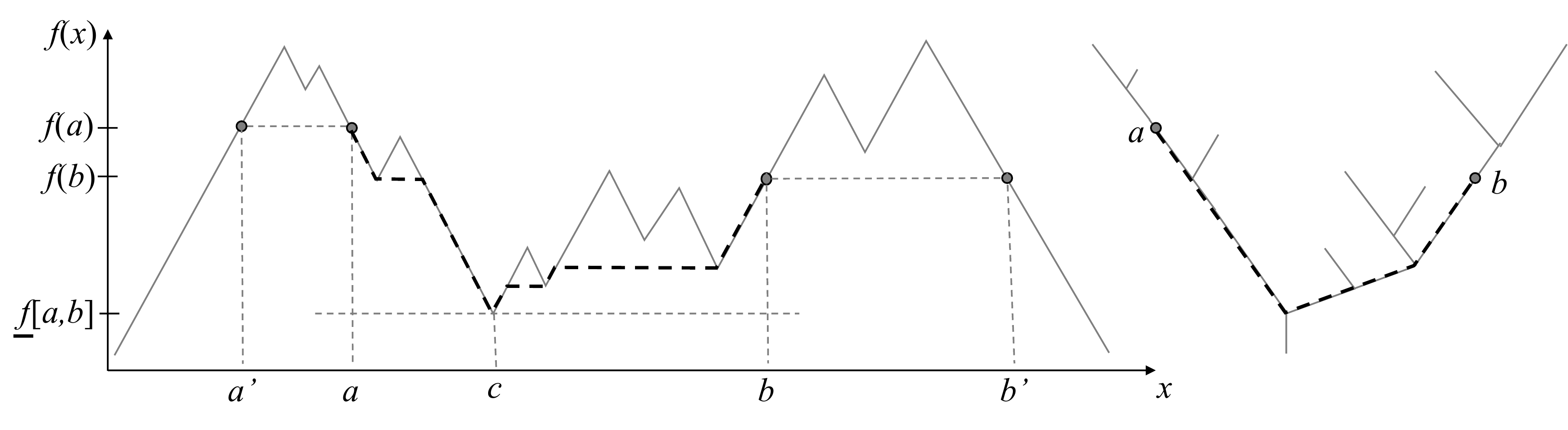}
\caption{Tree metric $d_f$ on a real interval $I$ defined by a function $f(x)$.
(Left panel): The graph of a function $f(x)$, $x\in I$ is shown by solid gray 
line. 
The distance $d_f(a,b)$ between points $a,b\in I$ is given by \eqref{eqn:tree_dist};
it equals the vertical distance along the path between $f(a)$ and $f(b)$ shown by
black dashed line.
The panel also illustrates equivalence in metric $d_f$: 
here $a\sim_f a'$ and $b\sim_f b'$, since $d_f(a,a')=d_f(b,b')=0$. 
(Right panel): The level set tree $\textsc{level}(f)$ of function $f(x)$ is shown
by solid gray line.
The distance $d_f(a,b)$ equals the length of the minimal 
tree path between points $a$ and $b$, shown by dashed black line.
Here, the tree distance is measured along the vertical axis.
}
\label{fig:dist}
\end{figure}

\subsubsection{General case}
\label{sec:level2}
For a function $f(x)\in C(I)$ on a closed interval $I\subset\mathbb{R}$,
the level set tree is defined 
via the framework of Def.~\ref{def:treeL}, following Aldous \cite{AldousI,AldousIII}
and Pitman \cite{Pitman}.
Specifically, let
$\underline{f}[a,b]:=\inf_{x\in[a,b]} f(x)$ for any subinterval $[a,b]\subset\,I$.
We define a {\it pseudo-metric} on $I$ as \cite{AldousIII,Pitman}
\begin{equation}
\label{eqn:tree_dist}
d_f(a,b):=\left(f(a)-\underline{f}[a,b]\right)
+ \left(f(b)-\underline{f}[a,b]\right),\quad a,b\in\,I.
\end{equation}
We write $a\sim_f b$ if $d_f(a,b)=0$.
Here $d_f$ is a metric on the quotient space $I_f\equiv I/\!\sim_f$. 
It can be shown \cite{Pitman} that $\left(I_f,d_f\right)$ is a tree by Def.~\ref{def:treeL}.
Figure~\ref{fig:dist} illustrates this construction for a particular piece-wise
function (left panel), and shows the respective tree $(I_f,d_f)$ as an element of
$\L^|$ (right panel).

We describe now the unique path $\sigma_{a,b}\subset I_f$
between a pair of points $a,b$. 
Let $c\in[a,b]$ be the leftmost point where $f(x)$ achieves the minimum on $[a,b]$:
\[c=\min\{x\in[a,b]: f(x) = \underline{f}[a,b]\}.\]
We define a function $\underline{f}(x)$ on $[a,b]$ as
\[\underline{f}(x) = \left\{\begin{array}{lr}
\inf_{y\in[a,x]}f(y),&\text{ if }x\in[a,c],\\
\inf_{y\in[x,b]}f(y),&\text{ if }x\in[c,b].
\end{array}
\right.
\]
By construction, $\underline{f}(x)$ is a continuous function that
is monotone non-increasing on $[a,c]$ and
monotone nondecreasing on $[c,b]$.
Furthermore,  $\underline{f}(x)\le f(x)$ and, in particular, $\underline{f}(x)=f(x)$
for $x\in\{a,b,c\}$.

\begin{lem}[{{\bf Rising Sun Lemma, F.\,Riesz \cite{Riesz}}}]
\label{sun}
Let 
\[S=\{x:\underline{f}(x)< f(x)\}\subset[a,b].\]
Then $S$ is an open set that can be represented as a countable union
of disjoint intervals
\[S = \bigcup_k(a_k,b_k)\]
such that $f(a_k)=f(b_k)=\underline{f}(a_k)=\underline{f}(b_k)$ 
and $f(x)>f(a_k)$ for any $x\in(a_k,b_k)$.
\end{lem}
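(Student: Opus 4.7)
My plan rests on three ingredients: continuity of $\underline f$, its monotonicity on each side of $c$ (non-increasing on $[a,c]$, non-decreasing on $[c,b]$), and the fact that the infimum defining $\underline f(x)$ is attained at some point $y_x$ in $[a,x]$ or $[x,b]$. The target is to prove that $\underline f$ is \emph{constant} on every connected component $(a_k,b_k)$ of $S$; the four-fold equality of values and the strict inequality inside the component will follow immediately from this.

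First I would dispatch the routine parts. Continuity of $\underline f$ on $[a,c]$ follows from the identity $\underline f(x+h)=\min\bigl(\underline f(x),\inf_{[x,x+h]}f\bigr)$ together with continuity of $f$; the analogous identity handles $[c,b]$, and the two pieces agree at $c$. Openness of $S=\{f-\underline f>0\}$ is then immediate, so $S$ decomposes into a countable family of disjoint open intervals $(a_k,b_k)$. Because $\underline f(a)=f(a)$, $\underline f(b)=f(b)$, and $\underline f(c)=f(c)$ (the last since $c$ is the global minimizer, viewed from both the left and right definitions), the points $a$, $b$, $c$ all fall outside $S$; hence each component lies wholly in $[a,c]$ or in $[c,b]$, and maximality forces $\underline f(a_k)=f(a_k)$ and $\underline f(b_k)=f(b_k)$.

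The heart of the argument is the constancy of $\underline f$ on a component. Fix $(a_k,b_k)\subset[a,c]$ and $x\in(a_k,b_k)$, and let $y_x\in[a,x]$ attain the minimum of $f$ on $[a,x]$, so $f(y_x)=\underline f(x)$. The squeeze $\underline f(x)\le \underline f(y_x)\le f(y_x)=\underline f(x)$, valid because $[a,y_x]\subset[a,x]$, shows $\underline f(y_x)=f(y_x)$, i.e.\ $y_x\notin S$. Since $(a_k,x]\subset S$, this forces $y_x\in[a,a_k]$, whence $\underline f(x)=f(y_x)\ge \underline f(a_k)=f(a_k)$; the reverse inequality $\underline f(x)\le \underline f(a_k)$ follows from $\underline f$ being non-increasing on $[a,c]$. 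Thus $\underline f\equiv f(a_k)$ on $(a_k,b_k)$, and letting $x\to b_k^-$ yields $f(b_k)=f(a_k)$, while $f(x)>\underline f(x)=f(a_k)$ throughout the component. Components contained in $[c,b]$ are handled by the mirror argument, with the minimizer now lying in $[b_k,b]$. The only delicate moment — and what I regard as the main obstacle, modest though it is — is the observation that any point realizing the running infimum must itself lie outside $S$, which is what pins $y_x$ on the correct side of the component.
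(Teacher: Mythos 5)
Your proof is correct, but it follows a different route from the paper's. The paper does not reprove anything: it observes that after the substitutions $-f(x)$ on $[c,b]$ and $-f(-x)$ on $[a,c]$ the statement becomes exactly the classical Rising Sun Lemma of Riesz, notes that $c\notin S$ because $f(c)$ is the global minimum (so the two halves can be glued), and cites \cite{Riesz,Tao} for the rest. You instead give a self-contained argument, and the one genuinely non-routine step --- showing that $\underline f$ is constant on each component $(a_k,b_k)$ by locating a minimizer $y_x$ of $f$ on $[a,x]$, observing that $\underline f(y_x)=f(y_x)$ forces $y_x\notin S$ and hence $y_x\in[a,a_k]$, and then squeezing $\underline f(x)$ between $\underline f(a_k)$ from both sides using monotonicity --- is precisely the engine of Riesz's original proof, correctly adapted to the two-sided setting here (with the mirror case on $[c,b]$ handled symmetrically). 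Your treatment of the routine points is also sound: continuity of $\underline f$ via $\underline f(x+h)=\min\bigl(\underline f(x),\inf_{[x,x+h]}f\bigr)$, exclusion of $a$, $b$, $c$ from $S$ so that components do not straddle $c$ and openness is not merely relative, and the identification $\underline f(a_k)=f(a_k)$, $\underline f(b_k)=f(b_k)$ at component endpoints. What the paper's reduction buys is brevity and a clean pointer to the literature; what your version buys is a complete argument that does not presuppose the reader knows the classical lemma, at the cost of essentially reproving it.
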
\index{rising sun lemma}
\begin{proof}
The statement is equivalent to that of the {\it Rising Sun Lemma} of Riesz \cite{Riesz,Tao}
applied to the functions $-f(x)$ on $[c,b]$ and $-f(-x)$ on $[a,c]$.
We just notice that $f(c)$ is the global minimum of $f(x)$ on $[a,b]$ and so
$c$ cannot be a part of $S$.
The union of two open sets, each represented as a countable union of disjoint intervals,
is itself an open interval represented as a countable union of disjoint intervals.
This completes the proof.
\end{proof}

\begin{figure}[t] 
\centering\includegraphics[width=\textwidth]{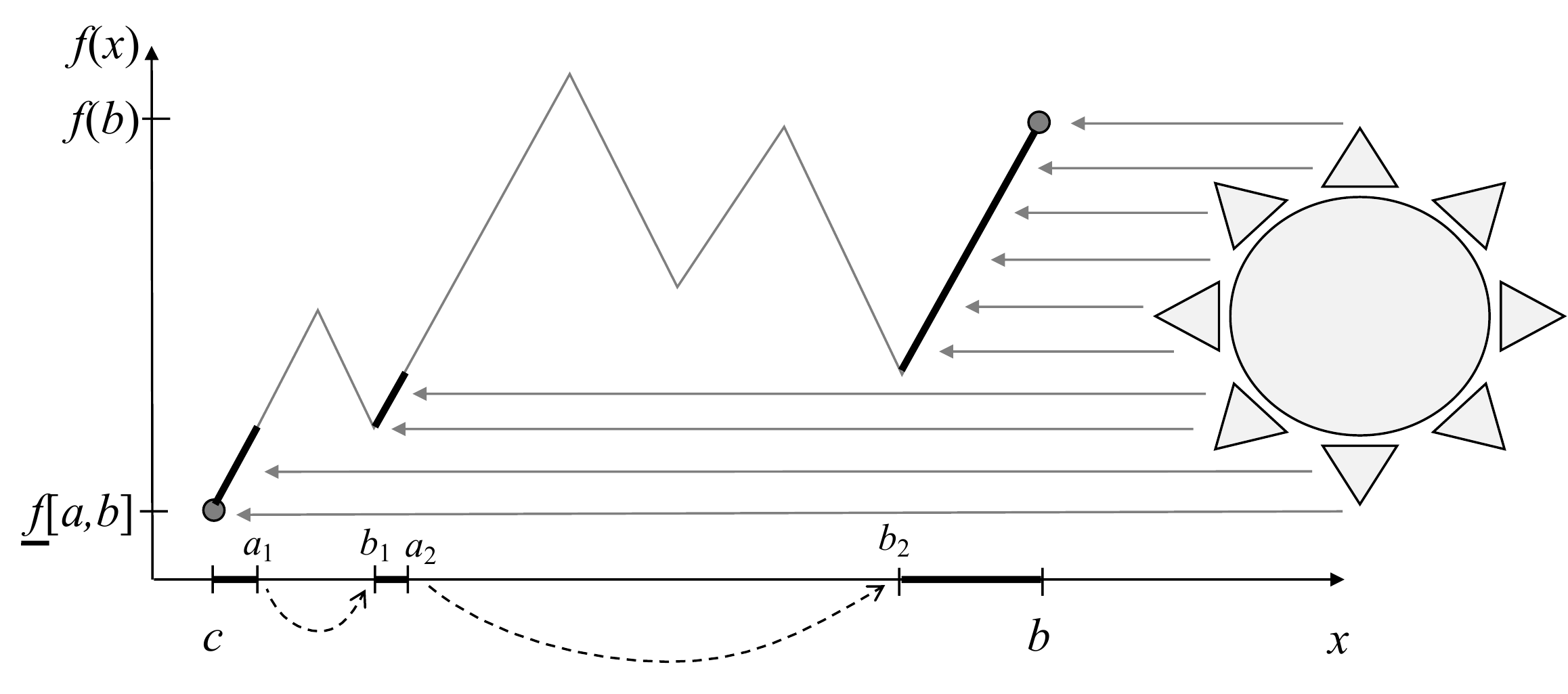}
\caption[Rising Sun Lemma]
{Rising Sun Lemma: an illustration.
The figure shows the graph of the function $f(x)$ of Fig.~\ref{fig:dist}
on the interval $[c,b]$. 
Lighted segments of the graph are shown by solid black lines; their pre-image
comprises the path $\sigma_{c,b}$ (solid black segments on the $x$-axis).
Shaded segments of the graph are shown by thin gray lines; their pre-image
comprises the set $S$ of Lemma~\ref{sun}.
The path $\sigma_{c,b}$ jumps over the intervals $(a_k,b_k)$ that form the set $S$,
as shown by dashed arrows, using the equivalence of the interval ends, $a_k\sim_f b_k$.}
\label{fig:Riesz}
\end{figure}

\noindent Figure~\ref{fig:Riesz} illustrates the Rising Sun Lemma in our setting
on the interval $[c,b]$.
As the sun rises from east (right), it lightens some segments of the 
graph of $f(x)$, and leaves the other segments in shade. 
The pre-image of the shaded segments is the set $S$, while
the pre-image of the lighted segments is the path $\sigma_{c,b}$.
The path, considered as a set in $[c,b]$, is making at most a countable number of jumps over
the intervals $(a_k,b_k)$ that comprise the set $S$ of Lemma~\ref{sun}.

For a tamed function with a finite number of local extrema, the path 
$\sigma_{a,b}$ is the pre-image of the graph 
of $\underline{f}(x)$ excluding the constant intervals.
The Rising Sun Lemma ensures that this statement generalizes to any continuous
function: 
\[\sigma_{a,b} = [a,b]\setminus S = \{x:\underline{f}(x)=f(x)\}\subset[a,b],\] 
which is travelled at unit speed left to right.
As a real set, the path $\sigma_{a,b}$ may have quite complicated structure.
For instance, it can be the Cantor set.
This, however, does not disturb the continuity of the map $[0,d_f(a,b)]\to I_f$ 
in Def.~\ref{def:treeL}.

The Rising Sun Lemma asserts that the function $\underline{f}(x)$ on $[a,b]$
has at most a countable set of constant disjoint intervals $I^{(k)}=(a_k,b_k)$, each of which 
corresponds to a positive excursion of $f(x)$. 
The end points of these intervals are equivalent in $I_f$, hence each
interval generates a tree $(I^{(k)}_f,d_f)$ whose root
corresponds to the equivalence class on $I$ consisting of $\{a_k,b_k\}$. 
This observations leads to the following statement.
\begin{cor}
The level set tree $\textsc{level}(f)$ of a continuous function $f(x)$ on
a real closed interval $[a,b]\subset\mathbb{R}$ consists of a segment
of length $d_f(a,b)$ and at most a countable number of 
trees attached to this segment with the same orientation. 
There is a one-to-one correspondence 
between these trees and the intervals $(a_k,b_k)$ from the Rising Sun Lemma. 
\end{cor}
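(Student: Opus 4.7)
The plan is to build the decomposition directly from the Rising Sun partition of $[a,b]$ established just before the corollary: $\sigma_{a,b} = [a,b]\setminus S$ and $S = \bigsqcup_k (a_k,b_k)$. First I would verify that the quotient image of $\sigma_{a,b}$ inside $(I_f, d_f)$ is a geodesic segment of length $d_f(a,b)$ between the equivalence classes of $a$ and $b$. This is essentially the content of the paragraph preceding the corollary: $\sigma_{a,b}$ is exactly the set where $\underline{f}$ and $f$ agree, a natural parametrization comes from $\underline{f}$ itself (which is monotone non-increasing on $[a,c]$ and non-decreasing on $[c,b]$), and the total variation of $\underline{f}$ on $[a,b]$ equals $(f(a)-\underline{f}[a,b]) + (f(b)-\underline{f}[a,b]) = d_f(a,b)$. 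Continuity of the quotient map on $\sigma_{a,b}$ under $d_f$, and hence its realization as the unique path of Def.~\ref{def:treeL}, follows from continuity of $\underline{f}$.

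Next, for each Rising Sun interval $(a_k,b_k)$, I would show that the restriction of $d_f$ to $[a_k,b_k]$ coincides with the pseudo-metric $d_{g_k}$ of $g_k := f|_{[a_k,b_k]}$, so that $(I_f^{(k)}, d_f)$ is isometric to $\textsc{level}(g_k)$. This reduces to the assertion that for $x,y\in[a_k,b_k]$, the infimum $\underline{f}[x,y]$ computed in $[a,b]$ is attained inside $[a_k,b_k]$; this is immediate from the Rising Sun Lemma, which guarantees $f \geq f(a_k)$ on $[a_k,b_k]$ with equality at the endpoints. Since $f(a_k)=f(b_k)$ forces $a_k \sim_f b_k$, the two endpoints glue to a single point $p_k$, which lies on the main segment because $a_k,b_k\in\sigma_{a,b}$. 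The tree $(I_f^{(k)},d_f)$ is thus attached to the main segment at $p_k$ as its root.

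For the orientation claim, I would observe that for any $x \in (a_k,b_k)$, $d_f(p_k,x)=f(x)-f(a_k) \geq 0$, so every non-root point of $(I_f^{(k)},d_f)$ has $f$-value strictly greater than $f(p_k)$. Viewing the level set tree with $f$-values on the vertical axis, each subtree hangs upward from its attachment point $p_k$, on the same side of the main segment as the corresponding positive excursion of $f$. Finally, the partition $[a,b] = \sigma_{a,b} \sqcup \bigsqcup_k (a_k,b_k)$ shows that every equivalence class in $I_f$ lies either on the main segment or in a unique subtree $(I_f^{(k)},d_f)$, yielding the claimed bijection with the Rising Sun intervals.

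The main obstacle I anticipate is the first step: rigorously realizing the quotient image of $\sigma_{a,b}$ as a geodesic segment of length $d_f(a,b)$. When $\sigma_{a,b}$ is Cantor-like, its arc-length parametrization is not the set inclusion $\sigma_{a,b}\hookrightarrow [a,b]$ but the map that ``jumps over'' each excursion interval $(a_k,b_k)$ using $a_k \sim_f b_k$; continuity under $d_f$ must be checked against the supremum of excursion heights and may be delicate when infinitely many excursions accumulate near a point of $\sigma_{a,b}$. Once this identification is in place, the remaining two steps amount to bookkeeping using the Rising Sun decomposition.
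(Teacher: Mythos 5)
Your proposal is correct and follows essentially the same route as the paper, which presents this corollary as an immediate consequence of the Rising Sun Lemma and the preceding discussion of the path $\sigma_{a,b}=\{x:\underline{f}(x)=f(x)\}$: the segment is the quotient image of $\sigma_{a,b}$ with length equal to the total variation $d_f(a,b)$ of $\underline{f}$, and each constancy interval $(a_k,b_k)$ of $\underline{f}$ has equivalent endpoints and so contributes one attached subtree. Your worry about the Cantor-like case is a non-issue, since for $x,y\in\sigma_{a,b}$ on the same side of $c$ one has $d_f(x,y)=|\underline{f}(x)-\underline{f}(y)|$, so continuity of the parametrization follows from continuity of $\underline{f}$ regardless of accumulating excursions.
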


It is straightforward to observe that the tree $(I_f,d_f)$ is equivalent to the above defined level 
set tree $\textsc{level}(f)$ for a function  $f(x) \in C\left(I\right)$ with 
a finite number of distinct local minima.
We just notice that for any subinterval $[a,b] \subset I$, the correspondence $a\sim_f b$ implies $\{f(x):~x\in [a,b]\}$ is a nonnegative excursion i.e., 
$$[a,b] \subset \mathcal{L}_\alpha(f) = \{x\,:\,f(x)\ge\alpha\} \quad \text{ where } \quad \alpha=f(a)=f(b).$$
In other words, every point in $\left(I_f,d_f\right)$ is an equivalence class
of points on $I$ with respect to $\sim_f$.
There exist three types of equivalence classes, depending on the number 
of distinct points from $I$ they include: 
(i) each single point class corresponds to a leaf vertex (local maximum), 
(ii) each two point class corresponds to an internal edge point (positive excursion), and
(iii) each three point class corresponds to an interval vertex 
(two adjacent positive excursions). 
For a general $f(x)\in C(I)$ there may exist equivalence classes that include an arbitrary
number $n$ of points from $I$, corresponding to $(n-1)$
adjacent positive excursions; 
and classes that consist of an infinite (countable or uncountable) number of points.
Conversely, for every $\alpha$, the level set $\mathcal{L}_\alpha(f)$ is a union of non-overlapping intervals $[a_j,b_j]$, i.e.,
\[\mathcal{L}_\alpha(f)=\bigcup_j \, [a_j,b_j],\]
where for each $j$, $a_j\sim_f b_j$. 

Representing level sets of a continuous function as a tree goes back to
works of Menger \cite{Menger} and Kronrod \cite{Kronrod}.
A multivariate analog of level set tree is among the key tools in
proving the celebrated Kolmogorov-Arnold representation theorem
(every multivariate continuous function can be represented as 
a superposition of continuous functions of two variables) that gives
a positive answer to a general version of the Hilbert's thirteenth problem
\cite{Arnold1959, Vit04}.
Such trees have also been discussed by Vladimir Arnold in connection to
topological classification of Morse functions and generalizations of 
Hilbert's sixteenth problem \cite{Arnold2006,Arnold2007}.
Level set trees for multivariate Morse functions (albeit
slightly different from those considered by Arnold) are discussed in Sect.~\ref{sec:morse}.

\subsection{Reciprocity of Harris path and level set tree}
\label{sec:rec}
Consider a function $f(x)\in C(I)$ with a finite number of distinct local minima. 
By construction, the level set tree $\textsc{level}(f)$ is completely
determined by the sequence of the values of local extrema of $f$,
and is not affected by timing of those extrema, as soon as their order
is preserved.
This means, for instance, that if $g(x)$ is a continuous and monotone increasing 
function on $I$, then the trees $\textsc{level}(f)$ and $\textsc{level}(f \circ g)$ are equivalent in $\L$.
Hence, without loss of generality we can focus on the level set trees
of continuous functions with alternating slopes $\pm 1$.
We write $\cE^{\rm ex}$ for the space of all positive piece-wise linear continuous finite excursions with alternating slopes $\pm 1$ and a finite number of segments (i.e., a finite number 
of local extrema).

The level set tree of an excursion from $\cE^{\rm ex}$ and 
Harris path are reciprocal to each other as described in the following statement.

\begin{prop}[{{\bf Reciprocity of Harris path and level set tree}}]\label{prop:reciprocityHP}
The Harris path
$H: \cL_{\rm plane}^|\to \cE^{\rm ex}$ and the level set tree
${\textsc{level}}: \cE^{\rm ex}\to\cL_{\rm plane}^|$ are reciprocal
to each other.
This means that for any $T\in\cL_{\rm plane}^|$ we have
$\textsc{level}(H_T(t))\equiv T,$
and for any $g(t)\in\cE^{\rm ex}$ we have
$H_{\textsc{level}(g)}(t)\equiv g(t).$
\end{prop}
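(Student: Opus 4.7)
The plan is to prove both directions by induction on the number of leaves of $T$ (equivalently, on the number of local maxima of $g\in\cE^{\rm ex}$, which is the same as the number of up-down segments), exploiting the recursive decompositions of a tree into principal subtrees and of an excursion into sub-excursions separated by the global minimum.

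First I would record a few structural facts that were essentially established in Sect.~\ref{sec:Harris}--\ref{sec:level1} and only need to be stated explicitly. For $T\in\cL_{\rm plane}^|$ with $n$ leaves, the DFS traversal visits each edge exactly twice, so $H_T$ is a piecewise linear function on $[0,2\,\textsc{length}(T)]$ with $2n$ segments of slopes $\pm 1$, starting and ending at $0$, and strictly positive in between; hence $H_T\in\cE^{\rm ex}$. Moreover the local maxima of $H_T$ are in bijection with the leaves of $T$ (in the DFS order), and the non-boundary local minima are in bijection with the internal vertices of $T$ visited the second or later time; and the value of $H_T$ at a vertex $v$ equals the distance $d(\rho,v)$ in $T$. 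Conversely, for $g\in\cE^{\rm ex}$ with $n$ local maxima, $\textsc{level}(g)$ is planted at the boundary minimum $g(0)=g(L)=0$, and bijections (i)--(iii) in Sect.~\ref{sec:level1} identify its leaves, internal vertices, and subtree structure with local maxima, non-boundary local minima, and adjacent positive excursions of $g$, respectively.

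Next I would set up the induction. For $n=1$ both statements reduce to the trivial identification of a single stem of length $h$ with a triangular excursion peaking at height $h$. For the inductive step, pick a tree $T\in\cL_{\rm plane}^|$ with $n\ge 2$ leaves and let $v^\ast$ be the internal vertex of $T$ adjacent to the root $\rho$ (unique since $T$ is planted); write $\ell$ for the length of the stem $[\rho,v^\ast]$ and let $T^{(1)},\dots,T^{(k)}$ be the principal subtrees rooted at $v^\ast$, in the left-to-right planar order. Unwinding the DFS definition, $H_T$ first climbs linearly from $0$ to $\ell$, then executes the concatenated paths $H_{T^{(i)}}$ each shifted upward by $\ell$ and joined at height $\ell$, and finally descends from $\ell$ back to $0$. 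In particular $\ell$ is the unique global minimum value attained on the interior endpoints separating these blocks, so when we apply $\textsc{level}$ to $H_T$ we obtain a stem of length $\ell$ (matching the stem $[\rho,v^\ast]$) with $k$ subtrees attached at the top in the same planar order; by the inductive hypothesis these subtrees are exactly the $T^{(i)}$. This gives $\textsc{level}(H_T)\equiv T$ in $\cL_{\rm plane}^|$.

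For the converse direction, take $g\in\cE^{\rm ex}$ with $n\ge 2$ local maxima and let $\alpha^\ast = \min_{t\in\text{interior local minima}} g(t)$ (set $\alpha^\ast=0$ if there are no interior local minima). Cutting $g$ at level $\alpha^\ast$ decomposes it into a stem $[0,\alpha^\ast]$ and excursions $g^{(1)},\dots,g^{(k)}$ above height $\alpha^\ast$ in left-to-right order; by the bijections recalled above this is precisely the decomposition of $\textsc{level}(g)$ into its stem of length $\alpha^\ast$ and the principal subtrees $\textsc{level}(g^{(i)})$. Applying $H$ to $\textsc{level}(g)$ and using the inductive hypothesis on each $\textsc{level}(g^{(i)})$ reconstructs $g$ segment by segment. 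The main obstacle in both directions is book-keeping: one must verify that the planar left-to-right order of offspring produced by DFS in $\cL_{\rm plane}^|$ matches the natural left-to-right order of sub-excursions in $\cE^{\rm ex}$, and that the metric (edge-length) information is preserved under both $H$ and $\textsc{level}$. Both follow from the slope-$\pm 1$ normalization and the fact that $H$ traverses each edge twice at unit speed.
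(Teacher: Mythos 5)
Your argument is correct, and it is worth noting that the paper does not actually supply a proof of Proposition~\ref{prop:reciprocityHP}: it only asserts that the statement ``is readily verified by examining the excursions and trees'' in Figs.~\ref{fig:Harris} and \ref{fig:dist}. Your induction on the number of leaves is therefore a genuine formalization rather than a paraphrase of an existing argument. The decomposition you use --- splitting a planted tree at the internal vertex $v^{\ast}$ adjacent to the root and, dually, cutting an excursion at the lowest interior local-minimum value $\alpha^{\ast}$ --- is precisely the recursion that the paper's constructions lean on implicitly (bijection (iii) of Sect.~\ref{sec:level1}, and the recursive characterization of the tree via its principal subtrees as in Remark~\ref{rem:GWchar}), so the two viewpoints are compatible; yours buys an actual proof at the cost of some bookkeeping, all of which you handle (planar order of sub-excursions versus DFS order of offspring, preservation of edge lengths via the slope-$\pm 1$ normalization, and the domain match $2\,\textsc{length}(T)=L$). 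Two small points would tighten it further. First, your preliminary bijection between interior local minima of $H_T$ and ``internal vertices visited the second or later time'' overcounts when $T$ is not binary: a vertex with $k$ offspring is returned to $k$ times, but only the first $k-1$ returns are local minima of $H_T$ (after the last offspring the path descends through that height without a local minimum). Since neither $\cL_{\rm plane}^{|}$ nor $\cE^{\rm ex}$ is restricted to binary objects, you should also say explicitly that these $k-1$ interior minima all sit at the same height $\ell$ and are identified under $\sim_{H_T}$, so that $\textsc{level}(H_T)$ returns a single $k$-fold branch point rather than a chain of binary ones; this is what makes your phrase ``$k$ subtrees attached at the top'' legitimate. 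Second, the induction terminates because the trees are reduced, so $v^{\ast}$ has at least two offspring and every principal subtree has strictly fewer leaves than $T$; that deserves one sentence. With those remarks added, the proof is complete.
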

\noindent This statement is readily verified by examining the excursions and 
trees in Figs.~\ref{fig:Harris},\ref{fig:dist}.

\begin{figure}[t] 
\centering\includegraphics[width=0.8\textwidth]{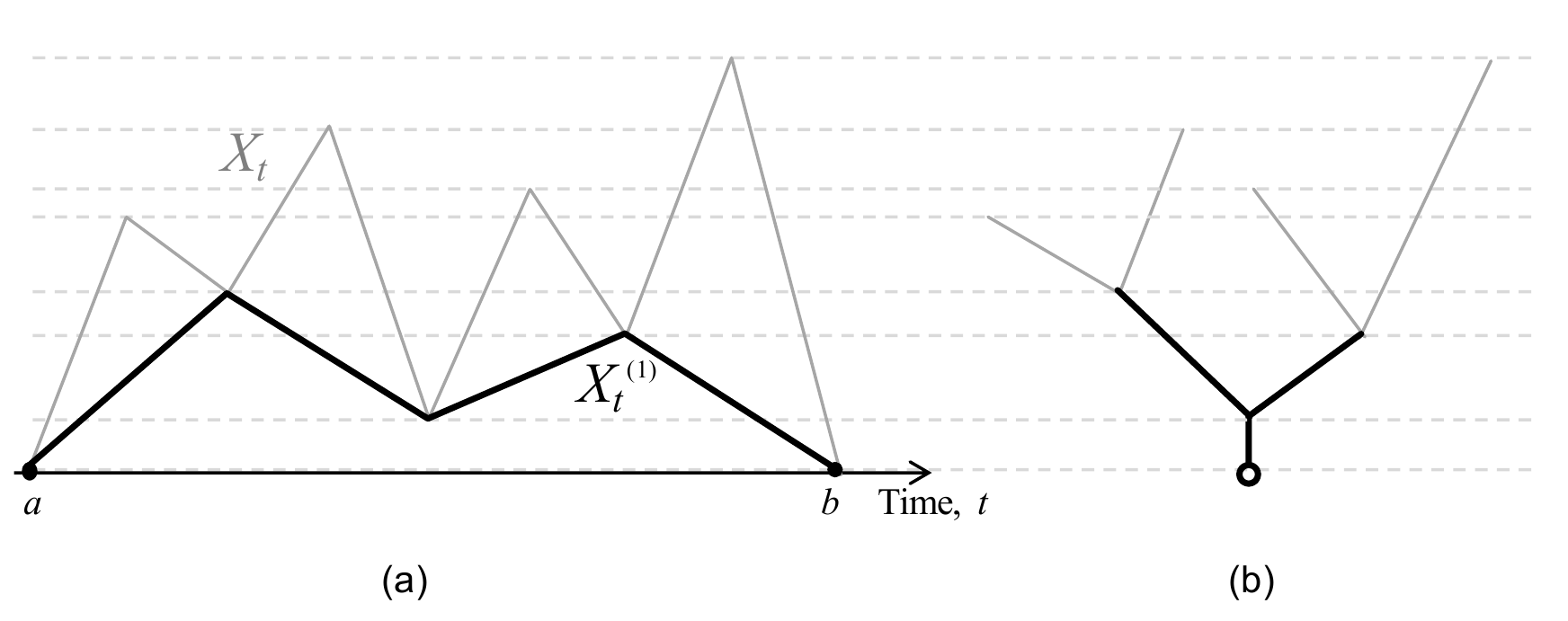}
\caption{Horton pruning of a positive excursion: transition to the local 
minima of an excursion $X_t$ corresponds to pruning of the corresponding 
level set tree.
(a) An original excursion $X_t$ (gray line) and linearly interpolated sequence 
$X^{(1)}_t$ of the respective local minima (black line).
(b) The level set tree $\textsc{level}(X^{(1)}_t)$  of the local minima sequence 
(black lines) is obtained by pruning of the level set tree $\textsc{level}(X_t)$
of the original excursion (whole tree). The pruned edges are shown in gray -- each of them
corresponds to a local maximum of the original excursion.}
\label{fig:prune1}
\end{figure}

\subsection{Horton pruning of positive excursions}\label{sec:excursions}
This section examines the level set tree and its Horton pruning for a positive 
excursion on a finite real interval.
We use these results for analysis of random walks $X_k$, $k\in\mathbb{Z}$,
which motivates us to write here $X_t$, $t\in\mathbb{R}$, for a continuous function.
 
Consider a continuous positive excursion $X_t$, $t\in[a,b]$, with a finite number
of distinct local minima and such that $X_a=X_b=0$ and $X_t>0$ for $a<t<b$.
Furthermore, consider excursion $X^{(1)}_t$, $t\in[a,b]$, obtained by a linear interpolation
of the boundary values and the local minima of $X_t$; as well as
functions $X^{(m)}_t$, $t\in[a,b]$, for $m\ge 1$, obtained by taking the local minima of $X_t$ 
iteratively $m$ times, and linearly interpolating their values together with
$X_a=X_b=0$ (see Fig.~\ref{fig:prune1}a).

In the space of level set trees of tamed continuous functions, the Horton pruning $\cR$ 
corresponds to coarsening the respective function by removing (smoothing) its local maxima, 
as illustrated in Fig.~\ref{fig:prune1}. 
An iterative pruning corresponds to iterative transition to the local minima, as we describe in the next statement.

\begin{prop}[{{\bf Horton pruning of positive excursions, \cite{ZK12}}}]
\label{ts_prune}
The transition from a positive excursion 
$X_t$ to the respective excursion  $X^{(1)}_t$ 
of its local minima corresponds to the Horton pruning of the level set tree $\textsc{level}(X_t)$.
This is illustrated in a diagram of Fig.~\ref{fig:prune2}.
In general,
\[\textsc{level}\left(X^{(m)}_t\right) 
= \cR^m\left(\textsc{level}(X_t)\right), \forall m \ge 1.\]
\end{prop}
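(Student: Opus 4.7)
I would first reduce to the one-step identity $\textsc{level}(X^{(1)}_t) = \cR(\textsc{level}(X_t))$ and then iterate: since $X^{(m)}_t$ is itself a continuous positive excursion on $[a,b]$ with finitely many local extrema, the recurrence $X^{(m+1)}_t = (X^{(m)})^{(1)}_t$ combined with the one-step identity closes an induction on $m$.

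For the one-step case, the approach is to exploit the pseudometric characterization of the level set tree from Section \ref{sec:level2}. Let $I^{(1)} = \{a, b, m_1, \ldots, m_{n-1}\}$ consist of the endpoints of $I$ together with the interior local minima of $X_t$. The first step is the observation that $\underline{X_t}[s,t] = \underline{X^{(1)}_t}[s,t]$ for every $s,t \in I^{(1)}$, since both equal $\min\{X_u : u \in I^{(1)}\cap[s,t]\}$. Hence $d_{X_t}$ and $d_{X^{(1)}_t}$ agree on $I^{(1)}$, and the inclusion $I^{(1)} \hookrightarrow I$ descends to an isometric embedding $\iota : \textsc{level}(X^{(1)}_t) \hookrightarrow T := \textsc{level}(X_t)$ whose image is the tree convex hull of the root $\rho$ and the internal vertices $v_{m_1}, \ldots, v_{m_{n-1}}$ of $T$ corresponding to the interior local minima.

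It then remains to identify this convex hull with $\cR(T)$. The leaves of $T$ are in bijection with the local maxima of $X_t$, so the leaf-removal stage of Horton pruning yields exactly this convex hull, possibly decorated with some intermediate degree-$2$ vertices. The combinatorial key is a lemma: for each interior local minimum $m_i$, the left (resp.\ right) subtree rooted at $v_{m_i}$ in $T$ is a single leaf if and only if $X_{m_{i-1}} < X_{m_i}$ (resp.\ $X_{m_{i+1}} < X_{m_i}$), with conventions $X_{m_0} = X_a = 0$ and $X_{m_n} = X_b = 0$. This follows from describing the one-sided positive excursion of $X_t - X_{m_i}$ adjacent to $m_i$ as extending back from $m_i$ to the nearest point at which $X_t$ returns to level $X_{m_i}$, and observing that this excursion captures the adjacent local minimum $m_{i\pm 1}$ iff $X_{m_{i\pm 1}} \ge X_{m_i}$. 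Consequently $v_{m_i}$ retains degree $\neq 2$ in the convex hull iff $X_{m_i}$ is a local extremum of the sequence $(X_a, X_{m_1}, \ldots, X_{m_{n-1}}, X_b)$, which is precisely when $m_i$ is a local extremum of $X^{(1)}_t$. Series reduction collapses the degree-$2$ vertices and sums the adjacent edge lengths; this telescopes to $|X_{m_i} - X_{m_j}|$ between any two consecutive surviving vertices, matching the edge lengths in $\textsc{level}(X^{(1)}_t)$. Vertex types agree (local maxima of the sequence yield leaves, local minima yield internal vertices of degree $3$), and the planar embedding is respected since series reduction preserves left/right orientation and both trees inherit orientation from the $t$-axis.

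\textbf{Main obstacle.} The main technical step is the structural lemma relating the shape of the subtree at $v_{m_i}$ in $T$ to the order of $X_{m_i}$ against its two neighbors in the sequence, together with careful handling of boundary cases (the leftmost and rightmost interior minima, the planted stem from the root) and possible ties in local-minima values. These are a matter of clean case analysis rather than a genuine obstacle, but they must be carried out carefully to avoid a proliferation of subcases.
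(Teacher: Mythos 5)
Your argument is correct, but it reaches the one-step identity $\textsc{level}(X^{(1)}_t)=\cR(\textsc{level}(X_t))$ by a different mechanism than the paper. The paper's proof is a local surgery: for each pair of consecutive local minima $s_1<s_2$ it replaces the single hump between them by the linear interpolation and observes directly that this deletes exactly one leaf of $\textsc{level}(X_t)$ together with its parental edge (whose lower endpoint is the vertex corresponding to $\max\{X_{s_1},X_{s_2}\}$); performing all these replacements simultaneously is the leaf-removal stage, series reduction being implicit in the fact that a level set tree is reduced by construction. You instead argue globally through the pseudometric \eqref{eqn:tree_dist}: the equality $\underline{X}[s,t]=\underline{X^{(1)}}[s,t]$ on the set of interior local minima plus endpoints yields an isometric embedding of $\textsc{level}(X^{(1)}_t)$ onto the subtree of $T$ spanned by the root and the internal vertices, which is then identified with $\cR(T)$. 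Your route handles the metric (edge-length) bookkeeping in one stroke via the isometry, at the price of invoking the standard fact that a finite tree is determined by the pairwise distances of a spanning set and of adding a separate remark for the planar orientation, which the metric argument does not see; the paper's route is more elementary and visibly local, but its claim about which vertex the removed leaf attaches to is precisely your structural lemma in disguise, so the combinatorial content is the same. Both proofs then conclude by the same recursion on $m$, using that $X^{(1)}_t$ is again an excursion of the required type. The only point to keep in mind is the standing assumption of distinct local minima values (atomless distributions in the random-walk application), which you correctly flag as the source of the tie cases.
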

\begin{proof}
First, 
\be\label{eq:levelone}
\textsc{level}\left(X^{(1)}_t\right) = \cR\left(\textsc{level}(X_t)\right)
\ee
is established via the following observation. For a pair of consecutive local minima $s_1<s_2$,
the level set tree $\textsc{level}(\widetilde{X}_t)$ of the function
$$\widetilde{X}_t=X_t {\bf 1}_{t \not\in [s_1,s_2]} +\left({s_2 -t \over s_2-s_1}X_{s_1}+ {t-s_1 \over s_2-s_1}X_{s_2}\right){\bf 1}_{t \in [s_1,s_2]}$$
is obtained from $\textsc{level}\left(X_t\right)$ by removing the leaf that corresponds to the unique local maximum
of $X_t$ inside $(s_1,s_2)$ together with its parental edge that connects it to the parental vertex, corresponding to
$\max\{X_{s_1},X_{s_2}\}$. Thus, substituting $X_t$ with linear interpolation of local minima, $X^{(1)}_t$, will result in simultaneous
removal of leaves together with the parental edges. 
The statement of the proposition follows via recursion of \eqref{eq:levelone}.
\end{proof}

It is straightforward to formulate an analog of Prop.~\ref{ts_prune} 
without the excursion assumption, for continuous functions with a
finite number of distinct local minima within $[a,b]$.

\begin{figure}[t] 
\centering\includegraphics[width=0.9\textwidth]{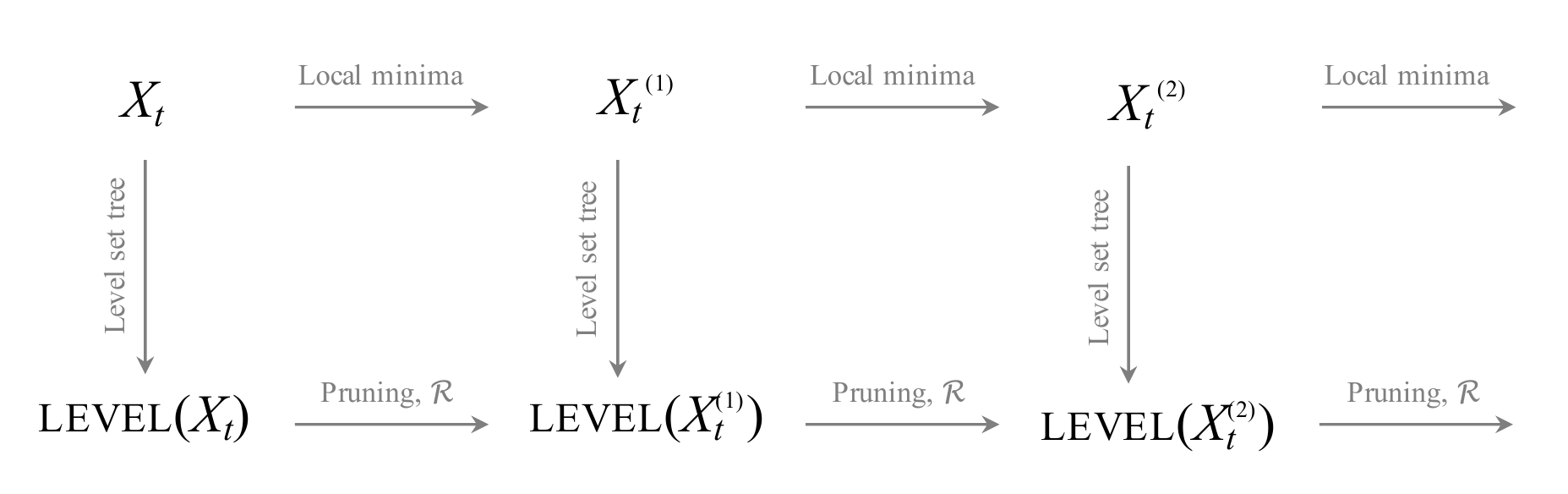}
\caption{Transition to the local minima of a function $X_t$ 
corresponds to the Horton pruning $\cR$ of the corresponding level set tree
$\textsc{level}(X_t)$.}
\label{fig:prune2}
\end{figure}

\subsection{Excursion of a symmetric random walk}
\label{sec:cgw}
We turn now to random walks $\{X_k\}_{k\in\mathbb{Z}}$.
Linear interpolation of their trajectories corresponds to the tamed 
continuous functions.
A random walk $\{X_k\}_{k\in\mathbb{Z}}$ with
a transition kernel $p(x,y)$ is called {\it homogeneous} 
if $p(x,y)\equiv p(y-x)$ for any $x,y\in\mathbb{R}$. 
A homogeneous random walk is {\it symmetric} if $p(x)=p(-x)$ for all $x\in\mathbb{R}$.
The transition kernel of a symmetric random walk can be represented as 
the even part of a p.d.f. $f(x)$ with support ${\sf supp}(f) \subseteq \mathbb{R}_+$: 
\index{random walk!homogeneous}
\index{random walk!homogeneous!symmetric}
\be\label{eqn:semkernel_pff}
p(x)=\frac{f(x)+f(-x)}{2}.
\ee
We assume that $p(x)$, and hence $f(x)$, is an atomless density function.

We write $\{X^{(1)}_k\}_{k\in\mathbb{Z}}$ for the sequence of local minima of  $\{X_k\}_{k\in\mathbb{Z}}$, listed in the order of occurrence, from left to right. 
In particular, we set $X^{(1)}_0$ to be the value of the leftmost local minima of $X_k$ for $k \geq 0$.
Recursively, we let $\{X^{(j+1)}_k\}_{k\in\mathbb{Z}}$ denote the sequence of local minima of $\{X^{(j)}_k\}_{k\in\mathbb{Z}}$.

\begin{lem}[{{\bf Local minima of random walks, \cite{ZK12}}}]
\label{inv_prune}
The following statements hold.
\begin{itemize}
  \item[(i)] The sequence of local minima  $\{X^{(1)}_k\}_{k\in\mathbb{Z}}$ of a homogeneous random walk $\{X_k\}_{k\in\mathbb{Z}}$ is itself a homogeneous random walk.   
  \item[(ii)] The sequence of local minima  $\{X^{(1)}_k\}_{k\in\mathbb{Z}}$ of a symmetric homogeneous random walk $\{X_k\}_{k\in\mathbb{Z}}$ is itself a symmetric homogeneous random walk.
\end{itemize}
\end{lem}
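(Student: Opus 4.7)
The plan is to decompose the walk $\{X_k\}$ into i.i.d.\ ``excursions'' between consecutive local minima, using the i.i.d.\ structure of the increments $\xi_k := X_k - X_{k-1}$ (atomlessness of $p$ rules out ties, so local extrema are a.s.\ well defined). First I would anchor the argument at an arbitrary local minimum: suppose $X_0$ is a local minimum, so $\xi_0 < 0$ and $\xi_1 > 0$. Let $q := {\sf P}(\xi_1 > 0)$. The next local minimum $X^{(1)}_1$ occurs at time $\tau_1 = U + V$, where
\[
U := \min\{k \geq 1 : \xi_{k+1} < 0\}, \qquad V := \min\{k \geq 1 : \xi_{U+k+1} > 0\}.
\]
Here $U$ is the length of the ascending run after $0$ and $V$ the subsequent descending run. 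From the i.i.d.\ structure of $\{\xi_k\}_{k \geq 2}$ it follows that $U \stackrel{d}{\sim} {\sf Geom}_1(1-q)$ and $V \stackrel{d}{\sim} {\sf Geom}_1(q)$ are independent, and conditionally on $\{U=u,\,V=v\}$ the variables $\xi_1,\dots,\xi_u$ are i.i.d.\ with density $p(y)/q$ on $\mathbb{R}_+$, while $\xi_{u+1},\dots,\xi_{u+v}$ are independently i.i.d.\ with density $p(y)/(1-q)$ on $\mathbb{R}_-$.

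To prove (i), I would invoke the strong Markov property of $\{X_k\}$ at the stopping time $\tau_1$: because $\tau_1$ depends only on the signs of the increments and because $X_{\tau_1}$ is by construction a new local-minimum configuration, the shifted walk $\{X_{\tau_1+k} - X_{\tau_1}\}_{k \geq 0}$ is an independent copy of the original conditioned walk. Iterating, the increments of the local-minima sequence
\[
\Delta_i := X^{(1)}_i - X^{(1)}_{i-1} \stackrel{d}{=} A - B, \qquad A := \sum_{k=1}^U \xi_k, \quad B := -\sum_{k=U+1}^{U+V} \xi_k,
\]
are i.i.d., so $\{X^{(1)}_k\}$ is itself a homogeneous random walk whose transition kernel is the law of $A - B$.

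For (ii) I would specialize to $p(y) = p(-y)$, which gives $q = 1/2$ and therefore $U \stackrel{d}{=} V$. The conditional density of $\xi_k$ in the ascending run is $2p(y)\mathbf{1}_{y>0}$, which by symmetry agrees with the law of $|\xi_k|$ restricted to the descending run; hence $A \stackrel{d}{=} B$. Furthermore $A$ and $B$ are built from disjoint blocks of independent random variables (the $\xi_k$'s together with the independent pair $(U,V)$), so $A$ and $B$ are independent. Consequently $\Delta_1 = A - B \stackrel{d}{=} B - A = -\Delta_1$, i.e., the increment law is symmetric, which (combined with (i)) gives the claim. The main technical point, and the step requiring the most care, is the clean separation of the ascending and descending blocks into genuinely independent pieces: rigorously, I would condition on $(U,V) = (u,v)$ to factorize the law of $(\xi_1,\dots,\xi_{u+v})$ as a product on $\mathbb{R}_+^u \times \mathbb{R}_-^v$, and then lift this to unconditional independence of $A$ and $B$ via the independence of $U$ and $V$.
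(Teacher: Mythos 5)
Your proposal is correct and follows essentially the same route as the paper: both decompose the increment $X^{(1)}_{j+1}-X^{(1)}_j$ as a geometric-length sum of positive jumps minus an independent geometric-length sum of negative jumps, deduce homogeneity from the i.i.d./renewal structure of the increments, and in the symmetric case observe that the two blocks are independent and equidistributed (since $q=1/2$ and the conditional jump densities coincide), so their difference is symmetric. Your explicit invocation of the strong Markov property to get independence (not just identical distribution) of the $\Delta_i$ is slightly more careful than the paper's one-line appeal to independence of increments, but it is the same argument.
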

\begin{proof}
Let $d_j = X^{(1)}_{j+1}-X^{(1)}_j$.
We have, for each $j$,
\be
\label{twosum}
d_j = \sum_{i=1}^{\xi_+} Y_i - \sum_{i=1}^{\xi_-} Z_i,
\ee
where the first sum corresponds to $\xi_+$ positive increments of $X_k$
between a local minimum $X^{(1)}_j$ and the subsequent local maximum $m_j$, and
the second sum corresponds to $\xi_-$ negative increments between the local maximum
$m_j$ and the subsequent local minimum $X^{(1)}_{j+1}$.
Accordingly, $\xi_+$ and $\xi_-$ are independent geometric random variables 
\[\xi_{+}\stackrel{d}{\sim}{\sf Geom}_1(p^{+}),\quad
\xi_{-}\stackrel{d}{\sim}{\sf Geom}_1(p^{-})\]
with parameters,
respectively,
\[p^{+}=\int\limits_0^\infty p(x)\,dx\quad\text{ and }\quad 
p^{-}=\int\limits_{-\infty}^0 p(x)\,dx,\]
and
$Y_i$, $Z_i$ are i.i.d. positive continuous random variables
with p.d.f.s, respectively,
\[f_Y(x) = \frac{p(x){\bf 1}_{\{x\ge 0\}}}{p^+}\quad\text{ and }
\quad 
f_Z(x) = \frac{p(-x){\bf 1}_{\{x\le 0\}}}{p^-}.\]

\medskip
\noindent
$(i)$ By independence of increments of a random walk, the random jumps $d_j$ have the same distribution for each $j$. 
This establishes the statement.

\medskip
\noindent
$(ii)$ For the kernel of a symmetric random walk, we have representation \eqref{eqn:semkernel_pff}.
In this case, $\xi_+$ and $\xi_-$ are independent geometric random variables with parameters
$p^+=p^-=1/2$ and
$Y_i$, $Z_i$ are i.i.d. positive continuous random variables with p.d.f. $f(x)$.
Hence, both sums in \eqref{twosum} have the same 
distribution, and their difference has a symmetric distribution.
Thus $\{X^{(1)}_j\}_{j\in\mathbb{Z}}$ is a symmetric homogeneous random walk.
\end{proof}

We notice that the symmetric kernel $p^{(1)}(x)$ for the chain of local minima $\{X^{(1)}_j\}_{j\in\mathbb{Z}}$
is necessarily different from $p(x)$ in both parts of Lemma~\ref{inv_prune}.
Hence, the random walk $\{X^{(1)}_j\}$ of local minima is always different from 
the initial random walk $\{X_k\}$.
In a symmetric case, however, both the processes happen to be closely related in 
terms of the structure of their level set trees. 
Now we explore this relation.

\begin{figure}[t] 
\centering\includegraphics[width=0.7\textwidth]{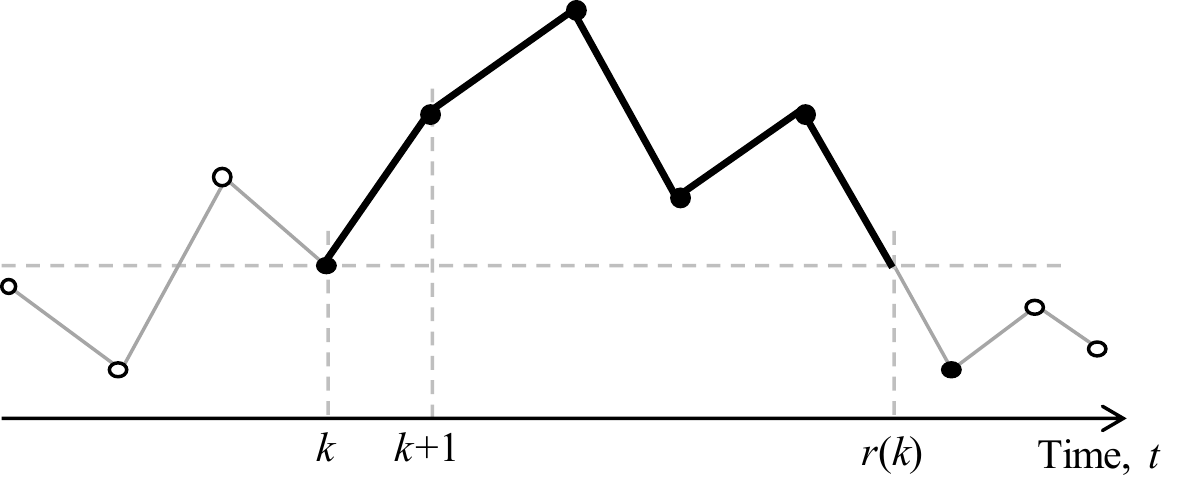}
\caption{Excursion of a symmetric homogeneous random walk: illustration.
The values of time series $X_k$, $k\in\mathbb{Z}$, are shown by circles;
the circles that form the excursion are filled.
The linear interpolation function $X_t$, $t\in\mathbb{R}$, is shown by solid line;
the excursion of $X_t$ on $[k,r(k)]$ is shown in bold. 
This is the first positive excursion of $X_t$ to the right of $k$.
}
\label{fig:ex}
\end{figure}

Consider linear interpolation $\{X_t\}_{t\in\mathbb{R}}$ of a symmetric homogeneous 
random walk $\{X_k\}_{k\in\mathbb{Z}}$ with an atomless transition kernel $p(x)$. 
For any $k\in\mathbb{Z}$, let 
\[T^{\rm ex}=T^{\rm ex}(X_t,k)\in\BL\] 
be the level set tree of the 
first positive excursion of $X_t-X_k$ to the right of $k$, with
convention $T^{\rm ex}=\phi$ if $X_{k+1}-X_k<0$.
Formally, let $r=r(k)\in\mathbb{R}$ be the unique epoch such that (Fig.~\ref{fig:ex}) 
\[r\ge k,\quad X_t>X_k \text{ for all } t\in(k,r),\quad \text{and } X_r=X_k.\] 
The epoch $r(k)$ is almost surely finite, as can be demonstrated by a renewal argument
using the symmetry of the increments of $X_k$.
We define
\[T^{\rm ex}(X_t,k):=\textsc{level}(X_t,t\in[k,r(k)]).\]
It follows from this definition that
\[\{X_{k+1}-X_k>0\}\Leftrightarrow \{T^{\rm ex}(X_t,k)\ne\phi\}.\]
The basic properties of symmetric homogeneous random walks imply that
the distribution of $T^{\rm ex}(X_t,k)$ is the same for all points $k\in\mathbb{Z}$.
This justifies the following definition.
\begin{Def}[{{\bf Positive and nonnegative excursions}}]
\label{def:Tex}
In the above setup, we call process $X^{\rm ex}_t$ a nonnegative 
excursion of the linearly interpolated symmetric homogeneous 
random walk $\{X_t\}_{t\in\mathbb{R}}$ if
\[X^{\rm ex}_t\stackrel{d}{=}\{X_{s-k}-X_k,~s\in[k,r(k)]\}\quad\text{for any }k\in\mathbb{Z}.\]
Furthermore, we call process $X^{\rm ex}_t$ a positive
excursion of the linearly interpolated symmetric homogeneous 
random walk $\{X_t\}_{t\in\mathbb{R}}$ if
\[X^{\rm ex}_t\stackrel{d}{=}\{X_{s-k}-X_k,~s\in[k,r(k)]~\big|X_{k+1}-X_k>0\}\quad\text{for any }k\in\mathbb{Z}.\]
\end{Def}
\index{excursion}
\index{excursion!positive excursion}
\index{excursion!nonnegative excursion}

\noindent 
A positive excursion defined above will also be called a {\it positive right excursion}.
The corresponding positive excursion in the reversed time order, starting from $k$ and going in the negative time direction,
will be called {\it positive left excursion}.
According to Def.~\ref{def:Tex}, a nonnegative excursion
may consist of a single point (if $r(k)=k$), in which case its level set tree
is the empty tree.
A positive excursion necessarily includes at least one positive value, and 
its level set tree is non-empty.
\index{excursion!positive right excursion}
\index{excursion!positive left excursion}

\bigskip
\noindent
Consider a homogeneous random walk $X_k$ with a symmetric atomless transition kernel $p(x)$,
$x\in \mathbb{R}$,
represented as in \eqref{eqn:semkernel_pff}.
Note that $X_k$ is time reversible, with $p(x)$ also being the transition kernel 
of the reversed process. 
The increment between a pair of 
consecutive local extrema (a minimum and a maximum) of $X_k$ is a sum of ${\sf Geom}_1(1/2)$-distributed number of  i.i.d. $f(x)$-distributed random variables, and therefore has density 
\be\label{eqn:geomf_s}
{\sf s}(x):=\sum\limits_{k=1}^\infty 2^{-k}\, \underbrace{f\ast \hdots \ast f}_{k \text{ times}} (x).
\ee
We now examine a positive-time process $\{X_k\}_{k\ge 0}$, conditioned on $X_0=0$.
Consider a sequence of local minima $\big\{X_j^{(1)}\big\}_{j\ge 1}$, where we set $X_0^{(1)}=0$, and $X_1^{(1)}, X_2^{(1)},\hdots$ are the local
minima of the random walk $X_k$, listed from left to right. 
For a positive right excursion $X^{\sf ex}$ originating at $k=0$, the number $N$ of leaves 
in the level set tree $\textsc{level}(X^{\sf ex})$ is determined by the location of the first 
local minimum below zero: 
$$N=\min\{j \geq 1  \,:\, X_j^{(1)} \leq 0\}.$$
The number of edges in the level set tree is $\#\textsc{level}(X^{\sf ex})=2N-1$.
Moreover, let $\kappa>0$ be the time of the first local minimum below zero, $X_\kappa=X_N^{(1)}$. 
Next, we define the quantity by which the first nonpositive local minimum 
of $X_k$ falls below the starting point at zero.
\begin{Def}[{{\bf Extended positive excursion and excess value}}]
\label{def:extendedTex}
\index{excursion!extended positive}
\index{excursion!extended positive!excess value}
In the above setup, the process $\breve{X}^{\sf ex}=\{X_t\}_{t \in [0,\kappa]}$ is called
the {\it extended positive excursion} or {\it extended positive right excursion}. 
That is, $\breve{X}^{\sf ex}$ is obtained by extending the excursion $X^{\sf ex}$ 
until the first local minimum $X_\kappa$ below the starting value.
The quantity $~\Lambda\big(\breve{X}^{\sf ex} \big):=-X_N^{(1)}$ is called 
the {\it excess value} of the extended excursion. 
This definition is illustrated in Fig.~\ref{fig:ex_ex}(a).
\end{Def}
\noindent The notions of the extended positive excursion and the excess value 
$\Lambda\big(\breve{X}^{\sf ex} \big)$ can 
be expanded to the left and right excursions with arbitrary initial values. 

\medskip

\begin{thm}[{{\bf Combinatorial excursion tree is critical Galton-Watson}}]
\label{thm:excursion-tree-main}
Suppose $X^{\sf ex}$ is a positive excursion of a homogeneous random walk on $\mathbb{R}$ with a symmetric atomless transition kernel and $T=\textsc{level}(X^{\sf ex})$. Then, the combinatorial shape of $T$
has the same distribution on $\cBT^|$ as the critical binary Galton-Watson tree:
\[\textsc{shape}(T) \overset{d}{\sim} \mathcal{GW}\left({1\over 2},{1\over 2}\right).\]
\end{thm}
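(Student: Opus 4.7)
My plan is to verify the recursive characterization of $\mathcal{GW}(1/2,1/2)$ provided by Remark~\ref{rem:GWchar}: letting $\mu$ denote the law of $\textsc{shape}(T)$ on $\cBT^|$, it suffices to establish (a) $\mu$ assigns probability $1/2$ to the single-leaf tree, and (b) conditional on $T$ being non-trivial, the two principal subtrees of $T$ attached at the first (non-root) internal vertex are independent and both distributed as $\mu$.

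First I would handle (a). The tree $T$ is trivial exactly when $X^{\sf ex}$ has a single local maximum, equivalently when the first local minimum of $\{X_k\}_{k\ge 0}$ lies on or below $0$. Conditioning on $X_1>0$, the value $Y$ of the first local maximum is a sum of a ${\sf Geom}_1(1/2)$ number of i.i.d.\ positive step sizes of density $f$, and the descent $Z$ from $Y$ to the first local minimum is an independent copy of the same law; both have density ${\sf s}$ from \eqref{eqn:geomf_s}. Hence $P(Y-Z\le 0)=1/2$ by symmetry of the law of $Y-Z$ and atomlessness of ${\sf s}$.

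Next I would address (b). Conditional on $T$ non-trivial, let $\tilde m>0$ denote the value of the lowest interior local minimum of $X^{\sf ex}$, almost surely achieved at a unique integer time $\tilde t$, and let $s_1<\tilde t<s_2$ be the two crossing times of level $\tilde m$ bracketing $\tilde t$. The first internal vertex of $T$ sits at height $\tilde m$, and the two principal subtrees are the level set trees of the positive excursions of $X^{\sf ex}_t-\tilde m$ on $[s_1,\tilde t]$ and $[\tilde t,s_2]$. I would show that these two sub-excursions are i.i.d.\ positive excursions of a symmetric homogeneous walk by applying the strong Markov property at the integer time $\tilde t$ conditional on $X_{\tilde t}=\tilde m$, combined with the time-reversibility of the symmetric walk (so that the pre-$\tilde t$ half, reversed in time, has the same law as the post-$\tilde t$ half). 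Applying (a) to each sub-excursion, whose kernel is again symmetric homogeneous atomless, then closes the recursion.

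The principal obstacle is the conditioning on $\tilde m$ being the interior global minimum: this event couples the two halves through the constraint that each half must remain above $\tilde m$ in its interior. I would address this by first conditioning jointly on $(\tilde t,\tilde m)$ and invoking the strong Markov property to factor the conditional law of the two halves into a product of independent parts, then verifying that the portion of each half lying above $\tilde m$ -- which is precisely what contributes to the corresponding principal subtree -- is a positive excursion of a symmetric homogeneous walk on $\mathbb{R}$. Together with Prop.~\ref{ts_prune} and Lem.~\ref{inv_prune}, which provide the Horton pruning perspective on iterating this decomposition (the local-minimum walk being itself symmetric homogeneous), this verification of Remark~\ref{rem:GWchar} identifies $\mu$ with $\mathcal{GW}(1/2,1/2)$.
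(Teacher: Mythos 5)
Your step (a) is correct and coincides with the paper's computation \eqref{eqn:orderhalf}, and your identification of the principal subtrees with the two sub-excursions of $X^{\sf ex}-\tilde m$ on either side of the lowest interior local minimum is the right starting point. The gap is in step (b). First, $\tilde t$ is the argmin of the excursion, not a stopping time, so the strong Markov property cannot be invoked at $\tilde t$; conditional independence of the two halves given $(\tilde t,\tilde m)$ can still be obtained by factorizing the defining constraints, but this is a path decomposition at the minimum that must be argued directly. Second, and more seriously, the conditional law of each half given $(\tilde t,\tilde m)$ is \emph{not} that of an unconditioned positive excursion: the left half, reversed and shifted, is an extended positive excursion in the sense of Def.~\ref{def:extendedTex} conditioned to have excess value exactly $\tilde m$ and duration exactly $\tilde t$, while the right half is an extended excursion conditioned to have excess value at least $\tilde m$ (it must descend below $0$, i.e., more than $\tilde m$ below its own starting level). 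In particular the two halves do not even have the same conditional law, contrary to what your time-reversal remark suggests. Since the shape of the level set tree is in general not independent of the excess value (cf.\ Eqs.~\eqref{eqn:lambda1s}--\eqref{eqn:lambda21s} and Lem.~\ref{lem:shapeVSexcessvalue}), these conditional marginals genuinely depend on $(\tilde t,\tilde m)$; consequently, conditional independence given $(\tilde t,\tilde m)$ does not integrate to the unconditional independence required by Remark~\ref{rem:GWchar}, and ``applying (a) to each sub-excursion'' is not legitimate, because under your conditioning the probability that a half is single-leaved is no longer $1/2$.

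The paper closes exactly this gap with a different device: a measure-preserving bijection $\Psi$ (a horizontal and vertical shift, in the spirit of Vervaat/cycle-lemma transforms) between the product space of an independent left and an independent right extended positive excursion glued at the origin and the space of single (left or right) extended excursions whose level set tree has order at least $2$, under which the origin is sent to the lowest interior local minimum. Because level set trees are invariant under shifts, this identifies the joint law of the two principal subtrees with that of the level set trees of two \emph{independent} positive excursions, and a left--right symmetry argument then removes the residual conditioning on the image lying in $\mathcal{X}_R$. Some transform of this kind, or an equivalent exchangeability argument, is the missing ingredient; time-reversibility alone does not decouple the two halves from the joint constraint imposed by $(\tilde t,\tilde m)$.
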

\begin{proof}
Recall that $\textsc{shape}(T)$ is almost surely in $\cBT^|$. 
Without loss of generality we consider a positive right excursion $X^{\sf ex}$ originating at $k=0$, where we set $X_0=0$.
The tree $\textsc{shape}(T)$ has exactly one leaf if and only if 
the first local minimum falls below zero.
That is, if the jump from $X_0=0$ to the first local maximum is smaller than or equal to 
the size of the jump from the first local maximum to the consecutive local minimum.
The probability of this event is: 
\be\label{eqn:orderhalf}
{\sf P}\left({\sf ord}(T)=1\right)=\int\limits_0^\infty \left(\int\limits_x^\infty {\sf s}(y)\,dy\right)\,{\sf s}(x)dx={1 \over 2}.
\ee

According to the characterization of the critical Galton-Watson distribution 
$\mathcal{GW}(1/2,1/2)$ given in Remark \ref{rem:GWchar} of Sect.~\ref{sec:GW},
the proof will be complete if we show that conditioned on ${\sf ord}(T) \geq 2$, the tree $\textsc{shape}(T)$ splits into a pair of
complete subtrees sampled independently from the same distribution as $\textsc{shape}(T)$.
This step is completed as follows.

\begin{figure}[t] 
\centering\includegraphics[width=0.8\textwidth]{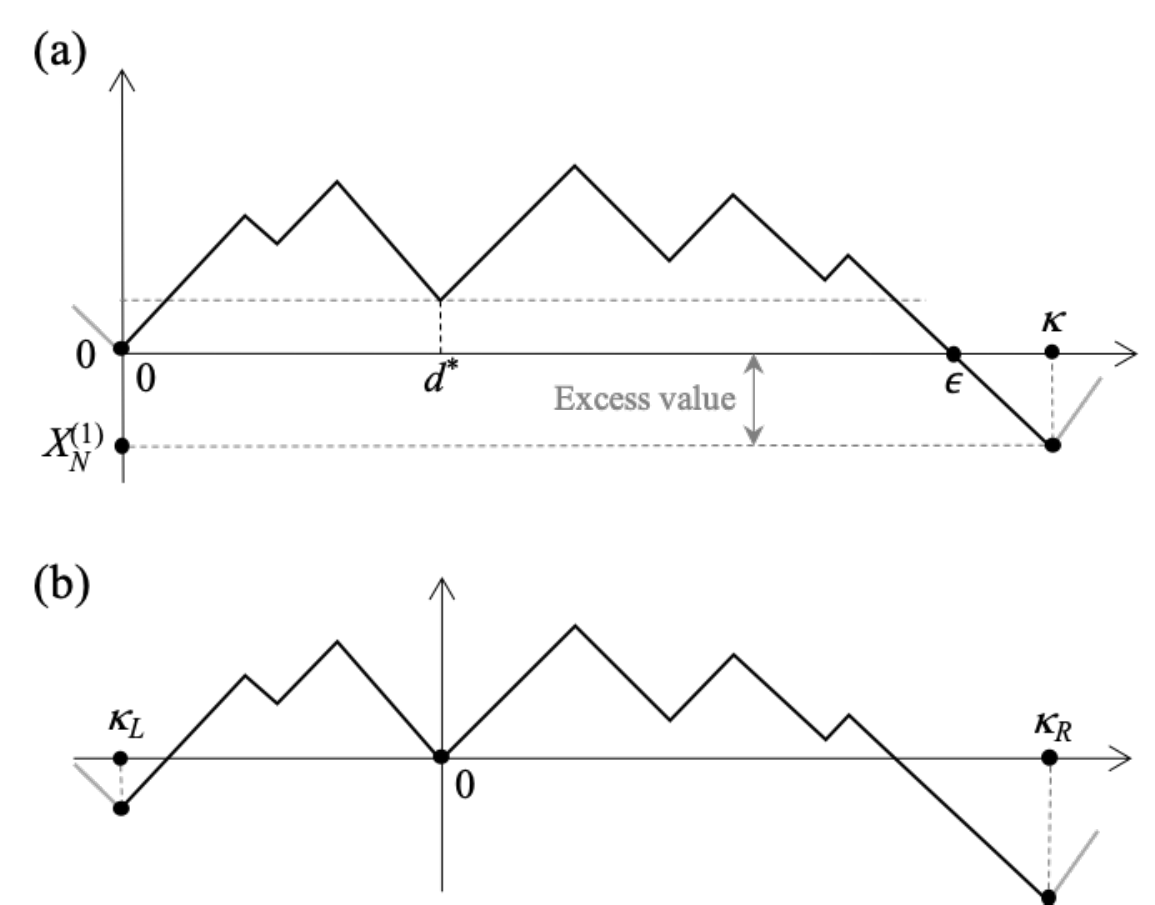}
\caption{Extended excursion: An illustration. 
(a) Extended positive right excursion $\breve{X}^{\sf ex}$ on the interval $[0,\kappa]$.
It is obtained by extending 
the respective positive right excursion $X^{\sf ex}$
on the interval $[0,\varepsilon]$ until the first local minimum $X_{\kappa}$
below zero.
The panel also illustrates the excess value $-X_N^{(1)}$ and the lowest local
minimum of the excursion at epoch $d^*$.
(b) A trajectory from $\mathcal{X}_{LR}$ on the interval $[\kappa_L,\kappa_R]$
consists of a positive left excursion on $[\kappa_L,0]$
and a positive right excursion on $[0,\kappa_R]$.
Observe that the trajectory in panel (b) is obtained by a horizontal
and vertical shift of the trajectory in panel (a).
The proof of Thm.~\ref{thm:excursion-tree-main} uses the one-to-one correspondence between extended (left/right) positive excursions with ${\sf ord}(T) \geq 2$ of  panel (a) and trajectories of panel (b).
 }
\label{fig:ex_ex}
\end{figure}

Consider the space $\mathcal{X}_L$ of all the trajectories of all extended positive left excursions
originating at $X_0=0$ and whose level set trees are of Horton-Strahler order $\geq 2$.
Similarly, consider the space $\mathcal{X}_R$ of all the trajectories of all extended positive right excursions 
originating at $X_0=0$ and whose level set trees are of Horton-Strahler order $\geq 2$.
We know from \eqref{eqn:orderhalf} that the probability measure for each of the sets $\mathcal{X}_L$ and $\mathcal{X}_R$
totals $1/2$. Thus, we may consider the union set of left and right extended positive excursions $\mathcal{X}_L \cup \mathcal{X}_R$
and equip it with a new probability measure obtained by gluing together the two respective restrictions of probability measures for the left and the right positive excursions.
That is the probability measure over the trajectories in $\mathcal{X}_L \cup \mathcal{X}_R$ when restricted to either $\mathcal{X}_L$ or $\mathcal{X}_R$,
will coincide with the respective probability measures for the left and for the right positive excursions, with the total probability adding up to one.
Now, since all the left and the right extended positive excursions in $\mathcal{X}_L \cup \mathcal{X}_R$ have Horton-Strahler order $\geq 2$, 
for each $\breve{X}^{\sf ex} \in \mathcal{X}_L \cup \mathcal{X}_R$ there is almost surely a unique integer $d^*$ such that $\breve{X}^{\sf ex}_{d^*}>0$ 
is the smallest local minimum of the excursion $\breve{X}^{\sf ex}$.

Next, conditioning on $X_0=0$ being a local minimum of $X_t$, we 
consider a space $\mathcal{X}_{LR}$ of all possible trajectories such that 
each trajectory consists of the left and the right extended positive excursions originating from $X_0=0$ (with no restrictions on their orders).
For a trajectory in $\mathcal{X}_{LR}$, let $\kappa_L<0$ and $\kappa_R>0$ denote the (random) endpoints of the left and the right extended positive excursions.
Thus, a trajectory $X_t$, $t \in [\kappa_L, \kappa_R]$, in $\mathcal{X}_{LR}$ consists of a left extended positive excursion 
$X_t~(\kappa_L\leq t \leq 0)$ and a right extended positive excursion $X_t~(0 \leq t \leq \kappa_R)$.
This construction is illustrated in Fig.~\ref{fig:ex_ex}(b).
The probability measure over the space $\mathcal{X}_{LR}$ is a product measure of the left and the right positive excursions.
We claim that there exists a bijective measure preserving shift map  
$$\Psi:\, \mathcal{X}_{LR} \rightarrow \mathcal{X}_L \cup \mathcal{X}_R.$$
Indeed, if the excess value $\Lambda\big(\{X_t\}_{\kappa_L\leq t \leq 0}\big)=-X_{\kappa_L}$ for the left excursion is smaller 
than the excess value $\Lambda\big(\{X_t\}_{\kappa_L \leq t \leq 0}\big)=-X_{\kappa_R}$ for the right excursion, we set 
$$\Psi\big(\{X_t\}_{\kappa_L \leq t \leq \kappa_R}\big)=\{X_{t+\kappa_L}-X_{\kappa_L}\}_{0\leq t \leq -\kappa_L+\kappa_R} ~\in \mathcal{X}_R.$$
Otherwise, we set
$$\Psi\big(\{X_t\}_{\kappa_L \leq t \leq \kappa_R}\big)=\{X_{t+\kappa_R}-X_{\kappa_R}\}_{\kappa_L-\kappa_R \leq t \leq 0} ~\in \mathcal{X}_L.$$
The map $\Psi$ is one-to-one onto as it consists of the vertical and the horizontal shifts. Also observe that under the mapping $\Psi$, the point $(0,0)$ of a trajectory in $\mathcal{X}_{LR}$ is sent to the point $(d^*,\breve{X}^{\sf ex}_{d^*})$ of the image trajectory in $\mathcal{X}_L \cup \mathcal{X}_R$.
We can construct $\Psi^{-1}:\, \mathcal{X}_L \cup \mathcal{X}_R  \rightarrow \mathcal{X}_{LR}$ accordingly as a map that shifts a trajectory $\breve{X}^{\sf ex}$ in $\mathcal{X}_L \cup \mathcal{X}_R$ by  subtracting $(d^*,\breve{X}^{\sf ex}_{d^*})$.
Finally, because we take the same product of the transition kernel values ${\sf s}(x)$ for the increments of a trajectory in $\mathcal{X}_{LR}$
as for its image in $\mathcal{X}_L \cup \mathcal{X}_R$ under the one-to-one mapping $\Psi$, the mapping $\Psi$ is measure preserving.

Thus, since vertical and horizontal shifts of a function preserve its level set tree,
we conclude that the distribution of the level set trees for the trajectories in $\mathcal{X}_{LR}$ and the trajectories in $\mathcal{X}_L \cup \mathcal{X}_R$ coincide.
The level set tree for a trajectory in $\mathcal{X}_{LR}$ consists of a stem that branches into two level set trees of the left and right positive excursions adjacent to $X_0=0$,
sampled independently from the same distribution as $\textsc{shape}(T)$. This is so since for the trajectories in $\mathcal{X}_{LR}$, $~X_0=0$ is the smallest local minimum.
Finally, we observe that the distribution of $\textsc{shape}\big(\textsc{level}(\breve{X}^{\sf ex})\big)$ is the same when $\breve{X}^{\sf ex}$ is sampled from $\mathcal{X}_L$ as when it is sampled from $\mathcal{X}_R$. Thus, for $\breve{X}^{\sf ex}$ sampled from $\mathcal{X}_R$, $\textsc{shape}\big(\textsc{level}(\breve{X}^{\sf ex})\big)$ consists of a stem that branches into two level set trees. If $X^{\sf ex}$ is the right positive excursion corresponding to $\breve{X}^{\sf ex}$ sampled from $\mathcal{X}_R$, then almost surely,
$$\textsc{shape}\big(\textsc{level}(X^{\sf ex})\big)=\textsc{shape}\big(\textsc{level}(\breve{X}^{\sf ex})\big).$$
Thus, conditioned on ${\sf ord}(T) \geq 2$, the tree $\textsc{shape}(T)$ splits into a pair of
complete subtrees sampled independently from the same distribution as $\textsc{shape}(T)$. 
This completes the proof.
\end{proof}

Theorem~\ref{thm:excursion-tree-main} establishes that the level set trees 
of symmetric random walks have the same combinatorial structure (equivalent
to that of a ciritical binary Galton-Watson tree), independently of the
choice of the transition kernel $p(x)$. 
The planar embedding and metric structure of the level set trees, however,
may depend on the kernel, as we illustrate in the following remark.
\begin{Rem}\label{rem:left_right_subtrees}
Consider an extended positive right excursion $\breve{X}^{\rm ex}$ of a symmetric homogeneous random walk and let $T=\textsc{level}(\breve{X}^{\rm ex})$ be its level set tree.
Condition on the event ${\sf ord}(T)\ge 2$, which
ensures that the left and right principal subtrees of $T$, which we denote
$T^\ell$ and $T^r$, respectively, are non-empty.

It follows from the construction in the proof of Thm.~\ref{thm:excursion-tree-main} that  
the subtrees $T^\ell$ and $T^r$ can be sampled as follows.
Consider two independent excursions -- 
an extended positive right excursion $\breve{X}_t^{\sf ex,r},~t\in [0,\kappa_r]$, and an extended positive left excursion $\breve{X}_t^{\sf ex,\ell},~t\in [\kappa_\ell,0]$.
Next, condition on the event that the excess value of the left excursion is less than
that of the right excursion:
$$\Lambda\big(\{\breve{X}_t^{\sf ex,\ell}\}_{t\in [\kappa_\ell,0]} \big) 
< \Lambda\big(\{\breve{X}_t^{\sf ex,r}\}_{t\in [0,\kappa_r]} \big).$$
Denote by $X^{\sf ex,\ell}$ and $X^{\sf ex,r}$ the positive left and right excursions
that correspond to the extended excursions $\breve{X}^{\sf ex,\ell}$ and 
$\breve{X}^{\sf ex,r}$.
Then, 
\be\label{eqn:left-right-subtreesL}
T^\ell\stackrel{d}{=}\textsc{level}({X}_{t}^{\sf ex,\ell}) ~\text{ and }
~T^r\stackrel{d}{=}\textsc{level}({X}_t^{\sf ex,r}).
\ee
Write $X^{\sf ex}$ for the positive right excursion that corresponds 
to the extended excursion $\breve{X}^{\rm ex}$.
Then, the stem of the tree $\textsc{level}(X^{\sf ex}) \in \BL^|$ has length equal to $\Lambda\big(\{\breve{X}_t^{\sf ex,\ell}\}_{t\in [0,\kappa_\ell]} \big)$.
This, in general, may introduce dependence between the planar embedding of $T$ 
and its edge lengths.
Such dependence is absent in the exponential critical binary Galton-Watson tree 
${\sf GW}(\lambda)$.

\end{Rem}

\begin{figure}[t] 
\centering\includegraphics[width=0.8\textwidth]{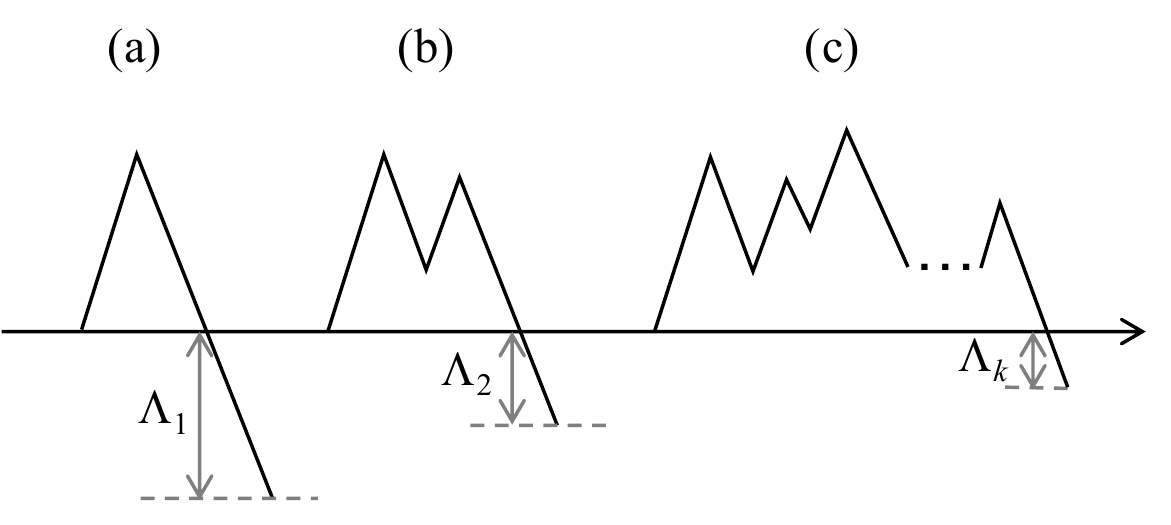}
\caption{Excess value $\Lambda(\breve{X}^{\sf ex})$ may depend on the tree shape 
$\textsc{shape}(\textsc{level}(\breve{X}^{\sf ex}))$.
Equations \eqref{eqn:lambda1s},\eqref{eqn:lambda21s} demonstrate why the excess value 
for a $\Lambda$-shaped excursion of panel (a) may differ from the excess value of an $M$-shaped
excursion of panel (b), and, hence, from the excess value of a general excursion of panel (c). }
\label{fig:excess_vs_tree}
\end{figure}

\bigskip
\noindent
Next, condition on the event that $X^{\sf ex}$ is an $\Lambda$-shaped excursion, which is equivalent to
$$\{\#\textsc{level}(X^{\sf ex})=1\}~= ~\{\#\textsc{level}(\breve{X}^{\sf ex})=1\}.$$
Then, the density function of the  excess value $\Lambda\big(\breve{X}^{\sf ex} \big)$ 
that we denote by
$\lambda_1(x)$ satisfies
\be\label{eqn:lambda1s}
\lambda_1(x)=2\int\limits_0^\infty {\sf s}(x+y) {\sf s}(y) \,dy,
\ee
where ${\sf s}(x)$ was defined in \eqref{eqn:geomf_s}.
This is so because conditioned on $$\{\#\textsc{level}(\breve{X}^{\sf ex})=1\},$$ the extended excursion $\breve{X}^{\sf ex}$ consists of an ${\sf s}$-distributed jump upward, and 
a larger ${\sf s}$-distributed jump downward. The excess value $\Lambda\big(\breve{X}^{\sf ex} \big)$ is the difference between the jumps. The multiple of $2$ in \eqref{eqn:lambda1s}
is due to conditioning upon the event of probability $1/2$ that the jump up is smaller than the jump down. 

\medskip
\noindent
Similarly, one can condition on the event that $X^{\sf ex}$ is an $M$-shaped excursion, which is equivalent to the event that the level set
tree has $2$ leaves and $3$ edges, i.e.,
$$\{\#\textsc{level}(X^{\sf ex})=3\}~= ~\{\#\textsc{level}(\breve{X}^{\sf ex})=3\}.$$
Then, the density function of the  excess value $\Lambda\big(\breve{X}^{\sf ex} \big)$ 
that we denote by
$\lambda_2(x)$ satisfies
\be\label{eqn:lambda21s}
\lambda_2(x)=2\int\limits_0^\infty \lambda_1(x+y) \lambda_1(y) \,dy.
\ee
This is so because conditioned on $$\{\#\textsc{level}(\breve{X}^{\sf ex})=3\},$$ the extended excursion $\breve{X}^{\sf ex}$ consists of two $\Lambda$-shaped (left and right) extended positive excursions originating from the only local minimum within the interior of 
the time domain $[0,\kappa]$ of $\breve{X}^{\sf ex}$. The excess value $\Lambda\big(\breve{X}^{\sf ex} \big)$ is the difference between the two $\lambda_1$-distributed excess values of the two $\Lambda$-shaped extended positive excursions.

\bigskip
\noindent
\begin{lem}\label{lem:shapeVSexcessvalue}
Consider a homogeneous random walk $X_k$ on $\mathbb{R}$ with a symmetric atomless transition kernel $p(x)$, $x\in\mathbb{R}$, i.e.,
there is a p.d.f. $f(x)$ with the support 
${\sf supp}(f) \subseteq \mathbb{R}_+$ such that 
$p(x)={1\over 2}(f(x)+f(-x))$.
Consider an extended positive excursion $\breve{X}^{\sf ex}$ of $X_k$, 
and the corresponding positive excursion $X^{\sf ex}$. 
Let $T=\textsc{level}(X^{\sf ex})$. 
Then, the following statements are equivalent:
\begin{itemize}
\item[(a)] $T$ is independent of the excess value $\Lambda\big(\breve{X}^{\sf ex} \big)$; 
\item[(b)] conditioned on $\textsc{p-shape}(T)$, the edge lengths are identically distributed;
\item[(c)] $f(x)$ is an exponential p.d.f.
\end{itemize}
If any of these statements holds, then the edge lengths are i.i.d. exponential random variables.
\end{lem}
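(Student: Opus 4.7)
The proof reduces to a characterization of ${\sf s}(x)$, the density of the increment between consecutive local extrema of $X_k$ from \eqref{eqn:geomf_s}. Its Laplace transform satisfies $\hat{\sf s}(\xi) = \hat f(\xi)/(2 - \hat f(\xi))$, from which one checks that ${\sf s}$ is exponential with rate $\mu$ if and only if $f$ is exponential with rate $2\mu$; condition (c) is therefore equivalent to ``${\sf s}$ is exponential.'' I will establish (c) $\Rightarrow$ (a) $\wedge$ (b) together with the i.i.d.\ exponential edge-length statement, and then close the circle via (a) $\Rightarrow$ (c) and (b) $\Rightarrow$ (c).

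For (c) $\Rightarrow$ everything, I induct on ${\sf ord}(T)$ using the recursive decomposition of Remark \ref{rem:left_right_subtrees}. In the base case ${\sf ord}(T)=1$, the joint density $2{\sf s}(u){\sf s}(u+v)$ of $(U,D-U)$ with $U,D$ i.i.d.\ $\mathrm{Exp}(\mu)$ under $D>U$ factors as $(2\mu e^{-2\mu u})(\mu e^{-\mu v})$, so the single edge length is $\mathrm{Exp}(2\mu)$, $\Lambda(\breve{X}^{\sf ex})$ is $\mathrm{Exp}(\mu)$, and the two are independent. For the inductive step, Remark \ref{rem:left_right_subtrees} realizes $T$ on $\{{\sf ord}(T)\ge 2\}$ as a stem of length $\Lambda_\ell$ attached to independent smaller-order subtrees $T^\ell, T^r$ conditioned on $\Lambda_\ell<\Lambda_r$. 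The hypothesis provides $T^\ell \perp \Lambda_\ell$, $T^r \perp \Lambda_r$, with $\Lambda_\ell, \Lambda_r$ i.i.d.\ $\mathrm{Exp}(\mu)$ and all subtree edges i.i.d.\ $\mathrm{Exp}(2\mu)$; a direct factorization argument shows that under the conditioning $(T^\ell,T^r) \perp (\Lambda_\ell,\Lambda_r)$. The min-of-exponentials identity then makes the stem $\Lambda_\ell \sim \mathrm{Exp}(2\mu)$, and memorylessness makes $\Lambda(\breve{X}^{\sf ex}) = \Lambda_r-\Lambda_\ell \sim \mathrm{Exp}(\mu)$ independent of $T$, closing the induction and yielding (a), (b), and the i.i.d.\ exponential conclusion in one stroke.

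For (a) $\Rightarrow$ (c), restrict to the $\Lambda$-shape event $\{D>U\}$, on which $T$ is determined by the single edge length $U$ and $(U,D-U)$ has joint density proportional to ${\sf s}(u){\sf s}(u+v)$ on $\{u,v>0\}$. Independence in (a) forces a factorization ${\sf s}(u){\sf s}(u+v) = g(u)h(v)$; letting $v\to 0^+$ yields $g(u)\propto {\sf s}(u)^2$, substituting back and letting $u\to 0^+$ yields $h(v)\propto {\sf s}(v)$, and we arrive at the multiplicative Cauchy identity ${\sf s}(u+v){\sf s}(0^+) = {\sf s}(u){\sf s}(v)$. Measurable integrable solutions on $(0,\infty)$ are exponential, so ${\sf s}$, and hence $f$, are exponential.

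For (b) $\Rightarrow$ (c), the smallest informative shape is the cherry (${\sf ord}(T)=2$). By Remark \ref{rem:left_right_subtrees}, the three edge lengths on this event are $(\Lambda_\ell,U_\ell,U_r)$ built from i.i.d.\ ${\sf s}$-variables $(U_\ell,D_\ell,U_r,D_r)$ subject to $D_\ell>U_\ell$, $D_r>U_r$, $\Lambda_\ell<\Lambda_r$. Equating the marginal density of the stem $\Lambda_\ell$, proportional to $\lambda_1(y)\cdot{\sf P}(D_r-U_r>y,\,D_r>U_r)$, with that of the leaf $U_\ell$ yields a convolutional integral identity in ${\sf s}$, which I expect can be reduced to the multiplicative Cauchy equation of the previous paragraph by an auxiliary Laplace-transform argument. \textbf{Main obstacle.} This reduction is the anticipated sticking point: the constraints in (b) enter ${\sf s}$ only through integrals against $\int_x^\infty {\sf s}$ rather than pointwise as in the $\Lambda$-shape case. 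A cleaner alternative I would try first is to prove (b) $\Rightarrow$ (a) directly, by recognizing that the stem length in any shape is an ``excess-type'' variable of a subtree (Remark \ref{rem:left_right_subtrees}) while $\Lambda(\breve{X}^{\sf ex})$ is an excess-type variable of the full tree; equidistribution of edges across shapes should then propagate into independence of $T$ from $\Lambda$, after which (a) $\Rightarrow$ (c) completes the argument.
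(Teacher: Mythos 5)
Your reduction to a characterization of ${\sf s}$, your $(c)\Rightarrow(a)\wedge(b)$ induction, and your $(a)\Rightarrow(c)$ factorization are sound in outline, though two points deserve care. First, in $(a)\Rightarrow(c)$ the limits $v\to 0^+$ and $u\to 0^+$ presuppose that ${\sf s}$ is continuous and positive at $0^+$, which a general atomless $f$ does not guarantee; the factorization ${\sf s}(u){\sf s}(u+v)=g(u)h(v)$ is better exploited by integrating out one variable (which identifies $h$ with a multiple of $\lambda_1$ via \eqref{eqn:lambda1s}) or by taking ratios ${\sf s}(u_1+v)/{\sf s}(u_2+v)=\mathrm{const}$, either of which forces exponentiality without pointwise limits. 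The paper instead compares the excess densities $\lambda_1$ and $\lambda_2$ of $\Lambda$-shaped and $M$-shaped extended excursions and invokes the Integrated Cauchy Functional Equation machinery (Lemmas \ref{lem:expcharacterization} and \ref{lem:expcharacterization2}); your route is more elementary but needs the regularity fix.

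The genuine gap is $(b)\Rightarrow(c)$, which you flag yourself and do not close; the fallback ``equidistribution should propagate into independence'' is not an argument. The missing observation is that (b) can be tested on just two edges whose laws are both minima of two i.i.d.\ excess-type variables: a leaf length is the minimum of two independent ${\sf s}$-distributed rises, with c.d.f.\ $F_1(x)=1-\bigl(\int_x^\infty{\sf s}(y)\,dy\bigr)^2$, while the internal edge at the base of a Y-shaped branch is the minimum of the two excess values of the adjacent $\Lambda$-shaped extended excursions, hence has c.d.f.\ $F_2(x)=1-\bigl(\int_x^\infty\lambda_1(y)\,dy\bigr)^2$ with $\lambda_1$ as in \eqref{eqn:lambda1s}. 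Equidistribution forces $\lambda_1\equiv{\sf s}$, and substituting this into \eqref{eqn:lambda1s} yields exactly ${\sf s}(x)=2\int_0^\infty{\sf s}(x+y){\sf s}(y)\,dy$, whose only solutions are exponential by Lemma \ref{lem:expcharacterization}. This single comparison replaces your convolutional identity and the hoped-for Laplace-transform reduction, and is the step you would need to supply to complete the equivalence.
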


\begin{proof}
$(c)\Rightarrow (a)$. It is easy to show via the characteristic functions that ${\sf s}(x)$ is an exponential p.d.f. if and only if $f(x)$ is an exponential p.d.f..
The memoryless property of the exponential random variables implies that if ${\sf s}(x)$ is an exponential p.d.f., then $T=\textsc{level}(X^{\sf ex})$ is independent of the excess value $\Lambda\big(\breve{X}^{\sf ex} \big)$.

$(a)\Rightarrow (c)$.
The excess value of a $\Lambda$-shaped extended positive excursion has the same 
distribution as the excess value of a $M$-shaped extended  positive excursion if and only if 
 $\lambda_1(x) \equiv \lambda_2(x)$. 
If this equality holds, then by equation \eqref{eqn:lambda21s} 
the p.d.f. $\lambda_1(x)$ satisfies equation \eqref{eqn:intgg} in Lemma \ref{lem:expcharacterization}, which implies that $\lambda_1(x) \equiv \lambda_2(x)$ is an exponential density function.
Hence, from \eqref{eqn:lambda1s} and Lemma \ref{lem:expcharacterization2} we conclude that ${\sf s}(x) \equiv \lambda_1(x)$ is an exponential density function, 
which in turn implies that $f(x)$ is exponential.

$(b)\Rightarrow (c)$.
The distribution of the leaf length is the minimum of two independent ${\sf s}(x)$-distributed random variables. Thus the cumulative distribution function of the leaf length equals 
$$F_1(x) = 1-\left(\int\limits_x^\infty {\sf s}(y) \,dy\right)^2.$$

The cumulative distribution function for the length of the non-leaf edge in a $Y$-shaped branch equals 
$$F_2(x)=1-\left(\int\limits_x^\infty \lambda_1(y) \,dy\right)^2.$$
Here, $F_1(x)\equiv F_2(x)$ if and only if $\lambda_1(x) \equiv {\sf s}(x)$, which by Lemma \ref{lem:expcharacterization} and equation \eqref{eqn:lambda1s}
happens if and only if ${\sf s}(x)$ is exponential. 
This implies that $f(x)$ is an exponential p.d.f..

$(c)\Rightarrow (b)$. 
Suppose $f(x)$ is the exponential density with parameter $\lambda$, i.e., $f(x)=\phi_\lambda(x)$.
According to the construction in the proof of Thm.~\ref{thm:excursion-tree-main}, together with statement $(a)$,
and because any edge in $T$ is a stem of a unique descendant subtree of $T$,
it suffices to prove that conditioned on $\textsc{p-shape}(T)$, the tree stem (root edge) has exponential distribution
with parameter $\lambda$.  

According to \eqref{eqn:geomf_s}, ${\sf s}(x)$ has the exponential density with parameter $\lambda/2$.
Conditioned on ${\sf ord}(T)=1$, the length of the stem (the only edge of the tree) 
equals the minimum of two independent exponentially distributed random variables with
density ${\sf s}(x)$, and hence has the exponential density with parameter $\lambda$.
Conditioned on ${\sf ord}(T)\ge 2$, the length of the stem is the minimum 
of the excess values of two independent extended positive excursions.
By the memoryless property of the exponential distribution,
each of these excess values has the exponential distribution with parameter $\lambda/2$.
Hence, the stem length has the exponential distribution with parameter $\lambda$.
This shows that the edge lengths in $T$ have the same distribution.

\medskip
\noindent
Finally, suppose any and therefore all three of the statements (a)-(c) hold,
then properties (b) and (c) insure that the edge lengths are identically and exponentially distributed,
while property (a) insures the independence of edge lengths. 
This completes the proof.
\end{proof}

\subsection{Exponential random walks}
\label{sec:erw}
Proposition~\ref{ts_prune} (and the subsequent comment) suggests that the problem 
of finding Horton self-similar trees with edge lengths is related to finding 
{\it extreme-invariant processes}
\be
\label{eqn:d_inv}
\big\{X^{(1)}_j-X^{(1)}_0\big\}_{j\in\mathbb{Z}}~\stackrel{d}{=}~\big\{\zeta(X_k-X_0)\big\}_{k\in\mathbb{Z}} \quad {\rm for~some~} \zeta>0,
\ee
where $\{X_k\}_{k\in\mathbb{Z}}$, is a time series with an atomless
distribution at every $k$ and $X^{(1)}_j$ is the corresponding time series of local minima.
The equality \eqref{eqn:d_inv} is understood as the distributional equivalence of two time series.

In this section we establish a sufficient condition for a symmetric homogeneous 
random walk to solve \eqref{eqn:d_inv}, and show that in this case $\zeta=2$.
Moreover, we show that if a symmetric random walk $X_k$ satisfies \eqref{eqn:d_inv},
the level set tree of its finite positive excursion, 
considered as elements of $\L$, is self-similar according to Def.~\ref{def:distss}.
Symmetric random walks with exponential increments is an example of a process
that solves \eqref{eqn:d_inv}.

The following result describes the solution of the problem \eqref{eqn:d_inv}
in terms of the characteristic function of $f(x)$.

\begin{prop}[{{\bf Extreme-invariance of a symmetric homogeneous random walk, \cite{ZK12}}}]
\label{prop:DSS}
Consider a symmetric homogeneous random walk  $\{X_k\}_{k\in\mathbb{Z}}$ 
with a transition kernel $~p(x)=\frac{f(x)+f(-x)}{2}~$, where $f(x)$ is a p.d.f. with
support ${\sf supp}(f) \subseteq \mathbb{R}_+$ and a finite second moment.
Then, the local minima $\{X^{(1)}_j\}_{j\in\mathbb{Z}}$ of $\{X_k\}_{k\in\mathbb{Z}}$ form a symmetric homogeneous random walk 
with transition kernel
\be \label{eq:scaling}
p^{(1)}(x)= \zeta^{-1}\,p(x/\zeta),\quad \zeta >0
\ee
if and only if $\zeta=2$ and
\be
\label{laplace}
\Re\left[\widehat{f}(2s)\right]=\left|\frac{\widehat{f}(s)}{2-\widehat{f}(s)}\right|^2,
\ee
where $\widehat{f}(s)$ is the characteristic function of $f(x)$ and
$\Re[z]$ denotes the real part of $z\in\mathbb{C}$.
\end{prop}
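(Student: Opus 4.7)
\medskip
\noindent\textbf{Proof proposal for Proposition~\ref{prop:DSS}.}
The plan is to pass to characteristic functions and reduce the statement to an identity for $\widehat{f}$. Recall from the proof of Lemma~\ref{inv_prune} that an increment of the random walk of local minima has the form $d_j = U - V$, where
\[
U \stackrel{d}{=} \sum_{i=1}^{\xi_+} Y_i, \qquad V \stackrel{d}{=} \sum_{i=1}^{\xi_-} Z_i,
\]
with $\xi_\pm \stackrel{d}{\sim} {\sf Geom}_1(1/2)$ and $Y_i, Z_i$ i.i.d.\ with density $f$ (here the symmetry $p^+ = p^- = 1/2$ and the choice of $Y$-and $Z$-densities are a direct consequence of the symmetric representation $p = (f(x)+f(-x))/2$). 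In particular, $U$ and $V$ are i.i.d.

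\medskip
First I would compute the characteristic function of $U$ by conditioning on $\xi_+$:
\[
\widehat{U}(s) \;=\; \sum_{k=1}^{\infty} 2^{-k}\,\widehat{f}(s)^{k} \;=\; \frac{\widehat{f}(s)}{2-\widehat{f}(s)}.
\]
Since $d_j = U - V$ with $U, V$ i.i.d., the characteristic function of $p^{(1)}$ is
\[
\widehat{p^{(1)}}(s) \;=\; \widehat{U}(s)\,\overline{\widehat{U}(s)} \;=\; \left|\frac{\widehat{f}(s)}{2-\widehat{f}(s)}\right|^{2}.
\]
The symmetry of $p$ gives $\widehat{p}(s) = \Re[\widehat{f}(s)]$, so the scaling relation \eqref{eq:scaling} is equivalent, via the uniqueness theorem for characteristic functions, to the identity
\[
\Re\bigl[\widehat{f}(\zeta s)\bigr] \;=\; \left|\frac{\widehat{f}(s)}{2-\widehat{f}(s)}\right|^{2} \qquad (s \in \mathbb{R}).
\]

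\medskip
It remains to pin down $\zeta = 2$. This is where I use the finite second moment hypothesis. Direct computation (or differentiating the characteristic function twice at $s=0$) gives $\Var(U) = \mathbb{E}[\xi_+]\Var(Y_1) + \Var(\xi_+)\,\mathbb{E}[Y_1]^{2} = 2\,\mathbb{E}[Y_1^{2}]$, so $\Var(d_j) = 2\Var(U) = 4\,\mathbb{E}[Y_1^{2}]$. On the other hand, a single step of the original walk has $\Var(X_{k+1}-X_k) = \int x^{2} p(x)\,dx = \mathbb{E}[Y_1^{2}]$, since $f$ is supported on $\mathbb{R}_+$. The scaling \eqref{eq:scaling} forces $\Var(d_j) = \zeta^{2}\Var(X_{k+1}-X_k)$, hence $\zeta^{2} = 4$, and positivity of $\zeta$ gives $\zeta = 2$. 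Plugging $\zeta = 2$ back into the identity above yields \eqref{laplace}, and conversely \eqref{laplace} together with $\zeta = 2$ forces $\widehat{p^{(1)}}(s) = \widehat{p}(2s)$, which by Fourier inversion is exactly \eqref{eq:scaling}.

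\medskip
I do not expect a serious obstacle: the computation is essentially forced once one recognizes $d_j$ as the difference of two i.i.d.\ geometric sums. The only subtle point is the derivation $\zeta = 2$, which is where the finite-second-moment assumption is essential (without it, no first- or second-moment argument pins down the scaling factor, and one would need a more delicate Tauberian extraction from the functional equation near $s = 0$).
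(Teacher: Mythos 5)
Your proposal is correct and follows essentially the same route as the paper's proof: the same decomposition of the local-minima increment as a difference of two i.i.d.\ ${\sf Geom}_1(1/2)$-compound sums, the same second-moment (law of total variance) argument to force $\zeta=2$, and the same characteristic-function identity $\widehat{p}^{(1)}(s)=\bigl|\widehat{f}(s)/(2-\widehat{f}(s))\bigr|^{2}$ matched against $\Re[\widehat{f}(\zeta s)]$. The only difference is the order of the two steps, which is immaterial.
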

\begin{proof}
Each increment between the consecutive local minima of $X_k$ can be represented as 
$d_j$ of \eqref{twosum}
where $\{Y_i\}$ and $\{Z_i\}$ are i.i.d. with density $f(x)$, 
and $\xi_+$ and $\xi_-$ are independent geometric random variables with parameter $1/2$, i.e., ${\sf Geom}_1(1/2)$. 

\medskip
\noindent
The law of total variance readily implies that $\zeta=2$. Indeed,
\begin{align}\label{eq:ltv}
{\sf Var} \left(\sum_{i=1}^{\xi_+} Y_i \right) &= {\sf E} \left[{\sf Var}\left(\sum_{i=1}^{\xi_+} Y_i ~\Big|~\xi_+\right) \right]+{\sf Var}\left({\sf E}\left[\sum_{i=1}^{\xi_+} Y_i ~\Big|~\xi_+\right] \right) \nonumber \\
&=\sigma^2 {\sf E}[\xi_+]+\mu^2 {\sf Var}(\xi_+)=2(\mu^2+\sigma^2 ),
\end{align}
where $\mu$ and $\mu^2+\sigma^2$ are the first and the second moments of $f(x)$ respectively. Thus, on one hand,
the variance of the increments of $X_k$ is
$${\sf Var}(X_{k+1}-X_k)=\mu^2+\sigma^2$$
since for a symmetric homogeneous random walk,  ${\sf E}[X_{k+1}-X_k]=0$. 
On the other hand, \eqref{twosum} and  \eqref{eq:ltv} imply that the variance of the increments in the sequence of local minima $X^{(1)}_j$ is
$${\sf Var}(X^{(1)}_{j+1}-X^{(1)}_j)={\sf Var}(d_j)={\sf Var}\left(\sum_{i=1}^{\xi_+} Y_i \right)+{\sf Var}\left(\sum_{i=1}^{\xi_-} Z_i \right)=4(\mu^2+\sigma^2).$$
Hence, ${\sf Var}(X^{(1)}_{j+1}-X^{(1)}_j)=4\,{\sf Var}(X_{k+1}-X_k)$, and therefore $\zeta=2$ is the only value of $\zeta$ for which the scaling \eqref{eq:scaling} may hold.

\medskip
\noindent
Taking the characteristic functions in \eqref{eq:scaling}, we obtain
\[\widehat{p}^{(1)}(s)=\widehat{p}(\zeta s)=\Re\left[\widehat{f}(\zeta s)\right].\]
while taking the characteristic function of $d_j$ in \eqref{twosum} we have
\[\widehat{p}^{(1)}(s)={\sf E}[e^{isd_j}]={\sf E}\left[\Big(\widehat{f}(s)\Big)^{\xi_+}\right]\,{\sf E}\left[\Big(\widehat{f}(-s)\Big)^{\xi_-}\right]=\left|\frac{\widehat{f}(s)}{2-\widehat{f}(s)}\right|^2.\]
Thus, \eqref{eq:scaling} is satisfied if and only if
\be \label{eq:zetavs2}
\Re\left[\widehat{f}(\zeta s)\right]=\left|\frac{\widehat{f}(s)}{2-\widehat{f}(s)}\right|^2.
\ee
Substituting $\zeta=2$ into \eqref{eq:zetavs2} completes the proof.
\end{proof}

\begin{ex}\label{ex:ei}
Exponential density $f(x)=\phi_\lambda (x)$ of \eqref{exp} solves \eqref{laplace} with 
any $\lambda>0$; see Thm.~\ref{eH} below for more detail.
\end{ex}

Consider a time series $\{X_k\}_{k \in \mathbb{Z}}$, with an atomless distribution of values at each $k$.
Let  $\{X_t\}_{t \in \mathbb{R}}$, be a continuous function of linearly interpolated values of $X_k$.
We define a {\it positive excursion} of $X_k$ as a fragment
of the time series on an interval $[l,r]$, $l,r\in\mathbb{Z}$,
such that $X_l\ge X_r$ and $X_k>X_l$ for all $l<k<r$ (see Fig.~\ref{fig:ex}).
To each positive excursion of $X_k$ on $[l,r]$ corresponds a
positive excursion of $X_t$ on $[l,\tilde r]$, where $\tilde r\in(r-1,r]$ 
is such that $X_{\tilde r} = X_l$.
The level set tree of a positive excursion of $X_k$ is that
of the corresponding positive excursion of $X_t$.

\medskip
\noindent
Propositions~\ref{prop:DSS} and \ref{ts_prune} imply the following statement.
\begin{cor}\label{cor:DSS}
Consider a symmetric homogeneous random walk  $\{X_k\}_{k\in\mathbb{Z}}$ 
with a transition kernel $~p(x)=\frac{f(x)+f(-x)}{2}~$, where $f(x)$ is a p.d.f. 
with support ${\sf supp}(f) \subseteq \mathbb{R}_+$ and a finite second moment.
Let  
$$T=\textsc{level}\big(\{X_t\}_{t \in [l,r]}\big)$$
be the level set tree for a positive excursion $\{X_t\}_{t \in [l,r]}$ generated by the random walk $X_k$ as defined in Sect. \ref{sec:level}.
Then, the tree $T$ has a Horton self-similar distribution (Def. \ref{def:distss}) over $\BL^|$,
if and only if the condition \eqref{laplace} holds for the characteristic function $\widehat{f}(s)$ of $f(x)$.
\end{cor}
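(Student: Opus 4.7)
The plan is to deduce the corollary by combining the reciprocity between Horton pruning and transition to local minima (Proposition~\ref{ts_prune}), the bijective correspondence between the level set tree of an excursion and the sequence of values at its local extrema (Sect.~\ref{sec:level1}), and the scaling criterion for the local-minima kernel (Proposition~\ref{prop:DSS}). I treat the two implications separately.

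For the forward direction, assume that the distribution of $T=\textsc{level}(X^{\rm ex})$ on $\BL^|$ is Horton self-similar with scaling exponent $\zeta>0$. By the length prune-invariance in Def.~\ref{def:distss}(iv), $\cR(T)\stackrel{d}{=}\zeta\cdot T$, where $\zeta\cdot T$ denotes $T$ with every edge length multiplied by $\zeta$. Proposition~\ref{ts_prune} identifies $\cR(T)$ with $\textsc{level}(X^{(1)\,\rm ex})$, the level set tree of the positive excursion of the local-minima walk $\{X^{(1)}_j\}_{j\in\mathbb{Z}}$. Since the combinatorial shape plus the edge lengths of the level set tree of a piecewise-linear excursion uniquely encode the sequence of values at its local extrema (up to a global additive constant fixed by the excursion condition), the distributional identity $\textsc{level}(X^{\rm ex})\stackrel{d}{=}\zeta^{-1}\,\textsc{level}(X^{(1)\,\rm ex})$ forces the sequence of values at local extrema of $X^{\rm ex}$ to be distributed as $\zeta^{-1}$ times that of $X^{(1)\,\rm ex}$. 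The inter-extremum increments therefore scale by $\zeta^{-1}$, which (via the geometric-sum representation used in the proof of Proposition~\ref{prop:DSS}) pins down the kernel scaling $p^{(1)}(x)=\zeta^{-1}p(x/\zeta)$. Proposition~\ref{prop:DSS} then yields $\zeta=2$ together with \eqref{laplace}.

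For the reverse direction, assume \eqref{laplace}. Proposition~\ref{prop:DSS} gives that the local minima form a symmetric homogeneous random walk with kernel $p^{(1)}(x)=2^{-1}p(x/2)$; equivalently, $\{X^{(1)}_j-X^{(1)}_0\}\stackrel{d}{=}\{2(X_j-X_0)\}$ as random walks. Positive excursions of $X^{(1)}$ are therefore distributed as twice positive excursions of $X$, and since the level set tree is invariant under vertical shifts and scales linearly with vertical scaling of the excursion, $\textsc{level}(X^{(1)\,\rm ex})\stackrel{d}{=}2\cdot\textsc{level}(X^{\rm ex})$. Combined with Proposition~\ref{ts_prune}, this gives $\cR(T)\stackrel{d}{=}2\cdot T$, which is the prune-invariance in lengths (Def.~\ref{def:distss}(iv)) with $\zeta=2$. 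The remaining requirements of Horton self-similarity come for free: Theorem~\ref{thm:excursion-tree-main} gives $\textsc{shape}(T)\stackrel{d}{\sim}\mathcal{GW}(1/2,1/2)$, which is Horton self-similar in shapes (Ex.~\ref{ex1}), supplying coordination and prune-invariance in shapes; and coordination in lengths follows by invoking the spatial homogeneity of the walk at local minima, which turns each complete subtree of order $K$ inside $T$ into the level set tree of a sub-excursion whose law agrees with the unconditional level set tree of a positive excursion of order $K$.

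The main obstacle is the coordination in lengths required by Def.~\ref{def:distss}(ii). A complete subtree of order $K$ inside $T$ is rooted at an internal local minimum whose value is strictly larger than the base level of the ambient excursion, so the corresponding sub-excursion is not literally an unconditional positive excursion but a positive excursion constrained to lie above a strictly positive threshold. One needs to argue, via a strong-Markov argument at the anchoring local minimum together with the left/right extended-excursion decomposition used in the proof of Theorem~\ref{thm:excursion-tree-main}, that the joint distribution of the combinatorial shape and edge lengths of this subtree is independent of the excess value at its base and therefore matches the unconditional distribution of $T$ restricted to order $K$. Once this is handled, the rest of the proof is a clean bookkeeping of which properties in Def.~\ref{def:distss} are supplied by Theorem~\ref{thm:excursion-tree-main}, by Proposition~\ref{ts_prune}, and by Proposition~\ref{prop:DSS}.
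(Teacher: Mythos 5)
Your proposal is correct and follows essentially the same route as the paper, whose entire proof of Corollary~\ref{cor:DSS} is two sentences: coordination in shapes and lengths is asserted to follow from the random walk construction, and Horton prune-invariance is obtained by combining Proposition~\ref{ts_prune} (pruning $=$ transition to local minima) with Proposition~\ref{prop:DSS} (kernel scaling $\Leftrightarrow$ \eqref{laplace} with $\zeta=2$), exactly the two pillars you use. Your additional care about coordination in lengths for complete subtrees anchored at interior local minima is a legitimate point the paper glosses over, and your proposed resolution via the exchangeable left/right extended-excursion decomposition from the proof of Theorem~\ref{thm:excursion-tree-main} (in the spirit of the averaging computation in \eqref{eq:Kam}) is the right way to close it.
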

\begin{proof}
The coordination in shapes and lengths follows from the random walk construction. 
Props.~\ref{prop:DSS},\ref{ts_prune} establish the Horton prune-invariance.
\end{proof}

A homogeneous random walk on $\mathbb{R}$ is called {\it exponential random walk} if its transition kernel is 
a mixture of exponential jumps: 
\index{random walk!exponential}
$$p(x)=\rho\,\phi_{\lambda_u}(x)+(1-\rho)\,\phi_{\lambda_d}(-x), \quad 0\le \rho \le 1, \quad \lambda_u,\lambda_d>0,$$
where $\phi_{\lambda}$ is the exponential density with parameter $\lambda>0$ as defined in Eq. (\ref{exp}).
We refer to an exponential random walk by its parameter triplet $\{\rho,\lambda_u,\lambda_d\}$.
Each interpolated exponential random walk with parameters $\{\rho,\lambda_u,\lambda_d\}$ is a
piece-wise linear function whose positive (up) and negative (down) increments are independent exponential 
random variables with respective parameters $\lambda_u$ and $\lambda_d$, and the probabilities of
a positive or negative increment at every integer instant are $\rho$ and $(1-\rho)$, respectively.
After a time change that makes all segments to have slopes $\pm 1$, each interpolated exponential random walk with parameters $\{\rho,\lambda_u,\lambda_d\}$ corresponds to a 
piece-wise linear function with alternating rises and falls that have independent exponential lengths with parameters $(1-\rho)\lambda_u$ and $\rho\lambda_d$, respectively.
An exponential random walk is symmetric if and only if $\rho=1/2$ and $\lambda_u=\lambda_d$.

\begin{thm}[{{\bf Self-similarity of exponential random walks, \cite{ZK12}}}]
\label{eH}
Let $\{X_k\}_{k\in\mathbb{Z}}$ be an exponential random walk with parameters $\{\rho,\lambda_u,\lambda_d\}$.
Then 
\begin{description}
\item[(a)]
The sequence $\{X^{(1)}_j\}_{j\in\mathbb{Z}}$ of the local minima of $X_k$ is an exponential random walk with parameters 
$\{\rho^*,\lambda_u^*,\lambda_d^*\}$ such that
\be 
\label{iteration}
\rho^*=\frac{\rho\,\lambda_d}{\rho\,\lambda_d+(1-\rho)\,\lambda_u}, \quad 
\lambda^*_d=\rho\lambda_d,~~\text{ and }~~
\lambda^*_u=(1-\rho)\lambda_u.
\ee
\item[(b)]
The exponential walk $X_k$ satisfies the self-similarity condition \eqref{eqn:d_inv}
if and only if it is symmetric ($\rho=1/2$ and $\lambda_u=\lambda_d$), i.e., when $p(x)$ is a mean zero Laplace p.d.f. 
\item[(c)]
The self-similarity \eqref{eqn:d_inv} is achieved after the first Horton pruning, for the chain $\{X^{(1)}_j\}_{j\in\mathbb{Z}}$ of the local minima,
if and only if the walk's increments have zero mean, $\rho\,\lambda_d = (1-\rho)\,\lambda_u$. 
\end{description}
\end{thm}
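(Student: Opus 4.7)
My plan is to prove part (a) first by an explicit characteristic function computation, and then deduce parts (b) and (c) as easy algebraic consequences.

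For part (a), I would use the representation \eqref{twosum} from Lemma~\ref{inv_prune} for the increment $d_j = X^{(1)}_{j+1} - X^{(1)}_j$ between consecutive local minima. For an exponential walk with parameters $\{\rho,\lambda_u,\lambda_d\}$, the ascending run consists of $\xi_+$ i.i.d.\ $\mathrm{Exp}(\lambda_u)$ positive jumps, and the descending run consists of $\xi_-$ i.i.d.\ $\mathrm{Exp}(\lambda_d)$ positive magnitudes, where $\xi_+$ and $\xi_-$ are independent geometric random variables on $\{1,2,\dots\}$ with parameters $1-\rho$ and $\rho$, respectively. The key computation is that a geometric sum of exponentials is itself exponential: using $\widehat{\phi}_\lambda(s) = \lambda/(\lambda - is)$, I would verify
\[
E\!\left[\widehat{\phi}_{\lambda_u}(s)^{\xi_+}\right] = \frac{(1-\rho)\,\widehat{\phi}_{\lambda_u}(s)}{1-\rho\,\widehat{\phi}_{\lambda_u}(s)} = \frac{(1-\rho)\lambda_u}{(1-\rho)\lambda_u - is},
\]
so that $\sum_{i=1}^{\xi_+} Y_i \sim \mathrm{Exp}\big((1-\rho)\lambda_u\big)$, and similarly $\sum_{i=1}^{\xi_-} Z_i \sim \mathrm{Exp}(\rho\lambda_d)$. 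The increment $d_j$ is then the difference of two independent exponentials, and a standard calculation gives $P(d_j > 0) = \rho\lambda_d/(\rho\lambda_d + (1-\rho)\lambda_u)$, with the memoryless property yielding $d_j\mid d_j>0 \sim \mathrm{Exp}((1-\rho)\lambda_u)$ and $-d_j\mid d_j<0 \sim \mathrm{Exp}(\rho\lambda_d)$. This matches the asserted parameters $\rho^*,\lambda_u^*,\lambda_d^*$. Finally, homogeneity of $\{X^{(1)}_j\}$ follows from Lemma~\ref{inv_prune}(i).

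For part (b), I would use part (a) to write the fixed-point equation $\{X^{(1)}_j-X^{(1)}_0\}\stackrel{d}{=}\{\zeta(X_k-X_0)\}$ as the three scalar conditions $\rho^*=\rho$, $\lambda_u^*=\lambda_u/\zeta$, $\lambda_d^*=\lambda_d/\zeta$. The first condition $\rho\lambda_d/(\rho\lambda_d+(1-\rho)\lambda_u)=\rho$ forces $\lambda_u=\lambda_d$ (since $0<\rho<1$), and then the remaining two conditions reduce to $1-\rho = \rho = 1/\zeta$, giving $\rho=1/2$ and $\zeta=2$. This recovers the symmetric Laplace kernel and is consistent with the $\zeta=2$ assertion of Proposition~\ref{prop:DSS}.

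Part (c) is then a one-line consequence: by part (a), $\{X^{(1)}_j\}$ is itself an exponential random walk with parameters $(\rho^*,\lambda_u^*,\lambda_d^*)$, so applying the criterion from part (b) to this pruned walk, self-similarity of $\{X^{(1)}_j\}$ holds iff $\rho^*=1/2$ and $\lambda_u^*=\lambda_d^*$. Both of these reduce to the single condition $\rho\lambda_d = (1-\rho)\lambda_u$, which is exactly $E[X_{k+1}-X_k]=\rho/\lambda_u - (1-\rho)/\lambda_d = 0$. The main obstacle is really just the characteristic-function computation in part (a); once the geometric-sum-of-exponentials identity is in hand, everything else is straightforward algebra, and no additional technicalities (such as those in the proof of Proposition~\ref{prop:DSS}) are needed.
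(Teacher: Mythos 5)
Your proposal is correct and follows essentially the same route as the paper: part (a) via the characteristic function of the geometric sums in \eqref{twosum} (the paper writes the product $\widehat{p}^{(1)}(s)=\frac{\rho(1-\rho)\lambda_d\lambda_u}{((1-\rho)\lambda_u-is)(\rho\lambda_d+is)}$ and identifies it by partial fractions as a mixture of two one-sided exponentials, which is the same computation you organize as ``geometric sum of exponentials is exponential'' plus ``difference of independent exponentials''), and parts (b) and (c) as the same algebraic consequences of \eqref{iteration}. The only cosmetic difference is that the paper also records an alternative verification of (b) by checking that $\widehat{\phi}_\lambda$ satisfies \eqref{laplace} and invoking Proposition~\ref{prop:DSS}, whereas you derive $\rho=1/2$, $\lambda_u=\lambda_d$, $\zeta=2$ directly from the fixed-point equations on the parameters, which is the route the paper labels ``follows immediately from part (a).''
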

\begin{proof}
(a) By Lemma \ref{inv_prune}(i), the sequence of local minima $X^{(1)}_j$ of $X_k$ is a homogeneous random walk with transition kernel $p^{(1)}(x)$. The latter is the probability distribution of the jumps $d_j$ given by \eqref{twosum} with 
$$\xi_+\overset{d}{\sim}{\sf Geom}_1(1-\rho), \quad \xi_-\overset{d}{\sim}{\sf Geom}_1(\rho), \quad Y_i\overset{d}{\sim}\phi_{\lambda_u}, ~\text{ and }~Z_i\overset{d}{\sim}\phi_{\lambda_d}.$$
The characteristic function $\widehat{p}^{(1)}(s)$ of the transition kernel $p^{(1)}(x)$ is found here as follows
\begin{align*}
\widehat{p}^{(1)}(s)&={\sf E} \left[\exp\Big\{is\left(X^{(1)}_{j+1}-X^{(1)}_j\right)\Big\}\right]
=\frac{\rho(1-\rho)\lambda_d \lambda_u}{\left((1-\rho)\lambda_u-is\right)(\rho\lambda_d+is)} \\
&=\rho^* \, \widehat{\phi}_{\lambda^*_u}(s)+(1-\rho^*) \, \widehat{\phi}_{\lambda^*_d}(s),
\end{align*}
where
\be\label{eqn:phihat}
\widehat{\phi}_\lambda(s)={\lambda \over \lambda -is}
\ee
is the characteristic function of an exponential random variable with parameter $\lambda$, and
$\rho^*, \lambda^*_u, \lambda^*_d$ are given by \eqref{iteration}.
Thus, 
\[p^{(1)}(x)=\rho^* \phi_{\lambda^*_u}(x)+(1-\rho^*)\phi_{\lambda^*_d}(-x).\]
This means that the sequence of local minima $\{X^{(1)}_j\}$ also evolves according to a two-sided 
exponential transition kernel, only with different parameters, $\rho^*$, $\lambda^*_d$, and $\lambda^*_u$. 

\medskip
\noindent
Part (b) of the theorem follows immediately from part (a).
Alternatively, we observe that the exponential density $f(x)=\phi_\lambda (x)$ solves \eqref{laplace} with 
any $\lambda>0$: by \eqref{eqn:phihat} we have
$$\Re\left[\widehat{\phi}_\lambda(2s)\right]=\Re\left[{\lambda \over \lambda -2is}\right]={ \lambda^2 \over \lambda^2+4s^2}$$
and
$$\left|\frac{\widehat{\phi}_\lambda(s)}{2-\widehat{\phi}_\lambda(s)}\right|^2=\left|{\lambda \over \lambda -2is}\right|^2={ \lambda^2 \over \lambda^2+4s^2}.$$
Hence, Part (b) follows from Prop. \ref{prop:DSS}.

\medskip
\noindent
(c) Observe that $\rho^*=1/2$ and $\lambda^*_d=\lambda^*_u$ if and only if $\rho\,\lambda_d = (1-\rho)\,\lambda_u$. 

\end{proof}

We now extend Def.~\ref{def:cbinary} to non-critical Galton-Watson trees.
\begin{Def} [{{\bf Exponential binary Galton-Watson tree, \cite{Pitman}}}]
\label{def:binary}
We say that a random {\it planted embedded binary tree} 
$T\in\BL^{|}$ is an exponential binary Galton-Watson tree  and write $T\stackrel{d}{\sim}{\sf GW}(\lambda',\lambda)$,
for $0\le\lambda'<\lambda$, if  
\begin{itemize}
\item[(i)] \textsc{shape}($T$) is a binary Galton-Watson tree $\mathcal{GW}(q_0,q_2)$ with 
\[q_0=\frac{\lambda+\lambda'}{2\lambda},\quad
q_2 = \frac{\lambda-\lambda'}{2\lambda};\]
\item[(ii)] the orientation for every pair of siblings in $T$ is random and symmetric; and
\item[(iii)] conditioned on a given \textsc{shape}($T$), the edges of $T$ are sampled as independent 
exponential random variables with parameter $\lambda$, i.e., with density~\eqref{exp}.
\end{itemize}
\end{Def}

\noindent In particular, we observe that
${\sf GW}(\lambda)={\sf GW}(0,\lambda).$
A connection between exponential random walks and exponential Galton-Watson trees  
is provided by the following well known result.

\begin{thm}{\rm \cite[Lemma 7.3]{Pitman},\cite{LeGall93,NP}}
\label{Pit7_3}
Consider a random excursion $Y_t$ in $\cE^{\rm ex}$.
The level set tree $\textsc{level}(Y_t)$ 
is an exponential binary Galton-Watson tree ${\sf GW}(\lambda',\lambda)$ 
if and only if the alternating rises and falls of $Y_t$, excluding the last fall, are 
distributed as independent exponential random variables with parameters ${\lambda+\lambda'  \over 2}$
and ${\lambda-\lambda'  \over 2}$, respectively, for some $0\le \lambda' < \lambda$.

Equivalently, for a random excursion $Y_t$ of a homogeneous random walk in $\cE^{\rm ex}$,
the level set tree $\textsc{level}(Y_t)$ is an exponential binary Galton-Watson tree ${\sf GW}(\lambda',\lambda)$ if
and only if $Y_t$, as an element of $\cE^{\rm ex}$,  corresponds to an excursion of an exponential walk with 
parameters $\{\rho,\lambda_u,\lambda_p\}$
satisfying
$(1-\rho)\lambda_u = {\lambda+\lambda' \over 2}$ and
$\rho\lambda_d = {\lambda-\lambda' \over 2}.$
\end{thm}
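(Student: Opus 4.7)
The plan is to use the reciprocity between Harris paths and level set trees (Prop.~\ref{prop:reciprocityHP}) to reduce both formulations of the theorem to one distributional statement: that the Harris path $H_T$ of a random $T\stackrel{d}{\sim}{\sf GW}(\lambda',\lambda)$ coincides in law (as an element of $\cE^{\rm ex}$) with the first positive excursion of an exponential random walk whose parameters $\{\rho,\lambda_u,\lambda_d\}$ satisfy $(1-\rho)\lambda_u=(\lambda+\lambda')/2$ and $\rho\lambda_d=(\lambda-\lambda')/2$. Writing $a=(\lambda+\lambda')/2$ and $b=(\lambda-\lambda')/2$ (so $a+b=\lambda$), such an excursion viewed in $\cE^{\rm ex}$ has alternating rises and non-last falls obtained by aggregating consecutive same-direction walk jumps, which are iid with marginals ${\sf Exp}(a)$ and ${\sf Exp}(b)$ respectively, while the last fall is truncated to return exactly to zero. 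This makes the two formulations of the theorem equivalent to the single distributional matching just stated.

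I will argue by induction on the recursive Galton--Watson construction of Rem.~\ref{rem:GWchar} (adapted to general $(q_0,q_2)=(a/\lambda,b/\lambda)$). In the base case $T$ is a single edge---stem plus leaf, probability $q_0$---and $H_T$ is the $\Lambda$-shape with rise and fall both equal to $\ell\sim{\sf Exp}(\lambda)$. On the walk side, a direct computation with independent exponentials gives ${\sf P}(R_1\le F_1)=a/(a+b)=q_0$, and by exponential memorylessness $R_1\mid R_1\le F_1\sim{\sf Exp}(a+b)={\sf Exp}(\lambda)$, with the truncated last fall equal to $R_1$; this matches $H_T$ exactly in distribution.

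For the inductive step, $T$ has stem $\ell\sim{\sf Exp}(\lambda)$ ending at an internal vertex with independent subtrees $T_a,T_b\sim{\sf GW}(\lambda',\lambda)$ (probability $q_2$), and $H_T$ decomposes as the concatenation $(\uparrow\!\ell)\oplus H_{T_a}\oplus H_{T_b}\oplus(\downarrow\!\ell)$ with the two sub-paths shifted vertically by $\ell$. On the walk side, one conditions on $R_1>F_1$ and analyzes the excursion about its global positive-interior minimum $m:=\min_{1\le i\le n-1}S_i$, where $S_i$ is the walk height after the $i$-th rise-fall pair. Applying the strong Markov property at the time $m$ is attained splits the excursion into two sub-paths above level $m$, to be identified with $H_{T_a}$ and $H_{T_b}$ via the inductive hypothesis.

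The main obstacle will be the two matching claims this identification requires: (i) the global interior minimum has distribution $m\sim{\sf Exp}(a+b)={\sf Exp}(\lambda)$, matching the stem length $\ell$ (rather than the naive ${\sf Exp}(a)$ that would come from $S_1$ alone); and (ii) the two sub-paths above $m$ are distributed as iid positive excursions of the same exponential walk. Both claims rest on exponential memorylessness combined with the implicit conditioning imposed by the excursion returning exactly to zero, and can be derived via a Vervaat-type cyclic-shift argument that places the global minimum at an endpoint of the excursion. Once these two facts are in hand, invoking the inductive hypothesis on the two sub-excursions closes the induction and completes the proof.
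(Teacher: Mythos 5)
The paper does not actually prove Theorem~\ref{Pit7_3}; it quotes it from \cite[Lemma 7.3]{Pitman} and \cite{LeGall93,NP}, so there is no internal proof to compare against, and I am judging your argument on its own merits. Your reduction of the two formulations to a single distributional identity (via Prop.~\ref{prop:reciprocityHP} and the fact that a ${\sf Geom}_1$ sum of i.i.d.\ exponentials is exponential) is correct, and your base case is fine. Your claim (i) is also true and easy: for $x>0$, the event $\{m>x,\,n\ge2\}$ coincides with the event that the walk of interior local minima first drops below level $x$ at the same step at which it drops below $0$; by memorylessness of the ${\sf Exp}(b)$ falls the overshoot below $x$ is ${\sf Exp}(b)$ independent of the past, so ${\sf P}(m>x\mid n\ge 2)=e^{-ax}e^{-bx}$, i.e.\ $m\sim{\sf Exp}(a+b)={\sf Exp}(\lambda)$. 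So far so good.

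The genuine gap is claim (ii), which is the entire content of the theorem and which you assert rather than prove. You need a path decomposition at the interior global minimum showing that, conditionally on $n\ge 2$, the two sub-excursions above level $m$ are independent of each other and of $m$, and that each has the \emph{unconditioned} excursion law. This is a Williams-type splitting, not a Vervaat transform: Vervaat's cyclic shift relates a bridge to an excursion and does not by itself deliver the independence of the two halves, nor does it explain why the first rise of the left half, $R_1-m$, is again ${\sf Exp}(a)$ after conditioning on the location of the argmin. Moreover, the independence in (ii) is genuinely special to exponential increments --- the paper's Lemma~\ref{lem:shapeVSexcessvalue} and Remark~\ref{rem:left_right_subtrees} show that for a general symmetric kernel the two principal subtrees are coupled through the comparison of the left and right excess values, and decouple only in the exponential case --- so any proof of (ii) must use exponentiality in an essential way and cannot follow from a purely combinatorial rearrangement. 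The tool that actually works is the measure-preserving shift $\Psi:\mathcal{X}_{LR}\to\mathcal{X}_L\cup\mathcal{X}_R$ constructed in the proof of Theorem~\ref{thm:excursion-tree-main}, combined with the memorylessness of the excess values: adapting that bijection to the asymmetric rates $a,b$ and invoking Lemma~\ref{lem:shapeVSexcessvalue} yields exactly your claim (ii). Two smaller points: the induction should be run on the number of leaves (equivalently, the number of rises), or else you must separately argue uniqueness of the fixed point of the recursive distributional equation of Remark~\ref{rem:GWchar}; and you should verify that the last fall of the \emph{left} sub-excursion is the correctly ``truncated'' one (it is, since $F_k$ terminates exactly at level $m$ by definition of the argmin), as this is needed for the left half to have the excursion law rather than a conditioned variant of it.
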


\noindent We emphasize the following direct consequence of Thms.~\ref{eH}(a) and \ref{Pit7_3}.
\begin{cor}
\label{cor:GW}
Suppose $T \stackrel{d}{\sim} {\sf GW}(\gamma)$ is an exponential critical binary Galton-Watson tree. 
Then, the following statements hold:
\begin{description}
\item[(a)] The pruned exponential critical binary Galton-Watson tree is an exponential critical binary Galton-Watson tree:
$$\Big(\cR^{k}(T) ~\big|~ \cR^{k}(T) \not= \phi \Big) \stackrel{d}{\sim} {\sf GW}\left(2^{-k}\gamma\right) 
\text{ for any }k \in \mathbb{N}.$$
\item[(b)] The lengths of branches of Horton-Strahler order $j\ge 1$ in $T$ (see Def. \ref{def:HS}) has exponential distribution
with parameter $2^{1-j}\,\gamma$. 
The lengths of branches (of all orders) are independent.
\end{description}
\end{cor}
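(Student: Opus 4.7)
\textbf{Plan for Corollary~\ref{cor:GW}.} My approach is to leverage the correspondence between ${\sf GW}(\gamma)$ and excursions of symmetric exponential random walks, and exploit the self-similarity of such walks under the local-minima operation. The statement is advertised by the authors as ``a direct consequence of Thms.~\ref{eH}(a) and \ref{Pit7_3},'' which makes the route via Section~\ref{sec:erw} natural.

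\textbf{Part (a).} First I would realize a tree $T\stackrel{d}{\sim}{\sf GW}(\gamma)$ as the level set tree of a positive excursion $Y_t$ of an exponential random walk $\{X_k\}$. By Theorem~\ref{Pit7_3} with $\lambda'=0$ and $\lambda=\gamma$, the parameter matching $(1-\rho)\lambda_u=\gamma/2$ and $\rho\lambda_d=\gamma/2$ is satisfied by the symmetric choice $\rho=1/2$, $\lambda_u=\lambda_d=\gamma$; so $\{X_k\}$ is symmetric exponential. Next, Proposition~\ref{ts_prune} identifies $\cR^{k}(T)$ with $\textsc{level}\bigl(Y^{(k)}_t\bigr)$, where $Y^{(k)}_t$ is the $k$-fold local-minima interpolant of $Y_t$; conditioning on $\cR^{k}(T)\neq\phi$ is exactly the condition that $Y^{(k)}_t$ is a non-trivial excursion (equivalently, that the original excursion has Horton-Strahler order strictly greater than $k$). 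Then I would iterate Theorem~\ref{eH}(a): starting from $(\rho,\lambda_u,\lambda_d)=(1/2,\gamma,\gamma)$, the update \eqref{iteration} preserves symmetry and halves both rates, so after $k$ iterations the local-minima chain $\{X^{(k)}_j\}$ is a symmetric exponential walk with parameters $(1/2,2^{-k}\gamma,2^{-k}\gamma)$. Finally, applying Theorem~\ref{Pit7_3} in the opposite direction to a positive excursion of $\{X^{(k)}_j\}$ identifies its level set tree as ${\sf GW}(2^{-k}\gamma)$, yielding part~(a).

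\textbf{Part (b).} For the distributional claim about branches of order $j$, I would apply part~(a) with $k=j-1$: conditional on $\cR^{j-1}(T)\neq\phi$, the pruned tree is ${\sf GW}(2^{1-j}\gamma)$, whose edges are by Definition~\ref{def:cbinary} i.i.d.\ exponential with rate $2^{1-j}\gamma$. The leaves of $\cR^{j-1}(T)$ are in bijection with the branches of order $j$ in $T$, and since the series reduction in each pruning step adds edge lengths, the leaf edge length in $\cR^{j-1}(T)$ equals the total branch length of the corresponding order-$j$ branch in $T$. This gives the exponential distribution with parameter $2^{1-j}\gamma$. For the mutual independence of branch lengths across branches of all (possibly distinct) orders, I would invoke the HBP representation: Theorem~\ref{HBPmain3} identifies ${\sf GW}(\gamma)$ with the critical Tokunaga process $S^{\rm Tok}(t;2,\gamma)$, and Proposition~\ref{HBP:branch}(5) applied to this HBP asserts precisely that the branch length of each order-$j$ branch is exponential with rate $\lambda_j=\gamma\cdot 2^{1-j}$ and is independent of every other branch length.

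\textbf{Main obstacle.} The exponential-walk route gives part~(a) and the marginal distribution of each order-$j$ branch length cleanly, but the full joint independence of branch lengths across different Horton-Strahler orders in part~(b) is more delicate: a given branch-length partition of the edges of $T$ is itself random, so naive conditioning on $\textsc{shape}(T)$ only yields conditional independence. The cleanest route around this is the HBP machinery of Section~\ref{HBP}, where the Poisson side-branching construction makes independence across all branches manifest; identifying ${\sf GW}(\gamma)$ with a critical Tokunaga process via Theorem~\ref{HBPmain3} lets us import that property directly. A secondary care point is bookkeeping the conditioning: pruning may annihilate the tree, and the symmetric exponential walk must be restricted to excursions of sufficient hierarchical depth, but in both part~(a) and part~(b) this matches precisely the stated conditioning $\cR^{k}(T)\neq\phi$.
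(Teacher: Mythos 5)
Your part (a) is exactly the argument the paper intends: the corollary is stated there as ``a direct consequence of Thms.~\ref{eH}(a) and \ref{Pit7_3},'' and your chain (realize $T$ as the level set tree of a symmetric exponential excursion via Thm.~\ref{Pit7_3}, identify $\cR^k$ with the $k$-fold local-minima map via Prop.~\ref{ts_prune}, iterate the parameter update \eqref{iteration} to get $(1/2,\,2^{-k}\gamma,\,2^{-k}\gamma)$, and read the result back through Thm.~\ref{Pit7_3}) is the intended proof. The marginal claim in part (b) via $k=j-1$ and the leaf-edge/order-$j$-branch bijection is also correct.

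The problem is the independence claim in part (b). You resolve it by invoking Theorem~\ref{HBPmain3} together with Prop.~\ref{HBP:branch}(5), but the paper's proof of Theorem~\ref{HBPmain3} explicitly cites Corollary~\ref{cor:GW} twice (``We will establish later in Corollary~\ref{cor:GW} that the length of every branch of order $j$ in $T$ is exponentially distributed\dots Furthermore, by Corollary~\ref{cor:GW}, conditioned on $\cR^i(T)\ne\phi$\dots''). As the paper is organized, your argument for independence is therefore circular: you would be using a theorem whose only given proof rests on the very corollary you are proving. The fix does not require the HBP machinery. Condition on $\textsc{shape}(T)$: the partition of the edges of $T$ into branches is a deterministic function of the combinatorial shape, distinct branches are edge-disjoint, and by Definition~\ref{def:cbinary} the edge lengths are i.i.d.\ ${\sf Exp}(\gamma)$ given the shape, so the branch lengths are conditionally independent, the length of a branch containing $m+1$ edges being ${\sf Gamma}(m+1,\gamma)$. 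The Markov branching structure of $\mathcal{GW}(\tfrac12,\tfrac12)$ (Remark~\ref{rem:GWchar}, Prop.~\ref{prop:BWW}) makes the side-branch counts $m(b)$ independent across branches with $m(b)\stackrel{d}{\sim}{\sf Geom}_0(2^{1-j})$ for a branch of order $j$; a ${\sf Geom}_0(p)+1$ sum of i.i.d.\ ${\sf Exp}(\gamma)$ variables is ${\sf Exp}(p\gamma)$, which gives both the rate $2^{1-j}\gamma$ and the unconditional independence in one stroke, with no appeal to Section~\ref{HBP}.
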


\begin{Rem}
[{{\bf A link between Thm.~\ref{eH} and Thm.~\ref{thm:BWW2}.}}]
Consider an excursion of an exponential random walk $X_t$  with parameters 
$\{\rho,\lambda_u,\lambda_d\}$.
The geometric stability of the exponential distribution implies that the monotone rises and falls of $X_t$
are exponentially distributed with parameters $(1-\rho) \,\lambda_u$ and $\rho \,\lambda_d$, respectively.
Thus, Thm.~\ref{Pit7_3} implies that 
$\textsc{shape}\left(\textsc{level}(X_t)\right)$ is
distributed as a binary Galton-Watson tree $\mathcal{GW}(q_0,q_2)$
with 
\[q_2 = \frac{\rho\,\lambda_d}{(1-\rho) \,\lambda_u+\rho \,\lambda_d}=1-q_0.\]
The first pruning $X_t^{(1)}$ (see Sect. \ref{sec:excursions}), according to \eqref{iteration},
is an exponential random walk with parameters
\[\left\{\rho^*=\frac{\rho \,\lambda_d}{(1-\rho) \, \lambda_u+\rho\,\lambda_d},
~\lambda_u^*=(1-\rho) \, \lambda_u,  ~\lambda_d^*=\rho \,\lambda_d\right\}.\]
Its upward and downward increments are exponentially 
distributed with parameters, respectively,
\[(1-\rho^*)\lambda_u^*=\frac{[(1-\rho) \,\lambda_u]^2}{(1-\rho) \,\lambda_u+\rho\,\lambda_d} \quad {\rm and~} \quad \rho^* \lambda_d^*=\frac{[\rho \,\lambda_d]^2}{(1-\rho) \,\lambda_u+\rho\,\lambda_d}.\]
Accordingly, the level set tree for a positive 
excursion $X_t^{(1)}$ is a binary Galton-Watson tree $\mathcal{GW}(q_0^{(1)},q_2^{(1)})$
with 
\[q_2^{(1)}=\frac{[\rho\,\lambda_d]^2}{[(1-\rho) \,\lambda_u]^2+[\rho \,\lambda_d]^2}=1-q_0^{(1)}.\]

\noindent
Continuing this way, we find that $n$-th pruning $X_t^{(n)}$ of 
$X_t$ is an exponential random walk such that the level set tree of 
its positive excursion has binary Galton-Watson distribution $\mathcal{GW}(q_0^{(n)},q_2^{(n)})$ with
\be\label{eq:iterateBFS}q_2^{(n)}=\frac{\left[q_2^{(n-1)}\right]^2}
{\left[q_0^{(n-1)}\right]^2+\left[q_2^{(n-1)}\right]^2}
=\frac{[\rho\,\lambda_d]^{2^n}}
{[(1-\rho) \,\lambda_u]^{2^n}+[\rho\,\lambda_d]^{2^n}}=1-q_0^{(n)}
\ee
The first equality in \eqref{eq:iterateBFS} defines the same iterative system 
as \eqref{eq:iterateDFS} in Thm.~\ref{thm:BWW2} 
of Burd et al. that describes iterative Horton pruning of Galton-Watson trees.
Another noteworthy relation connecting the exponential random walk $X_t^{(n)}$ with parameters $\{\rho^{(n)},\lambda^{(n)}_u,\lambda^{(n)}_d\}$
and the Galton-Watson tree $\mathcal{GW}(q_0^{(n-1)},q_2^{(n-1)})$ is given by
$$\rho^{(n)} = q_2^{(n-1)} \text{ for any } n\ge 1 ~(\text{where }q_2^{(0)}\equiv q_2).$$
\end{Rem}

\subsection{Geometric random walks and critical non-binary Galton-Watson trees}

A recent study by Barbosa et al. \cite{BCJKZ19} examines the self-similar properties of the level-set trees corresponding to the excursions of the so-called {\it geometric random walk} on $\mathbb{Z}$, defined below (Def. \ref{def:grw}). 
The results in \cite{BCJKZ19} give a discrete-space version of the results discussed in Sect.~\ref{sec:erw}.   

For the given probabilities $\{p_1, p_2, r_1, r_2\}$ such that $p_1 + p_2 \leq 1$, consider a discrete-time random walk on $\mathbb{Z}$, where at each time step, $p_1$ is the probability of an upward jump, $p_2$ is the probability of a downward jump, and $1-p_1-p_2$ is the probability of remaining at the same location. 
Conditioned on jumping upward, the increment size is a ${\sf Geom}_1(r_1)$-distributed random variable, while conditioned on jumping downward, the increment size is a ${\sf Geom}_1(r_2)$-distributed random variable. 
Here is a formal definition. \index{random walk!geometric}

\begin{Def}[{\bf Geometric random walk}]\label{def:grw}
A geometric random walk $X_t$ with probability parameters $$\{p_1, p_2, r_1, r_2\}$$ is a discrete time space-homogeneous random walk on $\mathbb{Z}$ with transition probabilities $p(x,y)=p(y-x)$ such that its jump kernel $p(x)$ is a double-sided geometric probability mass function (discrete Laplace distribution) that can be expressed as 
\begin{equation}\label{eqn:kernel}
p(x) = p_1g_1(x) + (1-p_1-p_2)\delta_0(x) + p_2g_2(-x),
\end{equation}
where $\delta_0(x)$ denotes the Kronecker delta function at $0$, and $g_i(x)$ ($i = 1,2$) is the probability mass function of a ${\sf Geom}_1(r_i)$-distributed random variable.
The distribution for a geometric random walk is denoted by ${\rm GRW}(p_1, p_2, r_1, r_2)$.
\end{Def}
\noindent
\begin{ex}
The most celebrated example of a geometric random walk is the simple random walk on $\mathbb{Z}$ with
distribution ${\rm GRW}\big({1\over 2}, {1\over 2}, 1, 1\big)$.
\end{ex}
By \eqref{eqn:kernel}, the characteristic function for the increments in a geometric walk is given by
\begin{equation}\label{eqn:kernelCh}
\widehat{p}(s) = {p_1r_1e^{is} \over 1-(1-r_1)e^{is}}+{p_2r_2e^{-is} \over 1-(1-r_2)e^{-is}} + (1-p_1-p_2).
\end{equation}
Equation \eqref{eqn:kernelCh} leads to the derivation of the following invariance result,
analogous to Thm. \ref{eH}(a) in a discrete space setting.
\begin{thm}[{{\bf \cite{BCJKZ19}}}]\label{thm:mainGeometricRW}
Suppose $X_t$ is a geometric random walk ${\rm GRW}(p_1, p_2, r_1, r_2)$, then the time series $X^{(1)}_t$ of its consecutive local minima (including flat plateaus)  is also a geometric random walk ${\rm GRW}\big(p^{(1)}_1, p^{(1)}_2, r^{(1)}_1, r^{(1)}_2\big)$ with probability parameters
\begin{align*}
&r_1^{(1)} = \frac{p_2r_1}{p_1 + p_2},
& p_1^{(1)} = \frac{r_2^{(1)}(1-r_1^{(1)})}{1-(1-r_1^{(1)})(1-r_2^{(1)})},\\
& r_2^{(1)} = \frac{p_1r_2}{p_1 + p_2}, 
& p_2^{(1)} = \frac{r_1^{(1)}(1-r_2^{(1)})}{1-(1-r_1^{(1)})(1-r_2^{(1)})}.
\end{align*}
\end{thm}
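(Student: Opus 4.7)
The plan is to adapt the proof of Theorem~\ref{eH}(a) and Lemma~\ref{inv_prune} to the discrete lattice setting, replacing continuous densities with probability mass functions and characteristic functions with probability generating functions (PGFs). That $X^{(1)}_t$ is a homogeneous random walk will follow from the strong Markov property of $X_t$; the content of the theorem is the explicit identification of its four parameters.

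First, I would argue that between any two consecutive local minima the trajectory of $X_t$ decomposes as an ascent of $\xi_+$ strictly upward jumps (possibly separated by flat plateaus) followed by a descent of $\xi_-$ strictly downward jumps (again possibly separated by plateaus). Conditional on a given step being non-flat, by \eqref{eqn:kernel} it is upward with probability $p_1/(p_1+p_2)$ and downward with probability $p_2/(p_1+p_2)$, so $\xi_+ \sim {\sf Geom}_1\bigl(p_2/(p_1+p_2)\bigr)$ and $\xi_- \sim {\sf Geom}_1\bigl(p_1/(p_1+p_2)\bigr)$, independent of each other and of the jump sizes. Flat steps do not alter the value of the walk at extrema and, by space-homogeneity of the kernel, do not disturb independence of successive increments of $X^{(1)}_t$.

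Next, I would write the signed increment $d := X^{(1)}_{t+1} - X^{(1)}_t$ as $d = G_+ - G_-$ with independent $G_+ = \sum_{i=1}^{\xi_+} U_i$ and $G_- = \sum_{j=1}^{\xi_-} D_j$, where $U_i \sim {\sf Geom}_1(r_1)$ and $D_j \sim {\sf Geom}_1(r_2)$ are i.i.d. The key algebraic step is the PGF identity
\begin{equation*}
G_{\xi_+}\bigl(G_U(z)\bigr) \;=\; \frac{a\,r_1 z}{1-(1-a\,r_1)z}, \qquad a := \frac{p_2}{p_1+p_2},
\end{equation*}
obtained by composing $G_U(z) = r_1 z/(1-(1-r_1)z)$ with $G_{\xi_+}(w) = aw/(1-(1-a)w)$ and simplifying. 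Hence $G_+ \sim {\sf Geom}_1(r_1^{(1)})$ with $r_1^{(1)} = p_2 r_1/(p_1+p_2)$, and symmetrically $G_- \sim {\sf Geom}_1(r_2^{(1)})$ with $r_2^{(1)} = p_1 r_2/(p_1+p_2)$.

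Finally, I would compute the distribution of $d = G_+ - G_-$ by direct convolution: for $m\ge 1$, summing over the common smaller value yields
\begin{equation*}
{\sf P}(d = m) \;=\; \frac{r_1^{(1)} r_2^{(1)}}{1-(1-r_1^{(1)})(1-r_2^{(1)})}\,(1-r_1^{(1)})^m,
\end{equation*}
with the symmetric formula for $m\le -1$ and the residual mass at $0$. Summing gives ${\sf P}(d>0) = p_1^{(1)}$ and ${\sf P}(d<0) = p_2^{(1)}$ as claimed, while the geometric shape of each tail shows that, conditional on $d>0$ (resp. $d<0$), $|d|$ is ${\sf Geom}_1(r_1^{(1)})$ (resp. ${\sf Geom}_1(r_2^{(1)})$). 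Comparing with \eqref{eqn:kernel} completes the identification. The main obstacle is rigorously handling the "including flat plateaus" clause: specifying what $X^{(1)}_t$ is at times when $X$ sits on a flat local minimum and verifying that the resulting increment sequence is stationary and independent. The memoryless property of the geometric distribution, combined with the Markov property of $X$, makes this essentially routine, but the bookkeeping — so that flat runs internal to an ascent or descent are absorbed into the $U_i,D_j$ structure, while flat runs between consecutive minima produce precisely the ${\sf P}(d=0)=1-p_1^{(1)}-p_2^{(1)}$ mass — is the step requiring the most care.
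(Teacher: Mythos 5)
Your proposal is correct and follows essentially the route the paper indicates for this cited result: decompose the increment between consecutive local minima as a geometric sum of geometric jumps, so that geometric stability yields $r_1^{(1)},r_2^{(1)}$, and then read off $p_1^{(1)},p_2^{(1)}$ from the two-sided convolution — your PGF computation is exactly the discrete analogue of the characteristic-function argument via \eqref{eqn:kernelCh} that mirrors Thm.~\ref{eH}(a) and Lemma~\ref{inv_prune}. One small correction to your closing remark: the atom ${\sf P}(d=0)=1-p_1^{(1)}-p_2^{(1)}$ arises from the event that the total ascent equals the total descent ($G_+=G_-$), not from flat runs of the original walk, which only affect timing and are invisible in the value sequence $X^{(1)}_t$.
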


\noindent
If $r_1=r_2=r$ and $p_1=p_2=p$, the geometric random walk ${\rm SGRW}(p,r)\equiv {\rm GRW}(p, p, r, r)$ is called {\it symmetric geometric random walk} (SGRW).
In this case, Thm.~\ref{thm:mainGeometricRW} can be reinterpreted as the following statement, 
analogous to Thm.~\ref{eH}(b) adapted to the discrete space $\mathbb{Z}$.
\begin{cor}[{{\bf \cite{BCJKZ19}}}]\label{cor:min_symGW}
Suppose $X_t\stackrel{d}{\sim}{\rm SGRW}(p, r)$ is a symmetric geometric random walk on $\mathbb{Z}$. 
Then, the time series $X^{(1)}_t$ of its consecutive local minima is also a symmetric geometric random walk ${\rm SGRW}\big(p^{(1)}, r^{(1)}\big)$ with probability parameters
$$p^{(1)} = \frac{1-r^{(1)}}{2-r^{(1)}} \quad \text{and} \quad r^{(1)} = \frac{r}{2}.$$
\end{cor}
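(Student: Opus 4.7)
The plan is to obtain Corollary~\ref{cor:min_symGW} as a direct specialization of Theorem~\ref{thm:mainGeometricRW} to the symmetric case, with the substitutions $p_1 = p_2 = p$ and $r_1 = r_2 = r$. Since the corollary's claim is a purely algebraic consequence of the formulas stated there, the proof amounts to (a) observing that symmetry is preserved under the passage to local minima, so that the resulting walk is indeed SGRW, and (b) carrying out the resulting simplifications.

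First I would verify preservation of symmetry. Plugging $p_1=p_2=p$ and $r_1=r_2=r$ into the formulas of Theorem~\ref{thm:mainGeometricRW} gives
\[
r_1^{(1)} \;=\; \frac{p_2 r_1}{p_1+p_2} \;=\; \frac{pr}{2p} \;=\; \frac{r}{2}, \qquad r_2^{(1)} \;=\; \frac{p_1 r_2}{p_1+p_2} \;=\; \frac{r}{2},
\]
so $r_1^{(1)} = r_2^{(1)} =: r^{(1)}$. Since the denominator $1-(1-r_1^{(1)})(1-r_2^{(1)})$ in the formulas for $p_1^{(1)}$ and $p_2^{(1)}$ is now symmetric in the roles of the two parameters and the numerators $r_2^{(1)}(1-r_1^{(1)})$ and $r_1^{(1)}(1-r_2^{(1)})$ coincide, we immediately obtain $p_1^{(1)} = p_2^{(1)} =: p^{(1)}$. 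This shows that the minima walk belongs to the SGRW family with parameters $(p^{(1)}, r^{(1)})$.

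Next I would compute $p^{(1)}$ explicitly. With $r^{(1)} = r/2$,
\[
p^{(1)} \;=\; \frac{r^{(1)}\bigl(1-r^{(1)}\bigr)}{1-\bigl(1-r^{(1)}\bigr)^2} \;=\; \frac{r^{(1)}\bigl(1-r^{(1)}\bigr)}{r^{(1)}\bigl(2-r^{(1)}\bigr)} \;=\; \frac{1-r^{(1)}}{2-r^{(1)}},
\]
which is the claimed expression. Combined with the identity $r^{(1)} = r/2$ derived above, this completes the derivation.

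There is really no obstacle here beyond clean bookkeeping: the content of Corollary~\ref{cor:min_symGW} is entirely contained in Theorem~\ref{thm:mainGeometricRW}, and the only thing to check is that the SGRW class is closed under the minimum-extraction operation (which is immediate from the symmetry of the input parameters) together with the one-line algebraic simplification above. If desired, one could also give a self-contained derivation bypassing Theorem~\ref{thm:mainGeometricRW} by using the characteristic function \eqref{eqn:kernelCh} with $p_1 = p_2 = p$, $r_1 = r_2 = r$, decomposing each excursion between consecutive local minima into a ${\sf Geom}_1(p/(2p)) = {\sf Geom}_1(1/2)$-distributed number of upward and downward jumps (each of geometric size), and reading off the resulting kernel — but this is just a repetition of the proof of Theorem~\ref{thm:mainGeometricRW} in the symmetric case and adds nothing new.
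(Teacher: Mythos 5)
Your proposal is correct and is exactly how the paper obtains this result: the corollary is presented as a direct specialization of Theorem~\ref{thm:mainGeometricRW} to $p_1=p_2=p$, $r_1=r_2=r$, and your algebraic simplification $1-(1-r^{(1)})^2=r^{(1)}(2-r^{(1)})$ matches the intended one-line computation.
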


\noindent
Next, consider the case of a geometric random walk $X_t$ with mean zero increments,
$$E[X_{t+1}-X_t]={p_1 \over r_1} -{p_2 \over r_2} = 0.$$
In this case $p_1r_2 = p_2r_1$, and Thm.~\ref{thm:mainGeometricRW} and Cor.~\ref{cor:min_symGW} 
imply the following result.

\begin{cor}[{{\bf \cite{BCJKZ19}}}]\label{cor:mean_zeroGW}
Suppose $X_t\stackrel{d}{\sim}{\rm GRW}(p_1, p_2, r_1, r_2)$ is a mean zero geometric random walk, 
i.e. $p_1r_2 = p_2r_1$. 
Then, the time series $X^{(1)}_t$ of its consecutive local minima is a symmetric geometric random walk ${\rm SGRW}\big(p^{(1)}, r^{(1)}\big)$ with probability parameters
$$p^{(1)} = \frac{1-r^{(1)}}{2-r^{(1)}} \quad \text{and} \quad r^{(1)} = \frac{r}{2},$$
where $r=\frac{2p_1r_2}{p_1 + p_2}=\frac{2p_2r_1}{p_1 + p_2}$.

Furthermore, let $X^{(k+1)}_t$ for $k =1,2,\hdots$ be the time series of the consecutive local minima of $X^{(k)}_t$. Then, $X^{(k)}_t$ is also a symmetric geometric random walk ${\rm SGRW}\big(p^{(k)}, r^{(k)}\big)$ with probability parameters
	\be\label{eq:gen_parGW}
		p^{(k)} = \frac{1-r^{(k)}}{2-r^{(k)}} \quad \text{and} \quad r^{(k)} = \frac{r}{2^k}.
	\ee
\end{cor}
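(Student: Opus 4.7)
The plan is to deduce both parts of the corollary as straightforward consequences of Theorem~\ref{thm:mainGeometricRW} (the general recursion for the probability parameters under taking local minima) and Corollary~\ref{cor:min_symGW} (which handles the symmetric case). The mean-zero condition is used only to trigger the symmetry after one application of the local minima operator; once symmetry is achieved, one can simply iterate.

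\textbf{Step 1: First application using the mean-zero condition.} I would start by invoking Theorem~\ref{thm:mainGeometricRW} to write
\[
r_1^{(1)} = \frac{p_2 r_1}{p_1+p_2}, \qquad r_2^{(1)} = \frac{p_1 r_2}{p_1+p_2}.
\]
The mean-zero assumption $p_1 r_2 = p_2 r_1$ gives $r_1^{(1)} = r_2^{(1)}$, so I denote this common value by $r^{(1)}$. Directly computing, $r^{(1)} = r/2$ where $r = \frac{2p_1 r_2}{p_1+p_2} = \frac{2p_2 r_1}{p_1+p_2}$. Plugging the equality $r_1^{(1)} = r_2^{(1)} = r^{(1)}$ into the formulas for $p_1^{(1)}$ and $p_2^{(1)}$ from Theorem~\ref{thm:mainGeometricRW}, I get
\[
p_1^{(1)} = p_2^{(1)} = \frac{r^{(1)}(1-r^{(1)})}{1-(1-r^{(1)})^2} = \frac{r^{(1)}(1-r^{(1)})}{r^{(1)}(2-r^{(1)})} = \frac{1-r^{(1)}}{2-r^{(1)}},
\]
which I label $p^{(1)}$. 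Hence $X^{(1)}_t \stackrel{d}{\sim} {\rm SGRW}(p^{(1)}, r^{(1)})$, establishing the first assertion.

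\textbf{Step 2: Induction for higher-order local minima.} Since $X^{(1)}_t$ is now a symmetric geometric random walk, I can apply Corollary~\ref{cor:min_symGW} inductively. Assume $X^{(k)}_t \stackrel{d}{\sim} {\rm SGRW}(p^{(k)}, r^{(k)})$ with $r^{(k)} = r/2^k$ and $p^{(k)} = (1-r^{(k)})/(2-r^{(k)})$; this is the base case when $k=1$. Corollary~\ref{cor:min_symGW} asserts that the local minima time series of a symmetric geometric random walk with parameter $r^{(k)}$ is again symmetric geometric, with parameter $r^{(k+1)} = r^{(k)}/2 = r/2^{k+1}$ and $p^{(k+1)} = (1-r^{(k+1)})/(2-r^{(k+1)})$. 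This closes the induction and yields \eqref{eq:gen_parGW}.

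\textbf{On obstacles.} There is no real obstacle here beyond bookkeeping: the statement is essentially an algebraic reduction of the general recursion in Theorem~\ref{thm:mainGeometricRW}. The one substantive observation is that the mean-zero hypothesis $p_1 r_2 = p_2 r_1$ is \emph{precisely} the identity needed to force $r_1^{(1)} = r_2^{(1)}$, and hence to drop $X^{(1)}_t$ into the symmetric class after one pruning step; from there Corollary~\ref{cor:min_symGW} provides a self-contained iteration and no further appeal to the non-symmetric machinery is needed.
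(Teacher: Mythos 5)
Your proof is correct and follows exactly the route the paper intends: the survey states that Corollary~\ref{cor:mean_zeroGW} follows from Theorem~\ref{thm:mainGeometricRW} and Corollary~\ref{cor:min_symGW}, and your Step~1 (using $p_1r_2=p_2r_1$ to force $r_1^{(1)}=r_2^{(1)}$ and simplify $p_1^{(1)}=p_2^{(1)}$ to $(1-r^{(1)})/(2-r^{(1)})$) together with the inductive application of Corollary~\ref{cor:min_symGW} is precisely that deduction. No gaps.
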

For the remainder of this section, let $\{p^{(k)}, r^{(k)}\}$ denote the parameters of the symmetric 
geometric random walk ${\rm SGRW}\big(p^{(k)}, r^{(k)}\big)$, obtained by taking $k$ iterations of 
local minima of $X_t\stackrel{d}{\sim}{\rm GRW}(p_1, p_2, r_1, r_2)$, as in Cor.~\ref{cor:mean_zeroGW}.

\begin{cor}[{{\bf \cite{BCJKZ19}}}]\label{cor:lim_parGW}
Suppose $X_t\stackrel{d}{\sim}{\rm GRW}(p_1, p_2, r_1, r_2)$ is a mean zero geometric random walk, 
i.e. $p_1r_2 = p_2r_1$. 
Then,
$$\lim_{n \to \infty} r^{(n)} = 0 ~\text{ and }~ \lim_{n \to \infty} p^{(n)} = \frac{1}{2}.$$
\end{cor}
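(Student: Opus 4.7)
The plan is to deduce Corollary~\ref{cor:lim_parGW} directly from the explicit recursive formulas obtained in the preceding Corollary~\ref{cor:mean_zeroGW}. Essentially everything substantive has already been established; what remains is a routine pair of limit computations.

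First, I would invoke Corollary~\ref{cor:mean_zeroGW}, which asserts that for a mean-zero geometric random walk $X_t\stackrel{d}{\sim}{\rm GRW}(p_1,p_2,r_1,r_2)$, the $k$-fold iterated local-minima process $X^{(k)}_t$ is a symmetric geometric random walk ${\rm SGRW}(p^{(k)},r^{(k)})$ with parameters given in closed form by
\[
r^{(k)}=\frac{r}{2^k}, \qquad p^{(k)}=\frac{1-r^{(k)}}{2-r^{(k)}},
\]
where $r=\dfrac{2p_1r_2}{p_1+p_2}=\dfrac{2p_2r_1}{p_1+p_2}$ is a fixed constant in $(0,1]$ depending only on the original parameters. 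The mean-zero hypothesis $p_1r_2=p_2r_1$ is what guarantees these two expressions for $r$ agree and that Corollary~\ref{cor:mean_zeroGW} applies.

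Next, since $r$ is a fixed positive real and the denominator $2^k$ diverges, the first limit is immediate:
\[
\lim_{k\to\infty} r^{(k)} \;=\; \lim_{k\to\infty}\frac{r}{2^k} \;=\; 0.
\]
For the second limit, the map $x\mapsto \dfrac{1-x}{2-x}$ is continuous on a neighborhood of $0$ (its denominator is bounded away from zero for $x$ near $0$), so by the continuous mapping property
\[
\lim_{k\to\infty} p^{(k)} \;=\; \lim_{k\to\infty}\frac{1-r^{(k)}}{2-r^{(k)}} \;=\; \frac{1-0}{2-0} \;=\; \frac{1}{2}.
\]

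There is no genuine obstacle here: the only thing to verify carefully is that the hypothesis $p_1r_2=p_2r_1$ of mean-zero increments is precisely what allows one to apply Corollary~\ref{cor:mean_zeroGW} and thereby obtain the closed-form expressions \eqref{eq:gen_parGW}, after which the statement reduces to elementary calculus. One might add a brief remark that the conclusion has an intuitive interpretation: under iterated local-minima pruning, the jump-size distribution spreads out (the geometric parameter $r^{(k)}$ tends to $0$, so the mean jump magnitude grows like $2^k$), while the walk becomes increasingly symmetric in the sense that the probability of a nonzero jump at a given step approaches $1/2$ on each side.
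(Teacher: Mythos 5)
Your proof is correct and is essentially the same as the paper's (implicit) argument: the corollary follows immediately from the closed-form expressions $r^{(k)}=r/2^{k}$ and $p^{(k)}=(1-r^{(k)})/(2-r^{(k)})$ of Corollary~\ref{cor:mean_zeroGW}, followed by two elementary limit computations. Nothing further is needed.
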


\medskip
\noindent
The following is a discrete analogue of Thm.~\ref{Pit7_3}, stated in Sect. \ref{sec:erw}.
\begin{thm}[{{\bf \cite{BCJKZ19}}}]\label{thm:offspringsGW}
Suppose $X_t\stackrel{d}{\sim}{\rm GRW}(p_1, p_2, r_1, r_2)$ is a geometric random walk 
with a nonnegative drift, i.e., $p_1r_2 \leq p_2r_1$.
Let $T^{\sf ex}$ be the level set tree of a positive excursion of $X_t$.
Then, 
\[\textsc{shape}(T^{\sf ex})\stackrel{d}{\sim}\mathcal{GW}\big(\{q_k\}\big) \text{ on }\cT^|\] 
with
$$q_0 = 1-p_1^{(1)}, \quad \text{and } \quad q_k = p_1^{(1)}\big(r_2^{(1)}\big)^{k-2}(1-r_2^{(1)}) \quad \text{ for }k =2,3 \hdots,$$
where $r_1^{(1)}$, $r_2^{(1)}$, $p_1^{(1)}$, and $p_2^{(1)}$ are as in Theorem \ref{thm:mainGeometricRW}
(recall that $q_1\equiv 0$ since we work with reduced trees).
Moreover, if $X_t$ is a mean zero geometric random walk (i.e., $p_1r_2= p_2r_1$), then 
$$q_0 = \frac{1}{2-r^{(1)}}, \quad \text{and } \quad
	    q_k = \frac{\big(r^{(1)}\big)^{k-2}(1-r^{(1)})^2}{2-r^{(1)}} \quad \text{ for }k =2,3 \hdots,$$
where $r_1^{(1)}$ and $r_2^{(1)}$ are as in Corollary \ref{cor:mean_zeroGW}.
\end{thm}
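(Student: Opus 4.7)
\textbf{(Strategy.)} The plan is to verify that $\textsc{shape}(T^{\sf ex})$ satisfies the two characterizing properties of a Galton--Watson tree with offspring law $\{q_k\}$: (i) the first non-root vertex has offspring count distributed as $\{q_k\}$, and (ii) conditional on this count equalling $k$, the $k$ subtrees rooted at that vertex's children are iid copies of $\textsc{shape}(T^{\sf ex})$. Induction on tree depth then yields the stated distributional identity. Both ingredients will be extracted from the strong Markov property of the GRW $X_t$, the spatial homogeneity of its kernel \eqref{eqn:kernel}, and the memoryless property of its geometric jump sizes, together with Thm.~\ref{thm:mainGeometricRW} identifying the local-minima chain $X^{(1)}_t$ as a GRW with parameters $(p_1^{(1)}, p_2^{(1)}, r_1^{(1)}, r_2^{(1)})$.

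\textbf{(Branching decomposition.)} For a positive excursion $E$ of $X_t$, the first non-root vertex of $\textsc{level}(E)$ corresponds either to the unique local maximum of $E$ (the $\Lambda$-shaped case, yielding a leaf and contributing to $q_0$) or to the lowest interior local minimum, whose value I denote $y^* > 0$. In the latter case the super-level set $\{t \in E : X_t > y^*\}$ decomposes into $K \ge 2$ maximal open intervals---the arches---which encode the subexcursions rooted at the children of the first vertex, so $K$ equals the offspring count. By the strong Markov property of $X_t$ applied at each passage time through level $y^*$ and by spatial homogeneity of the kernel, each arch (shifted down by $y^*$) is distributed as a generic positive excursion of $X_t$, and, conditional on $K$, the arches are mutually independent. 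This gives the iid branching property (ii) and reduces the proof to identifying the law of $K$ as $\{q_k\}$.

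\textbf{(Offspring law.)} For $q_0 = P(E \text{ is } \Lambda\text{-shaped})$, I decompose the walk into alternating monotone up- and down-runs (in the reduced walk obtained by deleting the stay-steps). Each up-run height is a ${\sf Geom}_1(p_2/(p_1+p_2))$-many sum of iid ${\sf Geom}_1(r_1)$ jumps, which by geometric stability has distribution ${\sf Geom}_1(r_1^{(1)})$; symmetrically each down-run depth is ${\sf Geom}_1(r_2^{(1)})$. The $\Lambda$-case occurs precisely when the first down-run depth $D_1$ weakly exceeds the first up-run height $U_1$, and a direct summation
\[
P(D_1 \ge U_1) = \sum_{u \ge 1} P(U_1 = u)\,P(D_1 \ge u) = \frac{r_1^{(1)}}{1 - (1-r_1^{(1)})(1-r_2^{(1)})} = 1 - p_1^{(1)}
\]
matches $q_0$. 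For $k \ge 2$, I would analyze the walk sequentially through the lens of $X^{(1)}_t$: each completed arch brings the walk back to level $y^*$, after which it either ascends into another arch above $y^*$ or executes a single down-jump of the local-minima chain that carries it past $0$ and terminates the excursion. The memoryless property of the ${\sf Geom}_1(r_2^{(1)})$ down-jump of $X^{(1)}_t$ renders the conditional continue/terminate probabilities equal to $r_2^{(1)}$ and $1-r_2^{(1)}$ respectively, independent of $y^*$; accumulating over $k-2$ continuations followed by one termination yields $q_k = p_1^{(1)}(r_2^{(1)})^{k-2}(1-r_2^{(1)})$. A routine check confirms $q_0 + \sum_{k\ge 2} q_k = 1$. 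The mean-zero case follows by substituting the simplified parameters $r^{(1)} = r/2$ and $p^{(1)} = (1-r^{(1)})/(2-r^{(1)})$ from Cor.~\ref{cor:mean_zeroGW}.

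\textbf{(Main obstacle.)} The hardest step is making the last argument rigorous---namely, justifying that the continue/terminate probabilities are truly memoryless and independent of the unobserved level $y^*$. The subtlety is that the conditioning ``$y^*$ is the \emph{overall} minimum of $E$'' is nonlocal, so decomposing the walk arch-by-arch naively introduces $y^*$-dependence into the conditional probabilities: the chance that a single downward jump of $X^{(1)}_t$ from level $y^*$ terminates the excursion is $(1-r_2^{(1)})^{y^*-1}$, which is manifestly $y^*$-dependent before the global conditioning. It is the memoryless property of the geometric down-jump distribution that produces the required cancellation when one marginalizes over $y^*$, and I expect this will need to be verified either by a coupling argument at the level of $X^{(1)}_t$ (coupling the conditioned excursion to a free GRW trajectory via the memoryless property of its down-jumps), or by a generating-function computation with the kernel \eqref{eqn:kernelCh} that explicitly sums over all $y^* > 0$ and reads off the resulting marginal law of $K$.
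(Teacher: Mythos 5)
The survey itself gives no proof of this theorem --- it is imported verbatim from \cite{BCJKZ19} --- so there is no in-paper argument to compare against; your plan has to be judged on its own terms. The skeleton is sound and the two central computations are correct: reducing monotone runs to ${\sf Geom}_1(r_1^{(1)})$ and ${\sf Geom}_1(r_2^{(1)})$ heights by geometric stability gives $q_0={\sf P}(D_1\ge U_1)=r_1^{(1)}/\bigl(1-(1-r_1^{(1)})(1-r_2^{(1)})\bigr)=1-p_1^{(1)}$, and the continue/terminate parameter for $k\ge 2$ does come out to $r_2^{(1)}$.

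Two steps need more than you give them. First, in the branching decomposition you assert that each arch above $y^*$, shifted down, is a generic positive excursion ``by the strong Markov property and spatial homogeneity.'' That is not enough: a middle arch is pinned to return to \emph{exactly} $y^*$, and the last arch must undershoot its starting level $y^*$ by at least $y^*$ (to reach $\le 0$), whereas a generic positive excursion is killed at its first passage to $\le 0$ with a free overshoot. The laws of the resulting shapes coincide only because the geometric down-jump satisfies ${\sf P}(\mathrm{jump}=d)=r_2\,{\sf P}(\mathrm{jump}\ge d)$, a likelihood ratio constant in $d$ --- i.e., the memorylessness you invoke only for the offspring law is already indispensable here. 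Second, your ``main obstacle'' is real, but the terminate event in your sequential argument is misidentified: from a visit to the current level $w$ the walk either revisits $w$ or drops strictly below $w$, and dropping below $w$ need not end the excursion --- it may land in $[1,w-1]$, in which case $w$ was never the global minimum and you were counting visits to the wrong level. The clean repair is a running-minimum (record) decomposition of the local-minima chain: from any visit to the current record $w$, the next return to $(-\infty,w]$ lands exactly at $w$ with probability $(1-p_1^{(1)}-p_2^{(1)})+p_1^{(1)}r_2^{(1)}=r_2^{(1)}$ (an immediate tie $U=D$, or an ascent followed by a return whose overshoot is ${\sf Geom}_0(r_2^{(1)})$ by memorylessness), and otherwise drops to a new record with a ${\sf Geom}_1(r_2^{(1)})$ undershoot. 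The per-record visit counts are therefore i.i.d.\ ${\sf Geom}_1(1-r_2^{(1)})$ and independent of the sequence of record levels and undershoots; since $K-1$ is the visit count at the \emph{last} record above zero, its law is ${\sf Geom}_1(1-r_2^{(1)})$ with no marginalization over $y^*$ required. This closes the gap without the coupling or generating-function machinery you anticipate needing.
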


\medskip
\noindent
Observe that, in the setting of Thm.~\ref{thm:offspringsGW}, if we consider a mean zero GRW ($p_1r_2 = p_2r_1$ and, equivalently, $r_1^{(1)}=r_2^{(1)}$) then,
$$\sum_{k} kq_k = \left(\frac{1-r_1^{(1)}}{1-r_2^{(1)}}\right)\left(\frac{r_2^{(1)} + r_2^{(1)}(1-r_2^{(1)})}{r_2^{(1)} + r_1^{(1)}(1-r_2^{(1)})}\right) = 1.$$	
In other words, the level set tree of its positive excursion is distributed as a {\it critical} Galton-Watson tree $\mathcal{GW}\big(\{q_k\}\big)$.
Combining Prop.~\ref{ts_prune} with Thm.~\ref{thm:offspringsGW} we have the following corollary.
\begin{cor}[{{\bf \cite{BCJKZ19}}}]\label{cor:lim_offGW}
Suppose $X_t\stackrel{d}{\sim}{\rm GRW}(p_1, p_2, r_1, r_2)$ is a mean zero geometric random walk, 
i.e. $p_1r_2 = p_2r_1$.
Let $T^{\sf ex}$ be the level set tree of a positive excursion of $X_t$.  
Then, $\textsc{shape}(T^{\sf ex})\stackrel{d}{\sim} \mathcal{GW}\big(\{q_k\}\big)$, where $\mathcal{GW}\big(\{q_k\}\big)$ is a critical Galton-Watson distribution on $\cT^|$.
Moreover, for any $n \geq 1$, the level set tree of a positive excursion of $X_t^{(n)}$ is distributed as
$$\Big(\cR^n\big(T^{\sf ex}\big)\,\Big|  \cR^n\big(T^{\sf ex}\big) \not= \phi \Big) ~\stackrel{d}{\sim} ~\mathcal{GW}\big(\{q_k^{(n)}\}\big)$$ 
with
    \begin{equation}\label{eqn:pruning_qkn}
    	q_0^{(n)} = \frac{1}{2-r^{(n+1)}} ~~\text{ and }~~   q_k^{(n)} = \frac{(r^{(n+1)})^{k-2}(1-r^{(n+1)})^2}{2-r^{(n+1)}} ~\quad (\forall k\geq 2),
    \end{equation}
where $r^{(n)}$ is given by equation \eqref{eq:gen_parGW} of Corollary \ref{cor:mean_zeroGW}.

\noindent
Finally, letting $n \to \infty$, we have
    \begin{equation}\label{eqn:GRWconfirmsBWW}
    	q_0^{(n)} \to \frac{1}{2}, \quad  q_2^{(n)} \to \frac{1}{2}, ~\text{ and }~q_k^{(n)} \to 0 \quad (\forall k>2).
    \end{equation} 
\end{cor}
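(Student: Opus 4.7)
The plan is to assemble the corollary from four results already established earlier in the excerpt: Theorem \ref{thm:offspringsGW} (which identifies the offspring distribution of the level set tree of a positive excursion), Proposition \ref{ts_prune} (which equates Horton pruning with iterated local-minima extraction), Corollary \ref{cor:mean_zeroGW} (which shows that iterated local minima of a mean zero GRW give symmetric GRWs with an explicit parameter update), and Corollary \ref{cor:lim_parGW} (which supplies the limit $r^{(n)} \to 0$, $p^{(n)} \to 1/2$).

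First I would verify that $\textsc{shape}(T^{\sf ex}) \stackrel{d}{\sim} \mathcal{GW}(\{q_k\})$ is indeed critical by applying Theorem \ref{thm:offspringsGW} in its mean-zero form to $X_t$ itself. This immediately yields $q_0 = 1/(2-r^{(1)})$ and $q_k = (r^{(1)})^{k-2}(1-r^{(1)})^2/(2-r^{(1)})$ for $k \geq 2$, and a direct summation (already carried out in the paragraph following Theorem \ref{thm:offspringsGW}) confirms $\sum_k k q_k = 1$. This handles the base case $n=0$.

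Next I would combine Proposition \ref{ts_prune} with Corollary \ref{cor:mean_zeroGW} to handle the pruned trees. Proposition \ref{ts_prune} gives $\cR^n(T^{\sf ex}) = \textsc{level}(X_t^{(n)})$ restricted to the corresponding excursion; conditioning on $\cR^n(T^{\sf ex}) \neq \phi$ corresponds to conditioning on the underlying $n$-fold iterated walk admitting a nontrivial positive excursion at that location. By Corollary \ref{cor:mean_zeroGW}, the iterated walk $X_t^{(n)}$ is itself a symmetric (and therefore mean zero) geometric random walk ${\rm SGRW}(p^{(n)}, r^{(n)})$. I would then re-apply Theorem \ref{thm:offspringsGW} with $X_t^{(n)}$ in place of $X_t$: the role of the parameter $r^{(1)}$ in that theorem is played by the \emph{first local minima} parameter of the input walk, which here is $r^{(n+1)} = r/2^{n+1}$. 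Plugging $r^{(n+1)}$ into the mean-zero formulas of Theorem \ref{thm:offspringsGW} produces precisely \eqref{eqn:pruning_qkn}.

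Finally, the limit \eqref{eqn:GRWconfirmsBWW} follows by substituting $r^{(n+1)} \to 0$ (from Corollary \ref{cor:lim_parGW}) into \eqref{eqn:pruning_qkn}: the denominator $2 - r^{(n+1)} \to 2$, the factor $(r^{(n+1)})^{k-2} \to 0$ for every $k > 2$, while $(r^{(n+1)})^{0}(1-r^{(n+1)})^{2} \to 1$ for $k=2$. I do not anticipate any serious obstacle; the only point that deserves care is the re-indexing, namely that applying Theorem \ref{thm:offspringsGW} to the $n$-fold iterated walk produces parameters at level $n+1$ of the original walk, and that the conditioning event $\{\cR^n(T^{\sf ex}) \neq \phi\}$ matches the event that $X_t^{(n)}$ admits a positive excursion in the relevant location. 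The resulting limit can also be viewed as an independent recovery, in this GRW setting, of the Burd--Waymire--Winn attraction property (Theorem \ref{BWW00_1}).
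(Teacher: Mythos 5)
Your proposal is correct and follows essentially the same route as the paper, which derives the corollary by combining Prop.~\ref{ts_prune} (pruning equals iterated local-minima extraction) with Thm.~\ref{thm:offspringsGW} applied to the symmetric iterated walk of Cor.~\ref{cor:mean_zeroGW}, and obtains the limit \eqref{eqn:GRWconfirmsBWW} from $r^{(n)}\to 0$ via Cor.~\ref{cor:lim_parGW}. Your attention to the re-indexing (the level-$(n+1)$ parameter appearing when Thm.~\ref{thm:offspringsGW} is applied to the $n$-fold iterated walk) and to matching the conditioning event is exactly the care the argument requires.
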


\noindent
The convergence in  \eqref{eqn:GRWconfirmsBWW} follows from Cor.~\ref{cor:lim_parGW} as $r^{(n)} \to 0$.
Writing $\nu=\mathcal{GW}\big(\{q_k\}\big)$, we have by Cor.~\ref{cor:lim_offGW} that the pushforward measure satisfies
$$\nu_n:=\cR_*^n(\nu)=\nu\circ\cR^{-n} \stackrel{d}{=}\mathcal{GW}\big(\{q_k^{(n)}\}\big)$$ 
while equation \eqref{eqn:GRWconfirmsBWW} additionally asserts that
  \begin{equation}\label{eqn:GRWconfirmsBWW00}
    \lim\limits_{n \rightarrow \infty}\nu_n(\tau \,|\tau\ne\phi)=\mu^*(\tau),
   \end{equation} 
  where $\mu^*$ denotes the critical binary Galton-Watson measure on $\cT^|$ defined in \eqref{eqn:mustarGWonT}.  
Equation \eqref{eqn:GRWconfirmsBWW00}  provides a specific example of Thm.~\ref{BWW00_1} (Thm.~1.3 in \cite{BWW00}) 
showing that recursive pruning of a critical Galton-Watson tree converges to a binary critical Galton-Watson tree.


\subsection{White noise and Kingman's coalescent}
\label{sec:white_Kingman}

This section establishes an interesting correspondence between the tree representations
of a white noise (sequence of i.i.d. random variables) and celebrated 
Kingman's coalescent process \cite{Kingman82b}.
We begin by an informal review of coalescent processes and their trees.

\subsubsection{Coalescent processes, trees}
\label{sec:coalescent}

{\bf Coalescent processes} \cite{Pitman,Aldous,Bertoin,Berestycki,EP98}.
A general finite coalescent process begins with $N$ singletons. The cluster formation is governed by a symmetric collision rate 
kernel $K(i,j)=K(j,i)>0$. Specifically, a pair of clusters with masses (weights) $i$ and $j$ coalesces at the rate $K(i,j)/N$, independently of the other pairs, to form a new cluster of mass $i+j$. The process continues until there is a single cluster of mass $N$. \index{coalescent processes} 

Formally, for a given $N\ge 1$ consider the space $\mathcal{P}_{[N]}$ of partitions of $[N]=\{1,2,\hdots,N\}$. 
Let $\Pi^{(N)}_0$ be the initial partition in singletons, and $\Pi^{(N)}_t ~~(t \geq 0)$ be a strong Markov process such that 
$\Pi^{(N)}_t$ transitions from partition $\pi \in \mathcal{P}_{[N]}$ to $\pi' \in \mathcal{P}_{[N]}$ with rate $K(i,j)/N$ provided that partition 
$\pi'$ is obtained from partition $\pi$ by merging two clusters of $\pi$ of weights $i$ and $j$.
If $K(i,j) \equiv 1$ for all positive integer masses $i$ and $j$, the process $\Pi^{(N)}_t$ is known as the $N$-particle {\it Kingman's coalescent process}. If $K(i,j)=i+j$ the process is called the $N$-particle {\it additive coalescent}. Finally, if $K(i,j)=ij$ the process is called the $N$-particle {\it multiplicative coalescent}.  
\index{coalescent processes!Kingman's coalescent}
\index{coalescent processes!additive coalescent}
\index{coalescent processes!multiplicative coalescent}

%

{\bf Coalescent tree.}
A merger history of the $N$-particle coalescent process can be naturally described by a time oriented binary tree 
constructed as follows. Start with $N$ leaves that represent the initial $N$ particles and have time mark $t=0$. 
When two clusters coalesce (a transition occurs), merge the corresponding vertices to form an internal vertex with a time mark of the coalescent. 
The final coalescence forms the tree root. The resulting time oriented binary tree represents the history of the process.
We notice that a given unlabeled tree corresponds to multiple coalescent trajectories obtained by relabeling of the initial particles.
\index{tree!coalescent}

\medskip
\noindent
Let $T^{(N)}_{\rm K}$ denote the coalescent tree for the  $N$-particle Kingman's coalescent process.
Let $N_j$ denote the number of branches of Horton-Strahler order $j$ in the tree $T^{(N)}_{\rm K}$. 
In Sect. \ref{sec:Kingman} we will show that for each $j\ge 1$, 
the asymptotic Horton ratios $\cN_j$ are well-defined (Def. \ref{def:HortonWellDef}), that is
\be\label{eq:KingmanLL}
{\frac{N_j}{N}\stackrel{p}{\to}\cN_j\quad\text{ as }\quad N\to\infty}.
\ee
Moreover, the Horton ratios $\cN_j$ are finite and can be expressed as
\[\cN_j = \frac{1}{2}\int_0^\infty g_j^2(x)\,dx,\]
where the sequence $g_j(x)$ solves the following system of
ordinary differential equations (ODEs):
\be\label{eq:ODEggg}
g'_{j+1}(x)-{g^2_j(x) \over 2}+g_j(x) g_{j+1}(x)=0,\quad x\ge 0
\ee
with $g_1(x)=2/(x+2)$, $g_j(0)=0$ for $j \geq 2$.
Equivalently,
\[\cN_j=\int_0^1 \left(1-\left(1-x\right)h_{j-1}(x)\right)^2 dx,\]
where $h_0\equiv 0$ and the sequence $h_j(x)$  satisfies the ODE system
\be\label{eq:ODEhhh}
h'_{j+1}(x)=2h_j(x)h_{j+1}(x)-h_j^2(x),\quad 0\le x \le 1
\ee
with the initial conditions $h_k(0)=1$ for $j\ge 1$.

\medskip
\noindent
The root-Horton law (Def.~\ref{def:HortonList})  for the well-defined Horton ratios $\cN_j$ \eqref{eq:KingmanLL} of the Kingman's coalescent process is stated in Thm.~\ref{Mthm}, 
with the Horton exponent bounded by the interval $2 \le R \le 4$. 
Moreover, the Horton exponent is estimated to be $R=3.0438279\hdots$ via the ODE representation in \eqref{eq:ODEggg} and \eqref{eq:ODEhhh}.
The numerical computation (not shown here) affirms that the ratio-Horton 
and the strong Horton laws of Def.~\ref{def:HortonList} are valid for the Kingman's coalescent as well.

\subsubsection{White noise}\label{sec:finite}
In this section we will show that the combinatorial $\textsc{shape}$ function for the level set tree $T_{\sf wn}$ of white noise is closely connected to the $\textsc{shape}$ function of the Kingman's coalescent tree $T_{\rm K}=T^{(N)}_{\rm K}$ introduced in Sect. \ref{sec:coalescent}.
Specifically, the two are separated by a single Horton pruning $\cR$.
In other words, conditioning on the same number of 
leaves, $\textsc{shape}\big(\cR(T_{\rm K})\big)\stackrel{d}{=}\textsc{shape}\big(T_{\sf wn}\big)$.

\medskip
\noindent
Let $W^{(N)}_j$ with $j=1,\dots,N\!-\!1$ be a {\it discrete white noise} that is 
a discrete time process comprised of $N\!-\!1$ i.i.d. random variables with a common atomless distribution.
Next, we consider an auxiliary process $\tilde W^{(N)}_{i}$ with $i=1,\dots,2N\!-\!1$, such that it has exactly $N$ local maxima and $N\!-\!1$ internal local minima 
$\tilde W^{(N)}_{2j}=W^{(N)}_j$, $j=1,\dots,N\!-\!1$.
We call $\tilde W^{(N)}_{i}$ an {\it extended white noise}. 
It can be constructed as in the following example.
\index{white noise!discrete}
\index{white noise!extended}

\begin{ex}[{{\bf Extended white noise}}] 
\be
\label{wnt}
\tilde W^{(N)}_i=
\left\{
\begin{array}{cc}
W^{(N)}_{i/2},& {\rm for~even~}i,\\
\max\left\{W^{(N)}_{i'},W^{(N)}_{i''}\right\}+1,&{\rm for~odd~}i,
\end{array}
\right.
\ee
where $i'=\max\left(1,\frac{i-1}{2}\right)$ and $i''=\min\left(N-1,\frac{i+1}{2}\right)$.
\end{ex}

\medskip
\noindent
Let $T_{\sf wn}^{(N)}=\textsc{level}\left(\tilde W^{(N)}_i\right)$ be the level set tree of $\tilde W^{(N)}_i$. 
By construction, $T_{\sf wn}^{(N)}$ has exactly $N$ leaves.
Also observe that the level set trees $T_{\sf wn}^{(N)}$ and 
$\textsc{level}\left(W^{(N)}_j\right)$ are separated by a single Horton pruning:
\be\label{eqn:TwShift} 
\cR\left(T_{\sf wn}^{(N)}\right)=\textsc{level}\left(W^{(N)}_j\right).
\ee  

\begin{lem}
\label{any_wh}
The distribution of $\textsc{shape}\left(T_{\sf wn}^{(N)}\right)$ on $\cBT^|$
is the same for any atomless distribution $F$ of the values of the
associated white noise $W^{(N)}_j$.
\end{lem}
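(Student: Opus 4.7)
The plan is to reduce the lemma to the fact that $\textsc{shape}(T_{\sf wn}^{(N)})$ is a deterministic function of the rank vector of the white noise sample $(W^{(N)}_1,\dots,W^{(N)}_{N-1})$, and that this rank vector is uniformly distributed on the symmetric group $S_{N-1}$ whenever the $W^{(N)}_j$ are i.i.d.\ from any atomless law. This reduces the lemma to a purely combinatorial statement that is independent of $F$.

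First, I would verify that the combinatorial shape of $T_{\sf wn}^{(N)}=\textsc{level}(\tilde W^{(N)})$ depends on $\tilde W^{(N)}$ only through the total order on its local extrema. Recall from Sect.~\ref{sec:level1} that the combinatorial shape of a level set tree of a tamed function is determined by (i) the positional pairing between local minima and their two adjacent local maxima, and (ii) the total order on the set of all local extrema by value. In the construction \eqref{wnt}, the positional pairing in (i) is rigid: leaves live at odd positions $i$ and internal minima at even positions $i=2j$ with value $W^{(N)}_j$. For (ii), the $+1$ offset in \eqref{wnt} guarantees that every local maximum exceeds every local minimum, and moreover the order among local maxima is itself a deterministic function of the order among $\{W^{(N)}_j\}$, because $\tilde W^{(N)}_{2k-1}=\max(W^{(N)}_{k-1},W^{(N)}_k)+1$ with the conventions on the endpoints. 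Hence the entire order on the extrema of $\tilde W^{(N)}$, and in turn the shape $\textsc{shape}(T_{\sf wn}^{(N)})$, is a deterministic function of the rank vector of $(W^{(N)}_1,\dots,W^{(N)}_{N-1})$ alone.

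Second, since $F$ is atomless, the $W^{(N)}_j$ are almost surely pairwise distinct, and by the standard exchangeability argument (i.i.d.\ continuous sampling) the induced rank vector is uniformly distributed over $S_{N-1}$, with a distribution that does not depend on $F$. Combining the two steps, $\textsc{shape}(T_{\sf wn}^{(N)})$ is the pushforward of the uniform measure on $S_{N-1}$ under a map that is independent of $F$, which proves the lemma.

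The only nontrivial step is the rigorous verification of the first claim, namely that $\textsc{shape}\circ\textsc{level}$ applied to $\tilde W^{(N)}$ truly factors through the rank vector. Intuitively this is immediate from Fig.~\ref{fig:LST} and from the pruning picture of Prop.~\ref{ts_prune}, which removes leaves in order of the values of the adjacent local minima. A clean way to write it would be by induction on $N$: at each step, the leaf whose parental edge ends at the largest internal minimum is the first to be pruned, and the pair of adjacent minima of that leaf's sibling configuration are identified into a single minimum in the resulting sequence of length $N-1$, so the claim propagates. Alternatively one may argue directly from the pseudo-metric definition \eqref{eqn:tree_dist}: any strictly increasing bijection between the minima values of two extended white noise sequences (with the same $N$) extends to an order-preserving bijection of their extrema, which induces an isomorphism of combinatorial trees via the equivalence $\sim_f$ of Sect.~\ref{sec:level2}.
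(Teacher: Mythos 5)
Your proposal is correct and follows essentially the same route as the paper's own proof: reduce $\textsc{shape}\bigl(T_{\sf wn}^{(N)}\bigr)$ to a deterministic function of the ordering of the local minima $W^{(N)}_j$, then observe that for i.i.d.\ samples from any atomless $F$ this ordering (the rank vector) has the same, $F$-independent distribution. Your version simply spells out in more detail the step the paper states in one line, namely that the combinatorial level set tree is completely determined by that ordering.
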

\begin{proof}
The condition of atomlessness of $F$ is necessary to ensure that the
level set tree is binary with probability one.
By construction, the combinatorial level set tree is completely determined by the 
ordering of the local minima of the respective trajectory, independently of
the particular values of its local maxima and minima.
We complete the proof by noticing that the distribution for the ordering of $W^{(N)}_j$ is 
the same for any choice of atomless distribution $F$.  
\end{proof}

Let $T^{(N)}_{\rm K}$ be the tree that corresponds to the Kingman's $N$-coalescent, and let
$\textsc{shape}\left(T^{(N)}_{\rm K}\right)$ be its combinatorial version that
drops the time marks of the vertices.
Both the trees $\textsc{shape}\left(T_{\sf wn}^{(N)}\right)$ and 
$\textsc{shape}\left(T^{(N)}_{\rm K}\right)$, belong to the space $\cBT^|$ (or, more
specifically, to $\cBT^|$ conditioned on $N$ leaves).

\begin{thm}\label{main2}
The trees $\textsc{shape}\left(T_{\sf wn}^{(N)}\right)$ and 
$\textsc{shape}\left(T^{(N)}_{\rm K}\right)$ have the same distribution on $\cBT^|$.
\end{thm}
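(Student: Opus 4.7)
The proof proceeds by induction on $N$, using a common distributional recursion satisfied by both combinatorial trees. The base cases $N=1$ (a single leaf) and $N=2$ (a cherry) are immediate, since each tree space under consideration contains a unique element.

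For the inductive step, fix $N\ge 3$ and assume $\textsc{shape}(T_{\sf wn}^{(n)})\stackrel{d}{=}\textsc{shape}(T_{\rm K}^{(n)})$ for all $1\le n<N$. By Lemma \ref{any_wh} we may take $W_1^{(N)},\ldots,W_{N-1}^{(N)}$ to be i.i.d.\ uniform on $(0,1)$, so the ranks form a uniform random permutation. Let $M\in\{1,\ldots,N-1\}$ be the position of the global minimum; then $M$ is uniform on $\{1,\ldots,N-1\}$, and conditional on $M$, the two sub-collections $\{W_j^{(N)}\}_{j<M}$ and $\{W_j^{(N)}\}_{j>M}$ are independent, each having the uniform relative order. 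From the construction \eqref{wnt} of the extended white noise, the global minimum corresponds to the root of $T_{\sf wn}^{(N)}$, and removing this root decomposes the tree into two subtrees: one built from the left sub-excursion (which has $M$ local maxima and $M-1$ interior minima) and one from the right (with $N-M$ local maxima). Applying Lemma \ref{any_wh} to each side, the two subtrees are independent with the same law as $T_{\sf wn}^{(M)}$ and $T_{\sf wn}^{(N-M)}$:
\begin{equation}\label{eq:wn_rec_plan}
\textsc{shape}\bigl(T_{\sf wn}^{(N)}\bigr)\stackrel{d}{=}\mathrm{Join}\!\bigl(\textsc{shape}(T_{\sf wn}^{(M)}),\,\textsc{shape}(T_{\sf wn}^{(N-M)})\bigr),
\end{equation}
where $\mathrm{Join}$ attaches the two given trees to a common root.

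Next I would derive the analogous recursion for the Kingman tree $T_{\rm K}^{(N)}$. Consider the last (root) merger, and let $L$ denote the size of one of the two clusters it joins. By the strong Markov property together with the exchangeability of Kingman's coalescent, conditional on $L$ and on which labels fall in each cluster, the restriction of the coalescent dynamics to each cluster is an independent Kingman coalescent on $L$ and $N-L$ particles, respectively. After forgetting labels this gives
$$\textsc{shape}\bigl(T_{\rm K}^{(N)}\bigr)\stackrel{d}{=}\mathrm{Join}\!\bigl(\textsc{shape}(T_{\rm K}^{(L)}),\,\textsc{shape}(T_{\rm K}^{(N-L)})\bigr)$$
conditional on $\{L,N-L\}$. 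To match \eqref{eq:wn_rec_plan} it then remains to verify that the unordered split $\{L,N-L\}$ has the same law as $\{M,N-M\}$ with $M$ uniform on $\{1,\ldots,N-1\}$, i.e.\ probability $2/(N-1)$ when $L\ne N-L$ and $1/(N-1)$ when $L=N-L$.

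For this step I would invoke the classical fact that Kingman's $N$-coalescent induces the uniform distribution on ranked labeled binary trees with $N$ leaves, of which there are $R(N)=N!(N-1)!/2^{N-1}$ (each sequence of $\binom{k}{2}$-uniform merger choices, $k=2,\ldots,N$, produces one such tree with equal weight). The number of ranked labeled trees with root split $\{L,N-L\}$, $L\ne N-L$, equals the product of: $\binom{N}{L}$ ways to split the labels into the two clusters; $R(L)R(N-L)$ ways to choose a ranked labeled tree on each side; and $\binom{N-2}{L-1}$ ways to interleave the $L-1$ and $N-L-1$ internal ranks below the root rank $N-1$. Dividing by $R(N)$ and simplifying yields $2/(N-1)$; the case $L=N-L$ requires dividing the count of partitions by $2$ and produces $1/(N-1)$. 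This matches the white-noise split distribution, and combining with the inductive hypothesis closes the induction.

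The main obstacle is the root-split computation in the final paragraph: one must simultaneously track non-planarity, labels, and internal ranks without double-counting, and correctly handle the symmetric case $L=N-L$ when $N$ is even. Once that bookkeeping is done, the matching of the two recursions and the induction on $N$ are routine.
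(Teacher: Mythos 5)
Your proposal is correct, but it takes a genuinely different route from the paper. The paper's proof is a single global coupling: it endows Kingman's coalescent with a left--right order inside clusters (splitting each merger rate $1/N$ into two rates $1/(2N)$ for the two concatenation orders), conditions on the final permutation $\sigma$ of the $N$ singletons, and observes that the combinatorial shape of the coalescent tree is determined entirely by the ranks of the $N-1$ ``adjacent connection times'' $t_j$ at which $\sigma_j$ and $\sigma_{j+1}$ first share a cluster; since all $(N-1)!$ orderings of these times are equiprobable, the shape law coincides with that of the level set tree of $N-1$ exchangeable atomless values, i.e.\ with $\textsc{shape}\bigl(T_{\sf wn}^{(N)}\bigr)$. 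Your argument instead runs an induction on $N$ through the root-split recursion, matching the uniform position of the global minimum of the white noise against the law of the unordered root split $\{L,N-L\}$ of a uniform ranked labeled tree; your count $\binom{N}{L}R(L)R(N-L)\binom{N-2}{L-1}/R(N)=2/(N-1)$ (halved in the symmetric case) is correct, and the same enumeration also justifies the conditional independence and identical distribution of the two subtrees, which is what actually carries the inductive step --- your earlier appeal to the strong Markov property is not by itself sufficient there, since you are conditioning on an event determined by the entire merger history, but the ranked-tree factorization repairs this. What each approach buys: the paper's coupling is non-inductive, avoids all enumeration, and exhibits the white noise explicitly as the vector of adjacent connection times of the coalescent (which is what makes the ``separated by one Horton pruning'' statement transparent); yours is more elementary in that it reduces everything to the classical uniform-split recursion for labeled histories, at the cost of the combinatorial bookkeeping over ranked labeled trees and the special treatment of the even case $L=N/2$.
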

\begin{proof}
The proof uses a construction similar in some respect to the celebrated Kingman paintbox process \cite{Kingman82b,Pitman,Bertoin,Berestycki}.
For the Kingman's $N$-coalescent, let us enumerate the initial singletons from $1$ to $N$. 
We will identify each cluster with a collection of singletons listed from left to right, where the order in which they are listed is important
as it contains a certain amount of information regarding the process's merger history. 
Specifically, consider a pair of clusters ${\bf i}$ and ${\bf j}$, identified with the corresponding collection of singletons as follows
$${\bf i}=\{i_1,\hdots,i_k\} \quad \text{ and }\quad {\bf j}=\{j_1,\hdots,j_m\}.$$
Next, we split the merger rate of ${1 \over N}$ into two. We let the clusters ${\bf i}$ and ${\bf j}$ merge into the new cluster 
$$\{{\bf i,j}\}=\{i_1,\hdots,i_k,j_1,\hdots,j_m\}$$
with rate ${1 \over 2N}$, or into the new cluster
$$\{{\bf j,i}\}=\{j_1,\hdots,j_m,i_1,\hdots,i_k\}$$
also with rate ${1 \over 2N}$. The final merger results in a cluster consisting of all $N$ singletons,
listed as a permutation from $S_N$,
$$\sigma=\{\sigma_1,\hdots,\sigma_N\}.$$
Conditioning on the final permutation $\sigma$, the merger history is described by the random connection times,
$$t_1,t_2,\hdots,t_{N-1},$$
where $t_j$ is the merger time when the singletons $\sigma_j$ and $\sigma_{j+1}$ meet in the same cluster.
The following diagram helps visualize the connection times:
$$\sigma_1 \stackrel{t_1}{\longrightarrow} \sigma_2 \stackrel{t_2}{\longrightarrow} \sigma_3 \stackrel{t_3}{\longrightarrow} \hdots \sigma_{N-1} \stackrel{t_{N-1}}{\longrightarrow}\sigma_N.$$
Since all $(N\!-\!1)!$ orderings of the connection times $t_1,\hdots,t_{N-1}$ are equiprobable, the combinatorial shape of the resulting coalescent tree
is distributed as the combinatorial tree $\textsc{shape}\left(T_{\sf wn}^{(N)}\right)$, where all $(N\!-\!1)!$ orderings of the analogous connection times  
$W^{(N)}_1,W^{(N)}_2,\hdots,W^{(N)}_{N-1}$ are also equiprobable.
\end{proof}

The following result is a consequence of the above Thm.~\ref{main2} and 
Thm.~\ref{Mthm} that we state and prove in Sect. \ref{sec:Kingman} establishing the 
 root-Horton law (Def. \ref{def:HortonList}) for Kingman's coalescent tree $\textsc{shape}\left(T^{(N)}_{\rm K}\right)$.

\begin{cor}
\label{main3}
The combinatorial level set tree of a discrete white noise $W^{(N)}_j$ 
is root-Horton self similar with the same Horton exponent $R$ as 
that for Kingman's $N$-coalescent.
\end{cor}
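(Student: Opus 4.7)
The plan is to assemble Corollary \ref{main3} from three ingredients already established in the excerpt: Theorem \ref{main2} (equidistribution of combinatorial shapes), Theorem \ref{Mthm} (the root-Horton law for Kingman's $N$-coalescent tree, with some Horton exponent $R$), and the identity \eqref{eqn:TwShift}, which says that a single Horton pruning turns $T_{\sf wn}^{(N)}$ into $\textsc{level}(W^{(N)}_j)$. The first step is to use Theorem \ref{main2} to transport the asymptotic Horton statistics of $\textsc{shape}(T_{\rm K}^{(N)})$ to $\textsc{shape}(T_{\sf wn}^{(N)})$: since the two measures on $\cBT^{|}$ coincide, the well-defined asymptotic Horton ratios $\cN_j$ (Def.~\ref{def:HortonWellDef}) exist for $T_{\sf wn}^{(N)}$ as $N\to\infty$ and agree with those for Kingman's coalescent, and in particular $(\cN_j)^{-1/j}\to R$.

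The second step is to push the root-Horton law one level through the pruning in \eqref{eqn:TwShift}. Writing $N_j^{(N)}$ for the branch counts in $T_{\sf wn}^{(N)}$, the order-shift identity \eqref{shift1} gives $N_j[\cR(T_{\sf wn}^{(N)})]=N_{j+1}^{(N)}$, so
\[
\frac{N_j[\cR(T_{\sf wn}^{(N)})]}{N_1[\cR(T_{\sf wn}^{(N)})]}
=\frac{N_{j+1}^{(N)}/N_1^{(N)}}{N_2^{(N)}/N_1^{(N)}}
\;\stackrel{p}{\longrightarrow}\;\frac{\cN_{j+1}}{\cN_2}=:\cN_j^{\cR},
\]
provided $\cN_2>0$, which holds as a byproduct of the analysis leading to Theorem \ref{Mthm} (indeed $\cN_j>0$ for all $j\geq 1$ from the Smoluchowski-Horton system). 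Hence $\textsc{level}(W^{(N)}_j)$ has well-defined asymptotic Horton ratios $\cN_j^{\cR}$.

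The third step is a short computation verifying that the Horton exponent is invariant under this shift. From $\cN_j^{\cR}=\cN_{j+1}/\cN_2$ we obtain
\[
(\cN_j^{\cR})^{-1/j}
=\cN_2^{1/j}\cdot\bigl(\cN_{j+1}^{-1/(j+1)}\bigr)^{(j+1)/j}
\;\longrightarrow\;1\cdot R^{1}=R
\]
as $j\to\infty$, which is precisely the root-Horton law (Def.~\ref{def:HortonList}) for the level set tree of the discrete white noise with exponent $R$. By Lemma \ref{any_wh} the distribution of $\textsc{shape}(T_{\sf wn}^{(N)})$, and hence that of $\textsc{shape}(\textsc{level}(W^{(N)}_j))$, does not depend on the atomless marginal $F$, so the conclusion is uniform in the choice of the white noise.

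There is no serious obstacle here: all the substance lies in Theorem \ref{Mthm}, whose proof (deferred to Section \ref{sec:Kingman} via the Smoluchowski-Horton ODE system) is the genuinely difficult analytic step. The only small care needed in the above proposal is to confirm $\cN_2>0$ so that the denominator in the transport step is non-degenerate and the convergence in probability descends to the ratio; this is immediate from the positivity of the solutions $g_j$, $h_j$ of the ODE systems referenced after \eqref{eq:ODEhhh}.
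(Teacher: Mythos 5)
Your proposal is correct and follows essentially the same route as the paper's proof: transfer the root-Horton law from the Kingman tree to $T_{\sf wn}^{(N)}$ via Theorem~\ref{main2}, then use the index shift $N_j[\cR(T)]=N_{j+1}[T]$ from \eqref{eqn:TwShift} and note that $\lim_j(\cN_{j+1})^{-1/j}=\lim_j(\cN_j)^{-1/j}=R$. Your extra care in renormalizing by $\cN_2$ and checking $\cN_2>0$ only makes explicit a detail the paper leaves implicit.
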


\begin{proof}
Together, Theorems~\ref{main2} and \ref{Mthm} imply the root-Horton self-similarity for $\textsc{shape}\left(T_{\sf wn}^{(N)}\right)$, with the same Horton exponent $R$.

By definition, Horton pruning corresponds to an index shift in Horton statistics:
$N_j\big[\cR(T)\big]=N_{j+1}[T]$ ($j \ge 1$). 
Thus, the root-Horton self-similarity for $\textsc{shape}\left(T_{\sf wn}^{(N)}\right)$ implies the root-Horton self-similarity 
for $\textsc{shape}\left(\textsc{level}\big(W^{(N)}_j\big)\right)$. 
Finally, the Horton exponent is preserved under the extra Horton pruning as
$$\lim\limits_{j \rightarrow \infty} \Big( \cN_{j+1} \Big)^{-{1 \over j}}=\lim\limits_{j \rightarrow \infty} \Big( \cN_j \Big)^{-{1 \over j}}=R.$$
\end{proof}

\subsection{Level set trees on higher dimensional manifolds and Morse theory}\label{sec:morse}

Consider an $n$-dimensional differentiable manifold $M=M^n$, and a differentiable function 
$f:M \rightarrow \mathbb{R}$. 
A point $p$ is called a {\it critical point} of $f$ if $df(p)=0$, 
in which case, $f(p)$ is said to be a {\it critical value} of $f$. A point $x \in M$ is called a {\it regular point}  of $f$ if it is not a critical point.

\begin{figure}[t] 
\centering\includegraphics[width=0.9\textwidth]{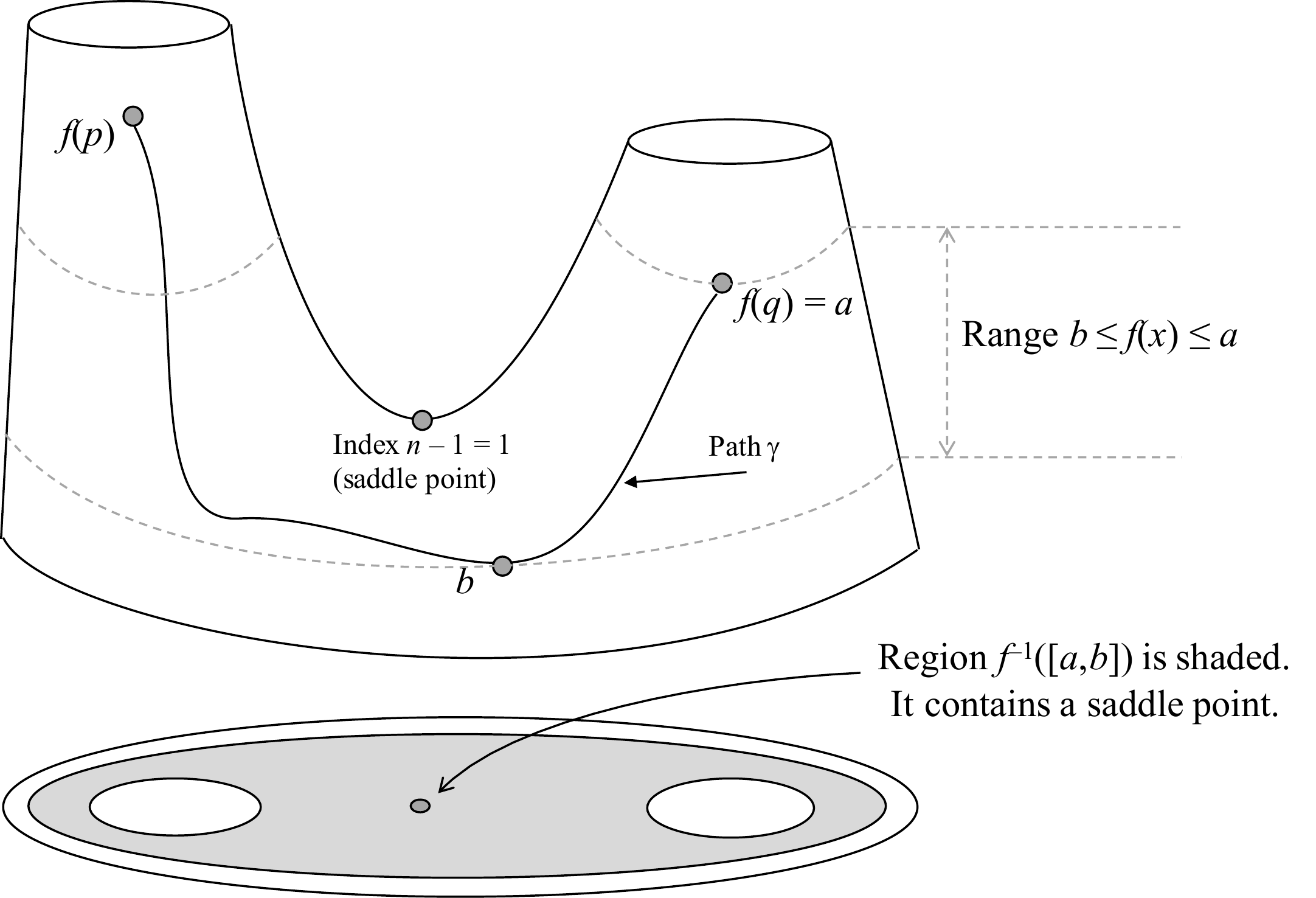}
\caption{Illustration to Lemma~\ref{lem:Milnor} (a counterexample).
Here, a function $f:M\subset\mathbb{R}^2\to\mathbb{R}$ is such that the region
$f^{-1}([a,b])$, which is shaded in the bottom part, contains a saddle  
(critical point of index $n-1=1$); hence the conditions of the Lemma are violated.
Observe, accordingly, that the image of any path $\gamma: p\to q$ must go below the point $a = f(q)$ by
a finite amount, i.e. there exists $\delta>0$ such that $\gamma\nsubset\cL_{a-\delta}$. 
}
\label{fig:Morse_pants}
\end{figure}

\medskip
\noindent
If $p$ is a critical point of $f$, then 
$$f(x)=f(p)+{1 \over 2}H_{f,p}(x,x)+O(3)$$
is the Taylor expansion of $f$ around $p$, where 
$$H_{f,p}(u,v)=\sum\limits_{i,j} {\partial^2 f \over \partial x_i \, \partial x_j}(p) \,u_i v_j \,:\, T_pM \times T_pM \rightarrow \mathbb{R}$$
is a {\it symmetric bilinear form} over the tangent space $T_pM$ generated by the {\it Hessian} matrix ${\partial^2 f \over \partial x_i \, \partial x_j}(p)$, and $O(3)$ denotes the third and higher order terms.

\begin{Def}[{{\bf Nondegenerate points and Morse functions \cite{Nicolaescu}}}]
\label{def:Morse}
Let $M$ and $f$ to be as above. A critical point $p \in M$ of $f$ is said to be nondegenerate if 
the determinant of its Hessian matrix ${\partial^2 f \over \partial x_i \, \partial x_j}(p)$ is not equal to zero. A differentiable function $f:M \rightarrow \mathbb{R}$ is said to be a Morse function if all of its critical points are nondegenerate.
\end{Def}
\index{Morse!function}

\begin{thm}[{{\bf Morse, \cite{Nicolaescu}}}]\label{thm:Morse}
Consider an $n$-dimensional differentiable manifold $M$, and a differentiable function $f:M \rightarrow \mathbb{R}$. 
If $p \in M$ is a nondegenerate critical point of $f$, then there exists an open neighborhood $U$ of $p$ and local coordinates $(x_1,\hdots,x_n)$ on $U$ 
with $$\big(x_1(p),\hdots,x_n(p)\big)=(0,\hdots,0)$$ such that in this coordinates $f(x)$ is a quadratic polynomial represented as
$$f(x)=f(p)+{1 \over 2}H_{f,p}(x,x).$$
\end{thm}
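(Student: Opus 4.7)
The plan is to prove this classical Morse lemma by the standard two-step route: first, use a Hadamard-type integral remainder to write $f-f(p)$ as a quadratic form with smooth coefficients; then diagonalize this quadratic form by a sequence of smooth completing-the-square changes of variable, essentially a smooth Gram--Schmidt/Sylvester procedure.

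First, I would pass to a local chart around $p$, translate so that $p$ corresponds to the origin in $\mathbb{R}^n$, and subtract the constant $f(p)$, so we may assume $f(0)=0$ and $df(0)=0$. Then by applying the fundamental theorem of calculus twice along the segment from $0$ to $x$ in a star-shaped neighborhood $U_0$, one obtains Hadamard's identity
\begin{equation*}
f(x)\;=\;\sum_{i,j=1}^{n} x_i x_j\, h_{ij}(x),\qquad h_{ij}(x)=\int_0^1\!\!\int_0^1 t\,\frac{\partial^2 f}{\partial x_i\partial x_j}(stx)\,ds\,dt,
\end{equation*}
where the $h_{ij}$ are smooth on $U_0$. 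After replacing $h_{ij}$ by $(h_{ij}+h_{ji})/2$ we may assume the matrix $(h_{ij}(x))$ is symmetric, and by construction $h_{ij}(0)=\tfrac12\,\partial_i\partial_j f(0)$, so $(h_{ij}(0))$ is a constant multiple of the Hessian and is therefore nondegenerate by hypothesis.

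Second, I would diagonalize $(h_{ij}(x))$ smoothly near $0$ by induction on $n$. At the inductive step, after a preliminary linear change of coordinates I may assume $h_{11}(0)\neq 0$; shrinking $U_0$, the sign of $h_{11}(x)$ is constant and $\sqrt{|h_{11}(x)|}$ is smooth. Define new coordinates
\begin{equation*}
y_1(x)=\sqrt{|h_{11}(x)|}\,\Bigl(x_1+\sum_{j>1}\frac{h_{1j}(x)}{h_{11}(x)}\,x_j\Bigr),\qquad y_j(x)=x_j\ \text{for}\ j\ge 2.
\end{equation*}
Its Jacobian at $0$ is upper-triangular with nonzero diagonal, so by the inverse function theorem this is a diffeomorphism on a smaller neighborhood. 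A direct substitution (the completing-the-square identity) yields
\begin{equation*}
f\;=\;\varepsilon_1 y_1^2 + \sum_{i,j\ge 2} y_i y_j \,\tilde h_{ij}(y),
\end{equation*}
where $\varepsilon_1=\mathrm{sgn}\,h_{11}(0)\in\{\pm1\}$ and $(\tilde h_{ij}(y))$ is smooth, symmetric, and still nondegenerate at $0$ (nondegeneracy is preserved because the overall form remains a rank-$n$ quadratic form with smooth coefficients). Iterating $n$ times produces coordinates in which $f(y)=\varepsilon_1 y_1^2+\cdots+\varepsilon_n y_n^2$. After a final linear change absorbing the $|\varepsilon_i|^{1/2}$ factors, this polynomial is, by Sylvester's law of inertia, exactly $\tfrac12 H_{f,p}(y,y)$ written in the new coordinates, which yields the stated form $f(y)=f(p)+\tfrac12 H_{f,p}(y,y)$.

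The main obstacle is the inductive step, specifically verifying that the completing-the-square substitution genuinely defines a smooth local change of coordinates and that the residual quadratic form on $y_2,\dots,y_n$ retains a smooth, symmetric, nondegenerate coefficient matrix so the induction can continue. The smoothness of $\sqrt{|h_{11}(x)|}$ requires $h_{11}(0)\neq 0$ (guaranteed by a preliminary linear diagonalization of the Hessian at $0$), the diffeomorphism property follows from a triangular Jacobian calculation, and the persistence of nondegeneracy follows from the fact that the rank of the form is a coordinate-invariant. Everything else is formal manipulation.
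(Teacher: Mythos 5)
Your proof is correct, but there is nothing in the paper to compare it against: the paper states this result as a classical theorem of Morse and cites Nicolaescu for it without reproducing a proof, so any argument you give is necessarily "different from the paper's." What you have written is the standard Milnor-style proof — Hadamard's integral remainder to get $f-f(p)=\sum x_ix_j h_{ij}(x)$ with $h_{ij}(0)=\tfrac12\partial_i\partial_j f(0)$, followed by inductive smooth completing of the square — and the details check out: the Jacobian of your substitution at the origin has determinant $\sqrt{|h_{11}(0)|}\neq 0$, and the residual coefficient matrix at $0$ is the Schur complement $\bigl(h_{ij}-h_{1i}h_{1j}/h_{11}\bigr)_{i,j\ge 2}$, which is nondegenerate precisely because $(h_{ij}(0))$ is (this is a cleaner justification than your rank-invariance remark, which is correct but informal). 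Two small points. First, your closing step is heavier than needed: since $\varepsilon_i=\pm 1$ there are no factors $|\varepsilon_i|^{1/2}$ to absorb, and no appeal to Sylvester's law is required — once $f-f(p)$ is a homogeneous quadratic polynomial in some coordinates, that quadratic form \emph{is} $\tfrac12 H_{f,p}$ expressed in those same coordinates, because the Hessian at a critical point is the coordinate-independent second-order part; so the diagonal form $\sum\varepsilon_i y_i^2$ already realizes the stated conclusion. Second, the theorem as stated in the paper assumes only that $f$ is differentiable, whereas your argument (and every proof of the Morse lemma) needs $f\in C^2$ at least, and $C^\infty$ for the coordinates to be smooth; this is a looseness in the paper's statement rather than in your proof, but you should flag the regularity you are actually using.
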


\begin{figure}[t] 
\centering\includegraphics[width=0.8\textwidth]{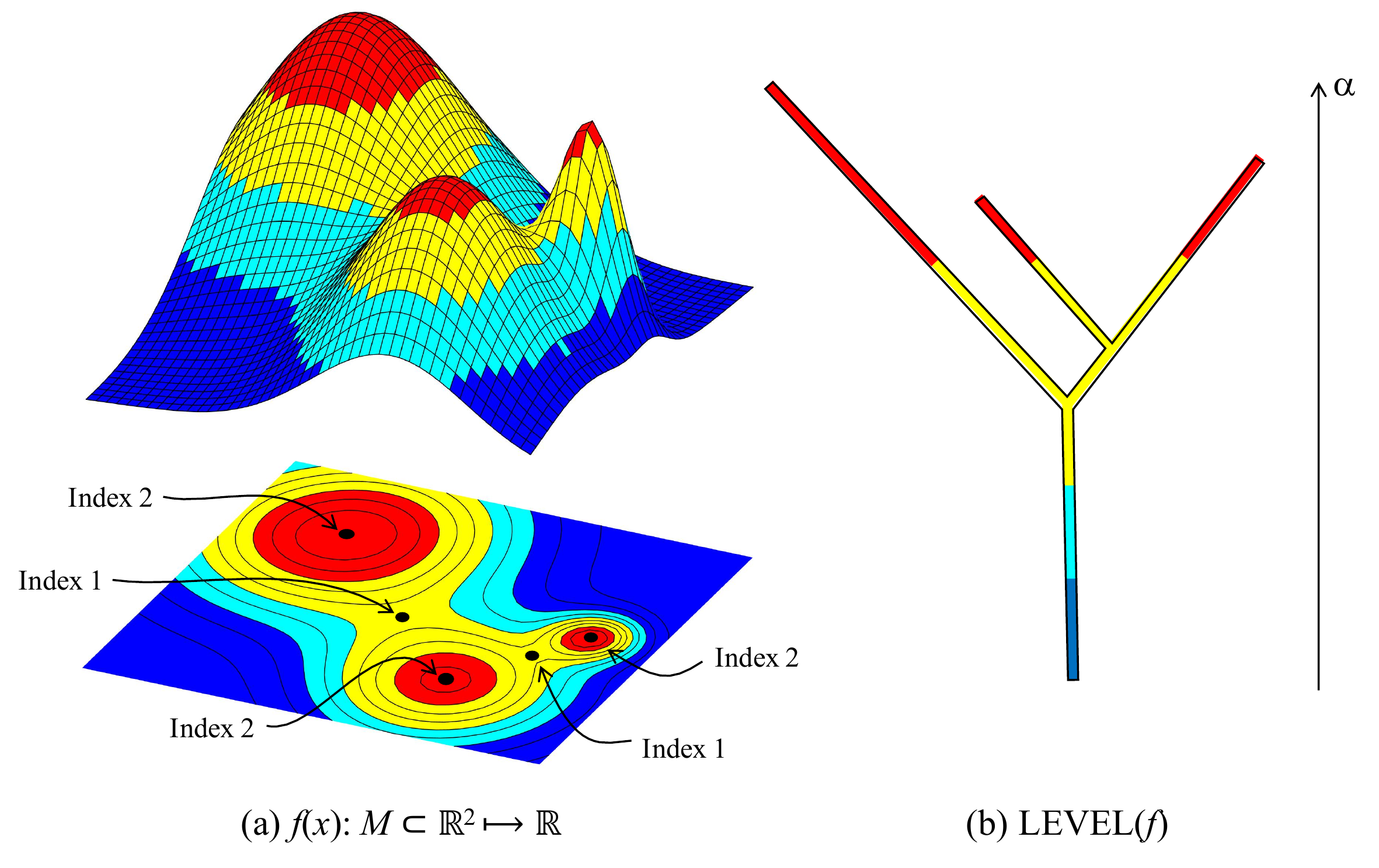}
\caption{Level set tree of a Morse function: An illustration. 
(a) A Morse function $f(x): M\subset\mathbb{R}^2\to \mathbb{R}$ (top) and 
its level sets $\cL_{\alpha}$ (bottom).
(b) The level set tree $\textsc{level}(f)$ shows how distinct 
components of $\cL_{\alpha}$ merge as threshold $\alpha$ decreases.
The color code illustrates the value of $f(x)$ at different level 
lines.
Each critical point of index $2$ (local maximum) corresponds to a leaf.
In this figure, each critical point of index $1$ (saddle) corresponds
to an internal vertex.}
\label{fig:Morse_2D}
\end{figure}

If $B(u,v): V \times V \rightarrow \mathbb{R}$ is a nondegenerate 
(i.e., with non-zero determinant) symmetric bilinear form over an $n$-dimensional vector space $V$,
then there exists a unique nonnegative integer $\lambda \leq n$ and at least one basis $\mathcal{B}$ of $V$ such that, in basis $\mathcal{B}$,
$$B(x,x)=-x_1^2-\hdots-x_\lambda^2+x_{\lambda+1}^2+\hdots+x_n^2.$$
This implies the following corollary to the Morse Theorem (Thm.~\ref{thm:Morse}), known as the Morse Lemma.
\begin{cor}[{{\bf Morse Lemma \cite{Nicolaescu}}}]\label{cor:Morse} \index{Morse!lemma}
Consider an $n$-dimensional differentiable manifold $M$, and a differentiable 
function $f:M \rightarrow \mathbb{R}$. 
If $p \in M$ is a nondegenerate critical point of $f$, then there exists and open neighborhood $U$ of $p$ and local coordinates $(x_1,\hdots,x_n)$ on $U$ 
with $$\big(x_1(p),\hdots,x_n(p)\big)=(0,\hdots,0)$$ such that in this coordinates,
$$f(x)=f(p)-x_1^2-\hdots-x_\lambda^2+x_{\lambda+1}^2+\hdots+x_n^2.$$
\end{cor}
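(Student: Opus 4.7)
The plan is to view Corollary~\ref{cor:Morse} as a direct consequence of Theorem~\ref{thm:Morse} combined with the diagonalization statement for nondegenerate symmetric bilinear forms (Sylvester's law of inertia) that is recalled in the paragraph immediately preceding the corollary. Morse's theorem already does the hard analytic work, giving local coordinates in which $f$ coincides with its quadratic approximation; what remains is to apply a \emph{linear} change of coordinates to put the quadratic form into standard signature.

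First I would invoke Theorem~\ref{thm:Morse}: since $p$ is a nondegenerate critical point of $f$, there exist an open neighborhood $U$ of $p$ and local coordinates $y = (y_1,\dots,y_n)$ on $U$, centered at $p$ (i.e.\ $y_i(p)=0$), in which
\[
f(y) = f(p) + \tfrac{1}{2}\,H_{f,p}(y,y).
\]
Here $H_{f,p}$ is the Hessian form, which is a symmetric bilinear form on $T_pM \cong \mathbb{R}^n$. Nondegeneracy of the critical point $p$ is by definition the condition $\det\!\bigl[\partial_i\partial_j f(p)\bigr]\neq 0$, so $H_{f,p}$ is nondegenerate as a symmetric bilinear form on $T_pM$.

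Next I would apply the fact stated just above the corollary: every nondegenerate symmetric bilinear form $B$ on an $n$-dimensional real vector space admits a basis $\mathcal{B} = (e_1,\dots,e_n)$ in which
\[
B(v,v) = -v_1^2 - \cdots - v_\lambda^2 + v_{\lambda+1}^2 + \cdots + v_n^2,
\]
for a uniquely determined integer $0\le\lambda\le n$ (the index of $B$). Let $A\in GL_n(\mathbb{R})$ be the matrix whose columns are the coordinates of $e_1,\dots,e_n$ in the $y$-basis, so that the linear map $y = Ax$, or equivalently $x = A^{-1}y$, realizes the change of basis. Setting
\[
x_i := (A^{-1}y)_i, \qquad i=1,\dots,n,
\]
defines a new smooth coordinate chart on (possibly a smaller) open neighborhood $U'\subseteq U$ of $p$, still centered at $p$, because the Jacobian of $x\mapsto y$ is the invertible constant matrix $A$.

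In these new coordinates the quadratic form $\tfrac{1}{2}H_{f,p}$ takes precisely the canonical signature form, so
\[
f(x) = f(p) - x_1^2 - \cdots - x_\lambda^2 + x_{\lambda+1}^2 + \cdots + x_n^2,
\]
which is the asserted normal form. The factor $\tfrac{1}{2}$ from Morse's theorem is absorbed into $A$ by rescaling each basis vector $e_i$ by $\sqrt{2}$, which is again a legitimate linear change of coordinates. There is really no technical obstacle beyond bookkeeping: Morse's theorem supplies the nonlinear straightening, and Sylvester's law of inertia supplies the linear diagonalization; the only point to verify carefully is that the composition of a smooth chart with a linear isomorphism remains a smooth chart centered at $p$, which is immediate.
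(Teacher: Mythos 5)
Your proof is correct and follows exactly the route the paper takes: apply Theorem~\ref{thm:Morse} to reduce $f$ locally to $f(p)+\tfrac{1}{2}H_{f,p}(x,x)$, then use Sylvester's law of inertia (the diagonalization of a nondegenerate symmetric bilinear form recalled just before the corollary) to put the quadratic form into signature normal form via a constant linear change of coordinates. Your extra remarks about absorbing the factor $\tfrac{1}{2}$ and checking that a linear isomorphism composed with a chart is still a chart are correct bookkeeping that the paper leaves implicit.
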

\noindent
The integer $\lambda$ in Cor.~\ref{cor:Morse} is called the {\it index} of the nondegenerate critical point $p \in M$.
The next lemma concerns directly the structure of the level set trees for $f:M \rightarrow \mathbb{R}$. 
Let $M$ and $f$ to be as above. 
Following the one-dimensional setup of Sect.~\ref{sec:level1}, for $\alpha\in\mathbb{R}$ we 
consider the level set
$$\cL_\alpha=\cL_\alpha(f) = \{x \in M \,:\, f(x) \geq \alpha\}.$$
\begin{lem}[{{\bf \cite{MilnorEtAl,Carmo}}}]\label{lem:Milnor}
Consider an $n$-dimensional differentiable manifold $M$, and a Morse function 
$f:M \rightarrow \mathbb{R}$. 
Given points $p,q \in M$ and a differentiable curve $\gamma:\, [0,1] \rightarrow M$ such that $\gamma(0)=p$ and $\gamma(1)=q$.
Let $a=\min\big\{f(p),f(q)\big\}$ be the minimal endpoint value, and let $b=\min\limits_{t \in [0,1]}\big(f \circ \gamma(t)\big)$.

Suppose $f^{-1}\big([a,b]\big)$ is compact and does not contain any critical points of index 
$n$ or $n-1$. 
Then, for any $\delta>0$, there exists a differentiable curve $\widetilde{\gamma}:\, [0,1] \rightarrow M$ homotopic to $\gamma$
 such that $\widetilde{\gamma}(0)=p$ and $\widetilde{\gamma}(1)=q$, and
 $$\widetilde{\gamma}\big([0,1]\big) \subset \cL_{a-\delta}.$$ 
\end{lem}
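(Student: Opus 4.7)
My plan is to combine the Morse-theoretic handle decomposition for superlevel sets with a cellular-approximation argument for $1$-dimensional maps into a pair.

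First, I will set up the Morse framework. By compactness of $f^{-1}([b,a])$ and the Morse property, this strip contains only finitely many critical points $c_1,\dots,c_k$, each of Morse index $\mu_i\le n-2$ by hypothesis (since the indices $n$ and $n-1$ are excluded). Shrinking $\delta>0$ if necessary, I can arrange $a-\delta$ to be a regular value of $f$, so that $\cL_{a-\delta}=\{f\ge a-\delta\}$ is a compact manifold-with-boundary contained in $\cL_b$. Fix an auxiliary Riemannian metric on $M$, giving a gradient field $\nabla f$. Applying the dual form of the first fundamental theorem of Morse theory (equivalently, the sublevel-set version applied to $-f$), the superlevel set $\cL_b$ is obtained from $\cL_{a-\delta}$ by successively attaching, for each critical point $c_i\in f^{-1}([b,a-\delta])$, a handle of index $n-\mu_i$. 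Because every $\mu_i\le n-2$, each attached handle has index $\ge 2$. This equips the pair $(\cL_b,\cL_{a-\delta})$ with a smooth relative CW structure whose relative cells all have dimension $\ge 2$; equivalently, its $1$-skeleton relative to $\cL_{a-\delta}$ is $\cL_{a-\delta}$ itself.

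Second, I would invoke cellular approximation. Because $f(p),f(q)\ge a>a-\delta$, the endpoints $p$ and $q$ both lie in $\cL_{a-\delta}$, so $\gamma$ defines a continuous map of the CW pair $([0,1],\{0,1\})$ into $(\cL_b,\cL_{a-\delta})$ which is already cellular on the $0$-skeleton. The classical cellular approximation theorem then produces a homotopy rel $\{0,1\}$ inside $\cL_b$ from $\gamma$ to a continuous map $\gamma':[0,1]\to\cL_{a-\delta}$. A standard smoothing step (a $C^0$-small perturbation inside the open set $\{f>a-\delta-\varepsilon\}$, followed if needed by a short push along $\nabla f$ back into $\cL_{a-\delta}$, keeping the already-interior endpoints $p,q$ fixed) converts $\gamma'$ into the required differentiable curve $\widetilde{\gamma}:[0,1]\to\cL_{a-\delta}$ homotopic to $\gamma$ in $M$.

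The hard part is the Morse-theoretic bookkeeping: verifying that each Morse critical point of $f$ of index $\mu$ contributes a handle of index $n-\mu$ to the growing superlevel set rather than to the sublevel set. This sublevel/superlevel duality is exactly what makes the hypothesis ``no critical points of index $n$ or $n-1$'' translate into ``only relative cells of dimension $\ge 2$ are attached'', which is the crucial input for cellular approximation: a $1$-dimensional curve cannot in general be pushed off $0$- or $1$-cells, but it can always be pushed off cells of dimension $\ge 2$. Once this handle decomposition is correctly in place, both the cellular approximation and the smoothing become essentially formal.
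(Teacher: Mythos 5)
The paper does not actually prove this lemma: it is quoted from the cited references (Milnor, do Carmo), so there is no in-paper argument to compare against. Your strategy --- dualize to $-f$, observe that $\cL_b$ is built from $\cL_{a-\delta}$ by attaching handles of index $n-\mu_i\ge 2$, and then compress the $1$-dimensional curve into the relative $1$-skeleton by cellular approximation rel $\{0,1\}$ --- is sound, and it is essentially the standard argument behind this result; the references phrase it instead via upward gradient flow and general position with respect to the descending disks (which have dimension $\mu_i\le n-2$, hence codimension $\ge 2$), and the two routes are interchangeable here.

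There is, however, one genuine gap, and it occurs exactly in the case the paper needs. The handle-attachment theorem you invoke, applied to $-f$ at a critical value $v$, requires $f^{-1}([v-\epsilon,v+\epsilon])$ to be compact and describes the pair $(\cL_{v+\epsilon},\cL_{v-\epsilon})$. For critical values $v\in(b,a-\delta)$ this is fine, since $[v-\epsilon,v+\epsilon]\subset[b,a]$ for small $\epsilon$. But if $b$ itself is a critical value --- which is precisely the configuration in Cor.~\ref{cor:KovMilnor}, where $b=\underline{f}(p,q)=f(c)$ for a critical point $c$ --- the theorem would need compactness of $f^{-1}([b-\epsilon,b+\epsilon])$, which is not among the hypotheses, and it would in any case describe $\cL_{b\pm\epsilon}$ rather than the set $\cL_b$ that actually contains $\gamma$. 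A separate argument is needed at the bottom level: in a Morse chart at a critical point $c$ with $f(c)=b$ and index $\mu\le n-2$, write $f=b-|x_-|^2+|x_+|^2$; then $\cL_b\cap U=\{|x_+|\ge |x_-|\}$ is a solid cone that deformation retracts onto $(\cL_{b+\epsilon}\cap U)$ with one $(n-\mu)$-cell attached (again of dimension $\ge 2$), while outside the Morse charts the upward gradient flow --- which never leaves $\cL_b$ and only uses compactness of $f^{-1}([b,b+\epsilon])\subset f^{-1}([b,a])$ --- pushes everything up to level $b+\epsilon$. With this patch your cell count, and hence the cellular-approximation step, goes through. A minor unrelated slip: $\cL_{a-\delta}=\{f\ge a-\delta\}$ need not be compact (only $f^{-1}([b,a])$ is assumed compact); this is harmless, since all the handles live in the compact strip $f^{-1}([b,a-\delta])$.
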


\begin{figure}[t] 
\centering\includegraphics[width=0.7\textwidth]{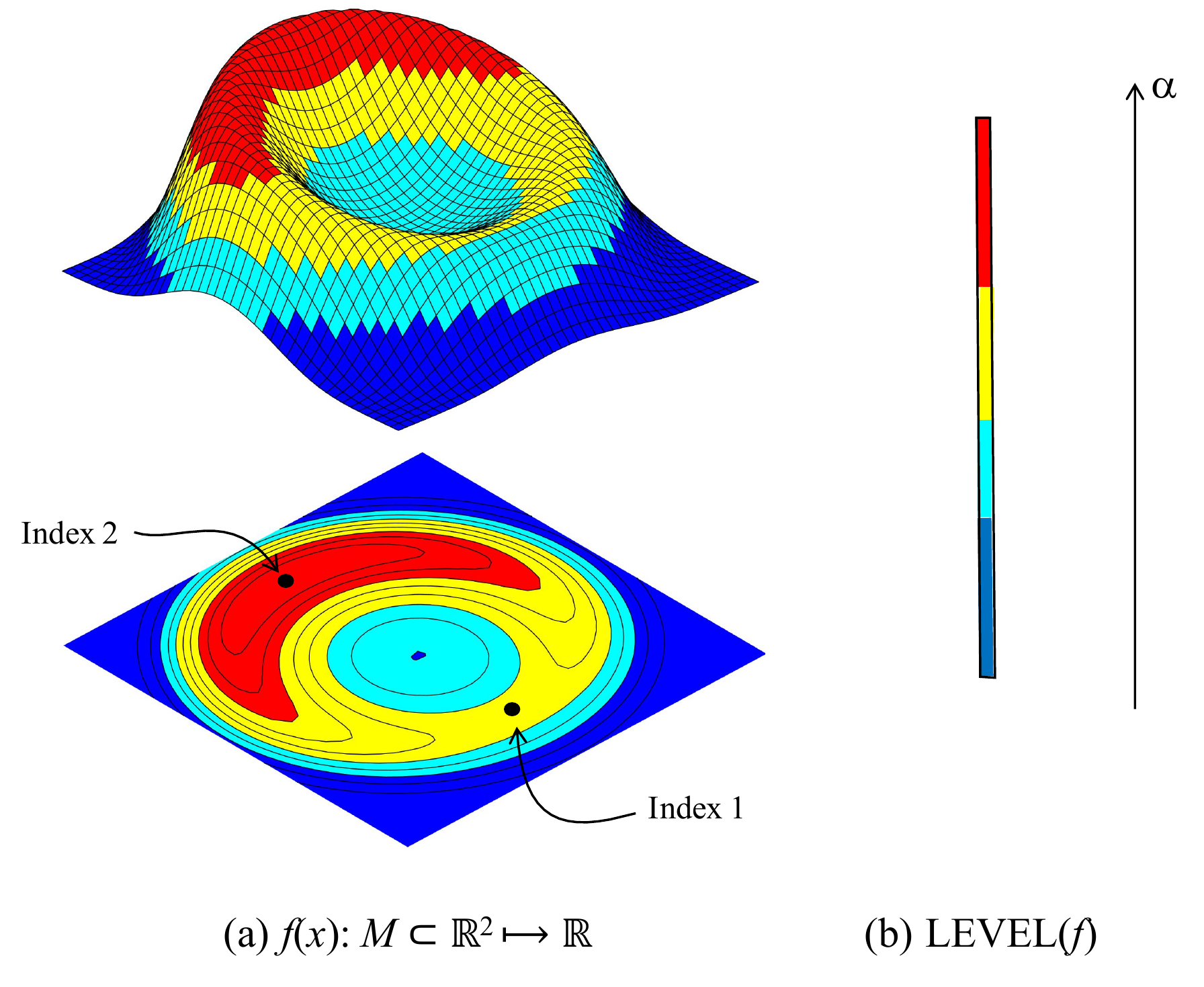}
\caption{Level set tree of a Morse function: An illustration. 
In this figure, the critical point of index $1$ (saddle) does not correspond
to an internal vertex. 
The rest of notations are the same as in Fig.~\ref{fig:Morse_2D}.}
\label{fig:Morse_saddle}
\end{figure}

\medskip
\noindent
Consider an $n$-dimensional compact differentiable manifold $M$, 
and a Morse function $f:M \rightarrow \mathbb{R}$. Recalling the definition of a level set tree in dimension one,
for $p,q \in M$, let $$\underline{f}(p,q):=\sup\limits_{\gamma:p \rightarrow q} \inf\limits_{x\in \gamma} f(x),$$
where the supremum is taken over all continuous curves $\gamma:\, [0,1] \rightarrow M$ such that $\gamma(0)=p$ and $\gamma(1)=q$.
Next, as it was the case when $\dim(M)=1$, we define a {\it pseudo-metric} on $M$ as
\begin{equation}\label{eqn:tree_dist_onM}
d_f(p,q):=\left(f(p)-\underline{f}(p,q)\right)+ \left(f(q)-\underline{f}(p,q)\right),\quad p,q\in\,M.
\end{equation}
We write $p\sim_f q$ if $d_f(p,q)=0$, and observe that $d_f$ is a metric over the quotient space $M/\!\sim_f$. 
Thus, $\left(M/\!\sim_f,d_f\right)$ is a metric space, satisfying Def.~\ref{def:treeL} of a tree. 
This tree will be called the {\it level set tree} of $f$, and  denoted by $\textsc{level}(f)$.
Here, $d_f(p,q)\ge |f(p)-f(q)|$, with $d_f(p,q)= |f(p)-f(q)|$ if and only if 
points $(p/\!\sim_f)$ and $(q/\!\sim_f)$ of $\textsc{level}(f)$ belong to the same lineage.
In particular, if $d_f(p,q)=f(p)-f(q)$, then 
$(p/\!\sim_f)$ is the descendant point to $(q/\!\sim_f)$, and respectively, $(q/\!\sim_f)$ is the ancestral point to $(p/\!\sim_f)$.
Figures~\ref{fig:Morse_2D},\ref{fig:Morse_saddle} show examples of level set trees for 
functions $f$ on $\mathbb{R}^2$.

\begin{ex}[{{\bf Compactness requirement}}]
\label{ex:compact}
The requirement for the manifold $M$ to be compact is necessary to ensure that there 
are no pairs of disjoint closed sets
such that the distance between the two sets equals zero.
As a counterexample, consider a function $f(x,y)=x^2-e^y$ on $M=\mathbb{R}^2$ (Fig.~\ref{fig:Morse_compact}). 
Here, the level set $\cL_0$ consists of two nonintersecting closed regions, 
marked by
gray shading in Fig.~\ref{fig:Morse_compact}(b):
\[A=\{f(x,y)\ge 0, x>0\} = \big\{x \geq e^{y/2}\big\}\]  
and  
\[B=\{f(x,y)\ge 0, x<0\} = \big\{x \leq -e^{y/2}\big\}.\] 
The distance between $A$ and $B$ is zero, as the two sets get arbitrary close
along the line $x=0$ as $y\to-\infty$.
Consider points $p=(e,2) \in A$ and $q=(-e,2) \in B$ marked in
Fig.~\ref{fig:Morse_compact}.
The points $p$ and $q$ are not connected by a continuous path inside $\cL_0$,
since each such a path must intersect the line $x=0$ along which $f<0$. 
Yet, if we were to extend the distance in \eqref{eqn:tree_dist_onM} to $M=\mathbb{R}^2$, 
then $\underline{f}(p,q)=0$ since for any $\delta>0$ there exists a path similar 
to $\gamma$ in Fig.~\ref{fig:Morse_compact}(b), with the tip on the line $x=0$ for large
enough $y$, so that $\gamma\subset \cL_{-\delta}$. 
Consequently, we have $\,d_f(p,q)=0$ implying that the points $p$ and $q$ are equivalent
on the level set tree of $f$, $p=_{\sim_f} q$, albeit they belong to two disconnected
components of $\cL_0$.
\end{ex}

\begin{figure}[t] 
\centering\includegraphics[width=0.9\textwidth]{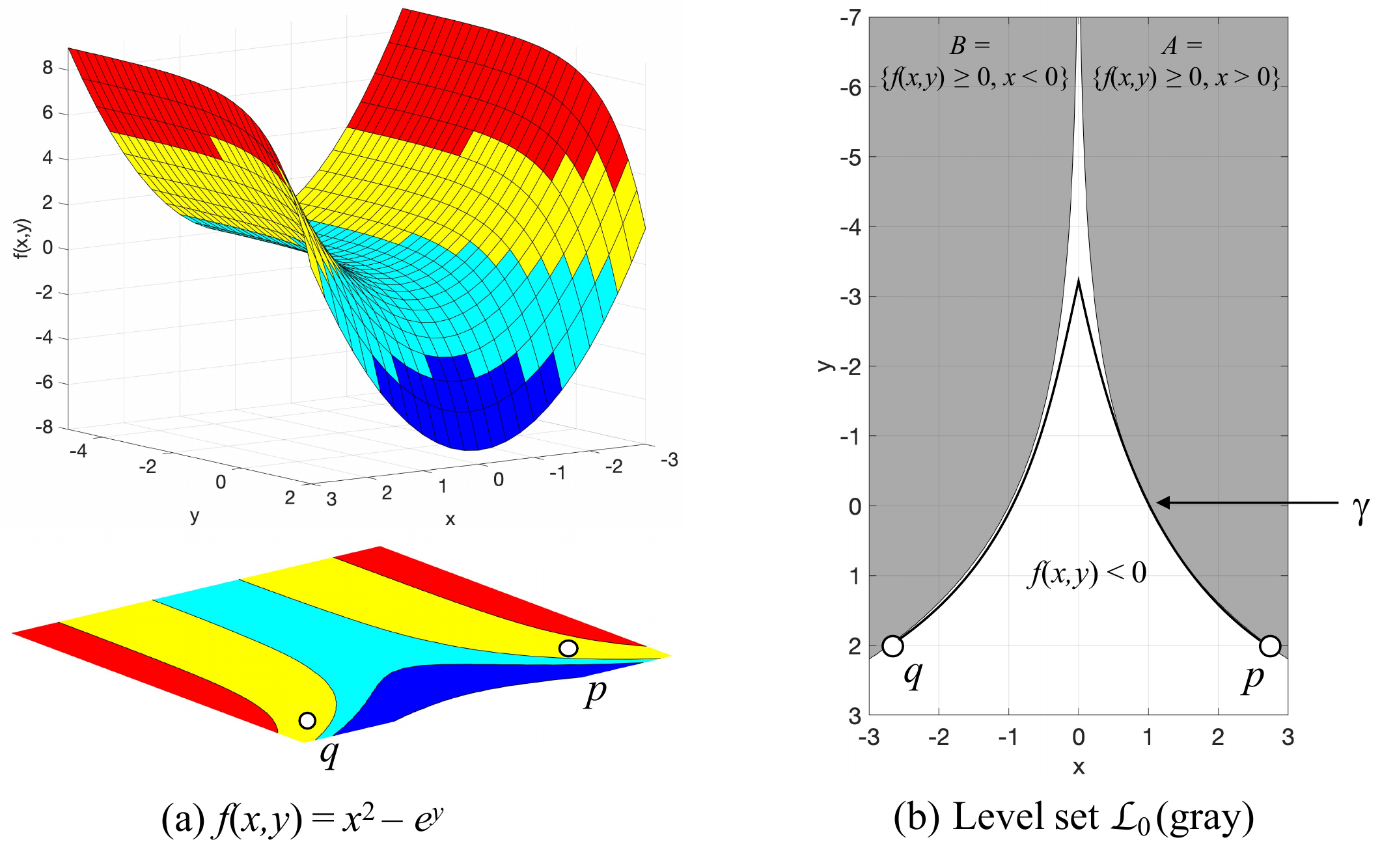}
\caption{Illustration to Example~\ref{ex:compact}. 
The manifold $M$ must be compact to properly define the level set tree of a function 
$f: M\to\mathbb{R}$.
In this example, $M=\mathbb{R}^2$ is not compact. 
This allows for the existence of points $p$ and $q$ such that $d_f(p,q)=0$,
while they belong to disconnected components of $\cL_0$.
}
\label{fig:Morse_compact}
\end{figure}

Naturally, if $f:M \rightarrow \mathbb{R}$ is a Morse function, the critical points of index $n$ 
(local maxima) correspond to the leaves of the level set tree $\textsc{level}(f)$.
As we decrease $\alpha$, new segments of $\cL_\alpha$ appear at the critical 
points of index $n$, and disconnected components of $\cL_\alpha$ merge at 
some critical points of index less than $n$. 
If $M$ is a compact manifold and $f:M \rightarrow \mathbb{R}$ is a Morse function, then by Lem.~\ref{lem:Milnor}
the critical points of index less than $n-1$ cannot be the merger points of separated pieces of $\cL_\alpha$.
Thus, we obtain the following corollary of Lem.~\ref{lem:Milnor}.
\begin{cor}\label{cor:KovMilnor}
Consider an $n$-dimensional compact differentiable manifold $M$, 
and a Morse function $f:M \rightarrow \mathbb{R}$. 
Then, there is a bijection between the leaves of $\textsc{level}(f)$ and the 
critical points of $f$ of index $n$, and a one-to-one (but not necessarily onto) 
correspondence between the internal (non-leaf) vertices of $\textsc{level}(f)$ 
and the critical points of $f$ of index $n-1$.
\end{cor}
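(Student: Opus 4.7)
The plan is to track how the connected components of the super-level sets $\cL_\alpha = \{x \in M : f(x) \ge \alpha\}$ evolve as $\alpha$ decreases from $+\infty$ to the global minimum of $f$. By the pseudo-metric construction \eqref{eqn:tree_dist_onM}, the leaves of $\textsc{level}(f)$ correspond to birth events at which a new connected component of $\cL_\alpha$ appears, while the internal (non-root) vertices correspond to merger events at which two previously disjoint components of $\cL_\alpha$ fuse into one. First I would invoke the standard gradient-flow argument: because $M$ is compact, the flow of a suitable vector field dual to $-df$ gives a diffeomorphism $\cL_{\alpha_2} \to \cL_{\alpha_1}$ whenever $[\alpha_1, \alpha_2]$ contains no critical values, so the collection of connected components is constant between critical values. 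Hence every vertex of $\textsc{level}(f)$ occurs at a critical value of $f$.

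Next I would classify the local contribution of each critical point $p$ by combining the Morse Lemma (Cor.~\ref{cor:Morse}) with Lem.~\ref{lem:Milnor}. If $p$ has index $n$, then locally $f(x) = f(p) - \|x\|^2$, so for $\alpha$ slightly below $f(p)$ a small open ball around $p$ enters $\cL_\alpha$ as a brand-new connected component, producing a new leaf. If $p$ has index $n-1$, then locally $f(x) = f(p) - x_1^2 - \cdots - x_{n-1}^2 + x_n^2$; just below $f(p)$ the local super-level set has exactly two disjoint ``arms'' along $\pm x_n$, which either belong to two distinct global components of $\cL_{f(p)+\delta}$ (giving an honest merger, i.e.~an internal vertex) or already lie in the same global component (giving no tree vertex at all). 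If $p$ has index $\lambda \le n-2$, then applying Lem.~\ref{lem:Milnor} to a small neighborhood of $p$ in which $p$ is the only critical point shows that no pair of points in $\cL_{f(p)+\delta}$ can change connectivity status as $\alpha$ passes through $f(p)$, so neither births nor mergers occur at $p$.

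Assembling these local pictures yields the two correspondences: every leaf arises uniquely from an index-$n$ critical point and every index-$n$ critical point produces exactly one leaf, giving a bijection; every internal vertex arises uniquely from an index-$(n-1)$ critical point, giving the claimed injection, while an index-$(n-1)$ critical point may fail to produce a vertex when its two arms already belong to the same global component.

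The main obstacle will be handling critical points that share a common critical value, since several births or mergers could then occur ``simultaneously'' and one must attribute each tree vertex to a single critical point. The natural fix is to localize: around each critical level $c$ one writes a small $\varepsilon$-neighborhood of $f^{-1}(c)$ as a disjoint union of Morse-chart neighborhoods of the individual critical points $p_1, \ldots, p_m$, computes the local contribution at each $p_i$ using the three cases above, and checks that the births and mergers from different $p_i$ are independent events in the component poset. Secondary care is needed to verify that the tree root (corresponding to the global minimum, an index-$0$ point) is correctly excluded from both correspondences, and that in the index-$n$ case the newly born local component is genuinely disjoint from the rest of $\cL_{f(p)-\varepsilon}$ for $\varepsilon$ small, which follows from isolating $p$ in a Morse chart and using compactness.
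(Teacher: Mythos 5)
Your argument is correct in substance and uses the same two geometric inputs as the paper (the Morse Lemma, Cor.~\ref{cor:Morse}, and Lem.~\ref{lem:Milnor}), but it is organized quite differently. You run a sweep over $\alpha$, tracking births and mergers of components of $\cL_\alpha$, which forces you to justify constancy of the component structure between critical values via gradient flow, to analyze each critical point in a local Morse chart, and to disentangle coincident critical values. The paper instead gives a short global contradiction argument working directly with the pseudo-metric $d_f$ of \eqref{eqn:tree_dist_onM}: if an internal vertex $(c/\!\sim_f)$ sat at a critical point of index $<n-1$, one takes two descendants $p,q$ in different lineages, notes $d_f(p,q)>|f(p)-f(q)|$, and then applies Lem.~\ref{lem:Milnor} once to lift the connecting curve into $\cL_{a-\delta}$ for every $\delta>0$, forcing $d_f(p,q)=|f(p)-f(q)|$ --- a contradiction. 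This bypasses all the bookkeeping about simultaneous critical values and about whether vertices occur only at critical points; on the other hand, your version makes explicit two things the paper treats as immediate, namely that regular values contribute no vertices and that the leaf/index-$n$ bijection comes from the birth events. One point to tighten: in your index-$\le n-2$ case you should not claim that Lem.~\ref{lem:Milnor} preserves the literal components of $\cL_{f(p)+\delta}$; the lemma is stated for the level band $f^{-1}$ of an interval (not a spatial neighborhood of $p$), and its conclusion is only that $p$ and $q$ can be joined inside $\cL_{a-\delta}$ for every $\delta>0$, i.e.\ $\underline{f}(p,q)=\min\{f(p),f(q)\}$. That weaker statement is exactly what identifies $p$ and $q$ as lying on a common lineage of $\textsc{level}(f)$, so you should phrase the ``no merger'' conclusion in terms of the pseudo-metric $d_f$ rather than in terms of components of a fixed super-level set.
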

\begin{proof}
Suppose $c \in M$ is a critical point of $f$ of index less than $n-1$ such that $(c/\!\sim_f)$ 
is an internal (non-leaf) vertex of $\textsc{level}(f)$.
Then, $(c/\!\sim_f)$ is a parent vertex to at least one pair of points $(p/\!\sim_f)$ and $(q/\!\sim_f)$ of $\textsc{level}(f)$ that do not belong to the same lineage, $\underline{f}(p,q)=f(c)$,
and therefore
\be\label{eqn:pqc_descent}
d_f(p,q)=f(p)+f(q)-2f(c)=|f(p)-f(c)|+2(a-f(c))>|f(p)-f(q)|,
\ee
where $a=\min\big\{f(p),f(q)\big\}$.
Thus, since $M$ is a differentiable manifold, there exists a differentiable curve $\gamma:\, [0,1] \rightarrow M$ such that $\gamma(0)=p$ and $\gamma(1)=q$,
and  $\min\limits_{t \in [0,1]}\big(f \circ \gamma(t)\big)=f(c)$. Then, by Lemma \ref{lem:Milnor}, for any $\delta>0$, there exists a differentiable curve 
$\widetilde{\gamma}:\, [0,1] \rightarrow M$ homotopic to $\gamma$
 such that $\widetilde{\gamma}(0)=p$ and $\widetilde{\gamma}(1)=q$, and
 $$\widetilde{\gamma}\big([0,1]\big) \subset \cL_{a-\delta}.$$
 Hence, $$d_f(p,q)\leq f(p)+f(q)-2(a-\delta)=|f(p)-f(q)|+2\delta$$
 for any $\delta>0$. Therefore, $d_f(p,q)=|f(p)-f(q)|$, contradicting \eqref{eqn:pqc_descent}, i.e., contradicting the assumption
 that $(p/\!\sim_f)$ and $(q/\!\sim_f)$ do not belong to the same lineage in $\textsc{level}(f)$.
\end{proof}

\begin{Rem}
Corollary~\ref{cor:KovMilnor} asserts that while every internal vertex of the level set tree
corresponds to a critical point of index $1$, not every critical point of index $1$ may 
correspond to an internal vertex. 
Figure~\ref{fig:Morse_2D} shows an example of a function where every critical point
of index $1$ (saddle) corresponds to an internal vertex. 
Figure~\ref{fig:Morse_saddle} shows an example of a function where the critical point
of index $1$ (saddle) does not corresponds to an internal vertex. 
\end{Rem}

\medskip
\noindent
Finally, Cor.~\ref{cor:KovMilnor} together with Morse Lemma (Cor.~\ref{cor:Morse}) 
imply the following lemma.
\begin{lem}\label{lem:KovMorse}
Consider an $n$-dimensional compact differentiable manifold $M$, and a 
Morse function $f:M \rightarrow \mathbb{R}$. 
Suppose there is no two distinct critical points $p$ and $q$ of index $n-1$ with the same value $f(p)=f(q)$.
Then, the level set tree $\textsc{level}(f)$ is binary.
\end{lem}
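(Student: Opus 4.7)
The plan is to show that each internal vertex of $\textsc{level}(f)$ has exactly two children. By Corollary \ref{cor:KovMilnor}, every internal vertex $v$ of $\textsc{level}(f)$ corresponds to a critical point $c \in M$ of index $n-1$. I will use the Morse Lemma (Cor.~\ref{cor:Morse}) to compute the local model of $f$ around $c$, then combine this with the hypothesis on distinct index-$(n-1)$ critical values to conclude that exactly two components of $\cL_\alpha$ merge at $c$ as $\alpha$ decreases through $f(c)$.

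First I would fix a critical point $c$ of index $n-1$ and apply Corollary \ref{cor:Morse} to obtain local coordinates $(x_1,\ldots,x_n)$ on an open neighborhood $U$ of $c$ in which
\[
f(x) = f(c) - x_1^2 - \cdots - x_{n-1}^2 + x_n^2.
\]
Setting $\epsilon := f(c) - \alpha > 0$, the local level set $U \cap \cL_\alpha$ is the set where $x_n^2 \geq -\epsilon + \sum_{i<n} x_i^2$. A direct inspection (restricting to a small enough product neighborhood) shows that for $\alpha < f(c)$ close to $f(c)$, $U \cap \cL_\alpha$ has exactly two connected components, characterized by the sign of $x_n$ on the region where $\sum_{i<n}x_i^2 > \epsilon$; whereas for $\alpha > f(c)$ close to $f(c)$, $U \cap \cL_\alpha \cap U$ is connected (the two arms are joined through the region $\sum_{i<n} x_i^2 < \epsilon$, where any $x_n$ is admissible). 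So locally, $c$ is precisely a binary merging point: two components of $\cL_\alpha$ unite into one as $\alpha$ crosses $f(c)$ from above.

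Next I would argue that this local picture globalizes: just above $\alpha = f(c)$ there are exactly two components of $\cL_\alpha$ whose closures contain $c$, and just below $\alpha = f(c)$ these two components have merged into one, while all other components of $\cL_\alpha$ are unaffected. Here compactness of $M$ (together with the fact that the critical values of $f$ form a discrete set, by Morse's Theorem and compactness) guarantees that for $\alpha$ in a small punctured neighborhood of $f(c)$ no other critical values intervene, and Lemma \ref{lem:Milnor} ensures that no component merging occurs at critical points of index strictly less than $n-1$. The hypothesis that no two distinct index-$(n-1)$ critical points share the same value then isolates $c$ as the unique point responsible for the merger at level $f(c)$, so no other pair of components merges simultaneously with the two identified above. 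Translating this back to $\textsc{level}(f)$, the vertex $v$ corresponding to $c$ is the parent of exactly two distinct subtrees, i.e.\ $v$ has degree two in the offspring sense.

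The main obstacle is the local step of showing that the Morse chart yields precisely two (rather than one or more) components of $U \cap \cL_\alpha$ meeting at $c$; this is where the fact that the unstable direction is one-dimensional (index $n-1$ gives a single positive square $x_n^2$) is essential, and where the possibility of a ``non-vertex'' saddle, as in Figure \ref{fig:Morse_saddle}, has to be carefully excluded by verifying that the two local components indeed lie in distinct global components of $\cL_\alpha$ for $\alpha$ slightly above $f(c)$ — this uses compactness of $M$ and a sufficiently small choice of neighborhood to prevent the two arms from being already connected away from $c$. Once this is in hand, combining the single-merger-at-a-time conclusion with Corollary \ref{cor:KovMilnor} shows every internal vertex of $\textsc{level}(f)$ has exactly two children, and hence $\textsc{level}(f)$ is binary.
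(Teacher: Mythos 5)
Your proof follows essentially the same route as the paper: Corollary~\ref{cor:KovMilnor} identifies internal vertices with index-$(n-1)$ critical points, and the Morse normal form $f(c)-x_1^2-\cdots-x_{n-1}^2+x_n^2$, having a single positive square, forces any merger of components of $\cL_\alpha$ to occur along the $x_n$-axis and hence to involve at most two components; your additional discussion of globalization and of the role of the distinct-critical-values hypothesis elaborates what the paper leaves implicit. One correction: in your local computation the two cases are swapped --- with $\epsilon=f(c)-\alpha$, the set $\{x_n^2\ge \sum_{i<n}x_i^2-\epsilon\}$ is connected when $\epsilon>0$ (i.e.\ $\alpha<f(c)$), since the region $\sum_{i<n}x_i^2\le\epsilon$ admits all $x_n$, and it has two components ($x_n>0$ and $x_n<0$) when $\epsilon<0$ (i.e.\ $\alpha>f(c)$); your concluding sentence that two components unite into one as $\alpha$ crosses $f(c)$ from above is the correct statement, so only the intermediate case labels need fixing.
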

\begin{proof}
Suppose $p$ is a critical point of $f$ corresponding to an internal (non-leaf) 
vertex in $\textsc{level}(f)$. Then, by Corollary \ref{cor:KovMilnor},
$p$ has index $\lambda=n-1$. 
Corollary \ref{cor:Morse} asserts that there exists and open neighborhood $U$ of $p$ and local coordinates $(x_1,\hdots,x_n)$ on $U$ 
with $$\big(x_1(p),\hdots,x_n(p)\big)=(0,\hdots,0)$$ such that in this coordinates,
$$f(x)=f(p)-x_1^2-\hdots-x_{n-1}^2+x_n^2.$$
Hence, as $\alpha$ decreases, the merger of distinct components of $\cL_\alpha$ 
happens along the $x_n$-coordinate axis. 
This allows for the merger of at most two components.
\end{proof}


\medskip
\noindent
Vladimir Arnold studied an alternative (albeit similar in spirit) construction of level set trees 
that he called the {\it graph of Morse function} $f:M \rightarrow \mathbb{R}$, 
concentrating mainly on the spheres $M=S^2$; 
see \cite{Arnold1959,Arnold2006,Arnold2007} and references therein.
Arnold has shown that these graphs are binary trees as well. 
These trees are constructed in such a way that both the local minima (index $0$) 
and the local maxima (index $2$) points of $f$ correspond to the leaves, 
while the saddle points (index $1$) correspond to the internal (non-leaf) vertices. 
The goal of Arnold's study was to shed light on the
problem of classifying all possible configurations of the horizontal lines on the topographical maps formulated by A. Cayley in 1868.
In \cite{Arnold2007}, Arnold quotes a communication with Morse: {\it M. Morse has told me, in 1965, that the problem of the description of the possible combinations of several critical points of a smooth function on a manifold looks hopeless to him. L.\,S.~Pontrjagin and H.~Whitney were of the same opinion.}
Arnold's work of topological classification of level lines for Morse functions on $S^2$ enriched the collection of questions accompanying the Hilbert's sixteenth problem, which promoted the study of the topological structures of the level lines of real polynomials $p(x)$ over $x \in \mathbb{R}^n$, \cite{Hilbert,Arnold2006,Arnold2007}.

\section{Kingman's coalescent process}\label{sec:Kingman}
We refer to a general definition of a coalescent process in Section \ref{sec:coalescent}.
Recall that in an $N$-particle coalescent process, a pair of clusters with masses $i$ 
and $j$ coalesces at the rate $K(i,j)/N$.
The mass-independent rate $K(i,j)=1$ defines the Kingman's coalescent process \cite{Kingman82b}.
The following result establishes a weak form of Horton law for Kingman's coalescent. 
\begin{thm}[{\bf Root-Horton law for Kingman's coalescent, \cite{KZ17ahp}}]\label{Mthm}
 Consider Kingman's $N$-coalescent process and its tree representation
$T^{(N)}_{\rm K}$. Let $N_j=N_j^{(N)}$ denote the number of branches of Horton-Strahler order $j$ in the tree $T^{(N)}_{\rm K}$. 
\begin{description}
  \item[(i)] The asymptotic Horton ratios $\cN_j$ exist  and are finite for all $j \in \mathbb{N}$, as in Def. \ref{def:HortonWellDef}.
That is, for each $j$, the following limit exists and is finite:
\be\label{eq:Nj}
N_j^{(N)}/N\stackrel{p}{\to}\cN_j\quad\text{ as }{N\to\infty}.
\ee
  \item[(ii)] Furthermore, $\cN_j$ satisfy the root-Horton law (Def. \ref{def:HortonList}): 
\[\lim\limits_{j \rightarrow \infty}\left(\cN_j \right)^{-{1 \over j}}=R\]
with Horton exponent $2 \le R \le 4$.
\end{description}
 \end{thm}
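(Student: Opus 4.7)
The plan is to prove Thm.~\ref{Mthm} by reducing the branch-counting problem to a hydrodynamic limit. First, I would set up the time-dependent process $\bigl(N_j^{(N)}(t)\bigr)_{j\ge 1}$, where $N_j^{(N)}(t)$ records the number of clusters of Horton--Strahler order $j$ alive at time $t$ in the Kingman $N$-coalescent (pairs merging at rate $1/N$). This is a density-dependent continuous-time Markov chain whose only transitions are: two order-$j$ clusters coalesce at rate $\binom{N_j}{2}/N$, decreasing $N_j$ by $2$ and increasing $N_{j+1}$ by $1$; or an order-$i$ cluster is absorbed into an order-$j$ cluster for $i<j$, at rate $N_iN_j/N$, decreasing $N_i$ by $1$. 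The total cluster density $s^{(N)}(t):=\sum_k N_k^{(N)}(t)/N$ satisfies $\dot s=-s^2/2$ with $s(0)=1$, yielding the a priori bound $s(t)\le 2/(t+2)$.

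Applying Kurtz's theorem (Appendix~\ref{sec:kurtz}, extended to $\ell^1(\mathbb{R})$ exactly as in the proof of Thm.~\ref{thm:hydro}) then gives $N_j^{(N)}(\cdot)/N \stackrel{p}{\to} n_j(\cdot)$, uniformly on compact time intervals, where $n_j$ solves the Smoluchowski--Horton system $\dot n_j=-n_j\sum_{k\ge j}n_k + n_{j-1}^2/2$ with $n_1(0)=1$, $n_j(0)=0$ for $j\ge 2$; the partial sums $g_j(t):=\sum_{k\ge j}n_k(t)$ satisfy the triangular system~\eqref{eq:ODEggg}. The \emph{total} branch counts decompose as $N_j^{(N)}=N\mathbf{1}_{j=1}+\#\{\text{order-}(j-1)\text{ pair mergers}\}$; the pair-merger counts are path functionals whose normalized cumulative intensities $\tfrac{1}{2}\int_0^t \bigl(N_{j-1}^{(N)}(s)/N\bigr)^2 ds$ converge in probability to $\tfrac{1}{2}\int_0^t n_{j-1}(s)^2 ds$. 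A tail-truncation argument using $s(t)\le 2/(t+2)$ to control contributions from $t\ge T$ then yields part~(i) with
\[
\cN_j=\tfrac{1}{2}\int_0^{\infty} n_{j-1}(t)^2\,dt=\tfrac{1}{2}\int_0^{\infty} g_j(t)^2\,dt,
\]
the second equality following from $g_{j+1}(0)=g_{j+1}(\infty)=0$ and integration of \eqref{eq:ODEggg}. The change of variable $x=t/(t+2)$ together with $h_{j-1}(x)=1-g_j(t)/g_1(t)$ recovers the alternative form~\eqref{eq:ODEhhh}.

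For part~(ii), the lower bound $R\ge 2$ is immediate from the structural inequality $N_j[T]\ge 2N_{j+1}[T]$, which passes to the limit as $\cN_j\ge 2\cN_{j+1}$ and hence $\cN_j\le 2^{1-j}$. For the upper bound, integrating \eqref{eq:ODEggg} over $[0,\infty)$ and using the vanishing boundary values gives the identity $\int_0^{\infty}g_j g_{j+1}\,dt=\tfrac{1}{2}\int_0^{\infty}g_j^2\,dt=\cN_j$, whence the Cauchy--Schwarz inequality yields
\[
\cN_j=\int_0^{\infty}g_j g_{j+1}\,dt\;\le\; 2\sqrt{\cN_j\,\cN_{j+1}},\qquad\text{so}\qquad \cN_{j+1}\ge \cN_j/4\ge 4^{1-j},
\]
giving $\limsup_j \cN_j^{-1/j}\le 4$, while the previous bound gives $\liminf_j \cN_j^{-1/j}\ge 2$.

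The principal obstacle will be to promote the two-sided geometric estimate $2\le \liminf\cN_j^{-1/j}\le \limsup\cN_j^{-1/j}\le 4$ to the existence of a single limit $R$. The sandwich bound is not by itself sufficient, so I would look for a self-similar profile $g_j(t)\sim \alpha\,R^{-j}\,\psi(\lambda^{-j} t)$ satisfying an asymptotic eigenproblem distilled from~\eqref{eq:ODEggg}, and try to control the approach to this profile via a Lyapunov functional written in the rescaled variables $h_j$ of \eqref{eq:ODEhhh}, where the fixed-point structure is cleaner because the $h_j$'s live on the bounded interval $[0,1]$. Establishing monotonicity of $j\mapsto \cN_{j+1}/\cN_j$ (or at least a contraction under a single index shift) would then close the argument; this spectral analysis of an infinite nonautonomous triangular system is the delicate step, and it is precisely what yields the numerical value $R\approx 3.0438\ldots$ mentioned in Sect.~\ref{sec:hydro}.
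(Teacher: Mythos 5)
Your treatment of part (i) is essentially the paper's own argument (Sect.~\ref{sec:hydro}): the density-dependent Markov chain, Kurtz's theorem, the tail bound coming from $\eta(t)=2/(t+2)$, and the identity $\cN_j=\tfrac12\int_0^\infty g_j^2\,dt$ all appear in Lem.~\ref{lem3} and Prop.~\ref{prop:properties_g}. Your derivation of $2\le\liminf_j\cN_j^{-1/j}\le\limsup_j\cN_j^{-1/j}\le 4$ likewise reproduces Prop.~\ref{prop:properties_g}(d),(e).

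The gap is in part (ii): the existence of the limit $\lim_j\cN_j^{-1/j}$ is the entire content of the theorem beyond the sandwich bound, and you have not proved it --- you only name candidate strategies (a self-similar profile, a Lyapunov functional, monotonicity of $j\mapsto\cN_{j+1}/\cN_j$). Moreover, the specific route you propose to ``close the argument,'' namely convergence or monotonicity of the ratios $\cN_j/\cN_{j+1}$, is the \emph{ratio}-Horton law, which is strictly stronger than the root-Horton law being claimed and is explicitly left open in Sect.~\ref{open} (only numerical evidence for it is reported). The paper avoids this by a different device: Prop.~\ref{prop:one} sandwiches $\big\|1-h_{k+1}/h\big\|_{L^2[0,1]}^2\le 1/h_{k+1}(1)\le\big\|1-h_k/h\big\|_{L^2[0,1]}^2$, which reduces the root-Horton law for $\cN_k=\big\|1-h_{k-1}/h\big\|_{L^2[0,1]}^2$ to showing that $\gamma_k:=h_k(1)/h_{k+1}(1)$ converges (Lem.~\ref{lem:h1}); convergence of $\gamma_k$ is then obtained from the monotonicity $\gamma_k\le\gamma_{k+1}\le 1$ (Lem.~\ref{gamma}), proved via the comparison functions $V_{k,\gamma}$ and the single-sign-change Lemma~\ref{positivezero}. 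That reduction to boundary values $h_k(1)$ and the induction behind Lemma~\ref{positivezero} are the essential ideas missing from your proposal; without them, or a fully worked-out substitute, the existence of $R$ is not established.
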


\subsection{Smoluchowski-Horton ODEs for Kingman's coalescent}  
\label{informal}

In this section we provide a heuristic derivation of  Smoluchowski-type ODEs for the number of Horton-Strahler branches in the coalescent 
tree $T^{(N)}_{\rm K}$ and consider the asymptotic version of these equations as $N\to\infty$.
Section~\ref{sec:hydro} formally establishes the validity of the hydrodynamic limit.

Recall that $K(i,j) \equiv 1$. Let $|\Pi^{(N)}_t|$ denote the total number of clusters at time $t \geq 0$, and let  $\eta_{(N)}(t):=|\Pi^{(N)}_t|/N$ be the total number of clusters relative to the system size $N$.
Then $\eta_{(N)}(0)=N/N=1$ and $\eta_{(N)}(t)$ decreases by $1/N$ with each 
coalescence of clusters; this happens with the rate
$${1 \over N} \, \binom{N\, \eta_{(N)}(t)}{2}={\eta_{(N)}^2(t) \over 2}\cdot N+o(N),
\quad{\rm as~} N\to\infty,$$
since $1/N$ is the coalescence rate for any pair of clusters regardless of their masses. 
Informally, this implies that the large-system limit relative number of clusters 
$\displaystyle\eta(t)=\lim_{N\to\infty}\eta_{(N)}(t)$ satisfies the following ODE:
\begin{eqnarray} \label{Aeta_t}
{d \over dt} \eta(t)=-\frac{\eta^2(t)}{2}.
\end{eqnarray}
The initial condition $\eta(0)=1$ implies a unique solution $\eta(t)=2/(2+t)$.
The existence of the limit $\eta(t)$ is established in Lem.~\ref{lem3}(a) of Sect. \ref{sec:hydro}.

Next, for any $k \in \mathbb{N}$ we write $\eta_{k,N}(t)$ for the relative number of 
clusters (with respect to the system size $N$) that correspond to branches 
of Horton-Strahler order $k$ in tree $T^{(N)}_{\rm K}$ 
at time $t$. 
Initially, each particle represents a leaf of Horton-Strahler order $1$. 
Accordingly, the initial conditions are set to be, using Kronecker's delta notation, 
$$\eta_{k,N}(0)=\delta_1(k).$$ 
Below we describe the evolution of $\eta_{k,N}(t)$ using the definition of Horton-Strahler orders.

Observe that $~\eta_{k,N}(t)$ increases by $1/N$ with each coalescence of clusters of Horton-Strahler 
order $k-1$ that happens with the rate
$${1 \over N} \, 
\binom{N\, \eta_{k-1,N}(t)}{2}={\eta_{k-1,N}^2(t) \over 2} \cdot N+o(N).$$
Thus ${\eta_{k-1,N}^2(t) \over 2}+o(1)$ is the instantaneous rate of increase of $\eta_{k,N}(t)$.

Similarly,  $~\eta_{k,N}(t)$ decreases by $1/N$ when a cluster of order $k$ coalesces 
with a cluster of order strictly higher than $k$ that happens with the rate
$$\eta_{k,N}(t) \, \left(\eta_{(N)}(t)-\sum\limits_{j=1}^{k} \eta_{j,N}(t) \right)\cdot N,$$
and it decreases by $2/N$ when a cluster 
of order $k$ coalesces with another cluster of order $k$ that happens with the rate
$${1 \over N} \, \binom{N\, \eta_{k,N}(t)}{2} ={\eta_{k,N}^2(t) \over 2}\cdot N+o(N).$$
Thus the instantaneous rate of decrease of $\eta_{k,N}(t)$ is
$$\eta_{k,N}(t) \, \left(\eta_{(N)}(t)-\sum\limits_{j=1}^{k} \eta_{j,N}(t) \right)
+\eta^2_{k,N}(t)+o(1).$$

We can informally write the limit rates-in and the rates-out for the clusters 
of Horton-Strahler order via the following {\it Smoluchowski-Horton system} of ODEs:
\begin{eqnarray} \label{Aeta}
{d \over dt} \eta_k(t)=\frac{\eta^2_{k-1}(t)}{2}-\eta_k(t) 
\, \left(\eta(t)-\sum\limits_{j=1}^{k-1} \eta_j(t) \right),
\end{eqnarray}
with the initial conditions $\eta_k(0)=\delta_1(k)$. 
Here we interpret $\displaystyle\eta_k(t)$ as the hydrodynamic limit of $\eta_{k,N}(t)$ as $N\to\infty$, 
which will be rigorously established in Lem. \ref{lem3}(b) of Sect. \ref{sec:hydro}.  We also let $\eta_0 \equiv 0$.

Since $\eta_k(t)$ has the instantaneous rate of increase $\eta_{k-1}^2(t)/2$, 
the relative total number of clusters corresponding to branches of Horton-Strahler order  $k$ is then
\be
\label{cNj}
\cN_k=\delta_1(k)+\int\limits_0^{\infty} {\eta^2_{k-1}(t) \over 2} dt.
\ee  
This equation has a simple heuristic interpretation.
Specifically, according to the Horton-Strahler rule \eqref{eq:HSorder}, 
a branch of order $k>1$ can only be created by merging two 
branches of order $k-1$.
In Kingman's coalescent process these two branches 
are selected at random from all pairs of branches of order $k-1$ that
exist at instant $t$. 
As $N$ goes to infinity, the asymptotic density of a pair of branches
of order $(k-1)$, 
and hence the instantaneous intensity 
of newly formed branches of order $k$, is $\eta^2_{k-1}(t)/2$.  
The integration over time gives the relative total number of order-$k$ branches.
The validity of equation \eqref{cNj} is established within the proof of Thm.~\ref{Mthm}(i) 
that follows Lem.~\ref{lem3}.

It is not hard to compute the first three terms of the sequence 
$\cN_k$ by solving equations (\ref{Aeta_t}) and (\ref{Aeta}) 
in the first three iterations:
$$\cN_1=1, \quad \cN_2={1 \over 3}, \quad \text{ and } 
\quad \cN_3={e^4 \over 128}-{e^2 \over 8}+{233 \over  384}= 0.109686868100941\hdots$$
Hence, we have 
${\cN_1/ \cN_2}={3}$ and 
${\cN_2/ \cN_3}= 3.038953879388\dots$ 
Our numerical results yield, moreover,
$$\lim\limits_{k \rightarrow \infty}\left(\cN_k \right)^{-{1 \over k}}
=\lim\limits_{k \rightarrow \infty}{\cN_{k} \over \cN_{k+1}}=3.0438279\dots$$

\subsection{Hydrodynamic limit} \label{sec:hydro}
This section establishes the existence of the asymptotic ratios 
$\cN_k$ of \eqref{eq:Nj} as well as the validity of the equations~\eqref{Aeta_t},
\eqref{Aeta} and \eqref{cNj} in a hydrodynamic limit. 
We refer to Darling and Norris \cite{RDJN08} for a survey of techniques 
for establishing convergence of a Markov chain to the solution of a differential equation.

Notice that if the first $k-1$ functions $\eta_1(t),\hdots,\eta_{k-1}(t)$ 
are given, then (\ref{Aeta}) is a linear equation in $\eta_k(t)$. 
This {\it quasilinearity} implies the existence and uniqueness of a solution.

We now proceed with establishing a hydrodynamic limit for the 
Smoluchowski-Horton system of ODEs \eqref{Aeta}.  
Let $$\eta_{k,N}(t):={N_k(t) \over N}~~~\text{ and }~~~g_{k,N}(t):=\eta_{(N)}(t)-\sum\limits_{j:j<k}\eta_{j,N}(t).$$ 

\begin{lem}\label{lem3}
Let $\eta_{(N)}(t)$ be the relative total number of clusters and $\eta(t)$  be the solution to equation (\ref{Aeta_t}) with the initial 
condition $\eta(0)=1$. 
Let $\eta_{k,N}(t)$ denote the relative number of clusters that correspond to 
branches of Horton-Strahler order $k$ and let functions $\eta_k(t)$ solve the 
system of equations \eqref{Aeta} with the initial conditions $\eta_k(0)=\delta_1(k)$. 
Then, as $N\to\infty$,
\begin{description}
  \item[(a)] $~\big\|\eta_{(N)}(t)-\eta (t) \big\|_{L^\infty [0,\infty)} \stackrel{p}{\to} 0$;
  \item[(b)] $~\|\eta_{k,N}(t)-\eta_k(t)  \|_{L^\infty [0,\infty)} \stackrel{p}{\to}  0$,\quad $\forall k\ge1$.
\end{description}
\end{lem}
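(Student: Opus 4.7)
The plan is to apply the density-dependent population process convergence theorem of Kurtz (Theorem~\ref{kurtzT} in Appendix~\ref{sec:kurtz}), in the same spirit as the proof of Theorem~\ref{thm:hydro}, and then upgrade convergence on compact intervals to uniform convergence on $[0,\infty)$ using tail bounds that follow from the monotonicity of $\eta_{(N)}(t)$. The triangular structure of \eqref{Aeta} (the right-hand side of the equation for $\eta_k$ depends only on $\eta_1,\ldots,\eta_{k-1}$) allows an inductive argument that avoids any infinite-dimensional functional analysis.

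For part~(a), note that $\eta_{(N)}(t)$ is a pure-death Markov chain on $\{k/N : 0\le k \le N\}$ with transition rate from $\eta_{(N)}$ to $\eta_{(N)}-1/N$ equal to $\tfrac{1}{N}\binom{N\eta_{(N)}}{2} = N\cdot\tfrac{1}{2}\eta_{(N)}^2 + O(1)$. This matches the framework of Theorem~\ref{kurtzT} with Lipschitz drift $F(\eta)=-\eta^2/2$ on any compact subset of $[0,1]$, so Kurtz's theorem yields
\[
\sup_{t\in[0,T]} \bigl|\eta_{(N)}(t) - \eta(t)\bigr| \stackrel{p}{\to} 0 \quad \text{for every fixed } T>0.
\]
To extend to $t\in[0,\infty)$, use that $\eta_{(N)}(t)$ is nonincreasing in $t$ and $\eta(t)=2/(2+t)\to 0$: given $\varepsilon>0$, pick $T$ with $\eta(T)<\varepsilon/2$, then $\sup_{t\ge T}\eta_{(N)}(t)\le \eta_{(N)}(T) < \varepsilon$ with high probability, and similarly $\sup_{t\ge T}\eta(t)<\varepsilon/2$.

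For part~(b), I would induct on $k$. Assuming part~(a) and the inductive hypothesis that $\eta_{j,N}\to \eta_j$ uniformly in probability on $[0,\infty)$ for $j<k$, apply Kurtz's theorem to the joint Markov chain $(\eta_{(N)},\eta_{1,N},\ldots,\eta_{k,N})$, whose transition rates — writing $g_{k,N}=\eta_{(N)}-\sum_{j<k}\eta_{j,N}$ for the relative number of clusters of order $\geq k$ — are bounded and Lipschitz in the state on compact subsets of $[0,1]^{k+1}$ and match the drift prescribed by \eqref{Aeta_t} and \eqref{Aeta}. This yields convergence uniformly on compact $[0,T]$. To pass to $[0,\infty)$, use the deterministic bound $0\le \eta_{k,N}(t)\le \eta_{(N)}(t)$ (since an order-$k$ cluster is a cluster) together with the analogous bound $0\le\eta_k(t)\le \eta(t)$ obtained from the ODE system; both upper bounds are already controlled uniformly in $t\ge T$ by part~(a) and its deterministic counterpart.

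The main obstacle is the uniform-in-$t$ aspect of the statement, since Kurtz's theorem as stated in the appendix only yields convergence on compact time intervals; the remedy is the monotonicity of $\eta_{(N)}$ plus the pointwise decay of the limit, which together force a tail sandwich that is uniform in $N$. A secondary technical point is checking the Lipschitz and boundedness conditions of Theorem~\ref{kurtzT} at each induction step; this is straightforward because each $\eta_{k,N}$ lives in $[0,1]$ and the drift coefficients in \eqref{Aeta} are quadratic polynomials in variables from this bounded set, so the conditions analogous to \eqref{eq:beta}--\eqref{eq:Lipschitz} used in the proof of Theorem~\ref{thm:hydro} hold automatically on the compact set $[0,1]^{k+1}$.
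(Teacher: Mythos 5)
Your proposal is correct and follows essentially the same route as the paper: apply Kurtz's density-dependent population process theorem to the finite-dimensional joint Markov chain of the order-wise cluster counts to obtain uniform convergence on compact time intervals, then control the tail $[T,\infty)$ via the monotonicity of $\eta_{(N)}$ together with the sandwich $0\le\eta_{k,N}\le\eta_{(N)}$ and $0\le\eta_k\le\eta$. The only cosmetic differences are that the paper treats all orders up to a fixed $K$ in a single application of Kurtz's theorem rather than by induction, and it bounds ${\sf P}\big(\eta_{(N)}(T)>\epsilon\big)$ by Markov's inequality applied to the merger time $T_m$ with ${\sf E}[T_m]=2m/(N-m)$ rather than by invoking the already-established convergence at the single time $T$; both devices are equally valid.
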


\begin{proof}
We adopt here the approach of \cite{KOY} that uses the weak limit law established in 
\cite[Theorem 2.1, Chapter 11]{EK86} and \cite[Theorem 8.1]{Kurtz81}; it is 
briefly explained in Appendix~\ref{sec:kurtz} of this manuscript.
This approach is different from the original proof given in \cite{KZ17ahp}, and also from the method developed in Norris \cite{Norris99} for the Smoluchowski equations.

\medskip
\noindent
For a fixed positive integer $K$, let 
$$\hat{X}_N(t)=\Big(N_1(t),N_2(t),\hdots, N_K(t), N(t) \Big) \in \mathbb{Z}_+^{K+1}$$
with $\hat{X}_N(0)=Ne_1$.
The process $\hat{X}_N(t)$ is a finite dimensional Markov process.
Its transition rates can be found using the formalism (\ref{tintK}) 
for density dependent population processes.  
Specifically, let $x=(x_1,x_2,\hdots, x_{K+1})$. 
Then, for any $1\leq k \leq K$, the change vector $\ell=- e_k-e_{K+1}$ corresponding to a merger of a cluster of order $k$ into a cluster of order higher than $k$ has the rate
$$q^{(n)}(x , x +\ell)={1 \over N}x_k \left(x_{K+1}-\sum\limits_{j=1}^k x_j\right) = N \beta_\ell \left({x \over N}\right),$$
where $\beta_\ell (x)=x_k \left(x_{K+1}-\sum\limits_{j=1}^k x_j\right)$. 
For a given $k$ such that $1\leq k \leq K$, the change vector 
\[\ell=-2e_k+e_{k+1}{\bf 1}_{k<K}-e_{K+1}\] 
corresponding to a merger of a pair of clusters of order $k$
is assigned the rate
\be\label{eqn:rateMerge}
q^{(n)}(x , x +\ell)={1 \over N} \left[\frac{x_k^2}{2}-\frac{x_k}{2}\right]= N \left[ \beta_{\ell}\left({x \over N}\right) + O\left( \frac{1}{n} \right) \right],
\ee
where $\beta_{\ell} (x)=\frac{x_k^2}{2}$.
Finally, the change vector $\ell=-e_{K+1}$ corresponding to a merger of two clusters, both of order greater than $K$, is assigned the rate
$$q^{(n)}(x , x +\ell)={1 \over N} \left[\frac{x_{K+1}^2}{2}-\frac{x_{K+1}}{2}\right]= N \left[ \beta_{\ell}\left({x \over N}\right) + O\left( \frac{1}{n} \right) \right],$$
where $\beta_{\ell} (x)=\frac{x_{K+1}^2}{2}$.

\bigskip
\noindent
By Thm.~\ref{kurtzT}, $X_N(t)=N^{-1}\hat{X}_N(t)$ converges to $X(t)$ as in (\ref{KurtzAS}), where $X(t)$ satisfies (\ref{KurtzDE}) with
\begin{align}\label{eqn:Kbl}
F(x) &:= \sum\limits_\ell \ell \beta_\ell (x)= \sum_{k=1}^K  x_k \left(x_{K+1}-\sum\limits_{j=1}^k x_j\right)[-e_k-e_{K+1}] \nonumber\\
&\qquad \qquad \qquad \qquad +{1 \over 2}\sum_{k=1}^{K+1} x_k^2 [-2e_k{\bf 1}_{k\leq K}+e_{k+1}{\bf 1}_{k<K}-e_{K+1}] \nonumber \\
&= \sum_{k=1}^K \left({x_{k-1}^2 \over 2}-x_k\left(x_{K+1}-\sum\limits_{j=1}^{k-1} x_j\right)\right)e_k-\frac{x_{K+1}^2}{2}e_{K+1} , 
\end{align}
where we let $x_{-1}=0$ at all times.
Here, $F(x)$ naturally satisfies the Lipschitz continuity conditions (\ref{KurtzLipschitz}), and the initial conditions $X(0)=X_n(0)=e_1$.

Therefore, for a given integer $K>0$ and a fixed real $T>0$, equation (\ref{KurtzDE}) in 
Thm.~\ref{kurtzT}  with $F(x)$ as in (\ref{eqn:Kbl})  yields
\be\label{eqn:ASall}
\lim\limits_{N \to \infty} \sup\limits_{s \in [0,T]} \left|N^{-1}\eta_{(N)}(s)-\eta(s)\right|=0 \qquad \text{ a.s.}
\ee
and
\be\label{eqn:AS}
\lim\limits_{N \to \infty} \sup\limits_{s \in [0,T]} \left|N^{-1}\eta_{k,N}(s)-\eta_k(s)\right|=0 \qquad \text{ a.s.}
\ee
for all $k=1,2,\hdots,K$, with $\eta_{(N)}$ satisfying \eqref{Aeta_t} and $\eta_{k,N}$ satisfying the system of Smoluckowski-Horton system of ODEs (\ref{Aeta}).

Let $T_m$ be the time when the first $m$ clusters merge. The expectation for the time $T_m$ is
\be\label{eqn:meanTm}
E[T_m]={N \over \binom{N}{2}}+{N \over \binom{N-1}{2}}+\dots+{N \over \binom{N-m+1}{2}}={2m \over N-m}.
\ee
For given $\epsilon \in (0,1)$ and $\gamma>1$ let $m=\lfloor(1-\epsilon)N \rfloor$. Taking $T>{2(1-\epsilon) \over \epsilon}\gamma$, 
we have for all $t \geq T$,
$$0 < \eta(t) \leq \eta(T) < \eta\left({2(1-\epsilon) \over \epsilon}\gamma\right)<\eta\left({2(1-\epsilon) \over \epsilon}\right)=\epsilon.$$ 
Thus $~\big|\eta_{(N)}(t)-\eta (t) \big|>\epsilon~$ would imply $~\eta_{(N)}(t)>\epsilon >\eta (t)>0$,
and by Markov's inequality, we obtain
\begin{eqnarray}\label{MarkovLast}
P\Big(\big\|\eta_{(N)}(t)-\eta (t) \big\|_{L^\infty [T,\infty)}>\epsilon \Big) & \leq & P\Big(\eta_{(N)}(T)>\epsilon \Big) = P\Big(T_m >T \Big)  \nonumber \\
& \leq & {2(1-\epsilon) \over \epsilon T} < 1/\gamma. 
\end{eqnarray}
Together \eqref{eqn:ASall} and the above equation \eqref{MarkovLast} imply
$$\lim\limits_{N \rightarrow \infty}P\Big(\big\|\eta_{(N)}(t)-\eta (t) \big\|_{L^\infty [0,\infty)}<\epsilon \Big)=1.$$
Hence $~\|\eta_{(N)}(t)-\eta(t)  \|_{L^\infty [0,\infty)} \rightarrow 0~$ in probability, establishing Lemma \ref{lem3}(a).

\bigskip
\noindent
Finally, observe that  for any $\epsilon>0$ and for $T>0$ large enough so that $~\eta(T) <\epsilon$,
$$\eta_k (t) \leq \eta(t) \leq \eta(T) <\epsilon \text{ for all } t \geq T.$$
Thus,
\begin{eqnarray}\label{eqn:MarkovT}
P\Big(\big\|\eta_{k,N}(t)-\eta_k (t) \big\|_{L^\infty [T,\infty)}>\epsilon \Big) & \leq & P\Big(\big\|\eta_{k,N}(t) \big\|_{L^\infty [T,\infty)}>\epsilon \Big) \nonumber \\
& \leq & P\Big(\big\|\eta_{(N)}(t) \big\|_{L^\infty [T,\infty)}>\epsilon \Big) \nonumber \\
& = & P\Big(\eta_{(N)}(T)>\epsilon \Big) \nonumber \\
& \leq & {2(1-\epsilon) \over \epsilon T},
\end{eqnarray}
where the last bound is obtained from Markov inequality: for $m=\lfloor(1-\epsilon)N \rfloor$,
$$P\Big(\eta_{(N)}(T)>\epsilon \Big)=P(T_m>T) \leq {E[T_m] \over T}={2m \over (N-m)T} \leq {2(1-\epsilon) \over \epsilon T}$$
by \eqref{eqn:meanTm}. Together, equations \eqref{eqn:AS} and \eqref{eqn:MarkovT} imply 
$$\|\eta_{k,N}-\eta_k  \|_{L^\infty [0,\infty)} \stackrel{p}{\to} 0 \qquad \forall k\geq 1.$$ \end{proof}

\medskip
Consequently, we establish a hydrodynamic limit for the Horton ratios (Thm.~\ref{Mthm}(i)) and validate formula \eqref{cNj}. 
\begin{proof}[Proof of Theorem \ref{Mthm}(i)]
The existence of the limit $\cN_j=\lim_{N\to\infty} N_j/N$ in probability and its expression \eqref{cNj} via the solution $\eta_{(N)}(t)$ of \eqref{Aeta_t}
follows from \eqref{eqn:rateMerge} in the context of Theorem \ref{kurtzT} and the tail bound \eqref{MarkovLast}.
\end{proof}

\subsection{Some properties of the Smoluchowski-Horton system of ODEs}\label{existence}
Here we restate the Smoluchowski-Horton system of ODEs  \eqref{Aeta} as a simpler quasilinear system of ODEs \eqref{eqn:ODEg}, which we later  (Sect. \ref{sec:ODEh}) rescale to the interval $[0,1]$ \eqref{eqn:ODEh}. 
Some of the properties established in Prop. \ref{prop:properties_g} and Lem. \ref{lem:half} of this
section are used in the proof of Thm.~\ref{Mthm}(ii) in Sect. \ref{sec:proofKingman}.

\subsubsection{Simplifying the Smoluchowski-Horton system of ODEs}\label{sec:ODEg}
Let $g_1(t)=\eta(t)$ and $g_k(t)=\eta(t)-\sum\limits_{j:~j<k} \eta_j(t)$ 
be the asymptotic number of clusters of Horton-Strahler order $k$ or higher at time $t$. 
We can rewrite \eqref{Aeta} via $g_k$ using $\eta_k(t)=g_k(t)-g_{k+1}(t)$:
\[{d \over dt}g_k(t)-{d \over dt}g_{k+1}(t)
={\big(g_{k-1}(t)-g_k(t) \big)^2 \over 2}-(g_k(t)-g_{k+1}(t))g_k(t).\]
We now rearrange the terms, obtaining for all $k \geq 2$,
\begin{eqnarray} \label{eqn:odeF1}
 {d \over dt}g_{k+1}(t)-{g^2_k(t) \over 2}+g_k(t) g_{k+1}(t)
 ={d \over dt} g_k(t)-{g^2_{k-1}(t) \over 2}+g_{k-1}(t) g_k(t). 
\end{eqnarray}
One can readily check that ${d \over dt} g_2(t)-{g^2_1(t) \over 2}+g_1(t)\,g_2(t)=0$; 
the above equations hence simplify as follows
\begin{eqnarray} \label{eqn:ODEg}
g'_{k+1}(t)-{g^2_k(t) \over 2}+g_k(t) g_{k+1}(t)=0 \qquad \\ \nonumber \text{ with }~g_1(t)={2 \over t+2}, \text{ and } g_k(0)=0 \text{ for } k \geq 2.
\end{eqnarray}

Observe that the existence and uniqueness of the solution sequence $g_k$ of 
\eqref{eqn:ODEg} follows immediately
from the quasilinear structure of the system \eqref{eqn:ODEg}: 
for a known $g_k(t)$, the next function $g_{k+1}(t)$  
is obtained by solving a first-order linear equation.

From \eqref{eqn:ODEg} one has $g_k(t)>0$ for all $t>0$,
and similarly, from the equation  \eqref{Aeta} one has 
\be\label{eqn:domin_g}
\eta_k(t)=g_k(t)-g_{k+1}(t)>0 ~~\text{ for all }~t>0.
\ee

Next, returning to the asymptotic ratios $\cN_k$, 
we observe that \eqref{eqn:odeF1} implies, for $k\ge 2$,
$$\cN_k=\int\limits_0^{\infty} {\eta^2_{k-1}(t) \over 2} dt
=\int\limits_0^{\infty} {(g_{k-1}(t)-g_k(t))^2 \over 2} dt
=\int\limits_0^{\infty} {g^2_k(t) \over 2} dt$$
since
$$ {(g_{k-1}(t)-g_k(t))^2 \over 2} ={d \over dt}g_k(t)+{g^2_k(t) \over 2},$$ 
where $0 \leq g_k(t) < g_1(t) \rightarrow 0$ as $t \rightarrow \infty$, and $\int\limits_0^{\infty}{d \over dt}g_k(t)dt=g_k(\infty)-g_k(0)=0$ for $k \geq 2$.
Let $n_k$ represent the number of order-$k$ branches relative to the number 
of order-$(k+1)$ branches:
\be\label{eqn:nk_g}
n_k:={\cN_k \over \cN_{k+1}}={\frac{1}{2}\int\limits_0^{\infty} {g^2_k(t)} dt \over \frac{1}{2}\int\limits_0^{\infty} {g^2_{k+1}(t)} dt}={ \|g_k\|^2_{L^2[0, \infty)} \over  \|g_{k+1}\|^2_{L^2[0, \infty)}}.
\ee
Consider the following limits that represent, respectively, the 
root and the ratio asymptotic Horton laws:
$$\lim\limits_{k \rightarrow \infty}\left(\cN_k \right)^{-{1 \over k}}=\lim\limits_{k \rightarrow \infty}\left(\prod\limits_{j=1}^{k} n_j \right)^{1 \over k} \qquad \text{ and } \qquad \lim\limits_{k \rightarrow \infty} n_k=\lim\limits_{k \rightarrow \infty} { \|g_k\|^2_{L^2[0, \infty)} \over  \|g_{k+1}\|^2_{L^2[0, \infty)}}.$$
Theorem~\ref{Mthm}(ii) establishes the existence of the first limit. 
We expect the second, stronger, limit also to exist and both of them to be equal to $3.043827\dots$ 
according to our numerical results.
We now establish some basic facts about $g_k$ and $n_k$.
\begin{prop}  \label{prop:properties_g}
Let $g_k(x)$ solve the ODE system (\ref{eqn:ODEg}). Then\\
\begin{description}
\item[\qquad (a)] 
$~\frac{1}{2}\int\limits_0^{\infty} {g^2_k(t)} dt=\int\limits_0^{\infty} g_k(t)g_{k+1}(t) dt,$\\
\item[\qquad (b)] 
$~\int\limits_0^{\infty} g^2_{k+1}(t) dt=\int\limits_0^{\infty} (g_k(t)-g_{k+1}(t))^2 dt,$\\
\item[\qquad (c)] 
$~\lim\limits_{t \rightarrow \infty} tg_k(t) =2,$\\
\item[\qquad (d)] 
$~n_k={ \|g_k\|^2_{L^2[0, \infty)} \over  \|g_{k+1}\|^2_{L^2[0, \infty)}} \geq {2},$\\
\item[\qquad (e)] 
$~n_k={ \|g_k\|^2_{L^2[0, \infty)} \over  \|g_{k+1}\|^2_{L^2[0, \infty)}} \leq {4}.$\\
\end{description}

\end{prop}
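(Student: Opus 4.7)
The plan is to prove part (c) first, since it supplies the decay at infinity needed for the integration by parts in (a); the remaining parts then follow by short algebraic manipulations.

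First I would establish (c) by induction on $k$. The base case is immediate from the explicit formula $g_1(t) = 2/(t+2)$. For the inductive step, introduce $u_k(t) = t g_k(t)$ and change the time variable to $\tau = \ln t$. A direct computation rewrites \eqref{eqn:ODEg} as
\[
\frac{d u_{k+1}}{d\tau} \;=\; u_{k+1}\bigl(1 - u_k\bigr) + \tfrac{1}{2}u_k^{2}.
\]
Under the induction hypothesis $u_k(\tau) \to 2$ as $\tau \to \infty$, the equation for $u_{k+1}$ is asymptotic to the linear equation $du/d\tau = -u + 2$, which has $u \equiv 2$ as a globally attracting fixed point. A Gr\"onwall-type comparison between $u_{k+1}$ and the limiting linear flow, combined with the bound $0 < g_{k+1} \le g_1 = 2/(t+2)$ from \eqref{eqn:domin_g} (so that $u_{k+1}$ is bounded and positive), yields $u_{k+1}(\tau) \to 2$. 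This step is the main analytic obstacle: one has to carefully control the error coming from replacing $u_k$ by its limit $2$ over an infinite time horizon, which I would handle by splitting $[\tau_0,\infty)$ into a region where $|u_k - 2| < \varepsilon$ and applying the Gr\"onwall estimate there.

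Given (c), part (a) follows from integrating the ODE \eqref{eqn:ODEg} over $[0,\infty)$:
\[
\bigl[g_{k+1}(t)\bigr]_0^{\infty} \;=\; \int_0^{\infty}\!\tfrac{1}{2}g_k^{2}(t)\,dt \;-\; \int_0^{\infty}\! g_k(t)g_{k+1}(t)\,dt.
\]
The left-hand side vanishes because $g_{k+1}(0)=0$ by the initial condition in \eqref{eqn:ODEg} and $g_{k+1}(t) = O(1/t) \to 0$ by (c). Part (b) then follows by expanding $(g_k-g_{k+1})^{2} = g_k^{2} - 2g_kg_{k+1} + g_{k+1}^{2}$, integrating, and substituting identity (a) into the cross term.

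For (d), I would use the strict dominance $0 < g_{k+1}(t) \le g_k(t)$ obtained from \eqref{eqn:domin_g}, which gives $g_{k+1}^{2} \le g_k g_{k+1}$ pointwise; integrating and applying (a) yields
\[
\|g_{k+1}\|_{L^{2}}^{2} \;\le\; \int_0^{\infty} g_k g_{k+1}\,dt \;=\; \tfrac{1}{2}\|g_k\|_{L^{2}}^{2},
\]
so $n_k \ge 2$. For (e), I would apply the Cauchy--Schwarz inequality to identity (a):
\[
\tfrac{1}{2}\|g_k\|_{L^{2}}^{2} \;=\; \int_0^{\infty} g_k g_{k+1}\,dt \;\le\; \|g_k\|_{L^{2}}\,\|g_{k+1}\|_{L^{2}},
\]
which, after dividing by $\|g_k\|_{L^{2}}$ and squaring, gives $\|g_k\|_{L^{2}}^{2} \le 4\|g_{k+1}\|_{L^{2}}^{2}$, i.e.\ $n_k \le 4$. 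These final two steps are essentially one-line corollaries of (a), so the bulk of the work sits in the inductive proof of (c).
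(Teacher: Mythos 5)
Your proposal is correct; the only substantive divergences from the paper are in parts (c) and (d). For (c) the paper first shows that $t g_{k+1}(t)$ is bounded above by $2$ and nondecreasing (by checking the sign of $[tg_{k+1}]'$ using $g_k\ge g_{k+1}$ and $tg_k<2$), so the limit exists a priori, and then identifies its value through a Mean Value Theorem / L'H\^{o}pital computation that yields the fixed-point relation $L=2L-2$; your route instead passes to logarithmic time $\tau=\ln t$, recognizes $u=2$ as the attracting equilibrium of the asymptotically autonomous equation $u'=u(1-u_k)+u_k^2/2$, and closes with a Gr\"onwall comparison. Both work: the paper's monotonicity argument buys existence of the limit with no $\varepsilon$-bookkeeping, while your linearization makes the mechanism (an attracting fixed point at $2$) more transparent but requires the two-sided comparison you correctly flag as the main obstacle. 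For (d) the paper's primary proof is the one-line combinatorial observation $\cN_k\ge 2\cN_{k+1}$ from the definition of Horton--Strahler orders (it also records an analytic alternative via the rescaled functions $h_k$); your derivation from the pointwise domination $g_{k+1}\le g_k$ together with identity (a) is a clean analytic substitute that stays entirely inside the ODE framework. Parts (a), (b) and (e) --- integrating the ODE, expanding the square, and Cauchy--Schwarz applied to (a) --- coincide with the paper's proof. One small structural remark: the decay $g_{k+1}(t)\to 0$ needed to kill the boundary term in (a) already follows from the domination $0\le g_{k+1}\le g_1=2/(t+2)$, so (a) does not in fact depend on the harder limit (c), and you could prove (a), (b), (d), (e) before touching (c).
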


\begin{proof} 

Part (a) follows from integrating (\ref{eqn:ODEg}), and part (b) follows from part (a). 
Part (c) is done by induction, using the L'H\^{o}pital's rule as follows. 
It is obvious that $~\lim\limits_{x \rightarrow \infty} tg_1(t) =2$.  
Hence, for any $k \geq 1$, \eqref{eqn:domin_g} implies
$$tg_k(t) \leq tg_1(t)={2t \over t+2} <2 \quad \forall t \geq 0.$$
Also, 
\begin{align*}
[tg_{k+1}]' &={tg^2_k(t) \over 2}-tg_k(t)g_{k+1}(t)+g_{k+1}(t)\\
&={\big(g_k(t)-g_{k+1}(t)\big)tg_k(t)+\big(2-tg_k(t)\big)g_{k+1}(t) \over 2}
\end{align*}
implying $~[tg_{k+1}]'\geq 0~$ for all $t \geq 0$ as $g_k(t)-g_{k+1}(t) \geq 0$ and $2-tg_k(t)>0$. Hence, $tg_{k+1}(t)$ is bounded and nondecreasing. 
Thus, $~\lim\limits_{t \rightarrow \infty} tg_{k+1}(t)$ exists for all $k \geq 1$.

Next, suppose $~\lim\limits_{t \rightarrow \infty} tg_k(t) =2$. 
Then by the Mean Value Theorem, for any $t>0$ and for all $y>t$,
$${g_{k+1}(t)-g_{k+1}(y) \over t^{-1}-y^{-1}} \leq \sup\limits_{z:~z \geq t}{g'_{k+1}(z) \over -z^{-2}}.$$
Taking $~y \rightarrow \infty$, obtain
$${g_{k+1}(t) \over t^{-1}} \leq \sup\limits_{z:~z \geq t}{g'_{k+1}(z) \over -z^{-2}}.$$
Therefore
$$\lim\limits_{t \rightarrow \infty} tg_{k+1}(t)=\lim\limits_{t \rightarrow \infty} {g_{k+1}(t) \over t^{-1}}=\limsup\limits_{z \rightarrow \infty} {g'_{k+1}(z) \over -z^{-2}}=\limsup\limits_{z \rightarrow \infty} {{g^2_k(z) \over 2}-g_k(z) g_{k+1}(z) \over -z^{-2}}$$
$$=\limsup\limits_{z \rightarrow \infty} \left[z^2g_k(z) g_{k+1}(z)-{z^2 g^2_k(z) \over 2}\right]=2\lim\limits_{t \rightarrow \infty} tg_{k+1}(t) -2$$
implying $~\lim\limits_{t \rightarrow \infty} tg_{k+1}(t)=2$.

Statement (d) follows from \eqref{eqn:nk_g} as we have $\cN_k \geq 2\cN_{k+1}$
from the definition of the Horton-Strahler order. 
An alternative proof of (d) using the system of ODEs \eqref{eqn:ODEh} is given in Sect. \ref{sec:ODEh}. 
 
Part (e) follows from part (a) together with H\"older inequality
$${1 \over 2}\|g_k\|^2_{L^2[0, \infty)}
=\int\limits_0^{\infty} g_k(t)g_{k+1}(t) dt 
\leq \|g_k\|_{L^2[0, \infty)} \cdot \|g_{k+1}\|_{L^2[0, \infty)},$$
which implies $n_k={\|g_{k}\|^2_{L^2[0, \infty)} \over \|g_{k+1}\|^2_{L^2[0, \infty)}} \leq {4}$.
\end{proof}

\begin{Rem}
The statements {\bf (a)} and {\bf (b)} of Proposition~\ref{prop:properties_g} have 
a straightforward heuristic interpretation, similar to that of equation 
\eqref{cNj} above.
Specifically, {\bf (a)} claims that the asymptotic relative total number 
of vertices of order $k+1$ and above in the Kingman's tree (left-hand side) 
equals twice the asymptotic relative total number of vertices of order 
$k+1$ and above except the vertices parental to two vertices of order $k$ 
(right-hand side).
This is nothing but the asymptotic property of a binary tree -- the number 
of leaves equals twice the number of internal nodes.
The item {\bf (a)} hence merely claims that the Kingman's tree formed by clusters
of order above $k$ is binary for any $k\ge 1$. 
Similarly, item {\bf (b)} claims that the asymptotic relative total number
of vertices of order $(k+2)$ and above (left-hand side) equals
the asymptotic relative total number of vertices of order $(k+1)$ (right-hand side).
This is yet another way of saying that the Kingman's tree is binary.
\end{Rem}

\subsubsection{Rescaling to $[0,1]$ interval}\label{sec:ODEh}
Define 
$$h_k(x)=(1-x)^{-1}-(1-x)^{-2} g_{k+1}\left({2x \over 1-x}\right)$$
for $x \in [0,1)$.
Then $h_0 \equiv 0$, $h_1 \equiv 1$, and the system of ODEs (\ref{eqn:ODEg}) rewrites as
\begin{equation} \label{eqn:ODEh}
h'_{k+1}(x)=2h_k(x)h_{k+1}(x)-h_k^2(x)
\end{equation}
with the initial conditions $h_k(0)=1$.

\medskip 
\noindent
Observe that the above quasilinearized system of ODEs (\ref{eqn:ODEh}) has $h_k(x)$ converging to $h(x)={1 \over 1-x}$ as $k \rightarrow \infty$, where $h(x)$ is the solution to Riccati equation $h'(x)=h^2(x)$ over $[0,1)$, with the initial value $h(0)=1$. Specifically, we have proven that $g_k(x) \rightarrow 0$ as $k \rightarrow \infty$. Thus
$$h_k(x)=(1-x)^{-1}-(1-x)^{-2} g_{k+1}\left({2x \over 1-x}\right) ~\longrightarrow ~h(x)={1 \over 1-x}.$$ 
\noindent
Observe that $h_2(x)=(1+e^{2x})/2$, but for $k \geq 3$ finding a closed form expression becomes increasingly hard. 

\medskip 
\noindent
We observe from \eqref{eqn:nk_g} that the quantity $n_k$ rewrites in terms of $h_k$ as follows
\be\label{eqn:nk_h}
n_k={\big\|1-h_k/h\big\|^2_{L^2[0,1]} \over \big\|1-h_{k+1}/h\big\|^2_{L^2[0,1]} }.
\ee
Consequently, equation \eqref{eqn:nk_h} implies
\be\label{eqn:Nk_h1}
\lim\limits_{k \rightarrow \infty}\left(\cN_k \right)^{-{1 \over k}}=\lim\limits_{k \rightarrow \infty}\left(\prod\limits_{j=1}^{k} n_j \right)^{1 \over k}
\!\!\!=\lim\limits_{k \rightarrow \infty}\left(\int_0^1 \left(1-{h_k(x)\over h(x)}\right)^2 dx \right)^{-{1 \over k}}\!\!\!\!\!.
\ee

\medskip 
\noindent
Now, for a known $h_k(x)$, \eqref{eqn:ODEh} is a first-order linear ODE in $h_{k+1}(x)$.
Its solution is given by $h_{k+1}(x)=\mathcal{H}h_k(x)$, where $\mathcal{H}$ is a 
nonlinear operator defined as follows
\be
\label{iter}
\mathcal{H}f(x)=\left[1-\int_0^x f^2(y) e^{-2\int\limits_0^y f(s) ds } dy \right] 
\cdot e^{2\int\limits_0^x f(s) ds }.
\ee
Hence, the problem of establishing the limit \eqref{eqn:Nk_h1}
for the root-Horton law concerns the asymptotic behavior of an iterated nonlinear functional.

\medskip 
\noindent
The following lemma will be used in Sect. \ref{sec:proofKingman}.
\begin{lem}\label{lem:half}
$$\big\|1-h_{k+1}/h\big\|_{L^2[0,1]}=\big\|h_{k+1}/h-h_k/h\big\|_{L^2[0,1]}$$
\end{lem}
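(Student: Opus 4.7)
The plan is to reduce the claimed $L^2[0,1]$ identity to the already-established integral identity in Proposition \ref{prop:properties_g}(b), by unwinding the change of variables that defined $h_k$ from $g_k$.

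First, I would rewrite both norms using the explicit expressions $h(x)=(1-x)^{-1}$ and the defining relation
\[
h_k(x)=(1-x)^{-1}-(1-x)^{-2}\,g_{k+1}\!\left(\tfrac{2x}{1-x}\right),
\]
so that $1-h_k(x)/h(x)=(1-x)\bigl[(1-x)^{-1}-h_k(x)\bigr]=(1-x)^{-1}g_{k+1}(t)$ with $t=2x/(1-x)$. Consequently, defining $u_k(x):=1-h_k(x)/h(x)$, one has $u_k(x)=(1-x)^{-1}g_{k+1}(t)$ and the telescoping observation $h_{k+1}(x)/h(x)-h_k(x)/h(x)=u_k(x)-u_{k+1}(x)$. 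Thus the claimed identity rewrites as
\[
\int_0^1 u_{k+1}^2(x)\,dx \;=\; \int_0^1 \bigl(u_k(x)-u_{k+1}(x)\bigr)^2\,dx.
\]

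Next, I would perform the substitution $t=2x/(1-x)$, under which $dx=\tfrac{2}{(t+2)^2}\,dt$ and $(1-x)^{-2}\,dx=\tfrac12\,dt$. This is the key computational step: it collapses the $(1-x)^{-2}$ factor coming from squaring $u_k$ into a clean Jacobian, producing
\[
\int_0^1 u_{k+1}^2(x)\,dx=\tfrac12\!\int_0^\infty g_{k+2}^2(t)\,dt, \qquad
\int_0^1 (u_k-u_{k+1})^2(x)\,dx=\tfrac12\!\int_0^\infty(g_{k+1}(t)-g_{k+2}(t))^2\,dt.
\]

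Finally, I would apply Proposition \ref{prop:properties_g}(b) with index shifted by one, namely $\int_0^\infty g_{k+2}^2\,dt=\int_0^\infty(g_{k+1}-g_{k+2})^2\,dt$, which matches the two expressions above and yields the lemma. The only mild obstacle is bookkeeping the index shift between $h_k$ and $g_{k+1}$ carefully; once that is done and the change of variables is performed, the result is immediate from Proposition \ref{prop:properties_g}(b).
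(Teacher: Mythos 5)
Your proof is correct, and it takes a genuinely different route from the paper's. The paper works entirely in the $h$-coordinates: it observes that the ODE \eqref{eqn:ODEh} can be rewritten as $h'_{k+1}(x)+(h_{k+1}(x)-h_k(x))^2=h_{k+1}^2(x)$, divides by $h^2(x)=(1-x)^{-2}$, and integrates by parts against $(1-x)^2$ (using $h_{k+1}(0)=1$) to convert $-\int_0^1 h'_{k+1}/h^2\,dx$ into $1-2\int_0^1 h_{k+1}/h\,dx$, which completes the square on the right-hand side. You instead undo the substitution that defined $h_k$ from $g_{k+1}$, noting that $1-h_k/h=(1-x)^{-1}g_{k+1}\bigl(2x/(1-x)\bigr)$ and that the Jacobian satisfies $(1-x)^{-2}\,dx=\tfrac12\,dt$, thereby identifying both $L^2[0,1]$ norms with $\tfrac12\int_0^\infty g_{k+2}^2\,dt$ and $\tfrac12\int_0^\infty(g_{k+1}-g_{k+2})^2\,dt$ respectively, and then cite Proposition~\ref{prop:properties_g}(b) with the index shifted by one. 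Your index bookkeeping and the Jacobian computation are both correct, and Proposition~\ref{prop:properties_g}(b) is established earlier in the paper, so the reduction is legitimate. The two arguments rest on the same underlying fact --- integrating the quasilinear ODE kills the cross term --- but yours buys economy and makes explicit that Lemma~\ref{lem:half} is nothing more than the $[0,1]$-rescaled restatement of Proposition~\ref{prop:properties_g}(b), while the paper's self-contained computation in the $h$-coordinates exhibits the integration-by-parts device that is reused in the proof of Proposition~\ref{prop:one}.
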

\begin{proof} 
Observing $\, h'_{k+1}(x)+(h_{k+1}(x)-h_k(x))^2=h_{k+1}^2(x) \,$,
we use integration by parts to obtain
\begin{align*}
\int\limits_0^1 &{(h_{k+1}(x)-h_k(x))^2 \over h^2(x)}dx=\int\limits_0^1 {h_{k+1}^2(x) \over h^2(x)}dx-\int\limits_0^1{h'_{k+1}(x) \over h^2(x)}dx \\
&=\int\limits_0^1 {h_{k+1}^2(x) \over h^2(x)}dx+1-2\int\limits_0^1 {h_{k+1}(x) \over h(x)}dx
~=\int\limits_0^1 {(1-h_{k+1}(x))^2 \over h^2(x)}dx
\end{align*}
as $1/h(x)=1-x$.
\end{proof}

\medskip
\noindent
Next, we notice that \eqref{eqn:domin_g} implies  
\be\label{eqn:domin_h}
h(x) > h_{k+1}(x) > h_k(x) ~~\text{ for all }~x \in (0,1)
\ee 
for all $k \geq 1$.

\medskip
\noindent
Finally, an alternative proof to Proposition \ref{prop:properties_g}(d) using the system of ODEs \eqref{eqn:ODEh} follows from Lemma \ref{lem:half} and \eqref{eqn:domin_h}. 
\begin{proof} [Alternative proof of Proposition \ref{prop:properties_g}(d)]
Lemma \ref{lem:half} implies
\begin{align*}
\big\|1-h_k/h\big\|_{L^2[0,1]}^2 &=2\int\limits_0^1(1-h_k/h)(1-h_{k+1}/h) dx \\
&=2\big\|1-h_{k+1}/h\big\|_{L^2[0,1]}^2 
+2\int\limits_0^1(h_{k+1}/h-h_k/h)(1-h_{k+1}/h) dx. 
\end{align*}
Hence, equation \eqref{eqn:domin_h}
yields $~n_k= {\big\|1-h_{k}/h\big\|_{L^2[0,1]}^2 \over \big\|1-h_{k+1}/h\big\|_{L^2[0,1]}^2} \geq 2$. 
\end{proof}

\subsection{Proof of the existence of the root-Horton limit}\label{sec:proofKingman}

Here we present a proof of Thm.~\ref{Mthm}(ii). The proof  is based on Lemmas \ref{lem:h1} and \ref{lem:h1exist} stated below
that will be proven in the Sects.~\ref{sec:h1} and \ref{sec:h1exist}. 

\begin{lem} \label{lem:h1}
If the limit $\lim\limits_{k \rightarrow \infty}{h_{k+1}(1) \over h_k(1)}$ exists, then 
$\lim\limits_{k \rightarrow \infty}\left(\cN_k \right)^{-{1 \over k}}=\lim\limits_{k \rightarrow \infty}\left(\prod\limits_{j=1}^{k} n_j \right)^{1 \over k}$ also exists, and
$$\lim\limits_{k \rightarrow \infty}\left(\cN_k \right)^{-{1 \over k}}
=\lim\limits_{k \rightarrow +\infty}\left({1 \over h_k(1)}\right)^{-{1 \over k}}
=\lim\limits_{k \rightarrow \infty}{h_{k+1}(1) \over h_{k}(1)}.$$
\end{lem}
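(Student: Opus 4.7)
The strategy is to prove the chain of three limit equalities in sequence. Setting $H_k := h_k(1)$ throughout, the equivalence of the last two limits, $\lim_k H_k^{1/k} = \lim_k H_{k+1}/H_k = L$, is the classical ratio-to-root passage: applying Stolz--Cesàro to $(\ln H_k)/k$ converts the assumed ratio limit $\ln H_{k+1}-\ln H_k\to\ln L$ into $(\ln H_k)/k\to\ln L$. This yields $(1/H_k)^{-1/k}\to L$, so the substantive task is to show that $\cN_k^{-1/k}\to L$ as well.

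The bridge is the identity $\cN_k = \int_0^1 \psi_{k-1}(x)^2\,dx$, with $\psi_{k-1}(x) := 1-(1-x)h_{k-1}(x) \in [0,1]$, obtained from the change of variable $t=2x/(1-x)$ implicit in the definition of $h_k$ in terms of $g_{k+1}$. Since $\psi_{k-1}$ vanishes at $x=0$, equals $1$ at $x=1$, and $h_{k-1}\uparrow h$ pointwise, heuristically $\psi_{k-1}$ rises sharply from $\approx 0$ to $1$ across a boundary layer of width $\sim 1/H_{k-1}$ near $x=1$, suggesting $\cN_k\asymp 1/H_{k-1}$. The easy half is the lower bound: since $h_{k-1}(x)\le H_{k-1}$ by monotonicity in $x$, we have $\psi_{k-1}(x)\ge[1-(1-x)H_{k-1}]_+$, and integrating over $[1-1/H_{k-1},1]$ gives
\[\cN_k \;\ge\; \int_0^{1/H_{k-1}} (1-yH_{k-1})^2\,dy \;=\; \frac{1}{3H_{k-1}}.\]
Taking $k$-th roots and using $H_{k-1}^{1/k}\to L$ produces $\limsup_k \cN_k^{-1/k}\le L$.

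The matching lower bound $\liminf_k \cN_k^{-1/k}\ge L$ requires an upper bound $\cN_k \le f(k)/H_{k-1}$ with $f(k)^{1/k}\to 1$. My plan is to exploit the linear ODE $v_k' = v_{k-1}^2+2h_{k-1}v_k$ for $v_k:=h-h_k$: integrating against the factor $\exp(-2\int_0^x h_{k-1})$ and using $h_{k-1}\le h$ yields the pointwise control $\psi_k(x)\le \cN_k/(1-x)$. Splitting $[0,1]$ at $x=1-1/H_k$ and combining this bound with the trivial $\psi_k\le 1$ in the boundary layer, then bootstrapping via Lemma~\ref{lem:half} and the monotonicity of $h_{k+1}-h_k$ in $x$, should deliver the required subexponential control of $\cN_k\,H_{k-1}$. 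Sandwiching yields $\cN_k^{-1/k}\to L$, and the middle limit $\bigl(\prod_{j=1}^k n_j\bigr)^{1/k}=\cN_{k+1}^{-1/k}$ inherits the same value because an index shift does not affect $k$-th roots.

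The main obstacle is the upper bound: the naive pointwise estimate $\psi_k(x)\le \min(1,\cN_k/(1-x))$ delivers only $\cN_{k+1}\le 2\cN_k$, which is weaker than the monotonicity $\cN_{k+1}\le \cN_k$ already forced by $\psi_k\le \psi_{k-1}$ and does not capture the geometric decay. Extracting the correct rate will require quantifying the boundary-layer profile of $\psi_{k-1}$ --- for instance, combining the Taylor expansion $\psi_{k-1}(x) = 1-H_{k-1}(1-x)+H_{k-2}(2H_{k-1}-H_{k-2})(1-x)^2+O((1-x)^3)$ read off at $x=1$ from the ODE $h_k' = 2h_{k-1}h_k-h_{k-1}^2$ with a uniform decay estimate for $\psi_{k-1}$ away from the endpoint, so as to prevent $\cN_k H_{k-1}$ from growing geometrically.
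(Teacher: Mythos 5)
Your argument is incomplete: only one half of the required sandwich is actually proved. The lower bound $\cN_k \ge \tfrac{1}{3}\,h_{k-1}(1)^{-1}$ is correct (the monotonicity of $h_{k-1}$ on $[0,1]$ that you invoke does hold, since $h_k'=h_k^2-(h_k-h_{k-1})^2=h_{k-1}(2h_k-h_{k-1})>0$), and it yields $\limsup_k \cN_k^{-1/k}\le L$. But the matching upper bound $\cN_k\le f(k)/h_{k-1}(1)$ with $f(k)^{1/k}\to 1$ is only announced as a programme, and you yourself concede that the estimates you actually have in hand (the pointwise bound and its consequence $\cN_{k+1}\le 2\cN_k$) do not capture the geometric decay. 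As written, nothing prevents $\cN_k\,h_{k-1}(1)$ from growing geometrically, so $\liminf_k \cN_k^{-1/k}\ge L$ is not established and the lemma is not proved. The boundary-layer expansion you propose at the end is a plausible heuristic but is not carried out, and it is not clear it would close the gap without substantial additional work.

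The missing ingredient --- and the way the paper closes the argument, via Proposition~\ref{prop:one} --- is an exact identity rather than an asymptotic profile. Dividing \eqref{eqn:ODEh} by $h_{k+1}^2$ gives $h_{k+1}'/h_{k+1}^2=1-\bigl(1-h_k/h_{k+1}\bigr)^2$, and integrating over $[0,1]$ with $h_{k+1}(0)=1$ yields
\[
\frac{1}{h_{k+1}(1)}=\int_0^1\Bigl(1-\frac{h_k(x)}{h_{k+1}(x)}\Bigr)^2\,dx .
\]
Since $h_k\le h_{k+1}\le h$, replacing $h_{k+1}$ by $h$ in the \emph{numerator} comparison ($1-h_k/h_{k+1}\le 1-h_k/h$) gives $1/h_{k+1}(1)\le\|1-h_k/h\|_{L^2[0,1]}^2=\cN_{k+1}$, while replacing it in the \emph{denominator} ($(h_{k+1}-h_k)^2/h_{k+1}^2\ge(h_{k+1}-h_k)^2/h^2$) together with Lemma~\ref{lem:half} gives $1/h_{k+1}(1)\ge\|1-h_{k+1}/h\|_{L^2[0,1]}^2=\cN_{k+2}$. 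The resulting two-sided bound $\cN_{k+2}\le 1/h_{k+1}(1)\le\cN_{k+1}$ is exactly the sandwich you need, with constant $1$ on both sides, and the $k$-th-root limit follows immediately. I recommend replacing your Taylor-expansion plan with this identity.
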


\begin{lem} \label{lem:h1exist}
The limit $\lim\limits_{k \rightarrow \infty}{h_{k+1}(1) \over h_k(1)} \geq 1$ exists, and is finite.
\end{lem}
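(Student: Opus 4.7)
My plan is to study the sequence $a_k := h_k(1)$ and its ratios $r_k := a_{k+1}/a_k$ directly from the ODE~\eqref{eqn:ODEh}. The first step is to derive a key integral identity for $a_{k+1}$. Dividing $h'_{k+1} = 2h_k h_{k+1} - h_k^2$ by $h_{k+1}^2$ and recognizing the left-hand side as $-\frac{d}{dx}\bigl(1/h_{k+1}\bigr)$ yields
\[
-\frac{d}{dx}\!\left(\frac{1}{h_{k+1}(x)}\right) = \left(1-\frac{h_k(x)}{h_{k+1}(x)}\right)^{\!2},
\]
which integrates with $h_{k+1}(0)=1$ to
\[
\frac{1}{a_{k+1}} \;=\; \int_0^1 \left(1-\frac{h_k(x)}{h_{k+1}(x)}\right)^{\!2} dx.
\]
This shows $a_k \in (0,\infty)$ for every $k$, and combined with the pointwise monotonicity $h_k \leq h_{k+1}$ from~\eqref{eqn:domin_h}, it gives that $\{a_k\}$ is strictly increasing, so $r_k \geq 1$.

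The second step is to prove that $\{r_k\}$ is uniformly bounded above. Set $A_k := \|1 - h_k/h\|_{L^2[0,1]}^2$ and $B_k := 1/a_k$, so by Step~1 we have $B_k = \|1 - h_{k-1}/h_k\|_{L^2[0,1]}^2$. Using $h_k \leq h_{k+1} \leq h$ on $(0,1)$, two elementary pointwise comparisons of integrands, combined with Lemma~\ref{lem:half}, give the sandwich
\[
A_{k+1} \;\leq\; B_{k+1} \;\leq\; A_k, \qquad k\ge 1.
\]
(The upper bound uses $h_k/h_{k+1}\ge h_k/h$; the lower bound uses $(h_{k+1}-h_k)/h_{k+1}\ge (h_{k+1}-h_k)/h$ together with Lemma~\ref{lem:half}.) From Proposition~\ref{prop:properties_g}(d)--(e) we have $n_k = A_k/A_{k+1} \in [2,4]$, and combining this with the sandwich yields $1 \leq r_k = B_k/B_{k+1} \leq A_{k-1}/A_{k+1} = n_{k-1} n_k \leq 16$, so $\{r_k\}$ is bounded.

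The crux is passing from boundedness to convergence. Introduce $\rho_k := B_k/A_k$, which the sandwich confines to $[1,4]$; rearranging gives the identity $r_k = (\rho_k/\rho_{k+1})\,n_k$, so existence of $\lim r_k$ reduces to showing (i) $\rho_k$ converges (conjecturally to $1$) and (ii) $\lim n_k$ exists. For (i) one needs to quantify $h_k/h_{k+1} - h_k/h$ in $L^2[0,1]$, which should follow from uniform convergence $h_k/h\to 1$ on compacts $[0,1-\delta]$ plus a boundary-layer estimate near $x=1$. For (ii), the renormalized profiles $\phi_k(x) := h_k(x)/a_k$ on $[0,1]$ satisfy a rescaled version of the iteration~\eqref{iter}; standard compactness combined with a monotonicity-based uniqueness of the limiting profile $\phi_\star$ would yield uniform convergence of $\phi_k$, pinning down $\lim n_k$ and hence $\lim r_k \in [2,4]$.

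The principal obstacle is the boundary layer at $x=1$. Since $h(1) = \infty$ while each $a_k$ is finite, the pointwise convergence $h_k \to h$ fails to be uniform on $[0,1]$, and all the interesting behavior of $r_k$ is concentrated in a shrinking neighborhood of $x=1$. Controlling this boundary-layer contribution---exploiting that the ODE \eqref{eqn:ODEh} is regular at $x=1$, so each $h_k$ extends smoothly across $x=1$, and that the derivative $h_{k+1}'(1) = a_k^2(2r_k-1)$ encodes the growth---is the substantive analytic step that drives the proof and determines the Horton exponent.
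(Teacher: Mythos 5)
Your Steps 1 and 2 are sound and essentially reproduce Proposition~\ref{prop:one} and Proposition~\ref{prop:properties_g}(d)--(e) (modulo a slip in the displayed differential identity: the correct relation is $h'_{k+1}/h^2_{k+1}=1-\big(1-h_k/h_{k+1}\big)^2$, as in \eqref{eqn:hk2positive}, not $\big(1-h_k/h_{k+1}\big)^2$; the integrated formula $1/a_{k+1}=\|1-h_k/h_{k+1}\|^2_{L^2[0,1]}$ that you then use is nevertheless correct). This gives $1\le r_k\le 16$, i.e.\ boundedness of the ratios. But boundedness is not the content of the lemma --- convergence is --- and that is exactly where your argument stops being a proof. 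Your Step 3 reduces convergence of $r_k$ to (i) convergence of $\rho_k=B_k/A_k$ and (ii) existence of $\lim_k n_k$, and neither is established: both are described with ``should follow from'' and ``would yield.'' Worse, (ii) is precisely the ratio-Horton law for Kingman's coalescent, which this survey explicitly lists as an open problem (Sect.~\ref{open}); the entire point of Lemma~\ref{lem:h1exist} combined with Lemma~\ref{lem:h1} is to extract the weaker root-Horton law \emph{without} knowing that $n_k$ converges. Your reduction therefore runs in the wrong direction, making the target contingent on something strictly harder, and the boundary layer at $x=1$ that you correctly identify as the obstruction is never actually controlled.

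The paper's proof avoids all of this by exhibiting a monotone quantity rather than attacking $\lim n_k$. Setting $\gamma_k=h_k(1)/h_{k+1}(1)=1/r_k\in(0,1]$, it introduces the spliced comparison functions $V_{k,\gamma}$ (equal to $1/(1-x)$ on $[0,1-\gamma]$ and to the rescaled copy $\gamma^{-1}h_k\big((x-(1-\gamma))/\gamma\big)$ on $[1-\gamma,1]$), proves by a sign-change-counting induction (Lemma~\ref{positivezero}) that $h_{k+1}(x)\le V_{k,\gamma_k}(x)$ (Proposition~\ref{drmota}), and then deduces $\gamma_k\le\gamma_{k+1}$ by contradiction (Lemma~\ref{gamma}); a bounded monotone sequence converges, and that is the whole lemma. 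To salvage your route you would need to locate an analogous monotone or Cauchy structure in $\rho_k$ or in the renormalized profiles $h_k/a_k$, and no such structure is currently known.
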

 
\medskip
\noindent
Once Lemmas \ref{lem:h1} and \ref{lem:h1exist} are established, the validity of root-Horton law Theorem \ref{Mthm}(ii) is proved as follows.
\begin{proof}[Proof of Theorem \ref{Mthm}(ii)]
The existence and finiteness of $\lim\limits_{k \rightarrow \infty}{h_{k+1}(1) \over h_k(1)}$ 
established in Lemma \ref{lem:h1exist} is the precondition for Lemma \ref{lem:h1} that in turn implies 
the existence and finiteness of the limit 
$$\lim\limits_{k \rightarrow \infty}\left(\cN_k \right)^{-{1 \over k}}
=\lim\limits_{k \rightarrow \infty}\left(\prod\limits_{j=1}^{k} n_j \right)^{1 \over k}=R$$ 
as needed for the root-Horton law.
Furthermore, 
\be\label{eqn:Rhk}
R=\lim\limits_{k \rightarrow \infty} {h_{k+1}(1) \over h_{k}(1)},
\ee
and $2 \leq R \leq 4$ by Proposition \ref{prop:properties_g}.
\end{proof}

\subsubsection{Proof of Lemma \ref{lem:h1} and related results}\label{sec:h1}

\begin{prop}\label{prop:one}
\be\label{eqn:propone}
\big\|1-h_{k+1}(x)/h(x)\big\|_{L^2[0,1]}^2 \leq {1 \over h_{k+1}(1)} \leq \big\|1-h_k(x)/h(x)\big\|_{L^2[0,1]}^2.
\ee
\end{prop}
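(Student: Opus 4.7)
My plan hinges on extracting an exact closed-form identity for $1/h_{k+1}(1)$ from the ODE and then sandwiching it between the two $L^2$ quantities using only the ordering $0 < h_k(x) < h_{k+1}(x) < h(x)$ on $(0,1)$ from \eqref{eqn:domin_h}.

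\textbf{Step 1 (ODE identity).} I will rewrite \eqref{eqn:ODEh} by completing the square:
$$h'_{k+1}(x)=2h_k h_{k+1}-h_k^2=h_{k+1}^2-(h_{k+1}-h_k)^2.$$
Dividing by $h_{k+1}^2$ and using $-(1/h_{k+1})'=h'_{k+1}/h_{k+1}^2$ gives
$$-\frac{d}{dx}\frac{1}{h_{k+1}(x)}=1-\left(1-\frac{h_k(x)}{h_{k+1}(x)}\right)^{\!2}.$$
Integrating from $0$ to $x^*<1$ and using $h_{k+1}(0)=1$ yields
$$\int_0^{x^*}\!\left(1-\frac{h_k}{h_{k+1}}\right)^{\!2}\!dx=x^*-1+\frac{1}{h_{k+1}(x^*)}.$$
Since $h'_{k+1}=h_{k+1}^2-(h_{k+1}-h_k)^2\ge 0$ (as $0\le h_{k+1}-h_k\le h_{k+1}$), the function $h_{k+1}$ is monotone non-decreasing, so $1/h_{k+1}(x)$ has a monotone limit as $x\uparrow 1$; by monotone convergence we obtain the key identity
$$\frac{1}{h_{k+1}(1)}=\int_0^1\!\left(1-\frac{h_k(x)}{h_{k+1}(x)}\right)^{\!2}\!dx.$$

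\textbf{Step 2 (Upper bound).} From \eqref{eqn:domin_h}, $h_{k+1}(x)\le h(x)$, hence $h_k/h_{k+1}\ge h_k/h$; both $1-h_k/h_{k+1}$ and $1-h_k/h$ lie in $[0,1]$, so squaring preserves the inequality:
$$\left(1-\frac{h_k}{h_{k+1}}\right)^{\!2}\le\left(1-\frac{h_k}{h}\right)^{\!2}.$$
Integrating over $[0,1]$ and combining with the identity of Step 1 gives $1/h_{k+1}(1)\le\|1-h_k/h\|_{L^2[0,1]}^2$, i.e.\ the right inequality of \eqref{eqn:propone}.

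\textbf{Step 3 (Lower bound).} I will invoke Lemma \ref{lem:half} to recast the left-hand side as
$$\big\|1-h_{k+1}/h\big\|_{L^2[0,1]}^{\,2}=\int_0^1\frac{(h_{k+1}-h_k)^2}{h^2}\,dx.$$
Again using $h_{k+1}\le h$ from \eqref{eqn:domin_h}, we have $(h_{k+1}-h_k)^2/h^2\le(h_{k+1}-h_k)^2/h_{k+1}^2=(1-h_k/h_{k+1})^2$ pointwise, and integrating together with the identity of Step 1 yields
$$\big\|1-h_{k+1}/h\big\|_{L^2[0,1]}^{\,2}\le\int_0^1\!\left(1-\frac{h_k}{h_{k+1}}\right)^{\!2}dx=\frac{1}{h_{k+1}(1)},$$
which is the left inequality of \eqref{eqn:propone}.

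The main conceptual step is recognizing that dividing the ODE by $h_{k+1}^2$ produces a perfect derivative on the left and the integrand $(1-h_k/h_{k+1})^2$ on the right; after that, both inequalities are immediate consequences of the pointwise ordering \eqref{eqn:domin_h}. The only mild technicality is the boundary behaviour at $x=1$, handled by monotone convergence applied to the non-negative integrand, which simultaneously shows that $h_{k+1}(1)\in[1,\infty)$ is well-defined and finite (a fact used implicitly throughout the statement).
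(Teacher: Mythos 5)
Your proof is correct and follows essentially the same route as the paper's: the same key identity $1/h_{k+1}(1)=\int_0^1\bigl(1-h_k/h_{k+1}\bigr)^2dx$ obtained by dividing the ODE \eqref{eqn:ODEh} by $h_{k+1}^2$ and integrating, the same use of Lemma \ref{lem:half} for the left inequality, and the same pointwise comparisons via \eqref{eqn:domin_h}. Your extra care with monotone convergence at $x=1$ is harmless but not needed, since each $h_{k+1}$ extends smoothly past $x=1$.
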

\begin{proof}
Equation \eqref{eqn:ODEh} implies
\be\label{eqn:hk2positive}
{h'_{k+1}(x) \over h_{k+1}^2(x)}=1-{(h_{k+1}(x)-h_k(x))^2 \over h_{k+1}^2(x)} \quad \forall x \in (0,1].
\ee
Integrating both sides of the equation \eqref{eqn:hk2positive} from $0$ to $1$ we obtain 
$${1 \over h_{k+1}(1)}=\int\limits_0^1 {(h_{k+1}(x)-h_k(x))^2 \over h_{k+1}^2(x)} dx=\big\|1-h_k(x)/h_{k+1}(x)\big\|_{L^2[0,1]}^2$$ 
as $h_{k+1}(0)=1$.

\vskip 0.2 in
\noindent
Hence, using Lemma \ref{lem:half}, the first inequality in \eqref{eqn:propone} is proved as follows 
\begin{align*}
{1 \over h_{k+1}(1)} =& \int\limits_0^1 {(h_{k+1}(x)-h_k(x))^2 \over h_{k+1}^2(x)} dx ~\geq \int\limits_0^1 {(h_{k+1}(x)-h_k(x))^2 \over h^2(x)} dx \\
&=\big\|1-h_{k+1}/h\big\|_{L^2[0,1]}^2 ~=\big\|1-h_{k+1}(x)/h(x)\big\|_{L^2[0,1]}^2 .
\end{align*}

\noindent
Finally, equations \eqref{eqn:domin_h} and \eqref{eqn:hk2positive} imply
$${1 \over h_{k+1}(1)}=\big\|1-h_k(x)/h_{k+1}(x)\big\|_{L^2[0,1]}^2 \leq \big\|1-h_k(x)/h(x)\big\|_{L^2[0,1]}^2.$$
This completes the proof.
\end{proof}

\medskip
\begin{proof}[Proof of Lemma \ref{lem:h1}]
If the limit $\lim\limits_{k \rightarrow \infty}{h_{k+1}(1) \over h_k(1)}$ exists and is finite, then so is the limit 
$\lim\limits_{k \rightarrow \infty}\left({1 \over h_k(1)}\right)^{-{1 \over k}}$. 
Then, the existence and the finiteness of the limit
$\lim\limits_{k \rightarrow \infty}\left(\cN_k \right)^{-{1 \over k}}$
follow from equation \eqref{eqn:Nk_h1} and Proposition \ref{prop:one}.
\end{proof}

\subsubsection{Proof of Lemma \ref{lem:h1exist} and related results}\label{sec:h1exist}
In this subsection we use the approach developed by Drmota \cite{MD2009} 
to prove the existence and the finiteness of 
$\lim\limits_{k \rightarrow \infty}{h_{k+1}(1) \over h_k(1)} \geq 1$. 
As we saw earlier, this result was used for proving existence, finiteness, and positivity  of  
$\lim\limits_{k \rightarrow \infty}\left(\cN_k \right)^{-{1 \over k}}=\lim\limits_{k \rightarrow \infty}\left(\prod\limits_{j=1}^{k} n_j \right)^{-{1 \over k}} $, the root-Horton law.

\begin{Def}
Given $\gamma \in  (0,1]$. Let
$$V_{k,\gamma}(x)=\begin{cases}
      {1 \over 1-x} & \text{ for } 0 \leq x \leq 1-\gamma, \\
      \gamma^{-1} h_k\left({x-(1-\gamma) \over \gamma}\right) & \text{ for } 1-\gamma \leq x \leq 1.
\end{cases} $$
\end{Def}
\noindent
Note that the sequences of functions $h_k(x)$ and $V_{k,\gamma}(x)$ can be extended beyond $x=1$.

\medskip
\noindent
Next, we make some observations about the above defined functions. 

\begin{obs}\label{obs:one}
$V_{k,\gamma}(x)$ are positive continuous functions satisfying
$$V'_{k+1,\gamma}(x)=2V_{k+1,\gamma}(x)V_{k,\gamma}(x)-V^2_{k,\gamma}(x)$$
 for all $x \in [0,1] \setminus (1-\gamma)$, with initial conditions $V_{k,\gamma}(0)=1$.
\end{obs}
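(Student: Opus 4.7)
The plan is to verify each of the three claims — positivity, continuity, and the differential equation — by checking them separately on the two pieces of the piecewise definition, and then checking compatibility at the junction point $x = 1-\gamma$.

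First I would handle positivity and continuity. On $[0, 1-\gamma]$ the function $V_{k,\gamma}(x) = 1/(1-x)$ is manifestly positive and continuous since $x < 1$ on this interval (assuming $\gamma > 0$). On $[1-\gamma, 1]$, positivity follows from $h_k > 0$ on $[0,1]$, which is a consequence of \eqref{eqn:domin_h} together with $h_0 \equiv 0$ and $h_k(0) = 1$. For continuity at the junction $x = 1-\gamma$, the left limit is $1/\gamma$ while the right limit is $\gamma^{-1} h_k(0) = \gamma^{-1}$, and these agree. The initial condition $V_{k,\gamma}(0) = 1$ follows immediately: if $\gamma < 1$ then $0 \in [0,1-\gamma]$ and $V_{k,\gamma}(0) = 1/(1-0) = 1$; if $\gamma = 1$ then $V_{k,1}(0) = h_k(0) = 1$.

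Next I would verify the differential equation on $[0, 1-\gamma)$, where $V_{k,\gamma}(x) = 1/(1-x) = h(x)$ for every $k$. Here $V'_{k+1,\gamma}(x) = 1/(1-x)^2$, and the right-hand side is
\[
2 V_{k+1,\gamma}(x) V_{k,\gamma}(x) - V^2_{k,\gamma}(x) = \frac{2}{(1-x)^2} - \frac{1}{(1-x)^2} = \frac{1}{(1-x)^2},
\]
matching $V'_{k+1,\gamma}$. This step is immediate and amounts to the observation that $h(x) = 1/(1-x)$ is the fixed-point solution to the Riccati-type equation.

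Finally, I would verify the equation on $(1-\gamma, 1]$ by a change of variables. Setting $u = (x - (1-\gamma))/\gamma$, one computes $V'_{k+1,\gamma}(x) = \gamma^{-2} h'_{k+1}(u)$ by the chain rule, and then
\[
2 V_{k+1,\gamma}(x) V_{k,\gamma}(x) - V^2_{k,\gamma}(x) = \gamma^{-2}\bigl(2 h_{k+1}(u) h_k(u) - h_k^2(u)\bigr) = \gamma^{-2} h'_{k+1}(u),
\]
where the last equality is exactly \eqref{eqn:ODEh}. This matches $V'_{k+1,\gamma}(x)$ and completes the verification. There is no genuine obstacle here — the only subtlety is that $V_{k,\gamma}$ need not be differentiable at $x = 1-\gamma$ (one-sided derivatives from the left give $1/\gamma^2$ and from the right give $\gamma^{-2} h'_k(0) = \gamma^{-2}(2h_{k-1}(0)h_k(0) - h_{k-1}^2(0))$, which may differ), which is precisely why the statement excludes the junction point from its domain.
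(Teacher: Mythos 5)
Your verification is correct, and the paper itself offers no argument here — the statement is recorded as an Observation immediately after the definition of $V_{k,\gamma}$, so the routine piecewise check you carry out (the fixed-point identity for $h(x)=1/(1-x)$ on $[0,1-\gamma)$, the chain-rule rescaling of \eqref{eqn:ODEh} on $(1-\gamma,1]$, and matching of the two pieces at the junction) is exactly the implicit content. Your closing remark about the junction is also accurate: the one-sided derivatives genuinely disagree for $k=1$ (since $h_1'\equiv 0$), which justifies excluding $x=1-\gamma$ from the domain of the ODE.
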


\begin{obs} \label{obs:two}
Let $\gamma_k={h_k(1) \over h_{k+1}(1)}$.  Then
\begin{equation} \label{gammak1}
V_{k,\gamma_k}(1)=h_{k+1}(1)
\end{equation}
and
\begin{equation} \label{gammak2}
V_{k,\gamma}(1)=\gamma^{-1} h_k(1) \geq h_{k+1}(1) \quad \text{ whenever } \gamma \leq \gamma_k.
\end{equation}
\end{obs}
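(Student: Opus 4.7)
The plan is to verify both assertions by direct substitution into the definition of $V_{k,\gamma}$.

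First I would compute $V_{k,\gamma}(1)$ for general $\gamma \in (0,1]$. Since $1 \geq 1-\gamma$, the second branch of the piecewise definition applies, giving
\[
V_{k,\gamma}(1) = \gamma^{-1} h_k\!\left(\frac{1-(1-\gamma)}{\gamma}\right) = \gamma^{-1} h_k(1).
\]
This single formula is essentially all the content needed.

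For assertion \eqref{gammak1}, I would substitute $\gamma = \gamma_k = h_k(1)/h_{k+1}(1)$ into the formula above to obtain
\[
V_{k,\gamma_k}(1) = \gamma_k^{-1} h_k(1) = \frac{h_{k+1}(1)}{h_k(1)} \cdot h_k(1) = h_{k+1}(1).
\]
For assertion \eqref{gammak2}, the equality $V_{k,\gamma}(1) = \gamma^{-1} h_k(1)$ is again immediate. The inequality $\gamma^{-1} h_k(1) \geq h_{k+1}(1)$ whenever $\gamma \leq \gamma_k$ follows because $\gamma \leq \gamma_k$ gives $\gamma^{-1} \geq \gamma_k^{-1} = h_{k+1}(1)/h_k(1)$, and multiplying by the positive quantity $h_k(1)$ (positivity of $h_k$ at $1$ is guaranteed by \eqref{eqn:domin_h} extended to the boundary, or directly from the ODE \eqref{eqn:ODEh} with $h_k(0)=1$) yields the claim.

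There is essentially no obstacle here: both statements are bookkeeping consequences of how $\gamma_k$ was defined, and the observation is really setting up notation for use in a subsequent monotonicity/comparison argument (presumably to show that $\gamma_k$ is the threshold value of $\gamma$ for which $V_{k,\gamma}$ reaches $h_{k+1}(1)$ at $x=1$, which is what will be exploited in comparing $V_{k,\gamma}$ with $h_{k+1}$ via Observation~\ref{obs:one} and the ODE \eqref{eqn:ODEh}).
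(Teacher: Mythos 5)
Your proof is correct and is exactly the direct-substitution argument the paper has in mind: the statement is presented as an Observation with no written proof, precisely because evaluating the second branch of the definition of $V_{k,\gamma}$ at $x=1$ gives $V_{k,\gamma}(1)=\gamma^{-1}h_k(1)$, from which both claims follow as you describe (using $h_k(1)\ge h_1(1)=1>0$ and $\gamma_k\le 1$, which follow from \eqref{eqn:domin_h}). Nothing further is needed.
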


\begin{obs}\label{obs:three}
$$V_{k,\gamma}(x) \leq V_{k+1,\gamma}(x)$$
for all $x \in [0,1]$ since $h_k(x) \leq h_{k+1}(x)$.
\end{obs}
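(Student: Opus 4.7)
The plan is to verify the claimed inequality directly by unwinding the piecewise definition of $V_{k,\gamma}$, reducing everything to the already-established pointwise monotonicity of the sequence $\{h_k\}$. The definition
\[
V_{k,\gamma}(x)=
\begin{cases}
(1-x)^{-1} & 0\le x\le 1-\gamma,\\
\gamma^{-1} h_k\!\left(\tfrac{x-(1-\gamma)}{\gamma}\right) & 1-\gamma\le x\le 1,
\end{cases}
\]
splits $[0,1]$ into two intervals on which the comparison between $V_{k,\gamma}$ and $V_{k+1,\gamma}$ can be carried out separately. The only real ingredient one needs is the fact that $h_k(y)\le h_{k+1}(y)$ for all $y\in[0,1]$, which is already in hand from earlier in the paper.

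First I would treat $x\in[0,1-\gamma]$: on this subinterval both $V_{k,\gamma}(x)$ and $V_{k+1,\gamma}(x)$ coincide with $(1-x)^{-1}$, so equality (in particular the desired inequality) holds trivially, independent of $k$. Then, for $x\in[1-\gamma,1]$, I would substitute $y:=(x-(1-\gamma))/\gamma\in[0,1]$, observe that
\[
V_{k+1,\gamma}(x)-V_{k,\gamma}(x) \;=\; \gamma^{-1}\bigl(h_{k+1}(y)-h_k(y)\bigr),
\]
and apply the pointwise monotonicity $h_{k+1}(y)\ge h_k(y)$. At the interface $x=1-\gamma$ (i.e.\ $y=0$) the two branches agree since $h_k(0)=1=(1-(1-\gamma))^{-1}\gamma$ up to the factor $\gamma^{-1}$, which keeps $V_{k,\gamma}$ continuous, so no issue arises from the piecewise split.

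The pointwise inequality $h_k\le h_{k+1}$ on $[0,1]$ — the only input beyond the definition — is exactly the content of \eqref{eqn:domin_h}: the bound $h(x)>h_{k+1}(x)>h_k(x)$ on $(0,1)$, with equality $h_k(0)=h_{k+1}(0)=1$ from the common initial condition of the system \eqref{eqn:ODEh}. This in turn descends from the positivity $\eta_k(t)=g_k(t)-g_{k+1}(t)>0$ of the Smoluchowski–Horton ODE system \eqref{Aeta}, which is already established.

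There is no genuine obstacle: the argument is a direct verification and essentially matches the one-line justification ``since $h_k(x)\le h_{k+1}(x)$'' appearing in the statement itself. The only mild care point is checking that the piecewise definition matches at the splicing point $x=1-\gamma$, which follows from $h_k(0)=1$, so continuity and the two-case comparison together give the result on all of $[0,1]$.
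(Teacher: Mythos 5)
Your proof is correct and is essentially the paper's own argument: the paper justifies this observation only by the one-line remark ``since $h_k(x)\le h_{k+1}(x)$'' (which rests on \eqref{eqn:domin_h}), and your case split over the two pieces of the definition of $V_{k,\gamma}$, together with the continuity check at $x=1-\gamma$, is just the explicit unwinding of that remark. Nothing is missing.
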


\begin{obs}\label{obs:four}
Since $h_1(x) \equiv 1$ and $\gamma_1={h_1(1) \over h_2(1)}$, 
$$h_2(x) \leq V_{1,\gamma_1}(x)=\begin{cases}
      {1 \over 1-x} & \text{ for } 0 \leq x \leq 1-\gamma_1, \\
      \gamma_1^{-1}=h_2(1) & \text{ for } 1-\gamma_1 \leq x \leq 1.
\end{cases} $$
\end{obs}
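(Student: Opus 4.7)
The plan is to verify the two pieces of the bound separately, by splitting the interval $[0,1]$ into the subintervals $[0,1-\gamma_1]$ and $[1-\gamma_1,1]$ on which $V_{1,\gamma_1}$ is defined by distinct formulas. On each subinterval the bound comes from a different, already-established fact, so once the split is in place the proof reduces to two short observations.

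On the first subinterval $[0,1-\gamma_1]$, where $V_{1,\gamma_1}(x)=h(x)=1/(1-x)$, the bound $h_2(x)\le h(x)$ is immediate from the domination \eqref{eqn:domin_h} applied with $k=1$, which gives $h_2(x)<h(x)$ for all $x\in(0,1)$, together with $h_2(0)=1=h(0)$ at the endpoint. On the second subinterval $[1-\gamma_1,1]$, where $V_{1,\gamma_1}(x)\equiv \gamma_1^{-1}=h_2(1)$, the bound $h_2(x)\le h_2(1)$ follows from monotonicity of $h_2$: from the ODE \eqref{eqn:ODEh} with $k=1$ (using $h_1\equiv 1$) one has $h_2'(x)=2h_2(x)-1$, and since $h_2(0)=1$ this yields $h_2(x)=(1+e^{2x})/2$, a strictly increasing function on $[0,1]$.

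There is no real obstacle here; the statement is a routine consequence of two facts that are already in hand (the general domination of $h_k$ by $h$, and the explicit/monotone form of $h_2$). The only thing worth emphasizing in the write-up is that the choice of the break-point $1-\gamma_1$ is precisely what makes the two pieces of $V_{1,\gamma_1}$ agree continuously at $x=1-\gamma_1$, since $1/(1-(1-\gamma_1))=\gamma_1^{-1}=h_2(1)$; this continuity, combined with $h_2(1-\gamma_1)<h_2(1)$ (by monotonicity), also confirms that the inequality is consistent across the join.
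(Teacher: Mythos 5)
Your proof is correct and matches the justification implicit in the paper, which states this as an unproved Observation relying on exactly the two facts you cite: the domination $h_2(x)<h(x)$ from \eqref{eqn:domin_h} on $[0,1-\gamma_1]$, and the monotonicity of $h_2(x)=(1+e^{2x})/2$ on $[1-\gamma_1,1]$. Nothing further is needed.
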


\medskip
\noindent
Observation \ref{obs:four} generalizes as follows.
\begin{prop}\label{drmota}
$$h_{k+1}(x) \leq V_{k,\gamma_k}(x)=\begin{cases}
      {1 \over 1-x} & {\rm~for~} 0 \leq x \leq 1-\gamma_k, \\
     \gamma_k^{-1} h_k\left({x-(1-\gamma_k) \over \gamma_k}\right) 
     & {\rm~ for ~} 1-\gamma_k \leq x \leq 1.
\end{cases}$$
\end{prop}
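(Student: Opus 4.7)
The plan is to prove the proposition by induction on $k$, with Observation~\ref{obs:four} providing the base case $k=1$. In the inductive step, assume the statement at level $k-1$, that is $h_k(x) \leq V_{k-1,\gamma_{k-1}}(x)$ on $[0,1]$, and establish the corresponding bound at level $k$. I would first split the interval $[0,1]$ at the gluing point $x = 1-\gamma_k$. On the left piece $[0,1-\gamma_k]$, the target function $V_{k,\gamma_k}$ coincides with $h(x) = (1-x)^{-1}$, so the desired inequality $h_{k+1}(x) \leq V_{k,\gamma_k}(x)$ is a direct consequence of \eqref{eqn:domin_h}. All the work is on the right piece $[1-\gamma_k,1]$, where $V_{k,\gamma_k}(x) = \gamma_k^{-1} h_k(y)$ with $y = (x - (1-\gamma_k))/\gamma_k$.

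On that right piece I would rescale the problem to $[0,1]$ by introducing $\widetilde u(y) := \gamma_k h_{k+1}\bigl(\gamma_k y + (1-\gamma_k)\bigr)$ and $\widetilde v(y) := \gamma_k h_k\bigl(\gamma_k y + (1-\gamma_k)\bigr)$. Differentiating and using \eqref{eqn:ODEh} gives the rescaled equation $\widetilde u'(y) = 2\widetilde v(y) \widetilde u(y) - \widetilde v(y)^2$, that is, the same Riccati form as the equation $h_k'(y) = 2 h_{k-1}(y) h_k(y) - h_{k-1}(y)^2$ driving $h_k$. The bound $h_{k+1}(x) \leq V_{k,\gamma_k}(x)$ on $[1-\gamma_k,1]$ is therefore equivalent to $\widetilde u(y) \leq h_k(y)$ on $[0,1]$. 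The boundary data are controlled by the choice of $\gamma_k$: from \eqref{eqn:domin_h} we get $\widetilde u(0) = \gamma_k h_{k+1}(1-\gamma_k) \leq \gamma_k h(1-\gamma_k) = 1 = h_k(0)$, and from the definition $\gamma_k = h_k(1)/h_{k+1}(1)$ of Observation~\ref{obs:two} we get $\widetilde u(1) = \gamma_k h_{k+1}(1) = h_k(1)$, with equality at the right endpoint.

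The crux is then a comparison argument on $[0,1]$ for the two Riccati equations $\widetilde u' = 2\widetilde v\, \widetilde u - \widetilde v^2$ and $h_k' = 2 h_{k-1} h_k - h_{k-1}^2$, with boundary data $\widetilde u(0) \leq h_k(0)$ and $\widetilde u(1) = h_k(1)$. I would argue by contradiction: if $\widetilde u > h_k$ somewhere in $(0,1)$, then the continuous function $\phi := h_k - \widetilde u$ (nonnegative at both endpoints, and zero at $y=1$) must have an internal zero $y_* \in (0,1)$ where $\phi$ decreases through zero, i.e. $\widetilde u(y_*) = h_k(y_*)$ and $\phi'(y_*) \leq 0$. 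At such a point,
\[
\phi'(y_*) = \bigl(h_k(y_*) - h_{k-1}(y_*)\bigr)^2 - \bigl(\widetilde u(y_*) - \widetilde v(y_*)\bigr)^2,
\]
by the algebraic identity $2gf - g^2 = f^2 - (f-g)^2$, and this quantity must therefore be nonpositive. I would rule this out by showing $\widetilde v(y) \leq h_{k-1}(y)$ on $[0,1]$: substituting the definition of $\widetilde v$ and applying the inductive hypothesis $h_k \leq V_{k-1,\gamma_{k-1}}$, together with the key compatibility $\gamma_k h_k(1) \leq h_{k-1}(1)$ (which follows from the monotonicity $\gamma_k \leq \gamma_{k-1}$, itself to be verified using $\gamma_j = h_j(1)/h_{j+1}(1)$ and the already-known bound $n_k \geq 2$ from Proposition~\ref{prop:properties_g}(d)), yields the needed pointwise domination after taking advantage of the concavity/monotonicity properties of the rescaling $y \mapsto \gamma_k y + (1-\gamma_k)$. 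Combined with $\widetilde u(y_*) \geq h_{k-1}(y_*)$, this forces $|\widetilde u - \widetilde v| \leq h_k - h_{k-1}$ at $y_*$, and hence $\phi'(y_*) \geq 0$, contradicting the strict decrease and completing the induction.

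The main obstacle, and the place where the argument is most delicate, is exactly the propagation step $\widetilde v \leq h_{k-1}$: the inductive hypothesis is about $h_k$ versus $V_{k-1,\gamma_{k-1}}$, while the rescaling here uses $\gamma_k$, not $\gamma_{k-1}$. Matching these two scales — showing that the family $\{\gamma_k\}$ is monotone enough that replacing $\gamma_{k-1}$ by $\gamma_k$ still preserves the bound after the affine change of variables — is where the structure of the sequence $\{h_j(1)\}$ (in particular the bound $n_k \geq 2$ and the fact that all $h_j$ are bounded by $h$) enters in an essential way. Once this monotonicity estimate is in place, the Riccati comparison at $y_*$ closes the proof.
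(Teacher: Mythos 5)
There is a genuine gap, and it sits exactly where you flagged the argument as ``most delicate'': the propagation step $\widetilde v(y)\leq h_{k-1}(y)$ on $[0,1]$ is false. Unwinding your rescaling, $\widetilde v\leq h_{k-1}$ is the statement $h_k(x)\leq V_{k-1,\gamma_k}(x)$ on $[1-\gamma_k,1]$, and at the right endpoint this reads $\gamma_k h_k(1)\leq h_{k-1}(1)$, i.e. $h_k(1)^2\leq h_{k-1}(1)\,h_{k+1}(1)$, i.e. $\gamma_{k-1}\geq\gamma_k$. But the sequence $\gamma_k=h_k(1)/h_{k+1}(1)$ is \emph{increasing} (Lemma~\ref{gamma}; numerically $h_2(1)/h_1(1)\approx 4.19$ while the ratios decrease toward $R\approx 3.04$), so generically $\widetilde v(1)>h_{k-1}(1)$ and your key comparison fails at $y=1$. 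The monotonicity $\gamma_k\leq\gamma_{k-1}$ you propose to verify is the wrong direction; and even the correct monotonicity could not be invoked here, since the paper's proof of Lemma~\ref{gamma} itself uses Proposition~\ref{drmota}, so the argument would be circular. This is why a na\"ive induction on the proposition itself does not close: the inductive hypothesis controls $h_k$ against $V_{k-1,\gamma_{k-1}}$, but the rescaling at stage $k$ forces you to compare against $V_{k-1,\gamma_k}$, which is the \emph{smaller} of the two.

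The paper sidesteps this scale mismatch by inducting on a much weaker statement with $\gamma$ held \emph{fixed}: Lemma~\ref{positivezero} shows, for every $\gamma\in(0,1)$ and every $k$, that $V_{k,\gamma}-h_{k+1}$ changes sign at most once on $[1-\gamma,1]$ and only from nonnegative to negative (proved with an integrating-factor identity, which is the rigorous version of your pointwise Riccati comparison). The proposition then follows by specializing $\gamma=\gamma_k$ and using the two boundary facts you also identified, $V_{k,\gamma_k}(1-\gamma_k)=h(1-\gamma_k)>h_{k+1}(1-\gamma_k)$ and $V_{k,\gamma_k}(1)=h_{k+1}(1)$: a function that starts positive, ends at zero, and can only cross from nonnegative to negative must stay nonnegative. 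Two secondary points: your displayed identity for $\phi'(y_*)$ has the wrong sign (one gets $\phi'(y_*)=(\widetilde u-\widetilde v)^2-(h_k-h_{k-1})^2$ at a crossing), and a single-point comparison only yields $\phi'(y_*)=0$ rather than a contradiction, which is why the integrating-factor (Gronwall-type) formulation is needed; but these are repairable, whereas the false bound $\widetilde v\leq h_{k-1}$ is not.
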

In order to prove Proposition \ref{drmota} we will need the following lemma.
\begin{lem} 
\label{positivezero}
For any $\gamma \in (0,1)$ and $k \geq 1$, function $V_{k,\gamma}(x)-h_{k+1}(x)$ changes its sign at most once as $x$ increases from $1- \gamma$ to $1$. Moreover, since $V_{k,\gamma}(1-\gamma)=h(1-\gamma) > h_{k+1}(1-\gamma)$,  function $V_{k,\gamma}(x)-h_{k+1}(x)$ can only change sign from nonnegative to negative.
\end{lem}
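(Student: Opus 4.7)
The plan is to prove the first assertion by induction on $k$; the second assertion then follows because \eqref{eqn:domin_h} gives $V_{k,\gamma}(1-\gamma) = h(1-\gamma) = 1/\gamma > h_{k+1}(1-\gamma)$, so the unique sign change (if any) must be from positive to negative.

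For the base case $k=1$, I would use $h_1 \equiv 1$ to conclude that $V_{1,\gamma}(x) \equiv 1/\gamma$ on $[1-\gamma, 1]$, while $h_2(x) = (1+e^{2x})/2$ is strictly increasing there. Thus $V_{1,\gamma}(x) - h_2(x)$ is strictly decreasing, so it has at most one zero. The inductive step $(k-1) \Rightarrow k$ (for $k \geq 2$) proceeds by contradiction. First, I compute $D'(x_0)$ at an arbitrary zero $x_0 \in (1-\gamma, 1]$ of $D := V_{k,\gamma} - h_{k+1}$. Using the ODE for $V_{k,\gamma}$ (Observation~\ref{obs:one}, shifted by one index) and the ODE~\eqref{eqn:ODEh} for $h_{k+1}$, and substituting the equality $V_{k,\gamma}(x_0) = h_{k+1}(x_0)$, one obtains the clean factorization
\[
D'(x_0) \;=\; \bigl[V_{k-1,\gamma}(x_0) - h_k(x_0)\bigr]\,\bigl[\,2 V_{k,\gamma}(x_0) - V_{k-1,\gamma}(x_0) - h_k(x_0)\,\bigr].
\]
The second factor is strictly positive: by \eqref{eqn:domin_h}, $h_k(x_0) < h_{k+1}(x_0) = V_{k,\gamma}(x_0)$, and by Observation~\ref{obs:three}, $V_{k-1,\gamma}(x_0) \leq V_{k,\gamma}(x_0)$. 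Hence $\operatorname{sign} D'(x_0) = \operatorname{sign}\bigl(V_{k-1,\gamma}(x_0) - h_k(x_0)\bigr)$.

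Next, if $D$ had two sign changes, then there would exist $1-\gamma < x_1 < x_2 \leq 1$ with $D$ going from $+$ to $-$ at $x_1$ and from $-$ to $+$ at $x_2$, forcing $D'(x_1) \leq 0$ and $D'(x_2) \geq 0$. By the sign identity above,
\[
V_{k-1,\gamma}(x_1) \leq h_k(x_1)
\qquad\text{and}\qquad
V_{k-1,\gamma}(x_2) \geq h_k(x_2).
\]
Combined with $V_{k-1,\gamma}(1-\gamma) = 1/\gamma > h_k(1-\gamma)$ from \eqref{eqn:domin_h}, the function $E := V_{k-1,\gamma} - h_k$ would be strictly positive at $1-\gamma$, nonpositive at $x_1$, and nonnegative at $x_2$. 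In the generic case (strict inequalities at $x_1$ and $x_2$), this produces at least two sign changes of $E$ on $[1-\gamma, x_2]$, contradicting the inductive hypothesis that $V_{k-1,\gamma} - h_k$ admits at most one sign change.

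The principal technical obstacle is handling the degenerate case where $V_{k-1,\gamma}(x_i) = h_k(x_i)$ for $i = 1$ or $2$, which makes $D'(x_i) = 0$ and allows $D$ to cross zero only through a zero of odd order $\geq 3$. My plan is to invoke real analyticity: because $h_2(x) = (1+e^{2x})/2$ is analytic and each subsequent $h_{k+1}$ solves a first-order linear ODE with analytic coefficients (and similarly for $V_{k,\gamma}$ on $(1-\gamma,1]$), the difference $D$ is real-analytic with only isolated zeros of finite order. A local Taylor expansion at $x_i$ combined with the analogous factorization at higher order (or a small perturbation of $\gamma$ producing only simple zeros and then a continuity argument) closes the degenerate case and completes the induction.
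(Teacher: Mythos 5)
Your induction and base case match the paper's, and your pointwise factorization
\[
D'(x_0)=\bigl[V_{k-1,\gamma}(x_0)-h_k(x_0)\bigr]\,\bigl[2V_{k,\gamma}(x_0)-V_{k-1,\gamma}(x_0)-h_k(x_0)\bigr]
\quad\text{at zeros }x_0\text{ of }D=V_{k,\gamma}-h_{k+1}
\]
is correct, as is the positivity of the second factor via \eqref{eqn:domin_h} and Observation~\ref{obs:three}. The genuine gap is exactly the degenerate case you flag, and it is not a removable technicality: the contradiction you extract needs $E:=V_{k-1,\gamma}-h_k$ to satisfy $E(x_1)<0$ and $E(x_2)>0$ \emph{strictly}, but the inductive hypothesis (``at most one sign change, from nonnegative to negative'') does not forbid $E$ from becoming negative and then returning to touch zero, so $E(x_2)=0$ yields no contradiction. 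Your two proposed repairs are both unsubstantiated. The ``analogous factorization at higher order'' does not exist in any clean form: differentiating $D'$ brings in $V_{k-1,\gamma}'$ and $h_k'$, hence $V_{k-2,\gamma}$ and $h_{k-1}$, and the cascade of lower-index terms destroys the product structure you rely on. The perturbation-in-$\gamma$ route would additionally require a genericity (transversality) argument showing that all zeros of $D$ are simple for a dense set of $\gamma$, which is a nontrivial piece of work you have not supplied; only the closedness of the ``at most one sign change'' property under uniform limits is easy.

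The paper closes this by globalizing your identity instead of evaluating it at zeros. One checks that for \emph{all} $x$,
\[
\frac{d}{dx}\Bigl[D(x)\,e^{-2\int_{1-\gamma}^x h_k(y)\,dy}\Bigr]
=\bigl(2V_{k,\gamma}-V_{k-1,\gamma}-h_k\bigr)\bigl(V_{k-1,\gamma}-h_k\bigr)\,e^{-2\int_{1-\gamma}^x h_k(y)\,dy},
\]
your formula being the special case where the term $-2h_kD$ vanishes. Since the first factor is nonnegative and, by the inductive hypothesis, the factor $V_{k-1,\gamma}-h_k$ is first nonnegative then nonpositive, the antiderivative $I(x)$ of the right-hand side is unimodal (nondecreasing then nonincreasing) with $I(1-\gamma)=0$; adding the positive constant $D(1-\gamma)$ and multiplying by the positive exponential shows $D$ crosses zero at most once and only downward. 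No case analysis at individual zeros, and no analyticity, is needed. I recommend you replace your contradiction argument by this integrating-factor computation.
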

\begin{proof}
This is a proof by induction with base at $k=1$. Here $V_{1,\gamma}(x) ={1 \over \gamma}$ is constant on $[1-\gamma,1]$, while $h_2(x)=(1+e^{2x})/2$ is an increasing function, and 
$$V_{1,\gamma}(1-\gamma)=h(1-\gamma)>h_2(1-\gamma).$$

For the induction step, we need to show that if $V_{k,\gamma}(x)-h_{k+1}(x)$ changes its sign at most once, 
then so does $V_{k+1,\gamma}(x)-h_{k+2}(x)$. 
Since both sequences of functions satisfy the same ODE relation (see Observation \ref{obs:one}), we have

\noindent
${d \over dx}\left[(V_{k+1,\gamma}(x)-h_{k+2}(x))\cdot e^{-2\int\limits_{1-\gamma}^x h_{k+1}(y)dy} \right]$
$$\qquad \qquad =(2V_{k+1,\gamma}(x)-V_{k,\gamma}(x)-h_{k+1}(x))\cdot (V_{k,\gamma}(x)-h_{k+1}(x))\cdot e^{-2\int\limits_{1-\gamma}^x h_{k+1}(y)dy},$$
where $h_{k+1}(x) \leq V_{k+1,\gamma}(x)$ by definition of $V_{k+1,\gamma}(x)$, and $V_{k,\gamma}(x) \leq V_{k+1,\gamma}(x)$ as in Observation \ref{obs:three}. 
\vskip 0.2 in
\noindent
Now, let
$$I(x):=\int\limits_{1-\gamma}^x (2V_{k+1,\gamma}(s)-V_{k,\gamma}(s)-h_{k+1}(s))\cdot (V_{k,\gamma}(s)-h_{k+1}(s))\cdot e^{-2\int\limits_{1-\gamma}^s h_{k+1}(y)dy} ds.$$
Then 
$$~(V_{k+1,\gamma}(x)-h_{k+2}(x))\cdot e^{-2\int\limits_{1-\gamma}^x h_{k+1}(y)dy}=V_{k+1,\gamma}(1-\gamma)-h_{k+2}(1-\gamma)+I(x).$$
\vskip 0.2 in
\noindent
The function $2V_{k+1,\gamma}(x)-V_{k,\gamma}(x)-h_{k+1}(x) \geq 0$, and since 
$V_{k,\gamma}(x)-h_{k+1}(x)$ changes its sign at most once, then $I(x)$ should change its sign from nonnegative to negative at most once as $x$ increases from $1-\gamma$ to $1$. Hence
$$V_{k+1,\gamma}(x)-h_{k+2}(x)=(V_{k+1,\gamma}(1-\gamma)-h_{k+2}(1-\gamma)+I(x)) \cdot e^{2\int\limits_{1-\gamma}^x h_{k+1}(y)dy}$$
should change its sign from nonnegative to negative at most once as 
$$V_{k+1,\gamma}(1-\gamma)=h(1-\gamma)>h_{k+2}(1-\gamma)$$
by \eqref{eqn:domin_h}.
\end{proof}

\begin{proof}[Proof of Proposition \ref{drmota}]
Take $\gamma=\gamma_k$ in Lemma \ref{positivezero}. Then function $h_{k+1}(x)-V_{k,\gamma_k}(x)$ 
should change its sign from nonnegative to negative at most once within the interval $[1-\gamma_k,1]$. 
Hence, $V_{k,\gamma_k}(1-\gamma_k) > h_{k+1}(1-\gamma_k)$ and $h_{k+1}(1) = V_{k,\gamma_k}(1)$ imply 
$h_{k+1}(x) \leq V_{k,\gamma_k}(x)$ as in the statement of the proposition.
\end{proof}


Now we are ready to prove the monotonicity result.
\begin{lem} \label{gamma}
$$\gamma_k \leq \gamma_{k+1} \qquad \text{ for all } k \in \mathbb{N}^+.$$
\end{lem}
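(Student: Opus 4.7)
The plan is to prove the slightly stronger statement $\gamma_k<\gamma_{k+1}$ by showing directly that $V_{k+1,\gamma_k}(1) > h_{k+2}(1)$. Since by Observation~\ref{obs:two} we have $V_{k+1,\gamma_k}(1)=\gamma_k^{-1}h_{k+1}(1)$, this inequality rewrites as $\gamma_k<h_{k+1}(1)/h_{k+2}(1)=\gamma_{k+1}$.

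To carry out this argument I would introduce, on $[1-\gamma_k,1]$, the auxiliary function
\[
G(x):=\bigl(V_{k+1,\gamma_k}(x)-h_{k+2}(x)\bigr)\exp\!\left(-2\!\int_{1-\gamma_k}^{x}\!h_{k+1}(s)\,ds\right),
\]
and show that $G$ is nondecreasing with $G(1-\gamma_k)>0$. Differentiating and substituting the ODE for $V_{k+1,\gamma_k}$ from Observation~\ref{obs:one} and the ODE \eqref{eqn:ODEh} for $h_{k+2}$, the bracketed expression factors cleanly as
\[
V_{k+1,\gamma_k}'-h_{k+2}'-2h_{k+1}(V_{k+1,\gamma_k}-h_{k+2}) \;=\; (V_{k,\gamma_k}-h_{k+1})\bigl(2V_{k+1,\gamma_k}-V_{k,\gamma_k}-h_{k+1}\bigr).
\]

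Both factors are nonnegative: Proposition~\ref{drmota} gives $V_{k,\gamma_k}\ge h_{k+1}$, while Observation~\ref{obs:three} gives $V_{k+1,\gamma_k}\ge V_{k,\gamma_k}$; chaining these inequalities yields $V_{k+1,\gamma_k}\ge h_{k+1}$, so
\[
2V_{k+1,\gamma_k}-V_{k,\gamma_k}-h_{k+1} \;=\; (V_{k+1,\gamma_k}-V_{k,\gamma_k})+(V_{k+1,\gamma_k}-h_{k+1}) \;\ge\; 0.
\]
Hence $G'(x)\ge 0$ on $[1-\gamma_k,1]$. Meanwhile, $G(1-\gamma_k)=h(1-\gamma_k)-h_{k+2}(1-\gamma_k)=\gamma_k^{-1}-h_{k+2}(1-\gamma_k)>0$ by the strict domination \eqref{eqn:domin_h}. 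Since the exponential factor is positive throughout, the monotonicity of $G$ forces $G(1)>0$, hence $V_{k+1,\gamma_k}(1)>h_{k+2}(1)$, and the desired inequality $\gamma_k<\gamma_{k+1}$ follows.

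The main conceptual step is spotting the integrating factor $\exp\!\bigl(-2\int_{1-\gamma_k}^{x}h_{k+1}\bigr)$ and then verifying the clean factorization — essentially the same algebraic device that underlies the proof of Lemma~\ref{positivezero}. Once the factorization is in place, the rest is just bookkeeping with the pointwise inequalities already established in Proposition~\ref{drmota} and Observation~\ref{obs:three}, and no further machinery (such as a contradiction argument based on the at-most-one-sign-change property) is needed.
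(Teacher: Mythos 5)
Your proof is correct. The computational engine is the same as the paper's: the integrating factor $e^{-2\int h_{k+1}}$, the factorization of $V_{k+1,\gamma}'-h_{k+2}'-2h_{k+1}(V_{k+1,\gamma}-h_{k+2})$ into $(V_{k,\gamma}-h_{k+1})(2V_{k+1,\gamma}-V_{k,\gamma}-h_{k+1})$, and the nonnegativity of both factors via Proposition~\ref{drmota} and Observation~\ref{obs:three}. The logical packaging, however, is genuinely different. The paper argues by contradiction: it assumes $\gamma_{k+1}\le\gamma_k$, uses that assumption to get the extra comparison $V_{k,\gamma_k}\le V_{k,\gamma_{k+1}}$ (and hence $h_{k+1}\le V_{k,\gamma_{k+1}}$), runs the differential inequality for $V_{k+1,\gamma_{k+1}}-h_{k+2}$ on $[1-\gamma_{k+1},1]$, and contradicts the defining identity $V_{k+1,\gamma_{k+1}}(1)=h_{k+2}(1)$ from \eqref{gammak1}. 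You instead work throughout with $\gamma=\gamma_k$, for which Proposition~\ref{drmota} directly supplies $h_{k+1}\le V_{k,\gamma_k}$, and read off $\gamma_k^{-1}h_{k+1}(1)=V_{k+1,\gamma_k}(1)>h_{k+2}(1)$, i.e.\ $\gamma_k<\gamma_{k+1}$. What this buys you: no contradiction hypothesis, one fewer monotonicity-in-$\gamma$ comparison, and the strict inequality $\gamma_k<\gamma_{k+1}$ rather than just $\gamma_k\le\gamma_{k+1}$ (the initial value $G(1-\gamma_k)=\gamma_k^{-1}-h_{k+2}(1-\gamma_k)>0$ by \eqref{eqn:domin_h} propagates to $G(1)>0$ since $G$ is nondecreasing). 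The paper's version, on the other hand, keeps all the analysis anchored at the "critical" parameter $\gamma_{k+1}$ where the boundary identity \eqref{gammak1} is available, which is why the contradiction closes so cleanly there. Both are valid; yours is slightly more economical.
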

\begin{proof} We prove it by contradiction. 
Suppose $\gamma_k \geq \gamma_{k+1}$ for some $k \in \mathbb{N}^+$. 
Then
$$V_{k,\gamma_k}(x) \leq V_{k,\gamma_{k+1}}(x)=\begin{cases}
      {1 \over 1-x} & \text{ for } 0 \leq x \leq 1-\gamma_{k+1}, \\
     \gamma_{k+1}^{-1} h_k\left({x-(1-\gamma_{k+1}) \over \gamma_{k+1}}\right) 
     & \text{ for } 1-\gamma_{k+1} \leq x \leq 1
\end{cases}$$
and therefore
$$h_{k+1}(x) \leq V_{k,\gamma_k}(x) \leq V_{k,\gamma_{k+1}}(x) \leq V_{k+1,\gamma_{k+1}}(x)$$
as $h_{k+1}(x) \leq V_{k,\gamma_k}(x)$ by Proposition \ref{drmota}.

Recall that for $x \in [1-\gamma_{k+1},1]$, 
$$V'_{k+1,\gamma_{k+1}}(x)=2V_{k,\gamma_{k+1}}(x)V_{k+1,\gamma_{k+1}}(x)-V_{k,\gamma_{k+1}}^2,$$ 
where at $1-\gamma_{k+1}$ we consider only the right-hand derivative.
Thus for $x \in [1-\gamma_{k+1},1]$,
$${d \over dx}\Big(V_{k+1,\gamma_{k+1}}(x)-h_{k+2}(x)\Big)=A(x)+B(x)\Big(V_{k+1,\gamma_{k+1}}(x)-h_{k+2}(x)\Big),$$
where $A(x)=2V_{k+1,\gamma_{k+1}}(x)-V_{k,\gamma_{k+1}}(x)-h_{k+1}(x) \geq 0$, $B(x)=2h_{k+1}(x) >0$, and $V_{k+1,\gamma_{k+1}}(1-\gamma_{k+1})-h_{k+2}(1-\gamma_{k+1})=h(1-\gamma_{k+1})-h_{k+2}(1-\gamma_{k+1})>0$.
Hence $$V_{k+1,\gamma_{k+1}}(1) - h_{k+2}(1) \geq V_{k+1,\gamma_{k+1}}(1-\gamma_{k+1})-h_{k+2}(1-\gamma_{k+1})>0$$
arriving to a contradiction since $V_{k+1,\gamma_{k+1}}(1) = h_{k+2}(1)$.
\end{proof}

\vskip 0.2 in
\noindent
\begin{cor}\label{cor:gamma}
Limit $\lim\limits_{k \rightarrow \infty} \gamma_k$ exists.
\end{cor}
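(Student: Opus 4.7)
The plan is to invoke the monotone convergence theorem for real sequences, after observing that the two ingredients it requires have already been supplied in the excerpt. First, Lemma \ref{gamma} gives monotonicity: $\gamma_k \leq \gamma_{k+1}$ for every $k \geq 1$. So $\{\gamma_k\}$ is a nondecreasing sequence of real numbers, and its limit in the extended reals exists automatically.

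Next, I would argue that the sequence is bounded above by $1$, hence the limit is finite. By definition $\gamma_k = h_k(1)/h_{k+1}(1)$, and from \eqref{eqn:domin_h} we have $h_k(x) \leq h_{k+1}(x)$ for every $x \in (0,1)$; this inequality extends to $x=1$ by continuity of $h_k$ and $h_{k+1}$ on $[0,1]$ (each $h_k$ is the solution of a first–order linear ODE on $[0,1]$ obtained by iterating $\mathcal{H}$ in \eqref{iter}, and is continuous there). Since $h_k(0)=1>0$ and $h_k$ is nondecreasing on $[0,1]$ (as $h_k' = 2h_{k-1}h_k - h_{k-1}^2 = h_{k-1}(2h_k - h_{k-1}) \geq 0$ using $h_k \geq h_{k-1}$), we have $0 < h_k(1) \leq h_{k+1}(1)$, so $\gamma_k \in (0,1]$.

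Combining the two observations, $\{\gamma_k\}$ is a nondecreasing sequence in $(0,1]$, hence $\lim_{k\to\infty}\gamma_k$ exists and lies in $(0,1]$. In particular it is strictly positive and finite, which is exactly what is needed to feed into Lemma \ref{lem:h1exist} to identify $\lim_k h_{k+1}(1)/h_k(1) = 1/\lim_k \gamma_k \in [1,\infty)$ and, through Lemma \ref{lem:h1} and Proposition \ref{prop:one}, the Horton exponent $R$.

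There is essentially no obstacle: the content of the corollary is purely the monotone convergence theorem applied to the bounded monotone sequence produced by Lemma \ref{gamma}. The only small nuisance is verifying the upper bound $\gamma_k \leq 1$, which is where I would spend the one sentence of actual work, reading off $h_k(1) \leq h_{k+1}(1)$ from the already–established monotonicity in the index $k$ of the sequence $\{h_k\}$.
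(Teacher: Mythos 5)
Your proof is correct and is essentially the paper's own argument: the paper likewise deduces the corollary from Lemma \ref{gamma} by noting that $\{\gamma_k\}$ is monotone increasing and bounded by $1$. The only difference is that you spell out the bound $\gamma_k=h_k(1)/h_{k+1}(1)\le 1$ via the monotonicity of $\{h_k\}$ in $k$, which the paper leaves implicit.
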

\begin{proof}
Lemma \ref{gamma} implies  $\gamma_k$ is a monotone increasing sequence, bounded by $1$.
\end{proof}

\vskip 0.1 in
\noindent
\begin{proof}[Proof of Lemma \ref{lem:h1exist}]
Lemma \ref{lem:h1exist} follows immediately from Corollary \ref{cor:gamma} and an observation 
that ${h_{k+1}(1) \over h_k(1)}={1 \over \gamma_k}$.
\end{proof}

\section{Generalized dynamical pruning}\label{sec:pruning}

The Horton pruning (Def.~\ref{def:Horton_prune}), which is the key element of the
self-similarity theory developed in previous sections, is a very particular way
of erasing a tree.
Here we suggest a general approach to erasing a finite tree from leaves down 
to the root that include both combinatorial and metric prunings, 
and discuss the respective prune-invariance. 

Given a tree $T \in \cL$ and a point $x \in T$, let $\Delta_{x,T}$ be the {\it descendant tree} 
of $x$: it is comprised of all points of $T$ descendant to $x$, including $x$; see Fig.~\ref{fig:isometry}a. 
Then $\Delta_{x,T}$ is itself a tree in $\cL$ with root at $x$. 
Let $T_1=(M_1,d_1)$ and $T_2=(M_2,d_2)$ be two metric rooted trees (Def.~\ref{def:treeL}), 
and let $\rho_1$ denote the root of $T_1$. 
A function $f: T_1 \rightarrow T_2$ is said to be an {\it isometry} if 
${\sf Image}[f] \subseteq \Delta_{f(\rho_1),T_2}$ and for all pairs $x,y \in T_1$,
$$d_2\big(f(x),f(y)\big)=d_1(x,y).$$
The tree isometry is illustrated in Fig.~\ref{fig:isometry}b. 
We use the isometry to define a {\it partial order} in the space $\cL$ as follows.  
We say that $T_1$ is {\it less than or equal to} $T_2$ and write $T_1 
\preceq T_2$ if 
 there is an isometry $f: T_1 \rightarrow T_2$. 
The relation $\preceq$ is a partial order as it satisfies the 
reflexivity, antisymmetry, and transitivity conditions. 
Moreover, a variety of  other properties of this partial order can be 
observed, including order denseness and semi-continuity. 

\begin{figure}[t] 
\centering\includegraphics[width=0.9\textwidth]{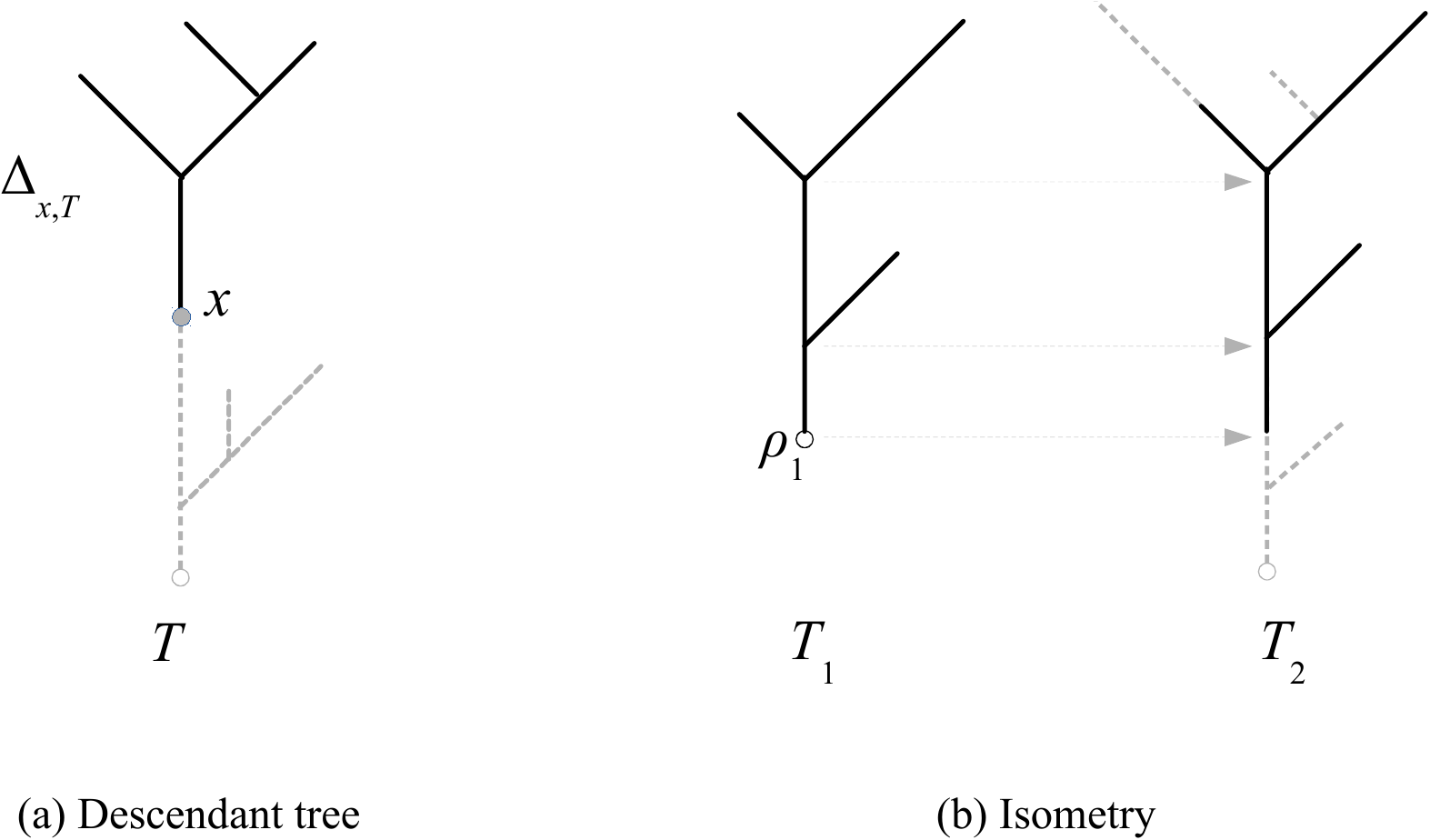}
\caption[Descendant subtree and isometry: an illustration.]
{\small Descendant subtree and tree isometry: an illustration.
(a) Subtree $\Delta_{x,T}$ (solid black lines) descendant to a point $x$ (gray circle)
in a tree $T$ (union of dashed gray and soling black lines).
(b) Isometry of trees. Tree $T_1$ (left) is mapped to tree $T_2$ (right).
The image of $T_1$ within $T_2$ is shown by black lines, the rest of $T_2$
is shown by dashed gray lines. 
Here, tree $T_1$ is less than tree $T_2$, $T_1 \preceq T_2.$}
\label{fig:isometry}
\end{figure}

We say that a function $\varphi:\cL \rightarrow \mathbb{R}$ is {\it monotone nondecreasing} 
with respect to the partial order $\preceq $ if
$\varphi(T_1) \leq \varphi(T_2)$ whenever $T_1 \preceq T_2.$
Consider a monotone nondecreasing function $\varphi:\cL \rightarrow \mathbb{R}_+$. 
We define the {\it generalized dynamical pruning} operator $\S(\varphi,T):\cL\rightarrow\cL$ induced by $\varphi$ 
for any $t\ge 0$ as \index{generalized dynamical pruning}
\be\label{eqn:GenDynamPruning}
\S(\varphi,T):=\rho\cup\Big\{x \in T\setminus\rho ~:~\varphi\big(\Delta_{x,T}\big)\geq t \Big\},
\ee
where $\rho$ denotes the root of tree $T$.
Informally, the operator $\S$ cuts all subtrees $\Delta_{x,T}$ for which the value of $\varphi$
is below threshold $t$, and always keeps the tree root.
Extending the partial order to $\cL$
by assuming $\phi \preceq T$ for all $T \in \cL$, we observe for any $T\in\cL$ that 
$S_s(T) \preceq S_t(T)$ whenever $s \geq t$.

\subsection{Examples of generalized dynamical pruning}\label{sec:GenPruningEx}
The dynamical pruning operator $\S$ encompasses and 
unifies a range of problems, depending on a choice of $\varphi$,
as we illustrate in the following examples.

\subsubsection{Example: pruning via the tree height}\label{ex:height}
Let the function $\varphi(T)$ equal the height of tree $T$:
\begin{equation}\label{phi_hight}
\varphi(T) = \textsc{height}(T).
\end{equation}
In this case the operator $\S$ satisfies the {\bf continuous semigroup property}: \index{continuous semigroup property}
$$\cS_t\circ\cS_s=\cS_{t+s} ~\text{ for any }~t,s\ge 0.$$ 
It coincides with the continuous pruning (a.k.a. tree erasure) studied 
by Jacques Neveu \cite{Neveu86}, who
established invariance of a critical and sub-critical binary
Galton-Watson trees with i.i.d. exponential edge lengths with respect to this operation.

It is readily seen that for a coalescent process (Sect.~\ref{sec:coalescent}), 
the dynamical pruning $\S$ of the corresponding coalescent tree with $\varphi(T)$ as in (\ref{phi_hight}) 
replicates the coalescent process.
More specifically, the timing and order of particle mergers is reproduced by the
dynamics of the leaves of $\S(\varphi,T)$.
See Sect.~\ref{sec:Bdyn}, Thm.~\ref{thm:pruning1} for a concrete version of this statement 
for the coalescent dynamics of shocks in the continuum ballistic annihilation model. 

\subsubsection{Example: pruning via the Horton-Strahler order}\label{ex:H}
Let the function $\varphi(T)$ be one unit less that the Horton-Strahler order 
${\sf ord}(T)$ of a tree $T$:
\begin{equation}\label{phi_Horton}
\varphi(T) = {\sf ord}(T)-1.
\end{equation}
This function is also known as the {\it register number} 
\cite{Ershov1958,FRV79}, as it equals the minimum number of memory registers 
necessary to evaluate an arithmetic expression described by a tree $T$, assuming
that the result is stored in an additional register that also can be used for calculations.

With the choice \eqref{phi_Horton}, the dynamical pruning operator coincides with the
Horton pruning (Def.~\ref{def:Horton_prune}):
$\S=\mathcal{R}^{\lfloor t \rfloor}$,
if we assume that all edge lengths equal to unity. 
It is readily seen that $\S$ satisfies the {\bf discrete semigroup property}: \index{discrete semigroup property}
$$\cS_t\circ\cS_s=\cS_{t+s} ~\text{ for any }~t,s\in \mathbb{N}.$$ 
Most of the present survey is focused on invariance of a tree distribution with respect to 
this operation.

\subsubsection{Example: pruning via the total tree length}\label{ex:L}
Let the function $\varphi(T)$ equal the total lengths of $T$:
\begin{equation}\label{phi_length}
\varphi(T) = \textsc{length}(T).
\end{equation}
The dynamical pruning by the tree length is illustrated in Fig.~\ref{fig:LP} for
a Y-shaped tree that consists of three edges.

Importantly, in this case $\S$ does not satisfy the semigroup property.
To see this, consider an internal vertex point $x \in T$ (see Fig.~\ref{fig:LP},
where the only internal vertex is marked by a gray ball). 
Then $\Delta_{x,T}$ consists of point $x$ as its root, 
the left subtree of length $a$ and the right subtree of length $b$. 
Observe that the whole left subtree is pruned away by time $a$, and the whole right 
subtree is pruned away by time $b$.
However, since $$\varphi(\Delta_{x,T}) = \textsc{length}(\Delta_{x,T})=a+b,$$
the junction point $x$ will not be pruned until time instant $a+b$. 
Thus, $x$ will be a leaf of  $\S(\varphi,T)$ for all $t$ such that
$$\max\{a,b\} \leq t \leq a+b.$$
This situation corresponds to Stage IV in Fig.~\ref{fig:LP}, where
each of the left and right subtrees stemming from point $x$ (marked by
a gray ball) consists of a single root vertex.

\begin{figure}[t] 
\centering\includegraphics[width=0.8\textwidth]{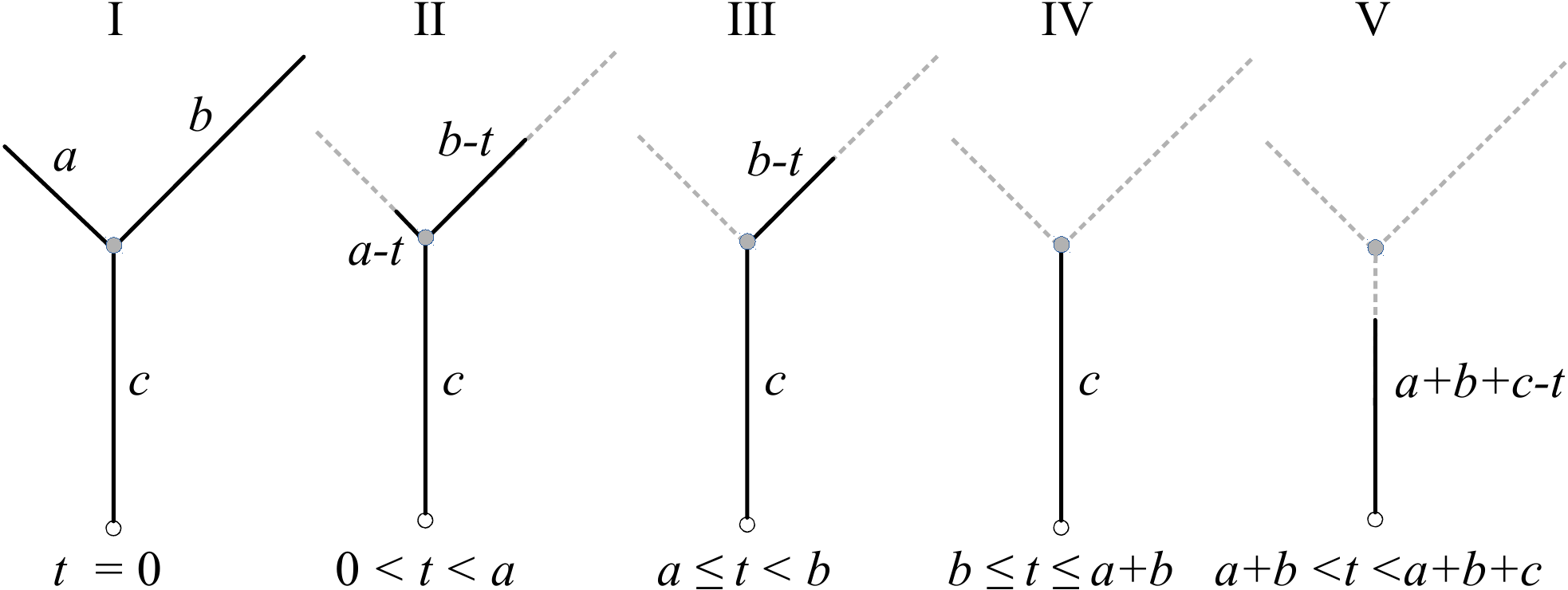}
\caption[Pruning by length: an illustration]
{\small Pruning by tree length: an illustration. 
Figure shows five generic stages in the dynamical pruning of a Y-shaped tree $T$,
with pruning function $\varphi(T) = \textsc{length}(T)$.
The pruned tree $\S$ is shown by solid black lines; the pruned parts of the initial 
tree are shown by dashed gray lines.\\
{\bf Stage I}: Initial tree $T$ consists of three edges, with lengths $a,b,c$
indicated in the panel; without loss of generality we assume $a<b$.\\
{\bf Stage II}: For any $t<a$ the pruned tree $\S$
has a Y-shaped form with leaf edges truncated by $t$. \\
{\bf Stage III}: For any $a\le t < b$ the pruned tree $\S$
consists of a single edge of length $c+b-t$.\\
{\bf Stage IV}: For any $b\le t \le a+b$ the pruned tree $\S$
consists of a single edge of length $c$. Notice that during this stage 
the tree $\S$ does not change with $t$; this loss of memory 
causes the process to violate the semigroup property.\\
{\bf Stage V}: For any $a+b<t<a+b+c$ the pruned tree $\S$
consists of a single edge of length $a+b+c-t$.}
\label{fig:LP}
\end{figure}

The semigroup property in this example can be introduced by considering 
{\bf mass-equipped trees}.
Informally, we replace each pruned subtree $\tau$ of $T$ with a point of mass equal
to the total length of $\tau$.
The massive points contain
some of the information lost during the pruning process, which is enough to 
establish the semigroup property.
Specifically, by time $a$, the pruned away left subtree (Fig.~\ref{fig:LP}, Stage III) 
turns into a massive point of mass $a$ 
attached to $x$ on the left side. 
Similarly, by time $b$, the pruned away right subtree (Fig.~\ref{fig:LP}, Stage IV) 
turns into a massive point of mass $b$ 
attached to $x$ on the right side. 
For $\max\{a,b\} \leq t \leq a+b$, this construction keeps truck of the quantity $a+b-t$ 
associated with point $x$, and when the quantity $a+b-t$ decreases to $0$, the two massive points coalesce into one. 
If at instant $t$ a single massive point seats at a leaf, its mass $m = t$, and the leaf's parental edge is being pruned.  
If at instant $t$ two massive points (left and right) seat at a leaf, 
they total mass $m\ge t$, and further pruning of the leaf's parental edge 
is prevented until the instant $t = m$, when the two massive points coalesce.
Keeping track of all such quantities makes $\S$ satisfy the continuous semigroup property.
This construction is formally introduced in Sect.~\ref{sec:annihilation}, which shows 
that the pruning operator $\S$ with \eqref{phi_length} coincides with 
the potential dynamics of continuum mechanics formulation of the 1-D ballistic annihilation model,
$A+A \rightarrow \zeroslash$.

\subsubsection{Example: pruning via the number of leaves}\label{ex:numL}
Let the function $\varphi(T)$ equal the number of leaves in a tree $T$.
This choice is closely related to the mass-conditioned dynamics of
an aggregation process. 
Specifically, consider $N$ singletons (particles with unit mass)
that appear in a system at instants $t_n\ge 0$, $1\le n\le N$. 
The existing clusters merge into consecutively larger clusters by 
pair-wise mergers. 
The cluster mass is additive: a merger of two clusters of masses
$i$ and $j$ results in a cluster of mass $i+j$.
We consider a time-oriented tree $T$ that describes this process. 
The tree $T$ has $N$ leaves and $(N-1)$ internal vertices.
Each leaf corresponds to an initial particle, each internal 
vertex corresponds to a merger of two clusters, and the edge lengths represent
times between the respective mergers. 
The action of $\S$ on such a tree coincides with a conditional
state of the process that only considers clusters of mass $\ge t$. 
A well-studied special case is a coalescent process with a kernel $K(i,j)$
of Sect.~\ref{sec:coalescent}.

\subsection{Pruning for $\mathbb{R}$-trees}
The generalized dynamical pruning
is readily applied to {\it real trees} (Sect.~\ref{sec:Rsetup}), although this is not
the focus of our work. 
We notice that the total tree length (Example~\ref{ex:L}) and number of leaves 
(Example~\ref{ex:numL}) might be undefined (infinite) for an $\mathbb{R}$-tree. 
We introduce in Sect.~\ref{sec:Rprune} a {\it mass} function that can serve as 
a natural general analog of these and other functions on finite trees. 
We show (Sect.~\ref{sec:Bdyn}, Thm.~\ref{thm:pruning}) that pruning by mass is equivalent to the pruning
by the total tree lengths in a particular situation of ballistic
annihilation model with piece-wise continuous potential with a finite number of segments. 
Accordingly, our results should be straightforwardly
extended to $\mathbb{R}$-trees that appear, for instance, as a description of
the continuum ballistic annihilation dynamics 
for other initial potentials.

\subsection{Relation to other generalizations of pruning}
A pruning operation similar in spirit to the generalized dynamical pruning  
was considered in a work by Duquesne and Winkel \cite{Winkel2012} that extended a formalism by Evans \cite{Evans2005} and Evans et al.~\cite{EPW06}. 
We notice that the two definitions of pruning, the generalized dynamical pruning of Sect.~\ref{sec:pruning}  
and that in \cite{Winkel2012}, are principally different, despite their similar appearance.  
In essence, the work \cite{Winkel2012} assumes the Borel measurability with respect to the Gromov-Hausdorff metric (\cite{Winkel2012}, Section 2), which implies the semigroup property of the respective pruning (\cite{Winkel2012}, Lemma 3.11).
On the contrary, the generalized dynamical pruning defined here may have the semigroup property only 
under very particular choices of $\varphi(T)$ as in  the examples in Sect. \ref{ex:height} and \ref{ex:H}. 
The majority of natural choices of $\varphi(T)$, including the tree length $\varphi(T) = \textsc{length}(T)$
(Sect. \ref{ex:L}) or the number of leaves in a tree (Sect. \ref{ex:numL}), do not satisfy the semigroup property, and hence
are not covered by the pruning of \cite{Winkel2012}.
The main results of our Sect.~\ref{sec:annihilation} refer to the pruning function 
$\varphi(T)=\textsc{length}(T)$ 
that does not satisfy the semigroup property, as shown in Sect. \ref{ex:L}.

Curiously, for the above two examples with no semigroup property, i.e., when $\varphi(T) = \textsc{length}(T)$ and when $\varphi(T)$ equals the number of leaves in $T$, the following discontinuity property holds with respect to the Gromov-Hausdorff metric  $d_{\sf GH}$ defined in \cite{Evans2005,EPW06,Winkel2012}. For any $\epsilon>0$ and any $M>0$, there exist trees $T$ and $T'$ in $\cL$ such that 
$$|\varphi(T) -\varphi(T')|>M ~~\text{ while }~~ d_{\sf GH}(T,T') <\epsilon .$$
Indeed, if $\varphi(T) = \textsc{length}(T)$, we consider a tree $T$ with the number of leaves exceeding $M/\epsilon$, and let $T'$ be the tree obtained from $T$ by elongating each of its leaves by $\epsilon$. Similarly, if  $\varphi(T)$ is the number of leaves in $T$, we construct $T'$ from $T$ by attaching at least $M/\epsilon$ new leaves, each of length $\epsilon$.

\subsection{Invariance with respect to the generalized dynamical pruning}
\label{sec:pi}
Consider a tree $T\in\L$ with edge lengths given
by a vector $l_T=(l_1,\dots,l_{\#T})$.
The vector $l_T$ can be specified by
distribution $\chi(\cdot)$ of a point $x_T=(x_1,\dots,x_{\#T})$ on the standard simplex
\[\Delta^{\#T}=\left\{x_i:\sum_i^{\#T} x_i = 1, 0<x_i\le 1\right\},\] 
and conditional distribution $F(\cdot|x_T)$ of the tree length
$\textsc{length}(T)$, so that
\[l_T = x_T\cdot\textsc{length}(T).\]
Accordingly, a tree $T$ can be completely specified by its planar shape,
a vector of proportional edge lengths, and the total tree length:
\[T=\left\{\textsc{p-shape}(T),x_T,\textsc{length}(T)\right\}.\]
A measure $\eta$ on $\L$ is a joint distribution of 
these three components:
\[\eta(T\in\{\tau,d\bar x,d\ell\}) = 
\mu(\tau)\cdot \chi_{\tau}(d\bar x)\cdot  F_{\tau,\bar x}(d\ell),\]
where the tree planar shape is specified by
\[\mu(\tau)={\sf Law }\left(\textsc{p-shape}(T)=\tau\right),\quad \tau\in\cT_{\rm plane},\]
the relative edge lengths is specified by 
\[\chi_\tau(\bar x) 
={\sf Law }\left(x_T=\bar{x} \,|\,\textsc{p-shape}(T)=\tau \right),\quad \bar x\in \Delta^{\#T},\]
and the total tree length is specified by
\[ F_{\tau, \bar x} (\ell) ={\sf Law }\left(\textsc{length}(T)=\ell \,|\,x_T=\bar{x}, ~ 
\textsc{p-shape}(T)=\tau \right),\quad \ell\ge0.\]
Let us fix $t\ge 0$ and a function $\varphi:\L \rightarrow \mathbb{R}$ that is monotone 
nondecreasing with respect to the partial order $\preceq$.
We denote by $\S^{-1}(\varphi,T)$ the preimage of a tree $T\in\L$ under the generalized dynamical
pruning:
\[\S^{-1}(\varphi,T)=\{\tau\in\L:\S(\varphi,\tau)=T\}.\]

\noindent Consider the distribution of edge lengths induced by the pruning:
$$\Xi_\tau(\bar x) 
={\sf Law }\left(x_{\tilde T}=\bar{x}  \,|\,\textsc{p-shape}\big(\tilde T\big)=\tau \right)$$
and
$$\Phi_{\tau, \bar x} (\ell)
={\sf Law }\left(\textsc{length}\big(\tilde T\big)=\ell \,|\,x_{\tilde T}=\bar{x}, ~\textsc{p-shape}\big(\tilde T\big)=\tau \right),$$
where the notation $\tilde T:=\S(\varphi,T)$ is used for brevity. 

\begin{Def}[{{\bf Generalized prune invariance}}]\label{def:pi2} \index{generalized prune invariance}
Consider a function $\varphi:\L \rightarrow \mathbb{R}_+$ that is 
monotone nondecreasing with respect to the partial order $\preceq$.
A measure $\eta$ on $\L$ is called {\it invariant with respect to the generalized dynamical pruning 
$\S(\cdot)=\S(\varphi,\cdot)$} 
(or simply prune invariant) if the following conditions hold for all $t\ge 0$:
\begin{itemize}
\item[(i)]
The measure is prune-invariant in shapes.
This means that for the pushforward measure 
$\nu=(\S)_*(\mu)=\mu\circ\S^{-1}$ we have
\[\mu(\tau)=\nu(\tau|\tau\ne\phi).\]
\item[(ii)]
The measure is prune-invariant in edge lengths.
This means that for any
combinatorial planar tree $\tau\in\T$
\[\Xi_\tau(\bar x)=\chi_\tau(\bar x)\]
and there exists a {\it scaling exponent} $\zeta\equiv \zeta(\varphi,t)>0$ such that for any 
 relative edge length vector
$\bar x\in\Delta^{\#\tau}$ we have 
\[\Phi_{\tau, \bar x} (\ell)=\zeta^{-1}F_{\tau, \bar x} \left(\frac{\ell}{\zeta}\right).\]
\end{itemize}
\end{Def}

\begin{Rem}[{{\bf Pruning trees with no embedding}}]
The generalized dynamical pruning \eqref{eqn:GenDynamPruning} and the notion of
prune invariance (Def.~\ref{def:pi2}) can be similarly defined   
on the space $\cL$ of metric trees with no planar embedding.
In this work we only apply the concept of prune invariance to planar trees. 
\end{Rem}

\begin{Rem}[{{\bf Relation to Horton prune-invariance}}]
Definition~\ref{def:pi2} is similar to Def.~\ref{def:distss} of prune invariance
with respect to the Horton pruning, with combinatorial Horton pruning $\cR$ being 
replaced with metric generalized dynamical pruning $\S$.
\end{Rem}

The prune invariance of Def.~\ref{def:pi2} unifies multiple invariance properties examined in the literature.
For example, the classical work by Jacques Neveu~\cite{Neveu86} establishes the prune invariance 
of the exponential critical binary Galton-Watson trees ${\sf GW}(\lambda)$ with respect to
the tree erasure from the leaves down to the root at a unit rate, which is equivalent to the
generalized dynamical pruning with function $\varphi(T)=\textsc{height}(T)$ 
(Sect.~\ref{ex:height}).
The prune invariance with respect to the Horton pruning (Sect.~\ref{ex:H}) has been established by Burd et al.~\cite{BWW00}
for the combinatorial critical  binary Galton-Watson $\mathcal{GW}\left({1 \over 2},{1 \over 2}\right)$ trees (Thm. \ref{thm:BWW00} in Sect. \ref{sec:BWW}).
Duquesne and Winkel \cite{Winkel2012} established the prune-invariance of 
the exponential critical binary Galton-Watson ${\sf GW}(\lambda)$ trees with
respect to the so-called {\it hereditary property}, which includes the
tree erasure of Sect.~\ref{ex:height} and Horton pruning of Sect.~\ref{ex:H}.
The {\it critical Tokunaga trees} analyzed in Sect. \ref{sec:Tok}  are prune-invariant with 
respect to the Horton pruning; this model includes ${\sf GW}(\lambda)$ trees as a special case.
Section~\ref{sec:PI} below establishes the prune invariance of the exponential critical binary Galton-Watson ${\sf GW}(\lambda)$ trees 
with respect to the generalized pruning with an arbitrary pruning function $\varphi(T)$.

\subsection{Prune invariance of ${\sf GW}(\lambda)$}
\label{sec:PI}
This section establishes prune invariance of exponential critical binary Galton-Watson trees 
with respect to arbitrary generalized pruning.

\begin{thm}[{{\bf \cite{KZ19}}}] \label{thm:main}
Let $T\stackrel{d}{\sim}{\sf GW}(\lambda)$, $T\in\BL^|$, be an exponential critical binary Galton-Watson tree 
with parameter $\lambda>0$.
Then, for any monotone nondecreasing function $\varphi:\BL^|\rightarrow\mathbb{R}_+$ and
any $\Delta>0$ we have
\[T^\Delta:=\{\cS_\Delta(\varphi,T)|\cS_\Delta(\varphi,T) \not= \phi\} \stackrel{d}{\sim} 
{\sf GW}(\lambda p_{\Delta}(\lambda,\varphi)),\]
where $p_{\Delta}(\lambda,\varphi)={\sf P}(\cS_\Delta(\varphi,T) \not= \phi)$. 
That is, the pruned tree $T^\Delta$ conditioned on surviving is an exponential 
critical binary Galton-Watson tree with parameter
$$\mathcal{E}_\Delta(\lambda,\varphi)=\lambda p_{\Delta}(\lambda,\varphi).$$
\end{thm}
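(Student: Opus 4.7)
The plan is to exploit the recursive characterization of $\mathsf{GW}(\lambda)$ as a continuous-time Markov branching process together with the memoryless property of the exponential distribution. By Rem.~\ref{rem:GWchar} and Def.~\ref{def:cbinary}, $T$ is distributionally equivalent to a stem $e_\xi$ of length $\xi\sim\mathrm{Exp}(\lambda)$ emanating from the root $\rho$, whose top vertex $v_0$ is, independently of $\xi$, either a leaf of $T$ (probability $1/2$) or a branching vertex carrying two iid copies $T_1,T_2\sim\mathsf{GW}(\lambda)$ (probability $1/2$). The key structural observation is that the generalized dynamical pruning commutes with subtree restriction: for $x$ in a subtree $\tau\subseteq T$ one has $\Delta_{x,T}=\Delta_{x,\tau}$, so $T^\Delta\cap\tau=\cS_\Delta(\varphi,\tau)$. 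Combined with the monotonicity of $\varphi$ relative to $\preceq$, this forces $\varphi(\Delta_{y,T})$ to be nonincreasing as $y$ moves away from $\rho$ along any lineage, so the surviving points near the root form a connected initial segment of the stem, possibly extended past $v_0$ via series reduction.

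Conditionally on $T^\Delta\neq\phi$, the strategy is to verify the two properties that characterize $\mathsf{GW}(\lambda p_\Delta)$ through the same recursion: (a) the stem length $\tilde\xi$ of $T^\Delta$ is $\mathrm{Exp}(\lambda p_\Delta)$; and (b) independently of $\tilde\xi$, the pruned tree either terminates (probability $1/2$) or splits into two iid conditional copies of $T^\Delta\mid T^\Delta\neq\phi$ (probability $1/2$). Property (b) will follow by noting that a branching vertex of $T^\Delta$ must be inherited from a branching vertex of $T$ at which both child subtrees' prunings are non-empty; the exchange symmetry between $T_1$ and $T_2$ and the commutativity of pruning with subtree restriction then yield the required iid structure with the $1/2{:}1/2$ offspring law. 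For (a), I would use a Poisson-thinning/memoryless-compounding viewpoint: the ``events'' along the stem of $T$ (top-of-stem terminations or splittings) occur at rate $\lambda$, and each such event is, independently, either a \emph{genuine} event of $T^\Delta$ (aggregate probability $p_\Delta$, producing a leaf or a branching vertex of the pruned tree) or an \emph{absorbed} event that merges into the stem of $T^\Delta$ (aggregate probability $1-p_\Delta$, via series reduction across a surviving degree-$2$ vertex or via a cut strictly inside the stem edge). By the memoryless property of $\mathrm{Exp}(\lambda)$ and the standard geometric compounding of iid exponential holding times, the waiting time to the first genuine event is then $\mathrm{Exp}(\lambda p_\Delta)$, as required.

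The hardest step will be the rigorous justification of this ``Poisson thinning'' classification, particularly in the scenario where $v_0$ is a branching vertex of $T$, neither $T_i^\Delta$ survives, yet $\varphi(\Delta_{v_0,T})<\Delta$: the stem of $T^\Delta$ is then cut strictly between $\rho$ and $v_0$ at a location that depends nontrivially on the realizations of $T_1,T_2$ through the pruning function $\varphi$. To handle this I would condition jointly on the Bernoulli$(p_\Delta)$ indicators $Y_i=\mathbf{1}_{T_i^\Delta\neq\phi}$ and on the value of $\varphi(\Delta_{v_0,T})$, invoke the recursive self-similarity of $\mathsf{GW}(\lambda)$ so that the successive stem extensions are iid $\mathrm{Exp}(\lambda)$ by the memoryless property, and derive a self-consistent Laplace-transform identity for $\tilde\xi$ whose unique solution is the Laplace transform of $\mathrm{Exp}(\lambda p_\Delta)$. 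Combined with the inherited $\tfrac12$--$\tfrac12$ splitting probabilities of part (b), this completes the identification $T^\Delta\mid T^\Delta\neq\phi\sim\mathsf{GW}(\lambda p_\Delta)$.
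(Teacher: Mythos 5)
Your overall skeleton---the recursive characterization of ${\sf GW}(\lambda)$, reduction to (a) the pruned stem length being ${\sf Exp}(\lambda p_\Delta)$ and (b) an inherited $\tfrac12{:}\tfrac12$ branching law with conditionally i.i.d.\ subtrees---is the same as the paper's, and your observations on monotonicity of $\varphi(\Delta_{y,T})$ along lineages and on commutation of pruning with complete-subtree restriction are correct. The gap is in step (a). The Poisson-thinning classification you propose does not hold: at the first event of $T$ on the stem (top vertex $x$ at distance $X\sim{\sf Exp}(\lambda)$), the stem of $T^\Delta$ continues past $x$ only when $x$ branches and exactly one child subtree survives pruning, an event of probability $\tfrac12\cdot 2p_\Delta(1-p_\Delta)=p_\Delta(1-p_\Delta)$, not $1-p_\Delta$; it terminates exactly at $x$ as a branch point only with probability $\tfrac12 p_\Delta^2$; and with the remaining probability $\tfrac12+\tfrac12(1-p_\Delta)^2$ it terminates at a cut point $\sup\{y:\varphi(\Delta_{y,T})\ge\Delta\}$ that generically lies strictly inside an edge, whose law depends on $\varphi$ in an essential way, and on which $T^\Delta=\phi$ is also possible. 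So ``standard geometric compounding of i.i.d.\ exponentials'' does not apply as stated, and your proposed repair---conditioning on $\varphi(\Delta_{v_0,T})$ and the survival indicators---drives you toward computing the $\varphi$-dependent law of that cut point, which is precisely what a proof uniform in $\varphi$ must avoid.

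The device you are missing is to work with the \emph{unconditional} distribution function ${\sf P}(Y\le h)$ of the pruned stem length, retaining the atom of mass $1-p_\Delta$ at $Y=0$, and to note that on every terminating configuration the trivial bound $Y\le X$ already gives $\{X\le h\}\subset\{Y\le h\}$ (while for $X>h$ one uses memorylessness: $\{Y\le h\}$ occurs iff nothing above the point at depth $h$ survives, probability $1-p_\Delta$). The cut-point location is then never needed. Equating the resulting four-case decomposition with $(1-p_\Delta)+p_\Delta F(h)$ and differentiating yields the closed renewal equation $f=p_\Delta\,\phi_\lambda+(1-p_\Delta)\,\phi_\lambda* f$, solved by ${\sf Exp}(\lambda p_\Delta)$ via characteristic functions; the coefficient $p_\Delta$ emerges only from the cancellation $\tfrac12-(1-p_\Delta)+\tfrac12\big((1-p_\Delta)^2+p_\Delta^2\big)=p_\Delta^2$, not as a literal thinning probability. (Your thinning intuition is exactly the right one for the infinite tree built from the leaves down, Thm.~\ref{thm:infmain}, where each candidate parent is accepted independently with probability $p_\Delta$; it does not transfer to the finite planted tree.) For step (b), the appeal to exchange symmetry should be replaced by the self-consistency relation $p_\Delta q=\tfrac12\big(p_\Delta^2+2p_\Delta(1-p_\Delta)q\big)$, which gives $q=\tfrac12$.
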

\begin{proof}
Let $X$ denote the length of the stem (edge adjacent to the root) in $T$, 
and $Y$ denote the length of the stem in $T^\Delta$. 
Let $x$ be the nearest descendent vertex (a junction or a leaf) to the root in $T$. 
Then $X$, which is an exponential random variable with parameter $\lambda$, represents the distance from the root of $T$ to $x$. 
Let ${\sf deg}_T(x)$ denote the degree of $x$ in tree $T$ and ${\sf deg}_{T^\Delta}(x)$ denote the degree of $x$ in tree $T^\Delta$.
If $T^\Delta=\phi$, then $Y=0$. 
Let $$F(h)={\sf P}(Y \leq h~|~ \cS_\Delta(\varphi,T) \not= \phi).$$  
The event $\{Y \leq h\}$ is partitioned into the following non-overlapping sub-events S$_1,\dots$ S$_4$ illustrated in Fig.~\ref{fig:thm2}:

\begin{figure}[hbt]
	\centering
	\includegraphics[width=0.9\textwidth]{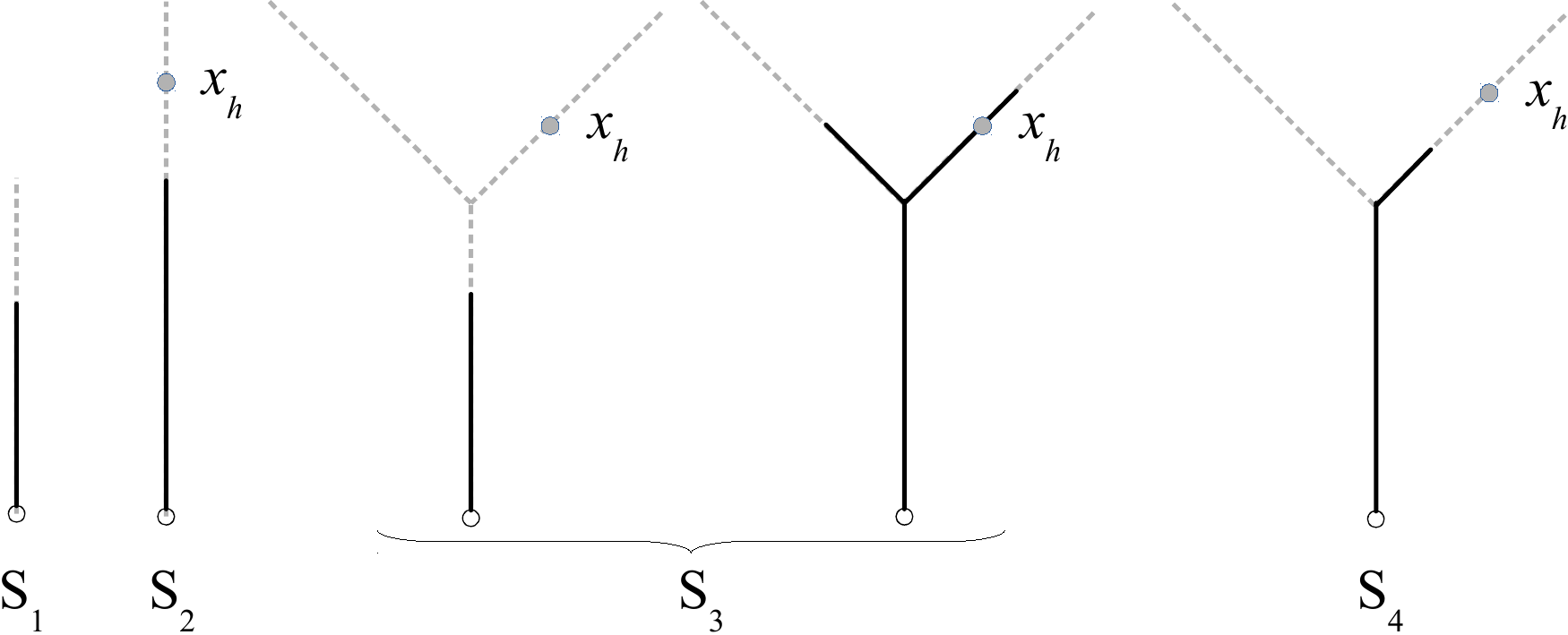}
	\caption{Sub-events used in the proof of Thm.~\ref{thm:main}.
	Gray dashed line shows (a part of) initial tree $T$.
	Solid black line shows (a part of) pruned tree $T^{\Delta}$. 
	We denote by $x_h$ a point in $T$ located at distance $h$ from the root, if it exists.}
	\label{fig:thm2}
\end{figure}
\begin{itemize}
  \item[(S$_1$)] The event 
  $\{{\sf deg}_T(x)=1\text{ and }X \leq h\}$ has probability
  $${1 \over 2}(1-e^{-\lambda h}).$$
  
  \item[(S$_2$)] The event 
  $$\{X>h \text{ and all points of }T\text{ descendant to }x_h\text{ do not belong to }T^\Delta\}$$
  has probability
  $$e^{-\lambda h}(1-p_\Delta).$$
  
  \item[(S$_3$)] The event $\{X \leq h$ and ${\sf deg}_T(x)=3$ and either both subtrees of $T$ descending from $x$ are pruned away
  completely (not intersecting $T^\Delta$) or $\{x \in T^\Delta,~{\sf deg}_{T^\Delta}(x)=3\}\}$
   has probability
  $${1 \over 2}(1-e^{-\lambda h})\big((1-p_\Delta)^2+p_\Delta^2 \big).$$
  
  \item[(S$_4$)] The event 
  $$\{X \leq h, {\sf deg}_T(x)=3\} \cap \{x \in T^\Delta, ~{\sf deg}_{T^\Delta}(x)=2\}  \cap \{Y \leq h\}$$  
  has probability\footnote{Here, ${\sf deg}_{T^\Delta}(x)=2$ means $x$ is neither a junction nor a leaf in $T^\Delta$.}
  $${1 \over 2}\int\limits_0^h \lambda e^{-\lambda t} \cdot 2p_\Delta(1-p_\Delta)\cdot F(h-t)\, dt=p_\Delta(1-p_\Delta) \int\limits_0^\infty \lambda e^{-\lambda t} F(h-t)\, dt.$$
\end{itemize}
Using this we have two representations for the probability ${\sf P}(Y\le h)$:
\begin{align*}
{\sf P}(Y\le h) =& (1-p_\Delta)+p_\Delta F(h)  \\
=& {1 \over 2}(1-e^{-\lambda h})+e^{-\lambda h}(1-p_\Delta)\\
&+{1 \over 2}(1-e^{-\lambda h})\big((1-p_\Delta)^2+p_\Delta^2 \big)\\
&+p_\Delta(1-p_\Delta) \int\limits_0^\infty \lambda e^{-\lambda t} F(h-t)\, dt,
\end{align*}
which simplifies to
$$(1-p_\Delta)+p_\Delta F(h) =(1-p_\Delta+p^2_\Delta )-e^{-\lambda h} p_\Delta +p_\Delta(1-p_\Delta) \int\limits_0^\infty \lambda e^{-\lambda t} F(h-t)\, dt.$$
Differentiating the above equality we obtain the following equation for the p.d.f. $f(y)={d\over dy}F(y)$ of $Y$:  
$$f(h)=p_\Delta\, \phi_\lambda (h) +(1-p_\Delta)\, \phi_\lambda \ast f(h),$$
where as before $\phi_\lambda$ denotes the exponential density with parameter $\lambda$ as in \eqref{exp}.
Applying integral transformation on both sides of the equation, we obtain the characteristic function $\widehat{f}(s)=\E\big[e^{isY}\big]$ of $Y$,
$$\widehat{f}(s)={\lambda p_\Delta \over \lambda p_\Delta -is}=\widehat{\phi}_{\lambda p_\Delta}(s).$$
Thus, we conclude that $Y$ is an exponential random variable with parameter $\lambda p_\Delta$.

\bigskip
Next, let $y$ be the  descendent vertex (a junction or a leaf) to the root in $T^\Delta$. 
If $T^\Delta = \phi$, let $y$ denote the root.
Let $$q={\sf P}({\sf deg}_{T^\Delta}(y)=3 ~|~ S_\Delta(T) \not= \phi).$$
Then,
\begin{align*}
p_\Delta q=& {\sf P}({\sf deg}_{T^\Delta}(y)=3)\\
=& {\sf P}({\sf deg}_T(x)=3) \cdot\Big\{{\sf P}\big({\sf deg}_{T^\Delta}(x)=3~|~{\sf deg}_T(x)=3\big)\\
&+{\sf P}\big({\sf deg}_{T^\Delta}(x)=2~|~{\sf deg}_T(x)=3\big)\cdot q \Big\}\\
=& {1 \over 2}\Big\{p^2_\Delta + 2p_\Delta(1-p_\Delta) q  \Big\}
\end{align*}
implying
$$q={1 \over 2} p_\Delta+(1-p_\Delta) q,$$
which in turn yields $q={1 \over 2}$.

\bigskip
We saw that conditioning on $\cS_\Delta(\varphi,T) \not= \phi$, the pruned tree $T^\Delta$ has the 
stem length distributed exponentially 
with parameter $\lambda p_\Delta$. 
Then, with probability $q={1 \over 2}$, the pruned tree $T^\Delta$ branches at $y$ (the stem end point farthest from the root) into two independent subtrees, each distributed as
$\{T^\Delta ~|~T^\Delta\ne \phi\}$. 
Thus, we recursively obtain that $T^\Delta$ is a critical binary Galton-Watson tree with i.i.d. exponential edge length with parameter $\lambda p_\Delta$.
\end{proof}

Next, we find an exact form of the survival probability $p_\Delta(\lambda,\varphi)$ for three particular choices of $\varphi$, thus obtaining
$\mathcal{E}_\Delta(\lambda,\varphi)$.

\begin{thm}[{{\bf \cite{KZ19}}}]\label{pdelta}
In the settings of Theorem \ref{thm:main}, we have 
\begin{description}
  \item[(a)] If $\varphi(T)$ equals the total length of $T$ $(\varphi = \textsc{length}(T))$, 
  then
  $$\mathcal{E}_\Delta(\lambda,\varphi)=\lambda e^{-\lambda \Delta}\Big[ I_0(\lambda \Delta)+ I_1(\lambda \Delta) \Big].$$
  
  \item[(b)] If $\varphi(T)$ equals the height of $T$ $(\varphi = \textsc{height}(T))$, then
  $$\mathcal{E}_\Delta(\lambda,\varphi)={2\lambda  \over \lambda \Delta +2}.$$
  
  \item[(c)] If $\varphi(T)+1$ equals the Horton-Strahler order of the tree $T$, then 
  $$\mathcal{E}_\Delta(\lambda,\varphi)=\lambda 2^{-\lfloor \Delta \rfloor},$$
  where $\lfloor \Delta \rfloor$ denotes the maximal integer  $\le \Delta$.
\end{description}
\end{thm}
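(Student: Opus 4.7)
The plan is to reduce the computation in each part to the survival probability $p_\Delta(\lambda,\varphi)={\sf P}(\cS_\Delta(\varphi,T)\neq\phi)$ and then invoke Theorem \ref{thm:main}, which gives $\mathcal{E}_\Delta(\lambda,\varphi)=\lambda\,p_\Delta(\lambda,\varphi)$. The unifying observation for all three choices of $\varphi$ is that the event $\{\cS_\Delta(\varphi,T)\neq\phi\}$ is determined by a single global functional of $T$, obtained by taking $x$ along the stem of $T$. As $x$ moves from the tree's first non-root vertex $v_0$ toward the root, $\Delta_{x,T}$ differs from $T$ only by the length of the truncated stem, so $\textsc{length}(\Delta_{x,T})\nearrow\textsc{length}(T)$ in part (a), $\textsc{height}(\Delta_{x,T})\nearrow\textsc{height}(T)$ in part (b), and ${\sf ord}(\Delta_{x,T})={\sf ord}(T)$ throughout the stem in part (c) because the stem lies on the highest-order branch. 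In each case survival of pruning is therefore controlled by the corresponding tree-wide quantity.

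For part (b), I will apply Lemma \ref{lem:Hx} directly: since $\textsc{height}(T)$ has c.d.f.~${\sf H}(x)=\lambda x/(\lambda x+2)$, we get
\[
p_\Delta={\sf P}(\textsc{height}(T)\ge\Delta)=1-{\sf H}(\Delta)=\tfrac{2}{\lambda\Delta+2}.
\]
For part (c), I will use Proposition \ref{prop:BWW}, according to which ${\sf ord}(T)\stackrel{d}{\sim}{\sf Geom}_1(1/2)$, so ${\sf P}({\sf ord}(T)\ge k)=2^{-(k-1)}$. Adopting the interpretation of Example \ref{ex:H} that $\cS_\Delta=\cR^{\lfloor\Delta\rfloor}$ when $\varphi={\sf ord}-1$ (so survival is equivalent to ${\sf ord}(T)\ge\lfloor\Delta\rfloor+1$), this yields $p_\Delta=2^{-\lfloor\Delta\rfloor}$.

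Part (a) is the more computational one. By Lemma \ref{lem:ell}, $\textsc{length}(T)$ has density $\ell(x)=x^{-1}e^{-\lambda x}I_1(\lambda x)$, so $p_\Delta=\int_\Delta^\infty\ell(x)\,dx$. I will verify directly that the claimed right-hand side
\[
F(\Delta):=e^{-\lambda\Delta}\bigl[I_0(\lambda\Delta)+I_1(\lambda\Delta)\bigr]
\]
equals this tail integral. Using the standard Bessel identities $I_0'(z)=I_1(z)$ and $I_1'(z)=I_0(z)-I_1(z)/z$, a short calculation gives
\[
F'(\Delta)=\lambda e^{-\lambda\Delta}\bigl[(I_0+I_1)-(I_0'+I_1')\bigr]=-\tfrac{e^{-\lambda\Delta}I_1(\lambda\Delta)}{\Delta}=-\ell(\Delta).
\]
Matching $F(0)=I_0(0)=1=\int_0^\infty\ell$ and $F(\infty)=0$ (from the asymptotic $e^{-z}I_n(z)\sim 1/\sqrt{2\pi z}$) then forces $F\equiv p_\Delta$, and multiplying by $\lambda$ yields $\mathcal{E}_\Delta(\lambda,\textsc{length})=\lambda e^{-\lambda\Delta}[I_0(\lambda\Delta)+I_1(\lambda\Delta)]$.

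The main obstacle will be part (a): guessing the closed form and executing the Bessel-derivative identity cleanly. Parts (b) and (c) are essentially look-ups of earlier results, and the only subtlety there is translating the survival event $\cS_\Delta(\varphi,T)\neq\phi$ into the correct single-variable condition on $T$.
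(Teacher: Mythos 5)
Your proof is correct and follows essentially the same route as the paper: parts (b) and (c) are read off from Lemma~\ref{lem:Hx} and the geometric distribution of ${\sf ord}(T)$ (the paper cites Corollary~\ref{cor:GW}(a) for (c)), and part (a) computes the tail of the length density $\ell$ from Lemma~\ref{lem:ell} --- the paper simply cites formula 11.3.14 of Abramowitz--Stegun for that integral, whereas you verify the antiderivative by hand. One cosmetic slip in part (a): the intermediate bracket should read $\lambda e^{-\lambda\Delta}\bigl[(I_0'+I_1')-(I_0+I_1)\bigr]$, which with $I_0'=I_1$ and $I_1'(z)=I_0(z)-I_1(z)/z$ indeed yields the $-\ell(\Delta)$ you correctly conclude.
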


\begin{proof}
{\bf Part (a).} Suppose $T\stackrel{d}{\sim}{\sf GW}(\lambda)$, and let $\ell(x)$ once again denote the p.d.f. of the total length $\textsc{length}(T)$. 
Then, by Lemma \ref{lem:ell}, 
\begin{align}\label{eq:pDelta1}
p_\Delta =& 1-\int\limits_0^{\Delta} \ell(x) \,dx =1-\int\limits_0^{\lambda \Delta}{1 \over x}  e^{-x}  I_1 \big(x\big) \,dx \nonumber \\
&= e^{-\lambda \Delta}\Big[ I_0(\lambda \Delta)+ I_1(\lambda \Delta) \Big],
\end{align}
where for the last equality we used formula 11.3.14 in \cite{AS1964}.

\bigskip
\noindent
{\bf Part (b).} 
Suppose $T\stackrel{d}{\sim}{\sf GW}(\lambda)$. Let ${\sf H}(x)$ once again denote the cumulative distribution function of the height $\textsc{height}(T)$. Then by Lemma \ref{lem:Hx}, for any $\Delta>0$,
$$p_\Delta=1-{\sf H}(\Delta)={2 \over \lambda \Delta +2}.$$

\bigskip
\noindent
{\bf Part (c).} 
Follows from Corollary \ref{cor:GW}(a). 
\end{proof}

\begin{Rem}
Let ${\mathcal E}_\Delta (\lambda,\varphi)={2\lambda \over \lambda \Delta +2}$ as in Theorem \ref{pdelta}(b).  
Here $~{\mathcal E}_0\lambda=\lambda$
and ${\mathcal E}_\Delta (\lambda,\varphi)$ is a linear-fractional transformation 
associated with matrix
$${\mathcal A}_\Delta=\begin{pmatrix}
     1 & 0   \\
     {\Delta \over 2}  &  1
\end{pmatrix}.$$
Since ${\mathcal A}_\Delta$ form a subgroup in $SL_2(\mathbb{R})$, the transformations $\left\{{\mathcal E}_\Delta \right\}_{\Delta \geq 0}$ satisfy the semigroup property
$${\mathcal E}_{\Delta_1} {\mathcal E}_{\Delta_2} ={\mathcal E}_{\Delta_1+\Delta_2}$$
for any pair $\Delta_1, \Delta_2 \geq 0$.

We notice also that the operator ${\mathcal E}_\Delta (\lambda,\varphi)$ in part (c) of Theorem~\ref{pdelta} satisfies only the discrete semigroup property for nonnegative integer times. 
Finally, one can check that ${\mathcal E}_\Delta (\lambda,\varphi)$ in part (a) does not satisfy the semigroup property.
\end{Rem}

\section{Continuum 1-D ballistic annihilation}\label{sec:annihilation}
As an illuminating application of the generalized dynamical pruning (Sect. \ref{sec:pruning}) and its invariance properties (Sect. \ref{sec:pi}), we consider the dynamics of particles governed by $1$-D ballistic annihilation model, traditionally denoted $A+A \rightarrow \zeroslash$ \cite{EF85}.
This model describes the dynamics of particles on a real line: a particle with Lagrangian coordinate $x$ moves with a constant velocity $v(x)$
until it collides with another particle, at which moment both particles annihilate, hence the model notation.
The annihilation dynamics appears in chemical kinetics and bimolecular reactions and has
received attention in physics and probability literature \cite{EF85, BNRL93,Belitsky1995,Piasecki95,Droz95,BNRK96,Ermakov1998,Blythe2000,KRBN2010,Sidoravicius2017}.
\index{ballistic annihilation}
\index{ballistic annihilation!continuum}

In a continuum version of the ballistic annihilation model introduced in \cite{KZ19}, 
the moving {\it shock waves} represent the {\it sinks} that aggregate the annihilated particles and hence accumulate the mass of the media. 
Dynamics of these sinks resembles a coalescent process that generates a tree structure for their trajectories, which explain the term {\it shock wave tree} that we use below.
The dynamics of a ballistic annihilation model with two coalescing sinks is illustrated in Fig.~\ref{fig:bam}.  

\begin{figure}[t] 
\centering\includegraphics[width=0.9\textwidth]{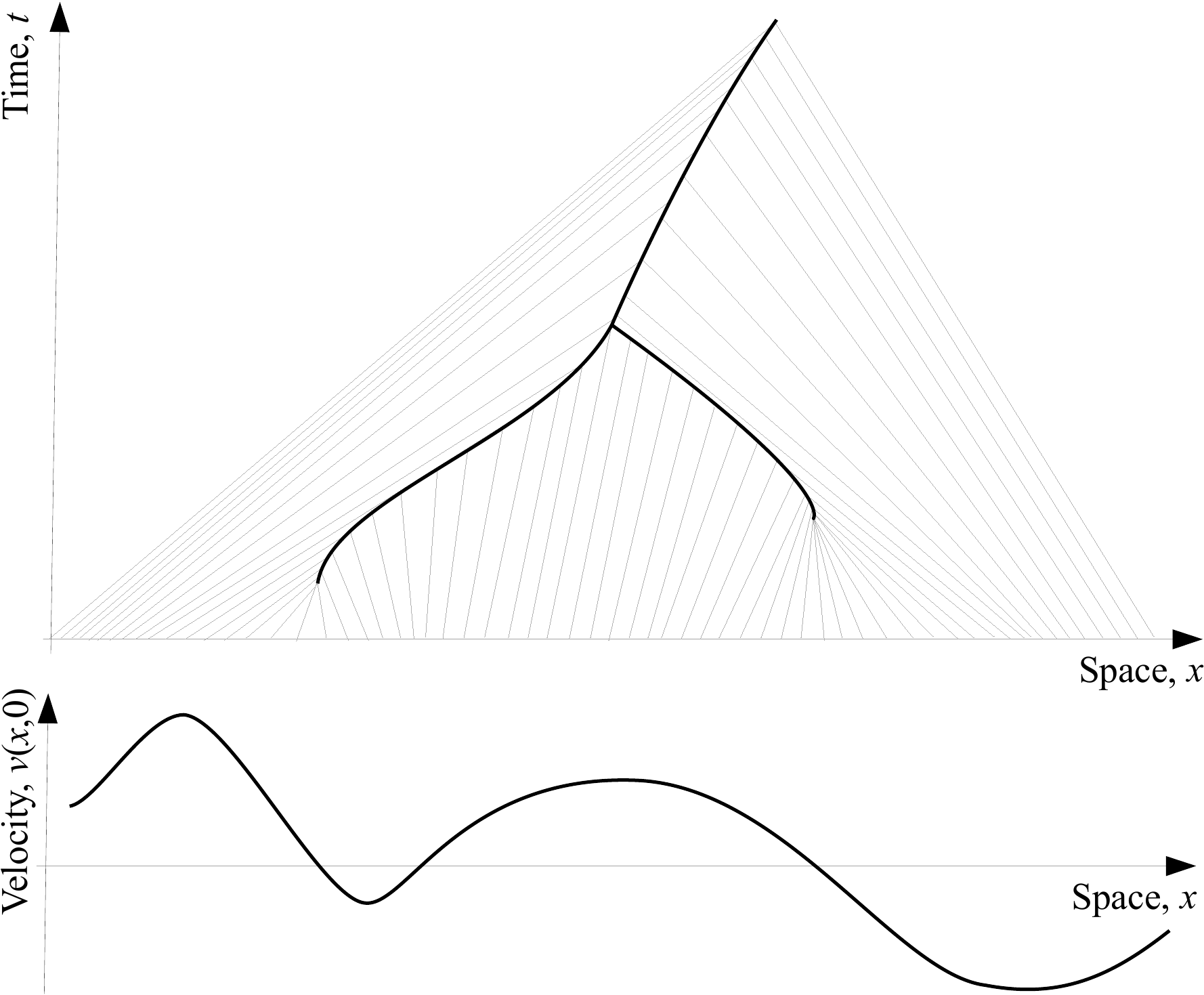}
\caption[Ballistic annihilation model: an illustration]
{Ballistic annihilation model: an illustration.
A particle with Lagrangian coordinate $x$ moves with velocity $v(x,0)$
until it collides with another particle and annihilates.
(Bottom panel): Initial velocity $v(x,0)$.
(Top panel): The space-time portrait of the system. 
The trajectories of selected particles are depicted by gray thin lines.
The shock wave that describes the motion and coalescence of sinks is shown 
by solid black line.
The sink trajectory forms an inverted Y-shaped tree. 
}
\label{fig:bam}
\end{figure} 

Sect.~\ref{sec:shock} introduces the continuum annihilation model and describes 
the natural emergence of sinks (shocks).
The model initial conditions are given by a particle velocity distribution and 
particle density on $\mathbb{R}$.
Subsequently, we only consider a constant density and initial velocity distribution with alternating values $\pm1$, or, equivalently,
initial piece-wise linear potential $\psi(x,0)$ with alternating slopes $\pm1$ (Fig.~\ref{fig:vel}).
Section~\ref{sec:solution} discusses a construction of the graphical 
embedding of the shock wave tree into the phase space $(x,\psi(x,t))$ and space-time domain $(x,t)$.
Theorems~\ref{thm:pruning1}, \ref{thm:pruning} in Sect.~\ref{sec:Bdyn} establish equivalence of 
the ballistic annihilation dynamics to the generalized dynamical pruning of 
a (mass-equipped) shock wave tree. 
Sections~\ref{sec:BDEE},\ref{sec:rand_mass} illustrate how the pruning interpretation
of annihilation dynamics facilitates analytical treatment of the model.
Specifically, we give a complete description of the time-advanced potential 
function $\psi(x,t)$ at any instant $t>0$ for the initial potential in a form of exponential excursion (Thm.~\ref{thm:annihilation}), 
and describe the temporal dynamics of a random sink 
(Thms.~\ref{thm:rand_mass},\ref{thm:mass}).
A real tree representation of ballistic annihilation is discussed in Sect.~\ref{sec:Rtree}.
\index{shock}
\index{shock!wave}
\index{shock!tree}
\index{sink}

\subsection{Continuum model, sinks, and shock trees}
\label{sec:shock}
Consider a Lebesgue measurable initial density $g(x) \geq 0$ of 
particles on an interval $[a,b]\subset \mathbb{R}$. 
The initial particle velocities are given by $v(x,0)=v(x)$.
Prior to collision and subsequent annihilation, a particle located at $x_0$ at time $t=0$ 
moves according to its initial velocity, so its coordinate $x(t)$ changes as
\begin{equation}
\label{eq:tangency_point}
x(t)=x_0+tv(x_0).
\end{equation}
When the particle collides with another particle, it annihilates.
Accordingly, two particles with initial coordinates and velocities $(x_-,v_-)$ and $(x_+,v_+)$
collide and annihilate at time $t$ when they meet at the same new position, 
\[x_-+tv_-=x_++tv_+,\]
given that neither of the particles annihilated prior to $t$.
In this case, the annihilation time is given by
\be
\label{t_sink}
t = -\frac{x_+-x_-}{v_+-v_-}.
\ee

Let $v(x,t)$ be the Eulerian specification of the velocity field at coordinate $x$
and time instant $t$; we define the corresponding 
{\it potential function} 
\[\psi(x,t)=-\int_a^x v(y,t)dy,\quad x\in[a,b], t\ge 0,\] 
so that $v(x,t)=-\partial_x \psi(x,t)$. 
Let $\psi(x,0)=\Psi_0(x)$ be the initial potential. 

We call a point $\sigma(t)$ {\it sink} (or {\it shock}), if there exist two particles that
annihilate at coordinate $\sigma(t)$ at time $t$.
Suppose $v(x) \in C^1(\mathbb{R})$. 
The equation \eqref{t_sink} implies that appearance of a sink is associated with
a negative local minima of $v'(x^*)$; 
we call such points {\it sink sources}.
Specifically, if $x^*$ is a sink source, then a sink will appear at {\it breaking time} 
$t^*=-1/v'(x^*)$ at the location given by 
\[\sigma(t^*)=x^*+t^*v(x^*)=x^*-{v(x^*) \over v'(x^*)},\]
provided there exists a  punctured neighborhood 
$$N_\delta(x^*)=\{x:~0<|x-x^*|<\delta\} \subseteq [a,b]$$ 
such that none of the particles with the initial coordinates in $N_\delta(x^*)$ is annihilated before time $t^*$. 

Sinks, which originate at sink sources, can move and coalesce (see Fig.~\ref{fig:bam}). 
We refer to a sink trajectory as a {\it shock wave}. 
We impose the conservation of mass condition by defining the {\it mass of a sink} 
at time $t$ to be the total mass of particles annihilated in the sink between 
time zero and time $t$. 
When sinks coalesce, their masses add up.
It will be convenient to assume that sinks do not disappear
when they stop accumulating mass.
Informally, we assume that the sinks are being pushed by the system particles.
Formally, there exists three cases depending on the occupancy of a
neighborhood of $\sigma(t)$.
If there exists an empty neighborhood around the sink coordinate $\sigma(t)$, the 
sink is considered 
at rest -- its coordinate does not change. 
If only the left neighborhood of $\sigma(t)$ is empty, and the right adjacent velocity
is negative:
\[v(\sigma_+,t):=\lim_{x\downarrow \sigma(t)} v(x,t)<0,\]
the sink at $\sigma(t)$ moves with velocity $v(\sigma_+,t)$.
A similar rule is applied to the case of right empty neighborhood.
The appearance, motion, and subsequent coalescence of sinks can be described 
by a time oriented {\it shock tree}.
In particular, the coalescence of sinks under initial conditions with a finite 
number of sink sources is described by a finite tree.

The dynamics of ballistic annihilation, either in discrete or 
continuum versions, can be quite intricate and is lacking
a general description.
The existing analyses
focus on the evolution of selected statistics under particular initial conditions. 
In the following sections, we give a complete description of the dynamics in case
of two-valued initial velocity and constant particle density.


\begin{figure}[t] 
\centering\includegraphics[width=0.9\textwidth]{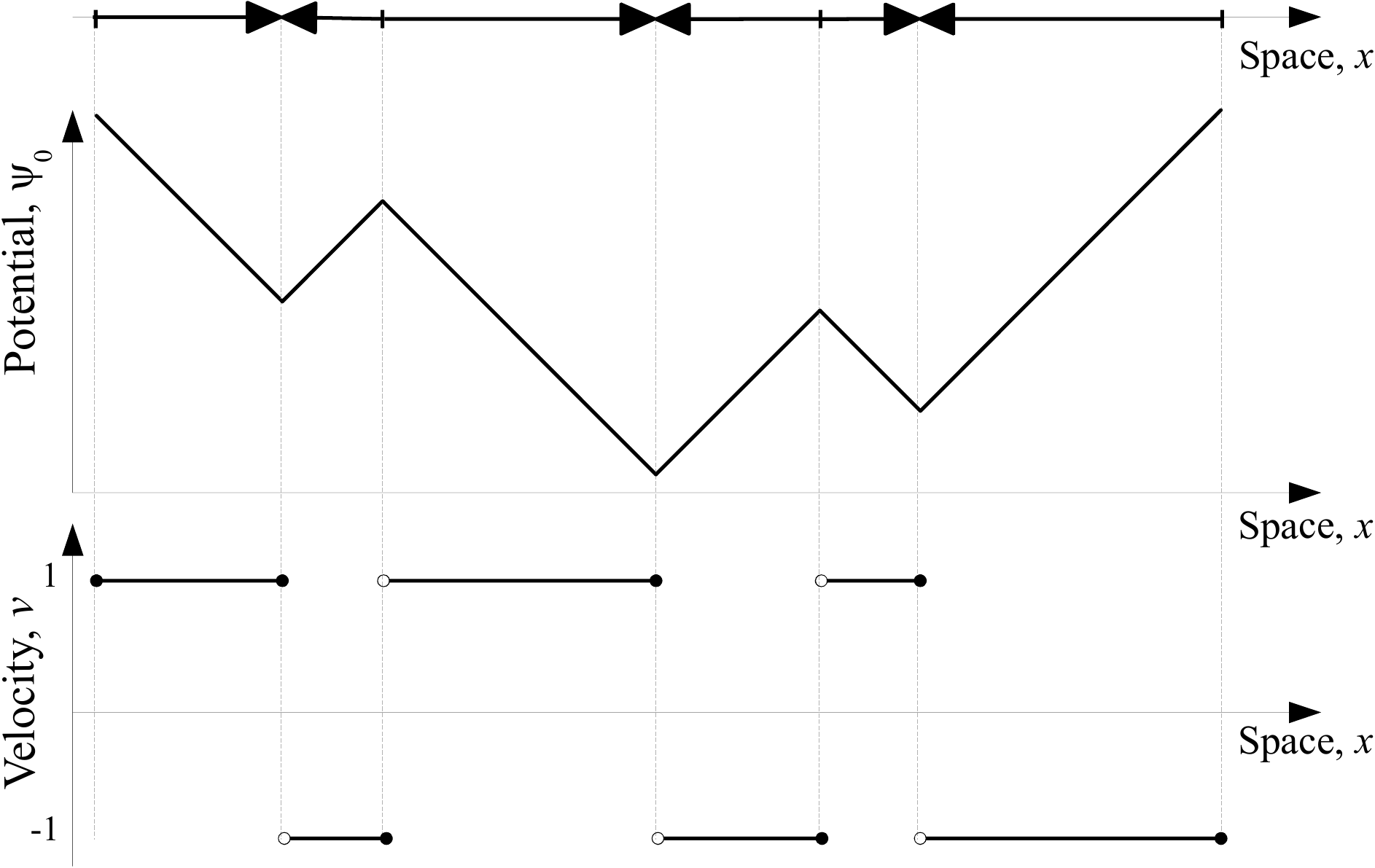}
\caption[Piece-wise linear unit slope potential]
{Piece-wise linear unit slope potential: an illustration.
(Top): Arrows indicate alternating directions of particle movement 
on an interval in $\mathbb{R}$.
(Middle): Potential $\Psi_0(x)$ is a piece-wise linear unit slope function.
(Bottom): Particle velocity alternates between values $\pm1$ within consecutive intervals.}
\label{fig:vel}
\end{figure} 

\subsection{Piece-wise linear potential with unit slopes}
\label{sec:solution}
The discrete 1-D ballistic annihilation model with two possible velocities $\pm v$ was considered in
\cite{EF85,Belitsky1995,BNRK96,Ermakov1998,Blythe2000}; 
the three velocity case ($-1$, $0$, and $+1$) appeared in \cite{Droz95,Sidoravicius2017}. 
Here, we explore a continuum version of the 1-D ballistic annihilation with two possible initial 
velocities and constant initial density, i.e.  $v(x)=\pm v$ and $g(x,0)\equiv g(x)\equiv g_0$ for $x\in[a,b]$.
Since we can scale both space and time, without loss of generality we let $v(x)=\pm 1$ 
and $g(x)\equiv 1$.

Recall (Sect.~\ref{sec:rec}) the space $\cE^{\rm ex}$ of positive piece-wise linear 
continuous excursions with alternating slopes $\pm 1$ and finite number of segments.
We write ${\cE}^{\sf ex}([a,b])$ for the restriction of this space on the real 
interval $[a,b]$.
We consider an initial potential 
$\psi(x,0)=\Psi_0(x)$ such that  $-\psi(x,0)\in{\cE}^{\sf ex}([a,b])$; see Fig.~\ref{fig:vel}.
This space bears a lot of symmetries that facilitate our analysis.

The dynamics of a system with a simple unit slope potential
is illustrated in Fig.~\ref{fig:shock_tree}.
Prior to collision, the particles move at unit speed either to the left or to the right,
so their trajectories in the $(x,t)$ space are given by lines with 
slope $\pm 1$ (Fig.~\ref{fig:shock_tree}, top panel, gray lines).
The local minima of the potential $\Psi_0(x)$ correspond to the points 
whose right neighborhood moves to the left and left neighborhood
moves to the right with unit speed, hence immediately creating a sink.
Accordingly, the sinks appear at $t=0$ at the local minima of the potential; 
and those are the only sinks of the system. 
The sinks move and merge to create a shock wave tree, shown
in blue in Fig.~\ref{fig:shock_tree}. 

Observe that the domain $[a,b]$ is partitioned into non-overlapping subintervals with
boundaries $x_j$ such that the initial particle velocity assumes alternating values of $\pm 1$ within
each interval, with boundary values $v(a,0)=v(a)=1$ and $v(b,0)=v(b)=-1$. 
Because of the choice of potential $\Psi_0(x)$, we have 
$$\int\limits_a^b v(x)\, dx=\Psi_0(b)-\Psi_0(a)=0,$$ 
i.e. the total length of the subintervals with the initial velocity $-1$ equals
the total length of the subintervals with the initial  velocity $1$.
For a finite interval $[a,b]$, there exists a finite time $t_{\rm max} = (b-a)/2$
at which all particles aggregate into a single sink of mass
$m = (b-a)=2\,t_{\rm max}$ \cite{KZ19}.
We only consider the solution on the time interval $[0,t_{\rm max}]$, and
assume that the density of particles vanishes outside of $[a,b]$.

\subsubsection{Graphical representation of the shock wave tree}
For our fixed choice of the initial particle density $g(x)\equiv 1$,
the model dynamics is completely determined by the potential $\Psi_0(x)$. 
We will be particularly interested in the dynamics of sinks (shocks),
which we refer to as {\it shock waves}.
The trajectories of sinks can be described by a set (Fig.~\ref{fig:shock_tree},
top panel)
\[\cG^{(x,t)}(\Psi_0) = \Big\{\big(x,t\big) \in \mathbb{R}^2 \,: \,
\exists \text{ a sink satisfying } \sigma(t)=x \Big\}\]
in the system space-time domain
$(x,t) : x\in[a,b], ~t\in \big[0,(b-a)/2\big].$
These trajectories have a finite binary tree structure: 
the combinatorial planar shape of $\cG^{(x,t)}(\Psi_0)$
is a finite tree in $\BT^|$ \cite{KZ19}.
For any two points $(x_i,t_i)\in\cG^{(x,t)}(\Psi_0) $, $i=1,2$,
connected by a unique self-avoiding path $\gamma$ within $\cG^{(x,t)}(\Psi_0)$,
we define the distance between them as
\[d^{(x,t)}\big((x_1,t_1),(x_2,t_2)\big)=\int\limits_\gamma |dt| = 2t^* - t_1 -t_2,\]
where 
\[t^*:=\max\{t: (x,t)\in\gamma\}.\]
Equivalently, the distance between the points within a single edge is defined as
their nonnegative time increment; this induces the distance $d^{(x,t)}$
on $\cG^{(x,t)}(\Psi_0)$.

Similarly, the trajectories of the sinks can be described by 
a set (Fig.~\ref{fig:shock_tree}, bottom panel)
\[\cG^{(x,\psi)}(\Psi_0) = \Big\{\big(x,\psi(x,t)\big) \in \mathbb{R}^2 \,: \,
\exists \text{ a sink satisfying } \sigma(t)=x \Big\}\]
in the system phase space
$(x,\psi(x,t)) :  ~x\in[a,b], ~t\in \big[0,(b-a)/2\big].$
For any two points $(x_i,\psi_i)\in\cG^{(x,\psi)}(\Psi_0) $, $i=1,2$,
connected by a unique self-avoiding path $\gamma$ within $\cG^{(x,\psi)}(\Psi_0)$,
we define the distance between them as
\[d^{(x,\psi)}\big((x_1,\psi_1),(x_2,\psi_2)\big)
=\int\limits_\gamma \big(|dt|+|dx|\big).\]
Equivalently, one can consider the $L^1$ distance between the points within a single edge;
this induces the distance $d^{(x,\psi)}$ on $\cG^{(x,\psi)}(\Psi_0)$.

\begin{lem}[\cite{KZ19}]
\label{lem:gt}
The metric spaces $\big(\cG^{(x,t)}(\Psi_0),d^{(x,t)}\big)$ and 
$\big(\cG^{(x,\psi)}(\Psi_0),d^{(x,\psi)}\big)$ are trees 
(Def.~\ref{def:treeL}).
Furthermore, they have a finite number of edges and are isomeric to a 
unique binary tree from $\BL^|$ that we denote by $S(\Psi_0)$.  
\end{lem}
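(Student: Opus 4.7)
The plan is to exploit the piecewise-linear unit-slope structure of $\Psi_0$ to reduce the lemma to a finite combinatorial analysis of sink dynamics. First, I would identify the set of sink sources: since $-\Psi_0 \in \cE^{\sf ex}([a,b])$ has finitely many segments with alternating slopes $\pm1$, the local minima of $\Psi_0$ form a finite set $x_1 < x_2 < \cdots < x_n$. At each $x_i$, the velocity jumps from $+1$ to $-1$, so a sink appears instantly at $t=0$; conversely, at each local maximum of $\Psi_0$ (between consecutive minima) the velocity jumps from $-1$ to $+1$, producing a rarefaction region but no sink. I would then verify that no new sinks arise for $t>0$: between two adjacent sinks the reduced velocity profile retains the same alternating structure, so sink creation is confined to $t=0$.

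Next, I would analyze the motion of the $n$ sinks. Each sink moves at a piecewise-constant velocity in $\{-1,0,+1\}$ determined by the sign of the velocity of its immediate left/right neighborhoods: when both sides carry mass moving toward the sink, it may sit at rest or move according to the net flux, and its velocity can only change at merger events. Two adjacent sinks with a rarefaction gap between them close that gap at relative speed at most $2$, so they meet in finite time. Induction on $n$ shows that mergers reduce the number of sinks by one at each event, that exactly $n-1$ mergers occur, and that all mass coalesces into a single final sink by $t_{\max}=(b-a)/2$. Generically only two sinks meet at a time; if three or more meet simultaneously I would break the tie by inserting zero-length edges in a canonical left-to-right order, producing a well-defined planar binary combinatorial shape.

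From this dynamics I can define $S(\Psi_0)\in\BL^{|}$ directly: its leaves are the $n$ sink sources ordered left-to-right (giving the planar embedding), its internal vertices correspond to merger events, and its root is the final aggregation point. Edge lengths come from the physical duration of each trajectory segment (equivalently from the two metrics restricted to individual segments). I would then build the natural bijections $\iota^{(x,t)}:S(\Psi_0)\to\cG^{(x,t)}(\Psi_0)$ and $\iota^{(x,\psi)}:S(\Psi_0)\to\cG^{(x,\psi)}(\Psi_0)$ which send each point of an edge to the corresponding $(x,t)$ or $(x,\psi(x,t))$ coordinate on the sink trajectory. By definition of $d^{(x,t)}$ as $|dt|$-length and of $d^{(x,\psi)}$ as $L^1$-length along unique self-avoiding paths, each $\iota$ is an isometry onto its image once the edge lengths of $S(\Psi_0)$ are set to the corresponding segment lengths under each metric.

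The main obstacle will be twofold. First, one must verify that the candidate spaces actually satisfy Def.~\ref{def:treeL} rather than merely being graphs: the key is to show that between any two points there is a unique continuous self-avoiding path, which follows from the time-monotonicity of trajectories and the fact that merger events are ``trifurcation-free'' (two incoming plus one outgoing edge). Second, one must handle the potential ambiguity at merger points: the $(x,\psi)$ picture assigns a jump in $\psi$ across a sink, so I would fix the convention $\psi(\sigma(t),t):=\lim_{x\uparrow\sigma(t)}\psi(x,t)$ (or the symmetric average) and check that the $L^1$ length along an edge is finite and matches the claimed isometry. Uniqueness of $S(\Psi_0)$ up to isomorphism in $\BL^{|}$ then follows because its combinatorial shape, planar embedding, and edge lengths are read off unambiguously from the physical dynamics.
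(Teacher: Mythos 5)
The survey does not actually prove this lemma --- it is imported from \cite{KZ19} --- but the approach it summarizes is \emph{static}: the tree $S(\Psi_0)$ is read off the initial potential directly (combinatorial shape $=$ level set tree of $-\Psi_0$ via Thm.~\ref{thm:SWT}, edge lengths $l_j=\v_j+\h_j$ computed from the basins $\cB_j$), and the graphical embeddings are then exhibited explicitly as unions of vertical and horizontal segments. Your route is \emph{dynamical}: induct on the number of sinks, show mergers are pairwise and exhaust in finite time, and define $S(\Psi_0)$ from the merger history. That is a legitimate alternative and in outline it works; it proves the lemma without first establishing the level-set-tree identification, at the cost of not producing the closed-form edge lengths that the paper's later arguments use.

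There are, however, two concrete problems. First, the lemma asserts that both metric spaces are isometric to the \emph{same} tree $S(\Psi_0)\in\BL^|$, which requires the $|dt|$-length of each trajectory segment in $\cG^{(x,t)}$ to equal its $\bigl(|dx|+|d\psi|\bigr)$-length in $\cG^{(x,\psi)}$. Your phrasing (``once the edge lengths of $S(\Psi_0)$ are set to the corresponding segment lengths under each metric'') dodges exactly this point and would a priori yield two different trees. The equality does hold, but it must be checked: during a rest (mass-accumulation) phase the sink is stationary, so $dx=0$ and the potential value at the sink rises at unit rate (the local minimum of $\psi$ is truncated at height $\Psi_0(x_j)+t$), giving $|d\psi|=|dt|$; during a motion phase the sink travels at unit speed with $d\psi=0$ at the sink, giving $|dx|=|dt|$. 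Hence each edge has common length $\v_j+\h_j$ in both pictures. Second, your tie-breaking device of ``zero-length edges'' for simultaneous multiple mergers is inadmissible: $\BL^|$ consists of trees with strictly positive edge lengths, so this would land you outside the target space. You should instead either impose (or observe the implicit) genericity of $\Psi_0$, or argue that after any merger the new sink must traverse a positive horizontal distance $\h_j=|\cB_{{\sf sibling}(j)}|/2>0$ before its next merger, so that consecutive internal vertices along any lineage are separated by positive length. Relatedly, ``relative speed at most $2$'' does not give finite meeting time; what you need is that once the region between two adjacent sinks empties, each is pushed toward the other at unit speed, so they approach at speed exactly $2$ from some time on.
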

\noindent We refer to the trees of Lem.~\ref{lem:gt} as the graphical trees
$\cG^{(x,t)}(\Psi_0)$ and $\cG^{(x,\psi)}(\Psi_0)$ since
they are two alternative graphical representations of 
the {\it shock wave tree} $S(\Psi_0)$.

\subsubsection{Structure of the shock wave tree}
Importantly, for our particular choice of the initial potential, 
the combinatorial structure and the planar embedding of the shock wave tree coincide
with that of the level set tree $T=\textsc{level}\big(-\Psi_0\big)$ of the initial potential,
as we state in the following theorem. 

\begin{thm}[{{\bf Shock wave tree is a level set tree, \cite{KZ19}}}]
\label{thm:SWT}
Suppose $g(x)\equiv1$ and the initial potential $\Psi_0(x)$ is such that $-\Psi_0(x)\in\cE^{\sf ex}$.
Then
\[\textsc{p-shape}\big(\textsc{level}\left(-\Psi_0\right)\big)=
\textsc{p-shape}\big(S(\Psi_0)\big).\]
\end{thm}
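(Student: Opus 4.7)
The plan is to match both trees vertex-by-vertex through the critical points of the initial potential $\Psi_0$. First, the leaves of $\textsc{level}(-\Psi_0)$ are by construction in bijection with the interior local maxima of $-\Psi_0$, which are precisely the local minima of $\Psi_0$; likewise, the leaves of $S(\Psi_0)$ are in bijection with the initial sinks, which originate exactly at the local minima of $\Psi_0$ (these are the points where the initial velocity $v(x)=-\partial_x\Psi_0(x)$ jumps from $+1$ to $-1$, so left-moving and right-moving particles collide there immediately). This bijection preserves left-to-right ordering on $\mathbb{R}$, which already handles the planar embedding at the leaf level.

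For the internal vertices, the main idea is to track each sink by its absorption basin. For any sink $\sigma$ alive at time $t$, let $B_t(\sigma)\subseteq[a,b]$ denote the set of original Lagrangian positions of all particles absorbed by $\sigma$ by time $t$. I would claim that $B_t(\sigma)$ is always a subinterval whose endpoints lie in $\{a,b\}\cup\{\text{interior local maxima of }\Psi_0\}$. The reason is that each interior local maximum $x^{*}$ of $\Psi_0$ is a separatrix of the initial velocity field: particles immediately to its left move leftward and those immediately to its right move rightward, so particles from the two sides of $x^{*}$ can only be absorbed by a common sink after the sinks flanking $x^{*}$ have merged. Consequently, no basin can extend across $x^{*}$ without a merger event occurring there, which yields a bijection between the merger events in $S(\Psi_0)$ and the interior local maxima of $\Psi_0$: each merger is identified with the unique local maximum at which the two merging basins become flush.

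Composing this with the standard bijection between interior local minima of $-\Psi_0$ and internal vertices of $\textsc{level}(-\Psi_0)$ gives the required bijection between internal vertices of the two trees. The parent-child nesting is compatible on both sides: descendant basins are contained in ancestor basins in $S(\Psi_0)$, and descendant leaves lie in the ancestor's super-level-set component in $\textsc{level}(-\Psi_0)$. These are the same nested intervals, because a basin flanked by two interior local maxima $x^{*}_L<x^{*}_R$ of $\Psi_0$ coincides with the connected component of $\{-\Psi_0>\alpha\}$ containing the leaves strictly between $x^{*}_L$ and $x^{*}_R$, where $\alpha=\max\{-\Psi_0(x^{*}_L),-\Psi_0(x^{*}_R)\}$. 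The left/right splitting of descendants at each internal vertex then matches the spatial left/right splitting of the basin at its separating local maximum, which handles the planar embedding for internal vertices as well.

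The main obstacle is rigorously justifying the basin claim above, because of the subtleties of sink motion rules when one side has been exhausted, the coasting of a sink with attached particles moving at the same velocity, and the handling of coincident events. A clean way to organize the argument is induction on the number $n$ of initial sinks. The base case $n=1$ yields a single-stem tree in both constructions. For the inductive step, I would identify the first merger in $S(\Psi_0)$ as the merger between the pair of adjacent sinks separated by the lowest interior local maximum of $\Psi_0$ (equivalently, the highest interior local minimum of $-\Psi_0$); this matches the topmost reduction (farthest from the root) in $\textsc{level}(-\Psi_0)$. One then reduces to an $(n-1)$-sink configuration by collapsing the merged pair into a single effective sink-source and smoothing out the separating peak, and applies the inductive hypothesis. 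The combinatorial fact that needs verification is that the two sinks flanking the lowest $\Psi_0$-peak actually meet before any other merger involving either of them; this uses the fact that their inner sides exhaust first (because the separator $x^{*}$ between them is closest in potential value to the two flanking leaves) and that, once the two sinks begin coasting toward each other, no intervening merger across $x^{*}$ can interfere.
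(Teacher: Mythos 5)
First, note that the survey does not reproduce a proof of Theorem~\ref{thm:SWT}: it is stated with a citation to \cite{KZ19}, so your proposal can only be measured against the intended argument. Your overall architecture is the right one and matches the standard proof: leaves of both trees are in order-preserving bijection with the local minima of $\Psi_0$; interior local maxima of $\Psi_0$ are velocity separatrices, so mergers of sinks are in bijection with them; and the nesting of merger events reproduces the nesting of excursion components of $-\Psi_0$, which is exactly $\textsc{level}(-\Psi_0)$. However, two of your concrete supporting claims are false as stated. (1) The basin $B_t(\sigma)$ does \emph{not} always have endpoints in $\{a,b\}\cup\{\text{interior local maxima}\}$: a sink at a local minimum with inner depth $d$ and outer depth $e>d$ exhausts its inner side at time $d$ and then coasts, so at the moment of its first merger its basin endpoint on the outer side sits at depth $d<e$, strictly between critical points; the leftover outer particles are swallowed only later by the merged sink. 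What is true, and suffices, is the weaker statement that no interior local maximum ever lies in the \emph{interior} of a basin until the merger associated with it has occurred. (2) The inductive step's identification of the \emph{first} merger is wrong. The merger at a valley $v$ of $-\Psi_0$ flanked by peaks $p_-,p_+$ (both of whose sinks turn toward $v$) occurs at time $(h(p_-)-h(v))+(h(p_+)-h(v))$, which is small when the flanking peaks barely exceed $v$, regardless of how high $v$ is. For example, take $-\Psi_0$ with successive extreme values $0,\,10,\,9.5,\,10,\,1,\,9.1,\,9,\,9.1,\,0$: the highest valley is at $9.5$ and its merger occurs at time $1$, but the valley at $9$ produces a merger at time $0.2$. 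So the first merger in time need not occur at the lowest interior local maximum of $\Psi_0$.

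Neither defect is fatal, because $\textsc{p-shape}$ depends only on the genealogy of mergers and not on their absolute times. The fact you flag as ``needing verification'' --- that the two sinks flanking the lowest interior local maximum of $\Psi_0$ merge with \emph{each other} before either merges with anything else --- is true (since that maximum is lower than every neighboring separatrix, both flanking sinks exhaust their inner sides first and turn toward each other, and no outside sink can cross either flanking minimum beforehand), and that local statement is all the induction requires; you should simply drop the assertion that this merger is first in absolute time. A cleaner organization, which avoids any claim about global time order, is to observe that for each interior local maximum $x^*$ of $\Psi_0$ the dynamics restricted to its basin $\cB_{x^*}$ is autonomous (particles adjacent to the basin boundary on the outside move away from it), collapses to a single sink in time $|\cB_{x^*}|/2$, and splits at $x^*$ into the two autonomous sub-basin dynamics; recursing on this decomposition gives the bijection of internal vertices, the parent--child relation, and the left--right planar order simultaneously. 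With those two repairs your proof is correct and is essentially the argument of \cite{KZ19}.
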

\noindent Theorem~\ref{thm:SWT} implies that there is one-to-one correspondence between internal local 
maxima of $\Psi_0(x)$ and internal non-root vertices of $S(\Psi_0)$. 
There is also a one-to-one correspondence between local minima and the leaves.
We label the tree vertices with the indices $j$ that correspond to the 
enumeration of the local extrema $x_j$ of $\Psi_0(x)$; see Fig.~\ref{fig:tree}.
We write ${\sf parent}(i)$ for the index of the parent vertex to vertex $i$;
${\sf right}(i)$ and ${\sf left}(i)$ for the indices of the right and the left
offsprings of an internal vertex $i$;
and ${\sf sibling}(i)$ for the index of the sibling of vertex $i$. 

For a local extremum $x_j$, we define its {\it basin} $\cB_j$ as the shortest interval that contains $x_j$ and supports 
a non-positive excursion of $\Psi_0(x)$.
Formally, $\cB_{j}=[x^{\rm left}_j, x^{\rm right}_j]$, where
\[x^{\rm right}_j = \inf\big\{x: \,x>x_j\text{ and }\Psi_0(x)>\Psi(x_j)\big\},\]
\[x^{\rm left}_j = \sup\big\{x: \,x<x_j\text{ and }\Psi_0(x)>\Psi(x_j)\big\}.\]
We observe that the basin $\cB_j$ for a local minimum $x_j$ coincides with its coordinate: 
$\cB_j = \{x_j = x^{\rm left}_j = x^{\rm right}_j\}$.

The basin's length is $\big|\cB_j\big| = x^{\rm right}_j-x^{\rm left}_j$.
Point $c_j = (x^{\rm right}_j+x^{\rm left}_j)/2$ denotes the center of the basin $\cB_j$.
Additionally, we let
\[\v_j = \Psi_0(x_{{\sf parent}(j)})-\Psi_0(x_j)\quad \text{ and } \quad 
\h_j = \big|\cB_{{\sf sibling}(j)}\big|/2.\]

\begin{figure}[t] 
\centering\includegraphics[width=0.8\textwidth]{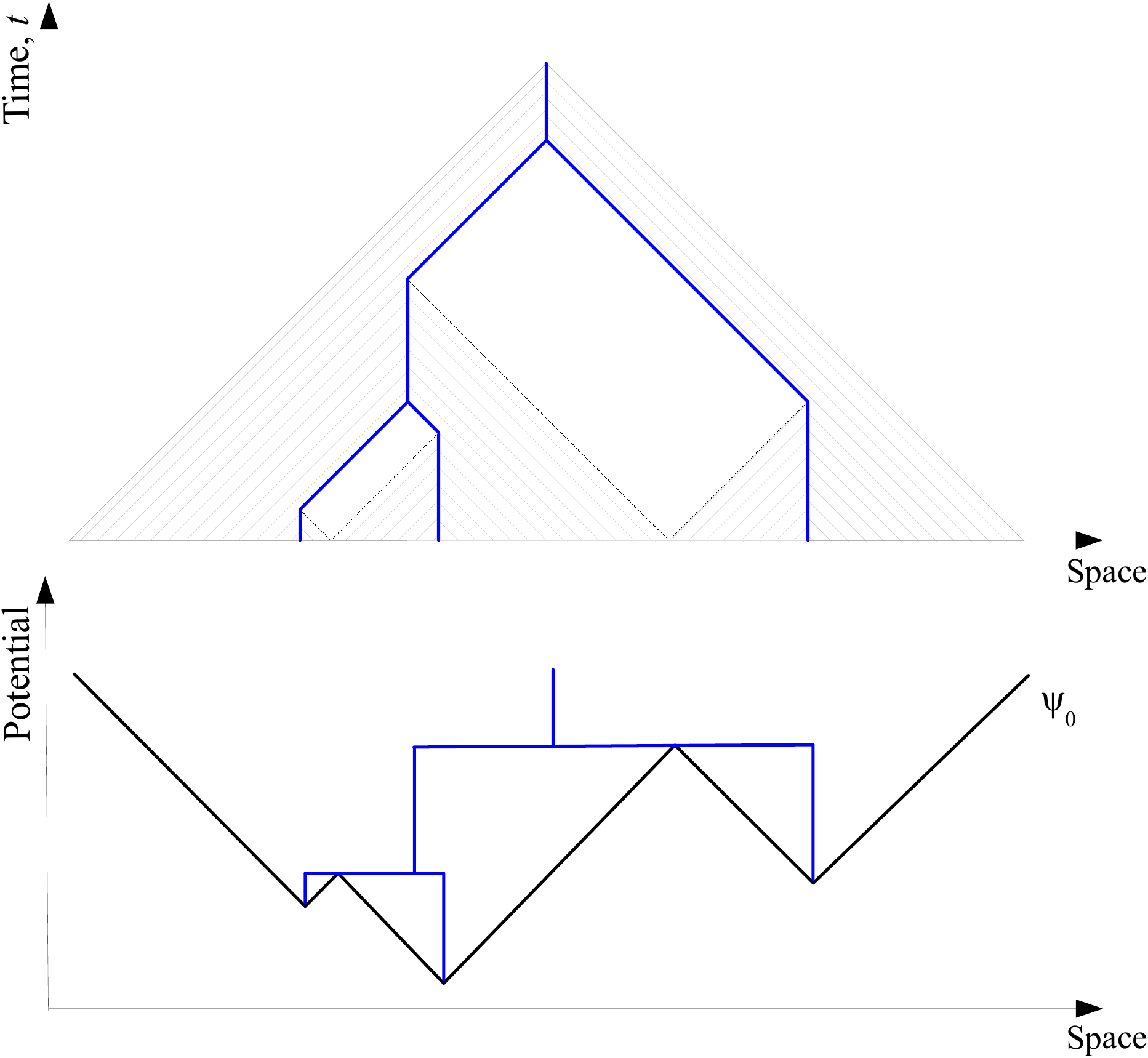}
\caption[Shock tree in a model with a unit slope potential: an illustration]
{Shock wave tree (sink tree) in a model with a unit slope potential: an illustration. 
(Top panel): Space-time dynamics of the system. 
Trajectories of particles are illustrated by gray lines.
The trajectory of coalescing sinks is shown by blue line -- this is 
the graphical representation $\cG^{(x,t)}(\Psi_0)$ of the shock wave tree
$S(\Psi_0)$.
Notice the appearance of empty regions (zero particle density) in the space-time domain.
(Bottom panel): Initial unit slope potential $\Psi_0(x)$ with three local 
minima (black line) and a graphical representation $\cG^{(x,\psi)}(\Psi_0)$ of the shock wave tree 
(blue line) in the phase space $(x,\psi(x,t))$.
}
\label{fig:shock_tree}
\end{figure} 

\noindent
We are now ready to describe the metric structure of the shock tree $S(\Psi_0)$ 
and a constructive embedding $\cG^{(x,\psi)}(\Psi_0)$ of the tree $S(\Psi_0)$ 
into the system's phase space.

{\bf Metric tree structure.}
The length $l_j$ of the parental edge of a non-root vertex $j$ 
within $S(\Psi_0)$ is given by 
$l_j = \v_j + \h_j.$

{\bf Graphical shock tree in the phase space.}
The tree
$\cG^{(x,\psi)}(\Psi_0)$
is the union of the following vertical and horizontal segments:
\begin{itemize}
\item[$(\v)$] For every local extremum $x_j$ of $\Psi_0(x)$ there exists a vertical 
segment from $(c_j,\Psi_0(x_j))$ to $(c_j,\Psi_0(x_j)+\v_j)$.
\item[(h)] For every local maximum $x_j$ of $\Psi_0(x)$ there exists a horizontal
segment of length $\h_{{\sf left}(j)}+\h_{{\sf right}(j)}$ 
from $(c_{{\sf left}(j)},\Psi_0(x_j))$ to $(c_{{\sf right}(j)},\Psi_0(x_j))$.
\end{itemize}

\noindent Figure~\ref{fig:tree} shows the graphical shock trees $\cG^{(x,\psi)}$ 
and $\cG^{(x,t)}$ for an initial potential with two local maxima and
three local minima, and illustrates the labeling of
vertical ($\v_j$) and horizontal ($\h_j$) segments of the tree.
Figure~\ref{fig:bracket} shows an example of the graphical tree $\cG^{(x,\psi)}$
for an initial potential with nine local minima (and, hence, with nine initial sinks).

Consider a tree $\cV(\Psi_0)\in\BL^{|}$ that has the same planar combinatorial 
structure as $S(\Psi_0)$, and the length of the parental edge of vertex $j$ is given by $l_j=\v_j$.
Informally, this is a tree that consists of the vertical segments of 
the graphical tree $\cG^{(x,\psi)}(\Psi_0)$ (Fig.~\ref{fig:shock_tree}, bottom).
We have the following corollary of Thm.~\ref{thm:SWT}.
\begin{cor}[{{\bf \cite{KZ19}}}]
\label{cor:V}
Suppose $g(x)\equiv1$ and potential $\Psi_0(x)$ is such that $-\Psi_0(x)\in\cE^{\sf ex}$.
Then
\[\cV(\Psi_0) = \textsc{level}\left(-\Psi_0\right).
\]
\end{cor}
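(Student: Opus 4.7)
The plan is to deduce Corollary~\ref{cor:V} directly from Theorem~\ref{thm:SWT} together with a straightforward matching of edge lengths coming from the construction of the level set tree in Section~\ref{sec:level1}. Both sides of the claimed equality live in $\BL^{|}$, so I only need to verify two things: (i) the planar combinatorial shapes coincide, and (ii) for each non-root vertex the length of its parental edge is the same in both trees.

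For (i), by definition $\cV(\Psi_0)$ is constructed to have the same $\textsc{p-shape}$ as $S(\Psi_0)$. Theorem~\ref{thm:SWT} gives $\textsc{p-shape}\big(\textsc{level}(-\Psi_0)\big)=\textsc{p-shape}\big(S(\Psi_0)\big)$, and hence $\textsc{p-shape}\big(\textsc{level}(-\Psi_0)\big)=\textsc{p-shape}\big(\cV(\Psi_0)\big)$. This also fixes the natural bijection between the non-root vertices of the two trees via the common indexing by local extrema $x_j$ of $\Psi_0$ (equivalently, of $-\Psi_0$): leaves $\leftrightarrow$ local minima of $\Psi_0 =$ local maxima of $-\Psi_0$, and internal vertices $\leftrightarrow$ local maxima of $\Psi_0 =$ local minima of $-\Psi_0$, with the boundary values $\Psi_0(a)=\Psi_0(b)=0$ forming the planted root.

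For (ii), recall that in the level set tree construction of Section~\ref{sec:level1}, every edge corresponds to a pair of adjacent local extrema of $f=-\Psi_0$ in the parent-child sense, and its length is defined as the absolute difference of $f$-values at those two extrema. Hence, for any non-root vertex $j$, the parental edge in $\textsc{level}(-\Psi_0)$ has length
\[
\bigl|(-\Psi_0)(x_j)-(-\Psi_0)(x_{\mathsf{parent}(j)})\bigr|=\Psi_0(x_{\mathsf{parent}(j)})-\Psi_0(x_j)=\v_j,
\]
where positivity follows from the nesting: passing from child to parent in $\textsc{level}(-\Psi_0)$ decreases $-\Psi_0$, hence increases $\Psi_0$. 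This is precisely the length $l_j=\v_j$ assigned to the parental edge of $j$ in $\cV(\Psi_0)$. The only slightly delicate case is an internal vertex $j$ whose parent is the root; there $x_{\mathsf{parent}(j)}$ is understood as a boundary point of the excursion with $\Psi_0=0$, and the formula still gives the correct edge length. Combining (i) and (ii) yields $\cV(\Psi_0)=\textsc{level}(-\Psi_0)$ as elements of $\BL^{|}$.

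The proof is essentially a bookkeeping argument on top of Theorem~\ref{thm:SWT}, and I do not anticipate any real obstacle beyond being careful about the sign convention (writing the level set tree of $-\Psi_0$ rather than of $\Psi_0$) and about the root edge, which is why I flagged those explicitly above.
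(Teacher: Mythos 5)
Your proof is correct and follows exactly the route the paper intends: the paper states Corollary~\ref{cor:V} as an immediate consequence of Theorem~\ref{thm:SWT} (matching of $\textsc{p-shape}$) combined with the definition $l_j=\v_j=\Psi_0(x_{{\sf parent}(j)})-\Psi_0(x_j)$ and the fact that level-set-tree edge lengths are differences of function values at adjacent local extrema. Your explicit handling of the sign convention and of the stem attached to the boundary root is the only bookkeeping required, and it is done correctly.
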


\begin{figure}[!h]
	\centering
	\includegraphics[width=0.85 \textwidth]{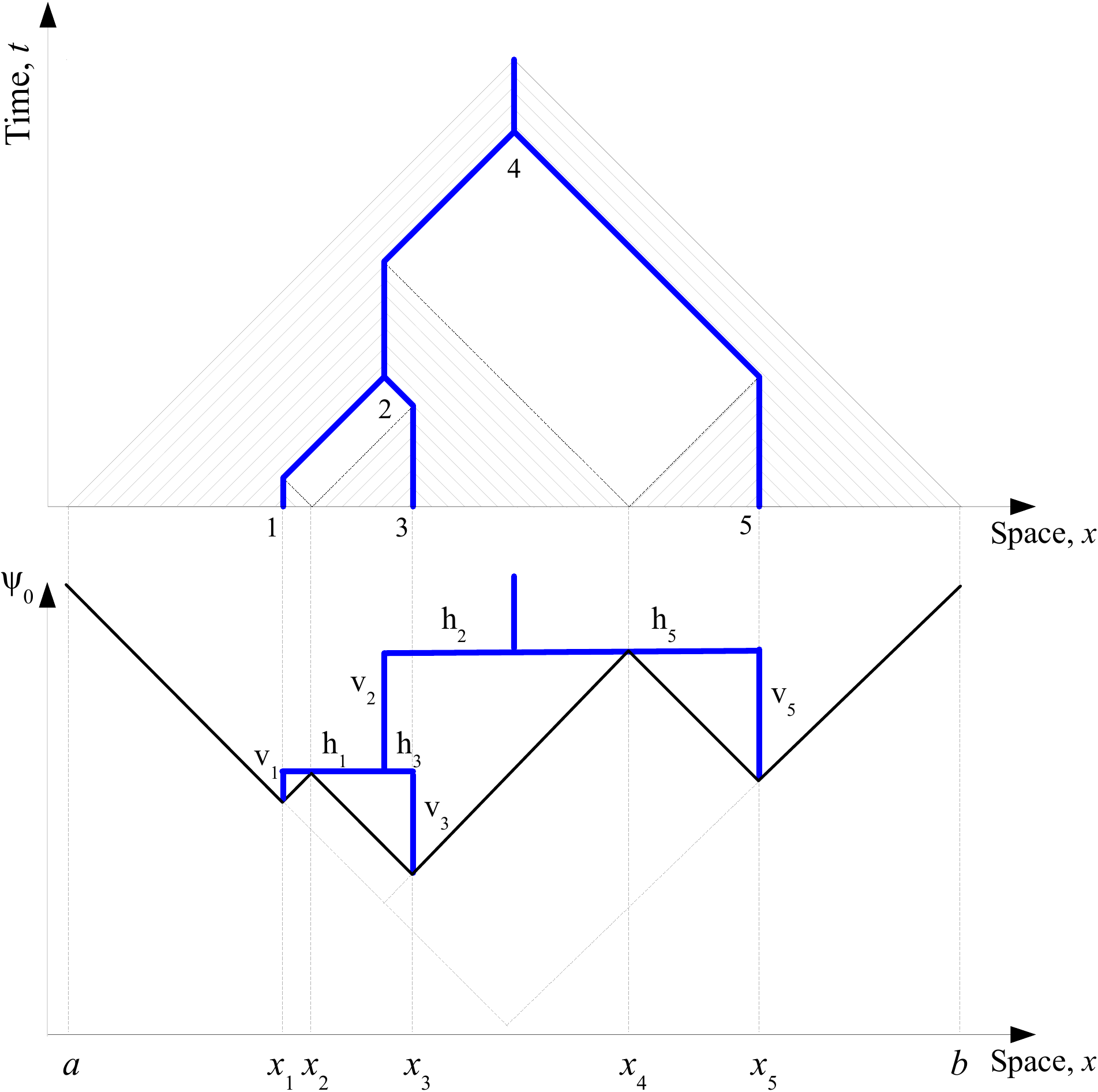}
	\caption{Shock tree for a piece-wise linear potential with two local 
	maxima.
	(Top): The shock tree in space-time domain (blue). Hatching illustrates 
	motion of 
	regular particles. There exist two empty rectangular areas, each corresponding to one
	of the local maxima.
	The panel illustrates indexing of the tree vertices.
	(Bottom): Potential $\Psi_0(x)$ (black) and the shock tree in the phase 
	space (blue).
	The panel illustrates the labeling of vertical ($\v_j$) and horizontal ($\h_j$) 
	segments of the tree.}
	\label{fig:tree}
\end{figure}

\subsubsection{Ballistic annihilation as generazlized pruning}
\label{sec:Bdyn}
This section shows that the dynamics of continuum ballistic annihilation with constant
initial density and unit-slope potential is equivalent to the generalized dynamical
pruning of either the shock wave tree (Thm.~\ref{thm:pruning1}) or 
the level set tree of the potential
(Thm.~\ref{thm:pruning}).

\begin{figure}[!t]
	\centering
	\includegraphics[width=0.95 \textwidth]{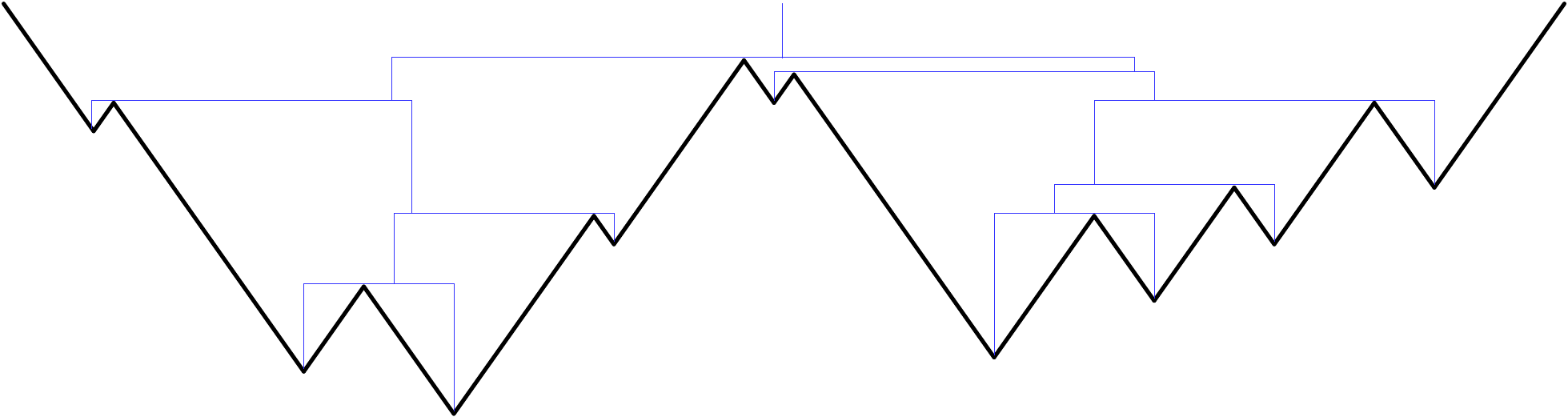}
	\caption{Graphical representation $\cG^{(x,\psi)}(\Psi_0)$ (blue) of the sink tree $S(\Psi_0)$
	for initial potential $\Psi_0(x)$ with nine local minima (black). 
	There are nine sinks that correspond to the leaves of the tree.
	The trajectory of each sink can be traced by going from the corresponding 
	leaf to the root of the tree.}
	\label{fig:bracket}
\end{figure}

Suppose a tree $T\in\BL^|$ has a particular
graphical representation $\cG_T\in\mathbb{R}^2$ 
implemented by a bijective isometry
$f:T\rightarrow\cG_T$ that maps the root of $T$ into the root of  $\cG_T$.
We extend the notion of the generalized dynamical pruning $\S(\varphi,\cG_T)$
for the graphical tree $\cG_T$ by considering the $f$-image of $\S(\varphi,T)$:
\[\S(\varphi,\cG_T) = f\big(\S(\varphi,T)\big).\]
Consider a natural isometry (Lem.~\ref{lem:gt}) between the shock wave tree $S(\Psi_0)$
and either of the graphical shock trees,
$\cG^{(x,t)}(\Psi_0)$ (in the space-time domain) or
$\cG^{(x,\psi)}(\Psi_0)$ (in the phase space). 
The next theorem formalizes an observation that the dynamics
of sinks is described by the continuous pruning (Sect.~\ref{ex:height}) 
of the shock wave tree.
\begin{thm}[{{\bf Annihilation pruning I, \cite{KZ19}}}]
\label{thm:pruning1}
Suppose $g(x)\equiv1$, and the initial potential $\Psi_0(x)$ is such that $-\Psi_0(x)\in\cE^{\sf ex}$.
Then, the dynamics of sinks is described by
the generalized dynamical pruning $\S(\varphi,\cG)$ of either the graphical 
tree $\cG=\cG^{(x,\psi)}(\Psi_0)$ (in the phase space) or $\cG=\cG^{(x,t)}(\Psi_0)$
(in the space-time domain), with the pruning function 
$\varphi(T)=\textsc{height}(T)$.
Specifically, the locations of sinks at any instant $t\in[0,t_{\rm max})$ coincide with the 
location of the leaves of the pruned tree $\S(\varphi,\cG)$.
\end{thm}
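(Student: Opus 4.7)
My plan is to handle the two graphical representations separately, treating the space-time representation $\cG^{(x,t)}$ first and then transferring the conclusion to the phase-space representation $\cG^{(x,\psi)}$ via the canonical isometry provided by Lemma~\ref{lem:gt}.

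For $\cG^{(x,t)}$, the argument is essentially geometric. I would first observe that every leaf of the shock wave tree is of the form $(x_\ell, 0)$, where $x_\ell$ is a local minimum of $\Psi_0$ and hence the birth location of a sink source, and that the edges (sink trajectories) are monotone in the time coordinate. Consequently, for any interior point $p = (x_p, t_p)$, every path within $\Delta_{p,\cG^{(x,t)}}$ from $p$ to a descendant leaf is monotone in time, so by the definition of the metric $d^{(x,t)}$ its length equals $t_p$. Therefore $\varphi(\Delta_{p,\cG^{(x,t)}}) = \textsc{height}(\Delta_{p,\cG^{(x,t)}}) = t_p$ uniformly, and the definition \eqref{eqn:GenDynamPruning} of generalized dynamical pruning gives
\[
\S(\varphi,\cG^{(x,t)}) = \rho \cup \bigl\{p \in \cG^{(x,t)} : t_p \geq t\bigr\},
\]
i.e., the "upper slab" of the shock tree at times $\geq t$. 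Its leaves are the minimum-time points in each surviving branch, which by construction of $\cG^{(x,t)}$ are exactly the points $(\sigma_i(t), t)$ for the currently surviving sinks $\sigma_i(t)$. This establishes the theorem for $\cG^{(x,t)}$.

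For $\cG^{(x,\psi)}$, I would invoke Lemma~\ref{lem:gt} to obtain the canonical isometry $\Phi:\cG^{(x,t)} \to \cG^{(x,\psi)}$ (both being isometric to $S(\Psi_0)$ as rooted metric trees). Since the height function and the descendant subtree relation depend only on the rooted metric structure, the pruning commutes with $\Phi$, so the leaves of $\S(\varphi, \cG^{(x,\psi)})$ are precisely the images $\Phi((\sigma_i(t), t))$. The remaining step is to verify that the first coordinate of $\Phi((\sigma_i(t), t))$ in the phase-space plane equals $\sigma_i(t)$. I would do this by case analysis along the vertical segments $(\v)$ and horizontal segments $(\h)$ comprising $\cG^{(x,\psi)}$: on a vertical segment above a leaf $(c_j, \Psi_0(x_j))$ the $x$-coordinate is constant at $c_j = x_j$, matching the stationary phase of a newly-born sink at the local minimum $x_j$; on a horizontal segment the $x$-coordinate varies at unit rate with the $L^1$ arc-length, matching the unit-speed motion of a sink that has exhausted one side of its basin and is propagating toward its sibling.

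The principal obstacle is this final case analysis in the phase-space representation: one must track which of the two adjacent basins of a sink is exhausted first, and verify that the endpoint and orientation of the corresponding horizontal segment of $\cG^{(x,\psi)}$ match the sink's displacement in the space-time picture. The unit-slope structure of $\Psi_0$ enforces unit-speed motion both in the space-time diagram and along the horizontal segments of the phase-space tree, and the basin-center identifications $c_j = x_j$ for local minima together with the explicit construction of the horizontal segments pin down the exact geometric correspondence. With these inputs the verification reduces to a direct check that the $\Phi$-image of a sink trajectory in $\cG^{(x,t)}$ coincides, as a parametrized path, with the concatenation of vertical and horizontal segments assigned to the corresponding lineage in $\cG^{(x,\psi)}$.
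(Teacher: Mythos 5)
The survey states this theorem without proof, importing it from \cite{KZ19}, so there is no in-paper argument to measure your proposal against; judged on its own terms it is essentially correct and well organized. The space-time half is complete: since every point $p=(x_p,t_p)$ of $\cG^{(x,t)}(\Psi_0)$ has all of its descendant leaves at time $0$ and the tree paths down to them are monotone in $t$, the formula $d^{(x,t)}\big((x_1,t_1),(x_2,t_2)\big)=2t^*-t_1-t_2$ gives $\textsc{height}\big(\Delta_{p,\cG^{(x,t)}}\big)=t_p$ exactly, so the pruning retains precisely the slab $\{p:t_p\ge t\}$ together with the root, and the leaves of that slab are the time-$t$ sections of the surviving branches, i.e.\ the current sink positions. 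The transfer to $\cG^{(x,\psi)}$ via the root-preserving isometry of Lemma~\ref{lem:gt} is also sound, since the generalized dynamical pruning with $\varphi=\textsc{height}$ is determined by the rooted metric structure alone and therefore commutes with such isometries.

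The one place where you make unnecessary work for yourself is the final verification that the isometric image of $(\sigma_i(t),t)$ has first coordinate $\sigma_i(t)$. You propose to check this against the constructive vertical/horizontal-segment description of $\cG^{(x,\psi)}$, which is doable but fiddly: one must track which arm of each basin is exhausted first and match the endpoints and orientations of the horizontal segments, and the bookkeeping with the quantities $\v_j$, $\h_j$, $c_j$ is exactly where errors creep in. It is cleaner to bypass the segment description entirely: $\cG^{(x,\psi)}(\Psi_0)$ is \emph{defined} as the locus $\big\{(x,\psi(x,t)):\sigma(t)=x\big\}$, so the map $(\sigma_i(t),t)\mapsto\big(\sigma_i(t),\psi(\sigma_i(t),t)\big)$ preserves the spatial coordinate tautologically; all that remains is to check that this trajectory-based identification \emph{is} the canonical isometry, which follows from the unit-slope structure — along any sink trajectory either the sink is at rest, where $|d\psi|=dt$ and $|dx|=0$, or it moves at unit speed, where $|dx|=dt$ and $|d\psi|=0$, so the $L^1$ arc length in the phase plane equals elapsed time. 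With that substitution your argument closes without the case analysis you flag as the principal obstacle.
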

\noindent
Theorem~\ref{thm:pruning1} only refers to the dynamics of the sinks; it is, however,
intuitively clear that the entire potential $\psi(x,t)$ at any given $t>0$ 
can be 
uniquely reconstructed from either of the pruned graphical trees, 
$\cG^{(x,t)}(\Psi_0)$ or $\cG^{(x,\psi)}(\Psi_0)$.
Because of the multiple symmetries \cite{KZ19}, 
the graphical trees possess significant redundant information.
It has been shown in \cite{KZ19} that the reduced tree $\cV(\Psi_0)$ (Cor.~\ref{cor:V}) 
equipped with information about the sinks provides a minimal
description sufficient for reconstructing the entire continuum annihilation dynamics.
\begin{lem}[\cite{KZ19}]
\label{lem:Vpshape}
Suppose $g(x)\equiv1$, and the initial potential $\Psi_0(x)$ is such that $-\Psi_0(x)\in\cE^{\sf ex}$.
Then,
\[\textsc{level}(\psi(x,t)) = \S(\textsc{length},\cV(\Psi_0)).\]
\end{lem}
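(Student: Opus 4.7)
The plan is to derive Lem.~\ref{lem:Vpshape} from Thm.~\ref{thm:pruning1} (which identifies the sink dynamics with the continuous pruning of the graphical shock tree $\cG^{(x,\psi)}(\Psi_0)$ by height) together with Cor.~\ref{cor:V} (which identifies $\cV(\Psi_0)$ with $\textsc{level}(-\Psi_0)$). First I would unpack the profile of $\psi(\cdot,t)$: since $-\Psi_0\in\cE^{\sf ex}$, the initial potential is piecewise linear with slopes $\pm 1$, and a direct analysis of the ballistic transport shows that $\psi(\cdot,t)$ remains piecewise linear with slopes $0$ on empty subintervals and $\pm 1$ elsewhere. Thus the combinatorial and metric structure of $\textsc{level}(\psi(\cdot,t))$ is controlled entirely by the surviving sinks (inherited from the original local minima of $\Psi_0$, possibly merged) and the surviving local maxima of $\Psi_0$ not yet swallowed by the expanding empty regions; flat subintervals contribute no branching to the level-set tree.

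The crux is a mass-balance identity linking the length functional on $\cV(\Psi_0)$ to the time of absorption. Namely, for every internal vertex $p\in\cV(\Psi_0)$ corresponding to a local maximum $x_p$ of $\Psi_0$, the sub-basin width satisfies $|\cB_{x_p}|=2\,\textsc{length}(\Delta_{p,\cV(\Psi_0)})$, which follows directly from the slope-$\pm 1$ geometry: the horizontal basin width equals twice the aggregate vertical drop from $x_p$ down to its descendant local minima, and that drop is precisely $\textsc{length}(\Delta_{p,\cV(\Psi_0)})$. Because the sinks inside $\cB_{x_p}$ consume particles in parallel at aggregate rate $2$ per unit time (one from each flank of each sink with living adjacent particles) for as long as any particles inside the sub-basin remain, the total elapsed time until the sub-basin is fully absorbed equals $|\cB_{x_p}|/2=\textsc{length}(\Delta_{p,\cV(\Psi_0)})$. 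The analogous leaf-edge computation is immediate: for $y$ on the parental edge of a leaf $j$ at vertical distance $d$ from the leaf, the sink at $x_j$ has raised its absorbed level from $\Psi_0(x_j)$ to $\Psi_0(x_j)+t$ by time $t$, so the feature of $\Psi_0$ corresponding to $y$ survives iff $d\ge t=\textsc{length}(\Delta_{y,\cV(\Psi_0)})$.

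Stitching these pointwise equivalences together shows that $\S(\textsc{length},\cV(\Psi_0))$ retains precisely those points of $\cV(\Psi_0)$ whose corresponding features of $\Psi_0$ persist in $\psi(\cdot,t)$, and the induced metric structure agrees; reading off the level-set tree then yields the claimed equality. The principal obstacle is justifying the mass-balance argument in the presence of asymmetric sub-basins, where individual sinks may drift across interior local maxima and merge at interior points before the sub-basin is exhausted; to handle this I would induct on the depth of $\cV(\Psi_0)$, with the base case (single-leaf $\cV$, i.e.\ an isolated V-shaped valley) verified directly as in Sect.~\ref{sec:solution}. The inductive step exploits the observation that, irrespective of the particular ordering of mergers inside a sub-basin, the aggregate consumption rate stays equal to $2$ until the sub-basin is exhausted, so the total consumption time for the sub-basin is insensitive to the fine-grained details of internal sink motions and depends only on the aggregate mass, i.e.\ on $\textsc{length}(\Delta_{p,\cV(\Psi_0)})$.
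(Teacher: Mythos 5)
Your overall strategy---reducing the identity to a pointwise survival criterion on $\cV(\Psi_0)$ via the geometric identity $|\cB_{x_p}|=2\,\textsc{length}\big(\Delta_{p,\cV(\Psi_0)}\big)$ and inducting on tree depth---is a reasonable skeleton, and that identity itself is correct (it is the Harris-path correspondence for slope-$\pm1$ excursions). But the central quantitative step fails. The aggregate rate at which the sinks inside $\cB_{x_p}$ consume mass is not $2$ but $2k(t)$, where $k(t)$ is the number of sinks that are accumulating (at rest with live particles on both flanks) at time $t$; a sink that has exhausted one flank drifts and consumes nothing. Already for a W-shaped sub-basin with leaf depths $\v_1>\v_3$ (Fig.~\ref{fig:Vtree}), the particles of $\cB_{x_2}$ are exhausted at time $\max(\v_1,\v_3)=\v_1$ (rate $4$ on $[0,\v_3]$, rate $2$ on $[\v_3,\v_1]$, total $2\v_1+2\v_3=|\cB_{x_2}|$), strictly before $|\cB_{x_2}|/2=\v_1+\v_3$. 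So ``time until the sub-basin is fully absorbed'' is neither $|\cB_{x_p}|/2$ nor the quantity you actually need: what must be shown is that the two sinks flanking $x_p$ \emph{merge} at time $|\cB_{x_p}|/2$, and between exhaustion and merger they travel through an already-empty region. That timing is a kinematic fact, not a mass-balance fact: once the slower flank finishes at time $\max(\v_1,\v_3)$, the two sinks face each other across a gap of width $2\min(\v_1,\v_3)$ and close it at relative speed $2$ (each pushed at unit speed by the particles behind it, which lie \emph{outside} $\cB_{x_p}$), giving merger at $\max+\min=|\cB_{x_p}|/2$. Your inductive step inherits the same error, since it again rests on ``the aggregate consumption rate stays equal to $2$ until the sub-basin is exhausted.''

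A second, related gap: between the time $\v_j$ when a sink stops accumulating and the time $\textsc{length}\big(\Delta_{p,\cV(\Psi_0)}\big)$ when it merges, its potential level is frozen at $\Psi_0(x_{{\sf parent}(j)})$ while it drifts, so your leaf criterion ``the feature at depth $d$ survives iff $d\ge t$'' only describes the accumulation phase. During the drift phase one must check that the level set tree of $\psi(\cdot,t)$ does not register the drifting sink as a separate leaf: it sits at the same level as the adjacent empty plateau and the remnant of the local maximum $x_p$, hence is absorbed into a shelf, and once both flanking sinks reach level $\Psi_0(x_p)$ they form a single flat local minimum. This is exactly what keeps $p$ a leaf of $\S(\textsc{length},\cV(\Psi_0))$ throughout the window $\max(\v_i,\v_j)\le t\le \v_i+\v_j$---the Stage-IV, no-semigroup window of Fig.~\ref{fig:LP}---and this verification, needed to match the two objects as metric trees rather than merely heuristically, is absent from the proposal.
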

Lemma~\ref{lem:Vpshape} states that the level set tree (i.e., the sequence of
the local extreme values) of $\psi(x,t)$ is uniquely reconstructed from the pruned
tree $\cV(\Psi_0)$.
This, however, is not sufficient to reconstruct the entire time-advanced potential,
which has plateaus corresponding to the intervals of zero density (recall the
empty regions in the top panels of Fig.~\ref{fig:shock_tree}).
The information about such plateaus is lost in the pruned tree.  
It happens that it suffices to remember ``the size'' of the pruned out parts of the tree
in order to completely reconstruct the annihilation dynamics from $\cV(\Psi_0)$.  
Specifically, we store the value $\varphi(\tau)$ 
for each subtree $\tau$ that has been pruned out. 
These values are stored in the {\it cuts} -- the points where the pruned subtrees
were attached to the initial tree; see Fig.~\ref{fig:cuts}(a). 
The cuts is a union of the leaves of the 
pruned tree and the vertices of the initial tree that became edge points in 
the pruned tree. 
A formal definition is given below.
\begin{Def}[{{\bf Cuts}}]
\label{def:cuts}
The set $\D(\varphi,T)$ of cuts in a pruned tree $\S(\varphi,T)$ is defined as
the boundary of the pruned part of the tree
\[\D(\varphi,T)=\partial\{x\in T~:~\varphi(\Delta_{x,T})<t\}.\]
\end{Def} 

We now define an extension $\mS(\varphi,T)$ of the generalized dynamical pruning 
that preserves the sizes of pruned subtrees. 
Such pruning starts with a tree from $\BL^|$ and results in a tree
from the space of mass-equipped trees, denoted $\mBL^|$. 
The pruning $\mS(\varphi,T)$ of a tree $T\in\BL^|$
is a tree from $\mBL^|$, whose projection to $\BL^|$ coincides
with $\S(\varphi,T)$.
In addition, the tree is equipped with massive points placed at the cuts. 
Each massive point corresponds to a pruned out subtree $\tau$ of $T$, 
with mass equal $\varphi(\tau)$.
If a cut is the boundary for two pruned subtrees 
(Fig.~\ref{fig:cuts}(a), cuts {\it a,d}), then it hosts two oriented masses.
Such cuts are typical in prunings that do not have the semigroup property
(see Fig.~\ref{fig:LP}, Stage IV).
Figure~\ref{fig:cuts}(b) illustrates mass-equipped pruning $\mS(\varphi,T)$
with pruning function $\varphi=\textsc{length}$.


\begin{figure}[!t]
	\centering
	\includegraphics[width=0.75 \textwidth]{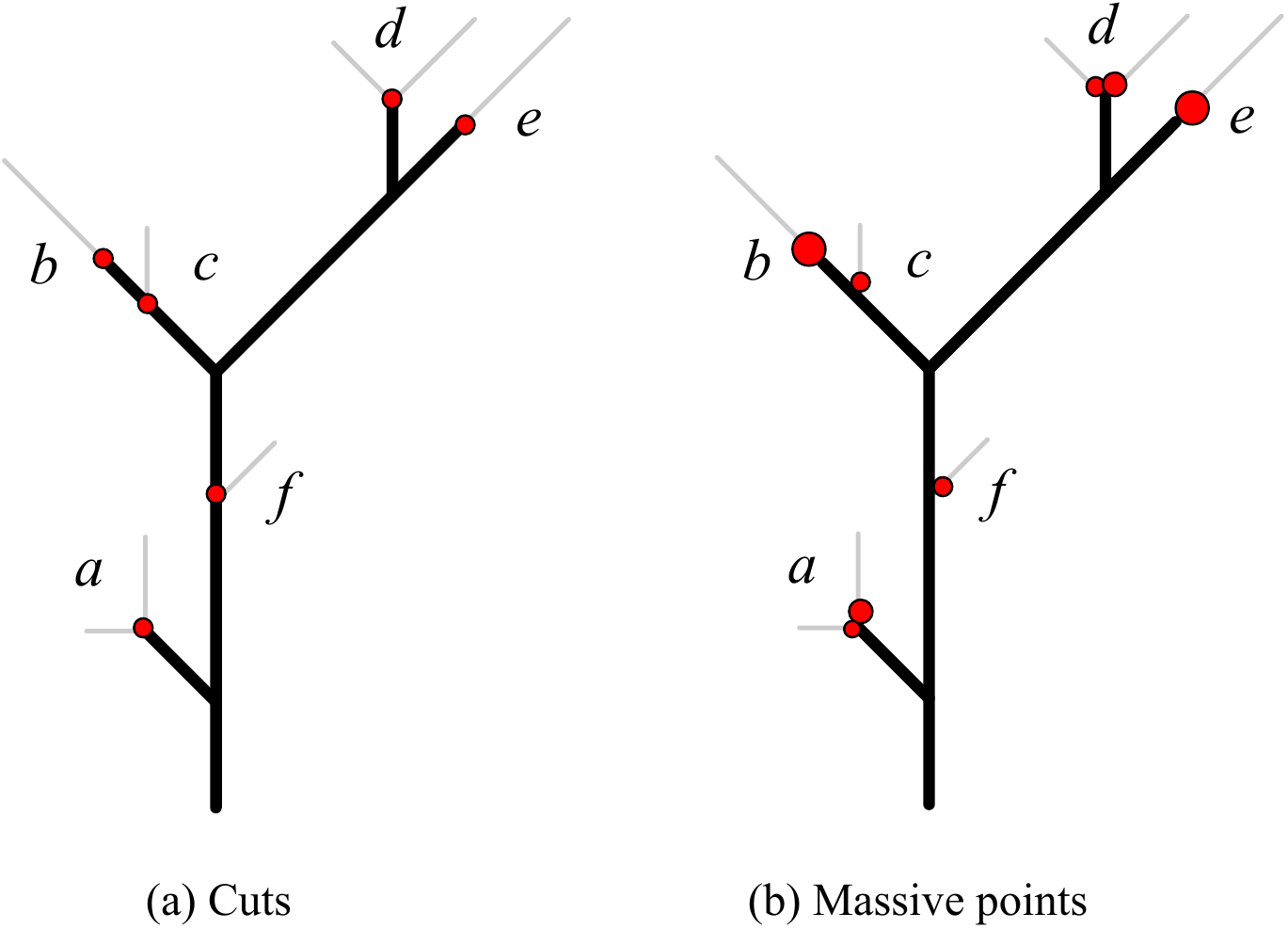}
	\caption{Cuts and massive points: an illustration. 
	(a) Pruned tree $\S(\textsc{length},T)$ (solid black) with
	the set of cuts (red circles).
	The pruned parts of the initial tree $T$ are shown in gray.
	Here, we prune by length;
	the cuts {\it a,d} correspond to Stage IV of Fig.~\ref{fig:LP}.
	The cuts {\it a} and {\it d} are placed at vertices of $T$ that became
	leaves within $\S(\textsc{length},T)$.
	The cuts {\it b} and {\it e} are placed at the leaves of the pruned tree.
	The cuts {\it c} and {\it f} are placed at vertices of $T$ that became
	non-vertex points within $\S(\textsc{length},T)$.
	(b) Massive points (red circles) placed at the cuts. 
	Each of the cuts {\it a} and {\it d} hosts two oriented massive points.
	Each of the cuts {\it b} and {\it e} hosts a single unoriented massive point.
	Each of the cuts {\it c} and {\it f} hosts a single oriented massive point.
	The circle size is proportional to the mass.}
	\label{fig:cuts}
\end{figure}

Next, we describe how to construct a potential $\psi_{T,t}(x)$
for a given $t\in[0,t_{\rm max}]$ and all $x\in[a,b]$
from a pruned mass-equipped tree $T = \mS(\textsc{length},\cV(\Psi_0))$.
Theorem~\ref{thm:pruning} then shows that this reconstructed potential coincides with the
time-advances potential of the annihilation dynamics.

\begin{con}[{{\bf Tree $\rightarrow$ potential}}]
\label{con2}
Suppose  $T = \mS(\textsc{length},\cV(\Psi_0))$.
The corresponding potential $\psi_{T,t}(x)$, with $-\psi_{T,t}(x)\in{\cE}^{\sf ex}$, 
is constructed in the following steps: 
\begin{itemize}
\item[(1)] Construct the Harris path $H_T(x)$ for the projection of $T$ to $\BL^{|}$
(i.e., disregarding masses), and consider the negative excursion $-H_T(x)$.
\item[(2)] At every local minimum of $-H_T(x)$ that corresponds to a double mass
$(m_{\rm L}, m_{\rm R})$, insert a horizontal plateau of length 
\[\varepsilon = 2(m_{\rm L}+m_{\rm R}-t),\]
as illustrated in Fig.~\ref{fig:Vtree}, Stage $2$.
\item[(3)] At every monotone point of $-H_T(x)$ that corresponds to an internal mass $m$, insert
a horizontal plateau of length $2m$ (Fig.~\ref{fig:Vtree}, Stage $3$).
\item[(4)] At every internal local maxima of $-H_T(x)$, insert
a horizontal plateau of length $2t$ (Fig.~\ref{fig:Vtree}, Stage $1$).
\end{itemize}
\end{con}

\begin{figure}[!t]
	\centering
	\includegraphics[width=0.85 \textwidth]{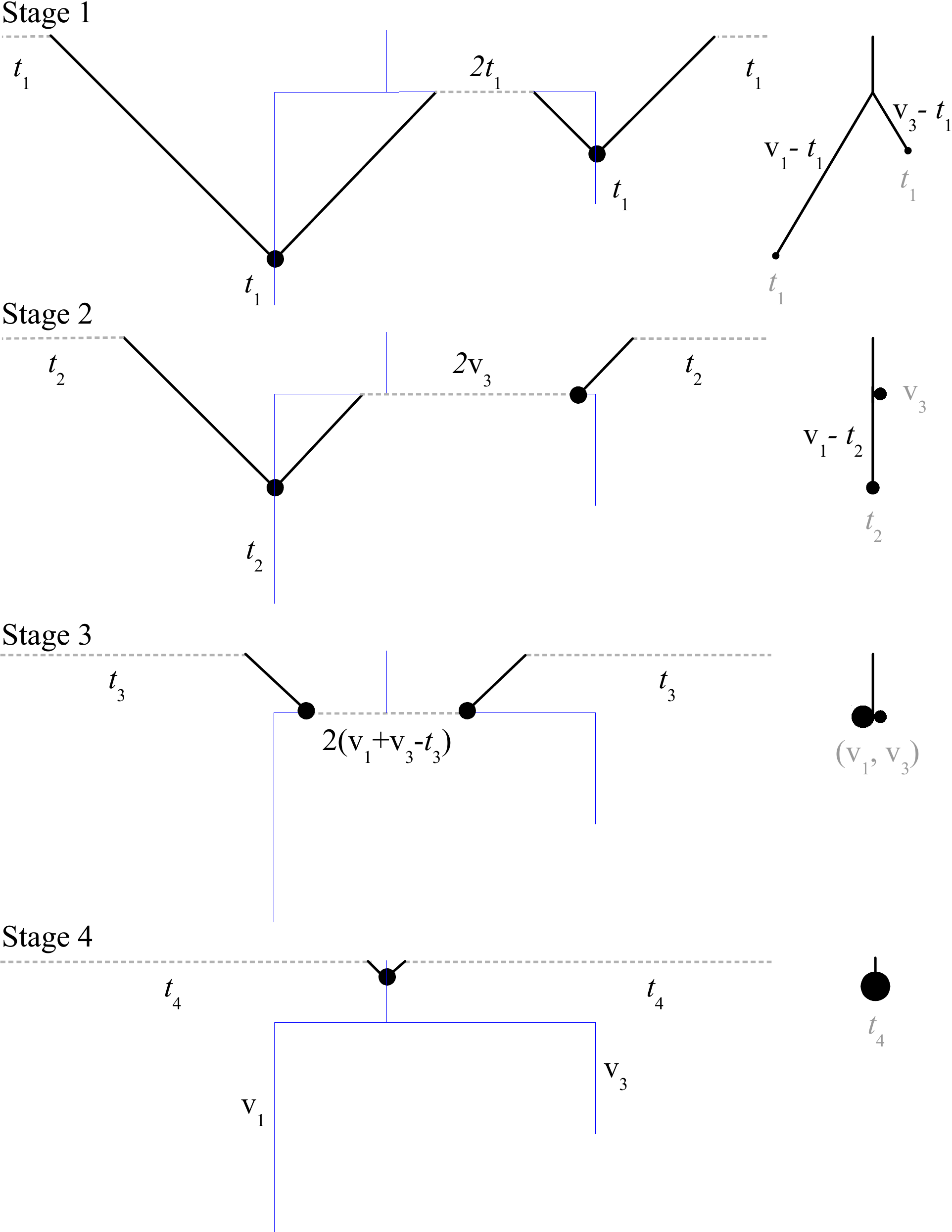}
	\caption{Four generic stages in the ballistic annihilation dynamics of a W-shaped potential (left), 
	and respective mass-equipped trees (right).
	The lengths $\v_1$ and $\v_3$ of the two vertical leaf segments are assigned as
	illustrated in the Stage 4 (see also Fig.~\ref{fig:tree}).
	(Left): Potential $\psi(x,t)$ is shown in solid black.
	Each plateau (dashed gray) corresponds to an interval of zero density.
	The graphical shock tree $\cG^{(x,\psi)}(\Psi_0)$ (blue)
	and sinks (black circles) are shown for visual convenience.
	(Right): Mass-equipped trees. 
	Segment lengths are marked in black, point masses are indicated in gray.
	Notice progressive increase of the point masses from Stage 1 to 4.
	The Stages 1 to 4 refer to time instants $t_1<t_2<t_3<t_4$.
	Here $\v_3<\v_1$, $\v_3>t_1$, $\v_3 < t_2 < \v_1$, $\v_1<t_3$, and $t_3< \v_1+\v_3<t_4$.}
	\label{fig:Vtree}
\end{figure}

\noindent The following theorem establishes the equivalence of the continuum annihilation 
dynamics and mass-equipped generalized dynamical pruning with respect 
to the tree length.
In particular, it includes the statement of Lem.~\ref{lem:Vpshape}.

\begin{thm}[{{\bf Annihilation pruning II, \cite{KZ19}}}]
\label{thm:pruning}
Suppose $g(x)\equiv 1$ and the initial potential $\Psi_0(x)$ is such that 
$-\Psi_0(x)\in{\cE}^{\sf ex}$. 
Then, for any $t\in[0,t_{\rm max}]$, the time-advances potential $\psi(x,t)$
is uniquely reconstructed (by Construction~\ref{con2}) from the
pruned tree 
$T(t) = \mS(\textsc{length},\cV(\Psi_0)).$
That is,
$\psi(x,t)\equiv\psi_{T(t),t}$ for all $x\in[a,b]$.
\end{thm}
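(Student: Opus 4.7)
My plan is to reduce the claim to two separate verifications: (i) the combinatorial and metric structure of the level set tree of $\psi(x,t)$, and (ii) the three plateau-length formulas in Steps~(2)--(4). Part~(i) is already provided by Lemma~\ref{lem:Vpshape}, which I may cite from \cite{KZ19}: it gives $\textsc{level}(\psi(x,t))=\S(\textsc{length},\cV(\Psi_0))$, so the negative Harris path of the unmassed pruned tree constructed in Step~(1) of Construction~\ref{con2} already reproduces the unit-slope monotone segments of $\psi(x,t)$. What remains is to verify that Steps~(2)--(4) insert horizontal plateaus of the correct lengths at the correct places.

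The key auxiliary identity is a mass--length relation for subbasins: for every complete subtree $\Delta_{v,\cV(\Psi_0)}$, the horizontal width of the corresponding subbasin $\cB_v\subseteq[a,b]$ equals $2\,\textsc{length}(\Delta_{v,\cV(\Psi_0)})$. This is a direct consequence of the unit-slope assumption combined with the Harris-path description of $-\Psi_0$: each edge of $\cV(\Psi_0)$ of vertical length $\v$ is traversed twice by $-H_{\cV(\Psi_0)}$ (once on ascent, once on descent) and contributes $2\v$ to the horizontal extent of $\cB_v$. Since the initial density is $g\equiv 1$, the particle mass inside $\cB_v$ equals $2\,\textsc{length}(\Delta_{v,\cV(\Psi_0)})$; in particular $2\,\textsc{length}(\cV(\Psi_0))=b-a$.

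Using this identity, I would verify the three plateau types separately via mass conservation and ballistic propagation at unit speed. For Step~(4), an internal local maximum of $-H_{T(t)}$ corresponds to an internal vertex $x_j$ of $\cV(\Psi_0)$ both of whose descendant subtrees survive the pruning; the peak of $\Psi_0$ at $x_j$ has not yet been absorbed and the initial velocities point outward from $x_j$ at unit speed, producing a symmetric empty neighborhood of length $2t$, exactly as in Step~(4). For Step~(3), a monotone point carrying an internal mass $m$ is a cut interior to an edge of $\cV(\Psi_0)$ that marks a descendant subbasin of tree length $m$ fully absorbed into a surviving parent sink; by the mass--length identity this absorption evacuates $2m$ of particle length, inserted as a horizontal plateau along the monotone segment. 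For Step~(2), a leaf of $T(t)$ carrying double mass $(m_L,m_R)$ is the image of an original internal vertex $v$ with $\max(m_L,m_R)<t\le m_L+m_R$: the sink at $v$ has absorbed the full basin $\cB_v$ of particle mass $2(m_L+m_R)$, but exactly $2t$ of this evacuated length has already been attributed to the Step~(4) plateaus rising from the two surviving peaks of $\Psi_0$ that bound $v$'s level in the pruned tree, leaving $2(m_L+m_R)-2t$ for the plateau anchored at the sink.

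The main technical obstacle is the local bookkeeping in case~(2): one must check that the Step~(2) plateau at a double-mass leaf glues seamlessly with the neighboring Step~(3) and Step~(4) plateaus, with neither gap nor overlap, so that the total length of plateaus plus the total length of the unit-slope pieces of $\psi_{T(t),t}(x)$ equals $b-a$. I would handle this by induction on the number of leaves of $\cV(\Psi_0)$: the base case of a single V-shape is a direct calculation showing that the residual $2t$ of empty length is distributed as two boundary plateaus naturally arising when the shorter negative Harris path is embedded in the domain $[a,b]$ of length $2h$; for the inductive step, one prunes a sibling pair of leaves of minimal Horton order and verifies by local analysis that the perturbation of the potential matches exactly the contribution prescribed by Construction~\ref{con2}. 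The global identity $2\,\textsc{length}(\cV(\Psi_0))=b-a$ serves as a consistency check on the total plateau length at all times $t\in[0,t_{\rm max}]$.
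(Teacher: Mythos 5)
First, note that the survey does not actually contain a proof of Theorem~\ref{thm:pruning}: the statement is imported from \cite{KZ19} and no argument is given here, so there is no in-paper proof to compare yours against. Judged on its own terms, your skeleton is reasonable. Citing Lemma~\ref{lem:Vpshape} for the unit-slope part, and the identity $|\cB_v|=2\,\textsc{length}\big(\Delta_{v,\cV(\Psi_0)}\big)$ --- which is just the Harris-path reciprocity of Prop.~\ref{prop:reciprocityHP} applied to the sub-excursion supported on $\cB_v$ --- is indeed the crux. Your treatments of Steps~(3) and~(4) of Construction~\ref{con2} are correct: a pruned-out subtree of length $m$ interior to a surviving edge corresponds to a collapsed side basin whose sink is picked up by the unit-speed stream at the moment of collapse and carried along together with an evacuated window of width exactly $2m$, and a surviving internal vertex is a peak of $\Psi_0$ from which particles recede on both sides, opening a gap of width $2t$.

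The genuine gap is in Step~(2), which is also the case that carries the content of the theorem. Your mechanism --- total evacuation $2(m_{\rm L}+m_{\rm R})$ minus ``$2t$ already attributed to the Step~(4) plateaus rising from the two surviving peaks that bound $v$'s level'' --- is not what happens, even though it lands on the right number. At a double-mass leaf with $m_{\rm L}\vee m_{\rm R}\le t<m_{\rm L}+m_{\rm R}$ there is not yet one sink but two: each sub-basin, of width $2m_{\rm L}$ resp.\ $2m_{\rm R}$, collapses to a sink at its center at time $m_{\rm L}$ resp.\ $m_{\rm R}$, exactly when the external unit-speed stream entering $\cB_v$ from that end reaches it; thereafter each sink is carried inward at unit speed, so at time $t$ the two sinks sit at distance $t$ from the two ends of $\cB_v$, and the Step~(2) plateau is the empty gap of width $2(m_{\rm L}+m_{\rm R})-2t$ between them. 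The flanking intervals of total width $2t$ are occupied by the invading external particles and reappear as unit-slope segments of $\psi(\cdot,t)$; they are not plateaus, and they have nothing to do with the Step~(4) plateaus, which sit at ancestor peaks located outside $\cB_v$ altogether, so no ``attribution'' is needed or correct. Since your proposed induction on the number of leaves is meant to verify precisely this local gluing, starting it from the wrong local picture would derail it; as written, the gluing --- i.e., the theorem itself --- remains a plan rather than a proof. The global identity $2\,\textsc{length}(\cV(\Psi_0))=b-a$ is a useful consistency check but cannot substitute for the local matching at each cut.
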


It is shown in \cite{KZ19} that, inversely, the mass-equipped 
tree $\mS(\textsc{length},\cV(\Psi_0))$ can be uniquely reconstructed from 
the time-advanced potential $\psi(x,t)$.  
Hence, the continuum ballistic annihilation dynamics is equivalent to
the mass-equipped generalized dynamical pruning of the level set tree
of the initial potential.
The next sections illustrates how this equivalence facilitates the
analytical treatment of the model.

\subsection{Ballistic annihilation of an exponential excursion}
\label{sec:BDEE}

This section examines a special case of piece-wise
linear potential with unit slopes: a negative exponential excursion.
Consider potential $$\psi(x,0)=-H_{{\sf GW}(\lambda)}(x)$$ 
that is the negative Harris path (Sect.~\ref{sec:Harris}) of an exponential critical binary 
Galton-Watson tree with parameter $\lambda$ (Def.~\ref{def:binary}).
In words, the potential is a negative finite excursion with linear segments of
alternating slopes $\pm 1$, such that the lengths of all segments except
the last one are i.i.d. exponential random variables with parameter $\lambda/2$.
Accordingly, the initial particle velocity $v(x,0)$ 
alternates between the values $\pm1$ at epochs of a stationary 
Poisson point process on $\mathbb{R}$ with rate $\lambda/2$, starting with $+1$ and until the 
respective potential crosses the zero level.

\begin{cor}[{{\bf Exponential excursion}}]
\label{cor:GWa}
Suppose $g(x)\equiv 1$ and initial potential 
$\Psi_0(x)=-H_{{\sf GW}(\lambda)}(x)$.
Then the corresponding tree $\cV(\Psi_0)\in\BL^{|}$
is an exponential binary critical Galton-Watson tree 
${\sf GW}(\lambda)$.
\end{cor}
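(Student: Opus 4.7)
The proof will be a short deterministic argument that essentially chains together two results already established in the survey: Corollary \ref{cor:V} (identifying $\cV(\Psi_0)$ with the level set tree of $-\Psi_0$ when $-\Psi_0 \in \cE^{\sf ex}$) and Proposition \ref{prop:reciprocityHP} (reciprocity of the Harris path and the level set tree). The role of randomness is only to transport a distributional statement through these two deterministic bijections.

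First, I would verify the hypothesis of Corollary \ref{cor:V}. A sample tree $T \stackrel{d}{\sim} {\sf GW}(\lambda)$ is almost surely a finite element of $\BL^|$ (by the criticality of $\mathcal{GW}(1/2,1/2)$ and almost sure finiteness of exponential edge lengths), so its Harris path $H_T(x)$ is almost surely a positive piece-wise linear continuous excursion with a finite number of linear segments of alternating slopes $\pm 1$, i.e. $H_T \in \cE^{\sf ex}$. Hence $-\Psi_0(x) = H_T(x) \in \cE^{\sf ex}$ almost surely, and Corollary \ref{cor:V} applies path-by-path:
\[
\cV(\Psi_0) \;=\; \textsc{level}\bigl(-\Psi_0\bigr) \;=\; \textsc{level}\bigl(H_T\bigr).
\]

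The second step is to invoke the reciprocity of Proposition \ref{prop:reciprocityHP}, which asserts that $\textsc{level}(H_T) \equiv T$ as elements of $\BL^|$ for any $T \in \BL^|$. Composing with the previous display gives the pathwise identity $\cV(\Psi_0) \equiv T$, and pushing this identity forward under $T \stackrel{d}{\sim} {\sf GW}(\lambda)$ yields the claimed distributional statement $\cV(\Psi_0) \stackrel{d}{\sim} {\sf GW}(\lambda)$.

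I do not anticipate a serious obstacle here; the only point worth attention is the bookkeeping that $\textsc{level}$ in Corollary \ref{cor:V} returns an element of $\BL^|$ (planar, with edge lengths), not merely a combinatorial shape, so that both the left- and right-hand sides of $\cV(\Psi_0) \stackrel{d}{=} T$ live in the same space and the edge-length distribution (i.i.d.\ exponential with rate $\lambda$) and the uniform planar embedding of ${\sf GW}(\lambda)$ are genuinely recovered. This is exactly the content of Proposition \ref{prop:reciprocityHP} applied in $\cL_{\rm plane}^|$, so no additional work is required beyond remarking that the constructions in Section \ref{sec:Harris} and Corollary \ref{cor:V} preserve both the planar embedding and the metric structure.
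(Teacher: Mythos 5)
Your proof is correct, and it diverges from the paper's argument in its second step. The paper's proof is also two lines: it first invokes Corollary~\ref{cor:V} to write $\cV(\Psi_0)=\textsc{level}(-\Psi_0)$, exactly as you do, but then concludes by citing Theorem~\ref{Pit7_3} (the Pitman/Le Gall/Neveu--Pitman characterization of level set trees of exponential excursions as ${\sf GW}(\lambda',\lambda)$ trees). You instead observe that $-\Psi_0$ is \emph{by definition} the Harris path $H_T$ of a sampled $T\stackrel{d}{\sim}{\sf GW}(\lambda)$ and close the loop with the deterministic reciprocity $\textsc{level}(H_T)\equiv T$ of Proposition~\ref{prop:reciprocityHP}, so the conclusion is a pathwise identity pushed forward through the law of $T$. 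Your route is the more elementary one: it needs no information about the distribution of the rises and falls of the excursion, whereas the route through Theorem~\ref{Pit7_3} implicitly requires knowing (or extracting from the ``if and only if'') that these increments are i.i.d.\ exponential with parameter $\lambda/2$. What the paper's route buys in exchange is that it would still work if the potential were specified only as a random excursion with exponential alternating increments rather than literally as $-H_{{\sf GW}(\lambda)}$; for the corollary as stated, your argument is the shorter path, and your closing remark about $\textsc{level}$ preserving the planar embedding and metric structure correctly identifies the only bookkeeping point that needs to be checked.
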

\begin{proof}
By Cor.~\ref{cor:V}, the tree $\cV(\Psi_0)$ is the level 
set tree of the negative potential $-\Psi_0(x)$.
The statement now follows from Thm.~\ref{Pit7_3}.
\end{proof}

To formulate the next result, recall that if $T\stackrel{d}{\sim}{\sf GW}(\lambda)$ and 
$\varphi(T) = \textsc{length}(T)$, then by (\ref{eq:pDelta1}),
$$p_t:={\sf P}(\varphi(T)>t)=e^{-\lambda t}\Big[ I_0(\lambda t)+ I_1(\lambda t) \Big].$$
Also, the p.d.f. of $\textsc{length}(T)$
is given by $\ell(x)$ of \eqref{eq:pdfs}.

\begin{thm}[{\bf Ballistic annihilation dynamics of an exponential excursion, \cite{KZ19}}] 
\label{thm:annihilation}
Suppose the initial particle density is constant, $g(x)\equiv 1$, 
and the initial potential $\psi(x,0)$ is the negative Harris path 
of an exponential critical binary Galton-Watson tree 
with parameter $\lambda$, i.e., $\cV(\Psi_0)\stackrel{d}{\sim}{\sf GW}(\lambda)$.
Then, at any instant $t>0$ the mass-equipped shock tree 
$\cV_t =  \mS(\textsc{length},\cV(\Psi_0))$ conditioned on surviving,
$\cV_t \not= \phi$, is distributed according to the 
following rules.
\begin{enumerate}
  \item[(i)] The planar shape of the tree, as an element of $\BL^|$, 
  is distributed as an exponential binary Galton-Watson tree ${\sf GW}(\lambda_t)$ with
  $\lambda_t:=\lambda p_t$.
  
  \item[(ii)] A single or double mass points are placed independently in each leaf 
  with the probability of a single mass being
  \[{2 \over \lambda}{\ell(t) \over p_t^2}.\]

 \item[(iii)] Each single mass at a leaf has mass $m=t$. 
 For a double mass, the individual masses $(m_{\rm L},m_{\rm R})$ have the following joint p.d.f.
  $$f(a,b)={\ell({a})\ell({b}) \over p_t^2 -{2  \over \lambda}\ell(t)}$$
  for $a,b>0$, $ a\vee b ~\leq t ~< a+b$.
  
  \item[(iv)] The number of mass points placed in the interior of any edge 
  is distributed geometrically with the probability of placing $k$ masses being
  $$p_t\big(1-p_t\big)^k, \qquad k=0,1,2,\hdots.$$
  The locations of $k$ mass points are independent uniform in the interior of the edge. 
  The orientation of each mass is left or right independently with probability $1/2$.
  
  \item[(v)]The edge masses are i.i.d. random variables with the following common p.d.f.
  $${\ell(a) \over 1-p_t}, \qquad a \in (0,t).$$
\end{enumerate}
\end{thm}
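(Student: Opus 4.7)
My plan is to prove statement (i) by a direct appeal to the general prune-invariance Theorem \ref{thm:main}: taking $\varphi(T) = \textsc{length}(T)$ yields $T^t \mid \{T^t \neq \phi\} \stackrel{d}{\sim} {\sf GW}(\lambda p_t)$, and Theorem \ref{pdelta}(a) identifies the survival probability as $p_t = e^{-\lambda t}[I_0(\lambda t) + I_1(\lambda t)]$, giving $\lambda_t = \lambda p_t$ exactly as claimed.

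For statements (ii)--(v) I will exploit the recursive branching characterization of ${\sf GW}(\lambda)$ from Remark \ref{rem:GWchar}: $T$ consists of an ${\sf Exp}(\lambda)$ stem of length $X$ ending in a vertex $Y$ which is a leaf with probability $1/2$ or, with probability $1/2$, a branching vertex whose two children initiate independent copies $T_L, T_R \sim {\sf GW}(\lambda)$ of lengths $L, R$ with common density $\ell$. Walking along this first stem from the root, the map $x \mapsto \textsc{length}(\Delta_{x,T})$ decreases linearly and then jumps downward at $Y$ by the length of the non-chosen subtree, so the fate of the first edge of $T^t$ partitions into disjoint cases based on whether $Y$ is internal and on the sizes of $L$ and $R$ relative to $t$: a single-mass leaf cut in the edge interior (when the $\textsc{length}$ function crosses $t$ on the stem itself); a double-mass leaf cut at $Y$ (when $L, R < t \le L+R$); a branching vertex at $Y$ (when $L, R \ge t$); or a series-reduced interior mass at $Y$ with the walk continuing into the surviving subtree (when exactly one of $L, R \ge t$). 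By memorylessness of the ${\sf Exp}(\lambda)$ stem segments together with the recursive branching property, the series-reduction case feeds back into itself, from which I will conclude that the interior masses on a single edge of $T^t$ form a ${\sf Geom}(p_t)$-count of i.i.d. values drawn from $\ell(a)/q_t$ on $(0,t)$; the uniform positions of these masses on the conditioned edge then follow from the standard property that i.i.d. exponential increments conditioned on their sum become uniform order statistics, and the i.i.d. Bernoulli$(1/2)$ left-right orientations are inherited from the symmetric planar embedding in Definition \ref{def:cbinary}.

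The hard part will be verifying the exact formulas in (ii) and (iii), namely the single-mass probability $\alpha = 2\ell(t)/(\lambda p_t^2)$ and the double-mass normalizer $p_t^2 - 2\ell(t)/\lambda$. Both reduce to establishing the identity $P(L + R < t) = 1 - 2p_t + 2\ell(t)/\lambda$, which I plan to derive by combining two ingredients: the derivative relation $\ell(t) = -p_t'(t)$ (obtained from \eqref{eq:pdfs} and \eqref{eq:pDelta1} using the Bessel recursion $I_{\nu-1}(z) - I_{\nu+1}(z) = (2\nu/z)I_\nu(z)$) and the convolution recursion $\ell = \tfrac{1}{2}\phi_\lambda + \tfrac{1}{2}\phi_\lambda * \ell * \ell$ of \eqref{recursionEll}, which rearranges to $\phi_\lambda * \ell * \ell = 2\ell - \phi_\lambda$. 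Integrating the latter identity over $[0,t]$ and invoking the derivative relation yields the needed formula for $P(L+R < t)$, and the inclusion-exclusion $P(L, R < t,\ L+R \ge t) = q_t^2 - P(L+R < t)$ then produces the double-mass normalizer; the single-mass probability $\alpha$ follows from this by taking the appropriate ratio of leaf-event probabilities.
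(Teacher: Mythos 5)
The survey states Theorem~\ref{thm:annihilation} without proof, deferring to \cite{KZ19}, so there is no in-paper argument to compare against; judged on its own terms, your proposal is structurally sound and the quantitative claims do check out. Part (i) indeed follows at once from Theorem~\ref{thm:main} with $\varphi=\textsc{length}$ together with Theorem~\ref{pdelta}(a), and your four-way case analysis at the first vertex of $T$ (interior cut, double-mass leaf, branch point, series-reduced interior mass) is the right decomposition. The renewal induced by the series-reduction case gives the ${\sf Geom}_0(p_t)$ count in (iv) and the density $\ell(a)/(1-p_t)$ in (v), since each interior mass is the length of a pruned-out subtree conditioned to be $<t$; note that in the renewal step the continuation tree must be conditioned to survive, which is what makes the per-step continuation probability $q_t=1-p_t$ rather than $p_tq_t$.

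Two points need more care than your sketch gives them. First, integrating $\phi_\lambda\ast\ell\ast\ell=2\ell-\phi_\lambda$ over $[0,t]$ yields ${\sf P}(X+L+R<t)=1-2p_t+e^{-\lambda t}$, the law of the \emph{total} tree length, not ${\sf P}(L+R<t)$; to reach your key identity ${\sf P}(L+R<t)=1-2p_t+\tfrac{2}{\lambda}\ell(t)$ you must deconvolve the exponential stem, e.g.\ via $\ell\ast\ell=(2\ell-\phi_\lambda)+\tfrac{1}{\lambda}(2\ell-\phi_\lambda)'=2\ell+\tfrac{2}{\lambda}\ell'$ together with $\ell(0^+)=\lambda/2$. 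In fact your leaf-type computation needs \emph{both} quantities, since the single-mass event (for an internal first vertex) is $\{L+R<t\le X+L+R\}$, whose probability is ${\sf P}(L+R<t)-{\sf P}(X+L+R<t)=\tfrac{2}{\lambda}\ell(t)-e^{-\lambda t}$; adding the leaf case $\tfrac12 e^{-\lambda t}$, renormalizing by $p_t$, and dividing by the leaf probability $p_t/2$ then gives exactly $2\ell(t)/(\lambda p_t^2)$, while the inclusion--exclusion ${\sf P}(L<t,R<t,L+R\ge t)=q_t^2-{\sf P}(L+R<t)=p_t^2-\tfrac{2}{\lambda}\ell(t)$ gives the normalizer in (iii). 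Second, for the uniform locations in (iv) you should verify that, conditionally on the number $k$ of interior masses and on the terminal type of the edge, the $k+1$ constituent segments are i.i.d.\ ${\sf Exp}(\lambda)$: the intermediate stems remain unconditioned exponentials because survival of each intermediate subtree is implied by the downstream conditioning, and the terminal partial segment is again ${\sf Exp}(\lambda)$ by memorylessness of the stem on which the length functional crosses level $t$. Both observations are routine, but they are precisely where the argument could silently fail if omitted.
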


\subsection{Random sink in an infinite exponential potential}
\label{sec:rand_mass}
Here we focus on the dynamics of a random sink
in the case of a negative exponential excursion potential.
To avoid subtle conditioning related to a finite potential,
we consider here an infinite exponential potential $\Psi^{\rm exp}_0(x)$,
$x\in\mathbb{R}$, constructed as follows.
Let $x_i$, $i\in\mathbb{Z}$ be the epochs of a Poisson point process on $\mathbb{R}$ with rate $\lambda/2$,
indexed so that $x_0$ is the epoch closest to the origin.
The initial velocity $v(x,0)$ is a piece-wise constant continuous function 
that alternates between values $\pm1$ within the intervals 
$(x_i-1,x_i]$ and with $v(x_0,0)=1$. 
Accordingly, the initial potential $\Psi^{\rm exp}_0(x)$ is a piece-wise 
linear continuous function with a local minimum at $x_0$ and alternating slopes 
$\pm1$ of independent exponential duration.
The results in this section refer to the sink $\cM_0$ with 
initial Lagrangian coordinate $x_0$.
We refer to $\cM_0$ as a {\it random sink}, using
translation invariance of Poisson point process.

\begin{figure}[h]
	\centering
	\includegraphics[width=0.7 \textwidth]{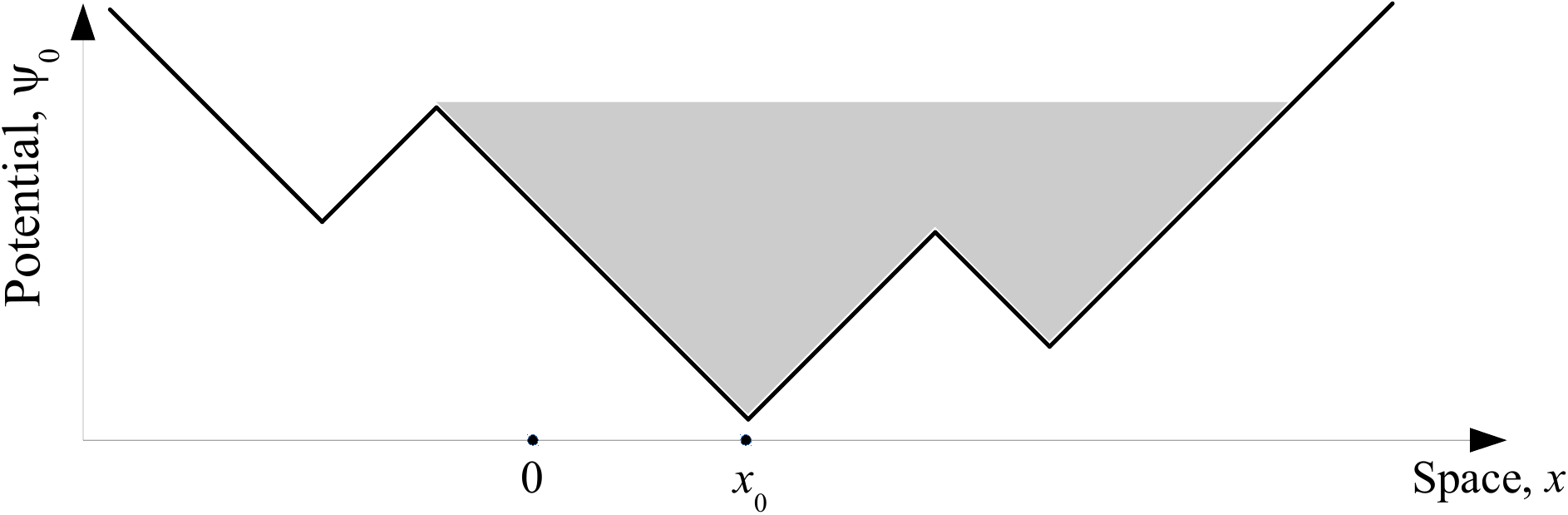}
	\caption{Random sink $\cM_0$ originates at point $x_0$ -- the local minimum 
	closest to the origin.
	Its dynamics during a finite time interval $[0,t]$ is completely specified 
	by a finite negative excursion
	$\cB_0^t$ similar to the one highlighted in the figure.}
	\label{fig:rand_mass}
\end{figure}

Observe that for any fixed $t>0$, the dynamics of $\cM_0$ is completely
specified by a finite excursion within $\Psi^{\rm exp}_0(x)$.
For instance, one can consider the shortest negative excursion of $\Psi^{\rm exp}_0(x)$
within interval $\cB_0^t$ such that
$x_0\in\cB_0^t$, $|\cB_0^t|>2t$, and one end of $\cB_0^t$ is a local
maximum of $\Psi^{\rm exp}_0(x)$ (see Fig.~\ref{fig:rand_mass}).
The respective Harris path is an exponential Galton-Watson tree ${\sf GW}(\lambda)$.  
The dynamics of $\cM_0$ consists of alternating
intervals of mass accumulation (vertical segments of $\cG^{(x,\psi)}$) and
motion (horizontal segments of $\cG^{(x,\psi)}$), starting with a mass accumulation
interval.
Label the lengths $\v_i$ of the vertical segments and the lengths $\h_i$ of the
horizontal segments in the order of appearance in the 
examined trajectory. 
Corollary~\ref{cor:GWa} implies that $\v_i,\h_i$ are independent;
the lengths of $\v_i$ are i.i.d. exponential random variables with parameter $\lambda$;
and the lengths of $\h_i$ equal the total lengths of independent
Galton-Watson trees ${\sf GW}\left(\lambda\right)$.
This description, illustrated in Fig.~\ref{fig:mass}, allows us to find
the mass dynamics of a random sink, which is described in the next two theorems.

\begin{figure}[h]
	\centering
	\includegraphics[width=0.7 \textwidth]{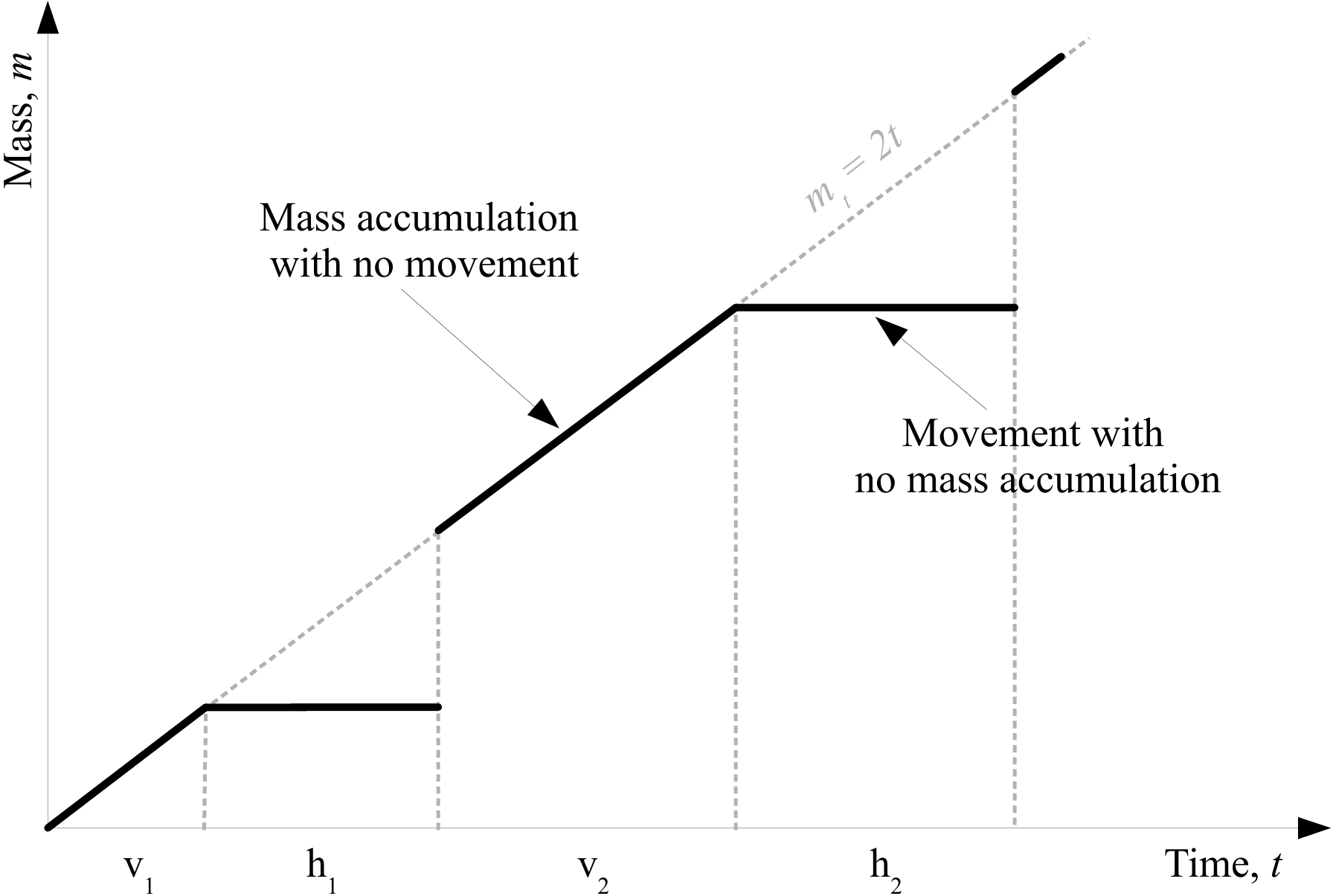}
	\caption{Dynamics of a random sink: an illustration. 
	The trajectory of a sink is partitioned into alternating 
	intervals of mass accumulation of duration $\v_j$ and intervals 
	of movement with no mass accumulation of duration $\h_j$. 
	Each $\v_j$ is an exponential random variable with parameter $\lambda$.
	Each $\h_j$ is distributed as the total length of a critical
	Galton-Watson tree with exponential edge lengths with parameter $\lambda$. }
	\label{fig:mass}
\end{figure}

\begin{thm}[{\bf Growth probability of a random sink, \cite{KZ19}}]
\label{thm:rand_mass}
The probability $\xi(t)$ that a random sink $\cM_0$ is growing at a given 
instant $t>0$ (that is, it is at rest and accumulates mass) is given by 
\begin{equation}\label{eq:growth2}
\xi(t)=e^{-\lambda t}I_0(\lambda t).
\end{equation}
\end{thm}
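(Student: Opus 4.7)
The plan is to view the random sink's growing/moving pattern as an alternating renewal process and reduce the computation of $\xi(t)$ to a Laplace transform identity. By the description accompanying Fig.~\ref{fig:mass}, conditional on the excursion $\cB_0^t$ being long enough, the sink $\cM_0$ accumulates mass for a duration $\v_1$, then moves for $\h_1$, then accumulates for $\v_2$, etc., where the $\v_j$ are i.i.d. ${\sf Exp}(\lambda)$ and the $\h_j$ are i.i.d., independent of the $\v_j$, with the common density $\ell(x)$ in \eqref{eq:pdfs} (the length of a ${\sf GW}(\lambda)$ tree). In particular the sink is \emph{growing} at time $t$ precisely when $t$ lies in one of the vertical intervals $[U_n,U_n+\v_{n+1})$, where $U_0=0$ and $U_n=\sum_{j=1}^{n}(\v_j+\h_j)$ for $n\ge 1$.

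I would first write, by the law of total probability,
\[
\xi(t)=\sum_{n=0}^{\infty}\int_0^t {\sf P}(U_n\in du)\,{\sf P}(\v>t-u),
\]
where ${\sf P}(\v>\cdot)$ is the tail of an exponential random variable. Taking the Laplace transform in $t$ and using independence,
\[
\hat\xi(s)=\frac{1-\hat{\v}(s)}{s}\sum_{n=0}^\infty\bigl[\hat{\v}(s)\hat\ell(s)\bigr]^n=\frac{1-\hat{\v}(s)}{s\,\bigl(1-\hat{\v}(s)\hat\ell(s)\bigr)}.
\]
Here $\hat{\v}(s)=\lambda/(\lambda+s)$, so $(1-\hat{\v}(s))/s=1/(\lambda+s)$. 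From \eqref{eq:LaplaceL}, writing $w=\lambda+s$, one has $\hat\ell(s)=(w-\sqrt{w^2-\lambda^2})/\lambda$, whence a short rationalisation gives
\[
\hat{\v}(s)\hat\ell(s)=1-\frac{\sqrt{w^2-\lambda^2}}{w}, \qquad 1-\hat{\v}(s)\hat\ell(s)=\frac{\sqrt{(\lambda+s)^2-\lambda^2}}{\lambda+s}.
\]
Substituting back collapses $\hat\xi$ dramatically:
\[
\hat\xi(s)=\frac{1}{\sqrt{(\lambda+s)^2-\lambda^2}}=\frac{1}{\sqrt{s^2+2\lambda s}}.
\]

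The final step is Laplace inversion, which is a classical identity for modified Bessel functions: $\mathcal{L}[I_0(\lambda t)](s)=1/\sqrt{s^2-\lambda^2}$, and shifting $s\mapsto s+\lambda$ (equivalently multiplying by $e^{-\lambda t}$) yields $\mathcal{L}[e^{-\lambda t}I_0(\lambda t)](s)=1/\sqrt{s^2+2\lambda s}$, which matches $\hat\xi(s)$. Uniqueness of Laplace transforms then gives $\xi(t)=e^{-\lambda t}I_0(\lambda t)$, as claimed.

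The only subtle point, and hence the main step to justify carefully, is the initial reduction to the renewal series: one must argue that it suffices to work with the infinite alternating sequence $(\v_j,\h_j)_{j\ge 1}$ rather than the truncated trajectory determined by a finite excursion $\cB_0^t$. This follows because for every fixed $t$ one can, as noted before Thm.~\ref{thm:rand_mass}, choose $\cB_0^t$ large enough so that the sink's behavior on $[0,t]$ is measurable with respect to finitely many $\v_j,\h_j$, and the infinite sequence is a consistent extension with the stated independence and marginals from Cor.~\ref{cor:GWa} and Lem.~\ref{lem:ell}. Once this is in place, the computation above is routine.
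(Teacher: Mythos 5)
Your proof is correct and follows essentially the same route the paper sets up: the survey defers the detailed argument to \cite{KZ19}, but the paragraph preceding Thm.~\ref{thm:rand_mass} (and Fig.~\ref{fig:mass}) establishes exactly the alternating renewal structure with i.i.d.\ ${\sf Exp}(\lambda)$ growth durations $\v_j$ and i.i.d.\ motion durations $\h_j$ with density $\ell(x)$ that you exploit, and your Laplace-transform computation, including the simplification of $\hat\v(s)\hat\ell(s)$ via \eqref{eq:LaplaceL} and the Bessel inversion $\mathcal{L}[e^{-\lambda t}I_0(\lambda t)](s)=1/\sqrt{s^2+2\lambda s}$, checks out. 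No gaps.
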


\begin{thm}[{\bf Mass distribution of a random sink, \cite{KZ19}}]
\label{thm:mass}
The mass of a random sink $\cM_0$ at instant $t>0$ has
probability distribution
\begin{align}
\label{mass}
\mu_t(a)
=  {\lambda \over 2} e^{-\lambda t} &\Big[ I_0\big(\lambda (t-a/2)\big) + I_1\big(\lambda (t-a/2)\big) \Big] \cdot I_0(\lambda a/2) \cdot {\bf 1}_{(0,2t)}(a)\nonumber\\
&+e^{-\lambda t}I_0(\lambda t) \delta_{2t}(a),
\end{align}
where $\delta_{2t}$ denotes Dirac delta function (point mass) at $2t$.
\end{thm}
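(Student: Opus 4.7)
The plan is to exploit the alternating-renewal structure of the random sink's trajectory on the graphical tree $\cG^{(x,\psi)}(\Psi_0^{\mathrm{exp}})$ described just before Theorem~\ref{thm:rand_mass}: the trajectory alternates between growth phases of i.i.d.\ durations $\v_i\sim\mathrm{Exp}(\lambda)$ and motion phases of i.i.d.\ durations $\h_i$ with density $\ell(x)$ from Lemma~\ref{lem:ell}, and the tree/annihilation correspondence of Theorem~\ref{thm:pruning} lets us read off the sink's mass $M(t)$ from the state of the mass-equipped pruned tree $\mS$ at time $t$. I would decompose on the event ``$\cM_0$ is in a growth phase at time $t$,'' which by Theorem~\ref{thm:rand_mass} has probability $\xi(t)=e^{-\lambda t}I_0(\lambda t)$, and compute the two contributions to $\mu_t$ separately.

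First I would handle the atom at $a=2t$. In a growth phase, the cut associated with $\cM_0$ lies in the interior of a $\cV$-edge, so $\cM_0$ corresponds to a \emph{single mass} leaf of $\mS$; Theorem~\ref{thm:annihilation}(iii) identifies the mass parameter at such a leaf as $m=t$, and Construction~\ref{con2}(3) then inserts a plateau of length $2m=2t$ in the reconstructed potential, so that the hole occupied by $\cM_0$'s sink has width exactly $2t$ and $M(t)=2t$ on this event. Multiplying by $\xi(t)$ yields the atomic contribution $e^{-\lambda t}I_0(\lambda t)\,\delta_{2t}(a)$.

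For the continuous density on $(0,2t)$, I would condition on the epoch $u\in(0,t)$ of the growth-to-motion transition that initiated the current motion phase. By the memoryless property of $\v$ and the independence of past cycles from the present one, the joint density of this event together with the survival ``current motion phase exceeds $t-u$'' factors as
\[
\lambda\,\xi(u)\,p_{t-u}\,du,\qquad u\in(0,t),
\]
where $\lambda\xi(u)$ is the instantaneous rate of growth-to-motion transitions (rate $\lambda$ of growth termination times the growth probability $\xi(u)$) and $p_{t-u}=P(\h_1>t-u)$ is the motion-phase survival probability from Theorem~\ref{pdelta}(a). On this event $\cM_0$ is represented by a \emph{double mass} leaf of $\mS$ whose plateau in Construction~\ref{con2}(2) has width $2u$, giving $M(t)=2u=a$. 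Changing variable to $a=2u$ and substituting $p_s=e^{-\lambda s}[I_0(\lambda s)+I_1(\lambda s)]$ converts $\lambda\xi(u)p_{t-u}$ into the stated density $\tfrac{\lambda}{2}e^{-\lambda t}[I_0(\lambda(t-a/2))+I_1(\lambda(t-a/2))]I_0(\lambda a/2)$.

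The main obstacle is the identification of $M(t)$ with $2u$ on the double-mass event, which matches the Construction~\ref{con2}(2) plateau length $2(m_L+m_R-t)$ to $2u$ and thus requires the telescoping identity $m_L+m_R=t+u$ at the time-$t$ double mass leaf along $\cM_0$'s trajectory; I would establish this by induction on the cycle count, tracking how sibling ${\sf GW}(\lambda)$-subtrees of $\cV(\Psi_0^{\mathrm{exp}})$ contribute to the merging masses at each transition. The companion normalization $\int_0^t\lambda\xi(u)p_{t-u}\,du+\xi(t)=1$ can then be verified cleanly by Laplace transforms: using $\mathcal{L}(\xi)(s)=1/\sqrt{s(s+2\lambda)}$ (equivalent to Theorem~\ref{thm:rand_mass}) and $\mathcal{L}(p)(s)=(s+2\lambda-\sqrt{s(s+2\lambda)})/(\lambda\sqrt{s(s+2\lambda)})$ derivable from Lemma~\ref{lem:ell}, the two Laplace transforms sum to $1/s$, confirming that $\mu_t$ is a probability measure.
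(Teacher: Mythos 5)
Your renewal skeleton is the right one and it does yield the stated formula: decomposing on whether $\cM_0$ is growing or moving at time $t$, using $\xi(t)=e^{-\lambda t}I_0(\lambda t)$ for the atom, writing the intensity of growth-to-motion transitions as $\lambda\xi(u)$ by memorylessness of the $\v_i$, and multiplying by the motion-phase survival $p_{t-u}$ is exactly the alternating $(\v_i,\h_i)$ structure set up before Theorem~\ref{thm:rand_mass} (Fig.~\ref{fig:mass}); your Laplace-transform normalization check is also correct. The gap is in the one genuinely nontrivial input, the identification of $M(t)$, which you route through Construction~\ref{con2} by equating the mass of the sink with the width of the plateau (empty interval) containing it. These are different quantities. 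A growing sink has mass $2t$ but sits in a plateau of width \emph{zero} (particles reach it continuously from both sides; Construction~\ref{con2} inserts no plateau at a single-mass leaf, and item (3) that you invoke applies to edge-interior masses, not leaves). Likewise, on the double-mass event the plateau width $2(m_{\rm L}+m_{\rm R}-t)$ is the residual gap between \emph{two} sinks of masses $2m_{\rm L}$ and $2m_{\rm R}$ approaching each other inside it, not the mass of $\cM_0$; the ``telescoping identity'' $m_{\rm L}+m_{\rm R}=t+u$ you propose to prove is false --- along $\cM_0$'s trajectory one has $m_{\rm L}=u$ (the pruned length on $\cM_0$'s side of the cut) and $m_{\rm R}=\h_{k+1}$ (the full length of the sibling subtree), so $m_{\rm L}+m_{\rm R}=u+\h_{k+1}$, the \emph{future} merger time. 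Moreover, during a motion phase $\cM_0$ need not correspond to a double-mass leaf at all: if $\h_{k+1}>t$ the sibling subtree is not yet fully pruned and $\cM_0$ is an edge-interior single oriented mass as in Theorem~\ref{thm:annihilation}(iv)--(v).

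The correct identification, which your computation implicitly uses, is $M(t)=2m$ with $m$ the total pruned length on $\cM_0$'s side of its cut: the sink accumulates mass at rate $2$ while growing (unit density, unit speed, two sides), at rate $0$ while moving, and at each merger absorbs a sibling sink of mass exactly $2\h_i$, so the mass telescopes to twice the time of the last growth instant, i.e.\ $M(t)=2t$ on the growth event and $M(t)=2u$ on the motion event. Once this is established by mass conservation along the trajectory rather than by plateau widths, the rest of your argument goes through and gives \eqref{mass}.
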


\begin{Rem}\label{rem:Burgers}
One can notice that the continuum annihilation dynamics of this section, 
with its shock waves, shock wave trees, and sink masses is reminiscent of that in 
the 1-D inviscid Burgers equation that describes the evolution of the velocity field $v(x,t)$:
\be\label{eqn:inviscidBurgers}\index{Burgers equation!inviscid}
\partial_t v(x,t)+v(x,t)\, \partial_x v(x,t)=0,\quad  x\in\mathbb{R}, t\in\mathbb{R}_+.
\ee 
The Burgers dynamics appears in a surprising variety of problems, ranging from cosmology 
to fluid dynamics and vehicle traffic models; see \cite{BK07,FrischBec,Gurbatov} for comprehensive review.
The solution of the Cauchy problem for the Burgers equation 
develops singularities (shocks) that correspond to intersection of individual particles.
The shocks evolve via the shock waves that can be described as massive particles that aggregate the colliding 
regular particles and hence accumulate the mass of the media. 
The dynamics of these massive particles 
generates a tree structure for their world trajectories, the shock wave tree \cite{Bertoin2000,Gurbatov}.

The case of smooth random initial velocity can be treated explicitly via the Hopf-Cole solution.
The case of non-smooth random initial velocities, e.g. a white noise or a (fractional) 
Brownian motion, has been extensively studied, both numerically \cite{She92}
and analytically \cite{Sinai92,Bertoin98,Bertoin2000,Giraud}. 
In this case, tracing the dynamics of the massive particles backward in time (from a 
point within a shock tree to the leaves) corresponds to fragmentation of the mass 
and describes the genealogy of the shocks, i.e., the sets of particles that merge 
with a given massive particle \cite{BernardeauFragmentation,Giraud}.
In particular, it has been established in \cite{Bertoin2000} that 
the shock wave tree for a Brownian motion initial velocity 
becomes the eternal additive coalescent after a proper time change;
similar arguments apply for the L\'evy type initial velocities \cite{Miermont2001}.
However, despite general heuristic understanding of the structure of the Burgers
shock wave tree, a complete analytical description is lacking (e.g., \cite{She92}).
\end{Rem}

\subsection{Real tree description of ballistic annihilation}
\label{sec:Rtree}

Recall that an $\mathbb{R}$-tree is a generalization of 
the concept of a finite tree with edge lengths to infinite spaces;
see Sect.~\ref{sec:Rsetup} for a formal setup.
We construct here (Sect.~\ref{sec:Rfull}) an $\mathbb{R}$-tree $\mathbb{T}=\mathbb{T}(\Psi_0)$ that describes the 
entire model dynamics as coalescence of particles
and sinks; this tree is sketched by gray lines in the top panel of 
Figs.~\ref{fig:shock_tree} and \ref{fig:Rtree}. 
Specifically, the tree consists of points $(x,t)$  such that there exist either 
a particle or a sink with coordinate $x$ at time $t$.
There is one-to-one correspondence between the initial particles $(x,0)$ 
and leaf vertices of $\mathbb{T}$.
Each leaf edge of $\mathbb{T}$ corresponds (one-to-one) to
the free (ballistic) run of a corresponding particle before annihilating in a sink.
Four of such free runs are depicted by green arrows in Fig.~\ref{fig:Rtree}.
The shock wave tree (movement and coalescence of sinks) corresponds to the non-leaf
part of the tree $\mathbb{T}$; it is shown by blue lines in Figs.~\ref{fig:shock_tree},
\ref{fig:Rtree}.
We adopt a convention that the motion of a particle consists of
two parts: an initial ballistic run at unit speed, and subsequent motion within 
a respective sink.
For example, the within-sink motion of particles $x$ and $x'$ is shown by red line in Fig.~\ref{fig:Rtree}.
This interpretation extends motion of all particles to the same 
time interval $[0, t_{\rm max}]$, with $t_{\rm max}$ being the 
time of appearance of the final sink that accumulates the total mass
on the initial interval.
This final sink serves as the tree root.
Section~\ref{sec:Rfull} introduces a proper metric on this space 
so that the model is represented by a time oriented rooted $\mathbb{R}$-tree.
In particular, the metric induced by this tree on 
the initial particles $(x,0)$ becomes an {\it ultrametric},
with the distance between any two particles equal to
the time until their collision (as particles or as respective sinks).

Section~\ref{sec:Rdomain} discusses two non-Lebesgue metrics of the 
system's domain $[a,b]$.
Both describe the ballistic annihilation dynamics and are readily constructed
from the initial potential $\Psi_0(x)$. 
One of these decsriptions is an $\mathbb{R}$-tree and the other is not.
The $\mathbb{R}$-tree description 
establishes an equivalence between the pairs of points that collide
with each other, like the pairs $(x,x')$ and $(y,y')$ in Fig.~\ref{fig:Rtree}.
This tree is isometric to the level set tree $\textsc{level}(-\Psi_0)$ of the 
initial potential that is used in this work to describe the shock wave tree
(Cor.~\ref{cor:V}); it is known in the literature 
as a {\it tree in continuous path} \cite[Def.~7.6]{Pitman},\cite[Ex.~3.14]{Evans2005}. 
In Sect.~\ref{sec:Rprune} we briefly discuss a natural way of introducing 
prunings on $\mathbb{R}$-trees and show that a typical pruning does not have 
the semigroup property.

\begin{figure}[t] 
\centering\includegraphics[width=0.9\textwidth]{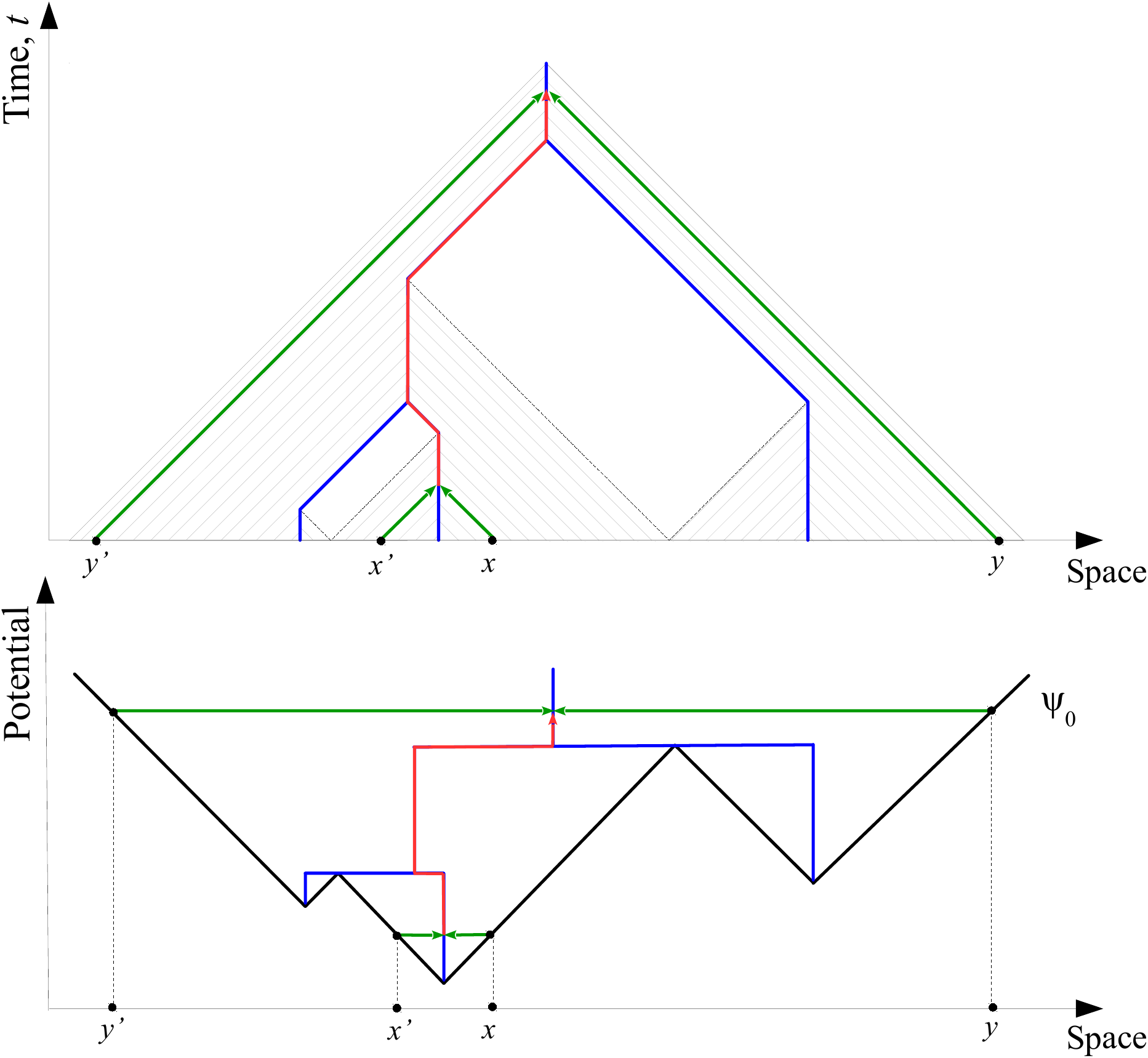}
\caption[Real tree representation of a model with a unit slope potential: an illustration]
{$\mathbb{R}$-tree representation of a ballistic annihilation model with a unit slope 
potential: an illustration. 
Figure illustrates dynamics of four points, $x,x',y,$ and $y'$, marked in the horizontal space axis.
The pairs of points $\{x,x'\}$ and $\{y,y'\}$ collide and annihilate with each 
other.
Green arrows correspond to ballistic runs of points $x,x',y,y'$,
and hence to leaves of tree $\mathbb{T}(\Psi_0)$.
Red line corresponds to the trajectory of points $x,x'$ after their 
collision, within a sink.
The rest of notations are the same as in Fig.~\ref{fig:shock_tree}.
}
\label{fig:Rtree}
\end{figure}

\subsubsection{$\mathbb{R}$-tree representation of ballistic annihilation}
\label{sec:Rfull}
We construct here a real tree 
representation of the continuum ballistic annihilation model of Sect.~\ref{sec:solution}.
Specifically, we assume a unit particle density 
$g(x)\equiv 1$ and initial potential $-\Psi_0(x)\equiv -\psi(x,0)\in{\cE^{\rm ex}}$,
i.e. $\Psi_0(x)$ is a unit slope negative excursion with a finite number of segments 
on a finite interval $[a,b]$ (e.g., bottom panel of Fig.~\ref{fig:shock_tree}). 
Recall that the interval $[a,b]$ completely annihilates by time $t_{\rm max}=(b-a)/2$,
producing a single sink at space-time location $((b+a)/2,t_{\rm max})$.

Consider the model's entire space-time domain
$\mathbb{T}=\mathbb{T}(\Psi_0)$
that consists of all points of the form $(x,t)$, $x\in[a,b]$, $0\le t\le t_{\rm max}$, 
such that there exists either a particle or a sink at location $x$ at time instant $t$. 
The shaded (hatched) regions in the top panels of Figs.~\ref{fig:shock_tree},\ref{fig:tree} 
are examples of such sets of points.
For any pair of points $(x,t)$ and $(y,s)$ in $\mathbb{T}$, we define their unique
{\it earliest common ancestor} as a point 
\[{\sf A}_{\mathbb{T}}((x,t),(y,s)) = (z,w)\in\mathbb{T}\]
such that
$w$ is the infimum over all $w'$ such that 
\[\exists\,z'\,:\, 
\{(x,t),(y,s)\}\in\Delta_{(z',w'),\mathbb{T}}.\]
The length of the unique segment between the points $(x,t)$ and $(y,s)$ is defined as 
\be
\label{T_len}
d\big((x,t),(y,s)\big)=\frac{1}{2}\big((w-t)+(w-s)\big)=\frac{1}{2}(2w-s-t),
\ee
where $w$ is the time component of
$(z,w)={\sf A}_{\mathbb{T}}((x,t),(y,s))$.

The tree $(\mathbb{T},d)$ for a simple initial potential is illustrated in 
the top panel of Fig.~\ref{fig:shock_tree} by gray lines.
The tree has a relatively simple structure.
There is a one-to-one
correspondence between the initial particles $(x,0)$, $x\in[a,b]$, and the leaf vertices
of $\mathbb{T}$.
There is a one-to-one correspondence between the ballistic runs of the initial particles
(runs before collision and annihilation) and the leaf edges of $\mathbb{T}$.
Four of such runs are shown by green arrows in Fig.~\ref{fig:Rtree}.
There is one-to-one correspondence between the sink points $(\sigma(t),t)$ and the non-leaf part
of $\mathbb{T}$.
In particular, the tree root corresponds to the final sink $((a+b)/2,t_{\rm max})$.
The sink points are shown by blue line in Figs.~\ref{fig:shock_tree},\ref{fig:tree}. 
It is now straightforward to check that the tree $(\mathbb{T},d)$ satisfies the
four point condition.

Consider again the sink subspace of $\mathbb{T}$, which consists of the points 
$\{\sigma(t),t)\}$ such that there exists a sink at location $\sigma(t)$ at
time instant $t$, equipped with the distance \eqref{T_len}.
This metric subspace is also a tree, as a connected subspace of 
an $\mathbb{R}$-tree \cite{Evans2005}.
This tree is isometric to 
the shock wave tree $S(\Psi_0)$ and hence to either of its graphical 
representations $\cG^{(x,t)}(\Psi_0)$ or 
$\cG^{(x,\psi)}(\Psi_0)$ that are
illustrated in Figs.~\ref{fig:shock_tree},\ref{fig:tree} (top and bottom panels, respectively).

From the above construction, it follows that all leaves $(x,0)$ are located at the
same depth (distance from the root) $t_{\rm max}$.
To see this, consider the segment that connect a leaf and the root and apply \eqref{T_len}.
Moreover, each {\it time section} at a fixed instant $t_0$, 
${\sf sec}(\mathbb{T},t_0)=\{(x,t_0)\in\mathbb{T}\}$,
is located at the same depth $t_{\max}\!-\!t_0$. 
This implies, in particular, that for any fixed $t_0\ge 0$, the metric induced 
by $\mathbb{T}$ on ${\sf sec}(\mathbb{T},t_0)$ is an {\it ultrametric}, which means 
that $d_1(p,q)\le d_1(p,r)\vee d_1(r,q)$ for 
any triplet of points $p,q,r\in{\sf sec}(\mathbb{T},t_0)$.\index{ultrametric}
Accordingly, each triangle $p,q,r\in{\sf sec}(\mathbb{T},t_0)$
is an {\it isosceles}, meaning that at least two of the three pairwise distances 
between $p,q$ and $r$ are equal and not greater than the third \cite[Def.~3.31]{Evans2005}.\index{isosceles}
The length definition \eqref{T_len} implies that the distance between
any pair of points from any fixed section ${\sf sec}(\mathbb{T},t_0)$ equals
the time until the two points (each of which can be either a particle or a sink) 
collide.

We notice that the collection of leaf vertices $\Delta^{\circ}_{p,\mathbb{T}}$ descendant 
to a point $p\in\mathbb{T}$ can be either a single point $(x_p,0)$, if $p$ is within
a leaf edge and represents the ballistic run of a particle, or 
an interval $\{(x,0): x_{\rm left}(p) \le x \le x_{\rm right}(p)\}$,
if $p$ is a non-leaf point that represents a sink.
We define the {\it mass} $m(p)$ of a point $p\in\mathbb{T}$ as
\[m(p)=\int\limits_{x:(x,0)\in\Delta^{\circ}_{p,\mathbb{T}}}g(z)dz 
= x_{\rm right}(p)-x_{\rm left}(p),\]
where the last equality reflects the assumption $g(z)\equiv 1$.
The mass $m(p)$ generalizes the quantity ``number of descendant leaves'' 
(Sect.~\ref{ex:numL}) to the $\mathbb{R}$-tree situation
with an uncountable set of leaves.
We observe that
(i) a point $p\in\mathbb{T}$ represents a ballistic run if and only if $m(p)=0$;
(ii) a point $p\in\mathbb{T}$ represents a sink if and only if $m(p)>0$. 
This means that the shock wave tree, which is isometric to
the sink part of the tree $(\mathbb{T},d)$, can be extracted from  
$(\mathbb{T},d)$ by the condition $\{p:m(p)>0\}$.

\subsubsection{Metric spaces on the set of initial particles}
\label{sec:Rdomain}
In this section we discuss two metrics on the system's domain $[a,b]$,
which is isometric to the set $\{(x,0): x\in[a,b]\}$ of initial particles.
These spaces contain the key information about 
the system dynamics and, unlike the complete tree $(\mathbb{T},d)$ of Sect.~\ref{sec:Rfull},  
can be readily constructed from the potential $\Psi_0(x)$.
 
Metric $h_1(x,y)$ reproduces the ultrametric induced by 
$(\mathbb{T},d)$ on $[a,b]$.
Below we explicitly connect this metric to $\Psi_0(x)$. 
For any pair of points $x,y\in[a,b]$ 
we define a basin ${\sf B}_{\Psi_0}(x,y)$ as the interval that supports the 
minimal negative excursion within $\Psi_0(x)$ 
that contains the points $x,y$.
Formally, assuming without loss of generality that $x<y$ we find the maximum 
of $\Psi_0$ on $[x,y]$:
\[{\sf m}_{\Psi_0}(x,y) = \sup\limits_{z\in[x,y]}\Psi_0(z)\]
and use it to define the basin
${\sf B}_{\Psi_0}(x,y) = [l,r],$
where
\[l = \sup\{z:z\le x, \Psi_0(z)\ge {\sf m}_{\Psi_0}(x,y)\},\]
\[r = \inf\{z:z\ge y, \Psi_0(z)\ge {\sf m}_{\Psi_0}(x,y)\}.\]
The metric is now defined as
\[h_1(x,y)=\frac{1}{2}|{\sf B}_{\Psi_0(x,y)}|.\]
It is straightforward to check that 
\[h_1(x,y) = \text{the time until collision of the particles } (x,0) \text{ and } (y,0),\]
where the collision is understood as either collision of particles, collision of
sinks that annihilated the particles, or collision between a sink that annihilated
one of the particles 
and the other particle.
For instance, the claim is readily verified, by examining the bottom panel of Fig.~\ref{fig:Rtree},
for any pair of points from the set $\{x,x',y,y'\}$.
The metric space $([a,b],h_1)$ is not a tree.
Moreover, this space is {\it totally disconnected}, since there only exists a finite number
of points (local minima of $\Psi_0(x)$) that have a neighborhood of arbitrarily small size.
Any other point at the Euclidean distance $\epsilon$ from the nearest local minimum is separated
from other points by at least $\epsilon/2$.

Metric $h_2(x,y)$ describes the mass accumulation by sinks during the annihilation process.
Specifically, we introduce an equivalence relation among the annihilating
particles, by writing $x\sim_{\Psi_0}y$ 
if the particles with initial coordinates $x$ and $y$ collide and annihilate with each other.
For example, in Fig.~\ref{fig:Rtree} we have $x \sim_{\Psi_0} x'$ 
and $y \sim_{\Psi_0}y'$.
The following metric is now defined on the quotient space $[a,b]|_{\sim_{\Psi_0}}$:
\[h_2(x,y) = 2\sup\limits_{z\in[x,y]}\left[\Psi_0(z)\right]-\Psi_0(x)-\Psi_0(y).\]
In words, the distance $h_2(x,y)$ between particles $x$ and $y$ equals the 
total mass accumulated by the sinks to which the particles belong during the
time intervals between the instants when the particles joined the respective
sinks and the instant of particle (or respective sink) collision.
Another interpretation is that $h_2(x,y)$ equals to the minimal Euclidean 
distance between points $x,y\in[a,b]|_{\sim_{\Psi_0}}$ in the quotient space; one can travel in this quotient space
as along a regular real interval, with a possibility to jump (with no distance accumulation)
between equivalent points.  
This $\mathbb{R}$-tree construction is know as the {\it tree in continuous path}
\cite[Def.~7.6]{Pitman},\cite[Ex.~3.14]{Evans2005}.

The metric space $([a,b]|_{\sim_{\Psi_0}},h_2)$ is a tree that is 
isometric to the level set tree of the potential $\Psi_0(x)$ on $[a,b]$
and hence to the (finite) shock wave tree $\cV(\Psi_0)$ (by Cor.~\ref{cor:V}), with 
the convention that the root is placed 
in $a\sim_{\Psi_0} b$.
This means, in particular, that prunings of these two trees,  with the same pruning function
and pruning time, coincide.

\subsubsection{Other prunings on $\mathbb{T}$}
\label{sec:Rprune}
One can introduce a large class of prunings on an $\mathbb{R}$-tree $(\mathbb{T},d)$ 
following the approach used above to define the point mass $m(p)$.
Specifically, consider a measure $\eta(\cdot)$ on $[a,b]$
and define $m_{\eta}(p) = \eta(\Delta^{\circ}_{p,\mathbb{T}})$.
The function $m_{\eta}(p)$ is nondecreasing along each segment
that connect a leaf and the root $\rho_{\mathbb{T}}$ of $\mathbb{T}$.
Hence, one can define a pruning with respect to $m_{\eta}$ on $\mathbb{T}$
by cutting all points $p$ with $m_{\eta}(p)<t$
for a given $t\ge 0$.
It is readily seen that the function $m_{\eta}(p)$ typically has discontinuities along a
path between a leaf and the root of $\mathbb{T}$.
This means that pruning with respect to $m_{\eta}$ typically does not
have the semigroup property.

\section{Infinite trees built from leaves down}\label{sec:infiniteT}
Examples of {\it infinite trees built from the root up} are plentiful; 
they include the infinite trees induced by the Yule processes or any other birth processes; 
infinite trees generated by a supercritical branching process; 
the trees that represent depth-first search and breadth-first search algorithms on infinite networks.
In this section we explore the {\it infinite trees built from leaves down} that arise naturally in the context of infinitely many coalescing particles or the level set trees of continuous functions. Interestingly, many of the results about finite trees can be obtained from the 
characterizations of the corresponding infinite trees built from leaves down.

\subsection{Infinite plane trees built from the leaves down}

In the context of Sect.~\ref{sec:level}, set $I=\mathbb{R}$ and 
consider a function $f(x) \in C(\mathbb{R})$.
Let $\mathcal{X}$ and $\mathcal{Y}$ be the sets containing all locations of local 
minima and local maxima of $f(x)$, respectively. 
Formally, $x_0 \in \mathcal{X}$ if $\exists \delta>0$ s.t. $f(x)\geq f(x_0)$ 
$\forall x \in (x_0-\delta,x_0+\delta)$, and $\mathcal{Y}$ is defined analogously.
Hence, the local extrema may include plateaus of constant values.
We assume that $f(x)$ satisfies the following conditions:
\begin{itemize}
  \item[{\bf (a)}] The set $\mathcal{X}$ of the locations of local minima has infinite image, i.e.,
  $$|f\big(\mathcal{X}\big)|=\infty.$$
  This condition guarantees that the level set tree of $f(x)$ that we construct below has 
  an infinite number of vertices.
  \item[{\bf (b)}] The intersection of $\mathcal{X}$ with any finite interval $[a,b]$ 
  is either empty or consists of a finite number of closed intervals 
  (possibly including separate points).
  This condition guarantees that every descendant subtree of the infinite level set tree of 
  $f(x)$ is finite.
  The conditions {\bf (a),(b)} guarantee that the level set tree has countably many 
  vertices.
  \item[{\bf (c)}] $\forall a \in \mathbb{R}$, the sets
  $$(a,\infty) \cap  f^{-1}\Big(\inf_{(a,\infty)}f(x)\Big) \quad \text{ and } \quad (-\infty,a) \cap f^{-1}\Big(\inf_{(-\infty,a)}f(x)\Big)$$
  are empty or consist of finitely many closed intervals (including separate points). 
  Here, $f^{-1}(-\infty)$ is an empty set.
  This condition, or equivalent, guarantees that the level set tree has finite branching
  (no vertices of infinite degree). 
   \end{itemize}

\medskip
\noindent
Recalling the construction in Sect.~\ref{sec:level2},  the level set tree $T_\infty=\textsc{level}\big(f(x) \big)$ has infinitely many leaves.
There, $T_\infty=\Big(\mathbb{R}/\!\sim_f, \,d_f \Big)$  is a metric quotient space obtained with respect to identification (denoted by  $a_\ell \sim_f a_r$) of pairs of points $a_\ell$ and $a_r$ in $\mathbb{R}$ as one point.  
Recall that we have $a_\ell \sim_f a_r$ whenever the following conditions are satisfied
\begin{enumerate}
  \item $a_\ell < a_r$ and $f(a_\ell)=f(a_r)$;
  \item $\forall x \in (a_\ell, a_r)$ we have  $f(x) \geq f(a_\ell)=f(a_r)$.
\end{enumerate}
The local maxima $\mathcal{Y}$ (including plateaus) constitute the leaves in $T_\infty$, and the local minima $\mathcal{X}$ (including plateaus) constitute the internal vertices (junctions) in $T_\infty$.   Such $T_\infty$ is also called an {\it infinite plane tree built from the leaves down} induced by function $f(x)$. The reason for the name being that as we study $f(x)$ over larger and larger intervals (e.g. $[-a,a]$ as $a \rightarrow \infty$) we discover more and more leaves of $T_\infty$ (local maxima) and their merger history (local minima) from leaves down, but never reaching the root. 

\begin{figure}[!t]
	\centering
	\includegraphics[width=\textwidth]{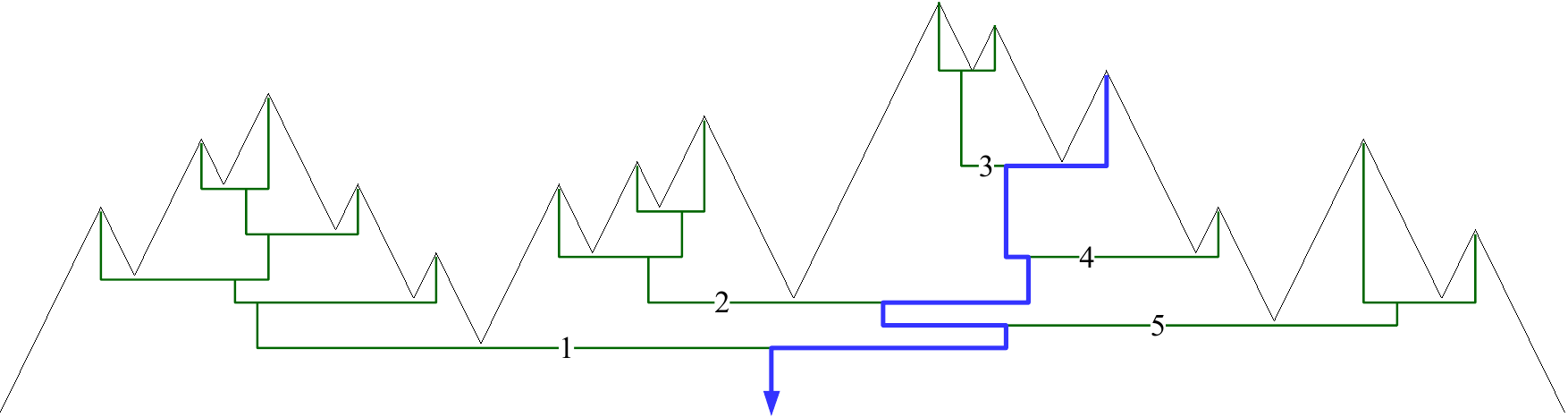}
	\caption{Golden lineage representation of a level set tree: an illustration.
	The figure shows a part of the level set tree for a piece-wise linear function on $\mathbb{R}$. 
	The initial part of the golden lineage is shown in blue.
	There exist five finite binary trees attached to the golden lineage -- they are shown in green.
	The tree index is shown along the stem of each tree.
	The trees indexed $1,2,3$ have left orientation, and the trees indexed $4,5$ have right orientation
	with respect to the golden lineage.  
}
	\label{fig:golden}
\end{figure}

\medskip
\noindent
To give a convenient description of an infinite tree $T_\infty$ built from leaves down, we designate one leaf as the {\it golden leaf}, and its ancestral lineage is called the 
{\it golden lineage} (Fig.~\ref{fig:golden}).  
In the above construction, we let the leaf that corresponds to the first 
local maximum in the nonnegative half-line,
$$\min\{x \in \mathcal{Y} \,:x \geq 0\},$$ 
 to be designated as the golden leaf. 
Let $\L^\infty$ denote the space of infinite plane trees built from the leaves down, with edge lengths and designated golden leaf.
For a tree $T_\infty\in\L^\infty$ with a designated golden leaf $\gamma^*$, we let 
$\ell=[\gamma^*,\phi]$ denote the unique ancestral path from the golden leaf $\gamma^*$ to its parent, grandparent, great-grandparent and on towards the tree root $\phi$, where $\phi$ is a {\it point at infinity}. 
Here, the ancestral path $\ell$ will be called the {\it golden lineage}. 
The golden lineage $\ell=\left\{\ell(i),e(i)\right\}$ consists of infinitely many vertices $\ell(i)$  that we enumerate by the index $i\ge 0$ along the path, starting from the golden leaf $\ell(0)=\gamma^*$ and increasing as we go down the golden lineage $\ell$, and infinitely many edges $e(i)=[\ell(i),\ell(i+1)]$.

Each tree $T_\infty \in\L^\infty$ can be represented as a forest of finite trees 
attached to the golden lineage $\ell$ as follows
\be\label{eqn:forest_infinite}
T_\infty=\left(\ell, \big\{\cD_i,\sigma_i \big\}_{i\ge 1} \right),
\ee 
where for each $i \geq 1$, $\cD_i=\Delta_{\ell(i)} \in \L^|$ denotes the complete 
subtree of $T_\infty$ rooted at $\ell(i)$ that does not include the golden leaf, 
and $\sigma_i \in \{-1,+1\}$ denotes the left-right orientation 
of $\cD_i$ with respect to the golden lineage $\ell$.
Figure~\ref{fig:golden} illustrates this construction. 

\medskip
\noindent
The representation \eqref{eqn:forest_infinite} of a tree $T_\infty \in \L^\infty$
allows one to relate the space $\L^\infty$ of infinite planar trees built from the leaves 
down with edge lengths and a designated golden leaf
to the notion of a {\it forest of trees attached to the floor line} described in Sect.~7.4 of \cite{Pitman}.
In addition, the golden lineage construct helps at meterizing the space $\L^\infty$.

\medskip
\noindent
Importantly,  for any point $x \in T_\infty$,  the descendant tree $\Delta_{x,T}$ is a finite tree in $\L$.
Therefore, the definition of generalized dynamical pruning \eqref{eqn:GenDynamPruning} 
extends naturally to the space $\L^\infty$ of infinite plane trees built from the leaves down. 
Applying the generalized dynamical pruning $\S$ to an infinite tree built from the leaves down, the uppermost point of the golden lineage within $\S(\varphi,T)$ will become the golden leaf for the pruned tree $\S(\varphi,T)$.

\medskip
\noindent
Next, we extend the notion of prune-invariance in planar shapes from Def. \ref{def:pi2}(i) to a subspace $S^\infty$ of the space $\L^\infty$.
Consider a subspace $S^\infty$ of $\L^\infty$. For a given monotone nondecreasing function $\varphi:\L\rightarrow\mathbb{R}^+$,
consider generalized pruning dynamics $\cS_t(\varphi,T_\infty)$ ($T_\infty \in S^\infty$).
 We say that a probability measure $\mu$ on $S_\infty$ is {\it prune-invariant in planar shapes} if
\be\label{eqn:pushforwardInf}
\mu(A)=\mu_t(A) \quad \forall t\geq 0, A \in\Sigma,
\ee
where $\mu_t=(\S)_*(\mu)=\mu\circ\S^{-1}$ is the pushforward measure, and $\Sigma$ is the induced $\sigma$-algebra.

The above definition of prune invariance \eqref{eqn:pushforwardInf} is significantly different from the original Def. \ref{def:pi2}(i) for finite trees as 
$\phi \not\in S^\infty$ and we do not need to condition on the event $\S(\varphi,T)\not= \phi$ in the pushforward measure. 
Importantly, the prune-invariance in \eqref{eqn:pushforwardInf} coincides with the John Von Neumann \cite{vonNeumann} definition of the {\it invariant measure},
fundamental for ergodic theory and dynamical systems. At the same time, the definition of {\it prune-invariance in edge lengths} Def. \ref{def:pi2}(ii) does not need to be reformulated any differently for the infinite trees built from leaves down.

\medskip
\noindent
The renown Krylov-Bogolyubov theorem \cite{KrylovBogolyubov} states that for a compact metrizable topological space $\Omega$ with the induced Borel $\sigma$-algebra $\Sigma$, and a continuous function $\mathcal{S}:\Omega \rightarrow \Omega$, there exists an {\it invariant} probability measure $\mu$ on $(\Omega,\Sigma)$ satisfying
\be\label{eqn:pushforwardKB}
\mu(A)=\mu_*(A) \quad \forall A \in\Sigma,
\ee
where $\mu_*=(\mathcal{S})_*(\mu)=\mu\circ\mathcal{S}^{-1}$ is the pushforward measure.

Here we will not concentrate on constructing a suitable metric for the space $\L^\infty$. 
However, in the spirit of the Krylov-Bogolyubov theorem, we will
show in Thm.~\ref{thm:infmain} that the infinite critical planar binary Galton-Watson tree ${\sf GW}_\infty (\lambda)$ built from the leaves down 
that we construct in Sect. \ref{sec:infGWtree} is prune invariant under generalized dynamical pruning $\cS_t$
induced by a monotone nondecreasing function $\varphi:\L\rightarrow\mathbb{R}^+$.
Additionally, it will be observed that Thm.~\ref{thm:infmain} is a generalization of
Thm.~\ref{thm:main}.

\subsection{Infinite exponential critical binary Galton-Watson tree built from the leaves down}\label{sec:infGWtree}

Consider a Poisson point process $\{T_k\}_{k \in \mathbb{Z}}$ on $\mathbb{R}$ with parameter 
$\lambda/2$, enumerated from left to right (where $T_0$ is the epoch closest to zero).
Let
$$X_t=\begin{cases}
  \sum\limits_{j=1}^k (-1)^{j+1} \big(T_j -T_{j-1}\big) +(-1)^k (T_k-t)    & \text{ if } t \in [T_{k-1},T_k),~ k \geq 1, \\
   \sum\limits_{j=k+1}^0 (-1)^{j+1} \big(T_j -T_{j-1}\big) +(-1)^k (T_k-t)    & \text{ if } t \in [T_k,T_{k+1}),~ k \leq -1.
\end{cases}$$
In other words, $X_t$ is a continuous piecewise linear function with 
slopes alternating between $\pm 1$ as it crosses the Poisson epochs 
$\{T_k\}_{k \in \mathbb{Z}}$, i.e., the slope
$${d \over dt}X_t=\begin{cases}
  -1    & \text{ if } t \in (T_{k-1},T_k),~ k \text{ even}, \\
  +1    & \text{ if } t \in (T_{k-1},T_k),~ k \text{ odd}.
\end{cases}$$
The level set tree $T_\infty=\textsc{level}\big(X_t\big)$ is invariant under shifting $X_t$ vertically, or shifting and scaling $X_t$ horizontally. 

\medskip
\noindent
Fix a point $t^* \in \mathbb{R}$ and generate $X_t$ with a Poisson point process $\{T_k\}_{k \in \mathbb{Z}}$. 
Then, with probability one, there will be a positive excursion of $X_t-X_{t^*}$ over an interval that begins or ends at $t^*$.
By Thm.~\ref{Pit7_3}, the level set tree of this adjacent positive excursion is distributed as ${\sf GW}(\lambda)$. 
Therefore, the infinite binary level set tree $T_\infty =\textsc{level}\big(X_t\big)$ for $X_t$ will be referred to as the {\it infinite planar exponential critical binary Galton-Watson tree built from the leaves down} with parameter $\lambda$, and denoted by ${\sf GW}_\infty (\lambda)$. 
We also refer to this tree as the infinite exponential  critical binary Galton-Watson tree.

\medskip
\noindent
In the representation \eqref{eqn:forest_infinite} of a tree $T_\infty \stackrel{d}{\sim}{\sf GW}_\infty(\lambda)$, the golden lineage $\ell$ is distributed 
as a one-dimensional Poisson process with parameter $\lambda$, the orientation variables $\sigma_i$ are i.i.d. Bernoulli with parameter $1/2$, and 
the complete subtrees $\cD_i$ are i.i.d. ${\sf GW}(\lambda)$ trees. Finally,  the golden lineage $\lambda$ and the sequences, $\sigma_i$ and $\cD_i$, 
are all sampled independently of each other.

\begin{figure}[!t]
	\centering
	\includegraphics[width=0.8\textwidth]{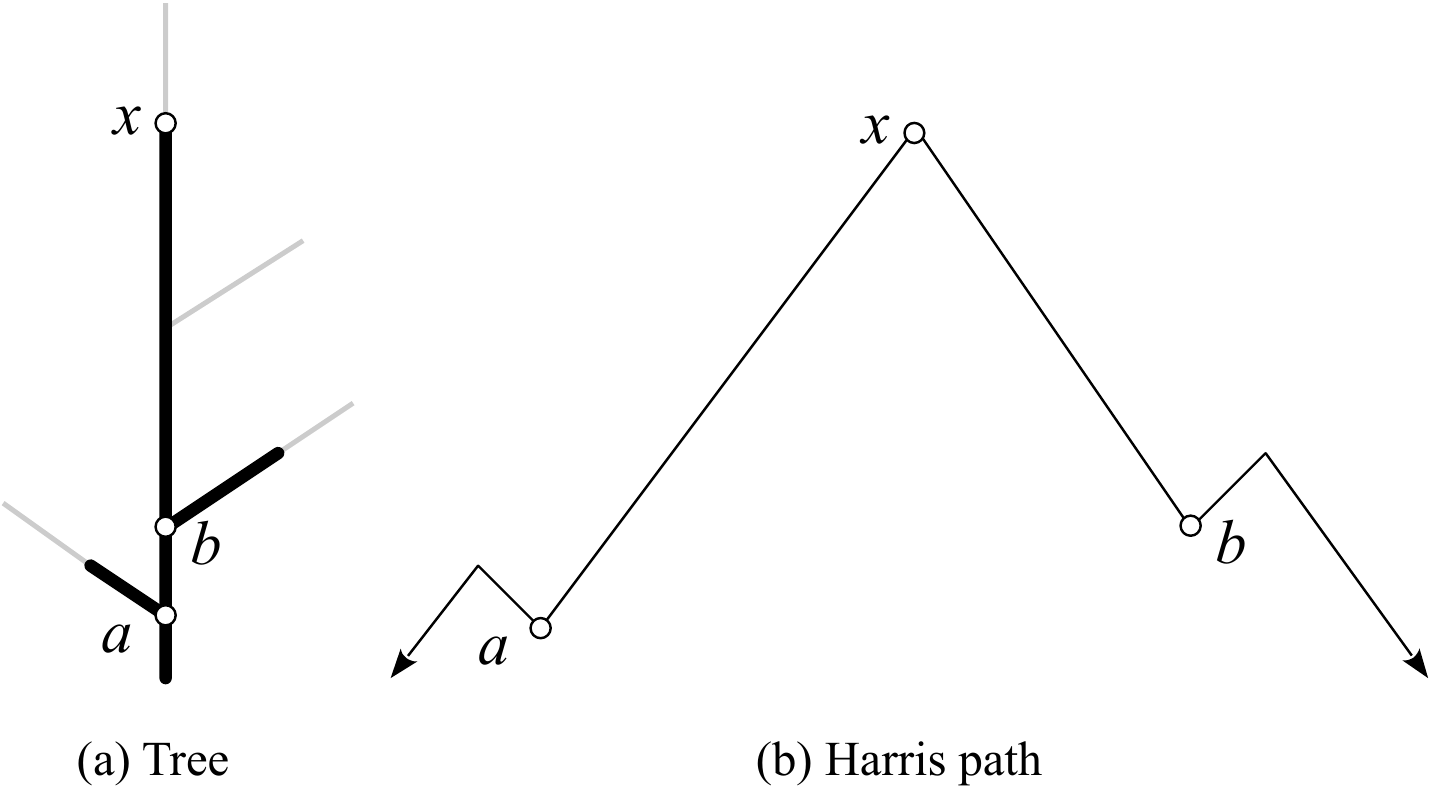}
	\caption{Illustration to the proof of Thm.~\ref{thm:infmain}. 
	(a) A fragment of the pruned tree $T_\infty^\Delta$ (black). 
	Parts of the pruned out subtrees of the initial tree $T_\infty$ are shown in gray.
	The point $x$ is a leaf of $T_\infty^\Delta$; it has the left parent $a$ and 
	right parent $b$ in $T_\infty^\Delta$.
	The same $x$, as an edge point in $T_\infty$, has the same left parent $a$ in $T_\infty$,
	but its right parent in $T_\infty$ is different from $b$. 
	(b) A part of the Harris path for the pruned tree $T_\infty^\Delta$.
	The proof of Thm.~\ref{thm:infmain} finds that the increments of the segments $|xa|$
	and $|xb|$ are i.i.d. exponential r.v.s. with parameter $\lambda p_\Delta/2$.
	}
	\label{fig:inf}
\end{figure}

\medskip
\noindent
The following is a variation of Thm.~\ref{thm:main} for the infinite critical exponential binary Galton-Watson tree.
\begin{thm} \label{thm:infmain}
Let $T_\infty \stackrel{d}{\sim}{\sf GW}_\infty(\lambda)$ 
with $\lambda>0$.
Then, for any monotone nondecreasing function $\varphi:\BL^|\rightarrow\mathbb{R}_+$ and
any $\Delta>0$ we have
\[T_\infty^\Delta:=\cS_\Delta(\varphi,T_\infty) \stackrel{d}{\sim} {\sf GW}_\infty\big(\lambda p_{\Delta}(\lambda,\varphi)\big),\]
where 
$$p_\Delta:=p_\Delta(\lambda,\varphi)={\sf P}(\cS_\Delta(\varphi,T) \not= \phi) 
\quad\text{for}\quad T\stackrel{d}{\sim}{\sf GW}(\lambda).$$ 
That is, the pruned tree $T_\infty^\Delta$ is also an infinite exponential critical binary Galton-Watson tree with the scaled parameter
$$\mathcal{E}_\Delta(\lambda,\varphi)=\lambda p_{\Delta}(\lambda,\varphi).$$
\end{thm}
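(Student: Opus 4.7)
The plan is to reduce this infinite-tree statement to the finite-tree invariance Thm.~\ref{thm:main} by combining it with the strong Markov and thinning properties of the Poisson point process that underlies the golden lineage of ${\sf GW}_\infty(\lambda)$. I would begin by recording the structural representation \eqref{eqn:forest_infinite}: $T_\infty=\bigl(\ell,\{\mathcal{D}_i,\sigma_i\}_{i\ge 1}\bigr)$, where the golden-lineage vertices $\{\ell(i)\}_{i\ge 1}$ form, via their distances from $\gamma^*=\ell(0)$ along $\ell$, a rate-$\lambda$ Poisson point process on the positive half-line, the subtrees $\mathcal{D}_i$ are i.i.d.\ ${\sf GW}(\lambda)$, the orientations $\sigma_i$ are i.i.d.\ Bernoulli$(1/2)$, and these three families are mutually independent.

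The next step is to localize the pruning. For any $x\in\mathcal{D}_i$ the descendant tree in $T_\infty$ coincides with $\Delta_{x,\mathcal{D}_i}$ (since every path from the root of $T_\infty$ to a point in $\mathcal{D}_i$ passes through $\ell(i)$), so $\cS_\Delta(\varphi,\cdot)$ acts autonomously on each $\mathcal{D}_i$. By Thm.~\ref{thm:main} each $\mathcal{D}_i$ is pruned away entirely with probability $1-p_\Delta$ and, conditional on survival, yields an independent ${\sf GW}(\lambda p_\Delta)$ tree. Because $\varphi$ is monotone with respect to $\preceq$ and $\Delta_{x,T_\infty}$ grows under $\preceq$ as $x$ moves down the golden lineage, the surviving trace of the lineage is a half-line $[x^*,\phi)$, and since each subtree survives with probability $p_\Delta>0$ the uppermost surviving point $x^*$ exists almost surely. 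Letting $j_1<j_2<\cdots$ index the surviving subtrees, a lineage vertex $\ell(j)$ contributes a degree-$\ge 3$ branching point to the metric tree $T_\infty^\Delta$ iff $\cS_\Delta(\varphi,\mathcal{D}_j)\ne\phi$; by Poisson thinning the golden-lineage distances between consecutive $\ell(j_i)$ and $\ell(j_{i+1})$ for $i\ge 1$ are i.i.d.\ ${\rm Exp}(\lambda p_\Delta)$.

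The central technical step is to show that the initial distance from $x^*$ to $\ell(j_1)$ along $\ell$ is also ${\rm Exp}(\lambda p_\Delta)$ and that the structure below $x^*$ is independent of everything above $x^*$. For this I would apply the strong Markov property of the Poisson process: the position $L^*$ of $x^*$ (measured from $\gamma^*$) is a stopping location in the filtration obtained by revealing the arrivals, subtrees, and orientations in order from $\gamma^*$ toward the root, since it is determined by the first-passage condition $\varphi(\Delta_{x,T_\infty})\ge\Delta$ that depends only on the portion of $T_\infty$ above $x^*$. Conditionally on that past, the structure below $x^*$ is a fresh rate-$\lambda$ Poisson process marked by i.i.d.\ ${\sf GW}(\lambda)$ subtrees and uniform orientations, so Bernoulli$(p_\Delta)$ thinning produces a rate-$\lambda p_\Delta$ Poisson process based at $x^*$, with first inter-arrival ${\rm Exp}(\lambda p_\Delta)$. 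Combining this with the previous paragraph, the new golden lineage is a rate-$\lambda p_\Delta$ Poisson process starting at the new golden leaf $x^*$, the attached subtrees are i.i.d.\ ${\sf GW}(\lambda p_\Delta)$, the retained orientations are i.i.d.\ Bernoulli$(1/2)$, and the three families are mutually independent, which is exactly the defining decomposition of ${\sf GW}_\infty(\lambda p_\Delta)$.

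The hard part will be handling $L^*$ as a continuous stopping location (it need not coincide with a Poisson arrival) and reconciling the new-golden-leaf convention with the metric structure in the boundary case $x^*=\ell(j_1)$: there $x^*$ is metrically a degree-$2$ point to which the surviving subtree $\cS_\Delta(\varphi,\mathcal{D}_{j_1})$ attaches, and one must verify that labelling this point the new golden leaf still matches the defining representation \eqref{eqn:forest_infinite} of ${\sf GW}_\infty(\lambda p_\Delta)$. A careful specification of the filtration along the golden lineage and a direct verification of the strong Markov property in that filtration should resolve both issues.
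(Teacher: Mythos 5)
Your route is genuinely different from the paper's: you reduce the infinite statement to the finite-tree invariance (Thm.~\ref{thm:main}) via the golden-lineage decomposition \eqref{eqn:forest_infinite}, Bernoulli thinning of the Poisson lineage, and a strong Markov argument at the first surviving lineage point $x^*$. The paper never touches \eqref{eqn:forest_infinite}; it fixes a leaf $x$ of the pruned tree, follows its chain of right (resp.\ left) parents in $T_\infty$, notes that each successive sibling subtree survives independently with probability $p_\Delta$, so the pruned parent distance is a ${\sf Geom}_1(p_\Delta)$ sum of ${\sf Exp}(\lambda/2)$ lengths and hence ${\sf Exp}(\lambda p_\Delta/2)$, and concludes through the Harris-path characterization of Thm.~\ref{Pit7_3}. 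Your direction of reduction is logically admissible (Thm.~\ref{thm:main} has an independent proof), but it reverses the paper's intended economy, since the paper presents the infinite version as the \emph{simpler} statement that implies the finite one.

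The genuine gap is in your last step, and it is not the measurability of $L^*$: it is that the decomposition \eqref{eqn:forest_infinite} is anchored at a \emph{leaf}, while $x^*$ fails to be a leaf of $T_\infty^\Delta$ on an event of positive probability. Take $\varphi=\textsc{length}$ and consider the event $E=\{\,\mathrm{dist}(\gamma^*,\ell(1))<\Delta\,\}\cap\{\,\cS_\Delta(\varphi,\cD_1)\neq\phi\,\}$, which has probability $(1-e^{-\lambda\Delta})\,p_\Delta>0$. On $E$, every lineage point strictly above $\ell(1)$ has descendant tree equal to a single segment of length $<\Delta$ and is pruned, whereas $\ell(1)$ survives because $\Delta_{\ell(1),T_\infty}\succeq\cD_1$ dominates a surviving subtree; hence $x^*=\ell(1)$, and the nonempty tree $\cS_\Delta(\varphi,\cD_1)\stackrel{d}{\sim}{\sf GW}(\lambda p_\Delta)$ is attached \emph{at} $x^*$, which is therefore a degree-two point of $T_\infty^\Delta$, not a leaf. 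Anchored at $x^*$, the "distance to the first branch point'' is $0$ and the topmost leaf edge of $T_\infty^\Delta$ is the stem of $\cS_\Delta(\varphi,\cD_1)$ concatenated with the lineage run down to the next surviving $\ell(j)$ --- a sum of two independent ${\sf Exp}(\lambda p_\Delta)$ variables --- so the verification of \eqref{eqn:forest_infinite} at $x^*$ fails outright on $E$. The correct golden leaf then lies inside $\cS_\Delta(\varphi,\cD_{j_1})$, and showing that the resulting mixture (over $E$ and its complement) of differently anchored structures still equals ${\sf GW}_\infty(\lambda p_\Delta)$ requires a re-rooting/spinal-decomposition identity for ${\sf GW}(\lambda p_\Delta)$ glued to the thinned lineage; the strong Markov property of the Poisson process, which you propose as the fix, only controls the law strictly below $x^*$ and says nothing about this re-anchoring. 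This is precisely the point at which the paper's Harris-path argument (which is insensitive to which leaf is designated golden) is doing real work.
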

\noindent Notice that since we are dealing with an infinite tree $T_{\infty}$, we do not need to be concerned about it surviving under the pruning operation $\cS_\Delta$.
The survival probability $p_{\Delta}$ used in the statement of Thm.~\ref{thm:infmain} is computed for finite trees,
so the values of scaled parameter $\mathcal{E}_\Delta(\lambda,\varphi)$
for selected pruning functions are given by Thm.~\ref{pdelta}.
\begin{proof}
Let ${\sf par}(x)$ denote the {\it right parent} to a point $x$ in $T_\infty$.
This means that the vertex ${\sf par}(x)$ is the parent of the first right subtree 
that one meets when travels the tree $T_{\infty}$ from $x$ down to the root.
In the Harris path of $T_{\infty}$, there exist two points that correspond to $x$
(they merge into a single point when $x$ is a leaf).
Consider the rightmost of these points, $r_x$, which belongs to a downward 
increment of the Harris path.
The vertex ${\sf par}(x)$ corresponds to the 
nearest right local minima of $r_x$.
Similarly, we let ${\sf par}_\Delta(\cdot)$ denote the right parent on $T_\infty^\Delta$.

Consider a leaf $x \in T_\infty^\Delta$, which is also a point in $T_\infty$; see Fig.~\ref{fig:inf}(a). 
We now find the distribution of the distance from $x$ to ${\sf par}_\Delta(x)$,
i.e., the length of the respective downward segment of the Harris path; see Fig.~\ref{fig:inf}(b).
Consider the descendant lineage of $x$ in $T_\infty$, which consists of vertices
$$x_1={\sf par}(x), ~~x_2={\sf par}(x_1),~~x_3={\sf par}(x_2),~\hdots~.$$
Due to the memorylessness property of exponential distribution, and the symmetry of left-right
orientation of subtrees in $T_{\infty}$, the distance from $x$ down to ${\sf par}(x)$ has
exponential distribution with rate $\lambda/2$.
The point $x$ belongs to one (left) of the two complete subtrees rooted 
at ${\sf par}(x)$ in $T_{\infty}$. 
Observe that ${\sf par}_\Delta(x)={\sf par}(x)$ if and only if the subtree that does not
contain $x$ (we call it sibling subtree) has not been pruned out completely, i.e., 
the intersection of the sibling subtree with $T_\infty^\Delta$ is not empty. 
(In the example of Fig.~\ref{fig:inf}(a), we have ${\sf par}_{\Delta}(x) = x_2\equiv b$.)
The sibling subtree is known to be distributed as ${\sf GW}(\lambda)$.
Therefore,
$${\sf P}\big({\sf par}_\Delta(x)=x_1\big)=p_\Delta.$$
Iterating this argument, we have for $k \geq 1$,
$${\sf P}\big({\sf par}_\Delta(x)=x_k ~\big|~ {\sf par}_\Delta(x)\not=x_1,\hdots, {\sf par}_\Delta(x)\not=x_{k-1}\big)=p_\Delta (1-p_\Delta)^{k-1}.$$
Therefore, the distance from a vertex $x$ down to ${\sf par}_\Delta(x)$ 
is a geometric ${\sf Geom}_1(p_\Delta)$
sum of independent exponential random variables with parameter $\lambda/2$.
Hence, it is itself an exponential random variable with parameter $\lambda p_\Delta/2$.
In other words, the downward segment of the Harris path 
of the pruned tree $T_\infty^\Delta$ adjacent to the local maximum that corresponds
to the leaf $x$ has exponential lengths with parameter $\lambda p_\Delta/2$; see Fig.~\ref{fig:inf}(b).

The same argument (using left parents) shows that the upward segment of the Harris path 
of the pruned tree $T_\infty^\Delta$ adjacent to the local maximum that corresponds
to the leaf $x$ has exponential lengths with parameter $\lambda p_\Delta/2$.
The lengths of the upward and downward segments are independent; see Fig.~\ref{fig:inf}(b).

Applying the above argument to all leaves in $T_\infty^\Delta$, we conclude that the 
Harris path of $T_\infty^\Delta$ consists of alternating up/down increments with 
independent lengths, distributed exponentially with the parameter $\lambda p_\Delta/2$.
Theorem~\ref{Pit7_3} states that in this case $T_\infty^\Delta$ is an exponential critical binary
Galton-Watson tree with parameter $\lambda p_\Delta$.
This completes the proof.
\end{proof}
Observe that Thm.~\ref{thm:main} can be obtained from Thm.~\ref{thm:infmain} by 
considering finite excursions of $X_t$.
Also notice that for the particular case of Horton pruning (Sect.~\ref{ex:H}), 
the statement of Thm.~\ref{thm:infmain} follows from Thm.~\ref{eH}.

\subsection{Continuum annihilation}
One can observe that the continuum annihilation dynamics 
that begins with an infinite exponential potential $\Psi^{\rm exp}_0(x)$,
$x\in\mathbb{R}$ (see Sect.~\ref{sec:rand_mass}), is nothing but the generalized dynamical pruning 
$\cS_t(\varphi,T_\infty)$ of the infinite planar critical exponential binary Galton-Watson 
tree built from the leaves down 
$$T_\infty:=\textsc{level}\big(-\Psi^{\rm exp}_0(x)\big)  \stackrel{d}{\sim}{\sf GW}_\infty(\lambda),$$ 
where $\varphi(T)=\textsc{length}(T)$ for $T \in \BL^|$.
Moreover, the key results of Sect. \ref{sec:rand_mass}, Thms.~\ref{thm:rand_mass} and \ref{thm:mass},
that describe the growth dynamics of a sink in the continuum annihilation model are in fact describing the length distributions of pruned out sections 
of $T_\infty \stackrel{d}{\sim}{\sf GW}_\infty(\lambda)$ under the generalized dynamical pruning 
$\cS_t(\varphi,T_\infty)$. 
The proofs of these results can be rewritten in the infinite tree style of Thm.~\ref{thm:infmain}.

\begin{figure}[!t]
	\centering
	\includegraphics[width=0.9\textwidth]{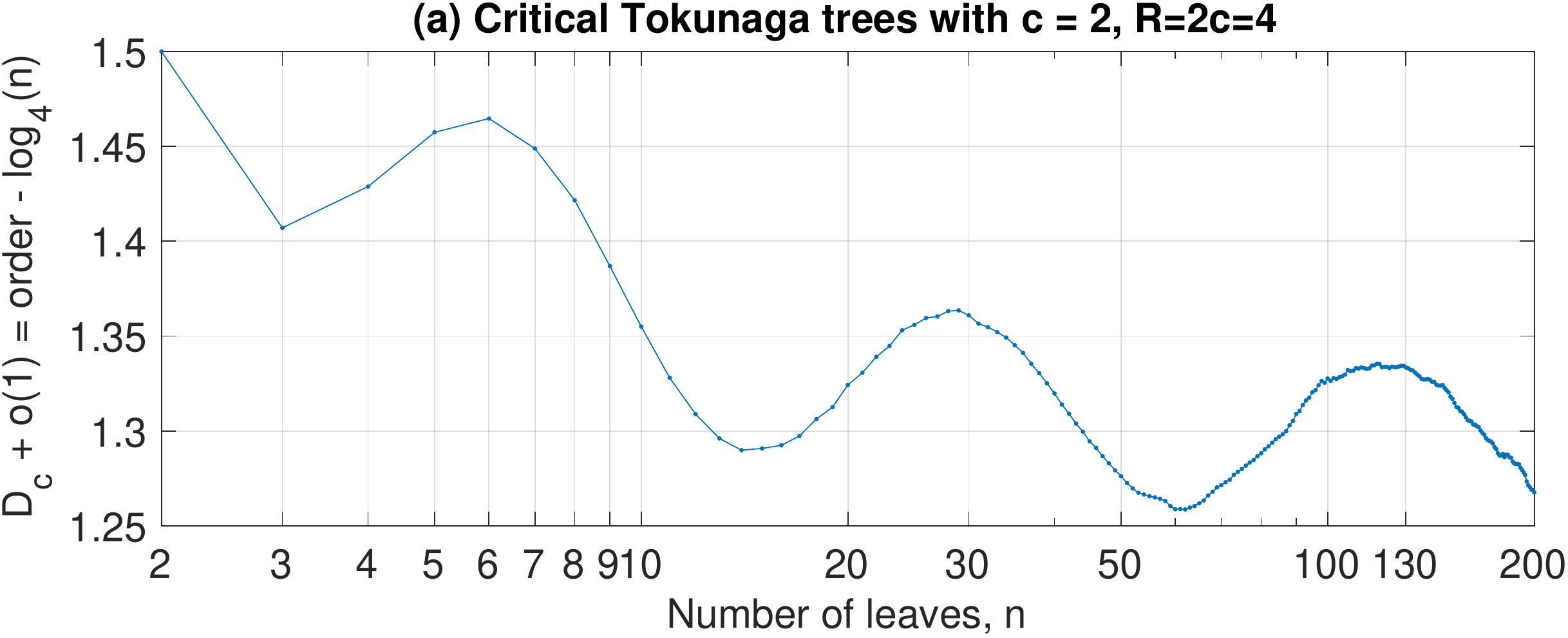}
	\includegraphics[width=0.9\textwidth]{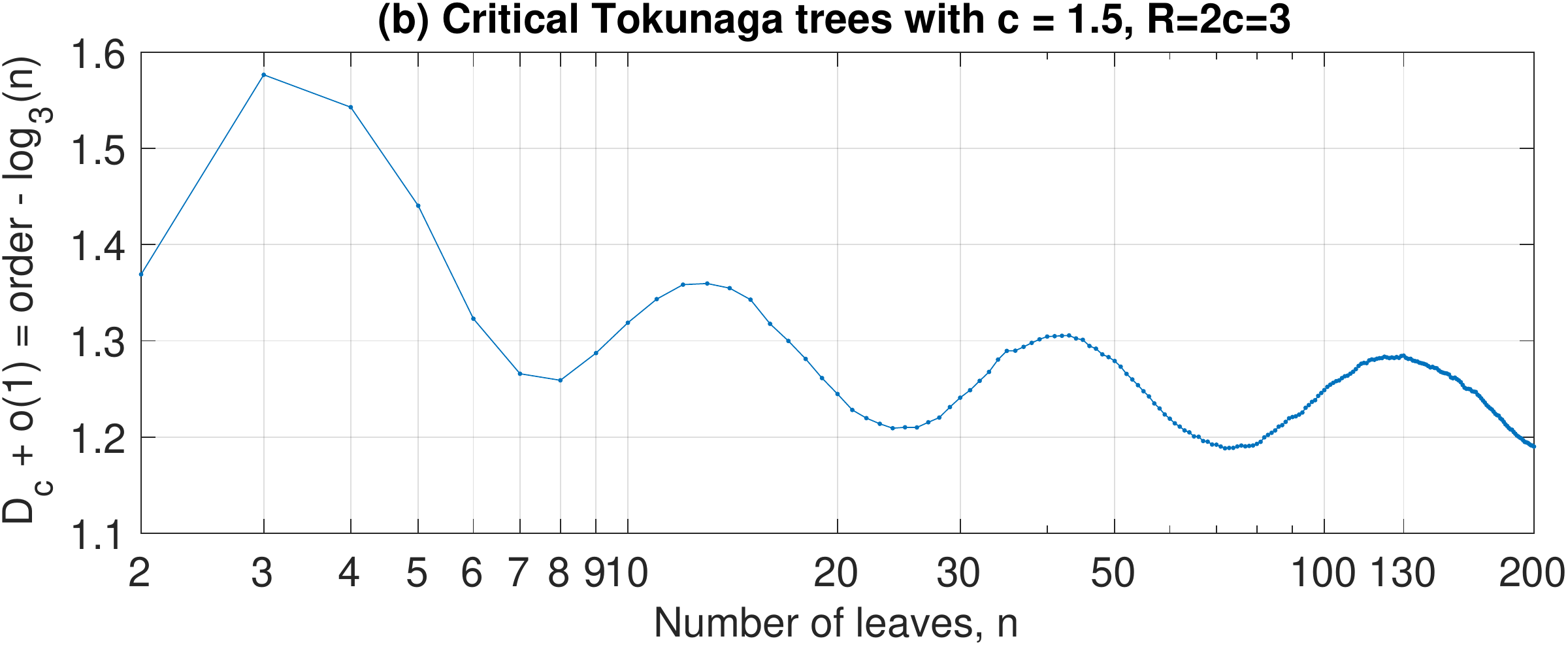}
	\caption{Periodic fluctuations of the average order of a 
	critical Tokunaga tree $S^{\rm Tok}(t;c,\gamma)$
	as a function of the tree size (number $n$ of leaves): numerical simulations.
	(a) Critical Tokunaga tree with $c=2, R=2c=4$ (critical binary Galton-Watson tree); 
	see \eqref{eqn:Flajolet}.
	(b) Critical Tokunaga tree with $c = 1.5, R=2c=3$; see \eqref{eqn:FlajoletTok}.
	The jitter at hight values of $n$ is due to stochastic variability in our numerical simulations. 
	}
	\label{fig:D}
\end{figure}

\section{Some open problems}\label{open}
\begin{enumerate}

\item
Consider the cumulative distribution function ${\sf H}_n(x)$ for the height of an exponential critical binary Galton-Watson tree ${\sf GW}(\lambda)$  
(Def. \ref{def:cbinary}) conditioned on having $n$ leaves; see \eqref{eqn:HnDef} of Sect. \ref{sec:heightGW}.
Can one derive the limit \eqref{eqn:WaymireKolchin} from the equation \eqref{eqn:HnDEsolh}?  

\medskip
\item 
For a given sequence $\{T_k\}_{k \in \mathbb{Z_+}}$
of positive real numbers, construct a coalescent process whose symmetric kernel is a function of the clusters' Horton-Strahler orders,
in such a way that the combinatorial part of the coalescent tree is mean self-similar with respect to Horton pruning
(Defs.~\ref{ss1} and \ref{def:ss2}), with Tokunaga coefficients $\{T_k\}$.
This would complement an analogous branching process construction of Sect.~\ref{HBP}.

\medskip
\item
Generalize equation \eqref{eqn:Flajolet} of Flajolet et al. \cite{FRV79} for the critical Tokunaga processes (Sect. \ref{sec:Tok}).
Formally, consider a tree $T$ that corresponds to a critical Tokunaga process $S^{\rm Tok}(t;c,\gamma)$ (Def. \ref{def:TokProcess}).
Establish the following generalization of \eqref{eqn:Flajolet}:
for any given $c>1$, there exists a periodic function $D_c(\cdot )$ of period one such that
\be\label{eqn:FlajoletTok}
{\sf E}\big[{\sf ord}(T)\,\big| \#T=2n-1\big]=\log_R{n}+D_c\big(\log_R{n} \big)+o(1)
\ee
as $n \rightarrow \infty$, where $R=2c$.
We confirmed the validity of \eqref{eqn:FlajoletTok} numerically; see Fig.~\ref{fig:D}.

\medskip
\item
For a hierarchical branching process $S(t)$ (Def. \ref{def:HBP}, Sect. \ref{HBP:def}),
describe the correlation structure of its Harris path.
A special case is given by Thm.~\ref{Pit7_3}; it shows
that the Harris path of the exponential critical binary Galton-Watson 
tree ${\sf GW}(\lambda)$,
which corresponds to the hierarchical branching process $S(t)\stackrel{d}{\sim}S^{\rm Tok}(t;c,\gamma)$
(Sect.~\ref{sec:Tok}), 
is 
an excursion of the exponential random walk
(Sect. \ref{sec:erw}), 
with parameters $\left\{{1 \over 2}, \lambda, \lambda \right\}$.

\medskip
\item
Recall that a rescaled Harris path of an exponential critical binary Galton-Watson tree  ${\sf GW}(\lambda)$ 
converges to the excursion of a standard Brownian motion \cite{LeGall93,NP}.
For a hierarchical branching process $S(t)$ (Def. \ref{def:HBP}, Sect. \ref{HBP:def}), 
explore the existence of a proper infinite-tree limit and the respective limiting 
excursion process.

\medskip
\item
Prove the following extension of Lem.~\ref{lem:shapeVSexcessvalue}.
In the setup of the Lemma, 
suppose that for any tree $T$, conditioned on $\textsc{p-shape}(T)$, 
the edge lengths in $T$ are independent.
Show that $f(x)$ is an exponential p.d.f.

\medskip
\item
Can the finite second moment assumption in Prop. \ref{prop:DSS} be removed?
Also, does \eqref{laplace} characterize the exponential distribution 
(like the characterizations in Appendix \ref{sec:char_exp})?

\medskip
\item 
In the context of Sect. \ref{sec:morse}, extend the one-dimensional result of
Prop.~\ref{ts_prune} to higher dimensions.
Specifically,
consider an $n$-dimensional compact differentiable manifold $M=M^n$, 
and a Morse function $f:M \rightarrow \mathbb{R}$. 
Construct a natural Morse function $f^{(1)}:M \rightarrow \mathbb{R}$ such that 
$$\textsc{level}\big(f^{(1)}\big)=\cR\big(\textsc{level}(f)\big).$$

\medskip
\item 
In the setting of Thm.~\ref{Mthm} from Sect. \ref{sec:Kingman}, establish the asymptotic ratio-Horton law (Def.~\ref{def:HortonList}) for the Kingman's coalescent tree, and, if possible, prove the asymptotic strong Horton law (Def.~\ref{def:HortonList}).
Specifically, prove $\lim\limits_{j \rightarrow \infty} {\cN_j \over \cN_{j+1}}=R$, and if possible, $\lim\limits_{j \rightarrow \infty} \big(\cN_j R^j\big)=const$. 
Is it possible to derive a closed form expression for the Horton exponent $R$?

\medskip
\item 
Find a suitable ramification
of the generalized dynamical pruning sufficient for describing the evolution of the shock tree 
in the one-dimensional inviscid Burgers equation \eqref{eqn:inviscidBurgers} and 
its multidimensional modification known as the {\it adhesion model} \cite{BK07,FrischBec,Gurbatov}.
Use this to complement the framework developed in \cite{Sinai92,Bertoin98,Bertoin2000,Giraud}.

\end{enumerate}

\newpage
\appendix

\section{Weak convergence results of Kurtz for density dependent population processes}\label{sec:kurtz}
We first formulate the framework for the  convergence result of Kurtz as stated in Theorem 2.1 in Chapter 11 of \cite{EK86} (Theorem 8.1 in \cite{Kurtz81}).
There, the {\it density dependent population processes} are defined as continuous time Markov processes with state spaces in $\mathbb{Z}^d$, and transition intensities represented as follows
\begin{equation}\label{tintK}
q^{(n)}(k, k+\ell) = n \left[ \beta_{\ell} \left( \frac{k}{n} \right) + O\left( \frac{1}{n} \right) \right],
\end{equation}
where $\ell, k \in \mathbb{Z}^d$, and $\beta_\ell$ is a given collection of rate functions. 
\index{density dependent population process}

\medskip
\noindent
In Section 5.1 of \cite{Aldous}, Aldous observes that the results from Chapter 11 of Ethier and Kurtz \cite{EK86} can be used to prove the weak convergence of  a Marcus-Lushnikov  process to the solutions of Smoluchowski system of equations in the case when the Marcus-Lushnikov process can be formulated as a finite dimensional density dependent population process. 
Specifically,  the  Marcus-Lushnikov processes  corresponding to the multiplicative and Kingman's coalescent with the monodisperse initial conditions ($n$ singletons) can be represented as finite dimensional density dependent population processes defined above.

\medskip
\noindent
Define $F(x)=\sum\limits_\ell \ell \beta_\ell (x)$. Then, Theorem 2.1 in Chapter 11 of \cite{EK86} (Theorem 8.1 in \cite{Kurtz81}) states the following law of large numbers. 
Let $\hat{X}_n(t)$ be the Markov process with the intensities $q^{(n)}(k, k+\ell)$ given in (\ref{tintK}), and let $X_n(t)=n^{-1}\hat{X}_n(t)$. Finally, let $|x|=\sqrt{\sum x_i^2 }$ denote the Euclidean norm in $\mathbb{R}^d$.

\begin{thm} \label{kurtzT}
Suppose for all compact $\mathcal{K} \subset \mathbb{R}^d$,
$$\sum_{\ell} |\ell| \sup_{x \in \mathcal{K}} \beta_\ell (\bar x) < \infty,$$
and there exists $M_\mathcal{K}>0$ such that
\begin{equation}\label{KurtzLipschitz}
|F(x)-F(y)| \leq M_\mathcal{K}|x-y|, \qquad \text{ for all }x,y \in \mathcal{K}.
\end{equation}
Suppose $\lim\limits_{n \to \infty}X_n(0)=x_0$, and $X(t)$ satisfies
\begin{equation}\label{KurtzDE}
X(t) = X(0) + \int_0^t F(X(s)) ds,
\end{equation}
for all $T \geq 0$. Then
\begin{equation}\label{KurtzAS}
\lim\limits_{n \to \infty} \sup\limits_{s \in [0,T]} |X_n(s)-X(s)|=0 \qquad \text{ a.s. }
\end{equation}
\end{thm}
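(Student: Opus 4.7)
The plan is to realize $\hat X_n(t)$ as a time-changed sum of independent unit-rate Poisson processes and then pass to the limit via a Gronwall argument, following the standard Poisson-representation technique of Kurtz.

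First, I would represent the jump process as
\[
\hat X_n(t) = \hat X_n(0) + \sum_\ell \ell\, Y_\ell\!\left( \int_0^t q^{(n)}\bigl(\hat X_n(s), \hat X_n(s)+\ell\bigr)\, ds \right),
\]
where $\{Y_\ell\}$ are independent unit-rate Poisson processes. Dividing by $n$ and inserting the hypothesis $q^{(n)}(k,k+\ell)=n\beta_\ell(k/n)+O(1)$, this becomes
\[
X_n(t) = X_n(0) + \sum_\ell \ell\cdot n^{-1} Y_\ell\!\left(n\int_0^t \beta_\ell(X_n(s))\,ds + \rho_{n,\ell}(t)\right),
\]
with $|\rho_{n,\ell}(t)|\le C_\ell t$ uniformly in $n$. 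Writing $\tilde Y_\ell(u)=Y_\ell(u)-u$ for the compensated processes, I would then decompose
\[
X_n(t) - X_n(0) - \int_0^t F(X_n(s))\,ds = \sum_\ell \ell\cdot n^{-1}\tilde Y_\ell\!\left(n\int_0^t \beta_\ell(X_n(s))\,ds\right) + E_n(t),
\]
where $E_n(t)$ collects the deterministic $O(1/n)$ errors coming from $\rho_{n,\ell}$.

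Next, I would localize to a compact set. Fix $T>0$, let $\mathcal K$ be a compact neighborhood of the deterministic orbit $\{X(s):s\in[0,T]\}$, and define the stopping time $\tau_n=\inf\{t\le T:X_n(t)\notin\mathcal K\}$. On $[0,\tau_n]$ the summability hypothesis $\sum_\ell|\ell|\sup_{\mathcal K}\beta_\ell<\infty$ lets me invoke the functional strong law of large numbers for Poisson processes, giving
\[
\sup_{u\le nT\sup_\mathcal K\beta_\ell}\; n^{-1}\bigl|\tilde Y_\ell(u)\bigr| \xrightarrow{\text{a.s.}} 0,
\]
uniformly in $\ell$ after truncating the tail via dominated convergence. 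Together with $|E_n(t)|\to 0$, this shows that the martingale and error terms vanish almost surely, uniformly on $[0,\tau_n]$. Then for $t\le\tau_n$ the Lipschitz hypothesis on $F$ and Gronwall's inequality yield
\[
\sup_{s\le t}|X_n(s)-X(s)| \le \bigl(|X_n(0)-x_0|+\delta_n\bigr)e^{M_\mathcal K t},
\]
where $\delta_n\to 0$ a.s., so $\sup_{[0,T]}|X_n-X|\to 0$ on the event $\{\tau_n=T\}$.

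Finally, I would show that $\tau_n=T$ eventually a.s. by choosing $\mathcal K$ with $X([0,T])$ in its interior: if $X_n$ left $\mathcal K$ before $T$, then by continuity of $X$ the above Gronwall bound would already be violated before exit, a contradiction for large $n$. Combining gives the claimed uniform a.s.\ convergence. The technical obstacle I expect is the uniform control of the infinite sum $\sum_\ell\ell\cdot n^{-1}\tilde Y_\ell(\cdot)$: the tail in $\ell$ has to be dominated using $\sum_\ell|\ell|\sup_\mathcal K\beta_\ell<\infty$ and a maximal inequality for compensated Poisson processes (e.g.\ $\mathbb{E}\sup_{u\le U}|\tilde Y_\ell(u)|^2\lesssim U$), which is routine in finite dimensions but requires care when the state space is extended to $\ell^1(\mathbb R)$ as needed in Theorem~\ref{thm:hydro}.
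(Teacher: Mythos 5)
The paper does not prove this theorem at all: it is quoted verbatim as Theorem 2.1 of Chapter 11 in Ethier--Kurtz \cite{EK86} (Theorem 8.1 in \cite{Kurtz81}), and the appendix only restates it for later use. Your outline is precisely the standard proof from those references --- random time-change representation via independent unit-rate Poisson processes, compensation, the functional strong law for $\tilde Y_\ell$, localization to a compact neighborhood of the orbit, and Gronwall --- and it is sound, including your correct identification of the $\ell^1$-in-$\ell$ tail control (via $\sum_\ell |\ell|\sup_{\mathcal K}\beta_\ell<\infty$ and a maximal inequality) as the only delicate step, which is exactly the point the paper leans on when extending the result to $\ell^1(\mathbb{R})$ in Theorem~\ref{thm:hydro}.
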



\section{Characterization of exponential random variables}\label{sec:char_exp}

This section contains a number of characterization results for exponential random variables that we use in this manuscript.
We refer the reader to \cite{AV86,Balakrishnan} for more on characterization of exponential random variables. 

\medskip
\noindent
The following result of K.\,S.~Lau and C.\,R.~Rao \cite{LauRao} that implies a characterization of exponential random variables
is used by us for establishing Lemma \ref{lem:shapeVSexcessvalue}. See \cite{BL14book} for more on Integrated Cauchy Functional Equations.
\index{integrated Cauchy functional equation}
\begin{lem}[{{\bf \cite{LauRao}}}]\label{thm:LauRao}
Consider an Integrated Cauchy Functional Equation
\be\label{eqn:ICFE}
\int\limits_0^\infty {G(x+y) \over G(y)} \,d\mu(y) \,=G(x) \quad \forall x \geq 0,
\ee
where $\mu(\cdot)$ is a p.d.f. on $[0,\infty)$ and $G(x)>0$ for $x$ in the support of $\mu$.
Then, $G(x)=e^{-\lambda x}$ for some $\lambda \geq 0$.
\end{lem}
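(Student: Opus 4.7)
\medskip
\noindent
\textbf{Proof proposal for Lemma \ref{thm:LauRao}.}

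The first step is to read off normalizations from the equation at $x=0$: since $G(y)/G(y)=1$ for $y$ in the support of $\mu$, substitution yields $G(0)=\int_0^\infty d\mu(y)=1$. Next, I would recast the hypothesis as a renewal-type harmonic equation. Define the (possibly $\sigma$-finite) positive measure $\nu$ on the support of $\mu$ by $d\nu(y)=G(y)^{-1}\,d\mu(y)$; then the hypothesis is exactly
\[
G(x)=\int_0^\infty G(x+y)\,d\nu(y)\qquad\forall x\ge 0,
\]
with the consistency condition $1=G(0)=\int G(y)\,d\nu(y)$. Thus $G$ is a positive $\nu$-harmonic function on the additive semigroup $[0,\infty)$ in the sense of Deny.

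The plan for the heart of the argument is to apply the Choquet--Deny--Lau--Rao classification of positive solutions of $G=G\ast\nu$. Concretely, one verifies that the Cram\'er condition is met: since $\int G(y)\,d\nu(y)=1$, the tilted probability measure $d\tilde\nu(y):=G(y)\,d\nu(y)=d\mu(y)$ satisfies $\int d\tilde\nu=1$, so the exponential tilt needed by Lau--Rao exists and coincides with the given $\mu$. An equivalent, more hands-on route is via Jensen's inequality applied to $-\log$: from the original equation,
\[
\log G(x)=\log\!\int\frac{G(x+y)}{G(y)}\,d\mu(y)\ge \int\bigl[\log G(x+y)-\log G(y)\bigr]\,d\mu(y),
\]
which rearranges to a subadditivity inequality $\log G(x)+\int\log G(y)\,d\mu(y)\ge\int\log G(x+y)\,d\mu(y)$. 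Iterating the original integral equation $n$ times and reapplying Jensen gives the corresponding $n$-fold version with the convolution $\mu^{\ast n}$; combining this with the equality case of Jensen and the long-run spreading of $\mu^{\ast n}$ (a random-walk/renewal argument on the closed semigroup generated by $\mathrm{supp}(\mu)$) should force equality in Jensen for every $x$, and hence show that $G(x+y)/G(y)$ is $\mu$-a.s.\ independent of $y$. This is the multiplicative Cauchy identity $G(x+y)=G(x)G(y)$ holding for $\mu$-almost every $y$ and every $x\ge 0$.

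Once the multiplicative relation is in hand, a short extension argument promotes it to all $x,y\ge 0$: the set of $y$ for which $G(x+y)=G(x)G(y)$ holds for every $x$ is closed under addition and contains $\mu$-a.e.\ point of $\mathrm{supp}(\mu)$, so it contains the closed additive semigroup generated by $\mathrm{supp}(\mu)$, which equals $[0,\infty)$ under the standing assumption that $\mu$ is a genuine probability density (not concentrated on a lattice, but one handles the lattice case separately by a standard approximation). Measurability of $G$, combined with $G>0$ and Cauchy's equation, yields $G(x)=e^{-\lambda x}$ for some $\lambda\in\mathbb{R}$; positivity and the boundedness $G\le 1$ implied by $G(0)=1$ together with the equation $G=\int G(\cdot+y)d\nu(y)$ rule out $\lambda<0$, giving $\lambda\ge 0$ as required. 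The principal obstacle will be the equality-in-Jensen step, where one must rigorously exclude the possibility of strict subadditivity; I expect this to be the place where the Choquet--Deny machinery (or the Lau--Rao renewal argument, tracing $G$ along the random walk $S_n=Y_1+\cdots+Y_n$ with $Y_i\stackrel{\text{i.i.d.}}{\sim}\mu$ and exploiting the martingale $M_n=G(x+S_n)/\prod_{i=1}^n G(Y_i)$) is genuinely needed rather than cosmetic.
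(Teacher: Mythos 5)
The paper does not prove this lemma at all: it is quoted verbatim from Lau and Rao (1982) and used as an imported black box in Appendix B, so there is no internal proof to compare against and your argument has to stand on its own. Your reformulation is sound — $G(0)=1$ follows from setting $x=0$, and rewriting \eqref{eqn:ICFE} as $G(x)=\int_0^\infty G(x+y)\,d\nu(y)$ with $d\nu=G^{-1}d\mu$ correctly identifies this as a Deny-type convolution equation, with $M_n=G(x+S_n)/\prod_{i=1}^nG(Y_i)$ the right martingale. But the heart of the proof is missing. The rigidity step — forcing equality in Jensen, i.e. showing that every positive $\nu$-harmonic function on the semigroup $[0,\infty)$ is an exponential times a lattice-periodic factor — \emph{is} the Choquet--Deny/Deny/Lau--Rao theorem; you invoke it rather than prove it. The sketched alternative ("iterate, use the spreading of $\mu^{\ast n}$, conclude equality in Jensen") does not close the gap: the $n$-fold Jensen inequality is no tighter than the one-fold one, and the actual argument requires the a.s. convergence of the positive martingale $M_n$, identification of its limit, and an exchangeability/zero--one argument to conclude that $G(x+y)/G(y)$ is $\mu$-a.s. constant in $y$. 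You candidly flag this as the hard part, which is fair, but as written the proof has a hole exactly there.

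A second, concrete error: you assert that $G\le 1$ is "implied by $G(0)=1$," and use this to conclude $\lambda\ge 0$. That implication is false. Worse, under the literal hypotheses of the lemma the conclusion $\lambda\ge 0$ is not derivable at all: $G(x)=e^{cx}$ with $c>0$ satisfies \eqref{eqn:ICFE} for any p.d.f. $\mu$, since $G(x+y)/G(y)=G(x)$ pointwise. The sign restriction only becomes available once one imports the extra structure present in the paper's applications (in Lemma \ref{lem:expcharacterization} the function $G$ is a survival function, hence nonincreasing and bounded by $1$, which kills $c>0$). So either add a monotonicity/boundedness hypothesis explicitly, or drop the claim that $\lambda\ge0$ follows from \eqref{eqn:ICFE} alone. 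The lattice-versus-nonlattice dichotomy you defer to "a standard approximation" also deserves an explicit sentence, since that is where the periodic factor in the general Deny classification is eliminated.
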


\noindent
The following characterization of exponential random variables follows immediately from Lemma \ref{thm:LauRao}.
\begin{lem}\label{lem:expcharacterization}
Consider a p.d.f. $g(x)$ defined on $[0,\infty)$, and satisfying
\be\label{eqn:intgg}
g(x)=2\int\limits_0^\infty g(x+y) g(y) \,dy \quad \forall x \geq 0.
\ee
Then, $g(x)$ is an exponential density function.
\end{lem}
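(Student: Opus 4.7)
My plan is to recognize the given functional equation as an instance of the Integrated Cauchy Functional Equation and apply Lemma~\ref{thm:LauRao} after an appropriate renormalization. First I would set $x=0$ in~\eqref{eqn:intgg} to obtain the key identity
\[
g(0) \;=\; 2\int_0^\infty g(y)^2\, dy,
\]
which immediately shows $g(0)>0$ (since $g$ is a nontrivial p.d.f.) and $g \in L^2([0,\infty))$.

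Next I would rescale by setting $G(x) := g(x)/g(0)$, so that $G(0)=1$, and introduce the measure
\[
d\mu(y) \;:=\; 2\,g(0)\, G(y)^2\, dy \;=\; \frac{2}{g(0)}\, g(y)^2\, dy.
\]
The identity above shows $\mu$ has total mass $1$, hence is a probability measure on $[0,\infty)$. Dividing~\eqref{eqn:intgg} by $g(0)$ and multiplying and dividing the integrand by $G(y)$, the equation becomes
\[
G(x) \;=\; \int_0^\infty \frac{G(x+y)}{G(y)}\, d\mu(y), \qquad x\ge 0,
\]
which is exactly the Integrated Cauchy Functional Equation~\eqref{eqn:ICFE}. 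Observing that $G>0$ on $\mathrm{supp}(\mu)=\mathrm{supp}(g)$ by construction, Lemma~\ref{thm:LauRao} yields $G(x)=e^{-\lambda x}$ for some $\lambda\ge 0$. Therefore $g(x)=g(0)\,e^{-\lambda x}$, and the p.d.f. normalization $\int_0^\infty g = 1$ forces $\lambda>0$ and $g(0)=\lambda$, giving the exponential density with parameter $\lambda$.

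The main technical obstacle will be handling the positivity and support issues: Lemma~\ref{thm:LauRao} a priori only pins down $G$ on $\mathrm{supp}(\mu)$, and I must rule out the possibility that $\mathrm{supp}(g)$ is a proper subset of $[0,\infty)$. I would address this by a brief self-consistency check using the original equation: if $\mathrm{supp}(g)\subseteq[a,\infty)$ with $a>0$, then substituting the candidate solution $g(x)=\lambda e^{-\lambda x}\mathbf{1}_{[a,\infty)}(x)$ into the right-hand side of~\eqref{eqn:intgg} and evaluating at $x\ge a$ produces the value $\lambda e^{-\lambda(x+2a)}$, which equals $g(x)=\lambda e^{-\lambda x}$ only when $a=0$. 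A short support-closure argument using the convolution structure of~\eqref{eqn:intgg} (namely, that $x\in\mathrm{supp}(g)$ and $y\in\mathrm{supp}(g)$ forces $x+y$ to interact nontrivially with $\mathrm{supp}(g)$) then gives $\mathrm{supp}(g)=[0,\infty)$, completing the proof.
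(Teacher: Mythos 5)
Your proof is correct and rests on the same key ingredient as the paper's --- the Lau--Rao lemma (Lem.~\ref{thm:LauRao}) for the Integrated Cauchy Functional Equation --- but you reduce to \eqref{eqn:ICFE} by a different route. The paper integrates \eqref{eqn:intgg} and applies Lau--Rao to the survival function $G(a)=\int_a^\infty g(x)\,dx$ with the probability measure $d\mu(y)=2G(y)g(y)\,dy$ (whose c.d.f. is $1-G^2$), obtaining an exponential tail and hence an exponential density. You instead normalize by $g(0)$ and apply Lau--Rao directly to $G=g/g(0)$ with $d\mu=\tfrac{2}{g(0)}g^2\,dy$; the algebra checks out, since $\tfrac{G(x+y)}{G(y)}\,d\mu(y)=\tfrac{2}{g(0)}g(x+y)g(y)\,dy$ integrates to $G(x)$ by \eqref{eqn:intgg}. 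The trade-off: your route needs the pointwise value $g(0)$ to be finite and positive and $g\in L^2$, which you extract from the $x=0$ instance of \eqref{eqn:intgg} --- strictly you should note that the hypothesis forces $2\int_0^\infty g^2=g(0)<\infty$, so a non-square-integrable $g$ is excluded --- whereas the paper's tail function is automatically finite, monotone, and positive where needed, so no such preliminary step arises. Your closing support discussion is more care than the paper takes: as Lem.~\ref{thm:LauRao} is stated here its conclusion $G(x)=e^{-\lambda x}$ is already global, and the identity $g(0)=2\int_0^\infty g^2>0$ by itself rules out $\mathrm{supp}(g)\subseteq[a,\infty)$ with $a>0$; the extra check does no harm.
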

\begin{proof}
Let $G(a)=\int\limits_a^\infty g(x)\,dx$. Then, integrating \eqref{eqn:intgg}, we have for all $a \geq 0$,
\be\label{eqn:intintgg}
G(a)=\int\limits_a^\infty g(x)\,dx=2\int\limits_0^\infty G(a+y) g(y) \,dy =\int\limits_0^\infty {G(a+y) \over G(y)} \,d\mu(y),
\ee
where $\mu(y)=1-G^2(y)$  is a p.d.f. on $[0,\infty)$. 
We notice that \eqref{eqn:intintgg} produces equation \eqref{eqn:ICFE}. 
Hence, by Lem.~\ref{thm:LauRao}, $G(x)=e^{-\lambda x}$, where $\lambda>0$ as $g(x)$ is p.d.f.
\end{proof}

\medskip
\noindent
Next, we recall the Parseval's identity, which we will use in the proof of characterization Lemma \ref{lem:expcharacterization2}.
\begin{thm}[{\bf Parseval's identity,} \cite{Ushakov}]\label{thm:Parselval}
For a pair of cumulative distribution functions $F(x)$ and $G(x)$ and their respective characteristic functions $\widehat{f}(s)$ and $\widehat{g}(s)$
the following identity holds for all $s \in \mathbb{R}$
$$\int\limits_{-\infty}^\infty e^{isx} \,\widehat{g}(x) \,dF(x) \,=\, \int\limits_{-\infty}^\infty  \widehat{f}(x+s) \,dG(x).$$
\end{thm}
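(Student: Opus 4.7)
The proof plan is a direct application of Fubini's theorem, and there is really no essential obstacle to overcome. I would start from the left-hand side
$$\int_{-\infty}^{\infty} e^{isx}\, \widehat{g}(x)\, dF(x)$$
and substitute the defining expression $\widehat{g}(x)=\int_{-\infty}^{\infty} e^{ixy}\, dG(y)$, giving an iterated integral over the product space $\mathbb{R}\times\mathbb{R}$. The integrand is $e^{isx}e^{ixy}=e^{ix(s+y)}$, which has modulus one, so $F\otimes G$, being a probability measure on $\mathbb{R}^{2}$, makes the double integral absolutely convergent with total variation at most one.

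Once absolute integrability is in hand, Fubini's theorem lets me swap the order of integration to obtain
$$\int_{-\infty}^{\infty}\!\!\int_{-\infty}^{\infty} e^{ix(s+y)}\, dF(x)\, dG(y)
\;=\;\int_{-\infty}^{\infty} \widehat{f}(s+y)\, dG(y),$$
and after renaming $y\mapsto x$ this is exactly the right-hand side $\int \widehat{f}(x+s)\,dG(x)$ appearing in the statement. Symmetry between $F$ and $G$ is evident: one could equally well begin by expanding $\widehat{f}(x+s)$ on the right and arrive at the left-hand side.

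The only step requiring care is the justification of Fubini, which is immediate here because $|e^{ix(s+y)}|\equiv 1$ and both $F,G$ are probability measures, so $\int\!\int 1\, dF\, dG=1<\infty$. This uniform boundedness is what makes Parseval's identity hold for arbitrary distribution functions with no additional integrability hypothesis on $F$, $G$, or their characteristic functions. I do not anticipate any technical subtlety beyond this; the identity is essentially a restatement of the adjointness between a probability measure and its Fourier transform paired through the bilinear form $(F,G)\mapsto \int\widehat{g}\,dF$.
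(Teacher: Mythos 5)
Your proof is correct: the substitution of the definition of $\widehat{g}$, the observation that the integrand has modulus one so that Fubini applies over the product probability measure $F\otimes G$, and the resulting identification of the inner integral with $\widehat{f}(s+y)$ constitute the standard and complete argument for this identity. The paper itself states this result with a citation to Ushakov and gives no proof, and your argument is precisely the classical one found there, so there is nothing further to reconcile.
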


\noindent
We give yet another characterization of the exponential p.d.f. $\phi_\lambda(x)=\lambda e^{-\lambda x}{\bf 1}_{\{x \geq 0\}}$ as defined in \eqref{exp}.
\begin{lem}\label{lem:expcharacterization2}
Consider a p.d.f. $g(x)$ defined on $[0,\infty)$, and satisfying
\be\label{eqn:intgg_phi}
\phi_\lambda(x)=2\int\limits_0^\infty g(x+y) g(y) \,dy \quad \forall x \geq 0.
\ee
Then, $g(x)=\phi_\lambda(x)$.
\end{lem}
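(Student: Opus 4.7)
The plan is to pass to Fourier transforms, reducing the quadratic identity for $g$ to an equation for $|\widehat g|^2$, and then to recover $g$ via the Hardy-space structure of characteristic functions of measures supported on $[0,\infty)$. First, observe that the right-hand side $R(x):=2\int_0^\infty g(x+y)g(y)\,dy$, interpreted for all $x\in\mathbb{R}$ with $g$ extended by $0$ to the negative reals, is the symmetric autocorrelation $2(g\ast g^-)(x)$, where $g^-(y):=g(-y)$, and is therefore an even function of $x$. Hence the hypothesis extends from $x\ge 0$ to $R(x)=\lambda e^{-\lambda|x|}$ on all of $\mathbb{R}$.

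Taking Fourier transforms in the paper's convention $\widehat f(s)=\int_{\mathbb{R}} e^{isx}f(x)\,dx$, and using $\widehat{g\ast g^-}(s)=|\widehat g(s)|^2$ together with $\int_{\mathbb{R}} e^{isx}\lambda e^{-\lambda|x|}\,dx=2\lambda^2/(\lambda^2+s^2)$, I obtain
\[
|\widehat g(s)|^2 \,=\, \frac{\lambda^2}{\lambda^2+s^2} \,=\, |\widehat{\phi_\lambda}(s)|^2 \qquad (s\in\mathbb{R}).
\]
Since $g$ and $\phi_\lambda$ are densities on $[0,\infty)$, both $\widehat g$ and $\widehat{\phi_\lambda}(s)=\lambda/(\lambda-is)$ extend to bounded holomorphic functions on the upper half-plane $\{\mathrm{Im}(s)>0\}$. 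The function $\widehat{\phi_\lambda}$ has no zeros there, so it is outer in $H^\infty$, and the quotient $B(s):=\widehat g(s)/\widehat{\phi_\lambda}(s)$ is holomorphic on the upper half-plane with $|B(s)|=1$ on the real line, i.e., $B$ is an inner function. The normalization $\widehat g(0)=1=\widehat{\phi_\lambda}(0)$ forces $B(0)=1$.

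The main obstacle will be eliminating the inner-factor ambiguity in $B$, since $|\widehat g|=|\widehat{\phi_\lambda}|$ alone does not determine $\widehat g$: the translated densities $g_c(x):=\lambda e^{-\lambda(x-c)}\mathbf{1}_{\{x\ge c\}}$ with $c>0$ satisfy the hypothesis yet give $\widehat{g_c}(s)=e^{isc}\widehat{\phi_\lambda}(s)$, so $B(s)=e^{isc}$ is a nontrivial singular inner function with $B(0)=1$. Ruling these out requires additional regularity of $g$ (implicit in the intended applications), for instance continuity at the origin. Under such regularity, differentiating the identity at $x=0$ and integrating by parts yields $g(0)=\lambda$: indeed $-\lambda^2=\phi_\lambda'(0)=2\int_0^\infty g'(y)g(y)\,dy=\big[g(y)^2\big]_0^\infty=-g(0)^2$. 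This forces $\widehat g$ itself to be outer, so $\widehat g$ equals $\widehat{\phi_\lambda}$ up to a unimodular constant, which must be $1$ by $\widehat g(0)=1$; Fourier uniqueness then gives $g=\phi_\lambda$.
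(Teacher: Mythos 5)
Your opening move coincides with the paper's: both arguments note that $\phi_\lambda$ itself satisfies \eqref{eqn:intgg_phi}, deduce that the symmetrized autocorrelations of $g$ and of $\phi_\lambda$ agree, and conclude $|\widehat{g}(s)|^2=|\widehat{\phi}_\lambda(s)|^2$. From there the routes diverge. The paper finishes by invoking Parseval's identity (Thm.~\ref{thm:Parselval}) to obtain the pointwise identity $\widehat{g}(x)\,g(x)\equiv\widehat{\phi}_\lambda(x)\,\phi_\lambda(x)$ and then divides by $|\widehat{g}|=|\widehat{\phi}_\lambda|$; you instead pass to the inner--outer factorization on the upper half-plane and observe that the modulus identity leaves an inner-factor ambiguity, witnessed by the shifted exponentials $g_c(x)=\lambda e^{-\lambda(x-c)}{\bf 1}_{\{x\ge c\}}$. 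The point you should make explicit, rather than leave as an aside, is that $g_c$ satisfies the \emph{full hypothesis} \eqref{eqn:intgg_phi}, not merely the modulus identity: for $x\ge 0$ the integrand vanishes unless $y\ge c$, and substituting $u=y-c$ gives $2\int_0^\infty g_c(x+y)g_c(y)\,dy=2\lambda^2e^{-\lambda x}\int_0^\infty e^{-2\lambda u}\,du=\phi_\lambda(x)$. So the lemma as literally stated admits counterexamples, and some hypothesis pinning $g$ down near the origin (e.g. $0\in{\sf supp}(g)$, or $g(0)=\lambda$) is genuinely needed. This also tells you exactly where to scrutinize the paper's argument: in \eqref{eqn:hatphiphi_hatgg}, Parseval with $F=G$ of density $g$ yields $\int_0^\infty e^{isy}\widehat{g}(y)g(y)\,dy=\int_0^\infty \widehat{g}(y+s)\,g(y)\,dy$, with $\widehat{g}$, not $g$, in the right-hand integrand, so the pointwise identity $\widehat{g}\,g\equiv\widehat{\phi}_\lambda\,\phi_\lambda$ does not follow as claimed.

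That said, your own completion is not yet a proof either. The computation $g(0)^2=\lambda^2$ (which already presupposes differentiability of $g$ and decay at infinity) rules out the translations $g_c$, whose inner factor is $e^{isc}$ and which have $g_c(0)=0$; but it does not force the quotient $B=\widehat{g}/\widehat{\phi}_\lambda$ to be a constant. An inner function on the half-plane can be a Blaschke product or a more general singular inner function, and the value of $g$ at a single point is far too little information to exclude these --- you would additionally have to show that no nontrivial inner multiple of $\widehat{\phi}_\lambda$ is the characteristic function of a probability density on $[0,\infty)$ with $g(0)=\lambda$, which is the real content of the lemma and is left unaddressed. A cleaner repair is to state and prove the lemma under a hypothesis tailored to its only use in the paper (Lemma~\ref{lem:shapeVSexcessvalue}), namely that $g$ has the form \eqref{eqn:geomf_s}, and verify directly that this structure kills all nontrivial inner factors.
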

\begin{proof}
Observe that $\phi_\lambda(x)$ satisfies
\be\label{eqn:intphiphi_phi}
\phi_\lambda(x)=2\int\limits_0^\infty \phi_\lambda(x+y) \phi_\lambda(y) \,dy \quad \forall x \geq 0.
\ee
Thus,
\be\label{eqn:intphiphi_gg}
\int\limits_0^\infty \phi_\lambda(x+y) \phi_\lambda(y) \,dy = \int\limits_0^\infty g(x+y) g(y) \,dy \quad \forall x \geq 0.
\ee
Hence, for the two pairs of independent random variables 
$$X_1,X_2  \overset{d}{\sim} \text{ p.d.f. } \phi_\lambda(x) ~~\text{ and }~~Y_1,Y_2  \overset{d}{\sim} \text{ p.d.f. }g(x),$$
we have
$$X_1-X_2 ~\overset{d}{=}~Y_1-Y_2.$$
Therefore, for the characteristic functions $\widehat{\phi}_\lambda$ and $\widehat{g}$, we have
\be\label{eqn:L2_phi_g}
\big|\widehat{\phi}_\lambda(s)\big|^2 \,= \, {\sf E} \left[e^{is(X_1-X_2)}\right] \,= \, {\sf E} \left[e^{is(Y_1-Y_2)}\right] \,=\, \big|\widehat{g}(s)\big|^2 .
\ee
Observe that \eqref{eqn:L2_phi_g}  can be also obtained from \eqref{eqn:intphiphi_gg} via multiplying both sides by $e^{isx}$ and integrating.

\medskip
\noindent
Next, from the Parseval's identity Theorem \ref{thm:Parselval} and \eqref{eqn:intphiphi_gg}, we have $\forall s \geq 0$,
\begin{align}\label{eqn:hatphiphi_hatgg}
\int\limits_0^\infty e^{isy} \widehat{g}(y)\, g(y) \,dy &= \int\limits_0^\infty g(s+y) g(y) \,dy  \nonumber \\
&= \int\limits_0^\infty \phi_\lambda(s+y) \phi_\lambda(y) \,dy \nonumber \\
&= \int\limits_0^\infty e^{isy} \widehat{\phi}_\lambda(y)\, \phi_\lambda(y) \,dy. 
\end{align}
Therefore,
$$\widehat{g}(x) \, g(x) \equiv \widehat{\phi}_\lambda(x) \, \phi_\lambda(x),$$
and \eqref{eqn:L2_phi_g} implies for any $x>0$, 
$$g(x)={\big|\widehat{g}(x)\, g(x)\big| \over \big|\widehat{g}(x)\big|}={\big|\widehat{\phi}_\lambda(x) \, \phi_\lambda(x)\big| \over \big|\widehat{\phi}_\lambda(x)\big|}=\phi_\lambda(x).$$
\end{proof}

\section{Notations}\label{sec:notations}

\begin{tabular}{l p{10cm} }
 $\rho$ &  {\it root} vertex; \\
 $\phi$ &  the {\it empty tree} comprised of a root vertex and no edges; \\
 $\cT$ &   the space of finite unlabeled rooted reduced trees with no planar embedding; \\
 $\cL$ &   the space of trees from $\cT$ with edge lengths; \\
 $\T$ &   the space of finite unlabeled rooted reduced trees with planar embedding;\\
 $\L$ &   the space of trees from $\T$ with edge lengths;\\
 $\cB\cS$ & the subspace of {\it binary} trees in a given space of rooted trees $\cS$, e.g., $S=\cT,\T,\cL,\L$;\\
 $\cS^{|}$ &    the subspace of {\it planted} trees in a given space of rooted trees $\cS$;\\
 $\cS^{\vee}$ &    the subspace of {\it stemless} trees in a given space of rooted trees $\cS$;\\
 $\mBL$ &    the subspace of {\it mass-equipped} trees in $\BL$;\\
 $\mathcal{GW}(\{q_k\})$ & the probability distribution of (combinatorial) Galton-Watson trees 
 on $\cT^|$  with offspring p.m.f. $\{q_k\}$;\\
 $\mathcal{GW}(q_0,q_2)$ &    the probability distribution of (combinatorial) binary Galton-Watson trees on $\cBT^|$ with termination 
  probability $q_0$ and split probability $q_2$;\\
  $\mathcal{GW}_{\rm plane}(q_0,q_2)$ &  the planar embedding of trees in $\mathcal{GW}(q_0,q_2)$  that assigns the left-right orientation to each pair of offsprings uniformly and independently;\\
 ${\sf GW}(\lambda',\lambda)$  &    the probability distribution of exponential binary Galton-Watson trees (see Def. \ref{def:binary});\\
 ${\sf GW}(\lambda)$  &    the probability distribution of exponential critical binary Galton-Watson trees (Def. \ref{def:cbinary});\\
 ${\sf GW}_\infty(\lambda)$ &    the probability distribution of infinite exponential critical binary Galton-Watson tree built from the leaves down, 
with parameter $\lambda>0$;\\
 $X \stackrel{d}{\sim} D$ &  random element $X$ has distribution $D$; \\
 $X \stackrel{d}{=} Y$ &  random elements $X$ and $Y$ are equidistributed; \\
 $\hat{t}(z)$ &  the generating function ($z$-transform) of a sequence $\{t(j)\}_{j=0,1,\hdots}$; \\
 $\widehat{f}(s)$ &  the characteristic function of a random variable with p.d.f. $f(x)$; \\
 $\mathcal{L}f(s)$ &  the Laplace transform of $f(x)$; \\
 $\stackrel{a.s.}{\to}$ & almost sure convergence;\\
 $\overset{d}{\rightarrow}$ & convergence in distribution; \\
 $\stackrel{p}{\to}$ & convergence in probability;\\
 $x_n \sim y_n$ & asymptotic equivalence: $\displaystyle\lim_{n\to\infty} \frac{x_n}{y_n} = 1$;\\
 $\cE^{\rm ex}$ &    the space of all positive piece-wise linear continuous finite excursions with alternating slopes $\pm 1$;\\
 $\mathbb{N}$ & the set of natural numbers $\{1,2,\dots\}$;\\ 
 $\mathbb{Z}_+$ & the set of nonnegative integer numbers $\{0,1,2,\dots\}$. 
 
\end{tabular}

\section{Standard distributions}\label{sec:distributions}
\begin{tabular}{l p{10cm} }
 ${\sf Exp}(\lambda)$ & the exponential distribution with rate $\lambda$; the respective
 p.d.f. is $\phi_{\lambda}(x) = \lambda e^{-\lambda x}$, $x\ge 0$; \\
 ${\sf Gamma}(\alpha,\beta)$ & the gamma distribution with shape parameter $\alpha>0$ and rate 
 parameter $\beta>0$; the respective p.d.f. is $f(x) = \beta^{\alpha}x^{\alpha-1}e^{-\beta x}/\Gamma(\alpha)$ for $x\ge 0$;\\
 ${\sf Geom}_0(p)$ & the geometric distribution with p.m.f. $p(m)=p\,(1-p)^m$ for $~m=0,1,2,\hdots ~$;\\
 ${\sf Geom}_1(p)$ &    the geometric distribution with p.m.f. $p(m)=p\,(1-p)^{m-1}$ for $~m=1,2,3,\hdots ~$;\\
 $ {\sf Poi}(\lambda)$ &the Poisson distribution with rate $\lambda>0$; the respective p.m.f. is
 $p(m) = \lambda^m e^{-\lambda}/m!$ for $m = 0,1,2,\dots$;\\
 ${\sf Unif}(A)$ &    the uniform distribution over a set $A$.\\

\end{tabular}

\section{Tree functions and mappings}\label{sec:treefunctions}
\begin{tabular}{l p{10cm} }
 $\textsc{length}(T)$ & the length of a tree $T\in \cL$ (or $\L$) defined as the sum of the lengths of its edges; \\
 $\textsc{height}(T)$ &  the height of a tree $T\in \cL$ (or $\L$) defined as the maximal distance between the root and a vertex;\\
 $\textsc{shape}(T)$ & the combinatorial shape of a tree $T\in \cL$ (or $\L$); it is a mapping from $\cL$ (or $\L$) to $\cT$;\\
 ${\textsc{p-shape}(T)}$ & the combinatorial shape of a tree $T\in \L$ together with the tree's planar embedding; it is a mapping from $\L$ to $\T$;\\
 $\textsc{level}(f)$ & the level set tree of a continuous function $f(x)$.\\

\end{tabular}

\section*{Acknowledgements}
First and foremost, we are grateful to Ed Waymire for his continuing advice, encouragement,
and support on more levels than one. 
We would like to thank Amir Dembo for providing valuable feedback, including 
the very idea of writing this survey;
Jim Pitman for his comments and suggesting relevant publications; and
Tom Kurtz for his insight regarding infinite dimensional population processes. 

We would like to express our appreciation to the colleagues with whom we discussed 
this work at different stages of its preparation:
Maxim Arnold, Krishna Athreya, Bruno Barbosa, Vladimir Belitsky, Yehuda Ben-Zion, Robert M. Burton, Mickael Checkroun, Evgenia Chunikhina, 
Steve Evans, Efi Foufoula-Georgiou, Andrei Gabrielov, Michael Ghil, Mark Meerschaert, 
George Molchan, Peter T. Otto, Scott Peckham, Victor P\'{e}rez-Abreu, Jorge Ramirez, 
Andrey Sarantsev, Sunder Sethuraman, Alejandro Tejedor, Enrique Thomann, Donald L. Turcotte, Guochen Xu, Anatoly Yambartsev,  
and many others.
Finally, we thank the participants of the workshop {\it Random Trees: Structure, Self-similarity, and Dynamics} that took place during April 23-27, 2018, at the
Centro de Investigaci\'on en Matem\'aticas (CIMAT), Guanajuato, M\'exico, for sharing their knowledge and research results.
 
YK would like to express his gratitude to  IME - University of S\~{a}o Paulo (USP), S\~{a}o Paulo, Brazil, for hosting him during his 2018-2019 sabbatical.


\printindex


\end{document}